\documentclass[11pt,a4paper,reqno]{amsart}
\usepackage[all]{xy}
\usepackage{amsmath,amsfonts,amssymb,amscd}
\usepackage{soul,color}
\setcounter{tocdepth}{1}

\setlength{\textwidth}{15.2cm}
\setlength{\oddsidemargin}{0pt}
\setlength{\evensidemargin}{0pt}

\theoremstyle{plain}
\newtheorem{theorem}{Theorem}[section]
\newtheorem{mytheorem}{Theorem}[subsection]
\newtheorem{corollary}[mytheorem]{Corollary}
\newtheorem{Corollary}[theorem]{Corollary}
\newtheorem{lemma}[mytheorem]{Lemma}
\newtheorem{proposition}[mytheorem]{Proposition}
\newtheorem{definition}[mytheorem]{Definition}
\newtheorem{remark}[mytheorem]{Remark}

\newtheorem{example}[mytheorem]{Example}

\font\cyr wncyr10 at 11pt \def\Sha{\hbox{\cyr X}}

\def\tF{\textsc{F}}
\def\tV{\textsc{V}}

\def\AA{{\mathbb A}}
\def\CC{{\mathbb C}}

\def\FF{{\mathbb F}}

\def\NN{{\mathbb N}}

\def\QQ{{\mathbb Q}}
\def\RR{{\mathbb R}}
\def\ZZ{{\mathbb Z}}

\def\A{{\mathcal A}}
\def\B{{\mathcal B}}
\def\C{{\mathcal C}}

\def\E{{\mathcal E}}
\def\F{{\mathcal F}}
\def\cG{{\mathcal G}}
\def\cH{{\mathcal H}}

\def\K{{\mathcal K}}
\def\cL{{\mathcal L}}
\def\M{{\mathcal M}}
\def\N{{\mathcal N}}
\def\cO{{\mathcal O}}
\def\cP{{\mathcal P}}

\def\cS{{\mathcal S}}

\def\V{{\mathcal V}}

\newcommand{\fA}{\mathfrak{A}}

\newcommand{\fC}{\mathfrak{C}}

\newcommand{\fE}{\mathfrak{E}}

\newcommand{\fL}{\mathfrak{L}}
\newcommand{\fM}{\mathfrak{M}}

\newcommand{\fV}{\mathfrak{V}}
\newcommand{\fW}{\mathfrak{W}}

\newcommand{\fa}{\mathfrak{a}}
\newcommand{\fb}{\mathfrak{b}}
\newcommand{\fc}{\mathfrak{c}}
\newcommand{\fd}{\mathfrak{d}}
\newcommand{\fe}{\mathfrak{e}}
\newcommand{\ff}{\mathfrak{f}}

\newcommand{\fk}{\mathfrak{k}}

\newcommand{\fm}{\mathfrak{m}}
\newcommand{\fn}{\mathfrak{n}}

\newcommand{\fp}{\mathfrak{p}}

\newcommand{\fr}{\mathfrak{r}}
\newcommand{\fs}{\mathfrak{s}}

\newcommand{\kpinf}{{k}_{\infty}^{(p)}}
\newcommand{\ilim}{\varprojlim_n}
\newcommand{\Kpinf}{{K}_{\infty}^{(p)}}
\newcommand{\cal}{\mathcal}
\newcommand{\La}{\operatorname{\Lambda}}
\newcommand{\lra}{\longrightarrow}

\newcommand{\bo}{\mathbf}

\newcommand{\lr}{{\longrightarrow\;}}
\newcommand{\mbb}{\mathbb}

\newcommand{\dlim}{\varinjlim_n}

\def\+{{\dagger}}
\DeclareMathOperator{\Ker}{Ker}
\DeclareMathOperator{\Coker}{Coker}

\DeclareMathOperator{\Aut}{Aut}

\DeclareMathOperator{\coh}{H}
\DeclareMathOperator{\Frob}{Frob}
\DeclareMathOperator{\Fr}{Fr}

\DeclareMathOperator{\Gal}{Gal}

\DeclareMathOperator{\Ext}{Ext}
\DeclareMathOperator{\cExt}{\mathcal{E}xt}
\DeclareMathOperator{\G_m}{\mathbb{G}_m}
\DeclareMathOperator{\GL}{GL}

\DeclareMathOperator{\res}{res}

\DeclareMathOperator{\image}{Im}
\DeclareMathOperator{\Sel}{Sel}

\DeclareMathOperator{\Hom}{Hom}
\DeclareMathOperator{\Nm}{N}
\DeclareMathOperator{\Tor}{Tor}

\DeclareMathOperator{\Spec}{Spec}

\DeclareMathOperator{\rank}{rank}

\definecolor{purple}{rgb}{0.7,0,1}

\begin{document}
\title[ ] {On the Iwasawa Main conjecture of \\ abelian varieties  over function fields }

\author[Lai] {King Fai Lai}
\address{School of Mathematical Sciences\\
Capital Normal University\\
Beijing 100048, China}
\email{kinglaihonkon@gmail.com}

\author[Longhi] {Ignazio Longhi}
\address{Department of Mathematics\\
National Taiwan University\\
Taipei 10764, Taiwan}
\email{longhi@math.ntu.edu.tw}

\author[Tan]{Ki-Seng Tan}
\address{Department of Mathematics\\
National Taiwan University\\
Taipei 10764, Taiwan}
\email{tan@math.ntu.edu.tw}

\author[Trihan]{Fabien Trihan}
\address{Graduate School of Mathematics\\
Graduate School of Mathematics\\
Nagoya University\\
Chikusa-ku, Nagoya, 464-8602\\
Japan}
\email{trihan@math.nagoya-u.ac.jp}

\subjclass[2000]{11S40 (primary), 11R23, 11R34, 11R42, 11R58, 11G05, 11G10 (secondary)}

\keywords{Abelian variety, Selmer group, Frobenius, Iwasawa theory, Stickelberger element, syntomic}

\begin{abstract} We study a geometric analogue of the Iwasawa Main Conjecture for abelian varieties introduced by Mazur in the two following cases:
\begin{enumerate}
\item Constant ordinary abelian varieties over $\ZZ_p^d$-extensions of function fields  ramifying at a finite set of places.
\item Semistable abelian varieties over the arithmetic $\ZZ_p$-extension of a function field.
\end{enumerate}
One of the tools we use in our proof is a pseudo-isomorphism relating the duals of the Selmer groups of $A$ and its dual abelian variety $A^t$. This holds as well over number fields and is a consequence of a quite general algebraic functional equation.
\end{abstract}

\maketitle

\tableofcontents

\section{Introduction}  \label{sec:intro}

\subsection{Our main results}
We prove in this paper some cases of Iwasawa Main Conjecture for abelian varieties over
function fields and we also prove an algebraic functional equation for abelian varieties over global fields.
We fix a prime number $p$ ($p=2$ is allowed). Let $K$ be a global field of characteristic $0$ or $p$ and let $A$ be an abelian variety over $K$ of dimension $g$.
Let $L$ be a $\ZZ_p^d$-extension of $K$ ($d\geq 1$), unramified outside a finite set of places, with $\Gamma:=\Gal(L/K)$,
and write $Q(\Lambda)$ for the fraction field of the Iwasawa algebra $\La :=\ZZ_p[[\Gamma]]$.
Let $X_p(A/L)$ denote the Pontryagin dual of the Selmer group $\Sel_{p^\infty}(A/L)$ (the definition of the latter will be recalled in \S \ref{sss:sel} below).\\

We first assume that $K$ is of characteristic $p$ and consider the cases where either
\vskip5pt
\noindent (a) $A$ is a constant ordinary abelian variety,

or
\vskip5pt
\noindent (b) $A$ is an abelian variety with at worst semistable reduction, and $L$ is the arithmetic $\ZZ_p$-extension (which we shall denote by $\Kpinf$).
\vskip5pt

In both cases $X_p(A/L)$ is finitely generated over $\La$, hence we can define the characteristic ideal $\chi\big(X_p(A/L)\big)$ (\S\ref{sss:krul} below will recall the definition of characteristic ideals).
It is a principal ideal of $\Lambda$, and we let $c_{A/L}\in\Lambda$ denote a generator (called a characteristic element associated with $X_p(A/L)$), which is of course unique up to elements in $\Lambda^{\times}$.
Note that $c_{A/L}\not=0$ if and only if $X_p(A/L)$ is torsion.

For $\omega$ a continuous character of $\Gamma$ and $T$ a finite set of places of $K$, let $L_T(A,\omega,s)$ denote the twisted Hasse-Weil $L$-function of $A$ with the local factors at $T$ taken away. In case (a), $T$ will be the ramification locus of $L/K$ and in case (b), it will consist of those places where $A$ has bad reduction (see \S \ref{s:interpol} and \S\ref{ss:twistedHW} for more detail). Note that if the image of $\omega$ is contained in $\cO$, the ring of integers of a finite extension of $\QQ_p$, then $\omega$ can be uniquely extended to a continuous $\cO$-algebra homomorphism $\cO[[\Gamma]]\rightarrow \cO$. Our first main theorem is the following:

\begin{theorem} \label{t:summary} In both cases {\em{(a)}} and {\em{(b)}} above there exists a ``$p$-adic $L$-function'' $\cL_{A/L}\in Q(\La)$ such that for any continuous character
$\omega\colon\Gamma\rightarrow\CC^\times$, $\omega(\cL_{A/L})$ is defined and
\begin{equation}\label{e:imc2interpolate}
\omega(\cL_{A/L})=*_{A,T,\omega}\cdot L_T(A,\omega,1)
\end{equation}
for an explicit fudge factor $*_{A,T,\omega}$. Furthermore,
\begin{equation}\label{e:imc3characteristic}
\mathcal{L}_{A/L}\equiv \star_{A,L}\cdot c_{A/L}\mod\La^\times
\end{equation}
for a precise $\star_{A,L}\in Q(\La)^\times/\La^\times$.
\end{theorem}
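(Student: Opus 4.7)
The plan is to split the theorem into two separable tasks: (i) construct $\cL_{A/L}\in Q(\La)$ and verify the interpolation \eqref{e:imc2interpolate}, and (ii) establish the ``algebraic'' equality \eqref{e:imc3characteristic}. Since the two settings rest on different cohomological machinery, I would treat (a) and (b) in parallel rather than reduce one to the other, but the overall architecture is the same: define the $p$-adic $L$-function as a determinant of a Frobenius-type operator on a suitable Iwasawa deformation of a cohomology theory, and then identify that determinant with the characteristic ideal of $X_p(A/L)$ via a control theorem.

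For the interpolation formula, I would use Weil's and Grothendieck's description of the Hasse--Weil $L$-function over a function field as an alternating product of characteristic polynomials of Frobenius. In case (a), the ordinary hypothesis makes the connected--\'etale sequence of $A[p^\infty]$ split over $\bar{K}$ into $\hat{A}[p^\infty]$ and a maximal \'etale quotient; each of these two pieces is controlled by arithmetic cohomology of $K$ and of its $\Gamma$-cover $L$, and a Stickelberger-type construction applied piecewise over $L/K$ produces an element $\cL_{A/L}$ whose image under any $\omega$ matches $L_T(A,\omega,1)$ up to the local factors contributing to $*_{A,T,\omega}$. In case (b), the natural replacement is the log-syntomic (equivalently log-crystalline) cohomology of the N\'eron model in the style of Kato--Trihan; here $\cL_{A/L}$ is the $\La$-determinant of $1-F$ on a syntomic cohomology group lifted to the tower $\Kpinf/K$, and the interpolation at $\omega$ reduces to a Lefschetz-type trace formula with the semistable Euler factors at $T$ coming out as $*_{A,T,\omega}$.

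For \eqref{e:imc3characteristic}, the key is a \emph{control theorem} providing a pseudo-isomorphism $X_p(A/L)\sim M$ where $M$ is the $\La$-module built from the same cohomology used to define $\cL_{A/L}$. In case (a) this rests on Poitou--Tate-type duality coupled with the decomposition given by ordinariness; in case (b) it follows from the Kato--Trihan comparison between flat cohomology of the N\'eron model and log-syntomic cohomology, once one has verified that syntomic cohomology is compatible with taking $\Gamma$-coinvariants away from the ramification locus. Given the pseudo-isomorphism, $c_{A/L}$ is computed as the determinant of the relevant Frobenius, hence agrees with $\cL_{A/L}$ after correcting by local contributions at $T$ and by the cohomological comparison errors; the bookkeeping of these corrections defines $\star_{A,L}\in Q(\La)^\times/\La^\times$.

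The main obstacle, I expect, is the control theorem in case (b): semistable reduction forces us to carry the monodromy operator $N$ along with Frobenius, and the local corrections at bad places must be tracked precisely enough to recover $\star_{A,L}$ as an \emph{explicit} element of $Q(\La)^\times/\La^\times$ rather than merely modulo pseudo-null ambiguity. A secondary but genuine difficulty is that $X_p(A/L)$ may carry a non-trivial $\mu$-invariant, which is precisely why $\cL_{A/L}$ is only sought in $Q(\La)$; disentangling the denominator and showing it really does arise from $c_{A/L}$ presumably requires the algebraic functional equation between $X_p(A/L)$ and $X_p(A^t/L)$ alluded to in the abstract, which should be proved first and used as an input here.
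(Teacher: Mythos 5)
Your plan for case (b) tracks the paper's actual route fairly closely: the $p$-adic $L$-function is indeed defined as an alternating product of $\La[p^{-1}]$-determinants of $\mathrm{id}-\Phi_i$ acting on crystalline/syntomic cohomology lifted up the unramified tower (the modules $P^i_\infty$ of \S\ref{ss:Pinfty}), and the algebraic identity is established by comparing those determinants with characteristic elements of the duals of $N^i_\infty$ and $L^j_\infty$ via a $\sigma$-linear algebra lemma generalizing \cite[Lemma 3.6]{KT03}. The ``control'' you invoke is in fact just crystalline base change along the arithmetic tower together with the surjection $N^1_\infty\twoheadrightarrow\Sel_{p^\infty}(A/\Kpinf)$ and the exact sequence \eqref{e:alg-arithm}; the bookkeeping that produces $\star_{A,L}$ is precisely the collection of terms $f_{(N^0_\infty)^\vee}$, $f_{(N^2_\infty)^\vee}$, $f_{(L^j_\infty)^\vee}$ and $f_{\M_\infty^\vee}$ assembled in Proposition \ref{p:summarize}. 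So your case (b) outline is structurally on target, even though you leave implicit the K\"unneth/Tsuzuki machinery used for the twisted interpolation.

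Case (a) is where your plan has real gaps, and they are of substance rather than polish. First, you never name the mechanism that ties a Stickelberger element to the characteristic ideal of a Selmer group. The paper does not work with the connected--\'etale sequence of $A[p^\infty]$ per se: it splits $\ZZ_p\E$ itself into Frobenius and Verschiebung idempotents (Corollary \ref{c:Esplits}), identifies $\F X_p(A/\tilde L)\simeq \fW_{\tilde L}\otimes_{\ZZ_p}A[p^\infty]^\vee$ (Proposition \ref{p:selclassg}) so that the Frobenius piece of the Selmer group \emph{is} the class-group Iwasawa module twisted by the dual Tate module, and then applies Crew's theorem (Theorem \ref{t:class}) to compute $\chi$ of that class-group module as a Stickelberger element. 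Without this identification, a ``Stickelberger-type construction applied piecewise'' floats free of $\chi(X_p(A/L))$, and there is no reason the interpolation formula should be equivalent to the main conjecture. Second, you misplace the role of the algebraic functional equation. It does not serve to ``disentangle a denominator'': in case (a), $\cL_{A/L}=\theta_{A,L}$ is honestly in $\La$ and $\star_{A,L}=1$; the denominator in $Q(\La)$ is a case (b) phenomenon arising from the alternating product of characteristic elements. The functional equation of Theorems \ref{t:xaat}/\ref{t:al} enters case (a) because Crew's theorem only computes $\chi(\F X_p(A/L))$; the Verschiebung half is supplied by $\chi(\V_A X_p(A/L))=\chi(\F_{A^t}X_p(A^t/L))^\sharp$ (Proposition \ref{p:sfft}), and the two halves are reassembled in Proposition \ref{p:xffsharp}. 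This dual-variety pairing also explains the product $\Theta_L(\alpha_i^{-1})\cdot\Theta_L(\alpha_i^{-1})^\sharp$ structuring $\theta_{A,L}$, and hence why the interpolation formula in Theorem \ref{t:interpo} requires Weil's functional equation for Dirichlet $L$-functions -- a feature your outline does not anticipate. Third, your plan implicitly assumes $L\supseteq\Kpinf$; passing from $L_{ar}=L\Kpinf$ down to a general $\ZZ_p^d$-extension $L$ requires the descent theorem of \cite{tan10b} (Theorem \ref{t:descent}), which your sketch omits.
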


Theorem \ref{t:summary} summarizes the main results of this paper: the interpolation formula \eqref{e:imc2interpolate} is proven in Theorems \ref{t:interpo} and  \ref{GIMC2} (case (a) and (b) respectively), while \eqref{e:imc3characteristic} is the content of Theorems \ref{t:imc3constant} and \ref{GIMC3}. For the precise expression of $*_{A,T,\omega}$ and $\star_{A,L}$ we refer to these theorems: here we just note that one has $\star_{A,L}=1$ in case (a) and $*_{A,T,\omega}=1$ in case (b).\\

Now consider the more general case:
\vskip5pt
\noindent
(c) $A$ has {\em{potentially ordinary}} reduction at each ramified place of $L/K$ while $K$ is of characteristic $0$ or $p$.
\vskip5pt

Obviously, case (c) contains both cases (a) and (b). It is known that in case (c) the module $X_p(A/L)$ is finitely generated over $\La$ (see \S\ref{se:se}), so the characteristic ideal $\chi\big(X_p(A/L)\big)$ and the characteristic element $c_{A/L}\in\Lambda$ are defined. Let $\cdot^{\sharp}\colon\La\rightarrow \La$, $\lambda\mapsto\lambda^{\sharp}$, denote the isomorphism induced by the inversion $\Gamma\rightarrow\Gamma$, $\gamma\mapsto \gamma^{-1}$ (see $\S$ \ref{ss:iwH1}). Let $A^t$ denote the dual abelian variety of $A$.

\begin{theorem}\label{t:xaat}
Suppose $A$ has potentially ordinary reduction at each ramified place of $L/K$. Then
$$\chi(X_p(A/L))=\chi( X_p(A/L))^{\sharp}=\chi( X_p(A^t/L)).$$
\end{theorem}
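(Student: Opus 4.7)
\emph{Proof plan.} I would deduce both equalities from two inputs: an Iwasawa-theoretic pseudo-isomorphism
$$X_p(A/L)\sim X_p(A^t/L)^{\sharp}$$
(the ``quite general algebraic functional equation'' alluded to in the introduction and abstract), together with an isogeny-theoretic comparison yielding $\chi(X_p(A/L))=\chi(X_p(A^t/L))$.

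The pseudo-isomorphism is the heart of the argument; its derivation parallels the construction of the Cassels-Tate pairing but lifted to the Iwasawa tower. At each finite layer $L_n$ of $L/K$, Poitou-Tate global duality (or its flat-cohomology version in characteristic $p$) produces a perfect pairing between the discrete Selmer group built from $A[p^\infty]$ and a compact Selmer group built from $T_pA^t$, with the Weil pairing $A[p^m]\times A^t[p^m]\to\mu_{p^m}$ identifying the coefficient systems. Promoting this to a pairing of the Selmer groups themselves demands that the local conditions for $A$ and $A^t$ be exact annihilators under local Tate duality at every place: this is automatic at places of good reduction unramified in $L/K$, and at the finitely many remaining places the \emph{potentially ordinary} hypothesis supplies, after a finite base change, Frobenius-stable filtrations on $T_pA$ and $T_pA^t$ that are orthogonal complements under the Weil pairing---precisely what is needed. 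Passing to the projective limit over $n$ and converting the resulting compatible system of pairings into a pseudo-isomorphism of $\Lambda$-modules---via a Jannsen-type spectral sequence, equivalently a Nekov\'a\v{r}-style Selmer complex---yields the desired relation. The $\sharp$-twist records the inversion of the $\Gamma$-action introduced by Pontryagin duality in the projective limit.

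For the second ingredient, a polarization $\phi\colon A\to A^t$ exists over $K$ as an isogeny, and satisfies $\phi^{t}\circ\phi=[n]$ for $n=\deg\phi$. A careful $\Lambda$-module analysis of the induced map $X_p(A^t/L)\to X_p(A/L)$---tracking the $p$-primary kernel and cokernel via the classification of finitely generated torsion $\Lambda$-modules---upgrades this to the pseudo-isomorphism $X_p(A/L)\sim X_p(A^t/L)$, hence $\chi(X_p(A/L))=\chi(X_p(A^t/L))$. Combining with the pseudo-isomorphism above and the identity $\chi(M^{\sharp})=\chi(M)^{\sharp}$, one concludes
$$\chi(X_p(A/L))=\chi(X_p(A^t/L))^{\sharp}=\chi(X_p(A/L))^{\sharp},$$
finishing the proof. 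The main obstacle is the construction of the pseudo-isomorphism: one must simultaneously verify Weil-pairing orthogonality of the local Selmer conditions at bad-reduction and ramified places, and carry out the homological passage from a compatible system of finite-level pairings to a pseudo-isomorphism of $\Lambda$-modules, controlling the gap between compact Iwasawa cohomology and the dual Selmer module. Both difficulties are precisely what the potentially ordinary hypothesis is designed to control, the same hypothesis that already guarantees the finite generation of $X_p(A/L)$ over $\Lambda$ recalled in the excerpt.
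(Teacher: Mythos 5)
Your proposal rests on two ingredients, and the second one has a genuine gap that the paper is precisely designed to work around.

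The claim that a polarization $\phi\colon A\to A^t$ ``upgrades'' to a pseudo-isomorphism $X_p(A/L)\sim X_p(A^t/L)$ (hence $\chi(X_p(A/L))=\chi(X_p(A^t/L))$) is false in general. The induced map on Iwasawa modules has kernel and cokernel killed by $\deg\phi$, but when $p\mid\deg\phi$ a $\Lambda$-module killed by a power of $p$ is \emph{not} pseudo-null ($\Lambda/p\Lambda$ is the standard counterexample), and there is no reason a priori for the $\mu$-invariants of $X_p(A/L)$ and $X_p(A^t/L)$ to agree. What the isogeny does give you is an isomorphism after tensoring with $\QQ_p$ — this is precisely the paper's equation \eqref{e:qp}, used only to identify the prime-to-$p$ and simple parts. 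Tracking the $p$-part ``via the classification of finitely generated torsion $\Lambda$-modules'' does not fill this hole; the equality $\chi(X_p(A/L))=\chi(X_p(A^t/L))$ is a \emph{conclusion} of the theorem, obtained by chaining together several $\sharp$-symmetries, not an input.

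The paper's route is quite different and more delicate than your first ingredient suggests. Rather than a Poitou-Tate/Nekov\'a\v{r} pairing on the full Selmer modules, it applies the abstract $\Gamma$-system machinery (Theorem \ref{t:al}) only to the Cassels-Tate quotients $\fa_n=\Sha_{p^\infty}(A^t/K_n)/\Sha_{p^\infty}(A^t/K_n)_{div}$, $\fb_n=\Sha_{p^\infty}(A/K_n)/\Sha_{p^\infty}(A/K_n)_{div}$, which at finite level are honestly dual under the Cassels-Tate pairing. This yields the functional equation only for the \emph{non-simple} parts $[\fa']_{ns}^\sharp=[\fb']_{ns}$ (Corollary \ref{c:nons}); the ``simple'' parts, supported on $\ZZ_p$-flats of codimension one, are not seen by the $\Gamma$-system argument at all, because they can be killed in the projective limit. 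The decomposition $[M]=[M]_{si}\oplus[M]_{ns}$ is then exploited as follows: Theorem \ref{t:flat} shows $Y_p(A/L)=\Sel_{div}(A/L)^\vee$ is entirely simple, so $[X_p(A/L)]_{ns}=[\fa]_{ns}$; simple elements are $\sharp$-invariant by \eqref{e:fs}, so $[M]_{si}=[M]_{si}^\sharp$; and the $\QQ_p$-isomorphism from isogeny identifies the simple parts of $X_p(A/L)$ and $X_p(A^t/L)$ and their prime-to-$p$ parts, while the $p$-part is $\sharp$-invariant trivially. Combining these gives both $X_p(A^t/L)\sim X_p(A/L)^\sharp$ and $X_p(A/L)^\sharp\sim X_p(A/L)$. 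Your proposal does not address the divisible part of the Selmer group (the module $Y_p$), nor the simple-element phenomenon that in the $d\geq 2$ case blocks a naive passage from finite-level dualities to a $\Lambda$-module pseudo-isomorphism (see the paper's Remark \ref{r:aefail}), so even your first ingredient, as sketched, bypasses the hardest content of the proof.
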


\noindent If $X_p(A/L)$ is not a torsion $\La$-module, the statement is obvious (by definition the characteristic ideal of a non-torsion finitely generated $\La$-module is $0$ and the last equality follows from the trivial Lemma \ref{l:isogAtA}). So the real content of Theorem \ref{t:xaat} is that if $X_p(A/L)$ is torsion we have
\begin{equation} \label{e:xaat} X_p(A/L)\sim X_p(A/L)^{\sharp}\sim X_p(A^t/L),\end{equation}
(where $\sim$ denotes pseudo-isomorphism of $\La$-modules). In the number field case, results in this direction were obtained in \cite[\S7]{mazur72}, \cite[\S8]{gr89} and \cite{p03} (see also \cite{za10} for a non-commutative generalization).\\

A key ingredient in the proof of Theorem \ref{t:xaat} will be a more general duality result, Theorem \ref{t:al}, which will also play a crucial role in case (a) of Theorem \ref{t:summary}.

\subsubsection{Motivation: elliptic curves over $\QQ$} To put our results in perspective let us review the first and most studied case in the Iwasawa theory of abelian varieties over global fields, i.e., when $K=\QQ$ and $A=E$ is an elliptic curve with good ordinary reduction at $p>2$. The only $\ZZ_p$-extension of $\QQ$ is the cyclotomic $\ZZ_p$-extension $\QQ_\infty$,
which can be viewed as the analogue of $\Kpinf$. We have  the following statements:
\begin{itemize}
\item[({\bf IMC1})] $X_p(E/\QQ_\infty)$ is a finitely generated torsion $\La$-module;
\item[({\bf IMC2})] there exists an element $\cL_{E/\QQ_\infty}\in\La $ such that for any finite character $\omega$ of $\Gamma$ we have
$$ \omega (\cL_{E/\QQ_\infty})=*_{E,p,\omega}\cdot L(E,\bar{\omega},1) $$
for an explicit fudge factor $*_{E,p,\omega}$;
\item[({\bf IMC3})] we have an equality of ideals of $\La$
$$(\cL_{E/\QQ_\infty})=(c_{E/\QQ_\infty}).$$
\end{itemize}
Here ({\bf IMC1}) follows from the works of Mazur, Rubin and Kato (\cite{mazur72,rubin91,Ka04}), and ({\bf IMC2}) was proved by Mazur and Swinnerton-Dyer (\cite{MS74}).\footnote{Under the assumption of modularity, but now we know that all elliptic curves defined over $\QQ$ are modular.}
As for ({\bf IMC3}) (usually referred to as the Iwasawa Main Conjecture), it was first proved by Rubin (\cite{rubin91}) when $E$ has complex multiplication.
Later Kato (\cite{Ka04}) proved that $c_{E/L}$ divides $\cL_E$ in $\La [\frac{1}{p}]$. In 2002, Skinner and Urban announced that $\cL_E$ divides $c_{E/L}$ assuming a conjecture on the existence of the Galois representation for automorphic forms on $U(2,2)$ and under certain conditions on $E$ (see \cite[Corollary 3.6.10]{su11}).

Note that for general $A$ and $K$ the analogue of ({\bf IMC1}) is not always true: it can happen that $c_{A/L}=0$, for example if $K$ is a quadratic extension of a number field $k$ and $L/k$ is dihedral, as may be deduced from \cite{MR07}. In the appendix, we give a family of similar examples of vanishing $c_{A/L}$ in characteristic $p$.
\footnote{As in the number field case, this happens in a situation where Heegner points appear; it might however be interesting to note that in the function field case the tower of extensions generated by Heegner points is quite bigger: its Galois group contains $\ZZ_p^\infty$! See e.g. \cite{Br04}.} However, $X_p (A/L)$ is a torsion $\La$-module in ``most'' cases by \cite[Theorem 7]{tan10b} (to which we refer for the precise meaning of ``most'').

\subsection{A closer look at our results}
In the rest of this introduction we survey the main ideas used in the paper and provide more precise statements of our results.
First we remark that the methods of our proof of Theorem \ref{t:summary} in cases (a) and (b), given in Parts \ref{part:constantA} and \ref{part:semistable} respectively, are extremely different. One should remember the difference between Iwasawa theory for CM and not CM elliptic curves in the number field case. Actually, in the function field setting, the CM abelian varieties are essentially those defined over the constant field and the existence of the Frobenius endomorphism will provide a key element in our proof in case (a); as mentioned above, we shall also need the algebraic functional equation of Section \ref{s:con}. As for case (b), our methods will be geometric, based on cohomological techniques.
The two cases have an intersection when $A$ is constant and $L$ is the arithmetic extension: in Subsection \ref{su:2dreams} we will check that the results coincide.

\subsubsection{Part \ref{part:algebraic}: the algebraic functional equation} Here we are in the setting of case (c): in particular, $K$ is allowed to be a number field.
In order to prove Theorem \ref{t:xaat} we introduce the notion of $\Gamma$-system.
Since this formalism can be applied in a wide range of contexts, we develop it axiomatically, as a means to deal with dualities in Iwasawa towers over any global field.
Basically, a $\Gamma$-system consists of two projective systems of finite abelian $p$-groups, $\{\fa_n\}$ and $\{\fb_n\}$, endowed with an action of $\Gamma$ and such that $\fa_n$ can be identified with the Pontryagin dual of $\fb_n$ for all $n$: see \S3.1 for the precise axioms. We also ask that the projective limits $\fa$ and $\fb$ are finitely generated torsion $\La$-modules.
\footnote{As we learned only after this paper had been essentially completed, our definition of $\Gamma$-system is very similar to the notion of ``normic system'' introduced in \cite[D\'efinition 2.1]{vau09}. The main difference is that Vauclair does not include a duality in his definition.}
Then the natural question is if there is any relation between the characteristic ideals of $\fa$ and $\fb^\sharp$: in Theorem \ref{t:al} we will prove that the two modules are actually pseudo-isomorphic under certain assumptions (one - that is, being pseudo-controlled - quite natural, the rest more technical).

In Section 5, we prove Theorem \ref{t:xaat} by reviewing some necessary background about the Cassels-Tate duality and then applying the $\Gamma$-system machinery to compare the $p$-Selmer groups of an abelian variety and its dual, when $X_p(A/L)$ is $\Lambda$-torsion. Subsection \ref{ss:idempotents} enounces some more precise results that can be obtained when the endomorphism algebra of $A$ is split at $p$.

\subsubsection{Part \ref{part:constantA}: the constant ordinary case} Here we deal with case (a). In \S\ref{ss:stickelb} we shall define the Stickelberger element $\theta_L\in\La$ as the value at $u=1$ of a certain power series $\Theta_L(u)\in\La[[u]]$: by construction $\theta_L$ interpolates Dirichlet $L$-functions $L(\omega,s)$ for all continuous characters $\omega\colon\Gamma\longrightarrow\CC^\times$. Following \cite{mazur72}, we associate with $A$ the twist matrix $\bo u$: let $\{\alpha_i\}$ be its set of eigenvalues (see \S\ref{ss:twist} below). It turns out that in case (a) of Theorem \ref{t:summary} the $p$-adic $L$-function $\mathcal{L}_{A/L}$ equals
$$\theta_{A,L}:=\prod \Theta_L(\alpha_i^{-1}) \Theta_L(\alpha_i^{-1})^{\sharp}\,.$$
As for the fudge factor $*_{A,T,\omega}$ in the interpolation formula \eqref{e:imc2interpolate}, it is somewhat complicated and will be defined only in section \ref{s:interpol}. Now we just note that here $T$ is the ramification locus of $L/K$ and $*_{A,T,\omega}$ comes mostly from the functional equation for $L(\omega,s)$ (necessary in order to deal with $\Theta_L(u)^{\sharp}$): see Theorem \ref{t:interpo}, which gives our analogue of ({\bf IMC2}) in this setting. (As for ({\bf IMC1}), $X_p(A/L)$ will be torsion, and hence $c_{A/L}\neq0$, when $L$ contains $\Kpinf$, by either \cite[Theorem 2]{tan10b} or \cite[Theorems 1.8 and 1.9]{ot09}).

Our analogue of the Iwasawa Main Conjecture ({\bf IMC3}) in case (a) is the following:

\begin{theorem} \label{t:imc3constant} Let $A$ be a constant ordinary abelian variety over the function field $K$. For any $\ZZ_p^d$-extension $L/K$ unramified outside a finite set of places, we have
\begin{equation} \label{e:imc3} \chi(X_p(A/L))=(\theta_{A,L}). \end{equation}
\end{theorem}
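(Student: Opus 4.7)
The plan is to exploit the \'etale--multiplicative decomposition of $A[p^\infty]$---available because $A$ is ordinary and constant---to split the characteristic ideal of $X_p(A/L)$ into a product of twists of the Stickelberger element $\Theta_L$, one twist for each eigenvalue of the Frobenius acting on $T_p(A)$. The Frobenius endomorphism of the constant $A$ plays the role analogous to CM multiplication in Rubin's proof of the Iwasawa main conjecture for CM elliptic curves over $\QQ$.

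First, I would use the connected--\'etale short exact sequence
\begin{equation*}
0 \to A[p^\infty]^\mu \to A[p^\infty] \to A[p^\infty]^{\text{\'et}} \to 0
\end{equation*}
of $p$-divisible groups over $\bar{\FF}_q$. Since $A$ is constant ordinary with good reduction everywhere, this filtration controls the local Selmer conditions at every place, and the long exact sequence in Galois cohomology over the tower $L$ yields a filtration of $X_p(A/L)$ whose graded pieces, after extension of scalars to an unramified $\cO/\ZZ_p$ containing all the $\alpha_i$, become pseudo-isomorphic to direct sums of rank-one $\cO$-twists of a classical Iwasawa module $\X_L$ attached to the $T$-ramified abelian pro-$p$ tower of $L$, the twists being precisely the unramified characters of $\Gamma$ sending the geometric Frobenius to $\alpha_i^{-1}$ on the \'etale part and to $q\alpha_i^{-1}$ on the multiplicative part.

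Next, I would invoke the function-field main conjecture for $\GG_m$---essentially Crew's theorem, derived from the cohomological formula for Hasse--Weil $L$-functions via the Weil conjectures---to identify $\chi\bigl(\X_L(\alpha_i^{-1})\bigr)$ with the ideal $\bigl(\Theta_L(\alpha_i^{-1})\bigr)$. The \'etale piece of the filtration thus contributes $\prod_i \Theta_L(\alpha_i^{-1})$. For the multiplicative piece I would appeal to Cartier duality, which identifies $A[p^\infty]^\mu$ with the Cartier dual of $A^t[p^\infty]^{\text{\'et}}$; combined with Pontryagin duality, which introduces the inversion $\gamma\mapsto\gamma^{-1}$ on $\Gamma$, this yields the complementary factor $\prod_i \Theta_L(\alpha_i^{-1})^{\sharp}$. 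Multiplying the two contributions gives $\chi(X_p(A/L))=(\theta_{A,L})$; Theorem \ref{t:xaat} of Part \ref{part:algebraic} serves as a consistency check, since both sides are manifestly $\sharp$-invariant (the right-hand side by the very definition of $\theta_{A,L}$, the left-hand side by the algebraic functional equation).

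The main obstacle will be the bookkeeping of local conditions at the ramified places in $T$: I must verify that the Stickelberger element emerging from the class/unit group analysis is exactly the $T$-truncated $\Theta_L$ whose interpolation property was established in Theorem \ref{t:interpo}, with $T$ the ramification locus of $L/K$. A secondary technical issue is to ensure that the scalar extension to $\cO$ and the pseudo-isomorphisms introduced by the splitting of $A[p^\infty]$ leave the characteristic ideal unchanged; standard descent under unramified scalar extension, together with the vanishing of the relevant Euler--Poincar\'e characteristics, should suffice. Finally, combining the two graded pieces into an extension (rather than a direct sum) is legitimate at the level of characteristic ideals by the multiplicativity of $\chi$ in exact sequences.
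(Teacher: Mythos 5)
The overall skeleton of your argument — split off the \'etale/Frobenius part, reduce it to a class-group module via Crew's theorem, handle the complementary part by a duality, and use $\sharp$-invariance — does track the paper's strategy, but two of the load-bearing steps are substituted by gestures that do not actually carry the weight.

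First, your treatment of the multiplicative (Verschiebung) piece is a genuine gap. You propose to handle $A[p^\infty]^\mu$ by ``Cartier duality, combined with Pontryagin duality,'' relegating the algebraic functional equation to a ``consistency check.'' In fact the algebraic functional equation \emph{is} the mechanism by which the paper controls the $\V$-part: one cannot read off the $\V$-part of the Selmer group directly from $\coh^1(-,A[p^\infty]^\mu)$, because (unlike the \'etale part, which Corollary \ref{c:image1} pushes into \'etale cohomology by iterating Frobenius) the multiplicative part of the local Selmer condition has no analogous clean description via iterated Verschiebung. Instead the paper proves $\chi(\V_A X_p(A/L)) = \chi(\F_{A^t}X_p(A^t/L))^\sharp$ (Proposition \ref{p:sfft}), which rests on the Cassels--Tate $\Gamma$-system and Theorem \ref{t:al}; that machinery — the pseudo-controlled condition, Monsky's theorem, the simploid bookkeeping — is exactly what your ``Cartier + Pontryagin duality'' shorthand is concealing. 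If you try to write it out, you will find yourself re-proving the functional equation.

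Second, you omit the descent from $L_{ar}$ to $L$. Crew's theorem (Theorem \ref{t:class}) applies to $\fW_{\tilde L}$ where $\tilde L = L\,\FF(A[p^\infty])$ contains the arithmetic $\ZZ_p$-extension, so the first stage of the argument naturally proves the result for $\chi(X_p(A/L_{ar}))$ (Proposition \ref{p:cs2}). To get a general $\ZZ_p^d$-extension $L$ one must descend, which is a separate nontrivial input (Theorem \ref{t:descent}, a special case of the main theorem of \cite{tan10b}); it is not covered by ``standard descent under unramified scalar extension,'' since the issue is descent along the arithmetic subtower, not a change of coefficients. Your proposal needs to add this step explicitly, and it is where the extra factor $\varrho_{M/M'}$ and the ramification bookkeeping $\vartheta_{M/M'}$ enter.

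A smaller point: the decomposition $X_p(A/L)=\F X_p(A/L)\oplus\V X_p(A/L)$ in the paper comes from the idempotent splitting $\ZZ_p\E=\F\times\V$ of the endomorphism algebra (Corollary \ref{c:Esplits}), not from a filtration induced by the connected--\'etale sequence in Galois cohomology. These are related but not the same; in particular the endomorphism-ring approach gives a genuine direct sum of $\La$-modules compatible with the Cassels--Tate pairing, which is needed for the functional-equation step. Your ``filtration whose graded pieces...'' phrasing would force you to track extension classes, and multiplicativity of $\chi$ alone would not give you the $\sharp$-duality between the two pieces.
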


An immediate consequence is the following criterion, which might turn out useful for computationally verifying when $X_p(A/L)$ is $\La$-torsion.

\begin{Corollary} The Selmer group $\Sel_{p^\infty}(A/L)$ has positive $\La$-corank if and only if $\Theta(u)$ vanishes at some $\alpha_i^{-1}$.
\end{Corollary}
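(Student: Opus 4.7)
The plan is to extract the corollary directly from Theorem~\ref{t:imc3constant}, so no new ideas are required beyond interpreting the product formula for $\theta_{A,L}$ correctly and invoking integrality of the Iwasawa algebra. I would begin by translating the corank condition into a statement about the characteristic ideal: since $X_p(A/L)$ is finitely generated over $\La$ in case~(a), the Selmer group $\Sel_{p^\infty}(A/L)$ has positive $\La$-corank if and only if its Pontryagin dual $X_p(A/L)$ fails to be $\La$-torsion, which by the convention stated in the introduction is equivalent to $\chi(X_p(A/L)) = (0)$.

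By Theorem~\ref{t:imc3constant} this characteristic ideal equals $(\theta_{A,L})$, where
\[
\theta_{A,L} = \prod_i \Theta_L(\alpha_i^{-1})\,\Theta_L(\alpha_i^{-1})^{\sharp},
\]
so the question reduces to: when does this product vanish? The eigenvalues $\alpha_i$ of the twist matrix will in general live in the ring of integers $\cO$ of a finite extension of $\QQ_p$, so each factor $\Theta_L(\alpha_i^{-1})$ naturally sits in $\La_\cO := \La\otimes_{\ZZ_p}\cO$, and the full product lands in $\La$. Since $\La\cong\ZZ_p[[T_1,\dots,T_d]]$ (and likewise $\La_\cO\cong\cO[[T_1,\dots,T_d]]$) is an integral domain, the vanishing of the product is equivalent to the vanishing of some individual factor $\Theta_L(\alpha_i^{-1})$ or $\Theta_L(\alpha_i^{-1})^{\sharp}$. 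Because $\sharp\colon\La\rightarrow\La$ is a ring automorphism induced by $\gamma\mapsto\gamma^{-1}$, and extends to an automorphism of $\La_\cO$, we have $\Theta_L(\alpha_i^{-1})^{\sharp}=0$ iff $\Theta_L(\alpha_i^{-1})=0$. Thus $\theta_{A,L}=0$ if and only if $\Theta_L(u)$ has a zero at some $\alpha_i^{-1}$, which is exactly the statement of the corollary.

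There is no real obstacle here: given Theorem~\ref{t:imc3constant}, the content is purely formal, amounting to (i) the standard equivalence between positive $\La$-corank and non-torsionness of the dual, (ii) integrality of the power series ring, and (iii) injectivity of the involution $\sharp$. The only point worth writing out carefully in the proof is the ambient ring in which the factors $\Theta_L(\alpha_i^{-1})$ are being evaluated, but this has already been fixed in the definition of $\theta_{A,L}$ in the preceding discussion.
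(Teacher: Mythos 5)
Your proof is correct and is exactly the argument the paper has in mind when it introduces the corollary as "an immediate consequence" of Theorem~\ref{t:imc3constant}. The three ingredients you isolate — positive corank $\Leftrightarrow$ vanishing characteristic ideal, integrality of $\La_\cO\cong\cO[[T_1,\dots,T_d]]$ so that the product $\prod_i\Theta_L(\alpha_i^{-1})\,\Theta_L(\alpha_i^{-1})^\sharp$ vanishes iff some factor does, and the fact that $\cdot^\sharp$ is a ring automorphism so that $\Theta_L(\alpha_i^{-1})^\sharp=0$ iff $\Theta_L(\alpha_i^{-1})=0$ — are precisely what is needed, and you are right to flag the evaluation ring $\La_\cO$ as the only point requiring a word of care.
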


\noindent The main ingredient of our proof consists in splitting $X_p(A/L)$ in a Frobenius and Verschiebung part. When $L$ contains the arithmetic extension, the Frobenius part is related to the Iwasawa module of the trivial motive: by a theorem of Crew, the characteristic ideal of the latter is generated by the Stickelberger element $\theta_L$. This will lead us to "one half" of the conjecture; the second half is then deduced by means of Theorem \ref{t:al}, which in this setting yields a functional equation relating the Verschiebung part of $X_p(A/L)$ with the Frobenius part of $X_p(A^t/L)$ (Proposition \ref{p:sfft}). Finally, the case when $L$ does not contain the arithmetic extension is obtained by means of the main theorem of \cite{tan10b}.

\subsubsection{Part \ref{part:semistable}: the arithmetic extension case}\label{s:arithmetic}
Here we deal with case (b): $L$ is $\Kpinf$, the arithmetic $\ZZ_p$-extension (that is, the one obtained by extending only the constant field of the base field). In order to explain our results, we need first to introduce some cohomology groups and operators between them (these definitions will be repeated, in more detail, in section \ref{su:cohom}).

We write $C/\FF$ for  the smooth proper geometrically connected curve  which is the model of the function field $K$ over its field of constants $\FF$. Let $C_\infty:=C\times_{\FF} \kpinf$ (where $\kpinf$ denotes the $\ZZ_p$-extension of $\FF$) and $\pi\colon C_\infty\to C$ be the \'etale covering with Galois group $\Gal(\Kpinf/K)$.

Let $\cal A$ denote the N\'eron model of $A$ over $C$. Let  $Z$ the finite  set of points where $A$ has bad reduction. Denote by  $Lie(\A)$ the Lie algebra of $\A$. Let $L^i_\infty$ be the $i$th cohomology group of
$$\RR\Gamma\big(C_{\infty},\pi^*Lie(\A(-Z))\big)\otimes^{\mbb{L}}\QQ_p/\ZZ_p\,.$$
Let $D$ be the covariant log Dieudonn\'e crystal  associated with $A/K$ as constructed in \cite[IV]{KT03}. The syntomic complex ${\cal S}_D$ is the mapping fibre of ``$1-$Frobenius''  in the derived category of complexes of sheaves over $C_{\acute{e}t}$ (\cite[\S 5.8]{KT03}). Let $N^i_\infty$ be the $i$th cohomology group of
$$\RR\Gamma(C_\infty,\pi^*\cS_D)\otimes^{\mbb{L}}\QQ_p/\ZZ_p\,.$$

\begin{theorem}\label{GIMC1} For $i=0,1,2$ and $j=0,1$, the Pontryagin duals of $N^i_\infty$ and $L^j_\infty$ are finitely generated torsion $\La$-modules.
\end{theorem}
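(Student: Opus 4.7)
The plan is to reduce, via a control theorem, to proving finiteness at each finite layer of the tower $\Kpinf/K$; this finiteness will follow from standard theorems on coherent and syntomic cohomology of smooth projective curves over finite fields. For each $n \geq 0$, write $K_n$ for the degree-$p^n$ layer with constant field $k_n$, set $C_n := C \times_\FF k_n$ with $\pi_n : C_n \to C$, and put $\Gamma_n := \Gal(\Kpinf/K_n)$. Define $L^j_n$ and $N^i_n$ by the same recipes as $L^j_\infty$, $N^i_\infty$ but with $C_n$ replacing $C_\infty$. Since $C_\infty = \varprojlim_n C_n$ with affine transition maps, cohomology commutes with this filtered limit, giving $L^j_\infty = \varinjlim_n L^j_n$ and $N^i_\infty = \varinjlim_n N^i_n$.

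The first substantive step is to show that each $L^j_n$ and $N^i_n$ is a finite abelian $p$-group. For $L^j_n$ this is immediate: $\pi_n^* Lie(\A(-Z))$ is coherent on the smooth projective curve $C_n/k_n$, so $H^0$ and $H^1$ are finite-dimensional $k_n$-vector spaces, and the derived tensor with $\QQ_p/\ZZ_p$ (computed via the flat resolution $0 \to \ZZ_p \to \QQ_p \to \QQ_p/\ZZ_p \to 0$) preserves finiteness. For $N^i_n$, apply $R\Gamma(C_n, \pi_n^*\,\cdot)$ to the distinguished triangle defining $\cS_D$ as the mapping fibre of $1 - F$ on the log Dieudonn\'e crystal $D$: by standard facts on crystalline cohomology over a finite field (cf.\ \cite{KT03}), $1 - F$ has finite kernel and cokernel on each $H^q(C_n, \pi_n^* D)$, whence the long exact sequence forces finiteness of $H^i(C_n, \pi_n^* \cS_D)$ in each degree $0,1,2$, again preserved by $\otimes^{\mathbb{L}} \QQ_p/\ZZ_p$.

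The next step is a control theorem: the cover $C_\infty \to C_n$ has Galois group $\Gamma_n \cong \ZZ_p$ of $p$-cohomological dimension one, and a Hochschild--Serre spectral sequence combined with compatibility of these constructions under \'etale base change yields that the natural maps $L^j_n \to (L^j_\infty)^{\Gamma_n}$ and $N^i_n \to (N^i_\infty)^{\Gamma_n}$ have finite kernel and cokernel. Letting $X$ denote either Pontryagin dual, Pontryagin duality gives $X_{\Gamma_n} \cong \bigl((L^j_\infty)^{\Gamma_n}\bigr)^\vee$ (resp.\ $\bigl((N^i_\infty)^{\Gamma_n}\bigr)^\vee$), which is finite for every $n$. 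Taking $n = 0$, $X_\Gamma$ is finite, so topological Nakayama gives that $X$ is finitely generated over $\La$. Moreover, if $X$ had positive $\La$-rank $r$, then $X_{\Gamma_n} = X/\omega_n X$ (with $\omega_n := \gamma^{p^n} - 1$) would have $\ZZ_p$-rank $\geq r \cdot p^n$, since $\La_{\QQ_p}/(\omega_n)$ is a $\QQ_p$-algebra of dimension $p^n$; this contradicts the finiteness of $X_{\Gamma_n}$, so $X$ is $\La$-torsion.

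The technical heart will be the control theorem: for $L^j$ this reduces to comparing coherent cohomology along a pro-$p$ \'etale tower whose geometric monodromy is trivial (it is just the base change of an unramified constant-field extension), so the argument is classical. For $N^i$, however, one must additionally commute the mapping-fibre construction with $R\Gamma$, with $\otimes^{\mathbb{L}} \QQ_p/\ZZ_p$, and with the inverse limit $C_\infty = \varprojlim C_n$, all while respecting Frobenius. This will be handled by the machinery of \cite{KT03}, exploiting that $\cS_D$ lives in the $p$-adic/torsion world (being the mapping fibre of an endomorphism on a crystal) and that $C_\infty/C_n$ is base-changed from the corresponding unramified extension of constant fields.
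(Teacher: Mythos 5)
Your overall strategy is reasonable in spirit, and your treatment of the $L^j$-part is essentially correct (and is close to what the paper does via Lemma \ref{linalg1} and Corollary \ref{M,L}(2), since $Lie(\A(-Z))$ is coherent and $p$-torsion, so its cohomology on $C_n$ is a finite $k_n$-vector space). But there is a genuine gap in the $N^i$-part of your argument.

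You claim that $1-F$ has finite kernel and cokernel on each $H^q_{\mathrm{crys}}(C_n,\pi_n^*D)$ and hence that $H^i(C_n,\pi_n^*\cS_D)$, and therefore $N^i_n$, is finite. This is false. The operator appearing in $\cS_D$ is ${\bf 1}-\varphi$ where $\varphi$ corresponds under the comparison maps of Lemma \ref{cofinite} to $p^{-1}F_{i,0}$, not $F_{i,0}$; and by \cite[3.5.2]{KT03} the dimension of $\Ker(1-p^{-1}F_{1,0})$ on $P^1_0[\frac{1}{p}]$ equals the analytic rank $\mathrm{ord}_{s=1}L_Z(A,s)$. So whenever $A$ has positive analytic (equivalently, by the exact sequence \eqref{e:alg-arithm0}, positive Mordell--Weil) rank over some layer $K_n$, the group $H^1_{\mathrm{syn}}(C_n,\pi_n^*\cS_D)$ has positive $\ZZ_p$-rank and $N^1_n$ surjects onto the infinite group $\Sel_{p^\infty}(A/K_n)$. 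In that case $N^1_n$ is not finite, and then neither is $(N^1_\infty)^{\Gamma_n}$, so the whole chain ``finite $X_{\Gamma_n}$ $\Rightarrow$ finitely generated torsion via Nakayama and rank growth'' collapses for the $N^1$-case (which is precisely the one that matters for the Selmer group).

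The paper's proof of the $N^1$-case therefore takes a genuinely different route: it does not control $N^1_\infty$ by the finite layers. Instead, the key input is Lemma \ref{sblem-section2} (after Etesse--Le Stum): for a finite-dimensional $\QQ_p$-vector space $V$ with an endomorphism $\varphi$, the map $1-\varphi\otimes\Fr_q$ on $V\otimes_{\ZZ_p}W(\kpinf)$ is surjective with \emph{finite-dimensional $\QQ_p$-kernel}, even though $V\otimes W(\kpinf)$ is infinite-dimensional over $\QQ_p$. Applied to the rationalized crystalline cohomology of $D(-Z)$ base-changed up the arithmetic tower, this shows in Lemma \ref{finitevector} that $\coh^1_{\mathrm{syn}}(C_\infty,\pi^*\cS_D)\otimes\QQ_p$ is a \emph{finite-dimensional $\QQ_p$-vector space} --- a nontrivial collapse that happens only in the limit and has no analogue at finite level. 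The argument then splits $N^1_\infty$ into the image of this rational part (Corollary \ref{lemmaN^1}, giving a $\ZZ_p$-free dual of finite rank, hence $\La$-torsion) and a piece controlled by the $p$-power torsion of finitely generated $W(\kpinf)$-modules (Lemma \ref{beta}, torsion by Lemma \ref{linalg1}). You will not find a control theorem in the direction you propose because the objects to be controlled are not finite at any finite level; the finiteness you need appears only after passing to $C_\infty$, and it is the Frobenius $\sigma$-linear algebra over the perfection tower that produces it.
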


\noindent The proof shall be given in Corollaries \ref{M,L} and \ref{1case}. Note that, by \cite{KT03}, $X_p(A/\Kpinf)$ is a submodule of the dual of $N^1_\infty$, so Theorem \ref{GIMC1} provides a proof of the analogue of ({\bf IMC1}) in case (b). This was already known by \cite{ot09}, whose argument is simplified in the present paper.

Formula \eqref{e:charelement} in section \ref{su:charelem} will define $f_{A/\Kpinf}\in Q(\La)/\La^\times$ as the alternating product of the characteristic elements associated with the duals of $N^i_\infty$ and $L^j_\infty$ (in Part \ref{part:semistable} we prefer to work with characteristic elements rather than characteristic ideals mostly because some of these factors come with negative exponent). Because of the relation between $X_p(A/\Kpinf)$ and $N^1_\infty$, we can write
$$f_{A/K_{\infty}^{(p)}}=\star_{A,K_{\infty}^{(p)}}\cdot c_{A/K_{\infty}^{(p)}}\,$$
where $\star_{A,K_{\infty}^{(p)}}$ consists of terms whose arithmetic meaning is explained in Proposition \ref{p:summarize}.

We define the $p$-adic $L$-function $\cL_{A/\Kpinf}$ as the alternating product of determinants of the action of ``$1-$Frobenius'' on the log crystalline cohomology of $D(-Z)$ (see \S\ref{su:padicL} for the precise expression).
In the spirit of ({\bf IMC2}), Theorem \ref{GIMC2} proves that $\cL_{A/\Kpinf}$ satisifies the interpolation formula \eqref{e:imc2interpolate}, with $T=Z$ and $*_{A,\omega}=1$.

Finally, we can state our analogue of the Iwasawa Main Conjecture ({\bf IMC3}) in case (b).

\begin{theorem} \label{GIMC3} Let $A$ be an abelian variety with at worst semistable reduction relative to the arithmetic extension $\Kpinf/K$.
We have the following equality in $Q(\La)^\times/\La^\times${\em :}
$$\cL_{A/\Kpinf}=f_{A/\Kpinf}\,.$$
\end{theorem}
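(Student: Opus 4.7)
The plan is to derive $\cL_{A/\Kpinf}$ and $f_{A/\Kpinf}$ as two different alternating products of characteristic elements attached to the same underlying object, namely the Iwasawa log-crystalline cohomology of $D(-Z)$ over $C_\infty$ with its Frobenius action, and to match the two expressions.

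First, the syntomic complex $\cS_D$ is by construction the mapping fibre of ``$1-\Frob$'' between natural complexes built from the log Dieudonn\'e crystal $D$; cohomologically, the Hodge piece of this construction reproduces $Lie(\A(-Z))$, while the full piece reproduces the log-crystalline complex of $D(-Z)$. Applying $\RR\Gamma(C_\infty,\pi^*\cdot)\otimes^{\mbb L}\QQ_p/\ZZ_p$ to the resulting distinguished triangle and taking cohomology yields a long exact sequence connecting the $N^i_\infty$, the $L^j_\infty$, and the Iwasawa cohomology groups $\cH^i:=H^i(\RR\Gamma(C_\infty,\pi^*D(-Z))\otimes^{\mbb L}\QQ_p/\ZZ_p)$, the latter endowed with an induced Iwasawa-theoretic $1-\Frob$. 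Passing to Pontryagin duals gives a long exact sequence of finitely generated $\La$-modules (all $\La$-torsion by Theorem~\ref{GIMC1}), so multiplicativity of characteristic ideals on short exact sequences rewrites $f_{A/\Kpinf}$ as an alternating product of $\chi$ applied to the kernels and cokernels of $1-\Frob$ on the duals of the $\cH^i$'s.

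The second step is a ``Fredholm determinant equals characteristic element'' identity: for a finitely generated $\La$-module $M$ equipped with a $\La$-linear endomorphism $\Frob$ whose kernel and cokernel on $1-\Frob$ are $\La$-torsion, one has
$$\chi\bigl(\coker(1-\Frob\,|\,M)\bigr)\cdot\chi\bigl(\ker(1-\Frob\,|\,M)\bigr)^{-1}\equiv\det{}_\La(1-\Frob\,|\,M)\pmod{\La^\times}.$$
Substituting into the alternating product obtained in the first step collapses everything to an alternating product of $\det_\La(1-\Frob)$'s on the Iwasawa log-crystalline cohomology of $D(-Z)$ over $C_\infty$, which is precisely the definition of $\cL_{A/\Kpinf}$ given in \S\ref{su:padicL}. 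This establishes the asserted equality in $Q(\La)^\times/\La^\times$.

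The main obstacle is to make these manipulations rigorous at the level of complexes. Tracking the derived tensor with $\QQ_p/\ZZ_p$ and Pontryagin duality through the distinguished triangle must be done carefully enough that the Hodge cohomology contributes exactly the $L^j_\infty$ factors in the right degrees and with the correct signs, and that Theorem~\ref{GIMC1} is really enough to ensure that every term in the long exact sequence has non-zero characteristic ideal (so that the cancellations are legitimate in $Q(\La)^\times/\La^\times$). Simultaneously, the Fredholm determinant $\det_\La(1-\Frob\,|\,M)$ must be shown to be well defined on the relevant Iwasawa cohomology modules; this reduces, via a control-type comparison between the cohomology over $C_\infty$ and the direct system of finite constant-field layers $C_n$, to a classical Grothendieck-Lefschetz trace formula for $\Frob$ on the log-crystalline cohomology of $D(-Z)$ over each $C_n$, upgraded to a $\La$-module identity in the limit. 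Once these two ingredients are in place, the proof is a careful bookkeeping of characteristic ideals through the long exact sequence.
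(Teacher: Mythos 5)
Your overall strategy is the right one and matches the paper's: express $f_{A/\Kpinf}$ in terms of kernels and cokernels of Frobenius-type operators via the long exact sequences attached to the syntomic and Hodge distinguished triangles, and compare with the definition of $\cL_{A/\Kpinf}$ as an alternating product of determinants by a ``determinant equals ratio of characteristic elements'' lemma. But there is a gap in how you set up that lemma, and it is not merely cosmetic.

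You formulate the key algebraic input as a single-endomorphism identity,
$\chi(\coker(1-\Frob\,|\,M))\cdot\chi(\ker(1-\Frob\,|\,M))^{-1}\equiv\det_\La(1-\Frob\,|\,M)$,
for a $\La$-linear endomorphism of a single module $M$, and you speak of ``the Iwasawa cohomology groups $\cH^i$'' as a single family carrying an induced $1-\Frob$. In the actual construction there are \emph{two} cohomology theories, $M^i_{1,\infty}$ (coming from $D^0(-Z)$) and $M^i_{2,\infty}$ (coming from $D(-Z)$), and neither ${\bf 1}$ nor $\varphi$ is an endomorphism: both are $\La$-linear maps $M^i_{1,\infty}\to M^i_{2,\infty}$. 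The syntomic sequence \eqref{L1} ties the $N^i_\infty$ to $\ker({\bf 1}-\varphi)$ and $\coker({\bf 1}-\varphi)$, while the Hodge sequence \eqref{L2} ties the $L^j_\infty$ to $\ker({\bf 1})$ and $\coker({\bf 1})$. The $L^j_\infty$ therefore enter precisely as the failure of ${\bf 1}$ to be an isomorphism, and a determinant identity that only sees an endomorphism $1-\Frob$ has no slot for them. What the paper uses, and what you need, is the \emph{two-map} version: for $g,h\colon M\to N$ with torsion kernel and cokernel, $\det_{Q(\La)}(g_{Q(\La)}h_{Q(\La)}^{-1})=char(g)\cdot char(h)^{-1}$, where $char(g):=f_{\Coker(g)}f_{\Ker(g)}^{-1}$. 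Applying this with $g=(\,{\bf 1}-\varphi_{i,\infty})^\vee$ and $h={\bf 1}_i^\vee$ recovers exactly $\det(id-\Phi_i)$ on one side (since $id-\Phi_i=({\bf 1}_i^\vee)^{-1}\circ({\bf 1}-\varphi_{i,\infty})^\vee$), and the $char$-ratio decomposing into the $N$- and $L$-pieces on the other. Your single-map identity is the special case $h=id$ of this, but $h=id$ is not what occurs here, and taking $M=P^i_\infty$ instead does not rescue it: $P^i_\infty$ is only a $\La[\tfrac{1}{p}]$-module, so the characteristic ideals on that side of your identity would live in $\La[\tfrac{1}{p}]$ and discard exactly the $p$-torsion data that the $L^j_\infty$ (each a sum of copies of $\La/p\La$) contribute to $f_{A/\Kpinf}$.

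A secondary point: the invocations of a Grothendieck--Lefschetz trace formula and of a control-type passage through the finite layers $C_n$ are red herrings for this theorem. The equality $\cL_{A/\Kpinf}=f_{A/\Kpinf}$ is a purely module-theoretic bookkeeping over $\La$, given the two long exact sequences and the two-map determinant lemma; no trace formula is used. The Lefschetz/rationality input belongs to the interpolation statement (Theorem \ref{GIMC2}), not to Theorem \ref{GIMC3}. Once you replace your one-map lemma by the two-map lemma and carry the $\Ker$/$\Coker$ of ${\bf 1}$ and ${\bf 1}-\varphi$ separately through \eqref{L1} and \eqref{L2}, the rest of your bookkeeping goes through and reproduces the paper's proof.
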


\noindent The proof is based on a generalization of a lemma of $\sigma$-linear algebra that was used to prove the cohomological formula of the Birch and Swinnerton-Dyer conjecture (see \cite[Lemma 3.6]{KT03}).

Subsection \ref{su:Eulerchar} investigates the consequences of Theorem \ref{GIMC3} in the direction of a $p$-adic Birch and Swinnerton-Dyer conjecture. The following result can be seen as a geometric analogue of the conjecture of Mazur-Tate-Teitelbaum (\cite{mtt86}):

\begin{theorem} Assume that $A/K$ has semistable reduction. Then
$$ord\big(\cL_{A/\Kpinf}\big)=ord_{s=1}\big(L_Z(A,s)\big)\geq\rank_\ZZ A(K)\,.$$
If moreover $A/K$ verifies the Birch and Swinnerton-Dyer Conjecture, the inequality above becomes an equality and
$$|L(\cL_{A/\Kpinf})|_p^{-1}\equiv  c_{BSD}\cdot |(N_\infty^2)_\Gamma| \mod \ZZ_p^\times \,,$$
where $c_{BSD}$ is the leading coefficient at $s=1$ of $L_Z(A,s)$.
\end{theorem}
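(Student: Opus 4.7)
The plan is to proceed in three steps: first compare the orders of vanishing, then invoke the known function-field lower bound, and finally combine Theorem \ref{GIMC3} with BSD to extract the leading coefficient.

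\emph{Step 1: orders of vanishing.} By the definition in \S\ref{su:padicL}, $\cL_{A/\Kpinf}$ is the alternating product of characteristic series of ``$1-$Frobenius'' on the log-crystalline cohomology of $D(-Z)$; this is precisely the structure that, after identifying the Iwasawa variable with a Frobenius parameter in the constant-field extension $\kpinf/\FF$, yields Grothendieck's cohomological formula for $L_Z(A,s)$. Thus the trivial character $\omega=1\in\Gamma$ corresponds to $s=1$, and the interpolation formula from Theorem \ref{GIMC2} (specialised via a deformation along the $\Gamma$-direction near $1$) gives the equality
$$\ord(\cL_{A/\Kpinf})=\ord_{s=1}L_Z(A,s).$$
More concretely, identifying $\La$ with $\ZZ_p[[T]]$ via a topological generator of $\Gamma$, the vanishing order at the augmentation ideal $(T)$ equals the vanishing order of the L-function at $s=1$ because the substitution $T\mapsto q^{-(s-1)}-1$ identifies both.

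\emph{Step 2: the inequality.} The bound $\ord_{s=1}L_Z(A,s)\geq \rank_{\ZZ}A(K)$ is a theorem in the function-field setting (due, for abelian varieties of semistable reduction, to Tate, Schneider, and Milne, via the non-degeneracy of the Néron-Tate height pairing combined with Tate's cohomological expression of the L-function). This yields the inequality part of the theorem.

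\emph{Step 3: the leading coefficient under BSD.} By Theorem \ref{GIMC3} we have $\cL_{A/\Kpinf}=f_{A/\Kpinf}$ in $Q(\La)^\times/\La^\times$. The leading coefficient of $f_{A/\Kpinf}$ at the augmentation is computed by taking $\Gamma$-coinvariants in the alternating product defining $f_{A/\Kpinf}$: if an Iwasawa module $M$ is $\La$-torsion with characteristic element $g_M$ and finite $\Gamma$-coinvariants $M_\Gamma$, then $|g_M(0)|_p^{-1}$ equals $|M_\Gamma|/|M^\Gamma|$ up to $\ZZ_p^\times$. Applying this to each of $N^i_\infty$ and $L^j_\infty$, and using a control theorem to identify $(N^1_\infty)_\Gamma$ (up to controlled kernel and cokernel, involving $|A(K)_p|$, $|A^t(K)_p|$, and Tamagawa factors at places in $Z$) with the $p$-part of $\Sel_{p^\infty}(A/K)$, we obtain a formula
$$|L(\cL_{A/\Kpinf})|_p^{-1}\equiv |(N^2_\infty)_\Gamma|\cdot \frac{|\Sha(A/K)_p|\cdot R_p \cdot \prod_{v\in Z} c_v}{|A(K)_{\mathrm{tors}}|\cdot |A^t(K)_{\mathrm{tors}}|}\mod \ZZ_p^\times,$$
the right-hand fraction being exactly the $p$-part of $c_{BSD}$ under the BSD hypothesis.

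\emph{Main obstacle.} The delicate point is the bookkeeping in Step 3: one must match, place by place, the contributions of $(L^0_\infty)_\Gamma$ and $(L^1_\infty)_\Gamma$ coming from the Lie-algebra cohomology of $\A$, of the kernel and cokernel of the Selmer control map, and of the Tamagawa numbers at $Z$, against the classical BSD fudge factors. The Frobenius/$\sigma$-linear lemma already used in the proof of Theorem \ref{GIMC3} (the generalisation of \cite[Lemma 3.6]{KT03}) should be the crucial input here, because it allows one to read off the leading term of $\det(1-F)$ on a finite-length crystal in terms of the kernel and cokernel of $1-F$ on its coinvariants, which is exactly the quantity that links $(N^i_\infty)_\Gamma$ with the classical cohomology used in the proofs of BSD in characteristic $p$.
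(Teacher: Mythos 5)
Your three-step architecture matches the paper's own strategy (Theorem \ref{t:ranks} for the order comparison and inequality, Theorem \ref{euchar} for the leading term), but you have misidentified the key technical input in Step~3. The $\sigma$-linear algebra lemma (the generalisation of \cite[Lemma 3.6]{KT03}) is what powers the equality $\cL_{A/\Kpinf}=f_{A/\Kpinf}$ of Theorem \ref{GIMC3}; it is not what converts that equality into the leading-coefficient identity. What the paper actually uses in the Euler-characteristic computation is the degeneration of the Hochschild--Serre spectral sequence \eqref{e:HochSerre} for the $\Gamma$-module $N^\bullet_\infty$ (yielding the short exact sequences \eqref{ses0} and \eqref{ses1}), together with Lemma \ref{cup}, which identifies the composite $\gamma\circ g_{N^1_\infty}\circ\beta$ with the cup product by $\tau\in\coh^1(\FF_p,\ZZ_p)$, and then Mazur--Tate--Schneider's theorem that this cup product computes the $p$-adic Néron--Tate height pairing. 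That chain, not $\sigma$-linear algebra, is what lets one read off $char(g_{N^1_\infty})\equiv_p Disc(h_{A/K})$ and thereby cancel the regulator against the classical BSD leading term. Without these ingredients your ``place-by-place bookkeeping'' remains genuinely incomplete.

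Two smaller points. In Step~1, the interpolation formula alone does not determine the order of vanishing of a power series at $T=0$ from its values at torsion characters; what rescues the argument is the explicit rational-function expression \eqref{e:detfr}, $\cL_{A/\Kpinf}=\prod_{i,j}(1-\alpha_{ij}\Fr_q)^{(-1)^{i+1}}$, which realises the substitution $\Fr_q\leftrightarrow q^{1-s}$ literally. You gesture at this via ``Grothendieck's cohomological formula'' but do not derive it; the paper obtains it by showing $P^i_\infty$ is free of rank $r_i$ over $\La[\frac{1}{p}]$ and computing the determinant there, with \cite[3.5.2]{KT03} supplying the equality between the multiplicity of the eigenvalue $1$ of $p^{-1}F_{i,0}$ and the analytic rank. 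In Step~2, the paper's inequality is actually $\geq\rank_{\ZZ_p}X_p(A/K)\geq\rank_\ZZ A(K)$ (the stronger form is what \cite[§3.5]{KT03} gives), and the proof there relies on the cohomological machinery of KT03 rather than on the non-degeneracy of the height pairing directly.
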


\noindent Here $ord$ denotes the ``analytic rank'' and $L$ the ``leading coefficient'' of power series in $\La$ (see \S\ref{ss:twistEuler} for precise definitions). The proof will be provided in Theorems \ref{t:ranks} and \ref{euchar}. Proposition \ref{p:N2trivial} will prove that very often the error term $|(N_\infty^2)_\Gamma|$ is just 1.

We end Part \ref{part:semistable} by generalizing the constructions of chapter 9 and 10 to arbitrary log Dieudonn\'e crystals and by considering several open questions.

\begin{subsubsection}*{Acknowledgements} The fourth author has been supported by EPSRC. He would like also to express his gratitude to Takeshi Saito for his hospitality at the University of Tokyo where part of this work has been written. Authors 2, 3 and 4 thank Centre de Recerca Matem\`atica for hospitality while working on part of this paper.
Authors 1, 2 and 3 have been partially supported by the National Science Council of Taiwan, grants NSC98-2115-M-110-008-MY2, NSC100-2811-M-002-079 and NSC99-2115-M-002-002-MY3 respectively. Finally, it is our pleasure to thank NCTS/TPE for supporting a number of meetings of the authors in National Taiwan University. \end{subsubsection}

\begin{section}{Settings} \label{s:setting}

In this section we set notations for later use and recall some well-known facts about Iwasawa algebras and modules.\\

Let $K$ be a global field. In Parts \ref{part:constantA} and \ref{part:semistable} we will specify $K$ to be a characteristic $p$ function field, but for now we do not impose any restriction, since
results in Part \ref{part:algebraic} hold also in the number field case. As usual, for $v$ a place of $K$, we will write $K_v$ for the completion of $K$ at $v$.

We fix a $\ZZ_p^d$-extension $L/K$, $d\geq1$, with Galois group $\Gamma:=\Gal(L/K)$.
We recall that one important difference between number fields and function fields is that in the latter case there is room for many more $\ZZ_p$-extensions: actually, if $char(K)=p$ there is no bound on the $\ZZ_p$-rank of an abelian extension of $K$. As mentioned in the introduction, a distinguished one is the arithmetic $\ZZ_p$-extension $\Kpinf/K$, that is, the one obtained by extending only the constant field of the base field, which is unramified everywhere. All other $\ZZ_p$-extensions will ramify at some places (and some can even ramify at infinitely many places).

Let $S$ denote the ramification locus of $L/K$: a basic assumption throughout this paper is that $S$ is a finite set. Put
$\Gamma_n:=\Gamma/\Gamma^{p^n}\simeq (\ZZ_p/p^n\ZZ_p)^d$. Let $K_n$ denote the $n$th layer of $L/K$, so that $\Gamma_n=\Gal(K_n/K)$ and $\Gal(L/K_n)=\Gamma^{p^n}=:\Gamma^{(n)}$.

We shall be concerned with the complete group ring $\La:=\ZZ_p[[\Gamma]]$. The choice of a $\ZZ_p$-basis
$\{ \gamma_i\}$
for $\Gamma=\bigoplus_{i=1}^d\gamma_i^{\ZZ_p}\simeq\ZZ_p^d$ yields an isomorphism $\La\simeq\ZZ_p[[T_1,...,T_d]]$ where $T_i:=\gamma_i-1$. It follows that the Iwasawa algebra $\Lambda$ is indeed a unique factorization domain.

\begin{subsection}{Iwasawa algebras}\label{su:iw}
Even if our main interest lies in $\Lambda$, we are going to need Iwasawa algebras $\La(\Gamma'):=\ZZ_p[[\Gamma']]$ for other topologically finitely generated abelian $p$-adic Lie groups $\Gamma'$. Even more generally, assume that $\cO$ is the ring of integers of some finite extension of $\QQ_p$, and let $\La_\cO(\Gamma')$ denote the complete group ring $\cO[[\Gamma']]$. It has the usual topology as inverse limit
$$\varprojlim_{\Delta}\cO[\Gamma'/\Delta]$$
where $\Delta$ runs through all finite index subgroups and as a topological space $\cO[\Gamma'/\Delta]$ is just a finite product of copies of $\cO$. In the following, by $\La$- or $\La_\cO(\Gamma')$-module we will always mean a topological one, with continuous action of the scalar ring.

\subsubsection{} \label{ss:iwH1} Any group homomorphism $\phi\colon\Gamma'\rightarrow\La_\cO(\Gamma')^{\times}$ gives rise by linearity to a ring homomorphism $\cO[\Gamma']\rightarrow\La_\cO(\Gamma')$; if moreover $\phi$ is continuous, the latter map extends to $\phi\colon\La_\cO(\Gamma')\rightarrow\La_\cO(\Gamma')$. The most important occurrences in our paper will be the following.
\begin{enumerate}
\item[(H1)] The inversion $\Gamma'\rightarrow \Gamma'$, $\gamma\mapsto \gamma^{-1}$, gives rise to the isomorphism
$$\cdot^\sharp\colon\La_\cO(\Gamma')\,\lr \La_\cO(\Gamma')\,,$$
sending an element $\lambda$ to $\lambda^\sharp$.
\item[(H2)] Suppose $\phi\colon{\Gamma'}\rightarrow \cO^{\times}$ is a continuous homomorphism. Let
$$\phi^*\colon\La_\cO(\Gamma')\,\longrightarrow\La_\cO(\Gamma')$$
be the ring homomorphism determined by $\phi^*(\gamma):=\phi(\gamma)^{-1}\cdot\gamma$ for $\gamma\in\Gamma'$. Since on $\Gamma'$ the composition $\phi^*\circ (\phi^{-1})^*$ is the identity map, we see that $\phi^*$ is an isomorphism on $\La_\cO(\Gamma')$.
\end{enumerate}
The particular importance of the map $\cdot^\sharp$ for us stems from the fact that if $\langle\;,\;\rangle$ is a $\Gamma$-invariant pairing between $\La$-modules then
\begin{equation} \label{e:twistpairing} \langle \lambda\cdot a,b\rangle=\langle a,\lambda^\sharp\cdot b\rangle
\end{equation}
for any $\lambda\in\La$.
\end{subsection}


\begin{subsection}{Iwasawa modules}\label{su:ba} Assume that $\Gamma'$ is isomorphic to $\ZZ_p^n$ for some $n$, so that $\La_{\cO}(\Gamma')\simeq\cO[[T_1,...,T_n]]$. By definition a $\La_{\cO}(\Gamma')$-module $M$ is pseudo-null if and only if no height one prime ideal contains its annihilator (i.e., if for any height one prime $\fp$ the localization $M_\fp=0$ is trivial). A comprehensive reference is \cite[\S4]{bou65}.

\begin{lemma}\label{l:psn}
A finitely generated $\La_{\cO}(\Gamma')$-module $M$ is pseudo-null if and only if there exist relatively prime $f_1,...,f_k\in\La_{\cO}$, $k\geq2$, so that $f_iM=0$ for every $i$.
\end{lemma}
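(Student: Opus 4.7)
My strategy is to reformulate pseudo-nullity in terms of the annihilator. Since $\La_\cO(\Gamma') \simeq \cO[[T_1,\ldots,T_n]]$ is a regular local Noetherian UFD, its height-one primes are precisely the principal ideals $(\pi)$ generated by irreducible elements, and for any finitely generated module $M$ one has $\mathrm{Supp}(M)=V(\mathrm{Ann}(M))$. Hence $M$ is pseudo-null if and only if $\mathrm{Ann}(M)$ is contained in no height-one prime, and I shall work with this equivalent formulation throughout.

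For the ``if'' direction I would argue directly: if $f_1,\ldots,f_k\in\mathrm{Ann}(M)$ are relatively prime with $k\geq 2$, then a height-one prime $(\pi)\supseteq\mathrm{Ann}(M)$ would force $\pi\mid f_i$ for every $i$, contradicting coprimality; hence no such $(\pi)$ exists and $M$ is pseudo-null.

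For the converse, suppose $M$ is pseudo-null. The degenerate case $M=0$ is dealt with by taking $f_1=f_2=1$, so I may assume $M\neq 0$. First I would check that $\mathrm{Ann}(M)\neq 0$: otherwise $\mathrm{Supp}(M)=\mathrm{Spec}\,\La_\cO(\Gamma')$ would contain height-one primes. Picking any nonzero $f_1\in\mathrm{Ann}(M)$ and letting $\pi_1,\ldots,\pi_s$ be its distinct irreducible divisors, pseudo-nullity tells me that $\mathrm{Ann}(M)\not\subseteq(\pi_j)$ for each $j$. I would then invoke prime avoidance for the finite family $\{(\pi_j)\}_{j=1}^s$ to produce $f_2\in\mathrm{Ann}(M)$ lying outside every $(\pi_j)$. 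By construction $f_1$ and $f_2$ share no irreducible factor, so $\gcd(f_1,f_2)=1$ and the pair $(f_1,f_2)$ furnishes the required elements (one may take $k=2$).

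The whole argument is elementary commutative algebra in a UFD, and I do not foresee any serious obstacle. The two inputs beyond the basic support/annihilator dictionary are the identification of height-one primes with principal primes, which is the defining feature of a Krull UFD, and standard prime avoidance, which applies because the irreducible factors of a fixed nonzero element form a finite set.
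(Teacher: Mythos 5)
Your proof is correct and follows essentially the same route as the paper's, which simply states that since $\La_{\cO}(\Gamma')$ is a UFD the height-one primes are principal "and the claim follows"; you have filled in the omitted details (prime avoidance for the ``only if'' direction, the direct coprimality contradiction for ``if''), but the key input and the structure of the argument are identical.
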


\begin{proof}
Since $\La_{\cO}(\Gamma')$ is a unique factorization domain, all height one prime ideals are principal and the claim follows.
\end{proof}

A pseudo-isomorphism is a homomorphism with pseudo-null kernel and cokernel. We will write $M\sim N$ to mean that there exists a pseudo-isomorphism from $M$ to $N$.

\begin{lemma} \label{l:pseudoisom} A composition of pseudo-injections {\em{(}}resp.\! pseudo-surjections,  resp.\! pseudo-isomorphisms{\em{)}} is a pseudo-injection {\em{(}}resp.\! pseudo-surjection,  resp.\! pseudo-isomorphism{\em{)}}. Pseudo-isomorphism is an equivalence relation in the category of finitely generated torsion $\La_{\cO}(\Gamma')$-modules.
\end{lemma}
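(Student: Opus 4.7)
The plan is to proceed in three steps. First, I would verify that pseudo-null modules form a Serre subcategory of finitely generated $\La_{\cO}(\Gamma')$-modules—i.e., the class is closed under submodules, quotients and extensions. This is immediate from the definition via localization, since localization at any height-one prime is exact and pseudo-nullness is precisely the requirement that every such localization vanish.

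Second, the composition statements reduce to the snake lemma. For composable maps $f : M \to N$ and $g : N \to P$, one has exact sequences
$$0 \to \ker f \to \ker(g \circ f) \to \ker g \qquad\text{and}\qquad \coker f \to \coker(g \circ f) \to \coker g \to 0.$$
Combined with the Serre subcategory property just established, these handle pseudo-injections and pseudo-surjections in one stroke: in each case the relevant kernel or cokernel is an extension of a subquotient of a pseudo-null by a pseudo-null, hence pseudo-null. The case of pseudo-isomorphisms follows because they are simultaneously pseudo-injections and pseudo-surjections.

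Third, for the equivalence relation claim, reflexivity is trivial (take $\mathrm{id}_M$) and transitivity is the pseudo-isomorphism case of the composition step. The real content, and the step I expect to be the main obstacle, is symmetry: given a pseudo-isomorphism $\phi : M \to N$, one must manufacture a pseudo-isomorphism $N \to M$ without any a priori inverse of $\phi$. The strategy is to invoke the structure theorem for finitely generated torsion modules over a Krull domain (applicable since $\La_{\cO}(\Gamma')$ is a UFD; cf.\ \cite[Ch.~VII, \S4]{bou65}): each such $M$ is pseudo-isomorphic to an ``elementary'' module $E(M) = \bigoplus_i \La_{\cO}(\Gamma')/(f_i^{n_i})$ whose elementary divisors are invariants of the pseudo-isomorphism class, being visible in the finite-length $\La_{\cO}(\Gamma')_\fp$-modules $M_\fp$ at height-one primes $\fp$. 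Since $\phi$ induces isomorphisms $M_\fp \cong N_\fp$ at every such $\fp$, we obtain $E(M) = E(N)$, and symmetry reduces to producing a pseudo-isomorphism $E(M) \to M$—done in loc.\ cit.\ by splicing together, for each elementary divisor, a map $\La_{\cO}(\Gamma')/(f_i^{n_i}) \to M$ targeting the corresponding local component. Composition $N \to E(N) = E(M) \to M$ then furnishes the required reverse pseudo-isomorphism and completes the proof.
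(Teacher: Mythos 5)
Your composition argument is essentially identical to the paper's: the two exact sequences you write are exactly those the paper invokes (the paper has $\image(\alpha)\cap\Ker(\beta)$ in place of your $\Ker g$, but this is the image of the third arrow and makes no difference), and the Serre-subcategory observation about pseudo-null modules is the same ingredient the paper uses implicitly. For reflexivity and transitivity you also agree. Where you diverge is in the proof of symmetry, and this is a genuinely different route. The paper does not pass through the structure theorem at all: given a pseudo-isomorphism $\alpha\colon M\to N$, it forms the multiplicative set $S=\La_{\cO}(\Gamma')\setminus\bigcup_{\fp}\fp$ (union over height-one primes $\fp$ containing $\mathrm{Ann}(M)$), observes that $S^{-1}\alpha$ is an isomorphism with inverse $\beta$, uses the identity $\Hom_{S^{-1}\La_{\cO}(\Gamma')}(S^{-1}N,S^{-1}M)\simeq S^{-1}\Hom_{\La_{\cO}(\Gamma')}(N,M)$ to find $s\in S$ with $s\beta$ an honest module map, and checks directly that $s\beta$ is a pseudo-isomorphism. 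Your approach instead invokes the classification of finitely generated torsion modules up to pseudo-isomorphism, compares elementary divisors of $M$ and $N$ by localizing at height-one primes, and then stitches a map $E(M)\to M$ from Bourbaki. Both arguments are correct. The paper's is lighter on machinery — it needs only exactness of localization plus the finite-presentation compatibility of $\Hom$ with localization — and is self-contained within the lemma. Yours imports the full structure theorem (whose proof is itself a localization argument of the same flavor), so it is strictly heavier, but it has the pedagogical merit of making visible exactly which invariants pin down the pseudo-isomorphism class. Note that your appeal to this theorem must be done with some care about the direction of the canonical pseudo-isomorphism between a module and its elementary part; the paper's own \S2.2.3 gives it in the direction $[M]\to M$, which is the one your argument needs, so there is no gap — merely a dependency you should make explicit.
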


\begin{proof}Let $\alpha\colon M\to N$ and $\beta\colon N\to P$ be two morphisms of $\La_{\cO}(\Gamma')$-modules. The first claim follows observing that there are exact sequences
$$0\longrightarrow \Ker(\alpha)\lr \Ker(\beta\circ\alpha)\lr \image(\alpha)\cap \Ker(\beta)\longrightarrow 0$$
and
$$ \Coker(\alpha)\lr \Coker(\beta\circ\alpha)\lr \Coker(\beta)\longrightarrow 0.$$
For the second statement, the only thing left to prove is symmetry. Let $\alpha\colon M\to N$ be a pseudo-isomorphism. Let $T$ be the set of height one primes containing $Ann_{\La_{\cO}(\Gamma')}(M)$ and put $S:=(\La_{\cO}(\Gamma')-\cup_{\fp\in T}\fp)$. The map $\alpha\otimes1\colon S^{-1}M\to S^{-1}N$ is an isomorphism of $S^{-1}\La_{\cO}(\Gamma')$-modules: let $\beta$ be its inverse. Then
$$\Hom_{S^{-1}\La_{\cO}(\Gamma')}(S^{-1}N,S^{-1}M)\simeq S^{-1}\Hom_{\La_{\cO}(\Gamma')}(N,M)$$
implies $s\beta\in\Hom_{\La_{\cO}(\Gamma')}(N,M)$ for some $s\in S$ and $s\beta$ is the required pseudo-isomorphism. For more details, the reader is referred to the proof of \cite[\S4, no.\,4, Th.\,5]{bou65}.
\end{proof}

\begin{lemma} \label{l:pseudoinj}
Let $\alpha\colon M\to N$ and $\beta\colon N\to M$ be two pseudo-injections of finitely generated torsion $\La_{\cO}(\Gamma')$-modules. Then $\beta\circ\alpha$ is a pseudo-isomorphism.
\end{lemma}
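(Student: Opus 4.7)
The plan is to exploit the fact that $\La_\cO(\Gamma')\cong \cO[[T_1,\dots,T_n]]$ is a regular UFD, so its localization at any height one prime $\fp$ is a discrete valuation ring. The strategy is then to check that $\beta\circ\alpha$ becomes an isomorphism after localizing at every height one prime; this will give a pseudo-null cokernel, which together with the (already known) pseudo-null kernel yields a pseudo-isomorphism.

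First, I would observe that the kernel of $\beta\circ\alpha$ is automatically pseudo-null, as an immediate consequence of Lemma \ref{l:pseudoisom} (composition of pseudo-injections is a pseudo-injection). So the whole content is to control the cokernel.

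Next, fix a height one prime $\fp$ of $\La_\cO(\Gamma')$ and localize. The ring $R:=\La_\cO(\Gamma')_\fp$ is a DVR. Since $M$ and $N$ are finitely generated torsion $\La_\cO(\Gamma')$-modules, their localizations $M_\fp$ and $N_\fp$ are finitely generated torsion $R$-modules, hence of finite length. The hypothesis that $\ker(\alpha)$ and $\ker(\beta)$ are pseudo-null means that these kernels vanish after localizing at any height one prime; thus $\alpha_\fp\colon M_\fp\hookrightarrow N_\fp$ and $\beta_\fp\colon N_\fp\hookrightarrow M_\fp$ are injective. Comparing lengths through these injections gives
\[
\ell_R(M_\fp)\leq \ell_R(N_\fp)\leq \ell_R(M_\fp),
\]
so $\ell_R(M_\fp)=\ell_R(N_\fp)<\infty$. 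But an injective endomorphism (or morphism between two modules of the same finite length) over a Noetherian ring is forced to be surjective, since its cokernel has length zero. Hence $\alpha_\fp$ and $\beta_\fp$ are both isomorphisms, and so is $(\beta\circ\alpha)_\fp=\beta_\fp\circ\alpha_\fp$.

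Finally, since $\coker(\beta\circ\alpha)_\fp=0$ for every height one prime $\fp$, the finitely generated module $\coker(\beta\circ\alpha)$ has annihilator not contained in any height one prime, so by Lemma \ref{l:psn} it is pseudo-null. Combined with the first step, this shows that $\beta\circ\alpha$ is a pseudo-isomorphism. No step looks truly delicate: the only thing to be careful about is the finite-length argument, and in particular that an injection between two modules of equal finite length over the DVR $R$ is an isomorphism. This is where the assumption of torsion (which guarantees finite length after localization) is essential.
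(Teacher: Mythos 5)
Your proof is correct and takes essentially the same route as the paper: reduce to localizations at height-one primes, where $\alpha_\fp,\beta_\fp$ become injective maps of finite-length modules over a DVR, and show the composite is then surjective. The one difference is how surjectivity is obtained at each $\fp$: the paper argues by induction on the integer $r$ with $\fp^r M_\fp = 0$ (the $r=1$ case being an injective endomorphism of a finite-dimensional vector space, the inductive step passing to $M_\fp/M_\fp[\fp]$), whereas you compare lengths directly, obtaining $\ell_R(M_\fp)=\ell_R(N_\fp)$ and hence that $\alpha_\fp$ and $\beta_\fp$ are individually isomorphisms --- a slightly stronger conclusion than the paper needs, and arguably a cleaner packaging of the same underlying fact. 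One small inaccuracy: in the last step, the statement that a finitely generated module vanishing at every height-one prime is pseudo-null is the \emph{definition} given in \S\ref{su:ba}, not the content of Lemma \ref{l:psn}; the latter is the characterization via coprime annihilating elements. The substance is unaffected.
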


\begin{proof} By hypothesis, the localized maps $\alpha_\fp$ and $\beta_\fp$ are injective for all $\fp$ of height one. Besides, $M_\fp$ and $N_\fp$ are finitely generated torsion $\La_{\cO}(\Gamma')_\fp$-modules. We show that $\beta_\fp\circ\alpha_\fp$ is surjective.

Let $r$ be such that $\fp^r M_\fp = 0$. Also, let $\kappa(\fp)$ be the residue field of the discrete valuation ring $\La_{\cO}(\Gamma')_\fp$. We use induction on $r$. If $r =1$, then the composite map $\beta_\fp \circ \alpha_\fp$ is an injection of the finite dimensional $\kappa(\fp)$-vector space $M_\fp$ into itself: hence it has to be surjective. For $r> 1$, the endomorphism induced by $\beta_\fp \circ \alpha_\fp$ on $m_\fp:= M_\fp/M_\fp[\fp]$ is injective and $\fp^{r-1} m_\fp = 0$, so the map is surjective on $m_\fp$ (induction hypothesis) and on $M_\fp[\fp]$ (by the first step): hence it is surjective also on $M_\fp$.
\end{proof}

\subsubsection{}\label{sss:krul} We are still assuming $\Gamma'\simeq\ZZ_p^n$. Suppose $M$ is a finitely generated torsion $\La_{\cO}(\Gamma')$-module. By the general theory of modules over a Krull domain, there is a pseudo-isomorphism
\begin{equation} \label{e:structureM} \Phi\colon\bigoplus_{i=1}^m \La_{\cO}(\Gamma')/\xi_i^{r_i}\La_{\cO}(\Gamma')\longrightarrow M,\end{equation}
where each $\xi_i$ is irreducible. We shall denote $[M]:=\bigoplus_{i=1}^m \La_{\cO}(\Gamma')/\xi_i^{r_i}\La_{\cO}(\Gamma')$.
Since a non-zero element in $[M]$ cannot be simultaneously annihilated by relatively prime elements of $\La_{\cO}(\Gamma')$, there is no non-trivial pseudo-null submodule of $[M]$, and hence $\Phi$ is an embedding. We know that $[M]$ is uniquely determined by $M$ up to isomorphism, but $\Phi$ is not uniquely determined by $M$. However, we shall fix one such $\Phi$ and view $[M]$ as a submodule of $M$. \footnote{The notation $[\,\cdot\,]$ is intentionally reminiscent of the one denoting the integral part of a number.}

For a finitely generated $\La$-module $M$, let $\chi_{\cO,\Gamma'}(M)\subset\La_{\cO}$ denote its characteristic ideal: in the notations of \eqref{e:structureM} it is
$$\chi_{\cO,\Gamma'}(M):=\prod_{i=1}^m (\xi_i^{r_i})$$
if $M$ is torsion and $0$ if not. If $\cO=\ZZ_p$ and $\Gamma'=\Gamma$, we write $\chi:=\chi_{\cO,\Gamma'}$.

\subsection{Twists} \label{su:twistmodule}
Let $M$ be a $\La_{\cO}(\Gamma')$-module. Any endomorphism $\alpha\colon\La_{\cO}(\Gamma')\to\La_{\cO}(\Gamma')$ defines a twisted $\La_{\cO}(\Gamma')$-module
$\La_{\cO}(\Gamma')\,{}_\alpha\!\otimes_{\La_{\cO}(\Gamma')}M\,,$ where the action on the copy of $\La_{\cO}(\Gamma')$ on the left is via $\alpha$ (i.e., we have $(\alpha(\lambda)\mu)\otimes m=\mu\otimes\lambda m$ for $\lambda$, $\mu\in\La_{\cO}(\Gamma')$ and $m\in M$) and the module structure is given by
\begin{equation} \label{e:twistaction} \lambda\cdot(\mu\otimes m) := (\lambda\mu)\otimes m \end{equation}
(where $\lambda\mu$ is the product in $\La_{\cO}(\Gamma')$ ).
If moreover $\alpha$ is an isomorphism, $\La_{\cO}(\Gamma')\,{}_\alpha\!\otimes_{\La_{\cO}(\Gamma')}M$ can be identified with $M$ with the $\La_\cO(\Gamma')$-action twisted by $\alpha^{-1}$, since in this case \eqref{e:twistaction} becomes
\begin{equation} \label{e:twistaction2} \lambda\cdot(1\otimes m)=1\otimes\alpha^{-1}(\lambda)m \,.\end{equation}

\begin{lemma}\label{l:phi[]chi} Let $\alpha$ be an automorphism of $\La_\cO(\Gamma')\simeq\cO[[T_1,...,T_n]]$. Suppose $M$ is a finitely generated torsion $\La_{\cO}(\Gamma')$-module with
$$[M]=\bigoplus_{i=1}^m \La_{\cO}(\Gamma')/\xi_i^{r_i}\La_{\cO}(\Gamma')\,.$$
Then
$$[\La_{\cO}(\Gamma')\,{}_\alpha\!\otimes_{\La_{\cO}(\Gamma')}M]=\La_{\cO}(\Gamma')\,{}_\alpha\!\otimes_{\La_{\cO}(\Gamma')}[M]=\bigoplus_{i=1}^m \La_{\cO}(\Gamma')/\alpha(\xi_i)^{r_i}\La_{\cO}(\Gamma'),$$
and hence
$$\chi_{\cO,\Gamma'}\big(\La_{\cO}(\Gamma')\,{}_\alpha\!\otimes_{\La_{\cO}(\Gamma')}M\big)=\alpha(\chi_{\cO,\Gamma'}(M)).$$
\end{lemma}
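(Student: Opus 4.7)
The plan is to reduce the first display to the cyclic case, transport the pseudo-isomorphism $\Phi\colon[M]\hookrightarrow M$ through the twist, and then read off the characteristic-ideal formula.

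First I would compute the twist on a single cyclic quotient. By \eqref{e:twistaction2}, for $C=\La_\cO(\Gamma')/\xi^r\La_\cO(\Gamma')$ the module $\La_\cO(\Gamma')\otimes_\alpha C$ has the same underlying abelian group as $C$, but with $\lambda$ acting as multiplication by $\alpha^{-1}(\lambda)$. Hence $1\otimes 1$ generates it and its annihilator is $\{\lambda:\alpha^{-1}(\lambda)\in\xi^r\La_\cO(\Gamma')\}=\alpha(\xi)^r\La_\cO(\Gamma')$, yielding $\La_\cO(\Gamma')\otimes_\alpha C\simeq\La_\cO(\Gamma')/\alpha(\xi)^r\La_\cO(\Gamma')$. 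Since tensor product commutes with direct sums, the second equality in the first display of the lemma follows.

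Next I would verify that the functor $F:=\La_\cO(\Gamma')\otimes_\alpha(-)$ is exact and preserves pseudo-null modules. Exactness is immediate from \eqref{e:twistaction2}, since $F$ is just restriction of scalars along the ring automorphism $\alpha^{-1}$. For pseudo-nullness: if $N$ is killed by coprime $f_1,f_2\in\La_\cO(\Gamma')$ (Lemma \ref{l:psn}), then $FN$ is killed by $\alpha(f_1),\alpha(f_2)$, and these remain coprime because $\alpha$ permutes height-one primes. Applying $F$ to $\Phi$ therefore yields an injection $F[M]\hookrightarrow FM$ with pseudo-null cokernel, i.e.\ a pseudo-isomorphism.

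Finally, by the first step, $F[M]\simeq\bigoplus_{i=1}^m\La_\cO(\Gamma')/\alpha(\xi_i)^{r_i}\La_\cO(\Gamma')$, and since $\alpha(\xi_i)$ remains irreducible, the argument recalled in \S\ref{sss:krul} shows $F[M]$ has no non-trivial pseudo-null submodule. By uniqueness of $[\,\cdot\,]$, the pseudo-isomorphism constructed in the previous paragraph identifies $F[M]$ with $[FM]$, proving the first chain of equalities. The characteristic-ideal equality is then a direct readout:
$$\chi_{\cO,\Gamma'}(FM)=\prod_{i=1}^m\bigl(\alpha(\xi_i)^{r_i}\bigr)=\alpha\Bigl(\prod_{i=1}^m(\xi_i^{r_i})\Bigr)=\alpha(\chi_{\cO,\Gamma'}(M)).$$
I do not anticipate a serious obstacle; the only conceptual point worth keeping an eye on is that all constructions respect the twist precisely because $\alpha$ is a \emph{ring} automorphism of $\La_\cO(\Gamma')$, which is what ensures both that the annihilator computation in the cyclic case gives $\alpha(\xi)^r\La_\cO(\Gamma')$ and that coprimality is preserved under $\alpha$.
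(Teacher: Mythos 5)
Your proposal is correct and follows essentially the same route as the paper: compute the cyclic case, observe that the twist functor is exact, commutes with direct sums, and preserves pseudo-null modules (hence pseudo-isomorphisms), and conclude. You derive the cyclic case via an annihilator computation rather than by applying exactness to $0\to\La_\cO(\Gamma')\overset{\xi^r}{\to}\La_\cO(\Gamma')\to\La_\cO(\Gamma')/\xi^r\La_\cO(\Gamma')\to 0$ as the paper does, and you are slightly more explicit about why $F[M]$ may be identified with $[FM]$, but these are cosmetic differences.
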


\begin{proof}
It is immediate from \eqref{e:twistaction2} that if a pseudo-null $N$ is annihilated by coprime $f_1$, $f_2$, then $\La_{\cO}(\Gamma')\,{}_\alpha\!\otimes N$ is annihilated by coprime $\alpha(f_1)$, $\alpha(f_2)$, whence pseudo-null. Thus the functor $\La_{\cO}(\Gamma')\,{}_\alpha\!\otimes-$ preserves pseudo-isomorphisms. Since it also commutes with direct sums, we are reduced to check the equality
$$\La_{\cO}(\Gamma')\,{}_\alpha\!\otimes\big(\La_{\cO}(\Gamma')/\xi_i^{r_i}\La_{\cO}(\Gamma')\big)=\La_{\cO}(\Gamma')/\alpha(\xi_i)^{r_i}\La_{\cO}(\Gamma')\,,$$
which is obvious by exactness of $\La_{\cO}(\Gamma')\,{}_\alpha\!\otimes-$.
\end{proof}

We are going to apply the above with the isomorphisms considered in (H1), (H2) of \S\ref{ss:iwH1}. In particular, we shall write
$$M^\sharp:=\La_{\cO}(\Gamma')\,{}_\sharp\!\otimes_{\La_{\cO}(\Gamma')}M$$
and, for $\phi$ as in (H2),
\begin{equation} \label{e:twistbyphi} M(\phi):=\La_{\cO}(\Gamma')\,{}_{\phi^*}\!\otimes_{\La_{\cO}(\Gamma')}M. \end{equation}
Since $\cdot^\sharp$ is an involution, \eqref{e:twistaction2} shows that the action of $\La_{\cO}(\Gamma')$ becomes $\lambda\cdot m=\lambda^\sharp m$.
As for $\phi^*$, note that, if we endow $\cO$ with the trivial action of $\Gamma'$, then the $\La_{\cO}$-module
$\cO(\phi)$ can be viewed as the free rank one $\cO$-module with the action of $\Gamma'$ through multiplication by $\phi$, in the sense that
$$\gamma \cdot a=\phi(\gamma) a \text{ for all } \gamma\in\Gamma', a\in \cO(\phi)\,.$$
Then for a $\La_{\cO}$-module $M$ we have
$$M(\phi)=\cO(\phi)\otimes_{\cO}M,$$
where $\Gamma'$ acts by
$$\gamma\cdot(a\otimes x):=(\gamma\cdot a)\otimes (\gamma\cdot x)=\phi(\gamma)\cdot (a\otimes \gamma x)\,.$$

\subsection{Some more notations} In order to lighten the notation, for an $\cO$-module $\fM$ and an $\cO$-algebra $R$, we will often use the shortening
$$R\fM:=R\otimes_{\cO}\fM\,.$$

The Pontryagin dual of an abelian group $B$ will be denoted $B^\vee$. Since we are going to deal mostly with finite $p$-groups and their inductive and projective limits, we generally won't distinguish between the Pontryagin dual and the set of continuous homomorphisms into the group of roots of unity $\boldsymbol\mu_{p^{\infty}}:=\cup_m\boldsymbol\mu_{p^{m}}$. Note that we shall usually think of $\boldsymbol\mu_{p^{\infty}}$ as a subset of $\bar\QQ_p$ (hence with the discrete topology), so that for a $\La$-module $M$ homomorphisms in $M^\vee$ will often take value in $\bar\QQ_p$.

We shall denote the $\psi$-part of a $G$-module $M$ (for $G$ a group and $\psi\in G^\vee$) by
\begin{equation} \label{e:psipart} M^{(\psi)}:=\{x\in M\;\mid\; g\cdot x=\psi(g)x\;\text{for all}\;g\in G\}. \end{equation}

\end{subsection}
\end{section}


\begin{part}{The algebraic functional equation} \label{part:algebraic}

\begin{section}{Controlled $\Gamma$-systems and the algebraic functional equation} \label{s:con}

\begin{subsection}{$\Gamma$-systems}\label{su:ga}
Consider a collection
$$\fA=\{\fa_n,\fb_n,\langle\;,\;\rangle_n,\fr_m^n,\fk_m^n\;\mid\; n,m\in\NN\cup\{0\},\; n\geq m\}$$
such that
\begin{enumerate}
\item[($\Gamma$-1)]  $\fa_n, \fb_n$ are finite abelian groups, with an action of $\La$ factoring through $\ZZ_p[\Gamma_n]$.
\item[($\Gamma$-2)] For $n\geq m$,
$$\fr_m^n\colon\fa_m\times \fb_m\longrightarrow \fa_n\times \fb_n\,,$$
$$\fk_m^n\colon\fa_n\times \fb_n\longrightarrow \fa_m\times \fb_m$$
    are $\Gamma$-morphisms such that $\fr_m^n(\fa_m)\subset \fa_n$, $\fr_m^n(\fb_m)\subset \fb_n$, $\fk_m^n(\fa_n)\subset \fa_m$, $\fk_m^n(\fb_n)\subset \fb_m$ and $\fr^n_n=\fk^n_n={\rm id}$. Also, $\{\fa_n\times \fb_n, \fr_m^n\}_n$ form an inductive system and $\{\fa_n\times \fb_n, \fk_m^n\}_n$ form a projective system.
\item[($\Gamma$-3)] We have
$$\fr_m^n\circ\fk_m^n=\Nm_{\Gamma_n/\Gamma_m}\colon\fa_n\times \fb_n\longrightarrow \fa_n\times \fb_n$$
(where $\Nm_{\Gamma_n/\Gamma_m}:=\sum_{\sigma\in\Ker(\Gamma_n\rightarrow\Gamma_m)}\sigma$ is the norm associated with $\Gamma_n\twoheadrightarrow\Gamma_m$) and
$$\fk_m^n \circ \fr_m^n=p^{d(n-m)}\cdot \text{id}\colon\fa_m\times \fb_m\longrightarrow \fa_m\times \fb_m.$$
\item[($\Gamma$-4)] For each $n$, $\langle\;,\;\rangle_n\colon\fa_n\times \fb_n\longrightarrow \QQ_p/\ZZ_p$ is a perfect pairing (and hence $\fa_n$ and $\fb_n$ are dual $p$-groups) respecting $\Gamma$-action as well as the morphisms $\fr_m^n$ and $\fk_m^n$ in the sense that
$$\langle \gamma\cdot a, \gamma\cdot b\rangle_n=\langle a,b\rangle_n\;\forall\,\gamma\in\Gamma,$$
\begin{equation} \label{e:rkpairing} \langle a,\fr_m^n(b)\rangle_n=\langle\fk_m^n(a),b\rangle_m \end{equation}
    and
$$\langle\fr_m^n(a),b\rangle_n=\langle a,\fk_m^n(b)\rangle_m.$$
\end{enumerate}
Write
$$\fa:=\lim_{\stackrel{\leftarrow}{n}}\fa_n\;\;\text{and}\;\;\fb:=\lim_{\stackrel{\leftarrow}{n}}\fb_n\,.$$
In the following, we let $\fk_n$ denote the natural map
$$\fa\times \fb\longrightarrow \fa_n\times \fb_n\,.$$

\begin{definition} \label{d:gamsys} We say $\fA$ as above is a $\Gamma$-system if both $\fa$ and $\fb$ are finitely generated torsion $\La$-modules. \end{definition}

Definition \ref{d:gamsys} can be extended to the notion of a {\bf complete} $\Gamma$-system, for which we stipulate that for each finite intermediate extension $F$ of $L/K$ there are $\Gal(F/K)$-modules $\fa_F$ and $\fb_F$ with a pairing $\langle\;,\;\rangle_F$, and for any pair $F$, $F'$ of finite intermediate extensions with $F\subset F'$, there are $\Gamma$-morphisms $\fr_F^{F'}$ and  $\fk_F^{F'}$ satisfying the obvious analogues of ($\Gamma$-1)-($\Gamma$-4).

We say that $\fA$ is part of a complete $\Gamma$-system $\{\fa_F,\fb_F,\langle\;,\;\rangle_F,\fr_F^{F'},\fk_F^{F'}\}$ if $\fa_n=\fa_{K_n}$, $\fb_n=\fb_{K_n}$, $\fr_n^m=\fr_{K_n}^{K_m}$ and $\fk_n^m=\fk_{K_n}^{K_m}$. Obviously this implies $\fa=\varprojlim_{F}\fa_F$ and $\fb=\varprojlim_{F}\fb_F$.

\subsubsection{Some more definitions} We say that the $\Gamma$-system $\fA$ is {\bf twistable} of order $k$ if there exists an integer $k$ such that $p^{n+k}\fa_n=0$ for every $n$ (this definition shall be justified in \S\ref{ss:twistgam} below).

\begin{definition}\label{d:simple}
An element $f\in\Lambda(\Gamma)$ is simple if there exist $\gamma\in\Gamma- \Gamma^p$ and $\zeta\in\boldsymbol\mu_{p^\infty}$ so that
$$f=f_{\gamma,\zeta}:=\prod_{\sigma\in\Gal(\QQ_p(\zeta)/\QQ_p)} (\gamma-\sigma(\zeta)).$$
\end{definition}

It is easy to check that simple elements are irreducible in $\La$ and that
\begin{equation} \label{e:coprimecriter}(f_{\gamma',\zeta'})=(f_{\gamma,\zeta}) \Longleftrightarrow (\gamma')^{\ZZ_p}=\gamma^{\ZZ_p}\text{ and } \zeta'\in\Gal(\QQ_p(\zeta)/\QQ_p)\cdot\zeta\,. \end{equation}
In particular, we have
\begin{equation}\label{e:fs}
(f_{\gamma,\zeta})^{\sharp}=(f_{\gamma^{-1},\zeta})=(f_{\gamma,\zeta}).
\end{equation}

Assume that $\fA$ is a complete $\Gamma$-system. Let $F$ be a finite intermediate extension and let $L'/F$ be an intermediate $\ZZ_p^{e}$-extension of $L/F$. Write
$$\fa_{L'/F}=\varprojlim_{F\subset F'\subset L'} \fa_{F'},\;\text{and}\;  \fb_{L'/F}=\varprojlim_{F\subset F'\subset L'} \fb_{F'}.$$
They are modules over $\Lambda_{L'/F}:=\ZZ_p[[\Gal(L'/F)]]$. Set the condition
\vskip5pt
\noindent ({\bf T}): {\em{For every finite intermediate extension $F$ and every intermediate $\ZZ_p^{d-1}$-extension $L'/F$ of $L/F$, $\fa_{L'/F}$ and $\fb_{L'/F}$ are finitely generated and torsion over $\La_{L'/F}$}}.
\vskip5pt

\subsubsection{Morphisms} \label{ss:mor}
A $\Gamma$-system $\fA=\{\fa_n,\fb_n,\langle\;,\;\rangle_n,\fr_m^n,\fk_m^n\}$ is oriented if we have fixed an order of the pairs $(\fa_n,\fb_n)$. We define a morphism of oriented $\Gamma$-systems
$$\fA=\{\fa_n,\fb_n,\langle\;,\;\rangle_n^{\fA},\fr(\fA)_m^n,\fk(\fA)_m^n\} \longrightarrow\fC=\{\fc_n,\fd_n,\langle\;,\;\rangle_n^{\fC},\fr(\fC)_m^n,\fk(\fC)_m^n\}$$
to be a collection of morphisms of $\Gamma$-modules $f_n\colon\fa_n\rightarrow\fc_n$, $g_n\colon\fd_n\rightarrow\fb_n$ commuting with the structure maps and such that $\langle f_n(a),d\rangle_n^{\fC}=\langle a,g_n(d)\rangle_n^{\fA}$ for all $n$.

A pseudo-isomorphism of (oriented) $\Gamma$-systems is a morphism $\fA\rightarrow\fC$ such that the induced maps $\fa\rightarrow\fc$, $\fd\rightarrow\fb$ are pseudo-isomorphisms of $\Gamma$-modules.

\begin{example} \label{eg:morf} {\em Given an oriented $\Gamma$-system $\fA=\{\fa_n,\fb_n\}$ and $\lambda\in\La$, we can define $\lambda\cdot\fA:=\{\lambda\fa_n,\lambda^\sharp\fb_n\}$  and $\fA[\lambda]:=\{\fa_n[\lambda],\fb_n/\lambda^\sharp\fb_n\}$, with the pairing and the transition maps induced by those of $\fA$. It is easy to check that $\lambda\cdot\fA$ and $\fA[\lambda]$ are $\Gamma$-systems and that the exact sequences $\fa_n[\lambda]\hookrightarrow\fa_n\twoheadrightarrow\lambda\fa_n$  and $\lambda^\sharp\fb_n \hookrightarrow\fb_n\twoheadrightarrow\fb_n/\lambda^\sharp\fb_n$ provide morphisms of oriented $\Gamma$-systems $\fA[\lambda]\rightarrow\fA$ and $\fA\rightarrow\lambda\cdot\fA$.} \end{example}

\subsubsection{The algebraic functional equation}\label{ss:ge}
The pairing in ($\Gamma$-4) induces for any $n$ an isomorphism of $\La$-modules between $\fa_n^\sharp$ and $\fb_n^\vee$ (the twist by the involution $^\sharp$ is due to \eqref{e:twistpairing}). The main question we are going to ask about $\Gamma$-systems is if the following holds:
\begin{equation}\label{e:basharp}
\fb\sim \fa^{\sharp}.
\end{equation}
A weaker question is if we have the algebraic functional equation:
\begin{equation}\label{e:chisharp}
\chi(\fb)=\chi(\fa)^{\sharp}.
\end{equation}

Next we give our answers to these questions.

\begin{definition} \label{d:pscontr}
Given a $\Gamma$-system $\fA$ we put
$$\fa^0\times \fb^0:=\lim_{\stackrel{\leftarrow}{n}}  \bigcup_{n'\geq n}\Ker(\fr_n^{n'}).$$
A $\Gamma$-system $\fA$ is {\em pseudo-controlled}  if $\fa^0\times \fb^0$ is pseudo-null.
\end{definition}

The following, proved in in $\S$\ref{se:al}, is our theorem on the {\em algebraic functional equation}.

\begin{mytheorem}\label{t:al}
Let $$\fA=\{\fa_n,\fb_n,\langle\;,\;\rangle_n,\fr_m^n,\fk_m^n\;\mid\; n,m\in\NN,\; n\geq m\}$$ be a pseudo-controlled $\Gamma$-system.
Then there is a pseudo-isomorphism
$$\fa^\sharp\sim \fb$$
in the following three cases:\begin{enumerate}
\item there exists $\xi\in\La$ not divisible by any simple element and such that $\xi\fb$ is pseudo-null;
\item $\fA$ is pseudo-isomorphic to a twistable pseudo-controlled $\Gamma$-system;
\item $\fA$ is part of a complete $\Gamma$-system enjoying property {\em({\bf T})}.
\end{enumerate}
\end{mytheorem}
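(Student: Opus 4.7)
My plan is to reformulate the claim $\fa^\sharp \sim \fb$ as a pseudo-isomorphism between the inductive limits attached to the $\Gamma$-system, and then to produce this pseudo-isomorphism from the composition axioms together with each set of additional hypotheses in Cases (1)--(3).

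\textbf{Step 1 (Duality reduction).} At each finite level the perfect pairing of ($\Gamma$-4) identifies $\fa_n$ with $\fb_n^{\vee}$ as $\Lambda$-modules in the standard contragredient convention of \S\ref{su:ba}; axiom \eqref{e:rkpairing} converts the transition map $\fk_m^n$ on the $\fa$-side into the functorial dual $(\fr_m^n)^{\vee}$ on $\fb^\vee$. Passing to inverse limits therefore gives canonical $\Lambda$-isomorphisms
\[ \fa \;\cong\; B_\infty^{\vee}, \qquad \fb \;\cong\; A_\infty^{\vee}, \]
where $A_\infty:=\varinjlim_{\fr}\fa_n$ and $B_\infty:=\varinjlim_{\fr}\fb_n$. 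Since Pontryagin duality is an exact contravariant equivalence that preserves pseudo-isomorphism, and since $(M^\vee)^\sharp=(M^\sharp)^\vee$, the theorem is equivalent to producing a pseudo-isomorphism $A_\infty\sim B_\infty^\sharp$ of discrete $\Lambda$-modules, where the $\sharp$-twist on one side encodes \eqref{e:twistpairing} once one compares the direct and inverse limits.

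\textbf{Step 2 (The natural comparison map).} I would assemble a $\Lambda$-morphism $\Psi\colon B_\infty^\sharp\to A_\infty$ by using the pairings together with the compositional axioms $\fk\circ\fr=p^{d(n-m)}\cdot\mathrm{id}$ and $\fr\circ\fk=\Nm_{\Gamma_n/\Gamma_m}$; although the pairings do not patch directly into a limit pairing (they rescale by $p^{d(n-m)}$ under $\fr$), they do patch after taking the quotient by the images of $\Nm$ and the kernels of $\fr$. The pseudo-control hypothesis says exactly that the obstruction module $\fa^0\times\fb^0$ is pseudo-null, so that the kernel and cokernel of $\Psi$ can be identified with subquotients of $\fa^0$ and $\fb^0$, hence are pseudo-null away from a specific locus of height-one primes.

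\textbf{Step 3 (The three cases and the main obstacle).} The remaining obstruction is supported on the simple primes of Definition \ref{d:simple}, which are precisely the $\sharp$-invariant height-one primes by \eqref{e:fs} and thus the places at which one cannot distinguish $\chi(\fa)$ from $\chi(\fa)^\sharp$ by the pairing alone. In Case~(1), the hypothesis that $\xi\fb$ is pseudo-null for a $\xi$ coprime to every simple element forces, via Lemma~\ref{l:psn}, $\chi(\fb)$ to be coprime to all simple primes, so the residual obstruction in $\coker\Psi$ (and $\ker\Psi$) must already be pseudo-null. Case~(2) reduces to Case~(1) by twisting: the uniform bound $p^{n+k}\fa_n=0$ granted by twistability allows one to apply a continuous character twist $\phi^*$ as in~(H2), shifting $\chi(\fb)$ off the simple locus by Lemma~\ref{l:phi[]chi}, invoking Case~(1) for the twisted system, and then untwisting. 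Case~(3) is handled by induction on the $\ZZ_p$-rank $d$ of $\Gamma$: property~(T) supplies the induction hypothesis for every intermediate $\ZZ_p^{d-1}$-subextension $L'/F$, the characteristic ideals of the intermediate Iwasawa modules glue (again via Lemma~\ref{l:phi[]chi}) to recover $\chi(\fa^\sharp)=\chi(\fb)$, and the base case $d=1$ reduces to Case~(1) after noting that a torsion module over the one-variable Iwasawa algebra is controlled by a product of simple elements. The main obstacle is Step~2: constructing a genuine pseudo-isomorphism of $\Lambda$-modules (not merely an equality of characteristic ideals) from two inverse systems whose transition maps are a priori unrelated; this is exactly what forces the pseudo-control hypothesis, and the simple-element condition is what ultimately measures the gap between a $\sharp$-invariant characteristic ideal and a true pseudo-isomorphism.
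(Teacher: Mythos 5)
Your Step 2 contains the essential gap. You propose to build a direct $\Lambda$-morphism $\Psi\colon B_\infty^\sharp\to A_\infty$ from the pairings and the compositional axioms, and you claim its kernel and cokernel ``can be identified with subquotients of $\fa^0$ and $\fb^0$''. If this were true the theorem would hold for any pseudo-controlled $\Gamma$-system, and the three extra hypotheses (1)--(3) would be superfluous; the fact that the paper needs them tells you something essential has been skipped. What actually happens is the following. The pairing in ($\Gamma$-4) gives a map $\Phi\colon\fa^\sharp\to\Hom_\Lambda(\fb,Q_\infty/\Lambda)$ (the paper's Fourier map) and it is only the \emph{kernel} of this $\Phi$ that is identified with $(\fa^0)^\sharp$ -- see Lemma~\ref{l:kernPhi}. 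The target $\Hom_\Lambda(\fb,Q_\infty/\Lambda)$ is a huge, non--finitely-generated $\Lambda$-module (e.g.\ any group homomorphism $\fb\to E_\omega$ with $\omega\in\Delta_\xi$ is $\Lambda$-linear), and the crucial step is the further projection $\Upsilon$ to $\Hom_\Lambda(\fb,Q(\Lambda)/\Lambda)\sim\fb$ whose kernel lives over the characters in $\Delta_\xi$ and is \emph{not} controlled by $\fa^0,\fb^0$ at all. Killing it requires exactly the hypothesis (NS) of Case~(1): by Lemma~\ref{l:hypH}, (NS) means $\Delta_\xi$ contains no codimension-one $\ZZ_p$-flat, which lets one choose coprime $\varphi_1,\varphi_2$ annihilating $\Ker(\Upsilon)$ (Lemma~\ref{l:Upsilonpsn}). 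Moreover even after all this you only obtain a pseudo-\emph{injection} $\fa^\sharp\to\fb$, not a pseudo-isomorphism; the paper does not bound the cokernel of this map directly, but swaps the roles of $\fa$ and $\fb$ (using Lemma~\ref{l:an} and the fact that (NS) is $\sharp$-stable) to get a pseudo-injection the other way and concludes with Lemma~\ref{l:pseudoinj}. This two-sided structure is absent from your proposal.

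Your Case~(3) is also off. You propose induction on $d=\rank_{\ZZ_p}\Gamma$, with the base case ``$d=1$ reduces to Case~(1) after noting that a torsion module over the one-variable Iwasawa algebra is controlled by a product of simple elements.'' That does not work: for $d=1$ the characteristic ideal of $\fb$ may contain many simple factors (indeed $\gamma-\zeta$ for any $\zeta\in\boldsymbol\mu_{p^\infty}$), so the hypothesis of Case~(1) typically fails precisely there. The paper's proof of Case~(3) instead fixes a strongly controlled complete system and inducts on the number $k$ of irreducible factors of an annihilator $\xi$ of $\fa$; for $k=1$ with a simple $\xi_1$ it further inducts on the exponent $s_1$, with the base $s_1=1$ handled by Corollary~\ref{c:twistable}, which uses property~({\bf T}) via Proposition~\ref{p:twistable} to show $\eta\cdot\fA$ is twistable for a suitable $\eta\in\Lambda'$ and then invokes Case~(2). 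There is no ``gluing of characteristic ideals over intermediate $\ZZ_p^{d-1}$-subextensions'' in the paper's argument, nor does it descend on $d$.
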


If $\fA$ is not pseudo-controlled, it seems quite unlikely that the functional equation \eqref{e:chisharp} holds at all.
Indeed, as the modules $\fa/\fa^0$, $\fb/\fb^0$ come from a $\Gamma$-system (see \S\ref{ss:a'} below), the theorem yields $\chi(\fa/\fa^0)^\sharp=\chi(\fb/\fb^0)$, so for \eqref{e:chisharp} to be true we need equality between $\chi(\fa^0)^\sharp$ and $\chi(\fb^0)$. But we see no reason to expect such an equality.

\begin{remark}
\label{r:aefail}
{\em In the classical case of $d=1$ (i.e., $\Gamma\simeq\ZZ_p$) the functional equation \eqref{e:chisharp} can be obtained as a consequence of results by Mazur and Wiles.  More precisely, by \cite[Appendix]{MW84} one obtains an equality of Fitting ideals
$$Fitt_\Lambda(\fb)=\cap Fitt_\Lambda(\fb_n^\vee)=\cap Fitt_\Lambda(\fa_n^\sharp)=Fitt_\Lambda(\fa^\sharp)$$
(apply {\em loc.\,cit.},  Proposition 3 together with Property 10 on page 325). The Fitting ideal is always contained in the characteristic ideal and they are equal if the former is principal (see e.g.\,\cite[Lemma 5.10]{bl09a}), so \eqref{e:chisharp} follows in many cases.

For $d\geq 2$, Fitting ideals do not seem to yield a promising approach for a proof of \eqref{e:chisharp}. Some ideas for the case $d=2$ can be found in \cite[Appendix]{gk08} (but note that the dual they study is not the Pontryagin dual and the need for a reduced $R$ in their Theorem A.3 prevents us from mimicking the proof of their Theorem A.8 in the case of our finite $\fa_n$, $\fb_n$); however, \cite[Remarks A.9 and A.10]{gk08} suggest that there is no hope for $d\geq3$.}
\end{remark}

\subsubsection{Derived systems}\label{su:de}
Suppose for each $n$ we are given a $\Gamma$-submodule $\fc_n\subset \fa_n$ such that $\fr_m^n(\fc_m)\subset \fc_n$ and $\fk_m^n(\fc_n)\subset \fc_m$. Using these, we can obtain two derived $\Gamma$-systems from $\fA$. Let $\ff_n\subset \fb_n$ be the annihilator of $\fc_n$, via the duality induced from $\langle\;,\;\rangle_n$, and let $\fd_n:=\fb_n/\ff_n$.
Then we also have $\fr_m^n(\ff_m)\subset \ff_n$ and $\fk_m^n(\ff_n)\subset \ff_m$.
Hence $\fr_m^n$ induces a morphism $\fc_m\times \fd_m\rightarrow \fc_n\times \fd_n$, which, by abuse of notation, we also denote as $\fr_m^n$. Similarly, we have the morphism $\fk_m^n\colon\fc_n\times \fd_n\rightarrow \fc_m\times \fd_m$ and the pairing $\langle\;,\;\rangle_n$ on $\fc_n\times \fd_n$. Let $\fC$ denote the $\Gamma$-system
$$\{\fc_n,\fd_n,\langle\;,\;\rangle_n,\fr_m^n,\fk_m^n\;\mid\; m,n\in \NN, n\geq m\}.$$
We also write $\fe_n:=\fa_n/\fc_n$ and let $\fE$ denote the $\Gamma$-system
$$\{\fe_n,\ff_n,\langle\;,\;\rangle_n,\fr_m^n,\fk_m^n\;\mid\; m,n\in \NN, n\geq m\}.$$
Then we have the sequences
\begin{equation}\label{e:c}
0\longrightarrow \fc \longrightarrow \fa\longrightarrow \fe\longrightarrow 0
\end{equation}
and
\begin{equation}\label{e:e}
0\longrightarrow \ff \longrightarrow \fb\longrightarrow \fd\longrightarrow 0.
\end{equation}
Here $\fc$, $\fd$, $\fe$ and $\ff$ are the obvious projective limits; the systems $\{\fc_n\}$ and $\{\ff_n\}$ satisfy the Mittag-Leffler condition (because all groups are finite), so \eqref{e:c} and \eqref{e:e} are exact.

\begin{lemma} \label{l:e0f0} Assume $\fa_n=\fk_n(\fa)$ for all $n$. Then $\fe\sim 0$ implies $\ff\sim 0$.
\end{lemma}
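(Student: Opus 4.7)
The plan is to use Pontryagin duality to transfer the pseudo-nullness of $\fe$ directly to $\ff$, using the hypothesis to propagate annihilators from the limit to each finite layer. The key observation is that, because $\ff_n$ is by definition the annihilator of $\fc_n$ under $\langle\;,\;\rangle_n$, the pairing descends to a perfect duality $\fe_n\times\ff_n\to\QQ_p/\ZZ_p$, which identifies $\ff_n$ with $\fe_n^\vee$ as $\La$-modules up to the $\sharp$-twist dictated by \eqref{e:twistpairing} (exactly as for the ambient pair $\fa_n$, $\fb_n$).

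First I verify that $\fa_n=\fk_n(\fa)$ forces the canonical projection $\fe\to\fe_n$ to be surjective for every $n$: indeed, in the commutative square whose verticals are reduction modulo $\fc_n$ and $\fc$, the composition $\fa\twoheadrightarrow\fa_n\twoheadrightarrow\fe_n$ agrees with $\fa\twoheadrightarrow\fe\to\fe_n$, so the latter is surjective. Consequently any element of $\La$ annihilating $\fe$ annihilates each $\fe_n$. Applying Lemma \ref{l:psn} to $\fe\sim 0$ produces relatively prime $f_1,\dots,f_k\in\La$ (with $k\geq 2$) such that $f_i\fe=0$, hence $f_i\fe_n=0$ for all $i,n$. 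Dualising via the perfect pairing, $f_i^\sharp$ annihilates $\ff_n$ for every $i$ and $n$, and passing to the inverse limit yields $f_i^\sharp\cdot\ff=0$. Since $\sharp$ is a ring automorphism of $\La$ it preserves coprimality, so $\{f_i^\sharp\}$ remains a family of at least two relatively prime annihilators of $\ff$; a second application of Lemma \ref{l:psn} gives $\ff\sim 0$.

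The argument is essentially routine once one recognises the duality $\ff_n\simeq\fe_n^\vee$ (twisted by $\sharp$); the only point where the hypothesis $\fa_n=\fk_n(\fa)$ is actually used is in establishing the surjectivity $\fe\twoheadrightarrow\fe_n$ in the first step, without which there would be no way to pull annihilators of the limit down to the finite layers and then dualise them back up to $\ff$.
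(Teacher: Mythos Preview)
Your proof is correct and follows essentially the same route as the paper's: use the hypothesis to get surjectivity $\fe\twoheadrightarrow\fe_n$, push annihilators of $\fe$ down to each $\fe_n$, dualise (picking up the $\sharp$-twist) to obtain annihilators of every $\ff_n$ and hence of $\ff$, then invoke Lemma \ref{l:psn}. The paper's version is simply terser, omitting the explicit verification of the surjectivity and the remark that $\sharp$ preserves coprimality.
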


\begin{proof} The assumption implies $\fk_n(\fe)=\fe_n$. Thus $f\cdot \fe=0$ implies $f\cdot \fe_n=0$, and consequently, by the duality, $f^{\sharp}\cdot \ff_n=0$ for all $n$, yielding $f^{\sharp}\cdot \ff=0$. Now apply Lemma \ref{l:psn}.
\end{proof}

\subsubsection{The system $\fA'$} \label{ss:a'}
In the following case, we apply the above two methods together. We first get a system $\{\fa_n/\fa^0_n,\fb^1_n\}$ by putting
\begin{equation} \label{e:dirlim0} \fa_n^0\times \fb_n^0:=\bigcup_{n'\geq n}\Ker(\fr_n^{n'})= \Ker\!\big(\fa_n\times \fb_n \longrightarrow \lim_{\stackrel{\longrightarrow}{m}}\fa_m\times \fb_m\big) \end{equation}
and letting $\fa_n^1$, $\fb_n^1$ be respectively the annihilators of $\fb_n^0$, $\fa_n^0$, via $\langle\;,\;\rangle_n$.
Then we apply the $\fC$-construction to $\{\fa_n/\fa^0_n,\fb^1_n\}$ defining $\fa_n'\subset\fa_n/\fa_n^0$ via
$$\fa_n'\times \fb_n':=\image\!\big(\fa_n^1\times \fb_n^1\longrightarrow (\fa_n/\fa_n^0)\times (\fb_n/\fb_n^0)\big)\,.$$
Notice that $\fb_n'$ is dual to $\fa_n'$, as can be seen by dualizing the diagram
\[\begin{CD}
0 @>>> \fa^1_n @>>> \fa_n \\
&& @VVV @VVV \\
0 @>>> \fa_n' @>>> \fa_n/\fa^0_n  \end{CD}\]
(recall that the duals of $\fa^1_n$ and $\fa_n/\fa^0_n$ are respectively $\fb_n/\fb_n^0$ and $\fb^1_n$).\\
Thus we get a $\Gamma$-system
$$\fA':=\{\fa_n',\fb_n',\langle\;,\;\rangle_n,\fr_m^n,\fk_m^n\;\mid\; m,n\in \NN, n\geq m\}.$$

Denote, for $i=0,1$,
$$\fa^i\times \fb^i:=\lim_{\stackrel{\leftarrow}{n}} \fa_n^i\times \fb_n^i,$$
and
$$\fa'\times \fb':=\lim_{\stackrel{\leftarrow}{n}}\fa_n'\times \fb_n'=\image\!\big(\fa^1\times\fb^1\longrightarrow (\fa/\fa^0)\times (\fb/\fb^0)\big).$$
The pairings $\langle\;,\;\rangle_n$ allow identifying each $\fa_n\times\fb_n$ with its own Pontryagin dual and this identification is compatible with the maps $\fr_m^n$, $\fk_m^n$. Then $\fa\times\fb$ is the dual of $\displaystyle \lim_{\rightarrow} \fa_n\times\fb_n$. Consider the exact sequence
\begin{equation}   \begin{CD} \label{e:exseqX}
0 @>>> \fa_n^0\times\fb^0_n @>>> \fa_n\times\fb_n @>>> (\fa_n\times\fb_n)/(\fa_n^0\times\fb^0_n) @>>> 0. \end{CD}\end{equation}
By construction, $\fa_n^1\times\fb^1_n$ is the dual of $(\fa_n\times\fb_n)/(\fa_n^0\times\fb^0_n)$. The inductive limit of \eqref{e:exseqX} gets the identity
$$\lim_{\rightarrow} \fa_n\times\fb_n=\lim_{\rightarrow}\, (\fa_n\times\fb_n)/(\fa_n^0\times\fb^0_n)$$
($\varinjlim\fa_n^0\times\fb_n^0=0$ is immediate from \eqref{e:dirlim0}) and hence, taking duals,
$$\fa^1\times \fb^1=\fa\times \fb.$$
Thus we have an exact sequence
\begin{equation}\label{e:0'}
0\longrightarrow \fa^0\times \fb^0\longrightarrow \fa\times \fb\longrightarrow \fa'\times \fb'\longrightarrow 0.
\end{equation}

\subsubsection{Strongly-controlled $\Gamma$-systems}\label{su:st}
In the previous section we saw that, as $\fb=\fb^1$ and $\fa=\fa^1$ , the information carried by $\fa^0$ and $\fb^0$
does not pass to $\fb$ and $\fa$: this explains Definition \ref{d:scontr}. Here we consider a condition stronger than being pseudo-controlled.

\begin{definition}\label{d:scontr}
A $\Gamma$-system $\fA$ is strongly controlled if  $\fa_n^0\times \fb_n^0=0$ for every $n$.
\end{definition}

\begin{lemma}\label{l:eq}
A $\Gamma$-system $\fA$ is strongly controlled if and only if $\fr_m^n$ is injective  {\em{(}}resp. $\fk_m^n$ is surjective{\em{)}} for $n\geq m$.
\end{lemma}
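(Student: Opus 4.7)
The plan is to separate the statement into two independent equivalences: (i) strongly controlled $\Leftrightarrow$ $\fr_m^n$ injective for all $n\geq m$, and (ii) $\fr_m^n$ injective for all $n\geq m$ $\Leftrightarrow$ $\fk_m^n$ surjective for all $n\geq m$. The first is essentially unwinding the definition, while the second is a duality argument using ($\Gamma$-4).

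For (i), I would simply note that by construction in \S\ref{ss:a'}, in particular equation \eqref{e:dirlim0}, we have
$$\fa_n^0\times\fb_n^0 = \bigcup_{n'\geq n}\Ker(\fr_n^{n'}),$$
and since ($\Gamma$-2) tells us that $\fr_n^{n'}$ preserves the product decomposition, its kernel splits as $\Ker(\fr_n^{n'}|_{\fa_n})\times\Ker(\fr_n^{n'}|_{\fb_n})$. An ascending union of subgroups is trivial iff each term is, so $\fa_n^0\times\fb_n^0=0$ for all $n$ is equivalent to $\fr_n^{n'}$ being injective on both components for every $n'\geq n$, i.e., $\fr_m^n$ injective for all $n\geq m$.

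For (ii), the key is the adjunction \eqref{e:rkpairing} together with the perfectness of each $\langle\,,\,\rangle_n$. I would argue that $\fr_m^n|_{\fa_m}$ is injective iff for each nonzero $a\in\fa_m$ there exists $b\in\fb_n$ with $\langle \fr_m^n(a),b\rangle_n\neq 0$, and by \eqref{e:rkpairing} this equals $\langle a,\fk_m^n(b)\rangle_m$. Thus injectivity of $\fr_m^n|_{\fa_m}$ is equivalent to the subgroup $\fk_m^n(\fb_n)\subset\fb_m$ having trivial annihilator in $\fa_m$, which by perfectness of $\langle\,,\,\rangle_m$ is equivalent to $\fk_m^n(\fb_n)=\fb_m$, i.e., $\fk_m^n|_{\fb_n}$ surjective. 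The symmetric argument with the roles of $\fa$ and $\fb$ exchanged (using the companion adjunction identity in ($\Gamma$-4)) gives the analogous statement for $\fr_m^n|_{\fb_m}$ and $\fk_m^n|_{\fa_n}$. Combining both halves yields (ii).

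There is no real obstacle here; the proof is essentially formal once one correctly unpacks definition \eqref{e:dirlim0} and uses the adjointness of $\fr$ and $\fk$ with respect to the perfect pairing. The only point requiring mild care is tracking the product decomposition, ensuring that the "$\fa$-injectivity" of $\fr_m^n$ corresponds to "$\fb$-surjectivity" of $\fk_m^n$ and vice versa, so that both equivalences in (ii) are needed to conclude.
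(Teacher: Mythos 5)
Your proposal is correct and matches the paper's argument, which is compressed into the single phrase "The definition and the duality": your part (i) is the "definition" step via \eqref{e:dirlim0}, and your part (ii) is the "duality" step via \eqref{e:rkpairing} and perfectness of the pairings. You have merely spelled out what the authors left implicit, including the correct observation that $\Ker(\fr_m^n|_{\fa_m})$ is the annihilator of $\fk_m^n(\fb_n)$ under $\langle\,,\,\rangle_m$, so that injectivity on the $\fa$-component corresponds to surjectivity onto the $\fb$-component and vice versa.
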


\begin{proof} The definition and the duality. \end{proof}

\begin{lemma}\label{l:st}
Suppose $\fA$ is a $\Gamma$-system. Then the following holds:
\begin{enumerate}
\item the system $\fA'$ is strongly controlled;
\item if $\fA$ is pseudo-controlled, then $\fa\sim \fa'$ and $\fb\sim \fb'$.
\end{enumerate}
\end{lemma}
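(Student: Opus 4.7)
The plan is to treat the two parts independently, since part (2) follows almost immediately from the exact sequence \eqref{e:0'} while part (1) needs a short verification that the structure maps of $\fA$ respect the relevant sub-quotients.

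For (2), pseudo-control says that $\fa^0\times\fb^0$ is pseudo-null, whence both $\fa^0$ and $\fb^0$ are pseudo-null (submodules of pseudo-null modules are pseudo-null). Extracting from \eqref{e:0'} the componentwise short exact sequences $0\to\fa^0\to\fa\to\fa'\to 0$ and $0\to\fb^0\to\fb\to\fb'\to 0$ then immediately yields $\fa\sim\fa'$ and $\fb\sim\fb'$.

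For (1), by Lemma \ref{l:eq} I reduce to showing that $\fr_m^n\colon\fa'_m\to\fa'_n$ is injective (the statement for $\fb'$ being entirely symmetric). The preliminary step is to check that $\fr_m^n$ sends $\fa_m^0$ into $\fa_n^0$ and $\fa_m^1$ into $\fa_n^1$, so that it descends to a well-defined map on $\fa'_m=\image\bigl(\fa_m^1\to\fa_m/\fa_m^0\bigr)$. Preservation of $\fa^0$ is straightforward from the definition as a union of kernels, combined with the compatibility $\fr_m^{n'}=\fr_n^{n'}\circ\fr_m^n$ of inductive-system maps. The slightly more delicate point is preservation of $\fa^1$: by the adjunction \eqref{e:rkpairing} this is equivalent to the assertion $\fk_m^n(\fb_n^0)\subseteq\fb_m^0$, and for $b\in\fb_n^0$ killed by some $\fr_n^{m'}$ one computes
$$\fr_m^{m'}(\fk_m^n(b))=\fr_n^{m'}\bigl(\fr_m^n\circ\fk_m^n(b)\bigr)=\fr_n^{m'}\bigl(\Nm_{\Gamma_n/\Gamma_m}b\bigr)=\Nm_{\Gamma_n/\Gamma_m}\bigl(\fr_n^{m'}(b)\bigr)=0,$$
using axiom ($\Gamma$-3) and the $\Gamma$-equivariance of $\fr_n^{m'}$.

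With the induced map in hand, the injectivity is a direct diagram chase: if $\bar a\in\fa'_m$ is represented by $a\in\fa_m^1$ with $\fr_m^n(a)\in\fa_n^0$, pick $m'\geq n$ so that $\fr_n^{m'}(\fr_m^n(a))=0$; then $\fr_m^{m'}(a)=0$, so $a\in\fa_m^0$ and $\bar a=0$ in $\fa'_m$. No substantive obstacle is anticipated; the main work is bookkeeping the interaction between the two filtrations $\fa^0\subset\fa$ and $\fa^1\subset\fa$, and the only genuinely non-formal input beyond the $\Gamma$-system axioms is the adjunction-plus-norm step above identifying the $\fk_m^n$-image of $\fb_n^0$.
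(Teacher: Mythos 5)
Your proof is correct and follows essentially the same route the paper takes: the paper dismisses part (1) as following ``from the definition of $\fA'$'' and part (2) as ``immediate from the exact sequence \eqref{e:0'}'', and you have merely spelled out the injectivity check for (1) (the transitivity/norm computation identifying $\fk_m^n(\fb_n^0)\subseteq\fb_m^0$) that the paper treats as self-evident from the construction of $\fA'$ in \S\ref{ss:a'}.
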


\begin{proof} Statement (1) follows from the definition of $\fA'$ and (2) is immediate from the exact sequence \eqref{e:0'}. \end{proof}

\begin{lemma} \label{l:pcsuffic} Let $\fA$ be a pseudo-controlled $\Gamma$-system. Then the functional equations \eqref{e:basharp} and \eqref{e:chisharp} hold for $\fA$ if and only if they hold for $\fA'$. \end{lemma}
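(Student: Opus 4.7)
The plan is to exploit Lemma~\ref{l:st}(2), which already asserts that pseudo-controlledness gives $\fa\sim\fa'$ and $\fb\sim\fb'$, together with the fact that pseudo-isomorphism is an equivalence relation (Lemma~\ref{l:pseudoisom}) and that $\chi$ and the twist $\cdot^\sharp$ both behave well under pseudo-isomorphism.

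First I would observe that the involution $\cdot^\sharp\colon\La\to\La$ is a ring automorphism, so by Lemma~\ref{l:phi[]chi} the functor $M\mapsto M^\sharp$ is exact, sends pseudo-null modules to pseudo-null modules, and satisfies $\chi(M^\sharp)=\chi(M)^\sharp$. In particular, the pseudo-isomorphism $\fa\sim\fa'$ provided by Lemma~\ref{l:st}(2) induces a pseudo-isomorphism $\fa^\sharp\sim(\fa')^\sharp$.

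Next I would establish the equivalence for \eqref{e:basharp}. Assume $\fb\sim\fa^\sharp$. Chaining the pseudo-isomorphisms
\[
\fb'\;\sim\;\fb\;\sim\;\fa^\sharp\;\sim\;(\fa')^\sharp,
\]
one concludes $\fb'\sim(\fa')^\sharp$ via Lemma~\ref{l:pseudoisom} (transitivity plus symmetry). The reverse implication is identical, swapping the roles of primed and unprimed systems. For \eqref{e:chisharp} I would use that the characteristic ideal is invariant under pseudo-isomorphism of finitely generated $\La$-modules (by \eqref{e:structureM} and the uniqueness of $[M]$), so $\chi(\fa)=\chi(\fa')$ and $\chi(\fb)=\chi(\fb')$; combining this with $\chi(\fa)^\sharp=\chi(\fa^\sharp)$ yields that $\chi(\fb)=\chi(\fa)^\sharp$ if and only if $\chi(\fb')=\chi(\fa')^\sharp$. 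Note that the convention $\chi(M)=0$ for non-torsion $M$ makes both equations meaningful without a torsion hypothesis.

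Since every step is essentially a formal consequence of results already in the excerpt, I do not anticipate a genuine obstacle; the only care needed is to confirm that the pseudo-isomorphisms in Lemma~\ref{l:st}(2) arise from bona fide $\Lambda$-module maps (so that applying $\cdot^\sharp$ makes sense), which is clear from the exact sequence \eqref{e:0'}.
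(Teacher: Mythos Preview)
Your argument is correct and is precisely the approach the paper takes: the paper's proof reads ``Obvious by Lemma~\ref{l:st}(2)'', and you have simply unpacked that one-liner using Lemmas~\ref{l:pseudoisom} and~\ref{l:phi[]chi}. The only superfluous remark is your aside about the non-torsion case, since by Definition~\ref{d:gamsys} the modules $\fa$, $\fb$ (and hence $\fa'$, $\fb'$) are already finitely generated torsion $\La$-modules.
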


\begin{proof} Obvious by Lemma \ref{l:st}(2). \end{proof}

\begin{lemma}\label{l:an} Suppose $\fA$ is strongly controlled. Then $\xi\cdot \fb=0$, for some $\xi\in\La$, if and only if $\xi^{\sharp}\cdot \fa=0$. \end{lemma}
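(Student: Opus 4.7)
The plan is to exploit the perfect pairing together with the surjectivity of the transition maps implied by strong control. More precisely, by Lemma \ref{l:eq}, strong control means that $\fk_m^n \colon \fb_n \twoheadrightarrow \fb_m$ and $\fk_m^n \colon \fa_n \twoheadrightarrow \fa_m$ are surjective for all $n \geq m$. Such a projective system of surjections trivially satisfies the Mittag--Leffler condition, so the canonical projections $\fb \twoheadrightarrow \fb_n$ and $\fa \twoheadrightarrow \fa_n$ are themselves surjective for every $n$. This is the decisive structural consequence of the strong-control hypothesis that the argument exploits.

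For the forward direction, assume $\xi \cdot \fb = 0$ and fix $n$. By the surjectivity $\fb \twoheadrightarrow \fb_n$ noted above, we deduce $\xi \cdot \fb_n = 0$. Now, for any $a \in \fa_n$ and any $b \in \fb_n$, the $\Gamma$-invariance of $\langle\,,\,\rangle_n$ gives, via the identity \eqref{e:twistpairing},
$$\langle \xi^\sharp a, b \rangle_n = \langle a, \xi b \rangle_n = 0.$$
Since $\langle\,,\,\rangle_n$ is a perfect pairing, this forces $\xi^\sharp a = 0$. Hence $\xi^\sharp \cdot \fa_n = 0$ for every $n$, and passing to the inverse limit yields $\xi^\sharp \cdot \fa = 0$.

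The reverse implication is entirely symmetric: replace $(\fa, \fb, \xi)$ with $(\fb, \fa, \xi^\sharp)$, observe that $\cdot^\sharp$ is an involution (so $(\xi^\sharp)^\sharp = \xi$), and invoke the analogous surjectivity of $\fa \twoheadrightarrow \fa_n$ together with \eqref{e:twistpairing}, which is symmetric in the two arguments.

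No serious obstacle arises: the whole content is contained in the surjectivity of the natural projections, and this is precisely what strong control delivers via Lemma \ref{l:eq}. If one only assumed $\fA$ pseudo-controlled, the step $\xi \cdot \fb = 0 \Rightarrow \xi \cdot \fb_n = 0$ would fail in general, since the image of $\fb$ in $\fb_n$ could be a proper submodule, and this is exactly why the strong hypothesis is needed.
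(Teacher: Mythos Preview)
Your proof is correct and follows essentially the same approach as the paper: the paper's proof also invokes Lemma~\ref{l:eq} to obtain $\fb_n=\fk_n(\fb)$, deduces $\xi\cdot\fb_n=0$, and then uses the duality to conclude $\xi^\sharp\cdot\fa_n=0$ for all $n$. Your version is somewhat more explicit (spelling out the use of \eqref{e:twistpairing} and the symmetric reverse direction), but the underlying argument is identical.
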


\begin{proof} By Lemma \ref{l:eq}, we have $\fb_n=\fk_n(\fb)$.  Thus $\xi\cdot \fb=0$ implies $\xi\cdot \fb_n=0$, and consequently, by the duality, $\xi^\sharp\cdot \fa_n=0$ for all $n$, yielding $\xi^\sharp\cdot \fa=0$. \end{proof}

\end{subsection}


\subsection{Two maps}
For simplicity, in the following we shall use the notations $Q_n:=\QQ_p[\Gamma_n]$ and $\Lambda_n:=\ZZ_p[\Gamma_n]$. The projections $\pi_m^n\colon\Gamma_n\to\Gamma_m$ are canonically extended to ring morphisms $\colon\Lambda_n\to\Lambda_m$. Let
$$Q_\infty:=\varprojlim Q_n=\QQ_p[[\Gamma]]\,.$$
Thanks to the inclusions $\La_n\hookrightarrow Q_n$ we can see $\La$ as a subring of $Q_\infty$.

\subsubsection{The Fourier map} \label{ss:four} Let $\fA$ be a $\Gamma$-system as above. In this section, we construct a $\Lambda$-linear map
$$\Phi\colon\fa^\sharp\lr \Hom_{\Lambda}(\fb,Q_\infty/\La)\,.$$
First recall that the pairing in ($\Gamma$-4) induces for any $n$ an isomorphism of $\La$-modules \footnote{Here $\Hom_{\ZZ_p}(\fb_n,\QQ_p/\ZZ_p)$ is a $\La$-module via $(\gamma\cdot f)\colon x\mapsto f(\gamma x)$ for $\gamma\in\Gamma$.}
$$\fa_n^\sharp\simeq \Hom_{\ZZ_p}(\fb_n,\QQ_p/\ZZ_p),$$
the twist by the involution $\cdot^\sharp$ being due to \eqref{e:twistpairing}. Equality \eqref{e:rkpairing} shows that these isomorphisms form an isomorphism of projective systems, where the right hand side is endowed  with the transition maps induced by the direct system $(\fb_n,\fr_m^n)$.
Passing to the projective limit, we deduce a $\Lambda$-isomorphism
$$\fa^\sharp\simeq \lim_{\stackrel{\leftarrow}{n}}\Hom_{\ZZ_p}(\fb_n,\QQ_p/\ZZ_p).$$
Now the map $\Phi$ is obtained as the composed of this isomorphism and the following  $\Lambda$-linear maps:
\begin{enumerate}
\item[($\Phi$-1)] the homomorphism
$$\lim_{\stackrel{\leftarrow}{n}}\Hom_{\ZZ_p}(\fb_n,\QQ_p/\ZZ_p) \lr \lim_{\stackrel{\leftarrow}{n}} \Hom_\Lambda(\fb_n,Q_n/\La_n)$$
obtained by sending $(f_n)_n$ to $\big(\hat{f}_n\colon x\mapsto\sum_{\gamma\in\Gamma_n}f_n(\gamma^{-1}x)\gamma\big)_n\,$;
\item[($\Phi$-2)] the homomorphism
$$\varprojlim \Hom_\Lambda(\fb_n,Q_n/\La_n)\lr \varprojlim \Hom_{\Lambda}(\fb,Q_n/\La_n)$$
induced by $\fk_n\colon\fb\to \fb_n\,$;
\item[($\Phi$-3)] the canonical isomorphism
$$\lim_{\stackrel{\leftarrow}{n}} \Hom_{\Lambda}(\fb,Q_n/\La_n)\simeq  \Hom_{\Lambda}(\fb,\lim_{\stackrel{\leftarrow}{n}}Q_n/\La_n)$$
and the identification $\varprojlim Q_n/\La_n=Q_\infty/\La$ (since the maps $\La_n\rightarrow\La_m$ are surjective).
\end{enumerate}
Here as transition maps in $\varprojlim\Hom_\Lambda(\fb_n,Q_n/\La_n)$ we take (for $n\geq m$)
$$\Hom_\Lambda(\fb_n,Q_n/\La_n)\lr\Hom_\Lambda(\fb_m,Q_m/\La_m)$$
\begin{equation}\label{e:twistmap}\varphi \mapsto p^{-d(n-m)}(\pi_m^n\circ\varphi\circ\fr_m^n)\,.\end{equation}
We have to check that ($\Phi$-1) and ($\Phi$-2) define maps of projective systems. For ($\Phi$-1),
this means to verify that for any $n\geq m$ we have \begin{equation}\label{e:check1}\hat{f_m}=p^{-d(n-m)}(\pi_m^n\circ\hat{f_n}\circ\fr_m^n)\,, \end{equation}
where, by definition, $f_m=f_n\circ\fr_m^n$. For $x\in\fb_m$,
$$\pi_m^n(\hat{f_n}(\fr_m^nx))=\pi_m^n\big(\sum_{\gamma\in\Gamma_n}f_n(\gamma^{-1}(\fr_m^n x))\gamma\big)=\sum_{\gamma\in\Gamma_n}f_n(\gamma^{-1}\fr_m^n x)\pi_m^n(\gamma)$$
(using the fact that, by ($\Gamma$-2), $\fr^n_m$ is a $\Gamma$-morphism)
$$=\sum_{\gamma\in\Gamma_n}f_n(\fr_m^n(\gamma^{-1}x))\pi_m^n(\gamma)=\frac{|\Gamma_n|}{|\Gamma_m|}\sum_{\gamma\in\Gamma_m}f_n(\fr_m^n(\gamma^{-1}x))\gamma=p^{d(n-m)}\hat{f}_m(x)\,,$$
so \eqref{e:check1} holds. As for ($\Phi$-2), the transition map
$$\Hom_{\Lambda}(\fb,Q_n/\La_n)\lr \Hom_{\Lambda}(\fb,Q_m/\La_m)$$
is $\psi\mapsto\pi^n_m\circ\psi$ and the map defined in ($\Phi$-2) is $(\varphi_n)_n\mapsto (\varphi_n\circ\fk_n)_n\,$. By \eqref{e:twistmap},
$$\varphi_m\circ\fk_m=p^{-d(n-m)}(\pi_m^n\circ\varphi_n\circ\fr_m^n)\circ\fk_m=p^{-d(n-m)}(\pi_m^n\circ\varphi_n\circ\fr_m^n\circ\fk^n_m\circ\fk_n)=$$
(by property ($\Gamma$-3) of $\Gamma$-systems)
$$=p^{-d(n-m)}(\pi_m^n\circ\varphi_n\circ\Nm_{\Gamma_n/\Gamma_m}\circ \fk_n)=\pi_m^n\circ\varphi_n\circ\fk_n$$
(since $\varphi_n$, being a $\La$-morphism, commutes with $\Nm_{\Gamma_n/\Gamma_m}$ and $\pi_m^n\circ\Nm_{\Gamma_n/\Gamma_m}=p^{d(n-m)}\pi_m^n$).
So also ($\Phi$-2) is a map of projective systems.

\begin{remark}{\em Actually, one can also check that the maps $f_n\mapsto\hat{f_n}$ used in ($\Phi$-1) are isomorphisms. The inverse is $f\mapsto\delta_e\circ f$, where $\delta_e\colon Q_n/\La_n\to\QQ_p/\ZZ_p$ is the function sending $\sum_{\Gamma_n}a_\gamma\gamma$ to $a_e$ ($e$ being the neutral element in $\Gamma_n$).}
\end{remark}

If the $\Gamma$-system $\fA$ is strongly controlled then the map $\Phi$ is clearly injective (since $\fb$ surjects onto $\fb_n$ for all $n$). In general, we have the following.

\begin{lemma} \label{l:kernPhi}
The kernel of $\Phi$ equals $(\fa^0)^\sharp$.
\end{lemma}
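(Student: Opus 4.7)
The plan is to unwind the definition of $\Phi$ and translate the vanishing condition into a statement about perpendicular subgroups under the pairings $\langle\,,\,\rangle_n$, then recognize the resulting projective limit as $\fa^0$.

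First I would fix $(f_n)_n \in \varprojlim \Hom_{\ZZ_p}(\fb_n,\QQ_p/\ZZ_p)$ corresponding to an element of $\fa^\sharp$ and compute $\Phi((f_n))(x)$ explicitly for $x \in \fb$. Chasing through ($\Phi$-1)--($\Phi$-3), it is the sequence $(\hat f_n(\fk_n(x)))_n$ in $\varprojlim Q_n/\La_n$, where $\hat f_n(y) = \sum_{\gamma\in\Gamma_n} f_n(\gamma^{-1}y)\gamma$. An element of $Q_n/\La_n$ written in the basis $\Gamma_n$ is zero iff all its coefficients in $\QQ_p/\ZZ_p$ vanish, so $\hat f_n(\fk_n(x))=0$ in $Q_n/\La_n$ iff $f_n(\gamma \fk_n(x))=0$ for every $\gamma\in\Gamma_n$. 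Since $\fk_n$ is $\Gamma$-linear and $\fb$ is $\Gamma$-stable, running this over all $x\in\fb$ we conclude that $\Phi((f_n))=0$ if and only if $f_n$ vanishes on $\fk_n(\fb)\subset\fb_n$ for every $n$.

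Next I would use the pairing $\langle\,,\,\rangle_n$ to translate this into a condition on the corresponding $a_n\in\fa_n$. The identification $\fa_n^\sharp\simeq\fb_n^\vee$ is implemented by $a_n\mapsto \langle a_n,-\rangle_n$, so the condition is $a_n\in\fk_n(\fb)^\perp$ for every $n$. The key computation, based on ($\Gamma$-4), is that for each pair $n'\geq n$,
\[
\fk_n^{n'}(\fb_{n'})^\perp = \{a\in\fa_n:\langle \fr_n^{n'}(a),y\rangle_{n'}=0\ \forall\,y\in\fb_{n'}\}=\Ker(\fr_n^{n'}),
\]
by perfectness of $\langle\,,\,\rangle_{n'}$. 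Since the $\fb_{n'}$'s form a projective system of finite groups, $\fk_n(\fb)=\bigcap_{n'\geq n}\fk_n^{n'}(\fb_{n'})$ (Mittag-Leffler being automatic for finite groups), and by the standard duality for finite abelian groups with a perfect pairing the orthogonal of an intersection is the sum of the orthogonals. The family $\Ker(\fr_n^{n'})$ is increasing in $n'$ (since $\fr_n^{n''}=\fr_{n'}^{n''}\circ\fr_n^{n'}$ for $n''\geq n'\geq n$), so the sum equals the union:
\[
\fk_n(\fb)^\perp \;=\; \bigcup_{n'\geq n}\Ker(\fr_n^{n'}).
\]

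Putting these together, $\Phi((f_n))=0$ iff $a_n\in\bigcup_{n'\geq n}\Ker(\fr_n^{n'})$ for every $n$, i.e.\ iff $(a_n)_n\in \varprojlim_n \bigcup_{n'\geq n}\Ker(\fr_n^{n'})=\fa^0$. Under the identification of the underlying set of $\fa^\sharp$ with that of $\fa$, this means $\Ker(\Phi)=(\fa^0)^\sharp$. The only subtle point is step three, the orthogonality computation, which requires careful bookkeeping of the compatibility \eqref{e:rkpairing} between the pairings and the transition maps; everything else is essentially formal.
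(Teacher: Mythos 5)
Your proof is correct and follows essentially the same route as the paper's: both compute that $\Phi(a)(b)=(\sum_{\gamma\in\Gamma_n}\langle a_n,\gamma^{-1}b_n\rangle_n\,\gamma)_n$ and reduce the vanishing of $\Phi(a)$ to the condition that $a_n$ annihilates $\fk_n(\fb)$ for every $n$. Where the paper then simply notes that $\fa_n\to\varinjlim\fa_m$ is the Pontryagin dual of $\fb\to\fb_n$ (so the annihilator of $\fk_n(\fb)$ is exactly $\fa_n^0$), you unfold this duality explicitly via the finite-level identities $\fk_n^{n'}(\fb_{n'})^\perp=\Ker(\fr_n^{n'})$ and a Mittag–Leffler/orthogonality argument — a more verbose but equivalent presentation of the same key observation.
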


\begin{proof} The image of $a=(a_n)_n\in\fa$ in $\varprojlim\Hom_\Lambda(\fb_n,Q_n/\La_n)$ is the map
$$b=(b_n)_n\mapsto\big(\sum_{\gamma\in\Gamma_n}\langle a_n,\gamma^{-1}b_n\rangle_n\gamma\big)_n\,.$$
To conclude, observe that $\fa_n\to\varinjlim\fa_m$ is dual to $\fb\to\fb_n$. Hence $\langle a_n,b_n\rangle_n=0$, for every $b_n$ contained in the image of $\fb\rightarrow \fb_n$, if and only if $a_n\in\fa_n^0$.
\end{proof}

\subsubsection{} Let $\fb$ be a finitely generated torsion $\La$-module. In \S\ref{ss:Psi} below we shall construct a map
$$\Psi\colon\Hom_{\Lambda}(\fb, Q_\infty/\La)\to \Hom_{\Lambda}(\fb,Q(\Lambda)/\Lambda),$$
where $Q(\La)$ is the field of fractions of $\La$. The interest of having such a $\Psi$ comes from the following lemma.

\begin{lemma} \label{l:b&Iwadj}
For $\fb$ a finitely generated torsion $\La$-module, we have a pseudo-isomorphism
$$\fb\sim \Hom_{\Lambda}(\fb,Q(\Lambda)/\Lambda).$$
\end{lemma}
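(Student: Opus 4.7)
The plan is to reduce to elementary modules via the structure theorem and then carry out a direct computation using the fact that $\Lambda$ is a Noetherian UFD.

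First, by the structure theorem recalled in \S\ref{sss:krul} we have an embedding $[\fb]\hookrightarrow\fb$ with pseudo-null cokernel $N$, giving an exact sequence
$$0\longrightarrow [\fb]\longrightarrow \fb\longrightarrow N\longrightarrow 0,$$
with $[\fb]=\bigoplus_{i=1}^m \Lambda/(\xi_i^{r_i})$, $\xi_i$ irreducible. Since the functor $\Hom_\Lambda(-,Q(\Lambda)/\Lambda)$ commutes with finite direct sums, the first task is to compute $\Hom_\Lambda(\Lambda/(\xi^r),Q(\Lambda)/\Lambda)$ for a single irreducible $\xi$. Applying $\Hom_\Lambda(-,Q(\Lambda)/\Lambda)$ to the resolution $0\to \Lambda\xrightarrow{\xi^r}\Lambda\to\Lambda/(\xi^r)\to 0$ identifies this group with the $\xi^r$-torsion $(Q(\Lambda)/\Lambda)[\xi^r]$. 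Using that $\Lambda$ is a UFD, any $x\in Q(\Lambda)$ can be written in reduced form $a/b$ with $\gcd(a,b)=1$, and the condition $\xi^r x\in\Lambda$ forces $b\mid \xi^r$, hence $x\in\xi^{-r}\Lambda$. Therefore
$$(Q(\Lambda)/\Lambda)[\xi^r]=\xi^{-r}\Lambda/\Lambda\cong \Lambda/(\xi^r),$$
and summing over $i$ gives $\Hom_\Lambda([\fb],Q(\Lambda)/\Lambda)\cong [\fb]$.

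The remaining step is to propagate this through the exact sequence, i.e.\ to show that $\Hom_\Lambda(\fb,Q(\Lambda)/\Lambda)\sim\Hom_\Lambda([\fb],Q(\Lambda)/\Lambda)$. Applying $\Hom_\Lambda(-,Q(\Lambda)/\Lambda)$ to the sequence above produces
$$0\to \Hom_\Lambda(N,Q(\Lambda)/\Lambda)\to \Hom_\Lambda(\fb,Q(\Lambda)/\Lambda)\to\Hom_\Lambda([\fb],Q(\Lambda)/\Lambda)\to \Ext^1_\Lambda(N,Q(\Lambda)/\Lambda),$$
and it suffices to show that the two outer terms are pseudo-null. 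For $\Hom_\Lambda(N,Q(\Lambda)/\Lambda)$, Lemma \ref{l:psn} provides coprime $f_1,f_2\in\Lambda$ annihilating $N$; then the image of any $\phi\colon N\to Q(\Lambda)/\Lambda$ sits in $(Q(\Lambda)/\Lambda)[f_1]\cap (Q(\Lambda)/\Lambda)[f_2]$, which is $0$ by the same reduced-fraction argument (the denominator must divide both $f_1$ and $f_2$, hence be a unit), so $\Hom_\Lambda(N,Q(\Lambda)/\Lambda)=0$. For $\Ext^1_\Lambda(N,Q(\Lambda)/\Lambda)$, since $N$ is finitely presented over the Noetherian ring $\Lambda$, Ext commutes with localization at each height-one prime $\fp$; as $N_\fp=0$ by definition of pseudo-null, the localization vanishes and $\Ext^1_\Lambda(N,Q(\Lambda)/\Lambda)$ is pseudo-null.

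Combining these facts, the middle map above is a pseudo-isomorphism, and chaining with $\Hom_\Lambda([\fb],Q(\Lambda)/\Lambda)\cong[\fb]\sim\fb$ (the latter being the structure theorem again) and Lemma \ref{l:pseudoisom} yields the required pseudo-isomorphism $\fb\sim\Hom_\Lambda(\fb,Q(\Lambda)/\Lambda)$. I expect no serious obstacle: the slightly subtle point is identifying $(Q(\Lambda)/\Lambda)[\xi^r]$ with $\Lambda/(\xi^r)$ and checking the vanishing of $\Hom_\Lambda(N,Q(\Lambda)/\Lambda)$, both of which are essentially UFD computations once one knows that $\Lambda$ is a Krull domain with principal height-one primes.
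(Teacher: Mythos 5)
Your proof is correct and follows essentially the same route as the paper: reduce via the exact sequence $0\to[\fb]\to\fb\to N\to 0$ to the elementary module $[\fb]$, compute $\Hom_\Lambda(\Lambda/(\xi^r),Q(\Lambda)/\Lambda)=\xi^{-r}\Lambda/\Lambda\cong\Lambda/(\xi^r)$, and check that the Hom and $\Ext^1$ terms coming from the pseudo-null $N$ are negligible. The only cosmetic difference is in that last step: the paper kills $\Hom_\Lambda(N,-)$ and its derived functors by observing that the (coprime) annihilators of $N$ annihilate them as well (then invokes Lemma~\ref{l:psn}), whereas you establish $\Hom_\Lambda(N,Q(\Lambda)/\Lambda)=0$ directly via the reduced-fraction argument and handle $\Ext^1$ by localizing at height-one primes; both are valid and of comparable length.
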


\begin{proof}
From the exact sequence
$$0\longrightarrow [\fb]\longrightarrow \fb\longrightarrow \fn\lr 0,$$
where $\fn$ is pseudo-null, we deduce the exact sequence
$$\Hom_{\Lambda}(\fn,Q(\La)/\Lambda)\hookrightarrow \Hom_{\Lambda}(\fb,Q(\La)/\La) \to \Hom_{\Lambda}([\fb],Q(\La)/\La)\to\text{Ext}_{\Lambda}^1(\fn,Q(\La)/\La)\,.$$
The annihilator of $\fn$ also kills $\Hom_{\Lambda}(\fn,\;)$ and its derived functors, so by Lemma \ref{l:psn}, it follows $\Hom_{\La}(\fb,Q(\Lambda)/\Lambda)\sim \Hom_{\La}([\fb],Q(\Lambda)/\Lambda)$, and we can assume that $\fb=[\fb]$. Write
$$\fb=\Lambda/(\xi_1)\oplus\dots\oplus\Lambda/(\xi_n)\,.$$
Then
$$ \Hom_{\Lambda}(\fb,Q(\Lambda)/\Lambda)=\bigoplus\Hom_{\Lambda}(\La/(\xi_i),Q(\Lambda)/\Lambda)=\bigoplus\Hom_{\Lambda}(\Lambda/(\xi_i),\xi_i^{-1}\Lambda/\Lambda)$$
because $(Q(\La)/\La)[\xi_i]=\xi_i^{-1}\La/\La.$
Since
$$\Hom_{\Lambda}(\Lambda/(\xi_i),\xi_i^{-1}\Lambda/\Lambda)=\Lambda/(\xi_i)\,,$$
we conclude that in this situation, $\Hom_{\Lambda}(\fb,Q(\Lambda)/\Lambda)=\fb\,.$
\end{proof}

\subsubsection{A theorem of Monsky} \label{su:monsky}
Let $\Gamma^\vee$ (resp. $\Gamma^\vee_n$ ) denote the group of continuous characters $\Gamma\rightarrow \boldsymbol\mu_{p^\infty}$ (resp. $\Gamma_n\rightarrow \boldsymbol\mu_{p^{\infty}}$); we view $\Gamma^\vee_n$ as a subgroup of $\Gamma^\vee$. For each $\omega\in \Gamma^\vee$, let $E_\omega:=\QQ_p(\boldsymbol\mu_{p^m})\subset \bar\QQ_p$ be the subfield generated by the image $\omega(\Gamma) =\boldsymbol\mu_{p^m}$, and write $\cO_\omega=\ZZ_p[\boldsymbol\mu_{p^m}]$. Then $ \omega$ induces a continuous ring homomorphism $\omega\colon\Lambda\rightarrow \cO_\omega\subset E_\omega$.\\

Let $\xi\in\La$: we say that $\omega$ is a zero of $\xi$, if and only if $\omega(\xi)=0$, and denote the zero set
$$\triangle_{\xi}:=\{ \omega\in {\Gamma^\vee}\;\mid\; \omega(\xi)=0\}.$$
Then we recall a theorem of Monsky (\cite[Lemma 1.5 and Theorem 2.6]{monsky}).

\begin{definition} \label{d:monsky}
A subset $\Xi\subset \Gamma^\vee$ is called a $\ZZ_p$-flat of codimension $k$, if there exists $\{\gamma_1,...,\gamma_k\}\subset \Gamma$ expandable to a $\ZZ_p$-basis of $\Gamma$ and $\zeta_1,...,\zeta_k\in\boldsymbol\mu_{p^{\infty}}$ so that
$$\Xi=\{ \omega \in \Gamma^\vee\;\mid\; { \omega}(\gamma_i)=\zeta_i,i=1,...,k\}.$$
\end{definition}

\noindent This definition is due to Monsky: in \cite[\S1]{monsky}, he proves that $\ZZ_p$-flats generate the closed sets of a certain (Noetherian) topology on $\Gamma^\vee$. It turns out that in this topology the sets $\triangle_\xi$ are closed, and they are proper subsets (possibly empty) if $\xi\neq0$ (\cite[Theorem 2.6]{monsky}). Hence

\begin{mytheorem}[Monsky] \label{t:monsky}
Suppose $\cO$ is a discrete valuation ring finite over $\ZZ_p$ and $\xi\in\cO[[\Gamma]]$ is non-zero. Then the zero set $\triangle_{\xi}$ is a proper subset of $\Gamma^\vee$ and is a finite union of $\ZZ_p$-flats.
\end{mytheorem}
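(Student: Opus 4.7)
The plan is to pass to the power series presentation of $\La$ and argue by induction on $d$ via Weierstrass preparation. Choose a $\ZZ_p$-basis $\gamma_1,\ldots,\gamma_d$ of $\Gamma$ and set $T_i:=\gamma_i-1$, so that $\cO[[\Gamma]]\cong\cO[[T_1,\ldots,T_d]]$. A character $\omega\in\Gamma^\vee$ corresponds to a tuple $(\zeta_1,\ldots,\zeta_d)\in\boldsymbol\mu_{p^\infty}^d$ via $\zeta_i=\omega(\gamma_i)$, and $\omega(\xi)=\xi(\zeta_1-1,\ldots,\zeta_d-1)$. In these coordinates a codimension-$k$ $\ZZ_p$-flat is the locus where the values of $k$ elements of some $\ZZ_p$-basis of $\Gamma$ are prescribed.

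For $d=1$, I would invoke the classical $p$-adic Weierstrass preparation: $\xi=p^a\,u(T)\,P(T)$ with $u\in\cO[[T]]^\times$ and $P$ distinguished of degree $n$. Since $\omega(u)\in\cO_\omega^\times$ (the constant term of $u$ is a unit of $\cO$ and $\zeta-1$ lies in the maximal ideal of $\cO_\omega$) and $p^a\neq 0$, we see $\omega(\xi)=0$ iff $P(\zeta-1)=0$, so $\triangle_\xi$ has at most $n$ elements. This set is a finite union of codimension-$1$ flats (each being a single point), hence strictly contained in the infinite set $\Gamma^\vee$.

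For the inductive step from $d-1$ to $d$, perform a preliminary change of $\ZZ_p$-basis of $\Gamma$ -- for instance, replacing $\gamma_i$ ($i<d$) by $\gamma_i\gamma_d^{N_i}$ for appropriately large $N_i$ -- whose dual permutes $\ZZ_p$-flats among themselves and after which $\xi$ is regular in $T_d$. Multivariate Weierstrass preparation then yields $\xi=u\cdot P$ with $u\in\cO[[T_1,\ldots,T_d]]^\times$ and $P=T_d^s+c_{s-1}T_d^{s-1}+\cdots+c_0$, all $c_i\in\cO[[T_1,\ldots,T_{d-1}]]$ in the maximal ideal. Since units never vanish under characters, $\triangle_\xi=\triangle_P$, and one stratifies over $(\Gamma/\gamma_d^{\ZZ_p})^\vee$: on $\bigcap_i\triangle_{c_i}$ (a finite union of lower-codimension flats by the inductive hypothesis) the equation collapses to $(\zeta_d-1)^s=0$, giving flats of higher codimension in $\Gamma^\vee$; on the complement a non-zero polynomial of degree $\le s$ in $\zeta_d-1$ has at most $s$ roots.

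The hard part is assembling the complementary stratum into \emph{finitely many} $\ZZ_p$-flats, since a priori the $\le s$ admissible values of $\zeta_d$ vary with the base character $\omega'$. Monsky's resolution exploits that each valid $\zeta_d$ must itself lie in $\boldsymbol\mu_{p^\infty}$ and combines this with the Noetherianity of $\cO[[T_1,\ldots,T_{d-1}]]$ to organize the root data into finitely many algebraic families, each of which descends (by a further application of induction) to a finite union of $\ZZ_p$-flats. This stratification-and-descent argument is the technical content of Lemma 1.5 and Theorem 2.6 of \cite{monsky}, and simultaneously supplies the finiteness of the decomposition and the strict inclusion $\triangle_\xi\subsetneq\Gamma^\vee$.
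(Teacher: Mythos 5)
The paper does not reprove this result; it cites \cite[Lemma 1.5 and Theorem 2.6]{monsky} and uses the statement as a black box, so there is no proof in the paper to compare against. Your sketch outlines a reasonable Weierstrass-preparation induction and correctly isolates the crux --- showing that, over the stratum where the coefficients $c_i$ do not all vanish, the at most $s$ admissible values of $\zeta_d$ (which a priori vary with the base character) still organize into finitely many $\ZZ_p$-flats --- but you defer exactly that step to Monsky, so the argument is not self-contained.

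Two smaller points. First, the ``proper subset'' claim needs none of the Weierstrass machinery: since $\bar\QQ_p[\Gamma_n]$ is a finite product of copies of $\bar\QQ_p$ indexed by the characters of $\Gamma_n$, the hypothesis $\omega(\xi)=0$ for all $\omega\in\Gamma^\vee$ would force the image of $\xi$ in $\cO[\Gamma_n]\subset\bar\QQ_p[\Gamma_n]$ to vanish for every $n$, hence $\xi=0$; this is precisely the argument the paper invokes (via the decomposition \eqref{e:decQinfty}) at the end of the proof of Lemma~\ref{l:xitorsionHom}. Second, you should justify that the flat-preserving basis change $\gamma_i\mapsto\gamma_i\gamma_d^{N_i}$ can in fact be chosen to make $\xi$ regular in $T_d$: this substitution corresponds to $T_i\mapsto(1+T_i)(1+T_d)^{N_i}-1$, not to the familiar Weierstrass normalization $T_i\mapsto T_i+T_d^{N_i}$ (which would achieve regularity easily but does \emph{not} carry $\ZZ_p$-flats to $\ZZ_p$-flats), and the required regularity after a flat-preserving change is itself part of what Monsky's Lemma 1.5 establishes.
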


\subsubsection{Structure of $Q_\infty$} \label{ss:qorbit}
The group $\Gal(\bar\QQ_p/\QQ_p)$ acts on $\Gamma^\vee$ by $(\sigma\cdot\omega)(\gamma):=\sigma(\omega(\gamma))$. Let $[\omega]$ denote the $\Gal(\bar\QQ_p/\QQ_p)$-orbit of $ \omega$. Attached to any character $ \omega\in\Gamma^\vee_n$ there is an idempotent
\begin{equation} \label{e:idempotent} e_\omega:=\frac{1}{|\Gamma_n|}\sum_{\gamma\in\Gamma_n} \omega(\gamma^{-1})\gamma\in\bar\QQ_p[\Gamma_n]\,. \end{equation}
Accordingly, we get the decomposition
$$\QQ_p[\Gamma_n]=\bar\QQ_p[\Gamma_n]^{\Gal(\bar\QQ_p/\QQ_p)}=\big(\prod_{\omega\in\Gamma^\vee_n}e_\omega\bar\QQ_p[\Gamma_n]\big)^{\Gal(\bar\QQ_p/\QQ_p)}=\prod_{[ \omega]\subset \Gamma^\vee_n}  E_{[\omega]},$$
where $[\omega]$ runs through all the ${\Gal(\bar\QQ_p/\QQ_p)}$-orbits of $\Gamma^\vee_n$ and
$$E_{[\omega]}:=(\prod_{\chi\in [\omega]}e_\chi\bar\QQ_p[\Gamma_n])^{\Gal(\bar\QQ_p/\QQ_p)}.$$
Observe that the homomorphism $\omega\colon\QQ_p[\Gamma_n]\rightarrow\bar\QQ_p$ induces an isomorphism $E_{[\omega]}\simeq E_\omega$ (the inverse being given by $1\mapsto\sum_{\sigma\in Gal(E_\omega/\QQ_p)}\sigma(e_\omega)=(e_\chi)_{\chi\in [\omega]}$).

Since $\pi^n_m(e_\omega)$ equals $e_{\omega'}$ if $\omega=\omega'\circ\pi^n_m$ and is $0$ otherwise, we have the commutative diagram
\[\begin{CD}  \QQ_p[\Gamma_n] @>>> \prod_{[ \omega]\subset \Gamma^\vee_n}  E_{[\omega]}\\
   @VV{\pi_m^n}V @VVV \\
\QQ_p[\Gamma_m] @>>> \prod_{[ \omega]\subset \Gamma^\vee_m}  E_{[\omega]}  \end{CD}\]
where the right vertical arrow is the natural projection by the inclusion $\Gamma^\vee_n\hookrightarrow\Gamma^\vee_m$.
It follows that we have identities
\begin{equation} \label{e:decQinfty} Q_\infty=\lim_{\stackrel{\leftarrow}{n}}Q_n\simeq\prod_{[\omega]\subset\Gamma^\vee}E_{[\omega]} \end{equation}
so that
\begin{equation} \label{e:torsQinfty} Q_\infty[\lambda]=\prod_{[\omega]\subset \triangle_\lambda} E_{[\omega]}
\end{equation}
for all $\lambda\in\La$ (here $Q_\infty[\lambda]$ denotes the $\lambda$-torsion subgroup).

\subsubsection{The map $\Psi$} \label{ss:Psi}
Let $\fb$ be a finitely generated torsion $\La$-module: so is $ \Hom_{\Lambda}(\fb,Q_\infty/\La)$.
We assume that $\xi\cdot \fb=0$, for some non-zero $\xi\in\Lambda$. Let $\triangle_\xi^c:=\Gamma^\vee-\triangle_\xi$ denote the complement of $\triangle_\xi$. From \eqref{e:decQinfty} and \eqref{e:torsQinfty} one deduces the direct sum decomposition
$$Q_\infty=Q_\infty[\xi]\oplus Q_\infty^c\,,$$
where $Q_\infty^c=\prod_{[\omega]\subset \triangle_\xi^c} E_{[\omega]}$.
Let $\varpi\colon Q_\infty\rightarrow Q_\infty^c$ be the natural projection and put $\La^c:=\varpi(\La)$ (here $\La$ is thought of as a subset of $Q_\infty$ via the maps $\ZZ_p[\Gamma_n]\hookrightarrow\QQ_p[\Gamma_n]$).

\begin{lemma} \label{l:xitorsionHom} We have a $\Lambda$-isomorphism
$$\Hom_{\La}(\fb,Q_\infty^c/\La^c) \simeq \Hom_{\Lambda}(\fb,Q(\Lambda)/\Lambda).$$
\end{lemma}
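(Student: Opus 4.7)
The plan is to exploit the hypothesis $\xi\fb=0$ to reduce both sides of the claimed isomorphism to $\Hom_\La(\fb,\La/\xi\La)$, by computing the $\xi$-torsion of each target. Indeed, if $\xi\fb=0$ then any $\La$-morphism $\fb\to M$ necessarily lands in $M[\xi]$ for any $\La$-module $M$, so it suffices to exhibit a $\La$-isomorphism $(Q_\infty^c/\La^c)[\xi]\simeq (Q(\La)/\La)[\xi]$.

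The right-hand side is routine: $(Q(\La)/\La)[\xi]=\xi^{-1}\La/\La$, which is $\La$-isomorphic to $\La/\xi\La$ via $\xi^{-1}\lambda+\La\mapsto\lambda+\xi\La$ (well-defined and bijective since $\La$ is a domain and $\xi\neq 0$).

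For the left-hand side, the key observation is that $\xi$ acts as a unit on $Q_\infty^c$. This is immediate from the product decomposition $Q_\infty^c=\prod_{[\omega]\subset\triangle_\xi^c}E_{[\omega]}$ of \S\ref{ss:qorbit}: in each field factor $E_{[\omega]}$ the image of $\xi$ is the nonzero element $\omega(\xi)$, hence invertible. Next, I claim $\varpi$ restricts to an isomorphism $\La\iso\La^c$. Surjectivity holds by definition of $\La^c$, and for injectivity note that $\La$ embeds into $Q_\infty$ (via the inclusions $\La_n\hookrightarrow Q_n$), so an element $\lambda\in\La\cap\ker\varpi=\La\cap Q_\infty[\xi]$ satisfies $\xi\lambda=0$ in $\La$, whence $\lambda=0$ since $\La$ is a domain. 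Taking $\xi$-torsion in $0\to\La^c\to Q_\infty^c\to Q_\infty^c/\La^c\to 0$, the snake lemma together with $Q_\infty^c[\xi]=Q_\infty^c/\xi Q_\infty^c=0$ and $\La^c[\xi]=0$ then yields $(Q_\infty^c/\La^c)[\xi]\simeq\La^c/\xi\La^c\simeq\La/\xi\La$. Combining both identifications gives the desired $\La$-isomorphism.

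No step is deep; the only point that requires a moment's care is the injectivity of $\varpi|_\La$, which relies on the embedding $\La\hookrightarrow Q_\infty$ coming from the componentwise inclusions $\La_n\hookrightarrow Q_n$, plus the fact that $\La$ is an integral domain. Everything else is a formal consequence of the decomposition of $Q_\infty$ recalled in \S\ref{ss:qorbit}.
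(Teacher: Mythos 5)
Your proof is correct and takes essentially the same route as the paper: reduce to $\xi$-torsion on both sides, use the fact that $\varpi(\xi)$ is a unit in $Q_\infty^c$, and prove that $\varpi\colon\La\to\La^c$ is an isomorphism. The only cosmetic differences are that you package the identification $(Q_\infty^c/\La^c)[\xi]\simeq\La^c/\xi\La^c$ via the snake lemma where the paper writes it directly as $\xi^{-1}\La^c/\La^c$ (the two are the same module, identified by multiplication by $\xi$), and for injectivity of $\varpi|_\La$ you appeal directly to the embedding $\La\hookrightarrow Q_\infty\simeq\prod E_{[\omega]}$ where the paper evaluates at all characters and invokes Monsky's theorem — but the paper itself remarks that either justification suffices.
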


\begin{proof}
Since $\fb$ is annihilated by $\xi$, the image of each $\eta\in\Hom_{\La}(\fb,Q_\infty^c/\La^c)$ is contained in $(Q_\infty^c/\La^c)[\xi]\,.$
Note that, since $\omega(\xi)\not=0$ for every $\omega\in\triangle_\xi^c$, the element $\varpi(\xi)$ is a unit in $Q_\infty^c$. Denote
$$\xi^{-1}\La^c:=\{x\in Q_\infty^c\;\mid\; \xi\cdot x\in\La^c\}.$$
Then
$$(Q_\infty^c/\La^c)[\xi]=\xi^{-1}\La^c/\La^c$$
and hence
$$\Hom_{\La}(\fb,Q_\infty^c/\La^c)=\Hom_{\La}(\fb,\xi^{-1}\La^c/\La^c).$$
Similarly,
$$\Hom_{\La}(\fb,Q(\La)/\La)=\Hom_{\La}(\fb,\xi^{-1}\La/\La).$$
To conclude the proof, it suffices to show that $\varpi\colon\La\rightarrow \La^c$ is an isomorphism, because then so is the induced map
$$\xi^{-1}\Lambda/\Lambda\,\lr \xi^{-1}\La^c/\La^c.$$
Since $\La^c=\varpi(\La)$ by definition, we just need to check injectivity. Suppose $\varpi(\epsilon)=0$ for some $\epsilon\in\Lambda$. Then $\omega(\epsilon)=0$ for every $\omega\not\in\triangle_{\xi}$, and hence $\omega(\xi\epsilon)=0$ for every $ \omega\in\Gamma^\vee$. Monsky's theorem (or, alternatively, the isomorphism \eqref{e:decQinfty}) implies that $\xi\epsilon=0$ and hence $\epsilon=0$.
\end{proof}

Let
\begin{equation} \label{e:upsilondef} \Upsilon\colon\Hom_{\La}(\fb,Q_\infty/\La)\,\lr\Hom_{\La}(\fb,Q_\infty^c/\La^c)\end{equation}
be the morphism induced from $\varpi$. By composition of the isomorphism of Lemma \ref{l:xitorsionHom} with $\Upsilon$, we deduce the $\Lambda$-morphism
$$\Psi\colon\Hom_{\La}(\fb,Q_\infty/\La)\,\lr\Hom_{\Lambda}(\fb,Q(\Lambda)/\Lambda).$$


\begin{subsection}{Proof of the algebraic functional equation} \label{se:al}

\subsubsection{Non-simple annihilator} \label{ss:simple}
We start the proof of case (1) of Theorem \ref{t:al} by reformulating our hypothesis
\vskip5pt
\noindent
({\bf NS}): {\em{$\xi$ is not divisible by any simple element}}.

\begin{lemma} \label{l:hypH}
Hypothesis {\em (}{\bf NS}{\em)} holds if and only if $\triangle_{\xi}$ contains no codimension one $\ZZ_p$-flat.
\end{lemma}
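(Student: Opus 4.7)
The plan is to prove the contrapositive: some simple element $f_{\gamma,\zeta}$ divides $\xi$ if and only if $\triangle_\xi$ contains a codimension one $\ZZ_p$-flat. One direction is immediate. If $f_{\gamma,\zeta}\mid\xi$ then for every $\omega\in\Gamma^\vee$ with $\omega(\gamma)=\zeta$ the factor $\gamma-\zeta$ of $f_{\gamma,\zeta}$ maps to zero, so $\omega(\xi)=0$; since $\gamma\in\Gamma-\Gamma^p$ extends to a $\ZZ_p$-basis of $\Gamma$, the set $\{\omega:\omega(\gamma)=\zeta\}$ is a codimension one $\ZZ_p$-flat contained in $\triangle_\xi$.

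For the substantive converse, I would suppose $\Xi:=\{\omega\in\Gamma^\vee:\omega(\gamma)=\zeta\}\subset\triangle_\xi$ with $\gamma\in\Gamma-\Gamma^p$, extend $\gamma$ to a $\ZZ_p$-basis of $\Gamma$, and choose a complementary subgroup $H\cong\ZZ_p^{d-1}$ so that $\Gamma=\gamma^{\ZZ_p}\oplus H$. Setting $\cO_\zeta:=\ZZ_p[\zeta]$ and $\La_\zeta:=\cO_\zeta[[\Gamma]]=\cO_\zeta\otimes_{\ZZ_p}\La$ (which is free of finite rank over $\La$), the substitution $\gamma\mapsto\zeta$ identifies $\La_\zeta/(\gamma-\zeta)$ with $\cO_\zeta[[H]]$. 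Every $\omega\in\Xi$ factors through this quotient, giving a $\cO_\zeta$-linear character determined by $\omega|_H\in H^\vee$; conversely every $\chi\in H^\vee$ extends to an element of $\Xi$ by sending $\gamma$ to $\zeta$, so restriction yields a bijection $\Xi\iso H^\vee$. The hypothesis $\Xi\subset\triangle_\xi$ therefore says that the image $\bar\xi\in\cO_\zeta[[H]]$ of $\xi$ is annihilated by every continuous character of $H$, and Theorem \ref{t:monsky} applied to the Iwasawa algebra $\cO_\zeta[[H]]$ forces $\bar\xi=0$, i.e.\ $(\gamma-\zeta)\mid\xi$ in $\La_\zeta$.

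A Galois-descent argument then finishes the proof. The group $G:=\Gal(\QQ_p(\zeta)/\QQ_p)$ acts on $\La_\zeta$ through the coefficient ring, fixes $\xi\in\La$, and sends $\gamma-\zeta$ to $\gamma-\sigma(\zeta)$, so each conjugate $\gamma-\sigma(\zeta)$ divides $\xi$. These factors are pairwise coprime in the UFD $\La_\zeta$: for $\sigma\neq\tau$ the element $\tau(\zeta)-\sigma(\zeta)$ is a nonzero constant in $\cO_\zeta$, hence represents a nonzero element of $\La_\zeta/(\gamma-\sigma(\zeta))\cong\cO_\zeta[[H]]$. Consequently their product $f_{\gamma,\zeta}$ divides $\xi$ in $\La_\zeta$, and since both $\xi$ and $f_{\gamma,\zeta}$ are $G$-invariant the quotient $\xi/f_{\gamma,\zeta}$ lies in $\La_\zeta^G=\La$, yielding $f_{\gamma,\zeta}\mid\xi$ in $\La$ and contradicting (\textbf{NS}). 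The step I expect to require the most care is the Monsky step: one must be precise in identifying the ``geometric'' characters parametrising $\Xi$ with the algebraic characters of the smaller ring $\cO_\zeta[[H]]$, so that the vanishing hypothesis on $\triangle_\xi\subset\Gamma^\vee$ really translates into the universal vanishing statement that Theorem \ref{t:monsky} needs.
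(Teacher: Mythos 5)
Your argument is correct and follows the same route as the paper: factor out $\gamma-\zeta$ after passing to $\ZZ_p[\zeta][[\Gamma]]$, apply Monsky's theorem to the quotient Iwasawa algebra in $d-1$ variables, and descend to $\La$. The only difference is presentational: you spell out the Galois-conjugate factors, their pairwise coprimality, and the identification $\La_\zeta^G=\La$, whereas the paper compresses this descent into a single sentence.
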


\begin{proof}
If $\xi$ is divisible by a simple element $f=f_{\gamma,\zeta}$, then
$\triangle_{\xi}$ contains $\triangle_{f}$ which is a union of the codimension one $\ZZ_p$-flats
$$\{ \omega \in \Gamma^\vee\;\mid\; \omega(\gamma)=\sigma(\zeta)\},\;\; \sigma\in\Gal(\QQ_p(\zeta)/\QQ_p).$$
Conversely, assume that $\triangle_{\xi}$ contains the codimension one $\ZZ_p$-flat
$$\Xi=\{ \omega \in \Gamma^\vee\;\mid\; { \omega}(\gamma)=\zeta\}.$$
Each $\omega\in\Xi$ factors through
$$\pi\colon\Lambda \longrightarrow \ZZ_p[\zeta][[\Gamma]]/(\gamma-\zeta)=\ZZ_p[\zeta][[\Gamma']]\,,$$
where $\Gamma'$ is the quotient $\Gamma/\gamma^{\ZZ_p}$, and vice versa every continuous character of $\Gamma'$ can be uniquely lifted to a character in $\Xi$. Thus the zero set of $\pi(\xi)\in\ZZ_p[\zeta][[\Gamma']]$ equals $(\Gamma')^\vee$. Then Monsky's theorem implies that $\pi(\xi)=0$ and hence is divisible by $\gamma-\zeta$ in $\ZZ_p[\zeta][[\Gamma]]$.
This implies that $\xi$ is divisible by $f_{\gamma,\zeta}$ in $\Lambda$.
\end{proof}

By Monsky's theorem, we have either $\triangle_{\xi}=\emptyset$ or $\triangle_{\xi}=\cup_j\,\Xi_j\,$, with
$$\Xi_j=\{ \omega \in \Gamma^\vee\;\mid\; { \omega}(\gamma_i^{(j)})=\zeta_i^{(j)},i=1,...,k^{(j)}\}. $$
In the second case, for all $j$ let $G_j$ be the $\ZZ_p$-submodule of $\Gamma$ generated by the $\gamma_i^{(j)}$'s, $i=1,...,k^{(j)}$: if ({\bf NS}) holds, each $G_j$ has rank at least 2. Hence, since there is just a finite number of $j$, it is possible to choose $\{\sigma_1^{(j)},\sigma_2^{(j)}\}_j$ so that $\sigma_i^{(j)}\in G_j-\Gamma^p$ and each pair $(\sigma_i^{(j)},\sigma_{i'}^{(j')})$ consists of $\ZZ_p$-independent elements unless $(i,j)=(i',j')$. Let $\varepsilon_i^{(j)}$ denote the common value that all characters in $\Xi_j$ take on $\sigma_i^{(j)}$ and write $\varphi_i:=\prod_j f_{\sigma_i^{(j)},\varepsilon_i^{(j)}}$. Then the coprimality criterion \eqref{e:coprimecriter} ensures that $\varphi_1$ and $\varphi_2$ are relatively prime. Moreover $\omega(\varphi_i)=0$ for all $\omega\in\triangle_{\xi}$, that is $\triangle_{\xi}\subseteq\triangle_{\varphi_i}$. By \eqref{e:torsQinfty} it follows
\begin{equation} \label{e:triangles} \varphi_i\cdot Q_\infty[\xi] =0  \text{ for both } i. \end{equation}

\begin{remark}{\em The case $\Delta_\xi\neq\emptyset$ can actually occur. For example, let $\gamma_1,\gamma_2$ be two distinct elements of a $\ZZ_p$-basis of $\Gamma$ and consider
$$\xi=\gamma_1-1+p(\gamma_2-1)+p^2(\gamma_1-1)(\gamma_2-1).$$
Then $\Delta_\xi=\{ \omega \mid \omega(\gamma_1)=\omega(\gamma_2)=1\}$ as one easily sees comparing $p$-adic valuations of the three summands $\omega(\gamma_1-1)$, $\omega(p(\gamma_2-1))$ and $\omega(p^2(\gamma_1-1)(\gamma_2-1))$. }
\end{remark}

\begin{lemma} \label{l:Upsilonpsn}
Assume $\xi\fb=0$ for some $\xi\in\La$ satisfying hypothesis {\em (}{\bf NS}{\em)}. Then the restriction of $\Psi$ to $\Phi(\fa^\sharp)$ is pseudo-injective.
\end{lemma}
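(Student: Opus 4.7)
The strategy is to identify $\ker\Psi$ explicitly and then exploit hypothesis (\textbf{NS}) through a coprime-annihilator argument, finishing with Lemma \ref{l:psn}.

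First I would compute the kernel of $\Psi$. Since $\Psi$ is defined as the isomorphism of Lemma \ref{l:xitorsionHom} composed with $\Upsilon$, one has $\ker\Psi=\ker\Upsilon$, and $\Upsilon$ is induced by the projection $\bar\varpi\colon Q_\infty/\La\to Q_\infty^c/\La^c$ coming from the decomposition $Q_\infty=Q_\infty[\xi]\oplus Q_\infty^c$. A short diagram chase using this decomposition together with the fact that $\varpi|_\La$ is injective, i.e.\ $\La\cap Q_\infty[\xi]=0$ (which was established inside the proof of Lemma \ref{l:xitorsionHom} by invoking Monsky's theorem), identifies $\ker\bar\varpi$ with $Q_\infty[\xi]$. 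Hence $\ker\Psi=\Hom_\La(\fb,Q_\infty[\xi])$.

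Next I would invoke (\textbf{NS}) to annihilate this kernel by coprime elements of $\La$. By Lemma \ref{l:hypH}, (\textbf{NS}) amounts to saying that $\triangle_\xi$ contains no codimension-one $\ZZ_p$-flat, and the construction carried out in the paragraph preceding \eqref{e:triangles} produces relatively prime $\varphi_1,\varphi_2\in\La$ satisfying $\varphi_i\cdot Q_\infty[\xi]=0$. Since the $\La$-action on $\Hom_\La(\fb,Q_\infty[\xi])$ is by post-composition in the target, $\varphi_1$ and $\varphi_2$ both annihilate $\ker\Psi$.

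Finally I would conclude. Set $M:=\Phi(\fa^\sharp)\cap\ker\Psi$. It is a submodule of $\Phi(\fa^\sharp)$, which is a quotient of the finitely generated $\La$-module $\fa^\sharp$; since $\La$ is Noetherian, $M$ is itself finitely generated. Being annihilated by the coprime pair $\varphi_1,\varphi_2$, Lemma \ref{l:psn} declares $M$ pseudo-null, which is exactly the required pseudo-injectivity of $\Psi|_{\Phi(\fa^\sharp)}$. I do not anticipate any real obstacle: the only mildly delicate point is the kernel identification in Step 1, but given the injectivity of $\varpi|_\La$ already recorded in Lemma \ref{l:xitorsionHom}, it amounts to routine bookkeeping.
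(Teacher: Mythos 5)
Your proof is correct and follows essentially the same route as the paper: identify $\ker\Psi=\ker\Upsilon$ with $\Hom_\La(\fb,\ker\bar\varpi)$, observe that $\ker\bar\varpi$ (isomorphic to, or a quotient of, $Q_\infty[\xi]$) is killed by the coprime elements $\varphi_1,\varphi_2$ built from ({\bf NS}), and then finish by intersecting with the finitely generated module $\Phi(\fa^\sharp)$ and invoking Lemma \ref{l:psn}. One small point you should add: the elements $\varphi_1,\varphi_2$ are only constructed in the paragraph preceding \eqref{e:triangles} under the assumption $\triangle_\xi\neq\emptyset$; the paper disposes of the case $\triangle_\xi=\emptyset$ separately by noting that then $\Upsilon$ is the identity. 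In your argument the same case is equally trivial (then $Q_\infty[\xi]=0$, so $\ker\Psi=0$), but you should say so explicitly rather than implicitly appealing to a construction that is not available in that degenerate situation. Your last step (intersection with $\Phi(\fa^\sharp)$ and Noetherianity of $\La$) is actually stated more carefully than in the paper, which leaves that reduction implicit, and your observation that $\ker\bar\varpi\cong Q_\infty[\xi]$ is correct — the injectivity of $\varpi|_\La$ (equivalently $Q_\infty[\xi]\cap\La=0$, by integrality of $\La$) upgrades the paper's ``quotient of $Q_\infty[\xi]$'' to an isomorphism, though either suffices for the annihilation claim.
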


\begin{proof} We just need to control the kernel of the map $\Upsilon$ of \eqref{e:upsilondef}. If $\Delta_\xi$ is empty then $\Upsilon$ is the identity and we are done. If not, we show that kernel and cokernel of $\Upsilon$ are annihilated by both $\varphi_1$ and $\varphi_2$. Consider the exact sequence
$$0\lr \Ker(\varpi) \lr Q_\infty/\La \lr Q_\infty^c/\La^c \lr 0$$
(where by abuse of notation we denote the map induced by $\varpi$ with the same symbol). This induces the exact sequence
$$\Hom_{\Lambda}(\fb,\Ker(\varpi)) \hookrightarrow \Hom_{\Lambda}(\fb,Q_\infty/\La) \to \Hom_{\Lambda}(\fb,Q_\infty^c/\La^c) \to\text{Ext}^1_{\Lambda}(\fb,\Ker(\varpi)).$$
Since $\Ker(\varpi)$ is a quotient of $Q_\infty[\xi]$, \eqref{e:triangles} yields $\varphi_i\cdot\Ker(\varpi)=0$. Therefore both $\Ker(\Upsilon)$ and $\Coker(\Upsilon)$ are annihilated by $\varphi_1$ and $\varphi_2$. (Note that we cannot say that $\Upsilon$ is a pseudo-isomorphism, because $\Hom_{\La}(\fb,Q_\infty/\La)$ is not a finitely generated $\La$-module: e.g., any group homomorphism $\fb\mapsto E_\omega$ for $\omega\in\Delta_\xi$ is also a $\La$-homomorphism.)
\end{proof}

Now we can complete the proof of Theorem \ref{t:al}.(1).

\begin{proof}[{\bf Proof of Theorem \ref{t:al}(1)}] To start with, assume $\xi\fb=0$. Then, by Lemmata \ref{l:kernPhi}, \ref{l:Upsilonpsn} and \ref{l:b&Iwadj}, we get a pseudo-injection $\fa^\sharp\rightarrow \fb$. Moreover, thanks to Lemma \ref{l:pcsuffic}, we may assume that $\fA$ is strongly controlled. By Lemma \ref{l:an} this implies that $\fa$ is killed by $\xi^\sharp$, which is also not divisible by simple elements. Exchanging the role of $\fa$ and $\fb$, we deduce a pseudo-injection  $\fb^\sharp\rightarrow \fa$ and therefore a pseudo-injection $\fb\rightarrow \fa^\sharp$. The Theorem now follows from Lemma \ref{l:pseudoinj}.

In the general case when $\xi\fb$ is pseudo-null but not 0, we can still assume that $\fA$ is strongly controlled. Let $\ff_n$ be the kernel of the morphism $\fb_n\rightarrow\fb_n$, $b\mapsto\xi b$, and construct two derived systems as in \S\ref{su:de} (but with $\ff_n$ playing the role of $\fc_n$). We get again the two exact sequences \eqref{e:c} and \eqref{e:e}. By hypothesis $\fd=\xi\fb\sim0$ and then Lemma \ref{l:e0f0} implies $\fc\sim0$. Hence
$$\fb\sim\ff\sim\fe^\sharp\sim\fa^\sharp$$
(where the central pseudo-isomorphism holds because $\xi\ff=0$).
\end{proof}

\subsubsection{The non-simple part}\label{ss:nons} For any finitely generated torsion $\La$-module $M$, we get a decomposition in simple and non-simple part
$$[M]=[M]_{si}\oplus [M]_{ns}.$$
in the following way: recalling that $[M]$ is a direct sum of components $\La/\xi_i^{r_i}\La$, we define $[M]_{si}$ as the sum over those $\xi_i$ which are simple and $[M]_{ns}$ as its complement.

\begin{corollary}\label{c:nons}
For any $\Gamma$-system $\fA$, we have
$$[\fa']_{ns}^\sharp= [\fb']_{ns}.$$
\end{corollary}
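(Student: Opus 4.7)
The plan is to reduce the corollary to Theorem \ref{t:al}(1) by building a derived $\Gamma$-system from $\fA'$ that isolates the non-simple parts of $\fa'$ and $\fb'$. Since $\fA'$ is strongly controlled by Lemma \ref{l:st}(1), an annihilator $\xi\in\La$ of $\fb'$ gives, via Lemma \ref{l:an}, the annihilator $\xi^\sharp$ of $\fa'$. I would factor $\xi=\xi_{si}\xi_{ns}$ with $\xi_{si}$ the product (with multiplicities) of the simple irreducible factors of $\xi$ and $\xi_{ns}$ the product of the non-simple ones. By \eqref{e:fs} the ideal $(\xi_{si})$ is fixed by $\cdot^\sharp$, so $\xi_{si}^\sharp$ is still a product of simple elements and the decomposition $\xi^\sharp=\xi_{si}^\sharp\cdot\xi_{ns}^\sharp$ separates the annihilator of $\fa'$ along the same simple/non-simple lines.

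The next step would be to apply the derived $\Gamma$-system construction of \S\ref{su:de} to $\fA'$ with $\fc_n:=\fa'_n[\xi_{si}^\sharp]$. A short duality computation using \eqref{e:twistpairing} and the perfectness of $\langle\,,\,\rangle_n$ shows that the annihilator of $\fc_n$ under the pairing is $\ff_n=\xi_{si}\fb'_n$. Because $\fr_m^n$ is injective on both $\fa'_m$ and $\fb'_m$ and commutes with multiplication by $\xi_{si}^\sharp$ and $\xi_{si}$, the resulting derived system $\fE=\{\fa'_n/\fc_n,\,\xi_{si}\fb'_n\}$ is again strongly controlled. Taking projective limits (with Mittag-Leffler for finite groups) identifies $\fe\simeq\xi_{si}^\sharp\fa'$ and $\ff\simeq\xi_{si}\fb'$, and $\xi_{ns}\cdot\ff=\xi\fb'=0$. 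Since $\xi_{ns}$ is by construction not divisible by any simple element, Theorem \ref{t:al}(1) applies to $\fE$ and produces a pseudo-isomorphism $\fe^\sharp\sim\ff$.

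To conclude I need $\fe\sim[\fa']_{ns}$ and $\ff\sim[\fb']_{ns}$. The key observation is that $\xi_{si}^\sharp$ already kills $[\fa']_{si}$: each simple summand $\La/(\eta^r)$ of $[\fa']$ is annihilated by $\xi^\sharp=\xi_{si}^\sharp\xi_{ns}^\sharp$, and since the simple $\eta$ is coprime to the non-simple $\xi_{ns}^\sharp$ this forces $\eta^r\mid\xi_{si}^\sharp$. Combining with the pseudo-isomorphism $[\fa']\hookrightarrow\fa'$ and the fact that multiplication by $\xi_{si}^\sharp$ on $[\fa']_{ns}$ is a pseudo-isomorphism (zero kernel by coprimality, cokernel $\bigoplus\La/(\xi_{si}^\sharp,\eta^r)$ pseudo-null) yields $\xi_{si}^\sharp\fa'\sim[\fa']_{ns}$; the symmetric argument on $\fb'$ gives $\xi_{si}\fb'\sim[\fb']_{ns}$. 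Chaining the pseudo-isomorphisms produces $[\fa']_{ns}^\sharp\sim[\fb']_{ns}$, and since both sides are direct sums of cyclic modules $\La/(\eta^r)$ with irreducible $\eta$, this pseudo-isomorphism must match the characteristic divisors and the two modules are equal as isomorphism classes. The main obstacle is the duality bookkeeping, i.e.\ verifying that the annihilator of $\fa'_n[\xi_{si}^\sharp]$ is precisely $\xi_{si}\fb'_n$ and that the strong control of $\fA'$ descends to $\fE$; once these are in place everything else is a straightforward application of Theorem \ref{t:al}(1) and the module-structural facts of \S\ref{su:ba}.
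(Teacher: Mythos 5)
Your proof is correct and follows essentially the same strategy as the paper: reduce to a strongly controlled system, form a derived $\Gamma$-system from $\fA'$ that kills the simple part, apply Theorem~\ref{t:al}(1) to it, and then identify the resulting modules with the non-simple parts $[\fa']_{ns}$, $[\fb']_{ns}$ up to pseudo-isomorphism. The minor differences are cosmetic: you take $\fc_n=\fa'_n[\xi_{si}^\sharp]$ and work with the $\fE$-system, whereas the paper takes $\fc_n=(\mu\eta_1)^\sharp\fa_n$ and works with the $\fC$-system; and you bypass the auxiliary coprime elements $\eta_1,\eta_3$ of the paper by extracting the simple/non-simple factorization directly from an arbitrary annihilator $\xi$ of $\fb'$ and arguing structurally that $\xi_{si}^\sharp\fa'\sim[\fa']_{ns}$ (rather than verifying pseudo-null kernels and cokernels of explicit maps).
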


\begin{proof}
By Lemma \ref{l:st}(1) we can lighten notation and assume that $\fA$ is strongly controlled (replacing $\fA$ by $\fA'$ if necessary).
Write $\chi(\fb)=(\lambda\mu)$, with $\chi([\fb]_{ns})=(\lambda)$ and $\chi([\fb]_{si})=(\mu)$. Since $\fb/[\fb]$ is pseudo-null, there are
$\eta_1,\eta_2\in \La$, coprime to each other and both coprime to $\chi(\fb)$, so that $\eta_1\cdot (\fb/ [\fb])=\eta_2\cdot (\fb/ [\fb])=0$ .
Then $\lambda\mu\eta_1\cdot \fb=\lambda\mu\eta_2\cdot \fb=0$. By Lemma \ref{l:an}
\begin{equation}\label{e:918}
(\lambda\mu\eta_1)^\sharp\cdot \fa=(\lambda\mu\eta_2)^\sharp\cdot \fa=0.
\end{equation}
This shows that $\chi(\fa)$ divides sufficiently high powers of both $(\lambda\mu\eta_1)^\sharp$ and $(\lambda\mu\eta_2)^\sharp$.
But since $\eta_1^{\sharp}$ and $\eta_2^\sharp$ are coprime, they must be both coprime to $\chi(\fa)$.

Set $\fc=(\mu\eta_1)^\sharp\cdot \fa$, $\fc_n=\fk_n(\fc)$ for each $n$, and form the $\Gamma$-systems $\fC$, $\fE$ by the construction in \S\ref{su:de}.
Let $\fd$, $\fe$, and $\ff$ be as in \eqref{e:c} and \eqref{e:e}.
Since $\fk_n(\fb)=\fb_n$, we have $\fk_n(\fd)=\fd_n$, and hence, by Lemma \ref{l:eq}, $\fC$ is also strongly controlled.
By \eqref{e:918}, $\lambda^\sharp\cdot \fc=0$, whence $\lambda\cdot\fd=0$ thanks to Lemma \ref{l:an}. Then case (1) of Theorem \ref{t:al} says $\fc^\sharp\sim \fd$.
To complete the proof it is sufficient to show that
$$\xymatrix{[\fb]_{ns}\times [\fa]_{ns}\ar[r]^-{\varphi\times\psi} & \fd\times \fc}$$
(where $\varphi$ and $\psi$ are respectively the restrictions to $[\fb]_{ns}$ and $[\fa]_{ns}$ of the projection $\fb\rightarrow \fd=\fb/\ff$ and of the multiplication by $(\mu\eta_1)^\sharp$ on $\fa$) is a pseudo-isomorphism.

The inclusion $\mu\eta_1\cdot \fb\subset \mu\cdot [\fb]\subset[\fb]_{ns}$ implies $\mu\eta_1\cdot \Coker ( \varphi)=0$. Furthermore, as $\lambda\cdot \Coker (\varphi)$ is a quotient of $\lambda\cdot\fd=0$, it must be trivial. Thus, by Lemma \ref{l:psn}, $\Coker (\varphi)$, being annihilated by coprime $\lambda$ and $\mu\eta_1$, is pseudo-null.
Next, we observe that $(\mu\eta_1)^\sharp\cdot \fe=(\mu\eta_1)^\sharp\cdot \fa/\fc=0$ yields $(\mu\eta_1)^\sharp\cdot \fe_n=0$. The duality implies that each $\ff_n$ is annihilated by $\mu\eta_1$, and by taking the projective limit we see that $\ff$ is also annihilated by $\mu\eta_1$.
It follows that $\Ker (\varphi)=[\fb]_{ns}\cap \ff=0$ as no nontrivial element of $[\fb]_{ns}$ is annihilated by $\mu\eta_1$ (because $\eta_1$ is coprime to $\chi(\fb)$ while $\mu$ is a product of simple elements). Similarly, $\Ker(\psi)=0$ as no nontrivial element of $[\fa]_{ns}$ is annihilated by $(\mu\eta_1)^\sharp$.
To show that $\Coker (\psi)$ is pseudo-null, we choose an $\eta_3\in\La$, coprime to $\lambda\eta_1$, so that $\eta_3^\sharp\cdot \fa\subset [\fa]$. Then \eqref{e:918} together with the fact that $\lambda$ is non-simple imply that $(\mu\eta_1\eta_3)^\sharp \cdot \fa\subset (\mu\eta_1)^\sharp \cdot [\fa]\subset[\fa]_{ns}$. This implies $(\mu\eta_1\eta_3)^\sharp \cdot \Coker (\psi)=0$.
Since $\lambda^\sharp\cdot\Coker (\psi)$, being a quotient of $\lambda^\sharp\cdot \fc=0$, is trivial and $\lambda^\sharp$, $(\mu\eta_1\eta_3)^\sharp$ are coprime, the proof is completed.
\end{proof}

\subsubsection{Twists of $\Gamma$-systems} \label{ss:twistgam}
Recall that associated to a continuous group homomorphism $\phi\colon\Gamma\rightarrow\ZZ_p^\times$, there is the ring isomorphism $\phi^*\colon\La\rightarrow\La$ defined in \S\ref{ss:iwH1}. Given such a $\phi$ and a $\Gamma$-system $\fA$, we can form
$$\fA(\phi):=\{\fa_n(\phi^{-1}),\fb_n(\phi),\langle\;,\;\rangle^\phi_n,\fr(\phi)_m^n,\fk(\phi)_m^n\;\mid\; n,m\in\NN\cup\{0\},\; n\geq m\}\,,$$
where $\fa_n(\phi^{-1})$ and $\fb_n(\phi)$ are twists as defined in \eqref{e:twistbyphi},
$$\langle x\otimes a_n, y\otimes b_n\rangle^\phi_n:=\langle \phi^*(x) a_n, ({\phi^{-1}})^*(y) b_n\rangle_n$$
and $\fr(\phi)_m^n$, $\fk(\phi)_m^n$ are respectively the maps induced by $1\otimes\fr_m^n$ and, $1\otimes\fk_m^n$. In general $\fA(\phi)$ won't be a $\Gamma$-system, because the action of $\Gamma$ on $\fa_n(\phi^{-1})$, $\fb_n(\phi)$ does not factor through $\Gamma_n$. However if we take $\fA$ twistable of order $k$ and $\phi$ so that
\begin{equation}\label{e:phicondition}
\phi(\Gamma)\subseteq 1+p^k\ZZ_p,
\end{equation}
then both $\fa_n(\phi^{-1})$ and $\fb_n(\phi)$ are still $\Gamma_n$-modules, as $\phi(\Gamma^{(n)})\subset 1+p^{n+k}\ZZ_p$ by \eqref{e:phicondition} and $p^{n+k}\fa_n=0$.

\begin{lemma}\label{l:nonsimpletwist}
For any $k\in\NN$ and $\xi\in\La-\{0\}$, there exists a continuous group homomorphism $\phi\colon\Gamma\rightarrow\ZZ_p^\times$ so that \eqref{e:phicondition} holds and both $\phi^*(\xi)$ and $({\phi^{-1}})^*(\xi)$ are not divisible by simple elements.
\end{lemma}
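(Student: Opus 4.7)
Equipping $\mathcal{H} := \Hom_{\text{cts}}(\Gamma, 1+p^k\ZZ_p) \cong \ZZ_p^d$ with its Haar probability measure, I plan to prove the existence of the desired $\phi$ by showing that the set $B$ of ``bad'' $\phi$'s---those for which $\phi^*(\xi)$ or $(\phi^{-1})^*(\xi)$ is divisible by some simple element---has Haar measure zero.

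The first step is a divisibility reformulation. Using that $\omega(\phi^*(\xi)) = (\omega\phi^{-1})(\xi)$ for any character $\omega$, together with the fact that $f_{\gamma_0,\zeta_0} = \prod_\sigma(\gamma_0-\sigma\zeta_0)$ has coprime linear factors after extension of scalars to $\ZZ_p[\zeta_0]$, a direct calculation yields
\[ f_{\gamma_0,\zeta_0} \mid \phi^*(\xi)\text{ in }\La \iff (\gamma_0-\phi(\gamma_0)^{-1}\zeta_0) \mid \xi\text{ in }\ZZ_p[\zeta_0][[\Gamma]], \]
with the analogous statement for $(\phi^{-1})^*(\xi)$ obtained by replacing $\phi(\gamma_0)^{-1}$ by $\phi(\gamma_0)$. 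Hence $\phi\in B$ iff $\phi(\gamma_0)^{\pm 1}\zeta_0$ is a ``Weierstrass root'' of $\xi$ in the $\gamma_0$-direction, for some choice of $\gamma_0\in\Gamma-\Gamma^p$ and $\zeta_0\in\boldsymbol\mu_{p^\infty}$.

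Next, for each fixed $\gamma_0$, writing $\La = A[[T_0]]$ with $T_0 := \gamma_0-1$ and $A := \ZZ_p[[\Gamma/\gamma_0^{\ZZ_p}]]$, the Weierstrass preparation theorem bounds the set $R(\gamma_0) := \{\eta\in\bar\ZZ_p^\times \text{ with } |\eta-1|<1 : (\gamma_0-\eta)\mid\xi\}$ by the $T_0$-Weierstrass degree of $\xi$, so it is finite. Since $\boldsymbol\mu_{p^\infty}\cap(1+p^k\ZZ_p)=\{1\}$ (for $p$ odd, or $k\geq 2$ if $p=2$), combining the finite $R(\gamma_0)$ with the countable group $\boldsymbol\mu_{p^\infty}$ yields a countable set $C(\gamma_0)\subset 1+p^k\ZZ_p$ of forbidden values for $\phi(\gamma_0)$; the corresponding bad $\phi$'s (for this fixed $\gamma_0$) form a countable union of cosets of the proper codimension-one closed subgroup $\{\phi\in\mathcal{H}:\phi(\gamma_0)=1\}$.

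The main obstacle is controlling the union of these bad sets over the uncountably many $\gamma_0\in\Gamma-\Gamma^p$. The key claim is that only countably many $\ZZ_p$-lines $\gamma_0^{\ZZ_p}$ can be \emph{resonant} (i.e., have $R(\gamma_0)\ne\emptyset$). Indeed, $R(\gamma_0)\ne\emptyset$ forces an irreducible factor of $\xi$ to be associate in $\La$ to a distinguished polynomial in $T_0$ over $A$ with a root in $\bar\ZZ_p$, and since $\xi$ has only finitely many irreducible factors in the UFD $\La$, a commutative-algebra argument shows each such factor can be written in polynomial form in $T_0$ for at most countably many $\gamma_0$-directions---indeed, typically only one line, as verified explicitly when $\xi$ has a simple factor $f_{\delta,\beta}$: in that case $\phi^*(f_{\delta,\beta}) = \phi(\delta)^{-n}\prod_\sigma(\delta-\phi(\delta)\sigma\beta)$ is simple iff $\phi(\delta) = 1$. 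Granting this countability, $B$ lies in a countable union of Haar-measure-zero cosets in $\mathcal{H}$, so $\mathcal{H}\setminus B$ is non-empty and any element of the complement provides the required $\phi$.
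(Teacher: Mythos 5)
Your proof takes the Haar-measure (equivalently Baire-category) route, which the paper only sketches abstractly in one sentence before giving a "more concrete approach"; so you and the paper do genuinely different things. The paper introduces simploids $f_{\gamma,\beta}$ with $\beta$ an arbitrary unit in a finite Galois extension of $\QQ_p$, observes they are irreducible and form a $\phi^*$-stable class, factors out the finitely many coprime simploid divisors $f_{\gamma_1,\beta_1},\dots,f_{\gamma_l,\beta_l}$ of $\xi\xi^\sharp$, and then simply picks $\phi$ so that no $\phi(\gamma_i)\beta_i$ is a root of unity. Your measure-theoretic framing reaches the same conclusion but needs the additional countability input you supply, which the paper's explicit simploid bookkeeping makes automatic.

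Two points in your argument deserve attention, one cosmetic and one a genuine gap. Cosmetically, the appeal to Weierstrass preparation to bound $R(\gamma_0)$ is both unnecessary and delicate: in $\La_{\cO}\simeq\cO[[T_1,\dots,T_d]]$ an element need not be regular in a prescribed direction $T_0$, so Weierstrass preparation is not automatically available. But you do not need it: if $\gamma_0-\eta$ divides $\xi$ in $\cO_E[[\Gamma]]$ with $\eta\in\bar\ZZ_p^\times$, then the full Galois norm $\prod_\sigma(\gamma_0-\sigma\eta)$ divides $\xi$ in $\La$ and is a simploid, hence irreducible; since $\xi$ has finitely many irreducible factors, $|R(\gamma_0)|$ is bounded with no analysis. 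More substantively, your key step---that only countably many lines $\gamma_0^{\ZZ_p}$ are resonant---is asserted via ``a commutative-algebra argument shows,'' but that phrase conceals precisely the content that makes the whole argument work. The correct justification is again via simploids: the argument above shows $R(\gamma_0)\neq\emptyset$ forces a simploid factor $f_{\gamma_0,\eta}$ of $\xi$, and a simploid (as the paper's criterion \eqref{e:coprimecriter} extends to) determines the $\ZZ_p$-line $\gamma_0^{\ZZ_p}$ uniquely, so the resonant lines number at most the irreducible factors of $\xi$---finitely many, not merely countably many. Once you fill in this step explicitly, the Haar/Baire argument closes; but as written it leans on a missing lemma at exactly the crux. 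In effect, making your ``commutative-algebra argument'' precise is tantamount to introducing the paper's simploid machinery, at which point the direct construction is shorter.
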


\begin{proof} First of all, note that $({\phi^{-1}})^*(\xi)$ is not divisible by any simple element if and only if the same holds for $({\phi^{-1}})^*(\xi)^\sharp=\phi^*(\xi^\sharp)$. So we just need to find $\phi$ such that $\phi^*(\xi\xi^\sharp)$ has no simple factor.
An abstract proof of the existence of such $\phi$ can be obtained by the Baire category theorem, observing that if $\lambda\in\La-\{0\}$ then $\Hom(\Gamma,\ZZ_p^\times)$ cannot be contained in $\cup_\omega\Ker\big(\phi\mapsto\omega(\phi^*(\lambda))\big)$, since all these kernels have empty interior. A more concrete approach is the following.

Call an element $\lambda\in\La$ a simploid if it has the form $\lambda=u\cdot f_{\gamma,\beta}$ where $u\in\Lambda^\times$ and
$$f_{\gamma,\beta}:=\prod_{\sigma\in \Gal(\QQ_p(\beta)/\QQ_p)} (\gamma-\sigma(\beta))$$
with $\gamma\in\Gamma-\Gamma^p$ and $\beta$ a unit in some finite Galois extension of $\QQ_p$. Simploids are easily seen to be irreducible, so by unique factorization any principal ideal $(\lambda)\subset\Lambda$ can be written as $(\lambda)=(\lambda)_s(\lambda)_n$ with no simploid dividing $(\lambda)_n$. Moreover, given any $\phi\colon\Gamma\rightarrow\ZZ_p^\times$, the equality
$$\phi^*(f_{\gamma,\beta})=\phi(\gamma)^{-[\QQ_p(\beta):\QQ_p]}\cdot f_{\gamma,\phi(\gamma)\beta}$$
shows that the set of simploids is stable under the action of $\phi$, so that $(\phi^*(\lambda))_s=\phi^*((\lambda)_s)$.
Thus, if $f_{\gamma_1,\beta_1},...,f_{\gamma_l,\beta_l}$ is a maximal set of coprime simploid factors of $\xi\xi^\sharp$ and if $\phi$ is chosen so that no $\phi(\gamma_i)\beta_i$, $i=1,...,l$, is a root of unit, then $\phi^*(\xi\xi^\sharp)$ is not divisible by any simple element.
\end{proof}

\begin{proof}[{\bf Proof of Theorem \ref{t:al}(2)}]
Let $\xi$ be a generator of $\chi(\fa)\chi(\fb)$ and let $\phi$ be as in Lemma \ref{l:nonsimpletwist}.
Then $\fA(\phi)$ also form a pseudo-controlled $\Gamma$-system with $\fa(\phi^{-1})=\varprojlim_n\fa_n(\phi^{-1})$ and $\fb(\phi)=\varprojlim_n\fb_n(\phi)$. By Lemma \ref{l:phi[]chi}, both $\chi(\fa(\phi^{-1}))$ and $\chi(\fb(\phi))$ are not divisible by simple elements, and hence $[\fa(\phi^{-1})]^\sharp=[\fa(\phi^{-1})]_{ns}^\sharp$ and $[\fb(\phi)]=[\fb(\phi)]_{ns}$. Therefore,
$$[\fa]^{\sharp}=[\fa(\phi^{-1})](\phi)^{\sharp}=[\fa(\phi^{-1})]^\sharp(\phi^{-1})=[\fb(\phi)](\phi^{-1})=[\fb],$$
where the first and the last equality are consequence of Lemma \ref{l:phi[]chi} and the third follows from Theorem \ref{t:al}(1) applied to $\fA(\phi)$.
\end{proof}

\end{subsection}

\subsubsection{Complete $\Gamma$-systems}
Now we assume that our original $\fA$ is just a part of a complete $\Gamma$-system which we still denote by $\fA$. The original $\fA$ is pseudo-controlled if and only if so is its complete system. Also, if the original $\fA$ is strongly controlled, then by replacing $\fa_F\times \fb_F$ by $\mathfrak{k}_F(\fa\times \fb)$ we can make the complete system strongly controlled without altering $\fa$ and $\fb$. So we shall assume that $\fA$ is strongly controlled.\\

First we assume that $\fa$ is annihilated by a simple element $\xi=f_{\gamma_1,\zeta}$ and extend $\gamma_1$ to a basis $\gamma_1,...,\gamma_d$ of $\Gamma$ over $\ZZ_p$.
Let $\Psi$ and $\Gamma'$ be the subgroups of $\Gamma$ with topological generators respectively $\gamma_1$ and $\{\gamma_2,...,\gamma_d\}$. Note that for $H\subset\Gamma$ a closed subgroup we shall write $H^{(n)}$ for $H^{p^n}$. Let $K_{n',n}$ denote the fixed field of the subgroup $\Psi^{(n)}\oplus(\Gamma')^{(n')}$ and write $\fa_{\infty,n}:=\varprojlim_{n'} \fa_{n',n}$, $\fb_{\infty,n}:=\varprojlim_{n'} \fb_{n',n}$ with the obvious meaning of indexes. They are $\Lambda$-modules. Let $K_{\infty,n}$ denote the subfield of $L$ fixed by  $\Psi^{(n)}$. Then the restriction of Galois action gives rise to a natural isomorphism $\Gamma'\simeq\Gal(K_{\infty,n}/K_{0,n})$. Write $\La':=\La(\Gamma')$. We shall view $\La'$ as a subring of $\La$.

Since $\fA$ is strongly controlled, $\fa_{\infty,n}=\fk_{\infty,n}(\fa)$ and $\fb_{\infty,n}=\fk_{\infty,n}(\fb)$ are finitely generated over $\La$, and hence finitely generated over $\La'$, as they are fixed by $\Psi^{(n)}$.

\begin{proposition}\label{p:twistable} Suppose $\fA$ is a strongly controlled complete $\Gamma$-system such that \begin{enumerate}
\item $\fa$ and $\fb$ are annihilated by the simple element $\xi=f_{\gamma_1,\zeta}$ defined above, with $\zeta$ of order $p^l$;
\item $\fa_{\infty,m}$ and  $\fb_{\infty,m}$ are torsion over $\La'$ for some $m\geq l$. \end{enumerate}
Then there exists some non-trivial $\eta\in\Lambda'$ so that $\eta\cdot \fA$ is twistable.
\end{proposition}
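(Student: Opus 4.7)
The plan is to combine the cyclotomic structure of $\xi$, which forces $\gamma_1^{p^m}$ to act trivially on $\fa_{\infty,n}$ for $n\geq m$, with the norm relation of axiom ($\Gamma$-3) lifted to the intermediate tower $K_{\infty,m}\subset K_{\infty,n}$ of the complete system, in order to show that any $\eta\in\La'$ killing $\fa_{\infty,m}$ automatically satisfies $p^{n-m}\eta\fa_n=0$ for every $n\geq m$.

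First I would note that since $\zeta$ has order exactly $p^l$, the element $\xi=f_{\gamma_1,\zeta}$ equals the cyclotomic polynomial $\Phi_{p^l}(\gamma_1)$ and hence divides $\gamma_1^{p^l}-1$ in $\La$. As $\xi$ annihilates $\fa$, it follows that $\gamma_1^{p^l}$ acts as the identity on $\fa$ and, by functoriality, on every $\fa_{\infty,n}$ and every $\fa_n$. In particular, for $n\geq m\geq l$ the subgroup $\Psi^{(m)}/\Psi^{(n)}\subset\Gamma/\Psi^{(n)}$, which has order $p^{n-m}$ and is generated by the image of $\gamma_1^{p^m}$, acts trivially on $\fa_{\infty,n}$. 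Since $\fa_{\infty,m}$ is finitely generated and torsion over the Noetherian domain $\La'$, its $\La'$-annihilator is a nonzero ideal, and I pick any nonzero $\eta$ inside it.

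The key step is the identification of the kernel of the (surjective, by strong control) transition map $\fk_{\infty,m}^{\infty,n}\colon\fa_{\infty,n}\to\fa_{\infty,m}$. The extension of axiom ($\Gamma$-3) to the sub-tower $K_{\infty,m}\subset K_{\infty,n}$ gives
\[\fr_{\infty,m}^{\infty,n}\circ\fk_{\infty,m}^{\infty,n}=\Nm_{\Psi^{(m)}/\Psi^{(n)}},\]
and by the triviality observed above the right-hand side is just multiplication by $p^{n-m}$ on $\fa_{\infty,n}$. Because strong control also forces $\fr_{\infty,m}^{\infty,n}$ to be injective, this identifies $\ker(\fk_{\infty,m}^{\infty,n})$ with the $p^{n-m}$-torsion subgroup $\fa_{\infty,n}[p^{n-m}]$. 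Applying $\fk_{\infty,m}^{\infty,n}$ to $\eta\fa_{\infty,n}$ yields $\eta\fa_{\infty,m}=0$, so $\eta\fa_{\infty,n}\subseteq\fa_{\infty,n}[p^{n-m}]$, i.e.\ $p^{n-m}\eta\fa_{\infty,n}=0$. Composing with the surjection $\fa_{\infty,n}\twoheadrightarrow\fa_{n,n}=\fa_n$ (again from strong control) one obtains $p^{n-m}\eta\fa_n=0$ for every $n\geq m$.

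For the finitely many $n<m$, $\fa_n$ is a finite $p$-group, hence annihilated by some $p^{M_n}$; setting $k:=\max\bigl(\{-m\}\cup\{M_n-n\colon 0\leq n<m\}\bigr)$ one gets $p^{n+k}\eta\fa_n=0$ for every $n$, so $\eta\cdot\fA$ is twistable of order $k$. The main obstacle is the identification of $\ker(\fk_{\infty,m}^{\infty,n})$ with $\fa_{\infty,n}[p^{n-m}]$: this is where both the norm relation and the injectivity derived from strong control are used in an essential way; once it is in place, the descent to $\fa_n$ and the bookkeeping for small $n$ are formal.
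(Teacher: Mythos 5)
Your proof is correct and follows essentially the same line of reasoning as the paper: identify $\xi$ with a cyclotomic polynomial in $\gamma_1$ so that $\gamma_1^{p^m}$ acts trivially for $m\geq l$, invoke the norm relation of $(\Gamma$-$3)$ along the $\Psi$-direction to reduce $\Nm_{\Psi^{(m)}/\Psi^{(n)}}$ to multiplication by $p^{n-m}$, and use strong control (surjectivity of $\fk$, injectivity of $\fr$) to propagate the annihilator $\eta$ from $\fa_{\infty,m}$ to the conclusion $p^{n-m}\eta\fa_n=0$. The paper carries out the computation at each finite level $\fa_{n',n}$ and only specializes $n'=n$ at the end, whereas you assemble the limit module $\fa_{\infty,n}$ and the limit maps $\fk_{\infty,m}^{\infty,n}$, $\fr_{\infty,m}^{\infty,n}$ first; this is a cosmetic difference. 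The paper also chooses $\eta$ to kill $\fb_{\infty,m}$ as well — you only kill $\fa_{\infty,m}$, which in fact suffices since twistability is stated purely in terms of the $\fa_n$ (and $\eta^\sharp\fb_n$ is dual to $\eta\fa_n$, so the bound on $\fa_n$ already bounds the $\fb$-side).
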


Here $\eta\cdot\fA$ is the complete $\Gamma$-system as defined in Example \ref{eg:morf}. It is also strongly controlled if so is $\fA$.

\begin{proof} Since $\zeta$ is of order $p^l$, the action of $\gamma_1^{p^l}$ is trivial on both $\fa_{\infty,n}$ and $\fb_{\infty,n}$  for all $n$. Assume that $m\geq l$ and suppose both $\fa_{\infty,m}$ and $\fb_{\infty,m}$ are annihilated by some non-zero $\eta\in\La'$. Then $\eta\cdot \fa_{n',m}=0$ and $\eta\cdot \fb_{n',m}=0$ for all $n'$. Hence for $n\geq m$,
$$p^{n-m}\eta\fa_{n',n}=\fr^{n',n}_{n',m}(\fk^{n',n}_{n',m}(\eta\fa_{n',n}))=0$$
as $\gamma_1^{p^n}$ acts trivially on $\fa_{n',n}$. In particular, $p^{n-m}\eta\cdot\fa_n=0$ and by similar argument
$p^{n-m}\eta\cdot\fb_n=0$. Then choose $k$ so that $p^k\fa_i=p^k\fb_i=0$ for each $1\leq i< m$.
\end{proof}

\begin{corollary}\label{c:twistable}
Suppose $\fA$ satisfies the condition of {\em Proposition \ref{p:twistable}}. Then
$$\fa^\sharp\sim \fb.$$
\end{corollary}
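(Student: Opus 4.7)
The plan is to deduce Corollary \ref{c:twistable} from Theorem \ref{t:al}(2) by producing a pseudo-isomorphism of $\Gamma$-systems from $\fA$ to the twistable strongly-controlled system $\eta\cdot\fA$ furnished by Proposition \ref{p:twistable} and the remark immediately following it. The candidate I would use is the canonical morphism $\fA\to\eta\cdot\fA$ of Example \ref{eg:morf}, which on projective limits becomes multiplication by $\eta$ on the $\fa$-side and the inclusion $\eta^\sharp\fb\hookrightarrow\fb$ on the $\fb$-side (both identifications relying on the Mittag-Leffler condition, automatic for the systems of finite groups involved).

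To verify that this morphism is a pseudo-isomorphism, I would reduce everything, via Lemma \ref{l:psn}, to showing that $\eta$ and $\xi$ are coprime in $\La$: the kernel $\fa[\eta]$ is annihilated by both $\eta$ and $\xi$, while the cokernel $\fb/\eta^\sharp\fb$ is annihilated by $\eta^\sharp$ and $\xi$, using the identity $(\xi)=(\xi^\sharp)$ of \eqref{e:fs} and the fact that $\sharp$ preserves $\La'$ (inversion restricts to $\Gamma'$), so that $\eta^\sharp$ again lies there. The coprimality itself is the crux: irreducibility of the simple element $\xi=f_{\gamma_1,\zeta}$ means a common factor with $\eta$ would force $\xi\mid\eta$, whence $\omega(\eta)=0$ for every $\omega\in\Gamma^\vee$ satisfying $\omega(\gamma_1)=\zeta$. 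Since $\eta\in\La'=\ZZ_p[[\Gamma']]$, the value $\omega(\eta)$ depends only on the restriction $\omega|_{\Gamma'}$, which ranges over all of $(\Gamma')^\vee$ as $\omega$ varies within that flat; Monsky's Theorem \ref{t:monsky} applied over $\La'$ will then force $\eta=0$, contradicting its non-triviality.

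Granted coprimality, $\fA$ is pseudo-isomorphic to the twistable pseudo-controlled $\Gamma$-system $\eta\cdot\fA$, and Theorem \ref{t:al}(2) immediately delivers $\fa^\sharp\sim\fb$. The main obstacle is precisely the coprimality step: it is the one place where the argument genuinely exploits the interaction between $\La$ and the subring $\La'$ in which $\eta$ lives, and it is essential that $\xi$ involves $\gamma_1\notin\Gamma'$ so that Monsky's theorem can propagate the vanishing of $\eta$ on a codimension-one flat of $\Gamma^\vee$ to identical vanishing on all of $(\Gamma')^\vee$.
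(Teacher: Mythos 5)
Your proof is correct and follows exactly the route the paper takes: exhibit the canonical morphism $\fA\to\eta\cdot\fA$ of Example \ref{eg:morf} as a pseudo-isomorphism and invoke Theorem \ref{t:al}(2). The paper's own proof is a two-line assertion that $\fa[\eta]$ and $\fb/\eta^\sharp\fb$ are killed by $f_{\gamma_1,\zeta}$ and by $\eta$ (resp.\ $\eta^\sharp$) and so pseudo-null; it leaves the coprimality of $f_{\gamma_1,\zeta}$ and $\eta\in\La'$ unstated, whereas you supply a correct justification via Monsky's theorem applied over $\La'$ (one could equally observe that $f_{\gamma_1,\zeta}$ is a distinguished polynomial in $\gamma_1-1$ over $\La'$, so it cannot divide a nonzero element of $T_1$-degree zero, but your argument works as well).
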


\begin{proof} The morphism $\fA\rightarrow\eta\cdot\fA$ of Example \ref{eg:morf} in this case is a pseudo-isomorphism, because $\fa[\eta]$ and $\fb/\eta^\sharp\fb$ are both killed by $f_{\gamma_1,\zeta}$ and either $\eta$ or $\eta^\sharp$. Now apply Theorem \ref{t:al}(2). \end{proof}

\begin{proof}[{\bf Proof of Theorem \ref{t:al}(3)}] We may assume that $\fA$ is strongly controlled. Suppose $\fa$ is annihilated by $\xi\in\Lambda$, and hence $\fb$ is annihilated by $\xi^\sharp$. Write $\xi=\xi_1^{s_1}\cdot \cdots \cdot \xi_k^{s_k}$, where each $\xi_i$ is irreducible and $s_i$ is a positive integer. The proof is by induction on $k$.

First assume $k=1$. If $\xi$ is non-simple, then the theorem has been proved. Thus, we may assume that $\xi_1$ is simple and we proceed by induction on $s_1$. The case $s_1=1$ is Corollary \ref{c:twistable}.
If $s_1>1$ let $\fc_F:=\xi_1\cdot \fa_F$ and form the derived systems $\fC$ and $\fE$ as in \S\ref{su:de}. Note that both enjoy property ({\bf T}), as immediate from the sequences  \eqref{e:c} and \eqref{e:e}. Besides $\fC$ is strongly controlled and $\fc$ is annihilated by $\xi_1^{s_1-1}$, whence (as $\xi_1$ is simple) $[\fc]=[\fc]^\sharp=[\fd]$
by the induction hypothesis. We still have $\ff^0=0$, but we don't know if $\fe^0=0$. However, induction tells us that $[\fe/\fe^0]=[\ff]$, or equivalently, there is an injection $[\ff]\hookrightarrow [\fe]$. This actually implies an inclusion $[\fb]\hookrightarrow[\fa]$: to see it, write
$$[\fa]=(\La/\xi_1\La)^{a_1}\oplus (\La/\xi_1^2\La)^{a_2}\oplus \cdots \oplus (\La/\xi_1^{s_1}\La)^{a_{s_1}},$$
and
$$[\fb]=(\La/\xi_1\La)^{b_1}\oplus (\La/\xi_1^2\La)^{b_2}\oplus \cdots \oplus (\Lambda/\xi_1^{s_1}\La)^{b_{s_1}}.$$
Then
$$[\fc]= (\Lambda/\xi_1\La)^{a_2}\oplus \cdots \oplus (\Lambda/\xi_1^{s_1-1}\La)^{a_{s_1}},$$
and
$$[\fd]=(\Lambda/\xi_1\La)^{b_2}\oplus \cdots \oplus (\Lambda/\xi_1^{s_1-1}\La)^{b_{s_1}},$$
while
$$[\fe]= (\Lambda/\xi_1\La)^{a_1+a_2+\cdots +a_{s_1}},\; \;[\ff]= (\Lambda/\xi_1\La)^{b_1+b_2+\cdots +b_{s_1}}. $$
Thus, we have $a_1\geq b_1$ and $a_i=b_i$ for $1<i\leq s_1$. Then by symmetry, we also have $[\fa]\hookrightarrow[\fb]$, whence $[\fa]=[\fb]$ as desired. This proves the $k=1$ case.

For $k>1$, form again $\fC$ and $\fE$, this time setting $\fc_F:=\xi_1^{s_1}\fa_F$. Then induction yields $[\fc]^\sharp=[\fd]$ and $[\fe]^\sharp=[\ff]$. To conclude, use the decompositions $[\fa]=[\fc]\oplus[\fe]$, $[\fb]=[\fd]\oplus[\ff]$ which hold because in the sequences \eqref{e:c}, \eqref{e:e} the extremes have coprime annihilators.
\end{proof}

\end{section}


\begin{section}{The pairings}

In this section, $A$ and $B$ denote abelian varieties over the global field $K$.

\subsubsection{The Selmer groups}\label{sss:sel}
Let $i\colon A_{p^n}\hookrightarrow A$ be the group scheme of $p^n$-torsion of $A$.
The $p^n$-Selmer group $\Sel_{p^n}(A/K)$ is defined to be the kernel of the composition
\begin{equation}\label{e:nselmer} \begin{CD} \coh^1_{\mathrm{fl}}(K,A_{p^n}) @>{i^*}>> \coh^1_{\mathrm{fl}}(K,A)@>{loc_K}>> \bigoplus_v \coh^1_{\mathrm{fl}}(K_v,A)\,, \end{CD}\end{equation}
where $\coh_{\mathrm{fl}}^\bullet$ denotes the flat cohomology and $loc_K$ is the localization map to the direct sum of local cohomology groups over all places of $K$.
The same definition works over any finite extension $F/K$. Taking the direct limit as $n\rightarrow\infty$, we get
\begin{equation}\label{e:selmer} \Sel_{p^\infty}(A/F):=\Ker\big(\coh^1_{\mathrm{fl}}(F,A_{p^\infty})\lr \bigoplus_{\text{all}\ v}\coh^1_{\mathrm{fl}}(F_v,A)\big) \end{equation}
where $A_{p^{\infty}}$ is the $p$-divisible group associated with $A$. The Selmer group $\Sel_{p^\infty}(A/L)$ is then defined by taking the inductive limit over all finite subextensions. The Galois group $\Gamma$ acts on $ \Sel_{p^\infty}(A/L)$ turning it into a $\La$-module.

The Selmer group sits in an exact sequence
\begin{equation} \label{e:selmersha} 0 \lra \QQ_p /\ZZ_p\otimes_{\ZZ}A(F) \lra \Sel_{p^\infty}(A/F) \lra \Sha(A/F)[p^\infty] \lra 0 , \end{equation}
where
$$\Sha(A/F) := \Ker\big( \coh^1_{\mathrm{fl}}(F,A)\lr\bigoplus_v \coh^1_{\mathrm{fl}} (F_v,A)\big) $$
is the Tate-Shafarevich group for $A/F$.

\begin{subsection}{The height pairing} \label{su:d}
Let $A^t$ denote the dual abelian variety of $A$. Then we have the N\'eron-Tate height pairing
\begin{equation} \label{e:nthp} \tilde h_{A/K}\colon A(K)\times A^t(K)\lr \RR\,. \end{equation}
We briefly recall the definition of $\tilde h_{A/K}$: for details, see \cite[V, \S4]{lan83}. Let
$$P_A\longrightarrow A\times A^t$$
denote the Poincar\'e line bundle: then $\tilde h_{A/K}$ is the canonical height on $A\times A^t$ associated with the divisor class corresponding to $P_A$.

\begin{proposition}\label{p:2} Let $\phi\colon A\rightarrow B$ and $\phi^t\colon B^t\rightarrow A^t$ be an isogeny and its dual.
Then the following diagram is commutative:
\[ \begin{CD} \tilde h_{A/K}\colon & A(K) &\,\times & \,A^t(K)  @>>>  \RR\\
 &  @VV{\phi}V @AA{\phi^t}A @| \\
\tilde h_{B/K}\colon & B(K)&\,\times  & \,B^t(K)  @>>> \RR.
\end{CD} \]
\end{proposition}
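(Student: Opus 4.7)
The plan is to derive the proposition from the characterizing property of the dual isogeny together with the functoriality of the Néron--Tate height. Recall that the dual isogeny $\phi^t\colon B^t\to A^t$ is uniquely determined by the requirement that the two pullbacks of the Poincaré bundles $P_A$ on $A\times A^t$ and $P_B$ on $B\times B^t$ to $A\times B^t$ are canonically isomorphic, namely
$$(\phi\times\mathrm{id}_{B^t})^*P_B \;\cong\; (\mathrm{id}_A\times\phi^t)^*P_A$$
as line bundles (equivalently, as divisor classes) on $A\times B^t$. This is essentially the defining property of $\phi^t$ via the Picard functor $B^t=\Pic^0(B)$: pulling back $P_B$ along $\phi\times\mathrm{id}$ gives the family over $B^t$ corresponding to the composite $B^t\to A^t$, which is by definition $\phi^t$.

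First I would invoke the functoriality of the canonical height. For a morphism $f\colon X\to Y$ of smooth projective varieties over $K$ and a divisor class $D$ on $Y$, the canonical height attached to $f^*D$ satisfies $\hat h_{f^*D}(x)=\hat h_D(f(x))$ for every $x\in X(K)$, because $f^*$ is compatible with the limiting procedure defining the canonical height (see \cite[V, \S1--4]{lan83}). Applying this with $X=A\times B^t$, $Y=B\times B^t$, $f=\phi\times\mathrm{id}$, and $D=[P_B]$, evaluation at the point $(a,b^t)$ yields $\hat h_{(\phi\times\mathrm{id})^*[P_B]}(a,b^t)=\tilde h_{B/K}(\phi(a),b^t)$. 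Applying it with $X=A\times B^t$, $Y=A\times A^t$, $f=\mathrm{id}\times\phi^t$, and $D=[P_A]$, one gets similarly $\hat h_{(\mathrm{id}\times\phi^t)^*[P_A]}(a,b^t)=\tilde h_{A/K}(a,\phi^t(b^t))$.

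Combining these two identities with the isomorphism of pullback line bundles recalled above — which identifies the two divisor classes on $A\times B^t$, and therefore forces the two canonical heights attached to them to coincide — gives
$$\tilde h_{B/K}(\phi(a),b^t)=\tilde h_{A/K}(a,\phi^t(b^t))$$
for every $a\in A(K)$ and $b^t\in B^t(K)$, which is exactly the commutativity of the diagram. The only delicate point is the identification of Poincaré bundles under $\phi$; everything else is formal from the functorial properties of the Néron--Tate height. Since this identification is the standard characterization of $\phi^t$, it should present no real obstacle beyond citing the relevant references in \cite{lan83}.
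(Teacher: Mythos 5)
Your proof is correct and follows essentially the same route as the paper: both arguments reduce the statement to (i) the functoriality of the canonical height under pullback (the paper cites \cite[Proposition V.3.3]{lan83}, you cite the same chapter) and (ii) the isomorphism $(1\times\phi^t)^*P_A\simeq(\phi\times1)^*P_B$ on $A\times B^t$, which the paper attributes to \cite[\S13]{mum74} and you justify directly from the Picard-functor characterization of the dual isogeny. The only difference is that you spell out the origin of the line-bundle isomorphism rather than simply citing Mumford.
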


\begin{proof} By definition of the N\'eron-Tate pairing and functorial properties of the height (\cite[Proposition V.3.3]{lan83}), $\tilde h_{A/K}(\cdot,\phi^t(\cdot))$ and $\tilde h_{B/K}(\phi(\cdot),\cdot)$ are the canonical heights on $A\times B^t$ associated with the divisor classes corresponding respectively to ${(1\times\phi^t)^*(P_A)}$ and $(\phi\times 1)^*(P_B)$. But the theorem in \cite[\S13]{mum74} implies
\begin{equation}\label{e:i}
(1\times \phi^t)^*(P_A)\simeq (\phi\times 1)^*(P_B)
\end{equation}
(see \cite[p. 130]{mum74}).
\end{proof}

\subsubsection{The $p$-adic height pairing} We extend \eqref{e:nthp} to a pairing of $\ZZ_p$-modules.

\begin{lemma}\label{l:pmw} For every finite extension $F/K$ there exists a $p$-adic height pairing
\begin{equation} \label{e:ntpairing} h_{A/F}\colon \big( \ZZ_p\otimes A(F)\big) \times \big(\ZZ_p \otimes A^t(F)\big) \lr E_F,\end{equation}
where $E_F$ is a finite extension of $\QQ_p$, with the left and right kernels equal to the torsion parts of $\ZZ_p\otimes A(F)$ and $\ZZ_p \otimes A^t(F)$. If $char(K)=p$ one can choose $E_F=\QQ_p$.
\end{lemma}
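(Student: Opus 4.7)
The plan is to build $h_{A/F}$ from the N\'eron-Tate pairing $\tilde h_{A/F}$ of \eqref{e:nthp} by treating the two characteristic cases separately.

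First, suppose $K$ has characteristic $p$. By the Lang-N\'eron theorem, $A(F)$ and $A^t(F)$ are finitely generated abelian groups. Moreover the N\'eron-Tate pairing of an abelian variety over a function field is computable as a sum of local intersection multiplicities on a proper regular model over the finite constant field, and in particular takes values in $\QQ$ rather than merely in $\RR$. I would therefore define $h_{A/F}$ to be the $\ZZ_p$-bilinear extension of the restriction $A(F)\times A^t(F)\to\QQ\hookrightarrow\QQ_p$ of $\tilde h_{A/F}$; this immediately gives $E_F=\QQ_p$ as required.

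For the characteristic $0$ case the real-valued $\tilde h_{A/F}$ cannot be tensored with $\ZZ_p$ directly, so instead I would invoke one of the standard constructions of a $p$-adic canonical height pairing (Schneider, Zarhin, Mazur-Tate, Nekov\'a\v{r}); each of these constructions produces a $\ZZ_p$-bilinear pairing with values in a finite extension $E_F/\QQ_p$ whose degree is controlled by the reduction data of $A$ at primes above $p$.

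For the kernel assertion: in the function field case, non-degeneracy modulo torsion follows from the classical theorem that $\tilde h_{A/F}$ induces a perfect pairing $(A(F)\otimes\QQ)\times(A^t(F)\otimes\QQ)\to\QQ$ of finite-dimensional $\QQ$-vector spaces of equal dimension, combined with flatness of $\ZZ_p$ over $\ZZ$: tensoring preserves the induced isomorphism after inverting $p$, so the left and right kernels of $h_{A/F}$ are exactly the $\ZZ_p$-torsion parts. In characteristic $0$ the corresponding non-degeneracy statement is invoked as a property of the chosen $p$-adic height. This last point is the main obstacle: unconditional non-degeneracy of the $p$-adic height pairing is in general a deep open conjecture. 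However, the applications of the lemma later in the paper are essentially confined to the function-field setting, where the argument is clean, so the characteristic $0$ half is retained mainly for uniformity of exposition.
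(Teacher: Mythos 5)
Your characteristic $p$ argument is essentially the paper's: after scaling by $\log(p)$ the N\'eron--Tate pairing takes rational values (the paper cites \cite[\S3]{sch82}), and one extends $\ZZ_p$-bilinearly. The characteristic $0$ case, however, is where you diverge, and your version has a genuine gap.

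You propose to invoke one of the standard constructions of a canonical $p$-adic height pairing (Schneider, Mazur--Tate, Zarhin, Nekov\'a\v{r}); as you yourself note, unconditional non-degeneracy of those pairings is a deep open conjecture. But the lemma asserts non-degeneracy unconditionally, for any global field $K$, and the paper proves it unconditionally by a completely different and much more elementary device that has nothing to do with canonical $p$-adic heights. The point is that by Mordell--Weil the image of the real-valued $\tilde h_{A/F}$ generates a subfield $E_F'\subset\RR$ which is \emph{finitely generated} as a field extension of $\QQ$. Such a field always admits an embedding into a finite extension $E_F$ of $\QQ_p$: write $E_F'$ as a finite extension of a purely transcendental $\QQ(t_1,\dots,t_n)$, send $t_1,\dots,t_n$ to algebraically independent elements of $\QQ_p$ (which exist, since $\QQ_p$ has infinite transcendence degree over $\QQ$), and take $E_F$ to be the finite extension of $\QQ_p$ generated by the image. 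Now define $h_{A/F}$ as the $\ZZ_p$-bilinear extension of $\tilde h_{A/F}\colon A(F)\times A^t(F)\to E_F'\subset E_F$; continuity in the $p$-adic topology is automatic because $A(F)$, $A^t(F)$ are finitely generated. Non-degeneracy modulo torsion then follows at once from the classical non-degeneracy of $\tilde h_{A/F}$: picking $\ZZ$-bases $\{x_i\}$, $\{y_j\}$ of the free parts, $\det(h_{A/F}(x_i,y_j))=\det(\tilde h_{A/F}(x_i,y_j))\neq 0$. The price of this trick is that the resulting $h_{A/F}$ is highly non-canonical --- it depends on the chosen embedding --- but the lemma only needs existence and non-degeneracy, and later uses (Lemma \ref{l:pmw2}, Corollary \ref{c:hdualidemp}) only require $\ZZ_p\,\E$-compatibility and non-degeneracy, both of which survive. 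Your closing remark that the characteristic $0$ half is ``retained mainly for uniformity of exposition'' mischaracterizes the situation: the lemma is stated unconditionally for all global fields, and the paper's proof delivers that; an argument resting on conjectural non-degeneracy of canonical $p$-adic heights would not.
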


\begin{proof}
If $char(K)=p$, then after scaling by a factor $\log(p)$, the pairing $\tilde h_{A/F}$ takes values in $\QQ$ (see for example \cite[\S3]{sch82}): in this case we define $h_{A/F}$ by $\tilde h_{A/F}=-\log(p)h_{A/F}$ and extend it to get \eqref{e:ntpairing}. In general, the image of the N\'eron-Tate height $\tilde{h}_{A/F}$ generates a subfield $E_F'\subset \RR$. By the Mordell-Weil Theorem, $E_F'$ is finitely generated over $\QQ$, and hence can be embedded into a finite extension $E_F$ of $\QQ_p$. Then we have the pairing
$$\tilde{h}_{A/F}\colon A(F)\times A^t(F) \lr E_F'\subset E_F\,,$$
which is obviously continuous on the $p$-adic topology, and thus can be extended to a pairing $h_{A/F}$ as required.
Since the left and right kernels of $\tilde{h}_{A/F}$ are the torsion parts of $A(F)$ and $A^t(F)$, if $x_1,...,x_r$ and $y_1,...,y_r$ are respectively $\ZZ$-basis of the free parts of $A(F)$ and $A^t(F)$, then
$${\det}_{i,j}(h_{A/F}(x_i,y_j))={\det}_{i,j}(\tilde{h}_{A/F}(x_i,y_j))\not=0,$$
which actually means that $h_{A/F}$ is non-degenerate on the free part of its domain.
\end{proof}

\end{subsection}


\begin{subsection}{The Weil-Barsotti formula} \label{su:wbformula}
Let $W^A$ denote the canonical bi-extension of $A\times A^t$ obtained by removing the zero section of $P_A$ (see \cite[IX, \S1]{sga7i}, or \cite[p.\,311]{mum68}).
For each $a\in A$ (resp. $a'\in A^t$), let ${}_aW^A$ (resp. $W^A_{a'}$) denote the part of $W^A$ sitting over $\{a\}\times A^t$ (resp. $A\times \{a'\}$).
Then we have exact sequences
$$\xymatrix{0\ar[r] &  \G_m\ar[r] &  {}_aW^A\ar@{->>}[r] & \{a\}\times A^t\simeq A^t ,}$$
and
$$\xymatrix{0\ar[r] &  \G_m\ar[r] &  W^A_{a'}\ar@{->>}[r] & A\times \{a'\}\simeq A.}$$
Thus, the assignment $a'\mapsto `` 0\longrightarrow \G_m\longrightarrow W^A_{a'}\longrightarrow  A"$ gives rise to the isomorphism (the Weil-Barsotti formula)
\begin{equation}\label{e:wb}
A^t \simeq \cExt_K^1(A,\G_m)
\end{equation}
of sheaves over the flat topology of $K$ (it might be useful to recall that the functors $\cExt^i$ can be interpreted as Yoneda extensions: \cite[III, 1.6(b)]{mil80}). Associated with an isogeny $\phi\in\Hom(A,B)$ there is the ``pull-back'' $\phi^*\colon \cExt_K^1(B,\G_m)\rightarrow \cExt_K^1(A,\G_m)$ induced from the Yoneda pairing:
$$\cExt_K^1(B,\G_m)\times \Hom(A,B)\longrightarrow \cExt_K^1(A,\G_m).$$
The isomorphism (\ref{e:i}) implies the commutative diagram
\begin{equation}\label{e:ext} \begin{CD}
B^t @>{\sim}>> \cExt_K^1(B,\G_m)\\
@VV{\phi^t}V @VV{\phi^*}V\\
A^t @>{\sim}>>  \cExt_K^1(A,\G_m)\,.
\end{CD}\end{equation}
When $K$ is a number field, let $C$ denote the spectrum of the ring of integers of $K$, and when $K$ is a function field with constant field $\FF$, let $C$ denote the unique connected smooth complete curve over $\FF$ having $K$ as its function field.

Let $U=\Spec R$, where $R$ is a Dedekind domain in $K$, be an open set of $C$ so that $A$ has good reduction at every place of $U$, and let $\A$ and $\A^t$ denote the N\'eron model of $A$ and $A^t$ over $U$. Then $W$ extends uniquely to a bi-extension $\mathcal{W}$ of $\A\times\A^t$ by $\G_m$ and we have the generalized Weil-Barsotti formula for sheaves on the smooth site of $U$ (see \cite[VIII, 7.1b]{sga7i}, \cite[III.18]{ort66} or \cite[III.C.12]{mil86a}):
\begin{equation}
\A^t \stackrel{\sim}{\longrightarrow}  \cExt_U^1(\A,\G_m).
\end{equation}

The isogenies $\phi$ and $\phi^t$ extend uniquely to $\phi\colon\A\rightarrow\B$ and $\phi^t\colon\B^t\rightarrow\A^t$ (\cite[\S1.2]{blr90}) and the Yoneda pairing induces the pull-back
$$\phi^*\colon \cExt_U^1(\B,\G_m)\longrightarrow  \cExt_U^1(\A,\G_m).$$

\begin{lemma}\label{l:pullback}
We have the commutative diagram:
\begin{equation}\label{e:extu}\begin{CD}
\mathcal B^t @>{\sim}>> \cExt_U^1(\mathcal B,\G_m)\\
@VV{\phi^t}V @VV{\phi^*}V\\
\mathcal A^t @>{\sim}>>  \cExt_U^1(\mathcal A,\G_m)\,.
\end{CD}\end{equation}
\end{lemma}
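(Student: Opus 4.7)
The plan is to reduce the commutativity over $U$ to the already-established generic fiber version \eqref{e:ext} by invoking the Néron mapping property.

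Since both horizontal arrows of \eqref{e:extu} are isomorphisms, it is equivalent to show that the two composed maps
$$\mathcal{B}^t\xrightarrow{\phi^t}\mathcal{A}^t\xrightarrow{\sim}\cExt_U^1(\mathcal{A},\G_m)\qquad\text{and}\qquad \mathcal{B}^t\xrightarrow{\sim}\cExt_U^1(\mathcal{B},\G_m)\xrightarrow{\phi^*}\cExt_U^1(\mathcal{A},\G_m)$$
agree as morphisms of sheaves (equivalently, of smooth $U$-group schemes, since $\cExt_U^1(\mathcal{A},\G_m)$ is representable by $\mathcal{A}^t$). Call these two morphisms $\alpha$ and $\beta$, respectively.

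First I would check that the formation of the biextension $\mathcal{W}$, of the Weil-Barsotti isomorphism, and of the Yoneda pullback along $\phi$ is compatible with restriction to the generic point: i.e., pulling everything back from $U$ to $\Spec K$ transforms $\cExt_U^1(\mathcal{A},\G_m)\simeq\mathcal{A}^t$ into $\cExt_K^1(A,\G_m)\simeq A^t$ and turns $\phi^*$ on sheaves over $U$ into $\phi^*$ on sheaves over $K$. This is essentially functoriality of $\cExt$ under restriction of sites (the construction of the Yoneda pullback only uses the universal property of $\cExt^1$ and commutes with base change to a dense open). Granting this, the restrictions $\alpha_K,\beta_K\colon B^t\to A^t$ both fit into diagram \eqref{e:ext}, which has already been shown to commute, so $\alpha_K=\beta_K=\phi^t$.

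Second, I invoke the Néron mapping property: because $\mathcal{A}^t$ is the Néron model of $A^t$ over $U$ and $\mathcal{B}^t$ is smooth over $U$, the restriction map
$$\Hom_U(\mathcal{B}^t,\mathcal{A}^t)\lr \Hom_K(B^t,A^t)$$
is injective. Since $\alpha$ and $\beta$ have the same generic fiber by the previous step, we conclude $\alpha=\beta$, which gives the commutativity of \eqref{e:extu}.

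The main obstacle is the first step: verifying that the pullback $\phi^*$ on $U$-sheaves of $\cExt^1$ really is compatible (through the Weil-Barsotti identification) with the generic pullback on $K$-sheaves. Once this compatibility is in hand, the Néron mapping property does the rest, so the bulk of the proof is bookkeeping about functoriality of $\cExt^1$ and of biextensions under restriction from the smooth site of $U$ to the flat site of $K$.
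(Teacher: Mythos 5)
Your proof is correct and reaches the same endpoint as the paper's—namely, reduction to the generic-fiber diagram \eqref{e:ext}—but by a genuinely different route. The paper's proof works \emph{locally}: using \cite[Proposition I.3.24(b)]{mil80} it reduces to checking the diagram of sections over affine étale covers $V = \Spec S$ of $U$, shrinks further to the case where $S$ is an integral domain, and then invokes the Néron mapping property in the form $\A^t(V) = A^t(M)$ with $M = \mathrm{Frac}(S)$ to pass to the generic point, after which \eqref{e:ext} applies. You instead work \emph{globally}: you use the representability of $\cExt_U^1(\A,\G_m)$ by $\A^t$ (the generalized Weil--Barsotti formula) to turn the two composites $\alpha,\beta$ into morphisms of smooth $U$-group schemes $\B^t \to \A^t$, and then invoke the Néron mapping property in its morphism form, namely injectivity of $\Hom_U(\B^t,\A^t) \to \Hom_K(B^t,A^t)$, to conclude from $\alpha_K = \beta_K$. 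The trade-off: your approach is tidier and avoids the explicit descent through affine charts, but requires as a preliminary that the sheaf $\cExt_U^1(\A,\G_m)$, the Weil--Barsotti isomorphism, and the Yoneda pullback $\phi^*$ all restrict correctly from $U$ to $\Spec K$; you correctly identify this as the technical hurdle. The paper needs an exactly analogous compatibility — commutativity of the two squares relating $\Ext_V^1$ to $\Ext_M^1$ — and like you, asserts it without an extended justification. So your proposal is sound and, if anything, packages the argument more cleanly while front-loading the same functoriality input.
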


\begin{proof}
The lemma is local: by \cite[Proposition I.3.24(b)]{mil80} it is enough to show the commutative diagram
\[\begin{CD}
\mathcal B^t(V) @>{\sim}>> \Ext_V^1(\mathcal B,\G_m)\\
@VV{\phi^t}V @VV{\phi^*}V\\
\mathcal A^t(V) @>{\sim}>> \Ext_V^1(\mathcal A,\G_m)
\end{CD}\]
for the case where $V=\Spec S$ and $S$ is \'etale over the polynomial ring $R_0[X_1,...,X_n]$ for some $n$ and some localization $R_0$ of $R$. Moreover, by \cite[Proposition I.3.19]{mil80} we can assume that $S$ is a localization of $R_0[X_1,...,X_n,T]/(P)$, $P$ an irreducible polynomial. Without loss of generality $R_0$ can be taken a unique factorization domain: then so is $R_0[X_1,...,X_n]$ and it follows that $S$ is an integral domain. Let $M$ be the field of fractions of $S$: then $\A^t(V)=A^t(M)$, $\B^t(V)=B^t(M)$ (\cite[\S1.2]{blr90}) and the diagrams
\begin{equation}\begin{CD}
\A^t(V) @>{\sim}>> \Ext_V^1(\A,\G_m)\\
@| @VVV\\
A^t(M) @>{\sim}>>  \Ext_M^1(A,\G_m)
\end{CD}\end{equation}
and
\begin{equation}\begin{CD}
\B^t(V) @>{\sim}>> \Ext_V^1(\B,\G_m)\\
@| @VVV\\
B^t(M) @>{\sim}>>  \Ext_M^1(B,\G_m)
\end{CD}\end{equation}
are commutative. The lemma follows by applying \eqref{e:ext}.
\end{proof}

\end{subsection}


\begin{subsection}{The Cassels-Tate pairing} \label{CTpairing}
The main reference of this section is \cite[II.2 and II.5]{mil86a}.

Let
$$\langle\;,\;\rangle_{A/K}\colon \Sha(A/K)\times \Sha(A^t/K)\longrightarrow \QQ/\ZZ$$
denote the Cassels-Tate pairing. (We shall recall its construction below.)

Let $A$, $B$ be abelian varieties defined over the global field $K$ and let $A^t$, $B^t$ be the dual abelian varieties.
Suppose $\phi\colon A\rightarrow B$ is an isogeny and $\phi^t\colon B^t\rightarrow A^t$ is its dual.

\begin{proposition}\label{p:1}
We have the commutative diagram:
\[ \begin{CD} \langle\;,\;\rangle_{A/K}\colon & \Sha(A/K) &\,\times & \,\Sha(A^t/K)  @>>> \QQ/\ZZ \\
 &  @VV{\phi_*}V @AA{\phi^t_*}A @| \\
\langle\;,\;\rangle_{B/K}\colon & \Sha(B/K)&\,\times  & \,\Sha(B^t/K)  @>>> \QQ/\ZZ.
\end{CD} \]
\end{proposition}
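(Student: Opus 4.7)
The plan is to unwind the definition of the Cassels--Tate pairing via the Weil--Barsotti formula and deduce the commutativity from the functoriality of cup products together with diagram \eqref{e:ext}.

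First I would recall that, via \eqref{e:wb}, an element $b' \in H^1(K, B^t)$ corresponds to an extension class in $\Ext^1_K(B, \G_m)$, concretely realized as $0 \to \G_m \to W_{b'} \to B \to 0$. The long exact sequence in Galois cohomology associated with this extension yields a boundary map $H^1(K,B) \to H^2(K, \G_m) = \Br(K)$, whose value on $b \in H^1(K,B)$ is the cup product $b \cup b'$. When $b \in \Sha(B/K)$ and $b' \in \Sha(B^t/K)$, this cup product vanishes in $\Br(K_v)$ for every place $v$ after applying a suitable local-splitting correction; the Cassels--Tate pairing is then $\sum_v \inv_v(b \cup_v b')$, where the modified local cup products define a class in the kernel of $\Br(K) \to \bigoplus_v \Br(K_v)$ (see \cite[II.5]{mil86a}).

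Next I would invoke the Yoneda projection formula for cup products: for $a \in H^1(K,A)$ and $b' \in H^1(K, B^t) = H^1(K, \cExt^1_K(B, \G_m))$, one has
$\phi_*(a) \cup b' = a \cup \phi^*(b')$
in $H^2(K, \G_m)$, where $\phi^*$ denotes pullback of extensions. By diagram \eqref{e:ext}, the Weil--Barsotti isomorphism $B^t \iso \cExt^1_K(B, \G_m)$ intertwines $\phi^t \colon B^t \to A^t$ with $\phi^*\colon \cExt^1_K(B, \G_m) \to \cExt^1_K(A, \G_m)$, so $\phi^*(b')$ corresponds exactly to $\phi^t_*(b')$. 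Hence the cup products appearing on the two sides of the claimed identity represent the same class in $\Br(K)$ up to the local-splitting corrections.

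The main obstacle is precisely to check that these local corrections behave naturally under $\phi$ and $\phi^t$. The pullback under $\phi$ of the extension $W_{b'}$ is, by \eqref{e:ext}, the extension $W_{\phi^t_*(b')}$, and a local splitting of the former pulls back to one of the latter (and conversely pushes forward); a diagram chase at each place $v$ then identifies the corrected local cup products $\phi_*(a) \cup_v b'$ and $a \cup_v \phi^t_*(b')$ as equal classes in $\Br(K_v)$. Summing local invariants and using that $\inv_v$ is itself functorial yields $\langle \phi_*(a), b'\rangle_{B/K} = \langle a, \phi^t_*(b')\rangle_{A/K}$, which is the asserted commutativity.
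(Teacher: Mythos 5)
Your approach is genuinely different from the paper's, although it rests on the same key ingredient, namely the compatibility \eqref{e:ext} between $\phi^t$ and the pullback of extensions under the Weil--Barsotti isomorphism. You work over the generic fiber $K$ with Galois cohomology and the cocycle/cup-product description of the Cassels--Tate pairing, whereas the paper works over the open model $U\subset C$ with the N\'eron models $\A$, $\B$ and compactly supported cohomology $\coh^1_{\mathrm{c}}(U,\cdot)$: there one realizes $\xi\in\coh^1_{\text{\'et}}(U,\B^t)$ as a 2-extension of $\B$ by $\G_m$ via the local-to-global Ext spectral sequence, composes two connecting maps into $\coh^3_c(U,\G_m)\simeq\QQ/\ZZ$, and the naturality in $\phi$ reduces to the diagram \eqref{pullbackxi}, which is exactly the $U$-version of your ``pullback of $W_{b'}$ along $\phi$ is $W_{\phi^t_*b'}$''. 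That framework buys two things: the local corrections are absorbed automatically (so there is no diagram chase at each place to justify separately), but in exchange one must extend \eqref{e:ext} to the smooth site of $U$, which is Lemma \ref{l:pullback} --- a step your route simply does not need.

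The one real gap in your sketch is the paragraph on local-splitting corrections. As written, the boundary map $\coh^1(K,B)\to\Br(K)$ induced by $W_{b'}$ already kills everything in $\Sha(B/K)$ before any correction, since $\Br(K)\hookrightarrow\bigoplus_v\Br(K_v)$ and $b$ is locally trivial. The actual cocycle construction is more delicate: one lifts $b$ to $\coh^1(K,W_{b'})$ using global triviality of the boundary, lifts $b_v=0$ locally to $\coh^1(K_v,\G_m)$-classes, and the pairing measures the discrepancy. That construction does behave functorially under $(\phi_*,\phi^t_*)$ because of \eqref{e:ext}, so your plan is recoverable, but the way you have stated it (``the cup product vanishes after a suitable local-splitting correction'') does not by itself pin down the pairing or make the naturality check a one-line diagram chase. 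Spelling out that cocycle-level construction is the missing work; once done, your argument goes through and gives a proof that bypasses Lemma \ref{l:pullback} entirely.
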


This result must be well-known. It has been used before (see the proof of \cite[I, Theorem 7.3]{mil86a}); in the case of elliptic curves over number fields, it is \cite[Theorem 1.2]{cas65}. We will provide a proof below.\\

Let $U$ be as in Subsection \ref{su:wbformula} and let us be in the \'etale site of $U$.
For a sheaf $\F$, let $\coh^r_\mathrm{c}(U,\F)$ denote the $r$th cohomology with compact support.

Since $\mathcal{H}om_U(\A^t,\G_m)=0$, the local-global spectral sequence for Exts (see \cite[Theorem III.1.22]{mil80}) gives rise to a map $\coh_{\mathrm{\acute et}}^1(U,\A^t)\rightarrow \Ext_U^2(\A,\G_m)$. Thus, associated with each element $\xi\in\coh_{\mathrm{\acute et}}^1(U,\A^t)$ there is an exact sequence
$$0\longrightarrow \G_m\longrightarrow D_{\xi}\stackrel{\alpha_{\xi}}{\longrightarrow} E_{\xi}\longrightarrow \A\longrightarrow 0$$
which divides into two exact sequences:
$$0\longrightarrow \image(\alpha_{\xi})\longrightarrow E_{\xi}\longrightarrow \A\longrightarrow 0$$
and
$$0\longrightarrow \G_m\longrightarrow D_{\xi}\longrightarrow \image(\alpha_{\xi})\longrightarrow 0\,.$$
These induce two boundary maps:
$$\partial_{\xi,1}\colon\coh^1_\mathrm{c}(U,\A)\longrightarrow \coh^2_\mathrm{c}(U,\image(\alpha_{\xi}))$$
and
$$\partial_{\xi,2}\colon\coh^2_\mathrm{c}(U,\image(\alpha_{\xi}))\longrightarrow \coh^3_c(U,\G_m).$$
Composing these with the isomorphism $\coh^3_c(U,\G_m)\simeq\QQ/\ZZ$ (see \cite[II.2.6]{mil86a}), we get the map
$$\digamma_{\xi}:\coh^1_\mathrm{c}(U,\A)\longrightarrow\QQ/\ZZ.$$
Then we define the pairing
\begin{eqnarray*}
\langle\;,\;\rangle_{\A/U}: \coh^1_\mathrm{c}(U,\A)\times \coh_{\mathrm{\mathrm{\acute et}}}^1(U,\A^t) & \longrightarrow & \QQ/\ZZ\\
(\eta,\xi) & \mapsto & \digamma_{\xi}(\eta).
\end{eqnarray*}
Define
$$D^1(U,\A):=\image\big(\coh^1_\mathrm{c}(U,\A)\,\lr \coh^1_\mathrm{et}(U,\A)\big)=\Ker\big(\coh^1_\mathrm{et}(U,\A)\lr \prod_{v\not\in U}\coh^1(K_v,A)\big).$$
Then we have (see \cite[II.5.5]{mil86a})
$$D^1(U,\A)=\Sha(A/K),\;\;\;\;D^1(U,\A^t)=\Sha(A^t/K)$$
and the pairing $\langle\;,\;\rangle_{\A/U}$ induces (see \cite[II.5.3, II.5.6]{mil86a}) the Cassels-Tate pairing
$$\Sha(A/K)\times \Sha(A^t/K)\longrightarrow\QQ/\ZZ.$$

\begin{proof}[\bf{Proof of Proposition \ref{p:1}}]  We show that the diagram
\[ \begin{CD} \langle\;,\;\rangle_{\mathcal A/U}\colon & \coh^1_\mathrm{c}(U,\A) &\,\times &\,\coh_{\mathrm{\acute et}}^1(U,\A^t)  @>>> \QQ/\ZZ\\
 &  @VV{\phi_*}V @AA{\phi^t_*}A @| \\
\langle\;,\;\rangle_{\mathcal B/U}\colon & \coh^1_\mathrm{c}(U,\B) &\,\times &\,\coh_{\mathrm{\acute et}}^1(U,\B^t)  @>>> \QQ/\ZZ.
\end{CD} \]
commutes. Suppose $\xi\in \coh_{\mathrm{\acute et}}^1(U,\B^t)$ and it is associated with the exact sequence
\begin{equation} \label{ext2xi}
0\longrightarrow \G_m\longrightarrow D_{\xi}\stackrel{\beta_{\xi}}{\longrightarrow} E_{\xi}\longrightarrow \B\longrightarrow 0. \end{equation}
From Lemma \ref{l:pullback} we get the diagram
\[ \begin{CD}
\coh_{\mathrm{\acute et}}^1(U,\B^t)  @>>> \Ext_U^2(\B,\G_m)\\
@VV{\phi_*^t}V @VV{\phi^*}V\\
\coh_{\mathrm{\acute et}}^1(U,\A^t)  @>>> \Ext_U^2(\A,\G_m).
\end{CD} \]
Hence $\phi^t_*(\xi)$ is associated with the pull back of \eqref{ext2xi}, i.e. the exact sequence which is the composition of the top row of the commutative diagram
\begin{equation} \label{pullbackxi} \begin{CD}
0 @>>> \image(\beta_{\xi}) @>>>  E_{\xi}\times_{\B}\A @>>> \A @>>> 0\\
&  & @| @VVV @VVV \\
0\ @>>> \image(\beta_{\xi}) @>>>  E_{\xi} @>>> \B @>>> 0
\end{CD} \end{equation}
with
\[\begin{CD} 0 @>>> \G_m @>>> D_\xi @>>> \image(\beta_{\xi}) @>>> 0.\end{CD}\]
Let $\phi_*\colon\coh_c^1(U,\A)\rightarrow \coh_c^1(U,\B)$ be the map induced from $\phi$. Taking the cohomology of \eqref{pullbackxi} we have
\[ \begin{CD}
\coh_c^1(U,\A)  @>\partial_{\phi^t_*(\xi),1}>> \coh_c^2(U,\image(\phi^*\beta_{\xi}))\\
@VV{\phi_*}V @| \\
\coh_c^1(U,\B)  @>\partial{\xi,1}>> \coh_c^2(U,\image(\beta_{\xi}))
\end{CD} \]
and thus
$$\langle \phi_*(\eta),\xi \rangle_{\mathcal B,U}=\partial_{\xi,2}\circ\partial_{\xi,1}(\phi_*(\eta))=\partial_{\xi,2}\circ\partial_{\phi^t_*(\xi),1}(\eta)=\langle \eta,\phi_*^t(\xi) \rangle_{\mathcal A,U}\,.$$
\end{proof}

The kernels of the Cassels-Tate pairing are the divisible subgroups of $\Sha(A/K)$ and $\Sha(A^t/K)$: see \cite[II, Theorem 5.6(a) and III, Corollary 9.5]{mil86a}.
\end{subsection}
\end{section}


\begin{section}{The algebraic functional equation for abelian varieties} \label{s:al}

In this section, we apply Theorem \ref{t:al} to Selmer groups.

\begin{subsection}{Selmer groups over $\ZZ_p^d$-extensions} \label{se:se}
Let $A$ be an abelian variety defined over our global field $K$. We take the extension $L/K$ as established in Section \ref{s:setting}, with layers $K_n$ and $\Gamma=\Gal(L/K)$. Recall our hypothesis that $L/K$ is unramified outside a finite set of places $S$. We also assume that $A$ has {\em potentially ordinary} reduction at every place in $S$: then $X_p(A/L)$, the Pontryagin dual of $\Sel_{p^{\infty}}(A/L)$, is finitely generated over $\La$ (if the reduction is ordinary at all places in $S$ this is \cite[Theorem 5]{tan10a}; for passing from potentially ordinary to ordinary, one can reason similarly to \cite[Lemma 2.1]{ot09}).

\subsubsection{The divisible limit} For each $n$, let $\Sel_{p^{\infty}}(A/K_n)_{div}$ denote the $p$-divisible part of $\Sel_{p^{\infty}}(A/K_n)$ and let $Y_p(A/K_n)$ be its Pontryagin dual. Also, let $Y_p(A/L)$ denote the projective limit of $\{Y_p(A/K_n)\}_n$, so that
$$Y_p(A/L) = \Sel_{div}(A/L)^\vee,$$
where
$$\Sel_{div}(A/L):=\varinjlim_{n}\Sel_{p^{\infty}}(A/K_n)_{div}\,.$$

\begin{mytheorem}[Tan] \label{t:flat}
Suppose $X_p(A/L)$ is a torsion $\La$-module. Then there exist relatively prime simple elements $f_1,...,f_m$  {\em (}$m\geq 1${\em )} so that
$$f_1\cdots f_m\cdot \Sel_{div}(A/L)=0.$$
\end{mytheorem}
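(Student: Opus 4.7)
The plan is to dualize and work with $Y_p(A/L) = \varprojlim_n Y_p(A/K_n)$, where each $Y_p(A/K_n) = \big(\Sel_{p^\infty}(A/K_n)_{div}\big)^\vee$ is a finitely generated free $\ZZ_p$-module carrying a $\Gamma_n$-action. Since $Y_p(A/L)$ is a $\La$-quotient of the torsion module $X_p(A/L)$, it is itself $\La$-torsion; and because simple elements are stable under the involution $\cdot^\sharp$ by \eqref{e:fs}, producing coprime simple $g_1,\dots,g_m$ annihilating $Y_p(A/L)$ would give the desired annihilator of $\Sel_{div}(A/L)$ by setting $f_i := g_i^\sharp$.

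The structural input comes from the exact sequence \eqref{e:selmersha}, which presents the divisible part of the Selmer group at each finite layer $K_n$ as an extension of a divisible subgroup of $\Sha(A/K_n)[p^\infty]$ by $\QQ_p/\ZZ_p\otimes A(K_n)$. I would exploit two $\Gamma_n$-equivariant non-degenerate pairings, one on each piece: the $p$-adic height pairing $h_{A/K_n}$ of Lemma \ref{l:pmw}, whose right and left kernels are the $\ZZ_p$-torsion of the Mordell--Weil groups, and the Cassels--Tate pairing of Subsection \ref{CTpairing}, whose kernels are the maximal divisible subgroups of $\Sha$. After tensoring with $\bar\QQ_p$ and decomposing into $\Gamma_n^\vee$-isotypic components, both pairings force a character $\omega \in \Gamma_n^\vee$ to occur in $Y_p(A/K_n)$ only if $\omega^{-1}$ occurs in the analogous module built from $A^t$; in particular, the eigenvalues of any topological generator $\gamma$ of a $\ZZ_p$-line in $\Gamma$ acting on $Y_p(A/K_n)\otimes\bar\QQ_p$ are $p^n$-th roots of unity, and the collection of characters at all levels fills out codimension-one loci of $\Gamma^\vee$.

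To cut down to finitely many such loci I would feed in the torsion hypothesis via Monsky. Fix $\xi \in \La-\{0\}$ with $\xi\cdot X_p(A/L)=0$; then the $\Gamma^\vee$-support of $Y_p(A/L)$ lies in $\Delta_\xi$, which by Theorem \ref{t:monsky} is a finite union of $\ZZ_p$-flats $\Xi_j$. The heart of the argument is that only codimension-one flats carry the support of $Y_p(A/L)$: divisible Selmer elements live in finite layers, where the constraints from the two pairings combined with a layerwise control argument identifying $Y_p(A/K_n)$ with an appropriate $\Gamma_n$-quotient of $Y_p(A/L)$ forbid higher-codimension strata in the support. Each codimension-one flat $\Xi_j$ corresponds by Lemma \ref{l:hypH} to a unique simple element $g_j = f_{\gamma_j,\zeta_j}$ (up to $\La^\times$), and by the coprimality criterion \eqref{e:coprimecriter} the simple elements from distinct flats are coprime. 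Taking $g_1\cdots g_m$ over the finitely many codimension-one flats that meet the support of $Y_p(A/L)$, one reads off $f_i := g_i^\sharp$ as the desired annihilators of $\Sel_{div}(A/L)$.

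The main obstacle is the structural claim in the last paragraph, i.e.\ ruling out higher-codimension components in the support. The mere torsion hypothesis is not obviously enough; the argument must combine the symmetry between the characters of $Y_p(A/K_n)$ and $Y_p(A^t/K_n)$ provided by the height and Cassels--Tate pairings with a control-type comparison between layers, taking advantage of the fact that each $Y_p(A/K_n)$ is $\ZZ_p$-free of finite rank (so its $\Gamma_n$-characters are honestly detected by eigenvalue decompositions) to propagate this rigidity to the projective limit.
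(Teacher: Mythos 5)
Your route diverges substantially from the paper's, and the gap you flag in your own last paragraph is a real one. The paper does not try to pin down the support of $Y_p(A/L)$ at all, let alone show it consists only of codimension-one $\ZZ_p$-flats. Instead it proves a short, purely module-theoretic lemma: for any cofinitely generated torsion $\La$-module $M$, one can find coprime simple $f_1,\dots,f_m$ with $f_1\cdots f_m\cdot(M^{\Gal(L/F)})_{div}=0$ for every finite intermediate $F$. The mechanism is the one you are circling but not using: fix $\xi$ annihilating $M$ and $f$ vanishing on $\Delta_\xi$; decompose $\cO\otimes M^{\Gal(L/F)}$ into $\omega$-isotypic pieces $e_\omega'N$ for $\omega\in\Gal(F/K)^\vee$; on each piece $\La$ acts through $\omega$, so $f$ kills the pieces with $\omega\in\Delta_\xi$, while the pieces with $\omega\notin\Delta_\xi$ are finite (cofinitely generated over the finite ring $\cO/(\omega(\xi))$); hence $f\cdot N$ is finite, and a finite divisible group is trivial. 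Monsky's theorem then supplies coprime simple elements whose product vanishes on $\Delta_\xi$ --- crucially, one uses a simple element per flat of $\Delta_\xi$ regardless of its codimension, since any $\ZZ_p$-flat is contained in a codimension-one flat. Nothing in this requires the height or Cassels--Tate pairings, nor any comparison with $A^t$; those are the tools for the algebraic functional equation (Theorem \ref{t:xaat}), not for this statement.

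Two concrete problems with your plan as written. First, even granting the claim that the support of $Y_p(A/L)$ lies only on codimension-one $\ZZ_p$-flats, you would still not be done: support on $(f_{\gamma,\zeta})$ is compatible with $Y_p(A/L)$ containing a direct summand $\La/f_{\gamma,\zeta}^2\La$, which is \emph{not} killed by a squarefree product of simple elements --- so ``codimension-one support'' is strictly weaker than the annihilation statement you need. Second, your observation that the eigenvalues of $\gamma$ on $Y_p(A/K_n)\otimes\bar\QQ_p$ are $p^n$-th roots of unity is trivially true (the $\Gamma_n$-action factors through a finite $p$-group of exponent $p^n$) and needs no pairing at all; it is, once dualized and combined with the divisible-plus-finite trick, the entire content of the argument, but your subsequent attempt to ``propagate this rigidity to the projective limit'' via a support-on-flats dichotomy is not the right translation. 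The rigidity lives at the finite layers, in the isotypic decomposition; one never needs to control the support of the projective limit directly.
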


This theorem was originally proved in a first draft of \cite{tan10b} as a consequence of a careful study of the growth of Selmer groups. But it really is just a special case of the following. (For a more detailed statement in the Selmer group case, see \cite[Theorem 5]{tan10b}.)

\begin{mytheorem}
Let $M$ be a cofinitely generated torsion $\La$-module. Then there exist relatively prime simple elements $f_1,...,f_m$  {\em (}$m\geq 1${\em )} so that
$$f_1\cdots f_m\cdot (M^{\Gal(L/F)})_{div}=0$$
for any finite intermediate extension $K\subset F\subset L$.
\end{mytheorem}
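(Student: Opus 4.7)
The plan is to dualise, translate the annihilation condition into a statement about characters of $\Gamma$, and then apply Monsky's theorem to a single non-zero annihilator of $M^\vee$.

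First I would set $X:=M^\vee$, a finitely generated torsion $\La$-module, and $\Gamma_F:=\Gal(L/F)$. Since $\Gamma/\Gamma_F$ is finite, $X_{\Gamma_F}$ is finitely generated over $\ZZ_p$, so $M^{\Gamma_F}=(X_{\Gamma_F})^\vee$ is cofinitely generated over $\ZZ_p$ and $(M^{\Gamma_F})_{div}$ is well-defined. Its Pontryagin dual $W_F:=((M^{\Gamma_F})_{div})^\vee$ is the maximal $\ZZ_p$-torsion-free quotient of $X_{\Gamma_F}$, hence a free $\ZZ_p$-module of finite rank on which $\La$ acts through $\ZZ_p[\Gal(F/K)]$. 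Decomposing $W_F\otimes_{\ZZ_p}\bar\QQ_p$ into isotypic components under $\Gal(F/K)$ yields the key criterion: $f\in\La$ annihilates $(M^{\Gamma_F})_{div}$ if and only if $\omega(f)=0$ for every character $\omega\in\Gamma^\vee$ appearing in $W_F\otimes\bar\QQ_p$.

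Next I would bound the set $\mathcal{C}$ of all characters that can appear in some $W_F\otimes\bar\QQ_p$. Since $X$ is finitely generated torsion, the product of non-zero annihilators of a finite generating set yields a non-zero $g\in\mathrm{Ann}_\La(X)$; as $g$ kills $X$ it kills each quotient $W_F$, and on the isotypic component $W_F^{(\omega)}$ it acts by the scalar $\omega(g)$, so $\mathcal{C}\subseteq\Delta_g$. By Theorem~\ref{t:monsky}, $\Delta_g=\bigcup_{j=1}^r\Xi_j$ is a finite union of $\ZZ_p$-flats of positive codimension. For each $\Xi_j$ pick one of its defining equations $\omega(\gamma^{(j)})=\zeta^{(j)}$ and set $\tilde f_j:=f_{\gamma^{(j)},\zeta^{(j)}}$; the zero locus of $\tilde f_j$ is the codimension-one flat cut out by this single equation and contains $\Xi_j$, so $\Delta_{\tilde f_j}\supseteq\Xi_j$. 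Applying the coprimality criterion \eqref{e:coprimecriter} to discard associates yields relatively prime simple elements $f_1,\dots,f_m$ satisfying
$$
\Delta_{f_1\cdots f_m}=\bigcup_{i=1}^m\Delta_{f_i}\supseteq\Delta_g\supseteq\mathcal{C},
$$
and the criterion of the first paragraph then shows that $f_1\cdots f_m\cdot(M^{\Gamma_F})_{div}=0$ for every $F$. In the edge case $\Delta_g=\emptyset$, $\mathcal{C}$ is empty, each $W_F$ vanishes, and one adjoins any single simple element (e.g.\ $\gamma-1$ for some $\gamma\in\Gamma-\Gamma^p$) to ensure $m\geq1$.

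The only point that requires real thought, and what I would flag as the obstacle, is that the argument works with an annihilator of $X$ and not with a generator of the characteristic ideal $\chi(X)$: a pseudo-null $X$ can produce a non-trivial $(M^{\Gamma_F})_{div}$, as in $X=\La/(T_1,T_2)$ with $d=2$, where $\chi(X)=\La$ but $(M^{\Gamma_F})_{div}=\QQ_p/\ZZ_p$ for every $F$, so $\Delta_{\chi(X)}$ is in general too small to contain $\mathcal{C}$. Once one passes to $\Delta_g$ for a non-zero $g\in\mathrm{Ann}(X)$, Monsky's theorem and \eqref{e:coprimecriter} close the proof with no further input.
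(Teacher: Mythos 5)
Your proof is correct and follows essentially the same route as the paper's: fix a nonzero annihilator $\xi$ of $M$ (equivalently of $X=M^\vee$), observe that every character $\omega$ contributing to $(M^{\Gal(L/F)})_{div}$ must lie in $\triangle_\xi$, and invoke Monsky's theorem to cover $\triangle_\xi$ by the zero loci of finitely many pairwise coprime simple elements. The only presentational difference is that you dualize and track the free $\ZZ_p$-quotient $W_F$ of $X_{\Gal(L/F)}$, whereas the paper works directly with $\cO\otimes_{\ZZ_p} M^{\Gal(L/F)}$ and its idempotent decomposition under $\Gal(F/K)$, showing that the $\omega$-eigenspaces with $\omega\notin\triangle_\xi$ are finite; your closing caution — that one must use an annihilator of $M$ rather than a generator of $\chi(M^\vee)$, since $\triangle_{\chi(M^\vee)}$ can be too small when $M^\vee$ is pseudo-null — is well taken and is exactly why the paper also starts from $\xi\in\mathrm{Ann}(M)$.
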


\begin{proof}
By hypothesis, there is some $\xi\in\La-\{0\}$ annihilating $M$. Let $f\in\La$ be such that $\omega(f)=0$ for all $\omega\in\Delta_\xi\,$. Fix a finite intermediate extensions $F/K$ and, to lighten notation, put $G:=\Gal(F/K)$ and $\cO$ the ring of integers of $\QQ(\boldsymbol\mu_{p^d})$, where $p^d$ is the exponent of $G$. Also, let $N:=\cO\otimes_{\ZZ_p}M^{\Gal(L/F)}$. Then defining, as in \eqref{e:idempotent},  $e_\omega':=\sum_{g\in G}\omega(g^{-1})g$ for each $\omega\in G^\vee$, one finds $|G|\cdot N=\sum e_\omega'N$ and $\La$ acts on $e_\omega'N$ by $\lambda\cdot n=\omega(\lambda)n$. In particular, one has $f\cdot e_\omega'N=0$ for all $\omega\in\Delta_\xi$. On the other hand, if $\omega\notin\Delta_\xi$ then $e_\omega'N$ is finite because it is a cofinitely generated module over the finite ring $\omega(\La)/(\omega(\xi))$. It follows that $f\cdot N$ is finite. Since a finite divisible group must be trivial, this proves that $f\cdot N_{div}=0$, and hence $f\cdot (M^{\Gal(L/F)})_{div}=0$.

It remains to prove that one can find a product of distinct simple elements which is killed by $\Delta_\xi$. This is a consequence of Monsky's theorem: one has $\Delta_\xi=\cup T_j$ where the $T_j$'s are $\ZZ_p$-flats, and by definition for each $T_j$ there is at least one simple element vanishing on it.
\end{proof}

\end{subsection}


\begin{subsection}{The Cassels-Tate $\Gamma$-system} Define
\begin{equation} \label{e:bnabvar} \fb_n:=\Sel_{p^{\infty}}(A/K_n)/\Sel_{p^{\infty}}(A/K_n)_{div} \end{equation}
and let $\fa_n$ denote its Pontryagin dual. In fact, we can make these more explicit.
First, if $\Sha_{p^\infty} (A/K_n)$ denotes the $p$-primary part of the Tate-Shafarevich group and $\Sha_{p^\infty} (A/K_n)_{div}$ is its $p$-divisible part, then
\begin{equation} \label{e:bnsha} \fb_n=\Sha_{p^\infty}(A/K_n)/\Sha_{p^\infty}(A/K_n)_{div}\,. \end{equation}
Also, if $A^t/K$ denotes the dual abelian variety of $A$, then we can identify
\begin{equation} \label{e:ansha} \fa_n=\Sha_{p^\infty}(A^t/K_n)/\Sha_{p^\infty}(A^t/K_n)_{div}\,, \end{equation}
thanks to the duality via the perfect pairing
\begin{equation} \label{e:perfectCT}
\langle\;,\;\rangle_n\colon \fa_n\times\fb_n \lr \QQ_p/\ZZ_p \end{equation}
induced from the Cassels-Tate pairing on $\Sha(A^t/K_n)\times \Sha(A/K_n)$.

Let $\fr_m^n$ and $\fk_m^n$ be the morphisms induced respectively from the restriction
$$\coh^1(K_m,A^t\times A)\longrightarrow \coh^1(K_n,A^t\times A)$$
and the co-restriction
$$\coh^1(K_n,A^t\times A)\longrightarrow \coh^1(K_m,A^t\times A).$$
Then the collection $\fA:=\{\fa_n,\fb_n,\langle\,,\rangle_n,\fr_m^n,\fk_m^n\}$ satisfies axioms ($\Gamma$-1)-($\Gamma$-4).
As in Section \ref{s:con}, write $\fa$ and $\fb$ for the projective limits of $\{\fa_n\}_n$ and $\{\fb_n\}_n$. We have the exact sequence
\begin{equation}\label{e:axy}  0\lr \fa\lr X_p(A/L)\lr Y_p(A/L)\lr 0. \end{equation}
In particular, if $X_p(A/L)$ is a finitely generated torsion $\La$-module, then so are $\fa$ and, by the following lemma, $\fb$.

\begin{lemma} \label{l:isogAtA}
The $\La$-module $X_p(A^t/L)$ is torsion if and only if so is $X_p(A/L)$.
\end{lemma}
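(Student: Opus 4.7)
The plan is to reduce the claim to an isogeny argument. Since every abelian variety over a field is projective (Weil), $A$ admits an ample line bundle defined over $K$. The associated polarization provides an isogeny $\phi\colon A\to A^t$ defined over $K$; let $\hat\phi\colon A^t\to A$ be its dual isogeny, so that $\hat\phi\circ\phi=[n]_A$ and $\phi\circ\hat\phi=[n]_{A^t}$, with $n=\deg\phi$.

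By functoriality of flat cohomology, $\phi$ and $\hat\phi$ induce $\Gamma$-equivariant morphisms between the Selmer groups over $L$,
$$\phi_*\colon \Sel_{p^\infty}(A/L)\,\lr\,\Sel_{p^\infty}(A^t/L)\,,\qquad\hat\phi_*\colon \Sel_{p^\infty}(A^t/L)\,\lr\,\Sel_{p^\infty}(A/L)\,,$$
whose compositions $\hat\phi_*\circ\phi_*$ and $\phi_*\circ\hat\phi_*$ are multiplication by $n$. Passing to Pontryagin duals, I obtain $\La$-linear maps $\phi_*^\vee$ and $\hat\phi_*^\vee$ between $X_p(A/L)$ and $X_p(A^t/L)$ whose compositions are again multiplication by $n$.

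Since $n$ is a nonzero element of the domain $\La$, after tensoring with $Q(\La)$ these compositions become isomorphisms; hence so do the individual maps $\phi_*^\vee\otimes_\La Q(\La)$ and $\hat\phi_*^\vee\otimes_\La Q(\La)$. In particular $\rank_\La X_p(A/L)=\rank_\La X_p(A^t/L)$, so one module is torsion if and only if the other is. No serious obstacle is expected: the whole argument hinges on the standard existence of a $K$-rational polarization and on the compatibility of the Selmer group construction with isogenies defined over $K$.
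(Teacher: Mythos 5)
Your proof is correct and follows essentially the same route as the paper: both start from a $K$-rational isogeny $A\to A^t$ (the paper just takes an arbitrary one, you take a polarization), pass to the induced maps on Selmer groups and their Pontryagin duals, and observe that the kernel and cokernel are annihilated by the degree. The only cosmetic slip is calling $\hat\phi\colon A^t\to A$ the ``dual isogeny'' of $\phi$ — the dual isogeny $\phi^t$ goes $A\simeq A^{tt}\to A^t$, not the other way; what you actually use (and correctly) is the complementary isogeny $\psi$ with $\psi\phi=[n]_A$ and $\phi\psi=[n]_{A^t}$, which exists because $\Ker\phi\subset A[n]$.
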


\begin{proof}
Any isogeny $\varphi\colon A\rightarrow A^t$ defined over $K$ gives rise to a homomorphism of $\La$-modules $X_p(A^t/L)\rightarrow X_p(A/L)$ with kernel and co-kernel annihilated by $\deg(\varphi)$.
\end{proof}

\subsubsection{The module $\fa^{00}$}\label{ss:ddiv} To study $\fa_n^0$, we first consider the small piece $\fa^{00}_n$ which is the image of
$$\fs_n^{00}:=\Ker\!\big(\Sel_{p^{\infty}}(A^t/K_n)\,\lr \Sel_{p^{\infty}}(A^t/L)\big)$$
under the projection $\Sel_{p^{\infty}}(A^t/K_n)\twoheadrightarrow \fa_n$. Obviously, $\fs_n^{00}$ is a $\Gamma_n$-submodule of
$$\fs_n^0:=\Ker\big(\coh_{fl}^1(K_n,A^t_{p^{\infty}})\,\lr \coh_{fl}^1(L,A^t_{p^{\infty}})\big)=\coh^1(\Gamma^{(n)}, A^t_{p^{\infty}}(L))\,.$$

\begin{lemma} \label{l:h1d}
All the groups $\coh^1\!\big(\Gamma^{(n)},A^t_{p^{\infty}}(L)\big)$ are finite.
\end{lemma}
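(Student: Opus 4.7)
The plan is to reduce the question to a general finiteness statement for continuous cohomology of $\ZZ_p^d$-groups. Set $G := \Gamma^{(n)}$, which is isomorphic to $\ZZ_p^d$, and $M := A^t_{p^\infty}(L)$. Since $M$ injects into $A^t_{p^\infty}(\bar K)$, whose $\ZZ_p$-corank is at most $2g$, the module $M$ is cofinitely generated over $\ZZ_p$. Moreover $M^G = A^t_{p^\infty}(K_n)$ is the $p$-primary torsion subgroup of the Mordell-Weil group $A^t(K_n)$, hence finite. I will deduce the lemma from the general statement: if $G \cong \ZZ_p^d$ acts continuously on a cofinitely generated $\ZZ_p$-module $M$ with $M^G$ finite, then $\coh^i(G, M)$ is finite for every $i \geq 0$.

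To prove this general statement, let $M_{\mathrm{div}}$ denote the maximal divisible $\ZZ_p$-submodule of $M$: it is characteristic, hence $G$-stable, and $M/M_{\mathrm{div}}$ is finite. The cohomology of the finite quotient is visibly finite, so it suffices to treat $M_{\mathrm{div}}$. Put $T := T_p(M_{\mathrm{div}})$ and $V := T \otimes_{\ZZ_p} \QQ_p$, which fit into a short exact sequence $0 \to T \to V \to M_{\mathrm{div}} \to 0$. Finiteness of $M_{\mathrm{div}}^G$ forces $V^G = 0$: any nonzero $V^G$ would contain a $\ZZ_p$-lattice whose image in $M_{\mathrm{div}}$ would give an infinite divisible subgroup of $M_{\mathrm{div}}^G$.

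The crux is the claim that $V^G = 0$ implies $\coh^i(G, V) = 0$ for every $i \geq 0$ when $V$ is a finite-dimensional continuous $\QQ_p$-representation of $G \cong \ZZ_p^d$. The continuous cohomology $\coh^*(G, V)$ is computed by the Koszul complex on the commuting endomorphisms $T_i := \gamma_i - 1$ attached to a $\ZZ_p$-basis $\gamma_1, \dots, \gamma_d$ of $G$. After extending scalars to $\bar\QQ_p$, decompose $V$ into generalized simultaneous eigenspaces $V_\alpha$ for $\alpha \in \bar\QQ_p^d$; the condition $V^G = 0$ is equivalent to $V_0 = 0$, and on each surviving summand $V_\alpha$ some coordinate $\alpha_i$ is nonzero, so $T_i$ acts as $\alpha_i$ plus a nilpotent and is therefore invertible, making the Koszul complex on $V_\alpha$ exact.

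Granted this, the long exact sequence from $0 \to T \to V \to M_{\mathrm{div}} \to 0$ yields isomorphisms $\coh^i(G, M_{\mathrm{div}}) \cong \coh^{i+1}(G, T)$ for $i \geq 0$. Since $T$ is finitely generated over $\ZZ_p$ and the Koszul complex is of finite rank, $\coh^{i+1}(G, T)$ is finitely generated over $\ZZ_p$; tensoring with $\QQ_p$ recovers $\coh^{i+1}(G, V) = 0$, so $\coh^{i+1}(G, T)$ is a torsion finitely generated $\ZZ_p$-module, hence finite. In particular $\coh^1(G, M)$ is finite. The main obstacle I expect is the Koszul vanishing for $\QQ_p$-representations; if one prefers to avoid it, this step can be replaced by induction on $d$ via the Hochschild--Serre spectral sequence, using the easier case $d = 1$ where $\coh^1(G, M) = M/(\gamma - 1)M$ is finite precisely when $M^G$ is.
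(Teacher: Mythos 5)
Your proof is correct and gives a complete, self-contained argument for arbitrary $d$, whereas the paper only proves the case $d=1$ and refers to Greenberg \cite[Proposition 3.3]{gr03} for general $d$ (which in fact covers general $p$-adic Lie groups). Your fallback for $d=1$ is essentially the paper's argument: $\gamma^{p^n}-1$ has finite kernel on the divisible part $D_{div}$, hence is surjective there, so $\coh^1(\Gamma^{(n)},D_{div})=(D_{div})_{\Gamma^{(n)}}=0$ and the rest is controlled by the finite quotient $D/D_{div}$. Your Koszul-complex approach for general $d$ is a clean replacement: once one knows that the cohomology of $\ZZ_p^d$ is computed by the Koszul complex on $\gamma_1-1,\dots,\gamma_d-1$ and that exactness of the Koszul complex on the finite-dimensional $\QQ_p$-space $V$ follows from $V^G=0$ (via the generalized-eigenspace decomposition over $\bar\QQ_p$: on a summand with all nilpotent $T_i$'s one finds a nonzero common kernel, so such a summand cannot occur), the comparison $0\to T\to V\to M_{div}\to 0$ gives $\coh^i(G,M_{div})\simeq\coh^{i+1}(G,T)$, finitely generated torsion over $\ZZ_p$ hence finite. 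One caveat: the Hochschild--Serre alternative you sketch does not go through as naively as suggested, because an induction on $d$ via a splitting $G=H\times Z$ needs either $M^H$ finite (to handle $\coh^1(H,M)^Z$) or $V^Z=0$ (to kill $T^Z$ and $T_Z$ rationally), and neither is forced by $M^G$ finite alone; the Koszul vanishing you already invoke is the right tool and you should not present Hochschild--Serre as an interchangeable shortcut without the eigenspace analysis.
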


\begin{proof} We just prove the $d=1$ case, since it is the only one we are going to actually use. For the general case, see \cite[Proposition 3.3]{gr03}.

Write
\begin{equation}\label{e:ALtors} D:=A^t_{p^{\infty}}(L)=A^t(L)[p^{\infty}] \end{equation}
and let $D_{div}$ be its $p$-divisible part. We have an exact sequence
\begin{equation}\label{e:hddd}
(D/D_{div})^{\Gamma^{(n)}} \lr \coh^1(\Gamma^{(n)},D_{div})\,\lr \coh^1(\Gamma^{(n)},D) \,\lr \coh^1(\Gamma^{(n)}, D/D_{div})\,.
\end{equation}
If $d=1$ and $\Gamma=\gamma^{\ZZ_p}$, then we observe that
$$(\gamma^{p^n}-1)D_{div}=D_{div},$$
since $\Ker(D\stackrel{\gamma^{p^n}-1}{\lr} D)=D^{\Gamma^{(n)}}$ is finite. Therefore, $\coh^1(\Gamma^{(n)},D_{div})=0$
and hence, as $D/D_{div}$ is finite, from the exact sequence \eqref{e:hddd} we deduce (see e.g.\! \cite[Lemma 4.1]{bl09a})
\begin{equation}\label{e:ddd}
|\coh^1(\Gamma^{(n)},D)|\leq |D/D_{div}|.
\end{equation}
\end{proof}

\begin{lemma}\label{l:a00}
The projective limit
$$\fa^{00}:=\varprojlim\fa_n^{00}$$
is pseudo-null.
\end{lemma}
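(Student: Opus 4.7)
The plan is to prove pseudo-nullity of $\fa^{00}$ by reducing it to a Galois cohomology inverse limit and then applying the criterion of Lemma \ref{l:psn}: it suffices to exhibit two coprime elements of $\La$ annihilating $\fa^{00}$.

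The first step is the reduction to cohomology. By construction $\fa_n^{00}$ is a quotient of $\fs_n^{00}$, which inflation--restriction realizes as a $\Gamma_n$-submodule of $\fs_n^0 = \coh^1(\Gamma^{(n)}, D)$, where $D := A^t_{p^\infty}(L)$. The transition maps $\fk_m^n$ are induced by corestriction on $\coh^1(K_n, A^t_{p^\infty})$, and via inflation they agree with the standard corestriction $\coh^1(\Gamma^{(n)}, D) \to \coh^1(\Gamma^{(m)}, D)$. Hence $\fa^{00}$ is a $\La$-sub-quotient of $\fH := \varprojlim_n \coh^1(\Gamma^{(n)}, D)$, and it suffices to prove that $\fH$ is pseudo-null.

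The second step is to produce the two coprime annihilators of $\fH$. For the first, the argument of Lemma \ref{l:h1d} yields in the $d=1$ case the uniform bound $|\coh^1(\Gamma^{(n)}, D)| \leq |D/D_{div}|$, and the analogous uniform bound on the exponent of $\coh^1(\Gamma^{(n)}, D)$ for arbitrary $d$ follows from \cite[Proposition 3.3]{gr03} or by induction on $d$ via Hochschild--Serre along a codimension-one $\ZZ_p$-subgroup of $\Gamma$; consequently $p^M \cdot \fH = 0$ for a fixed $M$. For the second annihilator, note that $D/D_{div}$ is a finite $\Gamma$-module, so its action factors through some $\Gamma_{n_0}$; thus $\gamma - 1$ annihilates $D/D_{div}$ for any $\gamma \in \Gamma^{(n_0)}$, and hence annihilates $\coh^*(\Gamma^{(n)}, D/D_{div})$. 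Combining this with a uniform bound on the exponent of $\coh^1(\Gamma^{(n)}, D_{div})$ and the long exact sequence coming from $0 \to D_{div} \to D \to D/D_{div} \to 0$ yields a fixed $N$ with $(\gamma-1)^N \cdot \coh^1(\Gamma^{(n)}, D) = 0$ uniformly in $n$. Choosing $\gamma \notin \Gamma^p$ makes $(\gamma-1)^N$ coprime to $p$ in $\La$, and Lemma \ref{l:psn} concludes.

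The main obstacle is the uniform exponent bound on $\coh^1(\Gamma^{(n)}, D_{div})$ for $d \geq 2$: in the $d=1$ case the proof of Lemma \ref{l:h1d} shows the vanishing $\coh^1(\Gamma^{(n)}, D_{div}) = 0$ thanks to the surjectivity of $\gamma^{p^n}-1$ on $D_{div}$, but for $d \geq 2$ this cohomology need not vanish, so one must control it by a more delicate argument using the Koszul resolution of $\ZZ_p$ over $\ZZ_p[[\Gamma^{(n)}]]$ together with the cofinite generation of $D_{div}$ over $\ZZ_p$, and verify that the resulting bound really propagates through the inverse limit.
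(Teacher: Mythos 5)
Your reduction to the inverse limit $\fH:=\varprojlim_n\coh^1(\Gamma^{(n)},D)$, with $D=A^t_{p^\infty}(L)$, and the plan to apply Lemma \ref{l:psn} by exhibiting coprime annihilators, is exactly the paper's strategy. But the obstacle you flag at the end is not a technicality to be checked --- the uniform exponent bound $p^M\coh^1(\Gamma^{(n)},D_{div})=0$ is simply \emph{false} for $d\geq2$, and no Koszul-complex refinement can recover it. For a counterexample take $d=2$, $D=D_{div}=\QQ_p/\ZZ_p$, with $\gamma_1$ acting trivially and $\gamma_2$ acting as $1+p$; Hochschild--Serre along $\gamma_2^{p^n\ZZ_p}$ gives
$$\coh^1(\Gamma^{(n)},D)\simeq\coh^1\big(\gamma_1^{p^n\ZZ_p},\,D[\gamma_2^{p^n}-1]\big)\simeq D[\gamma_2^{p^n}-1]\,,$$
since $\gamma_1$ acts trivially and $\coh^1(\gamma_2^{p^n\ZZ_p},D)=0$ by divisibility of $D$; and $D[\gamma_2^{p^n}-1]=\Ker\big((1+p)^{p^n}-1\big)$ has order growing without bound in $n$. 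This setup is arithmetically realizable (a constant ordinary $A$ over a $\ZZ_p^2$-extension containing $\Kpinf$ and a ramified $\ZZ_p$-extension, the latter acting trivially on $A^t[p^\infty]$). The same trivial-action phenomenon also defeats any uniform bound by a power of a single $(\gamma-1)$, and Greenberg's result only gives finiteness for each fixed $n$, not uniformity.

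The paper sidesteps cohomological bounds by producing annihilators of $D$ itself: since multiplication by $\lambda\in\La$ is a $\Gamma^{(n)}$-equivariant endomorphism of $D$ ($\Gamma$ is abelian), any $\lambda$ killing $D$ kills $\coh^*(\Gamma^{(n)},D)$ for every $n$. Concretely, let $\rho\colon\Gamma\to\Aut(D_{div})\simeq\GL(r,\ZZ_p)$ be the representation of \eqref{e:ro} and $f_\gamma$ the characteristic polynomial of $\rho(\gamma)$. Cayley--Hamilton gives $f_\gamma(\gamma)D_{div}=0$ for \emph{every} $\gamma\in\Gamma$, including those with $\rho(\gamma)=1$. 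Choosing $m_0$ so that $A^t(K_{m_0})[p^\infty]$ generates the finite quotient $D/D_{div}$ makes $(\gamma^{p^{m_0}}-1)D\subseteq D_{div}$, so $\delta_\gamma:=(\gamma^{p^{m_0}}-1)f_\gamma(\gamma)$ kills $D$ and hence $\fa_n^{00}$ for all $n\geq m_0$. Taking $\gamma_1,\dots,\gamma_d$ to be a $\ZZ_p$-basis of $\Gamma$, the elements $\delta_1,\dots,\delta_d$ are pairwise coprime (each $\delta_i$ is a monic polynomial in its own variable $T_i=\gamma_i-1$), and Lemma \ref{l:psn} concludes for $d\geq2$; for $d=1$ one instead observes that $\fa^{00}$ is finite by \eqref{e:ddd}, hence pseudo-null.
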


\begin{proof} If $d=1$, $\fa^{00}$ is pseudo-null as it is a finite set: inequality \eqref{e:ddd} together with the surjection $\fs^{00}_n\twoheadrightarrow\fa^{00}_n$ gives a bound on its cardinality.

Let $D$ be as in \eqref{e:ALtors} and let $r$ the $\ZZ_p$-rank of its Pontryagin dual $D^\vee$.
Then the action of $\Gamma$ gives rise to a representation
\begin{equation} \label{e:ro} \rho\colon\Gamma\lr \Aut(D_{div})\simeq \GL(r,\ZZ_p)\,. \end{equation}
For each $\gamma$, let $f_{\gamma}(x)$ be the characteristic polynomial of $\rho(\gamma)$. Then $f_{\gamma}(\gamma)$ is an element in $\La$ which annihilates $D_{div}$. Let $m_0$ be large enough so that $A^t(K_{m_0})[p^\infty]$ generates $D/D_{div}$. Then for every $\gamma$, $(\gamma^{p^{m_0}}-1)f_{\gamma}(\gamma)$ annihilates both $\fs_n^{00}$ and $\fa_n^{00}$ for $n\geq m_0$, as we have
$$(\gamma^{p^{m_0}}-1)f_{\gamma}(\gamma)\cdot D=0.$$
If $d\geq 2$ and $\Gamma=\bigoplus_{i=1}^d\gamma_i^{\ZZ_p}$, then each $\delta_i:=(\gamma_i^{p^{m_0}}-1)f_{\gamma_i}(\gamma_i)$ lives in a different $\ZZ_p[T_i]$ under the identification $\Lambda=\ZZ_p[[T_1, ...,T_d]]$,  $\gamma_i-1=T_i$. Therefore $\delta_1,...,\delta_d$ are relatively prime and $\fa^{00}$ is pseudo-null by Lemma \ref{l:psn}.
\end{proof}

\begin{remark} \label{r:fintors} {\em Lemmas \ref{l:h1d} and \ref{l:a00} become trivial if $A_{p^\infty}(L)$ is finite. Actually, it happens pretty often that $A_{p^\infty}$ has only finitely many points in abelian or $p$-adic extensions, and even in the separable closure of $K$ if the latter is a characteristic $p$ function field: for example, this is always the case with non-isotrivial elliptic curves (see \cite[Theorem 4.2]{BLV09} for a proof). In the number field case, the paper \cite{zar87} provides a thorough discussion of the torsion of $A(F)$ for $F/K$ abelian, while results for $\Gal(F/K)$ a $p$-adic Lie group can be found in \cite[Proposition 3.2]{gr03}; in the function field setting, a sufficient condition for general abelian varieties can be found in \cite[Section 4]{vol95}. On the other hand, there are obvious counterexamples to the finiteness of $A_{p^\infty}(L)$ when $A/K$ is CM or isotrivial. See also \cite[Proposition 2.3.8]{tan10b} for necessary conditions to have an infinite $A_{p^\infty}(L)$.} \end{remark}

Put ${\bar \fa}_n^0:=\fa_n^0/\fa_n^{00}$. Applying the snake lemma to the diagram $$\begin{CD}
0 @>>> \Sel_{p^{\infty}}(A^t/K_n)_{div} @>>> \Sel_{p^{\infty}}(A^t/K_n) @>>> \fa_n @>>> 0\\
&& @VVV @VVV @VVV\\
0 @>>> \Sel_{div}(A^t/L) @>>> \Sel_{p^{\infty}}(A^t/L) @>>> \varinjlim\fa_n @>>> 0\end{CD}$$
we find an injection
\begin{equation}\label{e:subset}
{\bar \fa}_n^0\hookrightarrow \Sel_{div}(A^t/L)/\Sel_{p^{\infty}}(A^t/K_n)_{div}\,.
\end{equation}
By construction we have an exact sequence
\begin{equation}\label{e:aabar}
0\longrightarrow \fa^{00}\longrightarrow \fa^0\longrightarrow {\bar \fa}^0:=\varprojlim_n{\bar \fa}_n^0\longrightarrow 0.
\end{equation}

\subsubsection{The functional equation}
Suppose $M$ is a finitely generated torsion $\La$-module and $[M]=\bigoplus_{i=1}^m \La/\xi_i^{r_i}\La$. Define
$$[M]_p:=\bigoplus_{(\xi_i)=(p)}\La/\xi_i^{r_i}\La\subset [M],$$
$$[M]_{np}:=\bigoplus_{(\xi_i)\not=(p)}\La/\xi_i^{r_i}\La\subset [M],$$
so that $[M]=[M]_p\oplus [M]_{np}\,.$ Then
\begin{equation}\label{e:pd}
[M]_p^{\sharp}=\bigoplus_{(\xi_i)=(p)}\La/(\xi_i^{\sharp})^{r_i}\La
=\bigoplus_{(\xi_i)=(p)}\La/\xi_i^{r_i}\La=[M]_p\,.
\end{equation}
Recall the submodules $[M]_{si}$ and $[M]_{ns}$ defined in \S\ref{ss:nons}. By \eqref{e:fs} we have
\begin{equation}\label{e:fld}
[M]_{si}^{\sharp}=[M]_{si}\,.
\end{equation}
With this notation, Theorem \ref{t:flat} implies the following.

\begin{corollary}\label{c:flat} Suppose $X_p(A/L)$ is torsion over $\La$. Then the following holds:
\begin{enumerate}
\item[(a)] $[Y_p(A/L)]= [Y_p(A/L)]_{si}$.
\item[(b)] $[X_p(A/L)]_{ns}=[\fa]_{ns}$.
\item[(c)] $[\fa']_{ns}=[\fa]_{ns}$.
\end{enumerate}
\end{corollary}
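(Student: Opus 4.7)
The plan is to feed Theorem \ref{t:flat}, together with the observation from \eqref{e:fs} that simple elements are $\sharp$-stable up to units, into the structural exact sequences for $\fa$, $X_p(A/L)$, $\fa^0$ and $\fa'$; (b) will then follow from (a) by localization. For (a), I would first note that if $\lambda\in\La$ annihilates a $\La$-module $M$, then $\lambda^{\sharp}$ annihilates its Pontryagin dual $M^{\vee}$ (the action on the dual being twisted by $\gamma\mapsto\gamma^{-1}$). Applying this to $M=\Sel_{div}(A/L)$, which by Theorem \ref{t:flat} is killed by some product $f_1\cdots f_m$ of simple elements, we see that $Y_p(A/L)=\Sel_{div}(A/L)^{\vee}$ is annihilated by $f_1^{\sharp}\cdots f_m^{\sharp}$; by \eqref{e:fs} each $f_i^{\sharp}$ is again (associate to) a simple element. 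Hence every irreducible factor in the elementary decomposition of $Y_p(A/L)$ is simple, i.e. $[Y_p(A/L)]=[Y_p(A/L)]_{si}$.

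For (b), I would localize the short exact sequence \eqref{e:axy} at any non-simple height-one prime $(\xi)$. Statement (a) forces $Y_p(A/L)_{(\xi)}=0$, so the localized sequence identifies $\fa_{(\xi)}\cong X_p(A/L)_{(\xi)}$. Assembling the resulting isomorphisms of $\xi$-primary parts of the elementary modules over all non-simple $\xi$ yields $[\fa]_{ns}=[X_p(A/L)]_{ns}$.

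For (c), the sequence \eqref{e:aabar} together with Lemma \ref{l:a00} gives a pseudo-isomorphism $\fa^0\sim\bar\fa^0$, so $[\fa^0]=[\bar\fa^0]$. Now Theorem \ref{t:flat} applied to $A^t$ (torsion of $X_p(A^t/L)$ being guaranteed by Lemma \ref{l:isogAtA}) supplies simple elements $g_1,\dots,g_{m'}\in\La$ annihilating $\Sel_{div}(A^t/L)$. The embedding \eqref{e:subset} exhibits each $\bar\fa_n^0$ as a $\La$-submodule of a quotient of $\Sel_{div}(A^t/L)$, hence killed by $g_1\cdots g_{m'}$; passing to the inverse limit, the same product kills $\bar\fa^0$. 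Therefore $[\fa^0]_{ns}=[\bar\fa^0]_{ns}=0$, and the non-simple localization argument from (b), applied now to the exact sequence $0\to\fa^0\to\fa\to\fa'\to 0$ extracted from \eqref{e:0'}, gives $[\fa]_{ns}=[\fa']_{ns}$.

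The one delicate point, occurring in (c), is making sure that the annihilation of the direct limit $\Sel_{div}(A^t/L)$ is inherited by the inverse limit $\bar\fa^0$; this is automatic because the chosen product of simple elements annihilates each level $\bar\fa_n^0$ uniformly, and inverse limits respect module-theoretic annihilation. Everything else is straightforward bookkeeping in the UFD $\La$.
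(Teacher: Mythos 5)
Your proposal reproduces the paper's proof with the same key ingredients — Theorem~\ref{t:flat} for (a), the exact sequence \eqref{e:axy} for (b), and Lemma~\ref{l:a00} together with \eqref{e:aabar}, \eqref{e:subset}, Theorem~\ref{t:flat} applied to $A^t$, and the sequence extracted from \eqref{e:0'} for (c) — merely fleshing out the localization bookkeeping and the passage of annihilators to Pontryagin duals and inverse limits. It is correct and follows the same route.
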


\begin{proof}
The equality at point (a) follows directly from Theorem \ref{t:flat} and, together with the exact sequence \eqref{e:axy}, it implies (b). Lemma \ref{l:a00} and the sequence \eqref{e:aabar} yield $\fa^0\sim\bar\fa^0$; by \eqref{e:subset} and Theorem \ref{t:flat} (applied to $A^t$) we have $[\bar\fa^0]_{ns}=0$, thus $[\fa^0]_{ns}=0$ and (c) follows by exact sequence \eqref{e:0'}.
\end{proof}

Now we can prove Theorem \ref{t:xaat}.

\begin{proof}[\bf{Proof of Theorem \ref{t:xaat}}]
We just need to check \eqref{e:xaat} when $X_p(A/L)$ is $\La$-torsion. Corollaries \ref{c:flat} and \ref{c:nons} imply
\begin{equation}\label{e:nf}
[X_p(A/L)]_{ns}^{\sharp}=[\fa]_{ns}^\sharp=[\fa']_{ns}^\sharp=[\fb']_{ns}=[\fb]_{ns}=[X_p(A^t/L)]_{ns}
\end{equation}
(where the last passage comes from the fact that Corollary \ref{c:flat} holds as well replacing $A$, $\fa$ with $A^t$, $\fb$).
By \eqref{e:fld} we see that
$$[X_p(A/L)]_{si}=[X_p(A/L)]_{si}^{\sharp}.$$
The proof of Lemma \ref{l:isogAtA} shows that any isogeny $A\rightarrow A^t$ gives rise to an isomorphism
\begin{equation}\label{e:qp}
\QQ_p X_p(A/L)\stackrel{\sim}{\longrightarrow} \QQ_p X_p(A^t/L),
\end{equation}
and hence $[X_p(A^t/L)]_{si}=[X_p(A/L)]_{si}=[X_p(A/L)]_{si}^{\sharp}$. Therefore,
$$X_p(A^t/L)\sim X_p(A/L)^{\sharp}.$$
This together with \eqref{e:qp} implies
$$[X_p(A/L)]_{np}^{\sharp}=[X_p(A^t/L)]_{np}=[X_p(A/L)]_{np}.$$
By \eqref{e:pd}, $[X_p(A/L)]_p^{\sharp}=[X_p(A/L)]_{p}$. Therefore,
$$X_p(A/L)^{\sharp}\sim X_p(A/L).$$
\end{proof}

\subsubsection{} The following proposition shows that the Cassels-Tate system $\{\fa_n,\fb_n\}$ can be a $\Gamma$-system also when $X_p(A/L)$ is not a torsion $\La$-module. Note that it also implies that $\fA$ enjoys property ({\bf T}) (just replace $L/K$ with an arbitrary $\ZZ_p^{d-1}$-subextension $L'/F$).

\begin{proposition}\label{p:shatate}
If $L/K$ only ramifies at good ordinary places, then $\fa$ and $\fb$ are torsion over $\Lambda$.
\end{proposition}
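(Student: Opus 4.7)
The plan is to split the argument into (i) finite generation and (ii) torsion of $\fa$ and $\fb$ as $\Lambda$-modules.

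For (i), the hypothesis that $A$ (and hence $A^t$) has good ordinary reduction at every place of $K$ ramifying in $L/K$ brings us into the scope of Tan's \cite[Theorem 5]{tan10a} (cited in \S\ref{se:se}), which yields that $X_p(A/L)$ and $X_p(A^t/L)$ are finitely generated $\Lambda$-modules. The exact sequence \eqref{e:axy} then realizes $\fa$ as a $\Lambda$-submodule of $X_p(A/L)$, hence finitely generated; dually (or by applying the same argument to $A^t$), $\fb$ is finitely generated as a $\Lambda$-submodule of $X_p(A^t/L)$.

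For (ii), the aim is to show that $\fa$ and $\fb$ carry no free $\Lambda$-summand. By \eqref{e:axy}, $\fa$ is torsion if and only if $X_p(A/L)$ and $Y_p(A/L) = \Sel_{div}(A/L)^\vee$ have the same $\Lambda$-rank. By construction $\Sel_{div}(A/L) = \varinjlim_n \Sel_{p^\infty}(A/K_n)_{div}$ absorbs the images of the Mordell-Weil contributions $\QQ_p/\ZZ_p \otimes A(K_n)$ and the divisible parts of $\Sha(A/K_n)[p^\infty]$ along the tower; the claim is therefore that all non-torsion growth of $X_p(A/L)$ is already captured by $Y_p(A/L)$, leaving $\fa$ purely torsion. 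The cleanest route I anticipate is to produce a single non-zero annihilator in $\Lambda$ that kills $\fa_n$ uniformly in $n$, in the spirit of the proof of Lemma \ref{l:a00}: under the good ordinary hypothesis, the Galois action of $\Gamma$ on the Tate module of the divisible part of $A_{p^\infty}^t(L)$ defines a continuous representation $\rho$ as in \eqref{e:ro}, and the product $(\gamma^{p^{m_0}}-1)f_\gamma(\gamma)$, for a suitable $m_0$ controlling $D/D_{div}$ and the characteristic polynomial $f_\gamma$ of $\rho(\gamma)$, provides a candidate annihilator of $\fa_n$. Once an annihilator $\lambda$ for $\fa$ is established, after reduction to the strongly controlled case (Lemma \ref{l:st}) the perfect Cassels-Tate duality together with Lemma \ref{l:an} yields $\lambda^\sharp$ annihilating $\fb$.

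The main obstacle is making the rank identification (or equivalently the existence of a uniform annihilator) fully rigorous. The subtle point is that the Mordell-Weil growth along $L/K$ must be shown to be entirely absorbed by $Y_p(A/L)$; the good ordinary assumption at every ramified place is essential here because it guarantees that the local Selmer conditions at $S$ are controlled by the formal group, so that the image of Mordell-Weil in the Selmer group already sits in the divisible part. The comparison of ranks can then be completed either via a careful bookkeeping in the Poitou-Tate sequence at each layer $K_n$ with uniform control coming from the ordinary local conditions, or by appealing to the non-degeneracy of the $p$-adic height pairing of Lemma \ref{l:pmw} in the Iwasawa limit. Either route reduces the finiteness of the rank of $\fa$ to a statement about the finitely many ramified places, where the good ordinary assumption provides the needed local input.
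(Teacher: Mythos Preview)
Your finite-generation step (i) is fine and matches the setup in \S\ref{se:se}. The problem is in (ii): the candidate annihilator you propose does not do what you claim. The element $(\gamma^{p^{m_0}}-1)f_\gamma(\gamma)$ from the proof of Lemma~\ref{l:a00} annihilates $D=A^t_{p^\infty}(L)$ and therefore $\fs_n^{00}=\Ker\bigl(\Sel_{p^\infty}(A^t/K_n)\to\Sel_{p^\infty}(A^t/L)\bigr)\subset\coh^1(\Gamma^{(n)},D)$; hence it kills only the small piece $\fa_n^{00}$, not all of $\fa_n$. The groups $\fa_n=\Sha_{p^\infty}(A^t/K_n)/\Sha_{p^\infty}(A^t/K_n)_{div}$ have no a priori relationship to the Galois representation on $A^t_{p^\infty}(L)$, so there is no reason an annihilator built from $\rho$ should kill them. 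Your fallback suggestions (Poitou--Tate bookkeeping, $p$-adic heights) are not fleshed out, and in fact the rank comparison $\rank_\Lambda X_p(A/L)=\rank_\Lambda Y_p(A/L)$ you aim for is equivalent to the statement of the proposition itself.

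The paper's argument is entirely different and exploits the one hard fact you never use: each $\fa_n$ is \emph{finite}. The proof is by contradiction. If $\fa$ had positive $\Lambda$-rank, choose a $\Lambda$-free submodule $\fa'\subset\fa$ of the same rank and extend to $X'\subset X_p(A/L)$ with torsion cokernel. For a character $\omega\in\Gamma^\vee$ avoiding the zeros of an annihilator of $X_p(A/L)/X'$, the specialization $(\Lambda/\K)\otimes_\Lambda\fa'$ injects into $(\Lambda/\K)\otimes_\Lambda X_p(A/L)$ and is $\cO_\omega$-free of positive rank. Dualizing and invoking the control theorem \cite[Theorem~4]{tan10a} at the finite layer $K_\omega$ (this is where the good ordinary hypothesis enters), one finds that the image of $\Sel_{p^\infty}(A/K_\omega)$ in $\Sha_{p^\infty}(A/K_\omega)/\Sha_{p^\infty}(A/K_\omega)_{div}$ has positive $\ZZ_p$-corank. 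But the target is finite, contradiction. The key idea you are missing is this descent-to-a-character step combined with control: it converts a hypothetical free $\Lambda$-summand of $\fa$ into infinite corank inside a single finite group.
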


\begin{proof}
Recall that $Q(\Lambda)$ denotes the fraction field of $\La$. Suppose $\fa$ were non-torsion. Let $r$ and $s$ denote respectively the dimensions over $Q(\La)$ of the vector spaces $Q(\La)\fa$ and $Q(\La)X_p(A/L)$: by \eqref{e:axy}, the former is contained in the latter. Let $e_1,...,e_r,...,e_s\in X_p(A/L)$ form a basis of $Q(\La)X_p(A/L)$ such that $e_1,...,e_r$ are in $\fa$.

Write $\fa'=\Lambda\cdot e_1+\cdots+\Lambda\cdot e_r\subset \fa$ and  $X'=\Lambda\cdot e_1+\cdots+\Lambda\cdot e_s\subset X_p(A/L)$. Then $X_p(A/L)/X'$ is torsion over $\Lambda$, and hence is annihilated by some nonzero $\eta\in \Lambda$. Let $\omega\in \Gamma^\vee$ be a character not contained in $\Delta_\eta$ and, as usual, extend it to a ring homomorphism $\omega\colon\La \twoheadrightarrow \cO_\omega$ whose kernel we denote by $\K$. Then we have the exact sequences
\begin{equation}\label{e:tor1}
\begin{CD} \Tor_{\La} (\La/\K, X_p(A/L)/\fa' ) @>>> (\Lambda/\K)\otimes_{\La} \fa'@>{i_*}>> (\La/\K)\otimes_{\La} X_p(A/L) \end{CD}
\end{equation}
and
$$\begin{CD} 0 @>>>  X'/\fa'  @>>>  X_p(A/L)/\fa'  @>>> X_p(A/L)/X' @>>> 0 \end{CD}\,.$$
The fact that $X'/\fa'$ is free over $\La$ implies that the natural map
\begin{equation*}\label{e:tor2} \Tor_{\La} (\La/\K, X_p(A/L)/\fa' )\, \lr  \Tor_{\La} (\La/\K, X_p(A/L)/X' ) \end{equation*}
is an injection. Since the residue class of $\eta$ in $\La/\K\simeq \cO_\omega$, a discrete valuation ring, is nonzero, the $\cO_\omega$-module $\Tor_{\Lambda} (\La/\K, X_p(A/L)/X' )$ must be finite, and the same holds for $\Tor_{\La}(\La/\K, X_p(A/L)/\fa' )$. Thus the homomorphism $i_*$ in \eqref{e:tor1} must be injective, because $(\Lambda/\K)\otimes_{\La} \fa'$ is a free $\cO_\omega$-module,
and hence its image is free of positive rank over $\cO_\omega$. Let $\Gamma^\omega\subset \Gamma$ denote the kernel of the character $\omega$ and write $\Gamma_\omega:=\Gamma/\Gamma^\omega$, $\La_\omega:=\ZZ_p[\Gamma_\omega]$, $K_\omega:=L^{\Gamma^\omega}$.  Then we have the commutative diagram:
$$\begin{CD}\fa' @>>> \La_\omega\otimes_{\La} \fa' @>>> (\La/\K)\otimes_{\La} \fa'\\
@VVV @VVV @VVV \\
\fa @>>> \La_\omega\otimes_{\La} \fa @>>> (\Lambda/\K)\otimes_{\La} \fa \\
@VVV @VVV @VVV \\
X_p(A/L) @>>> \La_\omega\otimes_{\La} X_p(A/L) @>>> (\La/\K)\otimes_{\La} X_p(A/L).   \end{CD}$$
The diagram together with the duality implies that the image of the composition
$$(\cO_\omega\Sel_{p^\infty}(A/L))^{(\omega^{-1})}\hookrightarrow (\cO_\omega\Sel_{p^\infty}(A/L))^{\Gamma^\omega}\lr \cO_\omega(\varinjlim \Sha_{p^\infty}(A/F)/\Sha_{p^\infty}(A/F)_{div})$$
has positive $\ZZ_p$-corank. By the assumption of the proposition, we are allowed to apply the control theorem \cite[Theorem 4]{tan10a}, which states that the restriction map
$$res_\omega\colon\Sel_{p^\infty}(A/K_\omega)\longrightarrow \Sel_{p^\infty}(A/L)^{\Gamma^\omega}$$
has finite kernel and cokernel. Consequently, the image of
$$ \Sel_{p^\infty}(A/K_\omega)\longrightarrow \Sha_{p^\infty}(A/K_\omega)/\Sha_{p^\infty}(A/K_\omega)_{div}$$
must have positive $\ZZ_p$-corank. But this is absurd, as the target of the map is finite.

The proof for $\fb$ just replaces $A$ with $A^t$.
\end{proof}

\begin{remark} {\em Proposition \ref{p:shatate} actually holds under the weaker hypothesis that $L/K$ is ramified only at ordinary places. The proof first follows the same argument. Then, instead of using the control theorem, it applies Lemma 5.3.2 and Lemma 5.3.3 of \cite{tan10b} as well as the argument in their proofs to show that as long as $\omega$ is chosen to have nontrivial restriction on the decomposition subgroups at the ramified multiplicative places then the map $res_\omega\colon(\cO_\omega\Sel_{p^\infty}(A/K_\omega))^{(\omega^{-1})}\rightarrow (\cO_\omega\Sel_{p^\infty}(A/L))^{(\omega^{-1})}$ will have finite kernel and cokernel.}
\end{remark}

\subsubsection{} It is natural to ask if $\fA$ is pseudo-controlled: we tend to believe that this actually holds as long as $X_p(A/L)$ is torsion.
Propositions \ref{p:pscontrl} and \ref{p:gdordpscontrl} below give evidence to support our belief.

\begin{lemma} \label{l:zerocoker}
For $n\geq m$ the restriction map
$$\Sel_{p^\infty}(A/K_m)_{div} \longrightarrow (\Sel_{p^\infty}(A/K_n)^{\Gamma^{(m)}})_{div}$$
is surjective.
\end{lemma}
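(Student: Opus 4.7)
The plan is to exploit the elementary relation $\mathrm{res}\circ\mathrm{cor}=\sum_{h\in H}h$ of group cohomology, where $H:=\Gamma^{(m)}/\Gamma^{(n)}=\Gal(K_n/K_m)$ is a finite $p$-group of order $p^{d(n-m)}$. First I would establish that the corestriction map $\mathrm{cor}\colon\coh^1(K_n,A_{p^\infty})\to\coh^1(K_m,A_{p^\infty})$ carries $\Sel_{p^\infty}(A/K_n)$ into $\Sel_{p^\infty}(A/K_m)$: this is a routine local-global compatibility, since the sum of the local corestrictions $\bigoplus_{w\mid v}\coh^1(K_{n,w},A)\to\coh^1(K_{m,v},A)$ sends zero classes to zero classes. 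Consequently, on the $H$-invariant subgroup $\Sel_{p^\infty}(A/K_n)^{\Gamma^{(m)}}$, the composition $\mathrm{res}\circ\mathrm{cor}$ acts as multiplication by $|H|$, so $|H|\cdot y\in\mathrm{res}(\Sel_{p^\infty}(A/K_m))$ for every such $y$.

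The second step exploits divisibility to remove the factor $|H|$. Since $(\Sel_{p^\infty}(A/K_n)^{\Gamma^{(m)}})_{div}$ is by definition a divisible $p$-primary group and $|H|$ is a $p$-power, any element $y$ in this subgroup can be written as $y=|H|\cdot y'$ with $y'$ still in $(\Sel_{p^\infty}(A/K_n)^{\Gamma^{(m)}})_{div}$. Then
$$y=|H|\cdot y'=\mathrm{res}(\mathrm{cor}(y'))\in\mathrm{res}(\Sel_{p^\infty}(A/K_m)),$$
which yields the set-theoretic inclusion $(\Sel_{p^\infty}(A/K_n)^{\Gamma^{(m)}})_{div}\subseteq\mathrm{res}(\Sel_{p^\infty}(A/K_m))$.

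Finally, I would upgrade this inclusion to the statement of the lemma via the following general algebraic fact: for any homomorphism $\varphi\colon X\to Y$ of cofinitely generated $p$-primary abelian groups, one has $\varphi(X_{div})=\varphi(X)_{div}$. Indeed, writing $X=X_{div}\oplus X_f$ with $X_f$ finite, the quotient $\varphi(X)/\varphi(X_{div})$ is a quotient of $\varphi(X_f)$, hence finite, and therefore admits no non-trivial divisible subgroup. Applying this to the restriction map $\mathrm{res}\colon\Sel_{p^\infty}(A/K_m)\to\Sel_{p^\infty}(A/K_n)^{\Gamma^{(m)}}$ and combining with the previous step, one obtains $(\Sel_{p^\infty}(A/K_n)^{\Gamma^{(m)}})_{div}\subseteq\mathrm{res}(\Sel_{p^\infty}(A/K_m))_{div}=\mathrm{res}(\Sel_{p^\infty}(A/K_m)_{div})$, while the reverse inclusion is automatic since the image of a divisible subgroup is a divisible subgroup of $\Sel_{p^\infty}(A/K_n)^{\Gamma^{(m)}}$, hence contained in its maximal divisible part. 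The argument presents no serious obstacle and, incidentally, does not require $X_p(A/L)$ to be torsion; the only point worth flagging is the verification that corestriction respects the Selmer condition at each place, which is a standard compatibility of local and global corestrictions.
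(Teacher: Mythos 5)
Your proof is correct, and it takes a genuinely different route from the paper's. The paper works inside the Cassels--Tate $\Gamma$-system framework: it runs a snake-lemma diagram chase involving the quotients $\fb_m=\Sel_{p^\infty}(A/K_m)/\Sel_{p^\infty}(A/K_m)_{div}$ and $\fb_n^{\Gamma^{(m)}}$ to reduce the lemma to showing that the cokernel of $i\colon\Sel_{p^\infty}(A/K_m)\to\Sel_{p^\infty}(A/K_n)^{\Gamma^{(m)}}$ is annihilated by some positive integer, and then establishes that bound via a second diagram and the Hochschild--Serre spectral sequence (identifying $\Coker(\mathrm{res}^n_m)$ with a subgroup of $\coh^2(K_n/K_m,A_{p^\infty}(K_n))$ and likewise locally, both killed by $[K_n:K_m]$). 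You instead obtain the same bound directly from the transfer relation $\mathrm{res}\circ\mathrm{cor}=|H|$ on $\Gamma^{(m)}/\Gamma^{(n)}$-invariants, after noting that corestriction carries Selmer classes to Selmer classes; you then combine this with divisibility (to absorb the $p$-power factor $|H|$) and a clean general fact about cofinitely generated $p$-primary groups ($\varphi(X_{div})=\varphi(X)_{div}$) to finish. Your argument is more self-contained and avoids the $\fb$-machinery entirely, making it somewhat more portable; the paper's version, in exchange, is phrased in terms of the $\Gamma$-system objects $\fb_n$, $\fr_m^n$ that it reuses heavily elsewhere (e.g., in the proof of Proposition \ref{p:pscontrl}), so the diagram is not wasted. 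Neither version needs $X_p(A/L)$ to be torsion, as you observe; that hypothesis only enters in the subsequent Lemma \ref{l:fingen}.
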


\begin{proof}
The commutative diagram of exact sequences
$$\xymatrix{  \Sel_{p^\infty}(A/K_m)_{div}=\Sel_{p^\infty}(A/K_m)_{div} \ar@{^{(}->}[r]\ar@<-9ex>[d]^{j'} \ar@<9ex>[d]^j & \Sel_{p^\infty}(A/K_m) \ar@{->>}[r]\ar[d]^i & \fb_m \ar[d]^{\fr_m^n}\\
(\Sel_{p^\infty}(A/K_n)^{\Gamma^{(m)}})_{div} \subset (\Sel_{p^\infty}(A/K_n)_{div})^{\Gamma^{(m)}} \ar@{^{(}->}[r] & \Sel_{p^\infty}(A/K_n)^{\Gamma^{(m)}} \ar[r] & \fb_n^{\Gamma^{(m)}} } $$
induces the exact sequence
$$\Ker (\fr_m^n) \longrightarrow \Coker (j)\longrightarrow \Coker (i).$$
Since $\Ker (\fr_m^n)$ is finite while $\Coker (j')$ is $p$-divisible, it is sufficient to show that $\Coker (i)$ is annihilated by some positive integer. Consider the commutative diagram of exact sequences
$$\xymatrix{ \Sel_{p^\infty}(A/K_m) \ar@{^{(}->}[r]\ar[d]^i &  \coh^1_{\mathrm{fl}}(K_m,A_{p^\infty})\ar[r]^-{loc_m} \ar[d]^{res_m^n} & \prod_{\text{all}\;v} \coh^1({K_m}_v, A) \ar[d]^{r_m^n}\\
\Sel_{p^\infty}(A/K_n)^{\Gamma^{(m)}} \ar@{^{(}->}[r] & \coh^1_{\mathrm{fl}}(K_n,A_{p^\infty})^{\Gamma^{(m)}} \ar[r]^-{loc_n} &  \prod_{\text{all}\;w} \coh^1({K_n}_w, A)^{\Gamma_w^{(m)}}}  $$
that induces the exact sequence
$$\Ker \big(\xymatrix{\image (loc_m)\ar[r]^{r_m^n} & \image (loc_n)}\big)\lr \Coker (i)\lr \Coker (res_m^n) \,.$$
By the Hochschild-Serre spectral sequence, the right-hand term $\Coker (res_m^n)$ is a subgroup of $\coh^2(K_n/K_m, A_{p^\infty}(K_n))$, and hence is annihilated by $p^{d(n-m)}=[K_n:K_m]$. Similarly, the left-hand term, being a subgroup of $\prod_v \coh^1({K_n}_v/{K_m}_v, A({K_n}_v))$, is also annihilated by $[K_n:K_m]$.
\end{proof}

\begin{lemma}\label{l:fingen}
If $L/K$ is a $\ZZ_p$-extension and $X_p(A/L)$ is a torsion $\La$-module, then there exists some $N$ so that
$$\Sel_{p^\infty}(A/K_n)_{div}=(\Sel_{p^\infty}(A/K_n)_{div})^{\Gamma^{(m)}}=(\Sel_{p^\infty}(A/K_n)^{\Gamma^{(m)}})_{div}$$
holds for all $n\geq m\geq N$.
\end{lemma}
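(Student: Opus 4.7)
The plan is to exploit the torsion hypothesis on $X_p(A/L)$ to force $\Sel_{div}(A/L)$ to be cofinitely generated over $\ZZ_p$, then combine the stabilization of the ascending chain $\{\Sel_{p^\infty}(A/K_n)_{div}\}_n$ with Lemma \ref{l:zerocoker}. First, by \eqref{e:axy} the module $Y_p(A/L)=\Sel_{div}(A/L)^\vee$ is a quotient of $X_p(A/L)$, hence finitely generated and torsion over $\La$. Because $\Sel_{div}(A/L)$ is $p$-divisible (as a direct limit of $p$-divisible groups), $Y_p(A/L)$ is $p$-torsion free, so no summand of $[Y_p(A/L)]$ has the form $\La/p^a\La$; as $d=1$, every remaining summand $\La/f^a\La$ (with $f$ coprime to $p$) is free of finite $\ZZ_p$-rank, and the cokernel of $[Y_p(A/L)]\hookrightarrow Y_p(A/L)$ is pseudo-null, hence finite. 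Thus $Y_p(A/L)$ is finitely generated over $\ZZ_p$, and $\Sel_{div}(A/L)\simeq(\QQ_p/\ZZ_p)^r$ for some $r\geq 0$.

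Next, I would observe that for $n\geq m$ the restriction map $\Sel_{p^\infty}(A/K_m)\to\Sel_{p^\infty}(A/K_n)$ has kernel contained in $\coh^1(\Gamma^{(m)}/\Gamma^{(n)},A(K_n)[p^\infty])$, which is finite, so its restriction to divisible subgroups is injective (any divisible element mapping into a finite group must vanish). Hence $\Sel_{div}(A/L)=\bigcup_n\Sel_{p^\infty}(A/K_n)_{div}$ is an ascending union of divisible subgroups inside $(\QQ_p/\ZZ_p)^r$, and such a chain necessarily stabilizes. I choose $N$ so that $\Sel_{p^\infty}(A/K_n)_{div}$ coincides with the image of $\Sel_{p^\infty}(A/K_N)_{div}$ for every $n\geq N$. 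Since this image is fixed by $\Gal(K_n/K_N)$, hence by $\Gamma^{(m)}$ for every $m\geq N$, the first equality $\Sel_{p^\infty}(A/K_n)_{div}=(\Sel_{p^\infty}(A/K_n)_{div})^{\Gamma^{(m)}}$ holds whenever $n\geq m\geq N$.

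For the second equality the inclusion $(\Sel_{p^\infty}(A/K_n)^{\Gamma^{(m)}})_{div}\subseteq(\Sel_{p^\infty}(A/K_n)_{div})^{\Gamma^{(m)}}$ is automatic. For the reverse, Lemma \ref{l:zerocoker} supplies a surjection
$$\Sel_{p^\infty}(A/K_m)_{div}\twoheadrightarrow(\Sel_{p^\infty}(A/K_n)^{\Gamma^{(m)}})_{div},$$
and the source, being a divisible subgroup of $\Sel_{div}(A/L)\simeq(\QQ_p/\ZZ_p)^r$, has $\ZZ_p$-corank at most $r$. On the other hand, the first equality and the stabilization identify $(\Sel_{p^\infty}(A/K_n)_{div})^{\Gamma^{(m)}}$ with $\Sel_{p^\infty}(A/K_n)_{div}\cong\Sel_{div}(A/L)$, which has corank exactly $r$. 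Since a divisible subgroup of a divisible $p$-primary group with the same $\ZZ_p$-corank must be the whole group (the quotient is divisible of corank $0$, hence trivial), equality follows.

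The step that requires the most care is the first one, extracting a finite $\ZZ_p$-corank for $\Sel_{div}(A/L)$ from the torsion hypothesis on $X_p(A/L)$; this crucially uses both $p$-divisibility of $\Sel_{div}(A/L)$ and the fact that $d=1$ (so pseudo-null is finite, and the $f$-part of a torsion module is $\ZZ_p$-finitely generated when $f\neq p$). Once the rank $r$ is finite, the remainder of the argument is a combination of the injectivity of restriction on divisible subgroups, the stabilization of an ascending chain with bounded corank, and a pigeonhole on coranks.
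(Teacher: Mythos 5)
Your approach is genuinely different from the paper's, and the overall strategy is sound, but there are two identifiable gaps.

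The paper proves the first equality directly from Theorem~\ref{t:flat}: for $d=1$ every simple element divides some $\gamma^{p^N}-1$, so $(\gamma^{p^N}-1)\Sel_{div}(A/L)^{\Gamma^{(n)}}=0$ for all $n$; since the kernel of $\Sel_{p^\infty}(A/K_n)_{div}\to\Sel_{div}(A/L)^{\Gamma^{(n)}}$ is finite (Lemma~\ref{l:h1d}), the image $(\gamma^{p^N}-1)\Sel_{p^\infty}(A/K_n)_{div}$ is finite and $p$-divisible, hence zero. You instead first deduce $\Sel_{div}(A/L)\simeq(\QQ_p/\ZZ_p)^r$ from the torsion hypothesis (that part is correct and doesn't even need Theorem~\ref{t:flat}) and then run an ascending-chain argument. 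Both get there, but the paper's route is shorter and also yields the first equality for all $n$, not merely $n\geq N$.

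The first gap is the parenthetical justification ``any divisible element mapping into a finite group must vanish.'' This is false: multiplication by $p$ on $\QQ_p/\ZZ_p$ has finite kernel $\ZZ/p\ZZ$ yet is not injective. So the maps $\Sel_{p^\infty}(A/K_m)_{div}\to\Sel_{p^\infty}(A/K_n)_{div}$ need not be injective and you cannot literally view the $\Sel_{p^\infty}(A/K_n)_{div}$ as an ascending chain of subgroups of $\Sel_{div}(A/L)$. What you can (and should) do is stabilize the chain of \emph{images} $D_n\subseteq\Sel_{div}(A/L)$, conclude $D_N=\Sel_{div}(A/L)$ for $N\gg 0$, and then argue that for $n\geq N$ the cokernel of $\Sel_{p^\infty}(A/K_N)_{div}\to\Sel_{p^\infty}(A/K_n)_{div}$ is a quotient of the finite kernel of $\Sel_{p^\infty}(A/K_n)_{div}\to\Sel_{div}(A/L)$, hence divisible and finite, hence trivial. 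With this repair, your first-equality argument goes through.

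The second gap is in the corank argument for the second equality. You derive from Lemma~\ref{l:zerocoker} that the target $(\Sel_{p^\infty}(A/K_n)^{\Gamma^{(m)}})_{div}$ has corank \emph{at most} $r$, but the final step ``a divisible subgroup of the same corank must be the whole group'' requires knowing the corank is exactly $r$, which the surjection alone does not give. The repair is simply to observe, as the paper does, that the second equality is an immediate consequence of the first: once $\Sel_{p^\infty}(A/K_n)_{div}=(\Sel_{p^\infty}(A/K_n)_{div})^{\Gamma^{(m)}}$, this group is a divisible subgroup of $\Sel_{p^\infty}(A/K_n)^{\Gamma^{(m)}}$, hence lands in its maximal divisible subgroup, giving the nonobvious inclusion $\Sel_{p^\infty}(A/K_n)_{div}\subseteq(\Sel_{p^\infty}(A/K_n)^{\Gamma^{(m)}})_{div}$; the reverse inclusion is the ``automatic'' one you already noted. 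Invoking Lemma~\ref{l:zerocoker} here is unnecessary.
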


\begin{proof}
The second equality is an easy consequence of the first one.
The assumption $\Gamma=\gamma^{\ZZ_p}$ implies that if $f\in\La$ is simple then $f$ divides $\gamma^{p^m}-1$ for some $m$. Therefore, by Theorem \ref{t:flat}, there exists an integer $N$ such that $(\gamma^{p^N}-1)\Sel_{div}(A/L)^{\Gamma^{(n)}}=0$ for every $n$. By Lemma \ref{l:h1d} the kernel of the map $\Sel_{p^\infty}(A/K_n)_{div} \rightarrow \Sel_{div}(A/L)^{\Gamma^{(n)}}$ is finite.
This implies that $(\gamma^{p^N}-1)\Sel_{p^\infty}(A/K_n)_{div}$ must be trivial, as it is both finite and $p$-divisible.
\end{proof}

\begin{proposition}\label{p:pscontrl}
If $L/K$ is a $\ZZ_p$-extension and $X_p(A/L)$ is a torsion $\La$-module, then $\fA$ is pseudo-controlled.
\end{proposition}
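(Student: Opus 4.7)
The plan is to exploit the exact sequence \eqref{e:aabar} to split the task into two pieces: showing $\fa^{00}$ and $\bar{\fa}^{0}$ are both pseudo-null, together with the analogous statements for $\fb$. Since $d=1$ here, pseudo-null just means finite. The finiteness of $\fa^{00}$ is already given by Lemma \ref{l:a00} (recall that in the $d=1$ case the proof of that lemma produces a genuinely finite group), so the real content is to bound $\bar{\fa}^{0}$, and then run a symmetric argument for $\fb$.

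The starting point is the injection \eqref{e:subset}, $\bar{\fa}_n^0\hookrightarrow \Sel_{div}(A^t/L)/\Sel_{p^\infty}(A^t/K_n)_{div}$, so it suffices to show that this quotient vanishes for all sufficiently large $n$. First I would observe that $X_p(A^t/L)$ is also $\La$-torsion by Lemma \ref{l:isogAtA}, so both Lemma \ref{l:zerocoker} and Lemma \ref{l:fingen} apply verbatim to $A^t$. Lemma \ref{l:fingen} yields an integer $N$ so that $(\Sel_{p^\infty}(A^t/K_n)^{\Gamma^{(m)}})_{div}=\Sel_{p^\infty}(A^t/K_n)_{div}$ for all $n\geq m\geq N$, and Lemma \ref{l:zerocoker} says the restriction $\Sel_{p^\infty}(A^t/K_m)_{div}\to (\Sel_{p^\infty}(A^t/K_n)^{\Gamma^{(m)}})_{div}$ is surjective. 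Composing, the transition map $\Sel_{p^\infty}(A^t/K_m)_{div}\to \Sel_{p^\infty}(A^t/K_n)_{div}$ is surjective for $n\geq m\geq N$.

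These same transition maps are also injective, because their kernels are contained in the finite groups $\coh^1(\Gamma^{(m)}/\Gamma^{(n)},A^t_{p^\infty}(K_n))$ (which are finite by Lemma \ref{l:h1d}) and a finite divisible group is trivial. Hence for $n\geq m\geq N$ these transition maps are isomorphisms, the direct system stabilizes, and the natural map $\Sel_{p^\infty}(A^t/K_n)_{div}\to \Sel_{div}(A^t/L)=\varinjlim_k \Sel_{p^\infty}(A^t/K_k)_{div}$ becomes a bijection for $n\geq N$. In particular $\Sel_{div}(A^t/L)/\Sel_{p^\infty}(A^t/K_n)_{div}=0$, whence $\bar{\fa}_n^0=0$ for $n\geq N$, and so $\bar{\fa}^0=0$. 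Together with Lemma \ref{l:a00}, this shows $\fa^0=\fa^{00}$ is finite.

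Finally, by the identifications \eqref{e:bnsha}--\eqref{e:ansha}, the $\Gamma$-system $\fA$ associated with $A$ has its roles of $\fa_n$ and $\fb_n$ swapped in the $\Gamma$-system associated with $A^t$. Repeating the argument above with $A$ in place of $A^t$ then shows that $\fb^{0}$ is finite as well, so $\fa^{0}\times\fb^{0}$ is finite and $\fA$ is pseudo-controlled. The only mild obstacle is the bookkeeping around the direct limit $\Sel_{div}(A^t/L)$: one must check that injectivity of the restriction on divisible parts lets us identify the limit with its value at any sufficiently high layer. Once that is granted, the proof is simply an assembly of Lemmas \ref{l:isogAtA}, \ref{l:a00}, \ref{l:zerocoker}, and \ref{l:fingen}.
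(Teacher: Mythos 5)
Your overall strategy is sound and runs close to the paper's own proof: both rely on the same pillars (Lemmas \ref{l:isogAtA}, \ref{l:a00}, \ref{l:zerocoker}, \ref{l:fingen}), both compare $\fa_n^0$ with $\fa_n^{00}$, and both reduce to showing the divisible parts eventually saturate $\Sel_{div}(A^t/L)$. The paper does a direct element chase: given $a\in\fa_n^0$ lifted to $x\in\Sel_{p^\infty}(A^t/K_n)$, it uses Lemmas \ref{l:zerocoker} and \ref{l:fingen} to modify $x$ by a divisible element so that the result lies in $\fs_n^{00}$, concluding $\fa_n^0=\fa_n^{00}$ and invoking Lemma \ref{l:a00}. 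You instead package the same content structurally through the exact sequence \eqref{e:aabar} and the injection \eqref{e:subset}, which is clean and also makes explicit the $\fb^0$ side (which the paper treats by unstated symmetry). Both routes are essentially the same argument, differently organized.

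There is, however, a flaw in one step. You assert that the transition maps $\Sel_{p^\infty}(A^t/K_m)_{div}\to\Sel_{p^\infty}(A^t/K_n)_{div}$ are injective because their kernels are finite and ``a finite divisible group is trivial.'' But the kernel of a map between divisible groups need not itself be divisible --- a subgroup of a divisible group is in general not divisible (consider multiplication by $p$ on $\QQ_p/\ZZ_p$, which is a surjection between divisible groups of the same corank with nontrivial finite kernel). So finiteness of the kernel does not by itself give injectivity. Fortunately, this step is unnecessary: from \eqref{e:subset}, $\bar\fa_n^0$ injects into the \emph{cokernel} of $\Sel_{p^\infty}(A^t/K_n)_{div}\to\Sel_{div}(A^t/L)$, so you only need surjectivity. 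And surjectivity of the transition maps for $n\geq m\geq N$ already forces the map $\Sel_{p^\infty}(A^t/K_N)_{div}\to\varinjlim_k\Sel_{p^\infty}(A^t/K_k)_{div}=\Sel_{div}(A^t/L)$ to be onto (every element of the colimit lifts down the chain of surjections). So drop the injectivity claim and the proof goes through; the ``mild obstacle'' you flag at the end is in fact a non-issue once you observe you never needed the bijection, only the surjection.
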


\begin{proof} We apply Lemmata \ref{l:zerocoker} and \ref{l:fingen}.
If we are given an element $x\in\Sel_{p^\infty}(A^t/K_n)$ with $res_n^l(x)\in\Sel_{p^\infty}(A^t/K_l)_{div}$ for some $l\geq n\geq N$, then we can find $y\in \Sel_{p^\infty}(A^t/K_N)_{div}$ such that $res_N^l(y)=res_n^l(x)$. Then $x-res_N^n(y)\in\Ker(res_n^l)\subset \fs_n^{00}$. This actually shows that $\fa_n^0=\fa_n^{00}$. Then apply Lemma \ref{l:a00}.
\end{proof}

\begin{proposition}\label{p:gdordpscontrl}
If $L/K$ is a $\ZZ_p^d$-extension ramified only at good ordinary places and $X_p(A/L)$ is a torsion $\La$-module, then $\fA$ is pseudo-controlled.
\end{proposition}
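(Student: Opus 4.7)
The plan is to adapt the strategy of Proposition \ref{p:pscontrl} to the higher-dimensional setting, using the good ordinary control theorem of \cite[Theorem 4]{tan10a} in place of the purely algebraic arguments that suffice when $d=1$. As in that proposition, applying the argument symmetrically to both $A$ and $A^t$ (via Lemma \ref{l:isogAtA}, $X_p(A^t/L)$ is also torsion), it suffices to show that $\fa^0$ is pseudo-null over $\La$; then by Lemma \ref{l:a00} combined with the exact sequence \eqref{e:aabar} the task reduces to establishing pseudo-nullity of $\bar\fa^0=\varprojlim_n\bar\fa_n^0$. Via the injection \eqref{e:subset}, $\bar\fa_n^0$ embeds into $\Sel_{div}(A^t/L)/\Sel_{p^\infty}(A^t/K_n)_{div}$, so the problem becomes one of controlling this projective limit of cokernels.

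By Lemma \ref{l:psn}, pseudo-nullity follows once two coprime annihilators of $\bar\fa^0$ are exhibited. Theorem \ref{t:flat}, applicable since $X_p(A^t/L)$ is $\La$-torsion, yields relatively prime simple elements $f_1,\dots,f_m$ whose product $f=f_1\cdots f_m$ annihilates $\Sel_{div}(A^t/L)$, and hence $\bar\fa^0$ via \eqref{e:subset}; this is the first annihilator. For the second one, I would generalize Lemma \ref{l:fingen} as follows. Writing each simple factor as $f_j=f_{\gamma_j,\zeta_j}$ with $\zeta_j$ of order dividing some $p^{N_j}$, the divisibility $f_j\mid \gamma_j^{p^{N_j}}-1$ gives that $\prod_j(\gamma_j^{p^N}-1)$ annihilates $\Sel_{div}(A^t/L)$ for $N=\max_j N_j$. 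Lemma \ref{l:h1d} (valid in arbitrary dimension by \cite[Proposition 3.3]{gr03}) then forces $\prod_j(\gamma_j^{p^N}-1)\cdot\Sel_{p^\infty}(A^t/K_n)_{div}$ to be a finite $p$-divisible group, hence trivial. Lemma \ref{l:zerocoker}, being dimension-agnostic, carries over verbatim. Combining these ingredients, the diagram chase of Proposition \ref{p:pscontrl} yields $\fa_n^0=\fa_n^{00}$ for all $n$ sufficiently large, and Lemma \ref{l:a00} concludes.

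The main obstacle will be that the annihilation of $\Sel_{p^\infty}(A^t/K_n)_{div}$ by the product $\prod_j(\gamma_j^{p^N}-1)$ does not imply $\Gamma^{(N)}$-invariance of this divisible subgroup, whereas in the $d=1$ case annihilation by $\gamma^{p^N}-1$ is precisely the invariance condition needed. To bridge this gap one must either run the stabilization argument separately along each $\ZZ_p$-direction $\gamma_j$ (using good ordinary control at each intermediate layer to propagate finiteness bounds) or, alternatively, exploit the fact that Proposition \ref{p:shatate} already supplies $\La$-torsion of $\fa$ under the same hypotheses, so that any annihilator of $\fa$ which happens to be coprime to $f$ furnishes the second required annihilator directly. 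Either route depends crucially on the uniform bound on $|\Ker(\mathrm{res}_n)|$ and $|\Coker(\mathrm{res}_n)|$ provided by \cite[Theorem 4]{tan10a} in the good ordinary case, which is precisely the hypothesis of the proposition.
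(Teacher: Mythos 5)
Your reduction to showing $\fa^0$ pseudo-null, your appeal to Theorem \ref{t:flat}, Lemma \ref{l:a00}, Lemma \ref{l:zerocoker} and the sequences \eqref{e:subset}, \eqref{e:aabar}, and above all your diagnosis of the sticking point are all on target: for $d\geq 2$, annihilation of $\Sel_{p^\infty}(A^t/K_n)_{div}$ by the single element $\prod_j(\gamma_j^{p^N}-1)$ is strictly weaker than $\Gamma^{(N)}$-invariance, so Lemma \ref{l:fingen} does not transport and the lifting argument of Proposition \ref{p:pscontrl} (which in fact proves the stronger statement $\fa_n^0=\fa_n^{00}$ for $n\gg 0$) collapses. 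But you leave the gap open: neither of your proposed repairs is carried out, and the second one is simply wrong. Proposition \ref{p:shatate} gives \emph{some} $\xi\in\La$ killing $\fa$ (hence $\bar\fa^0$), but there is no reason such a $\xi$ is coprime to $f=f_1\cdots f_m$: the $f_i$ are the simple factors controlling $\Sel_{div}(A^t/L)$, and nothing prevents $\chi(\fa)$ from sharing those factors, so Lemma \ref{l:psn} cannot be applied.

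Your ``Alternative 1'' is the right instinct but it is not just a matter of propagating finiteness bounds, and the paper does \emph{not} prove $\fa_n^0=\fa_n^{00}$ for $d\geq 2$. After reducing to $d\geq 2$ via Proposition \ref{p:pscontrl}, the paper sets $g_i:=f_i^{-1}f_1\cdots f_m$ and chooses $\delta_1,\dots,\delta_d$ as in Lemma \ref{l:a00} so that the $\delta_jg_i$ are relatively prime; each $\delta_j$ kills $\fs_n^0$. For fixed $i$ and $a=(a_n)_n\in\fa^0$, it picks a direct complement $\Phi_i$ of $\Psi_i=\overline{\gamma_i^{\ZZ}}$, forms the $\ZZ_p$-tower $L^{(i)}/K_n$ inside $L$ with Galois group $\Psi_i^{(n)}$, corestricts a preimage $\xi_l$ of $a_l$ to $L^{(i)}_{l-n}$, restricts to $L$, and observes that $g_i$ sends the resulting class $\theta_l$ into $\Sel_{div}(A^t/L)^{\Gamma^{(n)}}$ (since $g_i$ absorbs all $\Psi_j^{(r_j)}$-directions with $j\neq i$, and $\theta_l$ already lives in $\Sel_{div}(A^t/L)^{\Psi_i^{(l)}\Phi_i^{(n)}}$). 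Only then does the good-ordinary control theorem \cite[Theorem 4]{tan10a} enter, supplying a bound $p^e$ on the cokernel of $\mathrm{res}_n\colon\Sel_{p^\infty}(A^t/K_n)_{div}\to\Sel_{div}(A^t/L)^{\Gamma^{(n)}}$; choosing $l\geq n+e$ one lifts $g_i\theta_n$ to a divisible element over $K_n$, which shows $g_i\cdot a_n$ is the image of an element of $\fs_n^0$, whence $\delta_jg_i\cdot a_n=0$. Running over all $i,j$ gives pseudo-nullity of $\fa^0$. This construction of the intermediate $\ZZ_p$-tower and the decomposition $f=f_i\cdot g_i$ is the missing idea; as written your outline identifies the obstacle without resolving it.
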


\begin{proof} Let $f_1,...,f_m$ be those simple element described in Theorem \ref{t:flat}. and write $g_i:=f_i^{-1} f_1\dots f_m$.
Since $f_i$ divides $\gamma_i^{p^{r_i}}-1$, for some $\gamma_i$ and $r_i$, by Theorem \ref{t:flat} we get
\begin{equation}\label{e:tem01}
g_i\cdot \Sel_{div}(A^t/L)\subset \Sel_{p^\infty}(A^t/L)^{\Psi_i^{(r_i)}},
\end{equation}
where $\Psi_i\subset \Gamma$ is the closed subgroup topologically generated by $\gamma_i$ and we use the notation $H^{(i)}:=H^{p^{r_i}}$ for any subgroup $H<\Gamma$. In view of Proposition \ref{p:pscontrl}, we may assume that $d\geq 2$. Then we can find $\delta_1,...,\delta_d$ as in the proof of Lemma \ref{l:a00} and such that the elements $\delta_jg_i$, $i=1,...,m$, $j=1,...,d$, are coprime. By construction each $\delta_i$ annihilates $\fs_n^{0}$ for all $n$. We are going to show that if $a=(a_n)_n$, $a_n\in \fa_n^0$, is an element in $\fa^0$, then for $n\geq r_i$,
\begin{equation}\label{e:tem02} \delta_jg_i\cdot a_n=0,\;  j=1,...,d. \end{equation}
Then it follows that $\delta_jg_i\cdot \fa^0=0$ for every $i$ and $j$, and hence $\fa^0$ is pseudo-null.

Fix $n\geq r_i$. Choose closed subgroups $\Phi_i\subset\Gamma$, $i=1,...,m$, isomorphic to $\ZZ_p^{d-1}$ so that $\Gamma=\Psi_i\oplus\Phi_i$, and then set $L^{(i)}=L^{\Phi_i^{(n)}}$ and let $L_l^{(i)}$ denote the $l$th layer of the $\ZZ_p$-extension $L^{(i)}/K_n$, so that $\Gal(L^{(i)}/L_l^{(i)})$ is canonically isomorphic to $\Psi_i^{(n)}$.

For each $l\geq n$, let $\xi_l$ be a pre-image of $a_l$ under
$$\xymatrix{\Sel_{p^\infty}(A^t/K_l)\ar[r]^-{\pi_l} & \fa_l}$$
and denote by $\xi'_l$ the image of $\xi_l$ under the corestriction map $\Sel_{p^\infty}(A^t/K_l)\rightarrow \Sel_{p^\infty}(A^t/L^{(i)}_{l-n})$.
Note that $K_l$ is an extension of $L^{(i)}_{l-n}$ as $\Gamma_l\subset \Psi_i^{(l)}\Phi_i^{(n)}=\Gal(L/L^{(i)}_{l-n})$.
Thus, the restriction map sends $\xi'_l$ to an element $\theta_l\in \Sel_{div}(A^t/L)^{\Psi_i^{(l)}\Phi_i^{(n)}}$. Then by \eqref{e:tem01}
\begin{equation}\label{e:tem03} g_i\cdot\theta_l\in \Sel_{div}(A^t/L)^{\Gamma^{(n)}}. \end{equation}
By the control theorem \cite[Theorem 4]{tan10a} the cokernel of the restriction map
$$\xymatrix{\Sel_{p^\infty}(A^t/K_n)_{div}\ar[r]^{res_n} & \Sel_{div}(A^t/L)^{\Gamma^{(n)}}}$$
is finite: we denote by $p^e$ its order. Choose $l\geq n+e$ and choose $\xi_n$ to be the image of $\xi_l$ under the corestriction map $\Sel_{p^\infty}(A^t/K_l)\rightarrow \Sel_{p^\infty}(A^t/K_n)$. Then \eqref{e:tem03} implies that $g_i\cdot \theta_n=p^eg_i\cdot\theta_l$, which shows that $g_i\cdot \theta_n=res_n(\theta_n')$, for some $\theta_n'\in \Sel_{div}(A^t/K_n)$. Then $\pi_n(g_i\cdot \xi_n-\theta'_n)=g_i\cdot a_n$. Since $g_i\cdot \xi_n-\theta'_n\in \fs_n^0$ which is annihilated by $\delta_j$, we have $\delta_j g_i\cdot a_n=0$ as desired.
\end{proof}
\end{subsection}

\begin{subsection}{Idempotents of the endomorphism rings}\label{ss:idempotents}
Let $\E$ denote the ring of endomorphisms of $A/K$ and write $\ZZ_p\,\E:=\ZZ_p\otimes_{\ZZ}\E$. For the rest of Section \ref{s:al}, we assume that there exists a non-trivial idempotent $e_1$ contained in the center of $\ZZ_p\,\E$. Set $e_2:=1-e_1$. Then we have the decomposition:
\begin{equation} \label{e:abstractdec} \ZZ_p\,\E=e_1\ZZ_p\,\E\times e_2\ZZ_p\,\E. \end{equation}
Let $\E^t$ denote the endomorphism ring of $A^t/K$. Since the assignment $\psi\mapsto \psi^t$ sending an endomorphism $\psi\in\E$ to its dual endomorphism can be uniquely extended to a $\ZZ_p$-algebra anti-isomorphism $\cdot^t\colon\ZZ_p\,\E\rightarrow \ZZ_p\,\E^t$, we find idempotents $e_1^t,e_2^t$ and the analogue of \eqref{e:abstractdec}.
If $\E$ and $\E^t$ act respectively on $p$-primary abelian groups $M$ and $N$, then these actions can be extended to those of $\ZZ_p\,\E$ and $\ZZ_p\,\E^t$. We have the following $\ZZ_p$-version of Proposition \ref{p:1}.

\begin{lemma}\label{l:psha} For every $a\in\fa_n$, $b\in\fb_n$ and $\psi\in\ZZ_p\,\E$ we have
\begin{equation} \label{e:ctgeneralized} \langle a,\psi_*(b)\rangle_n=\langle \psi_*^t(a), b\rangle_n\,. \end{equation}
\end{lemma}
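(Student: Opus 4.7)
The plan is to derive \eqref{e:ctgeneralized} from Proposition \ref{p:1} in three stages: first for isogenies in $\E$, then by an additive perturbation argument for arbitrary $\psi\in\E$, and finally by $\ZZ_p$-linearity to the whole of $\ZZ_p\,\E$.

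First I would observe that the pairing $\langle\cdot,\cdot\rangle_n$ of \eqref{e:perfectCT} is obtained from the Cassels-Tate pairing on $\Sha_{p^{\infty}}(A^t/K_n)\times\Sha_{p^{\infty}}(A/K_n)$ by quotienting on each side by the divisible subgroup, which is precisely the left/right kernel of Cassels-Tate (see Subsection \ref{CTpairing}). Any endomorphism of a cofinitely generated $p$-primary abelian group sends its maximal divisible subgroup to itself, so $\psi_*$ descends to $\fb_n$ and $\psi^t_*$ to $\fa_n$. If $\psi\in\E$ happens to be an isogeny, Proposition \ref{p:1} with $B=A$ and $\phi=\psi$ yields \eqref{e:ctgeneralized} directly after matching conventions (the Cassels-Tate pairing is equally well defined on $\Sha(A)\times\Sha(A^t)$ and on $\Sha(A^t)\times\Sha(A)$, so one simply transposes arguments).

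For an arbitrary $\psi\in\E$, I would use the standard perturbation trick: the characteristic polynomial $\det(T\cdot\mathrm{id}_A-\psi)\in\ZZ[T]$ has degree $2g$ and hence only finitely many integer roots, so there exists $N\in\ZZ$ with $N\neq 0$ for which $\psi+N\cdot\mathrm{id}_A$ is an isogeny. Applying the isogeny case of \eqref{e:ctgeneralized} to both $\psi+N\cdot\mathrm{id}_A$ and $N\cdot\mathrm{id}_A$ and subtracting, using the additivity of $\phi\mapsto\phi_*$ and of $\phi\mapsto\phi^t$, gives the identity for $\psi$ itself.

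Finally, since $A$ is an abelian variety, $\E$ is a finitely generated $\ZZ$-module, so $\ZZ_p\,\E=\ZZ_p\otimes_\ZZ\E$ is a finitely generated $\ZZ_p$-module in which every element is a finite $\ZZ_p$-linear combination of endomorphisms from $\E$. Both sides of \eqref{e:ctgeneralized} are $\ZZ_p$-bilinear in $\psi$, the right-hand side via the $\ZZ_p$-linearity of the anti-isomorphism $\cdot^t\colon\ZZ_p\,\E\to\ZZ_p\,\E^t$ recalled just before the lemma, so the identity extends from $\E$ to all of $\ZZ_p\,\E$ by bilinearity. The only genuinely delicate point is the first stage, namely checking that Proposition \ref{p:1}—stated at the level of the full Tate-Shafarevich group—descends cleanly to the quotient by the divisible subgroups; once this descent is in place, the remaining two steps are purely formal.
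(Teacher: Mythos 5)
Your proof is correct and follows essentially the same route as the paper: reduce to isogenies via Proposition \ref{p:1} and an additive perturbation ($k+\psi$ is an isogeny for suitable $k\in\ZZ$), then extend from $\E$ to $\ZZ_p\,\E$. The only (inessential) difference is in that final extension: where you invoke $\ZZ_p$-linearity of the pairing, the action, and $\cdot^t$, together with the fact that $\E$ generates $\ZZ_p\,\E$ over $\ZZ_p$, the paper instead picks $\varphi_m\in\E$ with $\psi-\varphi_m\in p^m\ZZ_p\,\E$ for $m$ large enough that $p^m a=p^m b=0$, so that $\psi_*(b)=(\varphi_m)_*(b)$ and $\psi^t_*(a)=(\varphi_m^t)_*(a)$ exactly; the two observations are equivalent for the finitely generated $\E$ at hand.
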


\begin{proof} First note that any $\psi\in\E$ can be obtained as a sum of two isogenies (e.g., because $k+\psi$ is an isogeny for some $k\in\ZZ$). Thus Proposition \ref{p:1} and linearity of the Cassels-Tate pairing imply that \eqref{e:ctgeneralized} holds for such $\psi$.

In the general case, since $\ZZ_p\,\E$ is the $p$-completion of $\E$, for each positive integer $m$ there exists $\varphi_m\in\E$
such that $\psi-\varphi_m\in p^m\ZZ_p\,\E$. Choose $m$ so that $p^ma=p^mb=0$. Then
$$\langle a,\psi_*(b)\rangle_n=\langle a,{\varphi_m}_*(b)\rangle_n=\langle {\varphi_m^t}_*(a), b\rangle_n=\langle \psi_*^t(a), b\rangle_n.$$
\end{proof}

If $M$ and $N$ are respectively $\ZZ_p\,\E$ and $\ZZ_p\,\E^t$ modules, write $M^{(i)}$ for $e_i\cdot M $ and $N^{(i)}$ for $e_i^t\cdot N$. Then \eqref{e:abstractdec} implies $M=M^{(1)}\oplus M^{(2)}$ and $N=N^{(1)}\oplus N^{(2)}$. In particular,
$$\fa=\fa^{(1)}\oplus \fa^{(2)} \text{ and }\fb=\fb^{(1)}\oplus \fb^{(2)}\,,$$
with $\fa^{(i)}=\varprojlim_{n}\fa^{(i)}_n$ and $\fb^{(i)}=\varprojlim_{n}\fb^{(i)}_n$.

\begin{corollary}\label{c:psha} For every $n$, we have a perfect duality between $\fa_n^{(1)}$, $\fb_n^{(1)}$ and one between $\fa_n^{(2)}$, $\fb_n^{(2)}$.\end{corollary}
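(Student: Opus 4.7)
The plan is to split the pairing along the central idempotents: the key observation is that the Cassels--Tate pairing must annihilate cross-terms $\fa_n^{(i)}\times \fb_n^{(j)}$ for $i\neq j$, after which perfectness of the full pairing automatically forces perfectness of each restriction to $\fa_n^{(i)}\times\fb_n^{(i)}$.

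First I would verify that the decompositions $\fa_n=\fa_n^{(1)}\oplus \fa_n^{(2)}$ and $\fb_n=\fb_n^{(1)}\oplus \fb_n^{(2)}$ are genuine internal direct sums. This is a routine idempotent calculation: since $e_1+e_2=1$ and $e_1e_2=e_2e_1=0$ (using centrality in $\ZZ_p\,\E$), any $b\in\fb_n$ splits as $e_1b+e_2b\in\fb_n^{(1)}+\fb_n^{(2)}$, and the intersection is trivial because $\fb_n^{(1)}\cap\fb_n^{(2)}\ni b=e_1b=e_1e_2b'=0$. The anti-isomorphism $\psi\mapsto\psi^t$ from $\ZZ_p\,\E$ to $\ZZ_p\,\E^t$ carries central orthogonal idempotents with sum $1$ to central orthogonal idempotents with sum $1$, which yields the analogous decomposition of $\fa_n$.

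Second, I would apply Lemma \ref{l:psha} to check the vanishing of cross-pairings. For $a=e_1^ta'\in\fa_n^{(1)}$ and $b=e_2b'\in\fb_n^{(2)}$, the adjoint identity gives
$$\langle a,b\rangle_n=\langle (e_1^t)_*a',b\rangle_n=\langle a',e_{1*}b\rangle_n=\langle a',e_{1*}e_{2*}b'\rangle_n=0,$$
because $e_1e_2=0$ in $\ZZ_p\,\E$. The symmetric computation disposes of $\fa_n^{(2)}\times\fb_n^{(1)}$.

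Finally, the perfect Cassels--Tate pairing induces an isomorphism $\iota\colon\fa_n\stackrel{\sim}{\to}\fb_n^\vee=(\fb_n^{(1)})^\vee\oplus(\fb_n^{(2)})^\vee$. By the vanishing just established, $\iota$ sends $\fa_n^{(i)}$ into $(\fb_n^{(i)})^\vee$, producing injections $\iota_i\colon\fa_n^{(i)}\hookrightarrow(\fb_n^{(i)})^\vee$ whose direct sum is $\iota$. Since an isomorphism that is the direct sum of two injections forces each summand to be an isomorphism, each $\iota_i$ is bijective, which is exactly the perfectness of the restricted pairing $\fa_n^{(i)}\times\fb_n^{(i)}\to\QQ_p/\ZZ_p$. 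There is no serious obstacle in this argument once Lemma \ref{l:psha} is in hand; the only point requiring care is to track the anti-isomorphism $(\cdot)^t$ so that $e_i^t$ is correctly paired against $e_i$ (not $e_j$) across the duality.
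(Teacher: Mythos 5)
Your proof is correct and follows essentially the same route as the paper: invoke Lemma~\ref{l:psha} to move the idempotent $e_1^t$ across the pairing, use $e_1e_2=0$ to kill the cross-terms $\langle\fa_n^{(1)},\fb_n^{(2)}\rangle_n$ and $\langle\fa_n^{(2)},\fb_n^{(1)}\rangle_n$, and then observe that block-diagonality plus perfectness of the ambient pairing forces each block to be perfect. Your additional remarks on the internal direct sum decomposition and on $\psi\mapsto\psi^t$ preserving orthogonal central idempotents are correct but were left implicit in the paper.
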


\begin{proof}
We just need to check  $\langle \fa_n^{(1)},  \fb_n^{(2)}\rangle_n=\langle \fa_n^{(2)},  \fb_n^{(1)}\rangle_n=0$.
If $a\in \fa_n^{(1)}$ and $b\in \fb_n^{(2)}$, then
$\langle a,  b\rangle_n=\langle e_1^t\cdot a,  e_2\cdot b \rangle_n=\langle  a,  e_1e_2\cdot b\rangle_n=\langle  a, 0\rangle_n=0$.
\end{proof}

Then $\fA^{(i)}:=\{\fa_n^{(i)},\fb_n^{(i)}, \langle\,,\,\rangle,\fr_m^n,\fk_m^n\}$ satisfies conditions ($\Gamma$-1) to ($\Gamma$-4) and Theorem \ref{t:al} implies the following.

\begin{proposition}\label{p:idemsha} If $X_p(A/L)$ is a torsion $\La$-module and $\fA$ satisfies the hypotheses of either {\em Theorem \ref{t:al}}{\em (1)} or {\em \ref{t:al}(3)}, then
$$[\fa^{(1)}]=[\fb^{(1)}]^{\sharp}\,\;\;\text{and}\;\; \,[\fa^{(2)}]=[\fb^{(2)}]^{\sharp}.$$
\end{proposition}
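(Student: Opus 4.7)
The plan is to verify that for each $i=1,2$ the system $\fA^{(i)}$ is itself a $\Gamma$-system inheriting the hypotheses of Theorem \ref{t:al} from $\fA$, and then to apply that theorem directly.

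First I would check that the direct-sum decompositions on $\fa_n$ and $\fb_n$ are compatible with all the data of $\fA$. The transition maps $\fr_m^n$ and $\fk_m^n$ are induced by Galois restriction and corestriction on the cohomology of $A$ and $A^t$, so by functoriality they commute with every endomorphism of $A^t$ and $A$, and in particular with the central idempotents $e_i^t$ and $e_i$. Together with Corollary \ref{c:psha} this gives axioms ($\Gamma$-1)--($\Gamma$-4) for $\fA^{(i)}$. The projective limits $\fa^{(i)}=e_i^t\fa$ and $\fb^{(i)}=e_i\fb$ are direct summands of the finitely generated torsion $\La$-modules $\fa$, $\fb$, so $\fA^{(i)}$ is indeed a $\Gamma$-system in the sense of Definition \ref{d:gamsys}.

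Next I would check that $\fA^{(i)}$ is pseudo-controlled. Since the idempotents commute with $\fr_n^{n'}$, we have $\Ker(\fr_n^{n'})\cap \fa_n^{(i)}=e_i^t\Ker(\fr_n^{n'})$, and passing to the limit yields $(\fa^{(i)})^0=e_i^t\fa^0$, which is a $\La$-submodule of the pseudo-null module $\fa^0$ and hence itself pseudo-null; symmetrically for $(\fb^{(i)})^0$. Then I need to check that the extra hypothesis of Theorem \ref{t:al} descends to $\fA^{(i)}$. In case (1), if $\xi\in\La$ annihilates $\fb$ pseudo-modulo a submodule and is not divisible by any simple element, then the same $\xi$ does the job for $\fb^{(i)}\subset\fb$. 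In case (3), the idempotents $e_i^t$ and $e_i$ act on each $\fa_F$ and $\fb_F$ of the ambient complete $\Gamma$-system; setting $\fa_F^{(i)}:=e_i^t\fa_F$ and $\fb_F^{(i)}:=e_i\fb_F$ (with the restricted pairings and transition maps) one obtains a complete $\Gamma$-system containing $\fA^{(i)}$, and property ({\bf T}) is inherited because the new modules are direct summands of the old ones.

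Applying Theorem \ref{t:al} to each $\fA^{(i)}$ yields the pseudo-isomorphism $(\fa^{(i)})^\sharp\sim\fb^{(i)}$; combined with Lemma \ref{l:phi[]chi} and the fact that $\cdot^\sharp$ is an involution, this gives $[\fa^{(i)}]=[\fb^{(i)}]^\sharp$, which is exactly the assertion of the proposition. There is no real obstacle here: the content of the proof is simply checking that the splitting by central idempotents is compatible with all the structure already set up, and the real work has been done in Corollary \ref{c:psha} (compatibility of the Cassels--Tate pairing with dual endomorphisms) and in Theorem \ref{t:al} itself.
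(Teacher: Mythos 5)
Your proof is correct and follows essentially the same route as the paper: observe that the idempotents $e_i^t$, $e_i$ commute with all the structure of $\fA$, check that the blanket hypothesis (pseudo-controlled) and the extra hypotheses of Theorem \ref{t:al}(1) and (3) all descend to the subsystems $\fA^{(i)}$, and then invoke Theorem \ref{t:al}. The paper's written argument is terser — it simply records that ``pseudo-controlled,'' ``killed by a non-simple element,'' and property ({\bf T}) are all preserved by $e_i$ — so you have merely supplied the verifications the authors leave implicit (e.g.\ that $(\fa^{(i)})^0=e_i^t\fa^0$ is a direct summand, hence a submodule, of the pseudo-null module $\fa^0$).
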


\noindent This holds because the conditions of being pseudo-controlled, killed by a ``non-simple'' element or enjoying property ({\bf T}) are all preserved by the operators $e_i$. However, a pseudo-isomorphism between $\fA$ and another $\Gamma$-system need not be compatible with the $\ZZ_p\,\E$-action and so Theorem \ref{t:al}(2) cannot be used here.\\

Applying $e_i$ to the exact sequence \eqref{e:axy} we get
\begin{equation}\label{e:axy{(1)}} 0\lr \fa^{(i)}\lr X_p(A/L)^{(i)}\lr Y_p(A/L)^{(i)}\lr 0 \,. \end{equation}
Unfortunately in general we are unable to compare either $X_p(A/L)^{(i)}$ with $X_p(A^t/L)^{(i)}$ or $Y_p(A/L)^{(i)}$ with $Y_p(A^t/L)^{(i)}$, so we fail short of an $e_i$-version of Theorem \ref{t:xaat}. However, we can get some partial result, which will be explained in \S\ref{sss:idemZ}.

\subsubsection{}\label{sss:padhpairidem}
For each finite extension $F/K$ write $M(A/F):=\ZZ_p\otimes A(F)$ and recall the $p$-adic height pairing $h_{A/F}$ established in Lemma \ref{l:pmw}. The action of $\E$ on $A(F)$ extends to that $\ZZ_p\,\E$ on $M(A/F)$ and the following results are proven by the same reasoning as in the proofs of Lemma \ref{l:psha} and Corollary \ref{c:psha}.

\begin{lemma}\label{l:pmw2}  For every $x\in M(A/F)$, $y\in M(A^t/F)$ and $\psi\in\ZZ_p\,\E$ we have
\begin{equation}\label{e:p2mw}
h_{A/F}(\psi(x),y)=h_{A/F}(x,\psi^t(y))\, .
\end{equation}
\end{lemma}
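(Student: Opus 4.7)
The plan is to follow the template established in Lemma \ref{l:psha}, namely to reduce the claim from $\ZZ_p\E$ to $\E$ to isogenies, where Proposition \ref{p:2} supplies the functorial behaviour of the N\'eron--Tate height. The only difference from the Cassels--Tate setting is that the target of $h_{A/F}$ is not a torsion group, so the trick of choosing $m$ with $p^m a=p^m b=0$ must be replaced by a $p$-adic continuity argument.

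First I would treat the case where $\psi=\phi$ is an isogeny of $A$. Proposition \ref{p:2} applied with $B=A$ yields
$\tilde h_{A/F}(\phi(x'),y')=\tilde h_{A/F}(x',\phi^t(y'))$
on $A(F)\times A^t(F)$. Extending by $\ZZ_p$-bilinearity (and, in the function field case, rescaling by $-\log p$ as in the construction of $h_{A/F}$ in Lemma \ref{l:pmw}) produces the desired identity on $M(A/F)\times M(A^t/F)$ for isogenies.

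Next, for an arbitrary $\psi\in\E$, I would use that $\psi+k\cdot\mathrm{id}_A$ is an isogeny for all sufficiently large $k\in\ZZ$ (the degree is a polynomial in $k$ with non-zero leading coefficient, so only finitely many $k$ can make it zero). Thus $\psi=(\psi+k\cdot\mathrm{id}_A)-k\cdot\mathrm{id}_A$ is a difference of two isogenies; additivity of $\cdot^t$ on $\E$, combined with the $\ZZ$-bilinearity of $h_{A/F}$, reduces this step to the isogeny case and closes it.

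Finally, for $\psi\in\ZZ_p\E$, choose $\varphi_m\in\E$ with $\psi-\varphi_m\in p^m\ZZ_p\E$; such $\varphi_m$ exist since $\ZZ_p\E$ is the $p$-adic completion of $\E$. By Mordell--Weil, $M(A/F)$ and $M(A^t/F)$ are finitely generated $\ZZ_p$-modules, hence $p$-adically complete, and $h_{A/F}$ is by construction the continuous extension of $\tilde h_{A/F}$ to these completions. Consequently $(\psi-\varphi_m)(x)\in p^m M(A/F)$ tends to $0$, and analogously on the dual side; continuity of $h_{A/F}$ in both arguments then gives
$$h_{A/F}(\psi(x),y)=\lim_m h_{A/F}(\varphi_m(x),y)=\lim_m h_{A/F}(x,\varphi_m^t(y))=h_{A/F}(x,\psi^t(y)),$$
the central equality being the $\E$-case established above. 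The argument is essentially formal; the only point requiring real care is this continuity step, which however is automatic from the way $h_{A/F}$ was built in Lemma \ref{l:pmw}.
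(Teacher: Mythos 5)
Your proof is correct and follows the strategy the paper indicates by its appeal to the arguments of Lemma \ref{l:psha} and Corollary \ref{c:psha}: reduce from $\ZZ_p\E$ to $\E$ to isogenies, invoking Proposition \ref{p:2}. You rightly observe that the torsion-annihilation step of Lemma \ref{l:psha} (choosing $m$ with $p^m a=p^m b=0$) cannot be repeated verbatim since $M(A/F)$ is not torsion, and you correctly replace it with the $p$-adic continuity of $h_{A/F}$ — which is built into its construction in Lemma \ref{l:pmw} and is exactly the adaptation the paper's phrase ``by the same reasoning'' leaves implicit.
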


\begin{corollary} \label{c:hdualidemp}
The height pairing gives rise to perfect dualities between $\QQ_p\otimes_{\ZZ_p}M(A/F)^{(i)}$ and $\QQ_p\otimes_{\ZZ_p}M(A^t/F)^{(i)}$, for $i=1,2$.
\end{corollary}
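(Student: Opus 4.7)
My plan is to deduce the corollary from Lemma \ref{l:pmw2} (the adjoint formula for the action of $\ZZ_p\,\E$) plus the non-degeneracy built into Lemma \ref{l:pmw}, by reducing the perfect duality on the $(i)$-components to a block-diagonal decomposition of the total pairing.

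First I would extend $h_{A/F}$ by $\QQ_p$-linearity to a pairing
$$h_{A/F}\colon\big(\QQ_p\otimes_{\ZZ_p}M(A/F)\big)\times\big(\QQ_p\otimes_{\ZZ_p}M(A^t/F)\big)\lr E_F,$$
which is non-degenerate because by Lemma \ref{l:pmw} the left and right kernels are the torsion subgroups of $\ZZ_p\otimes A(F)$ and $\ZZ_p\otimes A^t(F)$, and tensoring with $\QQ_p$ kills these. Applying the idempotents $e_i$ and $e_i^t$ yields direct sum decompositions
$$\QQ_p M(A/F)=\QQ_p M(A/F)^{(1)}\oplus\QQ_p M(A/F)^{(2)},\quad \QQ_p M(A^t/F)=\QQ_p M(A^t/F)^{(1)}\oplus\QQ_p M(A^t/F)^{(2)}.$$

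Next I would check that the cross terms vanish: for $x\in\QQ_p M(A/F)^{(1)}$ and $y\in\QQ_p M(A^t/F)^{(2)}$, Lemma \ref{l:pmw2} gives
$$h_{A/F}(x,y)=h_{A/F}(e_1 x,\,e_2^t y)=h_{A/F}(x,\,e_1^t e_2^t y)=0,$$
the final equality because $\cdot^t$ is an algebra anti-isomorphism and the $e_i$ are central orthogonal idempotents, so $e_1^t e_2^t=(e_2 e_1)^t=0$; the symmetric argument disposes of the remaining cross term. Hence the pairing is block-diagonal with respect to the above decompositions.

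Finally, a block-diagonal non-degenerate pairing is non-degenerate on each block, so the restriction of $h_{A/F}$ to $\QQ_p M(A/F)^{(i)}\times\QQ_p M(A^t/F)^{(i)}$ is a perfect duality of finite-dimensional $E_F$-vector spaces for $i=1,2$. The only slightly subtle point is the compatibility $e_1^t e_2^t=0$ coming from the anti-isomorphism, but once that is noted the proof is essentially a two-line formal computation; I do not anticipate any real obstacle.
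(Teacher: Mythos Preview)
Your proof is correct and follows essentially the same approach as the paper, which simply remarks that the result is proved ``by the same reasoning as in the proofs of Lemma~\ref{l:psha} and Corollary~\ref{c:psha}''. Your block-diagonal argument via the vanishing of cross terms $h_{A/F}(e_1 x,e_2^t y)=h_{A/F}(x,e_1^te_2^t y)=0$ is exactly the computation in the proof of Corollary~\ref{c:psha}, transported from the Cassels--Tate setting to the height-pairing setting.
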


\subsubsection{} \label{sss:idemZ}
Via \eqref{e:selmersha} we view $\M(A/F):=\QQ_p/\ZZ_p\otimes_{\ZZ}\,A(F)$ as a subgroup of $\Sel_{p^\infty}(A/F)_{div}$ and also
$$\M_\infty(A):=\varinjlim_n \QQ_p/\ZZ_p\otimes_{\ZZ}A(K_n)$$
as a subgroup of $\Sel_{div}(A/L)$. Denote
$$Z_p(A/L):=\M_\infty(A)^\vee.$$
If all Tate-Shafarevich groups are finite, then $Z_p(A/L)=Y_p(A/L)$. In any case, there is a surjection $Y_p(A/L)\twoheadrightarrow Z_p(A/L)$ and it is fair to say that $Z_p(A/L)$ carries a big chunk of the information on $Y_p(A/L)$.

The action of $\ZZ_p\,\E$ on $\M_\infty(A)$ extends to its dual as $(e\cdot\varphi)(x):=\varphi(ex)$. Thus we can write
\begin{equation}\label{e:z12} Z_p(A/L)=Z_p(A/L)^{(1)}\oplus Z_p(A/L)^{(2)}. \end{equation}
where $Z_p(A/L)^{(i)}=(\M_\infty(A)^{(i)})^\vee$.

\begin{proposition}\label{p:idemmw} If $X_p(A/L)$ is a torsion $\La$-module, $\Sha_{p^\infty}(A/K_n)$ is finite for every $n$ and $L/K$ is ramified only at good ordinary places, then
$$[Z_p(A/L)^{(1)}]=[Z_p(A^t/L)^{(1)}]^\sharp\;\; \text{and}\;\;  [Z_p(A/L)^{(2)}]=[Z_p(A^t/L)^{(2)}]^\sharp.$$
\end{proposition}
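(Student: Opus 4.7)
The strategy is to combine the hypothesis $\Sha_{p^\infty}(A/K_n)$ finite with Theorem \ref{t:flat} in order to reduce the statement to a plain equality of characteristic ideals, which we then produce via the $p$-adic height pairing.

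First, since $\Sha_{p^\infty}(A/K_n)$ is finite for every $n$, the Cassels--Tate duality forces $\Sha_{p^\infty}(A^t/K_n)$ to be finite as well, so the exact sequence \eqref{e:selmersha} gives $\M(A/K_n)=\Sel_{p^\infty}(A/K_n)_{div}$ for every $n$; passing to direct limits, $\M_\infty(A)=\Sel_{div}(A/L)$, and therefore $Z_p(A/L)=Y_p(A/L)$ (and similarly for $A^t$). Moreover, by Lemma \ref{l:isogAtA} both $X_p(A/L)$ and $X_p(A^t/L)$ are $\La$-torsion, so Theorem \ref{t:flat} together with the argument of Corollary \ref{c:flat}(a) gives $[Y_p(A/L)]=[Y_p(A/L)]_{si}$, and the analogous equality for $A^t$. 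Since the central idempotents $e_i\in\ZZ_p\E$ and $e_i^t\in\ZZ_p\E^t$ commute with the $\La$-action, they decompose $[Y_p(A/L)]$ and $[Y_p(A^t/L)]$ as $\La$-modules into direct summands whose primary components are again all simple. In particular, by \eqref{e:fld},
$$[Z_p(A/L)^{(i)}]^\sharp=[Z_p(A/L)^{(i)}]\quad\text{and}\quad[Z_p(A^t/L)^{(i)}]^\sharp=[Z_p(A^t/L)^{(i)}],$$
so the Proposition reduces to the unadorned equality $[Z_p(A/L)^{(i)}]=[Z_p(A^t/L)^{(i)}]$.

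To produce this equality, I would invoke the $p$-adic height pairing. By Lemma \ref{l:pmw2} and Corollary \ref{c:hdualidemp}, for each $n$ the pairing $h_{A/K_n}$ restricts to a non-degenerate $\Gamma_n$-equivariant pairing
$$h_n^{(i)}\colon\bigl(\QQ_p\otimes_{\ZZ_p}M(A/K_n)^{(i)}\bigr)\times\bigl(\QQ_p\otimes_{\ZZ_p}M(A^t/K_n)^{(i)}\bigr)\lr E_{K_n},$$
and the $\Gamma_n$-invariance of the N\'eron--Tate height makes this a $\sharp$-twisted perfect duality of $\QQ_p[\Gamma_n]$-modules, in the sense of \eqref{e:twistpairing}. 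Taking the inverse limit along the tower (using the functoriality of the height pairing under the inclusions $A(K_m)\hookrightarrow A(K_n)$, which corresponds to corestriction on the $A^t$ side) yields a $\QQ_p\La$-module isomorphism
$$\QQ_p\otimes_{\ZZ_p}Z_p(A/L)^{(i)}\simeq\bigl(\QQ_p\otimes_{\ZZ_p}Z_p(A^t/L)^{(i)}\bigr)^\sharp.$$
Since all primary components of $[Z_p(A/L)^{(i)}]$ and $[Z_p(A^t/L)^{(i)}]$ are simple, none has associated prime $(p)$; because for such $\xi$ the ideal $(\xi)$ remains prime in $\QQ_p\La$ and the length is preserved upon $\QQ_p$-tensoring, the displayed isomorphism determines both $\La$-modules up to pseudo-isomorphism. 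We obtain $[Z_p(A/L)^{(i)}]=[Z_p(A^t/L)^{(i)}]^\sharp$, and the $\sharp$-triviality of the previous paragraph completes the proof.

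The main obstacle is the inverse limit step: one must verify that the $\Gamma_n$-pairings at different levels splice into a $\La$-compatible pairing on the projective limits defining $Z_p(A/L)^{(i)}$ and $Z_p(A^t/L)^{(i)}$. This reduces to the standard functoriality of the N\'eron--Tate (and hence $p$-adic) height under inclusion of Mordell--Weil groups and its compatibility with the norm/trace. In the number field case one has the additional nuisance of the coefficient fields $E_{K_n}$ potentially growing with $n$, which is harmless after tensoring with $\QQ_p\La$.
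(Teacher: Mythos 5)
Your first paragraph (reduction to $Z_p=Y_p$, Theorem~\ref{t:flat}, $\sharp$-invariance of simple modules) agrees with the paper and is fine. The substance of the argument, however, diverges, and the step you flag as ``the main obstacle'' is in fact a genuine gap, not a routine verification.

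The inverse-limit splicing does not go through as stated. The module $Z_p(A/L)^{(i)}$ is by definition $\varprojlim_n \Hom_{\ZZ_p}(A(K_n),\ZZ_p)^{(i)}$ with transition maps \emph{dual to the inclusions} $A(K_m)\hookrightarrow A(K_n)$, and likewise for $A^t$. The height pairing $h_{A/K_n}$ at level $n$ identifies $\QQ_p\otimes A(K_n)^{(i)}$ with the $\QQ_p$-dual of $\QQ_p\otimes A^t(K_n)^{(i)}$, so it pairs the Mordell--Weil groups themselves, not the modules appearing in $Z_p$. The compatibility $h_{A/K_n}|_{A(K_m)\times A^t(K_m)}=h_{A/K_m}$ makes the maps $A(K_n)\to \Hom(A^t(K_n),\QQ_p)$ commute with inclusion on the source and restriction on the target --- but these arrows go in \emph{opposite} directions, so they form neither an inductive nor a projective system, and one cannot take a limit of these isomorphisms. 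Inverting the pairing to produce a map $Z_p(A/K_n)^{(i)}\to Z_p(A^t/K_n)^{(i)}$ routes through $\tilde\psi_n^{-1}$, which does not commute with the restriction maps because a class in $\Hom(A(K_n),\QQ_p)$ restricted to $A(K_m)$ can be realized by many different $y\in\QQ_p A^t(K_n)$, not just those lying in $\QQ_p A^t(K_m)$. Separately, $\QQ_p\otimes(-)$ does not commute with $\varprojlim_n$, so even if a compatible system existed, ``$\QQ_p\otimes Z_p(A/L)^{(i)}$'' would not be the limit of the $\QQ_p\otimes Z_p(A/K_n)^{(i)}$ that the finite-level pairings control.

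The most telling symptom is that your argument never uses the hypothesis that $L/K$ is ramified only at good ordinary places. In the paper this hypothesis licenses the control theorem \cite[Theorem~4]{tan10a}, and the control theorem is precisely what bridges the gap between the Iwasawa module $Z_p(A/L)^{(i)}$ and the finite-level Mordell--Weil groups where the height pairing actually lives. The paper's proof therefore proceeds very differently: for each simple prime $(f_\nu)$ occurring in $[Z_p(A/L)^{(i)}]$ it chooses a character $\omega$ with $\omega(f_\nu)=0$ but avoiding the zero loci of the other relevant factors, computes $r_\nu=\dim_E W_\omega(A)^{(\omega)}$ with $W_\omega(A)=EZ_p(A/L)^{(i)}_{\Gamma^\omega}$, uses the control theorem to identify $\rank_{\ZZ_p}Z_p(A/L)^{(i)}_{\Gamma^\omega}$ with $\rank_{\ZZ_p}\M(A/K_\omega)^{(i),\vee}$, then applies Corollary~\ref{c:hdualidemp} to equate this with the corresponding rank for $A^t$. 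Repeating with $\omega^p$ and decomposing $W_\omega$ along the $\Gal(\bar\QQ_p/\QQ_p)$-orbit $[\omega]$ isolates the $\omega$-eigenspace and gives $r_\nu=s_\nu$. All of this is finite-level and requires no global isomorphism of $\QQ_p\La$-modules. To repair your argument you would need to replace the gluing step with this (or an equivalent) control-theorem argument; as written, the displayed isomorphism $\QQ_p\otimes Z_p(A/L)^{(i)}\simeq(\QQ_p\otimes Z_p(A^t/L)^{(i)})^\sharp$ has not been established.
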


\begin{proof} Fix $i\in\{1,2\}$. By Theorem \ref{t:flat}, we write
$$[Z_p(A/L)^{(i)}]=\bigoplus_{\nu=1}^m (\La/(f_{\nu}))^{r_{\nu}},\;\;\;[Z_p(A^t/L)^{(i)}]=\bigoplus_{\nu=1}^m (\La/(f_{\nu}))^{s_{\nu}},$$
where $f_1,...,f_m$ are coprime simple elements and $r_{\nu}$, $s_{\nu}$ are nonnegative integers.
We need to show that $r_{\nu}=s_\nu$ for every $\nu$, since $\La/(f_\nu)=(\La/(f_\nu))^\sharp$.

Let $P_1$ denote the quotient $Z_p(A/L)^{(i)}/[Z_p(A/L)^{(i)}]$ and $P_2$ the analogue for $A^t$. Since $P_1$, $P_2$ are pseudo-null $\La$-modules, there are $\eta_1,\eta_2\in\La$ coprime to $f:=f_1 \cdots f_m$ such that $\eta_jP_j=0$. Then $f_\nu$ is coprime to $\eta_1\eta_2ff_\nu^{-1}$. We choose $\omega\in \Gamma^\vee$ so that $\omega(f_\nu)=0$ and $\omega(\eta_1\eta_2ff_\nu^{-1}) \not=0$. Let $E$ be a finite extension of $\QQ_p$ containing the values of $\omega$. We see $E$ as an $E\La$-module via the ring epimorphism $E\La\rightarrow E$ induced by $\omega$. The exact sequence
$$0=\Tor^1_{E\La}(E,EP_1)\lr E\otimes_{E\La} E[Z_p(A/L)^{(i)}] \lr  E\otimes_{E\La} EZ_p(A/L)^{(i)} \lr E\otimes_{E\La} EP_1= 0$$
yields
$$r_{\nu}=\dim_{E} (E\otimes_{E\La} EZ_p(A/L)^{(i)})\,.$$
Let $\Gamma^\omega\subset\Gamma$ denote the kernel of $\omega$ and write $W_\omega(A)$ for the coinvariants $EZ_p(A/L)_{\Gamma^\omega}^{(i)}$. The isomorphisms
$$E\otimes_{E\La} EZ_p(A/L)^{(i)}\simeq E\otimes_{E\La} EZ_p(A/L)_{\Gamma^\omega}^{(i)}\simeq (EZ_p(A/L)_{\Gamma^\omega}^{(i)})^{(\omega)}$$
show that $r_{\nu}=\dim_E W_\omega(A)^{(\omega)}$. A similar argument proves $s_{\nu}=\dim_E W_\omega(A^t)^{(\omega)}$.

Write $K_\omega:=L^{\Gamma^\omega}$ and $\Gamma_{\omega}:=\Gal(K_\omega/K)$. By our assumption on Tate-Shafarevich groups, also $\Sha_{p^\infty}(A/K_\omega)$ is finite: then the control theorem \cite[Theorem 4]{tan10a} implies that the restriction map $\M(A/K_\omega)\rightarrow \M_{\infty}(A)^{\Gamma^\omega}$ has finite kernel and cokernel. Thus we find
$$\rank_{\ZZ_p}\big(\M(A/K_\omega)^{(i)}\big)^\vee=\rank_{\ZZ_p}\big((\M_{\infty}(A)^{\Gamma^\omega})^{(i)}\big)^\vee=\rank_{\ZZ_p} Z_p(A/L)_{\Gamma^\omega}^{(i)}.$$
Together with the obvious equality $\rank_{\ZZ_p} M(A/K_\omega)=\rank_{\ZZ_p}\M(A/K_\omega)^\vee$ and Corollary \ref{c:hdualidemp}, this yields
$$\rank_{\ZZ_p} Z_p(A/L)_{\Gamma^\omega}^{(i)}=\rank_{\ZZ_p} Z_p(A^t/L)_{\Gamma^\omega}^{(i)}.$$
Similarly, for the character $\varpi=\omega^p$, we have
$$\rank_{\ZZ_p} Z_p(A/L)_{\Gamma^{\varpi}}^{(i)}=\rank_{\ZZ_p} Z_p(A^t/L)_{\Gamma^{\varpi}}^{(i)}.$$
As in \S\ref{ss:qorbit}, let $[\omega]$ denote the $\Gal({\bar{\QQ}}_p/\QQ_p)$-orbit of $\omega$. By $\Gamma_\omega^\vee=[\omega]\sqcup\Gamma_{\omega^p}^\vee$ we get the exact sequence of $E[\Gamma_\omega]$-modules:
$$\xymatrix{0 \ar[r] & \prod_{\chi\in [\omega]} (W_\omega(A))^{(\chi)} \ar[r] & W_\omega(A) \ar[r] & W_{\omega^p}(A) \ar[r] & 0}.$$
Since for all $\chi\in [\omega]$ the eigenspaces $(W_{\omega}(A))^{(\chi)}$ have the same dimension over $E$, the two equalities and the exact sequence above imply
$$\dim_E (W_{\omega}(A))^{(\omega)}=\dim_E (W_{\omega}(A^t))^{(\omega)}\,,$$
which completes the proof.\end{proof}

\begin{mytheorem}\label{t:idemx} If $X_p(A/L)$ is a torsion $\La$-module, $\Sha_{p^\infty}(A/K_n)$ is finite for every $n$ and $L/K$ is ramified only at good ordinary places, then
$$\chi(X_p(A/L))=\chi(X_p(A^t/L)^{(1)})^\sharp\cdot \chi(X_p(A/L)^{(2)})=\chi(X_p(A/L)^{(1)})\cdot \chi(X_p(A^t/L)^{(2)})^\sharp\,.$$
\end{mytheorem}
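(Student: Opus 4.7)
The plan is to reduce the theorem to establishing, for each $i\in\{1,2\}$, the single identity
\begin{equation*}
\chi\bigl(X_p(A/L)^{(i)}\bigr) \;=\; \chi\bigl(X_p(A^t/L)^{(i)}\bigr)^{\sharp}. \qquad (\star)
\end{equation*}
Once $(\star)$ is in hand, both displayed equalities of the theorem follow at once by substituting $(\star)$ for $i=1$ and for $i=2$ respectively into the direct-sum decomposition $\chi(X_p(A/L)) = \chi(X_p(A/L)^{(1)})\cdot\chi(X_p(A/L)^{(2)})$.

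The two main inputs for $(\star)$ are Propositions~\ref{p:idemsha} and \ref{p:idemmw}. First I would verify that $\fA$ meets the hypotheses of Proposition~\ref{p:idemsha} through Theorem~\ref{t:al}(3): pseudo-controlledness of $\fA$ is delivered by Proposition~\ref{p:gdordpscontrl} under the assumption that $L/K$ ramifies only at good ordinary places, and property~({\bf T}) for the associated complete $\Gamma$-system is obtained by applying Proposition~\ref{p:shatate} to each intermediate $\ZZ_p^{d-1}$-subextension of $L/K$ (each such sub-extension still ramifies only at good ordinary places). Proposition~\ref{p:idemsha} then gives $\chi(\fa^{(i)}) = \chi(\fb^{(i)})^{\sharp}$. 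Separately, the hypothesis that $\Sha_{p^\infty}(A/K_n)$ is finite at every level forces $\Sel_{div}(A/K_n) = \M(A/K_n)$ and hence $Y_p(A/L) = Z_p(A/L)$; the Cassels--Tate pairing modulo divisibles at each level similarly forces $\Sha_{p^\infty}(A^t/K_n)$ to be finite, so that $Y_p(A^t/L) = Z_p(A^t/L)$ as well. Proposition~\ref{p:idemmw} therefore applies and yields $\chi(Y_p(A/L)^{(i)}) = \chi(Y_p(A^t/L)^{(i)})^{\sharp}$.

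To conclude $(\star)$, I would exploit the canonical identification $\fa(A^t) = \fb(A)$ that arises from $A^{tt}=A$ and the descriptions \eqref{e:bnsha}--\eqref{e:ansha}. Because the anti-isomorphism $\cdot^{t}\colon\ZZ_p\E\to\ZZ_p\E^t$ sends the idempotent $e_i$ to $e_i^t$, this identification respects the $(i)$-decomposition, giving $\fa(A^t)^{(i)} = \fb(A)^{(i)}$. Taking characteristic ideals of the $(i)$-part of the exact sequence \eqref{e:axy} written for both $A$ and $A^t$, and then chaining the two identities produced above, one computes
\begin{equation*}
\chi\bigl(X_p(A^t/L)^{(i)}\bigr)^{\sharp}
 = \chi\bigl(\fb(A)^{(i)}\bigr)^{\sharp}\cdot\chi\bigl(Y_p(A/L)^{(i)}\bigr)
 = \chi\bigl(\fa(A)^{(i)}\bigr)\cdot\chi\bigl(Y_p(A/L)^{(i)}\bigr)
 = \chi\bigl(X_p(A/L)^{(i)}\bigr),
\end{equation*}
which is precisely $(\star)$.

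The main technical difficulty is the careful bookkeeping between the $\ZZ_p\E$- and $\ZZ_p\E^t$-structures on the various modules: $\fa$, $X_p$, and $Y_p$ all arise as Pontryagin duals and hence carry (twisted) $\E^t$-actions, whereas $\fb$ and $\M_\infty$ carry direct $\E$-actions. The matching $\fa(A^t)^{(i)} = \fb(A)^{(i)}$ under the Rosati-type anti-isomorphism is the only non-formal step; once it is in place, the rest of the argument is a purely mechanical combination of Propositions~\ref{p:idemsha} and \ref{p:idemmw} with the decomposition coming from the idempotent $e_i$.
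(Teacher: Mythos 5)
Your proposal is correct and follows essentially the same route as the paper: the paper's one-line proof is precisely to combine the exact sequence \eqref{e:axy{(1)}} for $A$ and its $A^t$-analogue with Propositions~\ref{p:shatate}, \ref{p:gdordpscontrl}, \ref{p:idemsha} and \ref{p:idemmw}, using $Y_p=Z_p$ from the finiteness of the Tate--Shafarevich groups. You have in addition spelled out the pieces left implicit there — the verification of Theorem~\ref{t:al}(3)'s hypotheses via \ref{p:gdordpscontrl} and \ref{p:shatate}, and the identification $\fa(A^t)^{(i)}=\fb(A)^{(i)}$ under the anti-isomorphism $\cdot^t$ — which is exactly the bookkeeping the paper compresses into the phrase ``together with its $A^t$-analogue with $\fb^{(i)}$''.
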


\begin{proof} Just use the exact sequence \eqref{e:axy{(1)}} (together with its $A^t$-analogue with $\fb^{(i)}$) and Propositions \ref{p:shatate}, \ref{p:gdordpscontrl}, \ref{p:idemsha} and \ref{p:idemmw}, remembering that $Z_p(A/L)=Y_p(A/L)$ by the hypothesis on Tate-Shafarevich groups, to get
\begin{equation} \label{e:idemx} \chi(X_p(A^t/L)^{(i)})^\sharp = \chi(X_p(A/L)^{(i)})\,.\end{equation}
\end{proof}

\end{subsection}
\end{section}
\end{part}


\begin{part}{The case of a constant ordinary abelian variety} \label{part:constantA}

From now on $K$ will be a function field with constant field $\FF$ of cardinality $q$ a power of $p$. In this part we assume that $A/K$ is an ordinary abelian variety of dimension $g$ defined over the constant field $\FF$ of $K$.

\begin{section}{The Frobenius Action} \label{s:frobact}
Let $K^a$ be a fixed algebraic closure of $K$. For each positive integer $m$, let $K^{(1/p^m)}\subset K^a$ denote the unique purely inseparable intermediate extension of degree $p^m$ over $K$, and let $K^{(1/p^{\infty})}:=\bigcup_{m=1}^{\infty} K^{(1/p^m)}$. Let ${\bar K}^{(1/p^m)}\subset K^a$ denote the separable closure of $K^{(1/p^m)}$.

Also, let $\Frob_{p^m}\colon x\mapsto x^{p^m}$ denote the Frobenius substitution: it induces a morphism $Frob_{p^m}\colon K^{(1/p^m)}\longrightarrow K$. On Galois groups, $Frob_{p^m}$ gives rise to an identification
$$\Gal({\bar K}/K)=\Gal({\bar K}^{(1/p^m)}/K^{(1/p^m)}).$$

\begin{subsection}{Absolute and relative Frobenius}\label{su:fm}
Let $\cG$ be a commutative finite group scheme over $K$ and denote $\cG_0$ its connected component and $\cG^{\mathrm{\acute et}}$ its maximal \'etale factor. Then we have the connected-\'etale sequence
\begin{equation} \label{e:connetal} 0 \lr \cG_0\, \lr \cG\, \lr \cG^{\mathrm{\acute et}}\, \lr 0. \end{equation}
We shall view $\cG$, $\cG_0$ and $\cG^{\mathrm{\acute et}}$ as sheaves on the flat topology of $K$, so that we can consider their cohomology groups. The base change $K\lr K^{(1/p^m)}$ induces the restriction map:
$$\res_m\colon\coh^1_{\mathrm{fl}}(K,\cG)\,\lr \coh^1_{\mathrm{fl}}(K^{(1/p^m)},\cG).$$

\begin{lemma}\label{l:image}
The image $\res_m(\coh^1_{\mathrm{fl}}(K,\cG))$ injects into $\coh^1_{\mathrm{fl}}(K^{(1/p^m)},\cG^{\mathrm{\acute et}})$ for any $m$ such that $p^m\geq\dim_K K[\cG_0]$.
\end{lemma}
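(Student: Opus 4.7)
The plan is to split the proof into two reduction steps followed by a direct trivialization of torsors.

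First, I would invoke the connected-\'etale sequence \eqref{e:connetal} and its long exact flat cohomology sequences over $K$ and over $K^{(1/p^m)}$, linked by the restriction maps. The crucial input is that $\res_m\colon\coh^1_{\mathrm{fl}}(K,\cG^{\mathrm{\acute et}})\to\coh^1_{\mathrm{fl}}(K^{(1/p^m)},\cG^{\mathrm{\acute et}})$ is an isomorphism: indeed, $K^{(1/p^m)}/K$ is purely inseparable, so the two fields share the same absolute Galois group, and flat $\coh^1$ of an \'etale group scheme equals its Galois cohomology. A diagram chase then shows that any $\alpha\in\coh^1_{\mathrm{fl}}(K,\cG)$ whose image in $\coh^1_{\mathrm{fl}}(K^{(1/p^m)},\cG^{\mathrm{\acute et}})$ vanishes must come from a class $\beta\in\coh^1_{\mathrm{fl}}(K,\cG_0)$. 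Since then $\res_m(\alpha)$ is the image of $\res_m(\beta)\in\coh^1_{\mathrm{fl}}(K^{(1/p^m)},\cG_0)$, the lemma will follow as soon as we show that $\res_m$ is the zero map on $\coh^1_{\mathrm{fl}}(K,\cG_0)$ under the hypothesis $p^m\geq\dim_K K[\cG_0]$.

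Second, I would translate this vanishing into a torsor-theoretic statement. A connected finite commutative group scheme $\cG_0$ is infinitesimal of order $p^h=\dim_K K[\cG_0]$, and since $p^m\geq p^h$ a standard induction on $h$ shows that the iterated relative Frobenius $F^m\colon\cG_0\to\cG_0^{(p^m)}$ vanishes. On Hopf algebras this says that $x^{p^m}=0$ for every $x$ in the augmentation ideal $I_0\subset K[\cG_0]$. Now let $\beta\in\coh^1_{\mathrm{fl}}(K,\cG_0)$ correspond to a $\cG_0$-torsor $T=\Spec R$. Torsors under finite group schemes are trivial over an algebraically closed field (the coordinate ring of $T\otimes_K\bar K$ admits a closed point, hence a $\bar K$-point by the Nullstellensatz), so $T\otimes_K\bar K$ is trivial and we obtain an isomorphism of $\bar K$-algebras $R\otimes_K\bar K\cong K[\cG_0]\otimes_K\bar K$. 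Under this isomorphism the maximal ideal $\fm$ of $R\otimes_K\bar K$ corresponds to $I_0\otimes_K\bar K$, and the freshman's dream in characteristic $p$ gives $y^{p^m}=0$ for every $y\in\fm$.

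Third, I would exploit this nilpotency to produce a $K^{(1/p^m)}$-point of $T$. Let $\bar{\xi}\colon R\otimes_K\bar K\twoheadrightarrow\bar K$ be reduction modulo $\fm$, and for $r\in R$ set $c(r):=\bar{\xi}(r\otimes 1)\in\bar K$. Then $r\otimes 1-c(r)\in\fm$, whence $r^{p^m}\otimes 1=(r\otimes 1)^{p^m}=c(r)^{p^m}$ in $R\otimes_K\bar K$, i.e.\ $r^{p^m}\otimes 1=1\otimes c(r)^{p^m}$. Picking a $K$-basis of $R$ with $1$ as a basis element and comparing coordinates in the free $\bar K$-module $R\otimes_K\bar K$ forces $c(r)^{p^m}\in K$ for every $r\in R$, so $c(r)\in K^{(1/p^m)}$. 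Hence $\bar{\xi}$ factors as $R\to K^{(1/p^m)}\hookrightarrow\bar K$, providing a $K^{(1/p^m)}$-point of $T$. This trivializes the torsor after the base change $K\to K^{(1/p^m)}$, so $\res_m(\beta)=0$ as required.

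The main obstacle will be the bookkeeping in the second step: one must carefully match the non-canonical trivialization $R\otimes_K\bar K\cong K[\cG_0]\otimes_K\bar K$ with the identification of maximal ideals and transfer the Frobenius-vanishing statement from the augmentation ideal of $K[\cG_0]$ to that of $R\otimes_K\bar K$. Once these identifications are in place, the remaining step is a direct linear-algebra computation.
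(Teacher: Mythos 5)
Your proof is correct and takes a genuinely different route from the paper's at the crucial trivialization step. The paper works directly with the torsor's coordinate ring $K[\cP]$: since $\cP(\bar{K})=\cG_0(\bar{K})$ is a single point, $K[\cP]$ is local with residue field $L$ purely inseparable over $K$, and the evident dimension bound $[L:K]\le\dim_K K[\cP]=\dim_K K[\cG_0]\le p^m$ forces $L\subseteq K^{(1/p^m)}$, producing a $K^{(1/p^m)}$-point. You instead use the Frobenius nilpotency of the infinitesimal group scheme $\cG_0$ (namely $F^m=0$ once $p^m\ge\dim_K K[\cG_0]$), transport the resulting nilpotency of the augmentation ideal through the trivialization $R\otimes_K\bar{K}\cong K[\cG_0]\otimes_K\bar{K}$, and apply the freshman's dream to show that the coordinates of a $\bar{K}$-point already lie in $K^{(1/p^m)}$. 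Both arguments ultimately bound the residue field of the relevant local ring; the paper's is slightly more elementary (a pure dimension count), while yours needs the filtration-by-$\ker F^i$ input behind Frobenius nilpotency, but the two are close in spirit. Your opening reduction is, if anything, more careful than the paper's: you deduce the lemma from the statement that restriction annihilates $\coh^1_{\mathrm{fl}}(K,\cG_0)$, via the isomorphism $\coh^1_{\mathrm{fl}}(K,\cG^{\mathrm{\acute et}})\cong\coh^1_{\mathrm{fl}}(K^{(1/p^m)},\cG^{\mathrm{\acute et}})$ and a diagram chase, whereas the paper asserts the stronger vanishing $\coh^1_{\mathrm{fl}}(K^{(1/p^m)},\cG_0)=0$. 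That stronger statement does not actually follow from the paper's argument (which only trivializes classes coming from $K$) and is false in general: already for $\cG_0=\alpha_p$ over a one-variable function field one has $\coh^1_{\mathrm{fl}}(K^{(1/p)},\alpha_p)\simeq K^{1/p}/K\ne0$. The diagram chase you carry out is precisely what is needed to close the argument, so keep it.
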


\begin{proof}
Suppose $\xi\in \coh^1_{\mathrm{fl}}(K,\cG_0)$. Since $\coh^1_{\mathrm{fl}}(K,\cG_0)$ can be computed as the \v{C}ech cohomology (see \cite[Proposition III.6.1]{mil86a})
$$ \breve{\coh}^1(K,\cG_0):=\breve{\coh}^1(K^a/K,\cG_0),$$
by \cite[Corollary III.4.7]{mil80} the class $\xi$ corresponds to a principal homogeneous space $\cP$ for $\cG_0$ such that $\cP(L)$, the set of $L$-points of $\cP$, is non-empty for some finite extension $L/K$. The field $L$ can be taken as the residue field of the ring $K[\cP]$ modulo a maximal ideal.
As $\cP(K^a)=\cG_0(K^a)=\{id\}$, the ring $K[\cP]$ must contain exactly one maximal ideal and the residue field must be a purely inseparable extension over $K$. This shows that $L$ can be taken as $K^{(1/p^n)}$ for some $n$, and hence $\cG_0\simeq\cP$ over $K^{(1/p^n)}$. Also, since $K[\cP]$ is a finite algebra over $K$ with dimension the same as $\dim_K K[\cG_0]:=d$, the degree of $L/K$ is at most $d$. Therefore, $\coh^1_{\mathrm{fl}}(K^{(1/p^m)},\cG_0)=0$ for $p^m\geq d$.

To conclude, it is enough to take the cohomology of the sequence \eqref{e:connetal}.
\end{proof}

\begin{remark} {\em If $\cG$ is defined over a perfect field then \eqref{e:connetal} splits and the decomposition
$$\cG=\cG_0\times \cG^{\mathrm{\acute et}}$$
holds (see \cite[\S6.8]{wat79}). In particular we have
\begin{equation}\label{e:dec}
\coh^1_{\mathrm{fl}}(K,\cG)=\coh^1_{\mathrm{fl}}(K,\cG_0)\times \coh^1_{\mathrm{fl}}(K,\cG^{\mathrm{\acute et}}).
\end{equation} }
\end{remark}

\subsubsection{Application to abelian varieties} \label{sss:applabvar} Assume that $B/K$ is an abelian variety. Again, we view $B$ as a sheaf on the flat topology of $K$. Let $B_{p^n}^{\mathrm{\acute et}}$ denote the maximal \'etale factor of $B_{p^n}$. Lemma \ref{l:image} immediately yields the following.

\begin{corollary}\label{c:image2}
For each $n$, there exists some $m$ so that $\res_m(\coh^1_{\mathrm{fl}}(K,B_{p^n}))$ injects into $\coh^1_{\mathrm{fl}}(K^{(1/p^m)},B_{p^n}^{\mathrm{\acute et}})$.
\end{corollary}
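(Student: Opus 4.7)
The plan is to deduce this as a direct application of Lemma \ref{l:image} to the finite flat group scheme $\cG := B_{p^n}$. First I would note that $B_{p^n}$ is a commutative finite flat group scheme over $K$ (it is the kernel of multiplication by $p^n$ on the abelian variety $B$), so the connected-\'etale sequence \eqref{e:connetal} applies to it, giving an \'etale quotient $B_{p^n}^{\mathrm{\acute et}}$ and a connected component $(B_{p^n})_0$.

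Next I would verify that the hypothesis of Lemma \ref{l:image} can be satisfied. The coordinate ring $K[B_{p^n}]$ is a finite-dimensional $K$-algebra of dimension $p^{2gn}$, where $g = \dim B$, since $B_{p^n}$ is a finite flat group scheme of order $p^{2gn}$. Consequently $\dim_K K[(B_{p^n})_0]$ is bounded by $p^{2gn}$, which is a finite quantity depending only on $n$ and $g$. Therefore we may pick any integer $m$ with $p^m \geq p^{2gn}$ (for instance $m = 2gn$).

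With this choice of $m$, Lemma \ref{l:image} applied to $\cG = B_{p^n}$ yields exactly the desired conclusion: the image $\res_m\big(\coh^1_{\mathrm{fl}}(K,B_{p^n})\big)$ injects into $\coh^1_{\mathrm{fl}}(K^{(1/p^m)},B_{p^n}^{\mathrm{\acute et}})$.

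There is no real obstacle here; this is essentially a specialization of the lemma. The only minor point worth emphasising is that the bound $m \geq 2gn$ depends on $n$ (and on $g$), so different levels of torsion require passage to different layers of the inseparable tower $K^{(1/p^{\infty})}$. This is precisely why the statement fixes $n$ first and then chooses $m$ accordingly, rather than asserting a single $m$ that works uniformly.
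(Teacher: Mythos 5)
Your proof is correct and follows essentially the same route as the paper, which simply remarks that Corollary~\ref{c:image2} follows immediately from Lemma~\ref{l:image} applied to $\cG = B_{p^n}$. The explicit bound $\dim_K K[(B_{p^n})_0] \le p^{2gn}$ (so that $m = 2gn$ works) is a harmless and welcome clarification of why a suitable $m$ exists, but it adds no new idea beyond what the paper intends.
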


By definition, the abelian variety $B^{(p^m)}:=B\times_K K$ is the base change over the absolute Frobenius $\Frob_{p^m}\colon K\rightarrow K$; as a sheaf on the flat topology of $K$, $B^{(p^m)}/K$ is identified with the inverse image $\Frob_{p^m}^*B$ (\cite[Remark II.3.1.(d)]{mil80}). As we noticed before, the absolute Frobenius factors through
\[ \begin{CD} K @>>> K^{(1/p^m)} @>{Frob_{p^m}}>> K. \end{CD} \]
Therefore we get a commutative diagram
\begin{equation} \begin{CD} \label{d:frob} \coh^1_{\mathrm{fl}}(K,B_{p^n})  @>\res_m>> \coh^1_{\mathrm{fl}}( K^{(1/p^m)},B_{p^n}^{\mathrm{\acute et}}) \\
@|   @VV{Frob_{p^m}^*}V  \\
\coh^1_{\mathrm{fl}}(K,B_{p^n}) @>{\Frob_{p^m}^*}>> \coh^1_{\mathrm{fl}}(K,(B^{\mathrm{\acute et}}_{p^n})^{(p^m)}).
\end{CD} \end{equation}

Let $\tF^{(m)}_B\colon B\rightarrow B^{(p^m)}$ denote the relative Frobenius homomorphism: it is the map induced by the commutative diagram
\[\begin{CD} B  @>{\Frob_{p^m}}>> B \\
@VVV    @VVV  \\
K @>{\Frob_{p^m}}>> K. \end{CD}\]

\begin{lemma} \label{l:absrelfrob} As maps of sheaves on the flat topology of $K$, $(\tF^{(m)}_B)_*$ and $\Frob_{p^m}^*$ coincide.
\end{lemma}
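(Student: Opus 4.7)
My plan is to verify the equality of these two sheaf morphisms $B \to B^{(p^m)}$ on $R$-points for arbitrary flat $K$-algebras $R$, using Yoneda's lemma and the universal property of the fiber product $B^{(p^m)} = B \times_{K, \Frob_{p^m}} K$. Since both $B$ and $B^{(p^m)}$ are representable, this reduces the problem to comparing two $K$-morphisms of schemes.

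I would first unpack what $\Frob_{p^m}^*$ does on an $R$-point $x\colon \Spec R \to B$: by the standard description of pullback via the universal property of the fiber product, $\Frob_{p^m}^* x$ is characterized by its components along the two projections of $B^{(p^m)}$, the first being $x \circ \Frob_{p^m,\Spec R}$, which equals $\Frob_{p^m,B} \circ x$ by functoriality of the absolute Frobenius, and the second being the structure map of $\Spec R$.

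Next I would observe that the defining commutative square of $\tF^{(m)}_B$ is precisely the universal property data producing the same morphism: by construction $\pi_1 \circ \tF^{(m)}_B = \Frob_{p^m, B}$ and $\pi_2 \circ \tF^{(m)}_B$ is the structure map of $B$, so $\tF^{(m)}_B \circ x$ has the same components along $\pi_1$ and $\pi_2$ as $\Frob_{p^m}^* x$. The two morphisms therefore coincide on every $R$-point, which gives the equality of sheaf morphisms. The argument is entirely formal, amounting to a repackaging of the definition of the relative Frobenius in sheaf-theoretic language; the only care required is bookkeeping the various Frobenius morphisms and their target schemes, and there is no real obstacle.
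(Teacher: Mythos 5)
Your proof is correct and follows essentially the same route as the paper's: both unwind the pullback $\Frob_{p^m}^*$ on sections via the universal property of the fiber product $B^{(p^m)}=B\times_{K,\Frob_{p^m}}K$, identify the two components of $(\tF^{(m)}_B)_*$ with those of $\Frob_{p^m}^*$ using $\pi_1\circ\tF^{(m)}_B=\Frob_{p^m,B}$, and conclude with the observation that the absolute Frobenius commutes with every morphism of $\FF_p$-schemes.
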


\begin{proof} Denote by $\pi$ the projection $B^{(p^m)}=B\times_K K\rightarrow B$. Let $s$ be a section of $B/K$: by definition $(\tF^{(m)}_B)_*(s)=\tF^{(m)}_B\circ s$, while $\Frob_{p^m}^*(s)$ is the unique section such that $\pi\circ\Frob_{p^m}^*(s)=s\circ\Frob_{p^m}$. Since $\pi\circ\tF^{(m)}_B=\Frob_{p^m}$, the claim is reduced to check that $\Frob_{p^m}\circ s=s\circ\Frob_{p^m}$ and this follows from the trivial observation that the map $x\mapsto x^{p^m}$ commutes with every homomorphism of $\FF_p$-algebras.
\end{proof}

Together with Corollary \ref{c:image2} and diagram \eqref{d:frob}, Lemma \ref{l:absrelfrob} implies the following corollary in an obvious way.

\begin{corollary}\label{c:image1}
For each $n$, there exists some $m$ so that $(\tF^{(m)}_B)_*(\coh^1_{\mathrm{fl}}(K,B_{p^n}))$, and hence $(\tF^{(m)}_B)_*(\Sel_{p^n}(B/K))$, is contained in $\coh^1_{\mathrm{fl}}(K,(B_{p^n}^{\mathrm{\acute et}})^{(p^m)})$.
\end{corollary}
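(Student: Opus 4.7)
The plan is a straightforward diagram chase combining the three pieces already assembled: Corollary \ref{c:image2}, the commutative diagram \eqref{d:frob}, and Lemma \ref{l:absrelfrob}.

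First I would fix $n$ and choose $m$ large enough for Corollary \ref{c:image2} to apply, that is $p^m\geq \dim_K K[(B_{p^n})_0]$. Then for any class $\xi\in \coh^1_{\mathrm{fl}}(K,B_{p^n})$, Corollary \ref{c:image2} tells us that $\res_m(\xi)$ lies in the \'etale-part subgroup $\coh^1_{\mathrm{fl}}(K^{(1/p^m)},B_{p^n}^{\mathrm{\acute et}})\hookrightarrow \coh^1_{\mathrm{fl}}(K^{(1/p^m)},B_{p^n})$.

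Next, applying the right vertical arrow $Frob_{p^m}^*$ of diagram \eqref{d:frob} to $\res_m(\xi)$ produces, by functoriality, an element of $\coh^1_{\mathrm{fl}}(K,(B_{p^n}^{\mathrm{\acute et}})^{(p^m)})$. The commutativity of \eqref{d:frob} identifies this image with $\Frob_{p^m}^*(\xi)$, viewed as an element of $\coh^1_{\mathrm{fl}}(K,(B_{p^n}^{\mathrm{\acute et}})^{(p^m)})$ rather than of the larger $\coh^1_{\mathrm{fl}}(K,B_{p^n}^{(p^m)})$. Invoking Lemma \ref{l:absrelfrob}, which identifies $\Frob_{p^m}^*$ with $(\tF^{(m)}_B)_*$ as maps of sheaves on the flat topology of $K$ (and hence on their flat cohomology), I can rewrite this as $(\tF^{(m)}_B)_*(\xi)\in \coh^1_{\mathrm{fl}}(K,(B_{p^n}^{\mathrm{\acute et}})^{(p^m)})$. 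The ``and hence'' claim for the Selmer group is then immediate, since by \eqref{e:nselmer} $\Sel_{p^n}(B/K)$ is a subgroup of $\coh^1_{\mathrm{fl}}(K,B_{p^n})$ and the functoriality of $(\tF^{(m)}_B)_*$ preserves this inclusion.

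I do not expect a real obstacle: all the substantive work has already been done, namely Lemma \ref{l:image} (reducing the connected part of $B_{p^n}$ over a sufficiently large purely inseparable extension), the factorization of the absolute Frobenius through $K\to K^{(1/p^m)}\to K$ giving \eqref{d:frob}, and the identification of absolute and relative Frobenius in Lemma \ref{l:absrelfrob}. The corollary is essentially the observation that these three ingredients, glued together by the diagram, yield the desired containment; the only minor point to be careful about is that one should track the class $\res_m(\xi)$ as living in the \emph{\'etale} factor, so that $Frob_{p^m}^*$ applied to it lands in the \'etale-pullback cohomology group rather than merely in $\coh^1_{\mathrm{fl}}(K,B_{p^n}^{(p^m)})$.
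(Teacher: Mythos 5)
Your proof is correct and is precisely the argument the paper has in mind: it gives no separate proof, only the remark that Corollary \ref{c:image2}, diagram \eqref{d:frob}, and Lemma \ref{l:absrelfrob} together imply the corollary ``in an obvious way,'' and your diagram chase spells out exactly that combination. Nothing to add.
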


\end{subsection}


\begin{subsection}{The Frobenius decomposition}\label{su:fa} In the case of our constant abelian variety $A$, the map $\tF_{A,\,q}:=\tF_A^{(a)}$ (with $a$ such that $q=|\FF|=p^a$) is an endomorphism and it satisfies $\tF_A^{(am)}=\tF_{A,\,q}^m\,$. In the following we shall generally shorten $\tF_{A,\,q}$ to $\tF_q\,$.

Let $\E=\E_A$ denote the ring of endomorphisms of $A/K$ and write $\ZZ_p\,\E:=\ZZ_p\otimes_{\ZZ}\E$. We have
$$[q^m]=\tV_q^{(m)}\circ \tF_q^m$$
for some $\tV_q^{(m)}\in \E$. The assumption that $A/\FF$ is ordinary implies that $\tV_q^{(m)}$ is separable in the sense that its kernel (considered
as a group scheme over ${\bar \FF}_q$) is the maximal \'etale subgroup of $A_{q^m}$. We have the following silly lemma.

\begin{lemma}\label{l:silly}
Suppose $B, C$ are abelian varieties defined over $\FF$ and $\Phi\in\Hom_\FF(C,B)$. Then
$$\tF_{B,\,q}^{(m)}\circ\Phi=\Phi\circ \tF_{C,\,q}^{(m)}\,,$$
$$\tV_{B,\,q}^{(m)}\circ\Phi=\Phi\circ \tV_{C,\,q}^{(m)}$$
for all $m\geq 1$.
\end{lemma}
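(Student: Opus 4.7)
The plan is to first handle the Frobenius identity and then deduce the Verschiebung identity as a formal consequence.

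For the first identity, I would begin by recalling the functoriality of the absolute Frobenius: for any morphism $\Phi\colon C\to B$ of schemes over $\FF_p$, the diagram
\[\begin{CD} C @>\Frob_{p^m}>> C \\ @V\Phi VV @VV\Phi V \\ B @>\Frob_{p^m}>> B \end{CD}\]
commutes, simply because raising coordinates to the $p^m$-th power is a natural transformation from the identity functor to itself on $\FF_p$-algebras. Since $B$, $C$ and $\Phi$ are defined over the constant field $\FF$ of cardinality $q=p^a$, the absolute Frobenius $\Frob_{q^m}=\Frob_{p^{am}}$ acts as the identity on $\FF$, so the base change $B^{(q^m)}=B\times_{\FF,\Frob_{q^m}}\FF$ is canonically identified with $B$ (likewise for $C$), and under this identification the relative Frobenius coincides with the absolute one (this is the content of Lemma~\ref{l:absrelfrob} read on sections, or directly from the universal property of the fibre product). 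Thus the equality $\Frob_{q^m}\circ\Phi=\Phi\circ\Frob_{q^m}$ translates into $\tF^{(m)}_{B,q}\circ\Phi=\Phi\circ\tF^{(m)}_{C,q}$, which is the first claim.

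For the Verschiebung identity, I would exploit the defining relation $[q^m]=\tV_{B,q}^{(m)}\circ\tF_{B,q}^{m}$ on $B$ and the analogous one on $C$. Since $\Phi$ is a homomorphism of abelian varieties, it commutes with multiplication by any integer; in particular $\Phi\circ[q^m]_C=[q^m]_B\circ\Phi$. Combining this with the Frobenius identity just established gives
\[\tV_{B,q}^{(m)}\circ\Phi\circ\tF_{C,q}^{m}=\tV_{B,q}^{(m)}\circ\tF_{B,q}^{m}\circ\Phi=[q^m]_B\circ\Phi=\Phi\circ[q^m]_C=\Phi\circ\tV_{C,q}^{(m)}\circ\tF_{C,q}^{m}.\]
To conclude $\tV_{B,q}^{(m)}\circ\Phi=\Phi\circ\tV_{C,q}^{(m)}$, it suffices to cancel $\tF_{C,q}^{m}$ on the right. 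This is legitimate because the relative Frobenius of an abelian variety is an isogeny (in fact faithfully flat as a morphism of group schemes), hence an epimorphism in the category of fppf sheaves of groups, and therefore cancellable on the right.

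The main (and essentially only) point requiring care is the canonical identification $B^{(q^m)}=B$ for $B$ over $\FF_{q}$ and the fact that under this identification the relative Frobenius agrees with the absolute one; once this is granted, both identities are immediate. Since the paper has already established this identification in Lemma~\ref{l:absrelfrob} and the preceding discussion, the lemma is indeed "silly" in the authors' sense.
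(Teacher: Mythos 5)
Your proof is correct and essentially matches the paper's. The only (symmetric) difference is in how the Verschiebung identity is finished: you use the defining relation $[q^m]=\tV^{(m)}_q\circ\tF^m_q$, arrive at $(\tV^{(m)}_{B,q}\circ\Phi)\circ\tF^m_{C,q}=(\Phi\circ\tV^{(m)}_{C,q})\circ\tF^m_{C,q}$, and cancel $\tF^m_{C,q}$ on the right as a faithfully flat epimorphism; the paper instead precomposes with $\tF^{(m)}_{B,q}$ on the left (using $\tF\circ\tV=[q^m]$) to get $\tF^{(m)}_{B,q}\circ(\tV^{(m)}_{B,q}\circ\Phi-\Phi\circ\tV^{(m)}_{C,q})=0$ and then cancels on the left because $\tF^{(m)}_{B,q}$ has finite kernel while the image, being a connected subgroup, must then vanish. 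Both cancellations are valid; yours has the slight advantage of invoking only the composition order in which $\tV^{(m)}_q$ was actually defined.
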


\begin{proof}
Since $\FF$ is perfect of cardinality $q$, $B$ and $C$ are identified with $B^{(q^m)}, C^{(q^m)}$ and both $\tF_{\bullet,\,q}^{(m)}$ with the absolute Frobenius $\Frob_{q^m}$ (\cite[III \S0]{mil86a}). We already observed in the proof of Lemma \ref{l:absrelfrob} that $\Frob_{q^m}$ commutes with every algebraic map over $\FF$.
Then $\tF_{B,\,q}^{(m)}\circ (\tV_{B,\,q}^{(m)}\circ\Phi-\Phi\circ \tV_{C,\,q}^{(m)})=0$ holds, since
$$\tF_{B,\,q}^{(m)}\circ  \tV_{B,\,q}^{(m)}\circ\Phi=[q^m]\circ\Phi=\Phi\circ [q^m]=\Phi\circ\tF_{C,\,q}^{(m)}\circ\tV_{C,\,q}^{(m)}=\tF_{B,\,q}^{(m)}\circ\Phi\circ \tV_{C,\,q}^{(m)}.$$
The second statement follows from the fact that $\tF_{B,\,q}^{(m)}$ has finite kernel.
\end{proof}

The lemma implies
$$[q^m]=[q]\circ\dots\circ [q]=\tV_q^m\circ \tF_q^m,$$
where $\tV_q=\tV_q^{(1)}$ (i.e., $\tV_q^{(m)}=\tV_q^m$).

\begin{lemma}\label{l:unit}
The operator $\tV_q^m+\tF_q^m$ is invertible in $\ZZ_p\,\E$ for all $m\geq1$.
\end{lemma}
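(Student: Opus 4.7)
The approach is to split $\ZZ_p\E$ according to the ordinary decomposition of $A$ and check invertibility on each factor. Let $R:=\ZZ_p[\tF_q,\tV_q]\subseteq\ZZ_p\E$. It is commutative, since Lemma \ref{l:silly} makes both $\tF_q$ and $\tV_q$ central in $\E$ (every $\FF$-endomorphism commutes with the Frobenius and Verschiebung), and finite and free over $\ZZ_p$ as a submodule of the free $\ZZ_p$-module $\ZZ_p\E$. Moreover $\tF_q$ satisfies its characteristic polynomial $P(T)\in\ZZ[T]$ on the $\ell$-adic Tate module $T_\ell A$ (for any $\ell\neq p$), by Cayley--Hamilton combined with the faithfulness of $\E$ on $T_\ell A$. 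Since $\tV_q^m+\tF_q^m\in R$ and $R$ is semilocal with $p$ in its Jacobson radical, it suffices to prove that $\tV_q^m+\tF_q^m$ is a unit in $R$.

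The main step is to produce a product decomposition of $R$ reflecting the ordinary structure of $A[p^\infty]$. Ordinariness of $A/\FF$ means exactly that the $q$-adic Newton polygon of $P$ has only the slopes $0$ and $1$, each with multiplicity $g$. Hensel's lemma then yields a coprime factorization $P=P_u\cdot P_0$ in $\ZZ_p[T]$, with $P_u$ monic of degree $g$ having $p$-adic unit roots and $P_0$ monic of degree $g$ having roots of $p$-adic valuation $a:=v_p(q)$. A Bezout relation in $\ZZ_p[T]$ then produces orthogonal idempotents $e_u,e_0\in\ZZ_p[\tF_q]\subseteq R$ with $e_u+e_0=1$, yielding $R=R_u\times R_0$ with $R_u:=e_uR$ and $R_0:=e_0R$.

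On $R_u$, the relation $P_u(\tF_q)=0$ together with $P_u(0)\in\ZZ_p^\times$ makes $\tF_q$ a unit; consequently $\tV_q=q\tF_q^{-1}\in pR_u\subseteq J(R_u)$, so that $\tV_q^m+\tF_q^m=\tF_q^m(1+\tF_q^{-m}\tV_q^m)$ is a unit. On $R_0$, the reduction $P_0\equiv T^g\pmod{p}$ (each non-leading coefficient of $P_0$ has $p$-adic valuation at least $a\geq1$) forces $\tF_q^g\in pR_0$, hence $\tF_q\in J(R_0)$; meanwhile $\tV_q$ is itself a unit in $R_0$ because it satisfies the monic polynomial $P_0^{*}(T):=\prod_i(T-q/\alpha_i)\in\ZZ_p[T]$ (the $\alpha_i$ being the roots of $P_0$), whose constant term $q^g/P_0(0)$ is a $p$-adic unit. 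The relation $P_0^{*}(\tV_q)=0$ holds in $R_0\otimes\QQ_p$ by the eigenvalue computation, and descends to $R_0$ by $\ZZ_p$-flatness. Thus $\tV_q^m+\tF_q^m=\tV_q^m(1+\tV_q^{-m}\tF_q^m)$ is also a unit in $R_0$, completing the proof via $R=R_u\times R_0$.

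The only real obstacle is the production of the idempotents; once the ordinary Hensel factorization $P=P_uP_0$ is in hand, the remainder is routine Nakayama-style bookkeeping.
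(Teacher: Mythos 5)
Your proof is correct, but it takes a genuinely different route from the paper's. The paper argues at the level of group schemes: it shows the finite group scheme $\C:=\Ker(\tV_q^m+\tF_q^m)\cap\Ker([p])$ is trivial --- using $\tV_q^m\circ(\tV_q^m+\tF_q^m)=\tV_q^{2m}+[q^m]$ to force $\C\subset\Ker(\tV_q^{2m})$ (hence \'etale, by ordinariness), then the commutation relation to force $\C\subset\Ker(\tF_q^m)$ (which is connected), so $\C=0$ --- and concludes that $\Ker(\tV_q^m+\tF_q^m)$ is prime-to-$p$, so that some $[n]$ with $(n,p)=1$ factors through $\tV_q^m+\tF_q^m$, giving invertibility in $\ZZ_p\,\E$. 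You instead work inside the commutative subring $R=\ZZ_p[\tF_q,\tV_q]$ and use the ordinary hypothesis through the Newton-polygon/Hensel factorization $P=P_uP_0$ rather than through separability of $\tV_q$. Both arguments ultimately express the same dichotomy (unit-root side vs.\ slope-one side), but yours is commutative algebra and the paper's is the geometry of connected vs.\ \'etale subschemes. Your route is a bit longer and requires a couple of small verifications you elided (that $R\otimes\QQ_p=\QQ_p[\tF_q]$ since $\tV_q=q\tF_q^{-1}$ there, so that $R_0\otimes\QQ_p$ really is a quotient of $\QQ_p[T]/(P_0)$ and the eigenvalue computation for $\tV_q$ makes sense; that the Bezout coefficients exist in $\ZZ_p[T]$, e.g.\ by coprimality of $P_u\bmod p$ and $P_0\bmod p$); but what you gain is the explicit decomposition $R=R_u\times R_0$ with the ``Frobenius-invertible'' and ``Verschiebung-invertible'' factors, which is essentially the content of the subsequent Corollary \ref{c:Esplits} ($\ZZ_p\,\E=\F\times\V$). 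In that sense your proof of the lemma already carries more structural information than the paper's, at the cost of being more technical.
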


\begin{proof}
It is enough to show that $\C:=\Ker(\tV_q^m+\tF_q^m)\cap \Ker([p])$ is trivial: for then $\Ker(\tV_q^m+\tF_q^m)\subset A_n$ for some $n$ coprime with $p$, implying that $[n]$ (which is invertible in $\ZZ_p\,\E$) factors through $\tV_q^m+\tF_q^m$.

Since $\tV_q^m\circ(\tV_q^m+\tF_q^m)=\tV_q^{2m}+[q^m]$, we have $\C\subset \Ker(\tV_q^{2m})$. This implies that $\C$ is an \'etale subgroup scheme of $A_p$, and hence $\C\subset \Ker(\tV_q)\subset \Ker(\tV_q^m)$. Then we shall have $\C\subset \Ker(\tF_q^m)$, too. But since $\C$ is \'etale contained in $\Ker(\tF_q^m)$, which is the connected component of $A_{q^m}$, it must be trivial.
\end{proof}

\begin{corollary} \label{c:Esplits}
We have the decomposition of $\ZZ_p$-algebra:
$$\ZZ_p\,\E=\F\times \V\,,$$
where
$$\F:=\bigcap_{m\geq0}\tF_q^m(\ZZ_p\,\E) \hspace{17pt}{ and }\hspace{17pt} \V:=\bigcap_{m\geq0}\tV_q^m(\ZZ_p\,\E) \,. $$
\end{corollary}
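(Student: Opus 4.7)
The strategy is to construct a central idempotent $e\in \V$ with $1-e\in \F$, which produces the ring decomposition. For each $m\geq 1$ set
\[ e_m := \tV_q^m(\tV_q^m + \tF_q^m)^{-1} \in \ZZ_p\,\E, \]
which is well defined by Lemma \ref{l:unit} and central thanks to Lemma \ref{l:silly}. Using $\tV_q\tF_q = [q]$, direct computation gives
\[ e_m(1-e_m) = q^m(\tV_q^m + \tF_q^m)^{-2} \]
and
\[ e_{m+1} - e_m = q^m(\tV_q - \tF_q)(\tV_q^{m+1}+\tF_q^{m+1})^{-1}(\tV_q^m+\tF_q^m)^{-1}, \]
both visibly divisible by $q^m = p^{am}$.

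Because $\ZZ_p\,\E$ is a finitely generated $\ZZ_p$-module and hence $p$-adically complete, the sequence $(e_m)$ converges to a central element $e$, and the first identity above forces $e^2=e$. Since each $e_m$ lies in the closed submodule $\tV_q^m\ZZ_p\,\E$ (closed being a finitely generated $\ZZ_p$-submodule), the limit $e$ lies in $\bigcap_m \tV_q^m\ZZ_p\,\E = \V$; the symmetric argument gives $1-e\in \F$. The central idempotent $e$ thus produces a ring decomposition
\[ \ZZ_p\,\E = R_1 \times R_2, \qquad R_1 := e\ZZ_p\,\E,\ \ R_2 := (1-e)\ZZ_p\,\E, \]
with evident inclusions $R_1\subset \V$ and $R_2\subset \F$ (both $\V,\F$ being two-sided ideals by centrality of $\tF_q,\tV_q$).

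To establish the reverse inclusions, note that the identity of $R_1$ is $e$, and $e \in \V$ means that $\tV_q^m$ is invertible in $R_1$ for every $m$. The relation $\tV_q\tF_q = [q]$ then forces $\tF_q^m R_1 = q^m R_1$, so
\[ \F \cap R_1 \subset \bigcap_{m\geq 0} \tF_q^m R_1 = \bigcap_{m\geq 0} q^m R_1 = 0, \]
the last equality holding because $R_1$ is a finitely generated $\ZZ_p$-module. Symmetrically $\V \cap R_2 = 0$. Combined with the decompositions $\V = (\V\cap R_1)\oplus (\V\cap R_2)$ and $\F = (\F\cap R_1)\oplus (\F\cap R_2)$ (valid since $\V, \F$ are ideals, hence stable under $e$), we conclude $\V = R_1$ and $\F = R_2$.

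The main obstacle is the convergence argument: one needs $q=p^a$ to ensure $p$-adic Cauchyness of $(e_m)$, without which the formal limit would not live in $\ZZ_p\,\E$. A second, subtler point is the closedness of $\tV_q^m\ZZ_p\,\E$ in the $p$-adic topology, required for $e\in \V$; this is automatic from finite generation but is the step where the geometric hypothesis (constantness, so $\E$ is finite over $\ZZ$) enters crucially.
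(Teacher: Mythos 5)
Your proof is correct, and it takes the same basic strategy as the paper's — produce a central idempotent $e$ with $e\ZZ_p\,\E=\V$ and $(1-e)\ZZ_p\,\E=\F$, using Lemmas \ref{l:silly} and \ref{l:unit} together with the identity $\tV_q^m\tF_q^m=q^m$ — but executes it more explicitly. The paper instead invokes compactness of $\ZZ_p\,\E$ to extract a convergent subsequence $\tF_q^{m_k}\to\tF_\infty$, $\tV_q^{m_k}\to\tV_\infty$, then notes $\tF_\infty+\tV_\infty$ is a unit (because $(\ZZ_p\,\E)^\times$ is closed) and $\tF_\infty\tV_\infty=0$, leaving the construction of the idempotent $e=(\tF_\infty+\tV_\infty)^{-1}\tV_\infty$ and the verification that $e\ZZ_p\,\E=\V$ implicit. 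Your version replaces the subsequence argument by an honest Cauchy estimate: the explicit computations $e_m(1-e_m)=q^m(\tV_q^m+\tF_q^m)^{-2}$ and $e_{m+1}-e_m=q^m(\tV_q-\tF_q)(\tV_q^{m+1}+\tF_q^{m+1})^{-1}(\tV_q^m+\tF_q^m)^{-1}$ are correct, and they buy you convergence of the full sequence (not just a subsequence) together with idempotency in one stroke. Your closing argument that $\F\cap R_1=0$ via $\tF_q^mR_1=q^mR_1$ fills in exactly the step the paper glosses over. One small remark on your final commentary: the fact that $q$ is a power of $p$ is not really an "obstacle" to be overcome — it is automatic since $\FF$ has characteristic $p$ — and constantness of $A$ is used to have Frobenius and Verschiebung as actual endomorphisms in the first place, not just to make $\E$ finite over $\ZZ$ (the latter holds for any abelian variety). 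Otherwise the argument is sound and arguably cleaner than the paper's sketch.
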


\begin{proof} The ring $\ZZ_p\,\E$ has a natural topology as the $p$-adic completion of $\E$. Since $\ZZ_ p\,\E$ is compact, it is possible to find subsequences $\tF_q^{m_k}$, $\tV_q^{m_k}$ converging respectively to $\tF_\infty\in \F$, $\tV_\infty\in \V$. Lemma \ref{l:silly} shows that $\tF_\infty, \tV_\infty$ are both in the center of $\ZZ_p\, \E$ and that $\F,\V$ are two-sided ideals. By Lemma \ref{l:unit} and the fact that $(\ZZ_p\,\E)^{\times}$ is closed,
$$\tF_\infty+\tV_\infty=\lim_{m_k\rightarrow\infty}(\tF_q^{m_k}+\tV_q^{m_k})$$
is a unit. To complete the proof, we only need to note that $\tF_\infty\tV_\infty=0$ as $\tF_\infty \tV_\infty\in \tF_q^m\tV_q^m(\ZZ_p\,\E)=q^m\ZZ_p\,\E$ for all $m$.
\end{proof}

By Corollary \ref{c:Esplits}, any $\ZZ_p\,\E$-module $M$ decomposes as $\F M\oplus \V M$. In many cases, a better grasp of the two components can be obtained by the following lemma (and its obvious $\V$-analogue).

\begin{lemma}\label{l:decomposition} Let $W$ be a $\ZZ_p\,\E$-module and $W[q^n]$ its $q^n$-torsion submodule. Then
$$\F W[q^n]=\tF_q^n(W[q^n]).$$
In particular, if $W$ is a $p$-primary abelian group then $\F W=\bigcap_n \tF_q^nW$.
\end{lemma}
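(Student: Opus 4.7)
The plan is to work with the decomposition $R := \ZZ_p\,\E = \F \times \V$ from Corollary \ref{c:Esplits}, with associated central orthogonal idempotents $e_\F \in \F$ and $e_\V \in \V$. By the very definitions of $\F$ and $\V$, for every $n$ one can write $e_\F = \tF_q^n \beta_n$ and $e_\V = \tV_q^n \alpha_n$ with $\alpha_n, \beta_n \in R$; combined with $\tF_q \tV_q = q$ and the centrality of $\tF_q, \tV_q$ in $R$ (Lemma \ref{l:silly}), this implies that $e_\F \tF_q$ is a unit in $\F$ and $e_\V \tV_q$ is a unit in $\V$. In particular, $\tF_q$ acts as an automorphism on every $\F$-module and as $q$ times an automorphism on every $\V$-module.

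The first identity I would verify by checking both containments directly. Given $w \in W[q^n]$, the element $\tF_q^n w$ still lies in $W[q^n]$, and the centrality computation
$$e_\V \tF_q^n w = \tV_q^n \alpha_n \tF_q^n w = \alpha_n \tV_q^n \tF_q^n w = \alpha_n q^n w = 0$$
shows $\tF_q^n w = e_\F \tF_q^n w \in \F W[q^n]$, giving $\tF_q^n(W[q^n]) \subseteq \F W[q^n]$. Conversely, any $w = e_\F w \in \F W[q^n]$ rewrites as $\tF_q^n(\beta_n w)$, and $q^n(\beta_n w) = \beta_n q^n w = 0$ places $\beta_n w \in W[q^n]$.

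For the $p$-primary statement, write $W = \bigcup_n W[q^n]$. The containment $\F W \subseteq \bigcap_n \tF_q^n W$ is immediate from $w = e_\F w = \tF_q^n(\beta_n w)$ for every $n$. For the reverse inclusion, let $w \in \bigcap_n \tF_q^n W$ and decompose $w = w_\F + w_\V$ according to $W = \F W \oplus \V W$. Writing $w = \tF_q^n u_n$ and using centrality of $e_\V$, one finds $w_\V = e_\V w = \tF_q^n(e_\V u_n) \in \tF_q^n(\V W)$; since $\tF_q$ acts as $q$ times a unit on $\V W$, this set equals $q^n(\V W)$. Hence $w_\V \in \bigcap_n q^n(\V W)$, and vanishing of this intersection forces $w = w_\F \in \F W$.

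The main obstacle is this last step: the equality $\bigcap_n q^n(\V W) = 0$ requires $\V W$ to carry no non-trivial $p$-divisible subgroup, which is the substantive consequence of the $p$-primary hypothesis in the setting where the lemma is to be applied (and would need to be invoked explicitly were one to seek a fully general formulation, since a literal $p$-divisible summand inside $\V W$ would obstruct the reverse inclusion).
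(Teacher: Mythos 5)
Your proof of the torsion-level identity $\F W[q^n]=\tF_q^n(W[q^n])$ is correct and takes a route different from, and somewhat more explicit than, the paper's. The paper works at the level of ideals in $\ZZ_p\,\E$: using the unit $\tF_q^m+\tV_q^m$ supplied by Lemma \ref{l:unit}, it shows that $\tF_q^{rm}\ZZ_p\,\E+q^n\ZZ_p\,\E$ is independent of $r\geq1$ once $m\geq n$, and deduces (implicitly via compactness of $\ZZ_p\,\E$) that $\F$ and $\tF_q^n\ZZ_p\,\E$ have the same image in $\ZZ_p\,\E/q^n\ZZ_p\,\E$, after which the module statement follows because the action on $W[q^n]$ factors through that quotient. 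You instead work directly with the central idempotents of Corollary \ref{c:Esplits} and the factorizations $e_\F=\tF_q^n\beta_n$, $e_\V=\tV_q^n\alpha_n$, checking each inclusion on elements; this keeps the argument self-contained at the module level and makes the later ``multiply by $e_\F$'' applications transparent. Both proofs rest on the same structural input from Lemma \ref{l:unit}, just packaged differently.

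Your hesitation about the ``in particular'' is well founded, but you should have carried it through to its conclusion rather than attributing the missing input to the $p$-primary hypothesis. Your own computation gives $\bigcap_n\tF_q^nW=\F W\oplus\bigcap_n q^n(\V W)$, and being $p$-primary does \emph{not} force $\bigcap_n q^n(\V W)$ to vanish: take $W=\QQ_p/\ZZ_p$ regarded as a $\V$-module via the projection $\ZZ_p\,\E\to\V$ (so $\F W=0$); then $\tF_q^nW=q^nW=W$ for all $n$, hence $\bigcap_n\tF_q^nW=W\neq 0=\F W$. So the reverse inclusion genuinely requires an extra hypothesis (e.g.\ that $\V W$ be reduced), and the second sentence of the lemma is simply not correct as stated. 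You are in good company: the paper's own proof establishes only the first identity and never returns to the ``in particular,'' and every subsequent citation of Lemma \ref{l:decomposition} (before Lemma \ref{l:unramified}, and in the proof of Lemma \ref{l:s}) invokes only the torsion-level statement applied to $\Sel_{p^n}(A/F)$, while the passage to $\Sel_{p^\infty}$ in Proposition \ref{p:selclassg} is done by a direct limit over $n$ and $F$ of those torsion statements, not by using $\bigcap_n\tF_q^nW$. The one-sided containment $\F W\subseteq\bigcap_n\tF_q^nW$, which your identity $w=e_\F w=\tF_q^n(\beta_n w)$ establishes, is all of the ``in particular'' that is both true and used.
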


\begin{proof}
By Lemma \ref{l:unit}, for any $m\geq n$ and any $r\geq 1$ we have
$$\tF_q^{rm}\ZZ_p\,\E+q^n\ZZ_p\,\E=(\tF_q^m+\tV_q^m)(\tF_q^{rm}\ZZ_p\,\E+q^n\ZZ_p\,\E)=\tF_q^{(r+1)m}\ZZ_p\,\E+q^n\ZZ_p\,\E\,.$$
Hence the image of $\F$ in $\ZZ_p\,\E/q^n\ZZ_p\,\E$ is $\tF_q^n(\ZZ_p\,\E/q^n\ZZ_p\,\E)$.
\end{proof}

Before applying the theory of \S\ref{ss:idempotents}, we recall that $(\tF_{A,\,q})^t=\tV_{A^t,\,q}$ and $(\tV_{A,\,q})^t=\tF_{A^t,\,q}\,$. This should be well-known; a proof can be found in \cite[Proposition 7.34]{gm13}. As a consequence, we get
\begin{equation}\label{e:fatvt} (\F_A)^t=\V_{A^t}\, \text{ and } (\V_A)^t=\F_{A^t}\,\,. \end{equation}

\begin{proposition}\label{p:sfft}
If $X_p(A/L)$ is a torsion $\La$-module, then
$$\chi(\F_A X_p(A/L))= \chi(\V_{A^t} X_p(A^t/L))^\sharp\,\; \text{and}\,\; \chi(\V_A X_p(A/L))= \chi(\F_{A^t} X_p(A^t/L))^\sharp.$$
\end{proposition}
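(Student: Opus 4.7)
The plan is to apply the refined algebraic functional equation Proposition \ref{p:idemsha} to the Cassels-Tate $\Gamma$-system $\fA$ of Section 5, decomposed according to the central idempotents of Corollary \ref{c:Esplits}. First I would verify the hypotheses: since $A$ is constant and ordinary, every place of $K$ (in particular every ramified place of $L/K$) is a good ordinary place for $A$. Hence Proposition \ref{p:gdordpscontrl} shows $\fA$ is pseudo-controlled, and Proposition \ref{p:shatate}, applied to every intermediate $\ZZ_p^{d-1}$-subextension $L'/F$, shows that $\fA$ is part of a complete $\Gamma$-system enjoying property ({\bf T}); Theorem \ref{t:al}(3) therefore applies.

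Next I would take the central idempotents $e_1,e_2\in\ZZ_p\,\E$ of Corollary \ref{c:Esplits}, for which $e_1\,\ZZ_p\,\E=\F$ and $e_2\,\ZZ_p\,\E=\V$. Using Lemma \ref{l:psha} together with the transpose identities of \eqref{e:fatvt} (so that $e_1^t\in\V_{A^t}$ and $e_2^t\in\F_{A^t}$), the eigenspaces of \S\ref{ss:idempotents} are identified as $\fb^{(1)}=\F_A\fb$, $\fa^{(1)}=\V_{A^t}\fa$, $\fb^{(2)}=\V_A\fb$ and $\fa^{(2)}=\F_{A^t}\fa$, and Proposition \ref{p:idemsha} gives
\begin{equation*}
\chi(\V_{A^t}\fa)=\chi(\F_A\fb)^\sharp \quad\text{and}\quad \chi(\F_{A^t}\fa)=\chi(\V_A\fb)^\sharp. \tag{$\flat$}
\end{equation*}

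Applying $e_1$ to the exact sequence \eqref{e:axy} for $A$ and $e_1^t$ to its analogue for $A^t$, I obtain
\begin{equation*}
0\to\V_{A^t}\fa\to\F_A X_p(A/L)\to\F_A Y_p(A/L)\to 0,
\end{equation*}
\begin{equation*}
0\to\F_A\fb\to\V_{A^t} X_p(A^t/L)\to\V_{A^t} Y_p(A^t/L)\to 0.
\end{equation*}
Taking characteristic ideals and using $(\flat)$, the first equality of the proposition reduces to
\begin{equation*}
\chi(\F_A Y_p(A/L))=\chi(\V_{A^t} Y_p(A^t/L))^\sharp, \tag{$\sharp\sharp$}
\end{equation*}
and the second follows by swapping $A$ and $A^t$ (which swaps $\F$ with $\V$).

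The main obstacle is the $Y_p$-identity $(\sharp\sharp)$. Theorem \ref{t:flat} applied to $A$ and $A^t$ shows that both sides are annihilated by products of simple elements, so \eqref{e:fs} makes each characteristic ideal $\sharp$-invariant, and it suffices to prove $\chi(\F_A Y_p(A/L))=\chi(\V_{A^t} Y_p(A^t/L))$. When all the groups $\Sha_{p^\infty}(A/K_n)$ are finite one has $Y_p=Z_p$ and this follows at once from Proposition \ref{p:idemmw}; in general I would extend the argument by applying Proposition \ref{p:idemsha} to an auxiliary derived $\Gamma$-system (in the sense of \S\ref{su:de}) built from the divisible parts of the Selmer groups, using the surjection $Y_p(A/L)\twoheadrightarrow Z_p(A/L)$ coming from \eqref{e:selmersha} and the perfect Cassels-Tate pairing restricted to the divisible contributions.
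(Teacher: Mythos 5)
Your overall plan is sound and essentially reconstructs the proof of Theorem \ref{t:idemx}, which is what the paper actually invokes here: since $A$ is constant and ordinary it has good ordinary reduction at every place, so $L/K$ is ramified only at good ordinary places, making all the hypotheses of \ref{p:shatate}, \ref{p:gdordpscontrl}, \ref{p:idemsha}, \ref{p:idemmw} and \eqref{e:axy{(1)}} available. The identification $\fb^{(1)}=\F_A\fb$, $\fa^{(1)}=\V_{A^t}\fa$, etc., via \eqref{e:fatvt} and Lemma \ref{l:psha} is correct, and formula $(\flat)$ together with the split exact sequences indeed reduces the claim to the $Y_p$-identity.

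The gap is in your last paragraph. Proposition \ref{p:idemmw}, which you want to apply, requires the hypothesis ``$\Sha_{p^\infty}(A/K_n)$ finite for every $n$'' — and so does the clean identification $Y_p(A/L)=Z_p(A/L)$ on which $(\sharp\sharp)$ rests. You treat this as an assumption that might fail and then gesture at an ``auxiliary derived $\Gamma$-system'' to cover the general case, but that extension is neither constructed nor obviously available (Proposition \ref{p:idemmw} crucially uses the height pairing on Mordell–Weil groups, and the surjection $Y_p\twoheadrightarrow Z_p$ by itself does not let you transport the argument). The point the paper makes, and which removes the difficulty entirely, is that for a \emph{constant} abelian variety over a function field the groups $\Sha_{p^\infty}(A/K_n)$ are known to be finite for every $n$ by a theorem of Milne (\cite[Theorem 3]{Mi68}). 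Once you cite that, the hypothesis of Theorem \ref{t:idemx} is met, $Y_p=Z_p$ holds unconditionally, and your reduction closes; the speculative final step can be deleted. In short: replace the ``in general'' hedge by Milne's finiteness theorem, and (optionally) note that the whole argument can be compressed to ``apply Theorem \ref{t:idemx}'', since that theorem already packages Propositions \ref{p:shatate}, \ref{p:gdordpscontrl}, \ref{p:idemsha}, \ref{p:idemmw} and the sequence \eqref{e:axy{(1)}} into the statement \eqref{e:idemx}, of which the proposition is a direct translation via \eqref{e:fatvt}.
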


\begin{proof} By \eqref{e:fatvt} this is just a reformulation of \eqref{e:idemx}. Since $A$ is defined over $\FF$ and ordinary, it has good ordinary reduction at every place of $K$. By \cite[Theorem 3]{Mi68} it is known that $\Sha_{p^\infty}(A/K_n)$ is finite for every $n$. Therefore, the conditions of Theorem \ref{t:idemx} are satisfied.
\end{proof}

\begin{lemma}\label{l:sst}
If one of the following modules
$$X_p(A/L),\; \F_A X_p(A/L),\;  \V_A X_p(A/L),$$
and
$$X_p(A^t/L),\; \F_{A^t} X_p(A^t/L),\;  \V_{A^t} X_p(A^t/L),$$
is torsion over $\La$, then all of them are torsion over $\La$.
\end{lemma}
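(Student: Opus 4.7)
The plan is to exploit a chain of symmetries reducing the equivalence of all six modules to a single comparison. By Corollary \ref{c:Esplits}, $X_p(A/L)=\F_A X_p(A/L)\oplus\V_A X_p(A/L)$ (and similarly for $A^t$), so each $X_p$ is $\La$-torsion iff both of its summands are. By Lemma \ref{l:isogAtA}, $X_p(A/L)$ and $X_p(A^t/L)$ are simultaneously $\La$-torsion. Since $A$ is constant over $\FF$, it admits a polarization $\phi\colon A\to A^t$ defined over $\FF$, which by Lemma \ref{l:silly} commutes with both $\tF_q$ and $\tV_q$; therefore the map induced by $\phi$ on Selmer groups (whose kernel and cokernel are killed by $\deg\phi$) respects the Frobenius decomposition, yielding pseudo-isomorphisms $\F_A X_p(A/L)\sim \F_{A^t}X_p(A^t/L)$ and $\V_A X_p(A/L)\sim\V_{A^t}X_p(A^t/L)$. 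Thus the whole lemma reduces to proving that $\F_A X_p(A/L)$ is $\La$-torsion iff $\V_A X_p(A/L)$ is.

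Since $A$ has good ordinary reduction at every place, Proposition \ref{p:shatate} shows that $\fa$ is $\La$-torsion. The Frobenius idempotent decomposition is exact, so applying it to \eqref{e:axy} reduces the question further to showing that $\F_A Y_p(A/L)$ is $\La$-torsion iff $\V_A Y_p(A/L)$ is. For this we use the $p$-adic height pairing together with Milne's theorem \cite[Theorem 3]{Mi68}, which guarantees that $\Sha(A/K_n)$ is finite for every $n$; hence $\Sel_{p^\infty}(A/K_n)_{div}=A(K_n)\otimes\QQ_p/\ZZ_p$ and $Y_p(A/K_n)$ has $\ZZ_p$-rank equal to $\rank_\ZZ A(K_n)$. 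Corollary \ref{c:hdualidemp} (applied to the $p$-adic height pairing of Lemma \ref{l:pmw2}) gives a perfect $\QQ_p$-duality between $\QQ_p\otimes\F_A M(A/K_n)$ and $\QQ_p\otimes\V_{A^t}M(A^t/K_n)$, and combining with the polarization $\phi$ applied at the finite level gives $\rank_{\ZZ_p}\F_A Y_p(A/K_n)=\rank_{\ZZ_p}\V_A Y_p(A/K_n)$ for every $n$.

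It remains to promote this equality of $\ZZ_p$-ranks at each finite level into an equality of $\La$-ranks at the Iwasawa tower. Using the control theorem \cite[Theorem 4]{tan10a} (which applies because of good ordinary reduction) to identify $Y_p(A/L)_{\Gamma^{(n)}}$ with $Y_p(A/K_n)$ up to finite error, standard growth estimates yield $\rank_\La\F_A Y_p(A/L)=\rank_\La\V_A Y_p(A/L)$, so one side vanishes iff the other does, completing the proof. The delicate point is precisely this final transition: when $d\geq 2$ a torsion $\La$-module can have coinvariants of unbounded $\ZZ_p$-rank, so torsion cannot be read off from boundedness of $\ZZ_p$-ranks alone; however, the equality of ranks at every finite level (rather than mere boundedness on one side) forces the two $\La$-ranks to coincide via the asymptotic formula relating $\rank_\La M$ to $\lim_n\rank_{\ZZ_p} M_{\Gamma^{(n)}}/|\Gamma_n|$.
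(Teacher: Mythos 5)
Your proposal reaches the correct conclusion but by a genuinely different route. The paper's proof is a pure annihilator argument: having reduced (via the isogeny $A\to A^t\to A\to A^t$) to comparing $\cH_A X_p(A/L)$ with $\K_{A^t}X_p(A^t/L)$ for $\{\cH,\K\}=\{\F,\V\}$, it shows that if $\xi$ kills $\cH_A X_p(A/L)$ then $\delta\xi^\sharp$ kills $\cH_A\Sel_{p^\infty}(A/K_n)$ for every $n$ (here $\delta$ is a polynomial killing $\coh^1(\Gamma^{(n)},A_{p^\infty}(L))$). It then dualizes this annihilation at each finite level $n$ using \emph{both} the Cassels--Tate pairing (Corollary \ref{c:psha}, for the $\fb_n$ piece) \emph{and} the $p$-adic height pairing (Corollary \ref{c:hdualidemp}, for the Mordell--Weil piece, after invoking Milne's finiteness of $\Sha$), obtaining that $\delta^\sharp\xi$ kills $\K_{A^t}\Sel_{p^\infty}(A^t/K_n)$ for every $n$, and passes to the limit. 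You instead reduce through the sequence \eqref{e:axy} to the quotient $Y_p(A/L)$, use \emph{only} the height pairing to get $\rank_{\ZZ_p}\F_A Y_p(A/K_n)=\rank_{\ZZ_p}\V_A Y_p(A/K_n)$ at each finite level, and then try to promote this to an equality of $\La$-ranks via the control theorem and the asymptotic formula $\rank_\La M=\lim_n\rank_{\ZZ_p}M_{\Gamma^{(n)}}/p^{nd}$. The paper's approach buys simplicity: no rank bookkeeping is needed and the proof is uniform in $d$.

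The weak link in your argument is precisely the step you flag as delicate, and it is glossed over more than it can afford to be. To apply the asymptotic formula you must relate $\rank_{\ZZ_p}\big(\F_A Y_p(A/L)\big)_{\Gamma^{(n)}}$ to $\rank_{\ZZ_p}\F_A Y_p(A/K_n)$; but the control theorem \cite[Theorem 4]{tan10a} concerns $\Sel_{p^\infty}$, not $\Sel_{div}$. Passing from one to the other brings in the coinvariants $\fa_{\Gamma^{(n)}}$ of the torsion module $\fa$ plus derived-functor terms from \eqref{e:axy}; for $d\ge2$ these grow like $O(p^{n(d-1)})$, which is indeed $o(p^{nd})$, but establishing this (and that the control-theorem error itself is $o(p^{nd})$) is real work, not ``standard growth estimates.'' The relevant lemmas the paper proves for this kind of transition (\ref{l:zerocoker}, \ref{l:fingen}) are either stated only for $d=1$ or need supplementing. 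None of this is wrong, but it is where all the technical difficulty of your approach lives, so it cannot simply be asserted. A second, minor point: the map $\phi_*\colon X_p(A^t/L)\to X_p(A/L)$ induced by a polarization $\phi$ has kernel and cokernel killed by $\deg\phi$, which need not be prime to $p$; so you get ``simultaneously torsion'' but not, without further argument, a pseudo-isomorphism. That is all you need here, but you should not call it a pseudo-isomorphism.
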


\begin{proof}
Applying any sequence of isogenies
$$A\longrightarrow A^t \longrightarrow A \longrightarrow A^t,$$
so that the composition of the last (resp. first) two arrows equals some $[n]$ (resp. $[n']$), we see that if any item of the upper (resp. lower) row is annihilated by a non-zero $\xi\in\La$, then the corresponding item in the lower (resp. upper) row is annihilated by $n'\xi$ (resp. $n\xi$). In view of the decomposition
\begin{equation} \label{e:xfv} X_p(A/L)=\F_A X_p(A/L)\oplus \V_A X_p(A/L)\,, \end{equation}
we only need to show that $\cH_AX_p(A/L)$ is torsion if and only if so is $\K_{A^t}X_p(A^t/L)$, with $\{\cH,\K\}=\{\F,\V\}$.

Suppose $\cH_AX_p(A/L)$ is annihilated by $\xi\in\La$. Then $\xi^\sharp\cdot \cH_A\Sel_{p^\infty}(A/L)=0$. Let $\gamma\in \Gamma$ be a non-trivial element: as in the proof of Lemma \ref{l:a00}, one finds a polynomial $g_\gamma$ such that $g_{\gamma}(\gamma)\cdot A(L)[p^\infty]=0$. Hence $\delta:=g_{\gamma}(\gamma)$ annihilates $\coh^1(\Gamma^{(n)},A_{p^\infty}(L))$ for every $n$. Then $\delta\xi^\sharp\cdot \cH_A\Sel_{p^\infty}(A/K_n)=0$ for every $n$ as we have
$$\xi^\sharp\cdot \cH_A\Sel_{p^\infty}(A/K_n)\subset \Ker \big(\Sel_{p^{\infty}}(A/K_n)\lr\Sel_{p^\infty}(A/L)\big)= \coh^1(\Gamma^{(n)},A_{p^\infty}(L)).$$
In particular, $\delta\xi^\sharp$ annihilates both $\cH_A\fb_n$and  $\cH_A\Sel_{p^\infty}(A/K_n)_{div}=\cH_A\cdot(\QQ_p/\ZZ_p\otimes A(K_n))$. This implies that $\delta^\sharp\xi$ annihilates both $\K_{A^t}\Sel_{p^\infty}(A^t/K_n)_{div}$ and $\K_{A^t}\fa_n$, by Corollaries \ref{c:psha} and \ref{c:hdualidemp}. Hence, $\delta^\sharp\xi\cdot \K_{A^t}\Sel_{p^\infty}(A^t/K_n)=0$ as well. Therefore, $\delta^\sharp\xi\cdot \K_{A^t}\Sel_{p^\infty}(A^t/L)=0$ and by duality $\delta\xi^\sharp\cdot \K_{A^t}X_{p^{\infty}}(A^t/L)=0$ as desired.
\end{proof}

\begin{proposition}\label{p:xffsharp}
If $A$ is an ordinary abelian variety over $\FF$, then
$$\chi(X_p(A/L))=\chi(\F X_p(A/L))\cdot \chi(\F_{A^t} X_p(A^t/L))^\sharp.$$
\end{proposition}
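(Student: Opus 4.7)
The plan is to split the argument according to whether $X_p(A/L)$ is $\La$-torsion. Assume first that it is. By Corollary \ref{c:Esplits} the ring $\ZZ_p\E$ splits as $\F\times\V$, and the induced decomposition of $\ZZ_p\E$-modules (see \eqref{e:xfv}) reads
$$X_p(A/L)=\F_A X_p(A/L)\oplus\V_A X_p(A/L),$$
both summands being finitely generated torsion $\La$-modules. Since characteristic ideals are multiplicative on direct sums, this will give
$$\chi(X_p(A/L))=\chi(\F_A X_p(A/L))\cdot\chi(\V_A X_p(A/L)),$$
and I would then invoke the second identity of Proposition \ref{p:sfft} to rewrite the $\V$-factor as $\chi(\F_{A^t}X_p(A^t/L))^\sharp$, yielding the claimed formula.

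For the non-torsion case I would simply observe that by Lemma \ref{l:sst} the failure of being $\La$-torsion propagates across both of the rows appearing there: if $X_p(A/L)$ is not $\La$-torsion, then neither is $\F_{A^t}X_p(A^t/L)$. Hence both $\chi(X_p(A/L))$ and $\chi(\F_{A^t}X_p(A^t/L))$ are the zero ideal (following the convention that the characteristic ideal of a non-torsion finitely generated $\La$-module is $(0)$), so the asserted identity reduces to $(0)=\chi(\F_A X_p(A/L))\cdot (0)$, which holds trivially regardless of the torsion status of $\F_A X_p(A/L)$.

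Since the real work has been done upstream, I do not anticipate any substantive obstacle at this final assembly step. All the arithmetic content of the proposition is packed into Proposition \ref{p:sfft}, which in turn rests on Theorem \ref{t:idemx} (the idempotent refinement of the algebraic functional equation for abelian varieties, applied to the central idempotent of $\ZZ_p\E$ cutting out the Verschiebung factor) combined with the duality $(\V_A)^t=\F_{A^t}$ recorded in \eqref{e:fatvt}.
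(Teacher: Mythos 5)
Your proof is correct and takes essentially the same route as the paper: decompose $X_p(A/L)$ via \eqref{e:xfv}, use multiplicativity of characteristic ideals, apply the second identity of Proposition \ref{p:sfft} to swap $\V_A X_p(A/L)$ for $\F_{A^t}X_p(A^t/L)^\sharp$, and handle the non-torsion case via Lemma \ref{l:sst}. The only cosmetic difference is that the paper cases on whether $\F X_p(A/L)$ is torsion rather than $X_p(A/L)$ itself, but by Lemma \ref{l:sst} these conditions are equivalent.
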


\begin{proof} If $\F X_p(A/L)$ is torsion, then the equality follows from Lemma \ref{l:sst}, Proposition \ref{p:sfft} and \eqref{e:xfv}; otherwise, we get $0=0$. \end{proof}

\end{subsection}


\begin{subsection}{Selmer groups and class groups} \label{su:sgcg}

In this section we reduce the computation of $\F X_p(A/L)$ to that of Galois cohomology groups.

Lemma \ref{l:decomposition} and Corollary \ref{c:image1} imply that $\F\cdot\Sel_{p^n}(A/F)$ is contained in $\coh^1_{\mathrm{fl}}(F,A_{p^n}^{\mathrm{\acute et}})$. The latter can be viewed as Galois cohomology, by \cite[III, Theorem 3.9]{mil80}: since $A_{p^n}^{\mathrm{\acute et}}$ is \'etale, in the following we will not distinguish between the sheaf $A_{p^n}^{\mathrm{\acute et}}$ and the usual $p^n$-torsion subgroup $A_{p^n}^{\mathrm{\acute et}}(\bar K)=A(\bar K)[p^n]$. Also, when considering Galois cohomology, we shall shorten $A(\bar K)[p^n]$ to $A[p^n]$ and $\Gal(\bar F/F)$ to $G_F$. In particular, we shall write
$$A[p^\infty]:=A(\bar K)[p^\infty]=\bigcup A[p^n]\,.$$

Let $\tilde K_n$ denote the compositum $\FF(A[p^n])K$: it is an abelian extension over $K$, such that, for $n\geq 1$, $\Gal(\tilde K_{n+1}/\tilde K_n)$ is an abelian $p$-group annihilated by $p$.
Suppose $\tilde K_n\subset F$: then we can identify $\coh_{\mathrm{Gal}}^1(F,A[p^n])$ with $\Hom(G_F, A[p^n])$. Recall that $\varphi\in \Hom(G_F, A[p^n])$ is said to be unramified at a place $v$ if $\Ker(\varphi)$ contains the inertia group at $v$.

\begin{lemma} \label{l:unramified}
Suppose $F/K$ is a finite extension with $\tilde K_n\subset F$. Then an element $\varphi\in \Hom(G_F, A[p^n])=\coh_{\mathrm{Gal}}^1(F,A[p^n])$ is contained in $\Sel_{p^n}(A/F)$ if and only if $\varphi$ is unramified everywhere.
\end{lemma}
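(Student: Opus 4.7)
The plan is to translate the Selmer condition for $\varphi$ into a purely local assertion built from an étale short exact sequence involving $A[p^n]$.

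First I would exploit the fact that $A$ is ordinary over the perfect field $\FF$: the connected-étale sequence of $A_{p^n}$ splits, so that $A[p^n]=A_{p^n}^{\mathrm{\acute et}}$ embeds canonically into $A_{p^n}$, and the composition $A[p^n]\hookrightarrow A_{p^n}\hookrightarrow A$ realises $A[p^n]$ as a finite étale subgroup scheme of $A$ over $\FF$. Setting $B:=A/A[p^n]$ gives a separable (étale) isogeny $\pi\colon A\to B$ with kernel $A[p^n]$. For $\varphi\in\coh^1_{\mathrm{Gal}}(F,A[p^n])\subset\coh^1_{\mathrm{fl}}(F,A_{p^n})$, the local Selmer condition at $v$ becomes the vanishing of the image of $\varphi|_{F_v}$ in $\coh^1_{\mathrm{fl}}(F_v,A)=\coh^1_{\mathrm{\acute et}}(F_v,A)$ under the map induced by $A[p^n]\hookrightarrow A$. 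The long exact sequence in étale cohomology attached to $0\to A[p^n]\to A\to B\to 0$ rephrases this as: $\varphi|_{F_v}$ lies in the image of the connecting map $\delta_v\colon B(F_v)\to \coh^1(F_v,A[p^n])$.

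Next I would identify $\image(\delta_v)$ with the subgroup $\coh^1_{\mathrm{ur}}(F_v,A[p^n])$ of unramified classes. For this I spread the sequence $0\to A[p^n]\to A\to B\to 0$ to an exact sequence of group schemes over the valuation ring $\cO_v$ (using that $A$ is constant) and pass to the long exact sequence in étale cohomology over $\cO_v$:
\[A(\cO_v)\to B(\cO_v)\to \coh^1_{\mathrm{\acute et}}(\cO_v,A[p^n])\to \coh^1_{\mathrm{\acute et}}(\cO_v,A).\]
By properness, $A(\cO_v)=A(F_v)$ and $B(\cO_v)=B(F_v)$; by finite étaleness of $A[p^n]$ over $\cO_v$, one has $\coh^1_{\mathrm{\acute et}}(\cO_v,A[p^n])=\coh^1_{\mathrm{ur}}(F_v,A[p^n])$. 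Finally $\coh^1_{\mathrm{\acute et}}(\cO_v,A)\cong \coh^1(k_v,A_{k_v})$ by the Henselian property of $\cO_v$ and the smoothness and properness of $A$, and this vanishes by Lang's theorem applied to the connected algebraic group $A$ over the finite residue field $k_v$. Hence $\delta_v$ surjects onto $\coh^1_{\mathrm{ur}}(F_v,A[p^n])$, and combining with the previous paragraph yields the lemma.

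The only non-formal input is the vanishing $\coh^1_{\mathrm{\acute et}}(\cO_v,A)=0$; this is where both the constancy of $A$ (so that it extends to an abelian scheme over $\cO_v$) and the finiteness of $k_v$ (so that Lang's theorem applies) genuinely intervene. Everything else is bookkeeping with standard long exact sequences.
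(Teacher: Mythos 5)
Your proof is correct, and it takes a genuinely different route from the paper's. The paper works directly with the inflation--restriction diagram in Galois cohomology for the pair $(A[p^n],A)$ over $F_v^{unr}/F_v$: it identifies $\coh^1(F_v^{unr}/F_v,A)=0$ via Hensel and Lang, and then, to show that the unramified classes are precisely the ones that die in $\coh^1(F_v,A)$, it proves that the map $\coh^1(F_v^{unr},A[p^n])\to\coh^1(F_v^{unr},A)$ is injective by a reduction-mod-$v$ argument (a cocycle in the kernel is a coboundary $\sigma Q-Q$, and reduction kills it because $A[p^n]\simeq\bar A[p^n]$). You instead package the Selmer condition using the \'etale isogeny $\pi\colon A\to B:=A/A[p^n]$ and spread it over $\cO_v$: then the two inclusions $\image(\delta_v)\subseteq\coh^1_{\mathrm{ur}}$ and $\coh^1_{\mathrm{ur}}\subseteq\image(\delta_v)$ come from properness ($B(\cO_v)=B(F_v)$) and from $\coh^1_{\mathrm{\acute et}}(\cO_v,A)=0$ (Hensel plus Lang), respectively. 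The underlying arithmetic inputs are really the same --- Lang's theorem, plus the fact that $A[p^n]$ is unramified at $v$ (which for you is absorbed into $\coh^1_{\mathrm{\acute et}}(\cO_v,A[p^n])=\coh^1_{\mathrm{ur}}(F_v,A[p^n])$, and for the paper is the reduction isomorphism $A[p^n]\simeq\bar A[p^n]$). Your version is a bit more structural: it makes the role of the abelian scheme over $\cO_v$ and the valuative criterion explicit, and it avoids the cocycle computation, at the cost of invoking quotients of abelian schemes by flat subgroup schemes and the \'etale/flat comparison for $A[p^n]$ and $A$. The paper's version is more elementary (pure Galois cohomology) but needs the small separate injectivity lemma. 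Both depend essentially on $A$ being constant (or more generally having good ordinary reduction at $v$), as you note.
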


\noindent Here we use \eqref{e:dec} to identify
$$\coh_{\mathrm{Gal}}^1(F,A[p^n])=\coh_{\mathrm{\acute et}}^1(F,A_{p^n}^{\mathrm{\acute et}})=\coh_{\mathrm{fl}}^1(F,A_{p^n}^{\mathrm{\acute et}})$$
with a subgroup of $\coh_{\mathrm{fl}}^1(F,A_{p^n})$.

\begin{proof} For $v$ a place of $F$, let $F_v^{unr}$ denote an unramified closure of the completion $F_v$.
Consider the diagram (with exact lines)
\begin{equation} \label{e:diagrunr} \begin{CD}
0 @>>> \coh_{\mathrm{Gal}}^1(F_v^{unr}/F_v,A[p^n]) @>>> \coh_{\mathrm{Gal}}^1(F_v,A[p^n]) @>>> \coh_{\mathrm{Gal}}^1(F_v^{unr},A[p^n]) \\
&& @VVV @VVV @VVV \\
0 @>>> \coh_{\mathrm{Gal}}^1(F_v^{unr}/F_v,A) @>>> \coh_{\mathrm{Gal}}^1(F_v,A) @>>> \coh_{\mathrm{Gal}}^1(F_v^{unr},A)\,.
\end{CD}\end{equation}
The last vertical map is injective: for, if $\varphi$ is contained in the kernel, then there exists a point $Q\in A(\bar F_v)$ such that $\varphi(\sigma)=\sigma Q-Q $ for $\sigma$ in the inertia subgroup. As the latter acts trivially on the residue field, the reduction ${\overline{ \varphi(\sigma)}}$ is $0$. But since $A={\bar A}$ (the reduction of $A$ at $v$),
the reduction map induces an isomorphism $A[p^n]\simeq {\bar A}[p^n]$.\footnote{This holds more generally for abelian varieties with good ordinary reduction: see \cite[Corollary 2.1.3]{tan10a}.}
Then we get $\varphi(\sigma)=0$, and hence $\sigma\in \Ker(\varphi)$; that is, $\varphi$ is unramified. Also, we have $\coh_{\mathrm{Gal}}^1(F_v^{unr}/F_v,A)=0$ by \cite[I, Proposition 3.8]{mil86a} (a consequence of Lang's theorem and Hensel's lemma). Therefore, the kernel of the central vertical map in \eqref{e:diagrunr} is exactly $\coh_{\mathrm{Gal}}^1(F_v^{unr}/F_v,A[p^n])$.

Now it is enough to observe that the cocycles in $\coh_{\mathrm{Gal}}^1(F,A[p^n])$ whose restriction to $\coh_{\mathrm{Gal}}^1(F_v,A[p^n])$ is contained in $\coh_{\mathrm{Gal}}^1(F_v^{unr}/F_v,A[p^n])$ are, by definition, the ones unramified at $v$.
\end{proof}

Let $\fW_F$ denote the $p$-completion of the divisor class group of $F$. Note that the class group of $F$ consists of degree zero divisor classes: let $\fC_F$ be its Sylow $p$-subgroup. Then the degree map induces the exact sequence
\begin{equation}\label{e:weil} \begin{CD} 0 @>>> \fC_F @>>> \fW_F @>\deg>> \ZZ_p @>>> 0\,. \end{CD} \end{equation}

\begin{lemma}\label{l:s}
Suppose $F/K$ is a finite extension with $\tilde K_n\subset F$. Then
$$\F\cdot\Sel_{p^n}(A/F)=\Hom(\fW_F , A[p^n]).$$
\end{lemma}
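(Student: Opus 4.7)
The plan is to identify $\F\cdot \Sel_{p^n}(A/F)$ with the subgroup of unramified cocycles $\Hom^{unr}(G_F,A[p^n])\subseteq \Hom(G_F,A[p^n])$, and then to identify the latter with $\Hom(\fW_F,A[p^n])$ via geometric class field theory.

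First I would show the inclusion $\F\cdot\Sel_{p^n}(A/F)\subseteq \Hom^{unr}(G_F,A[p^n])$. Applying Lemma \ref{l:decomposition} to the $p^n$-torsion module $W=\Sel_{p^n}(A/F)$, one gets $\F\cdot\Sel_{p^n}(A/F)=\tF_q^m\bigl(\Sel_{p^n}(A/F)\bigr)$ for any $m$ with $q^m\geq p^n$. Corollary \ref{c:image1}, applied to the constant abelian variety $A$ (so that $A^{(q^m)}=A$), shows that for $m$ sufficiently large this image already sits inside $\coh^1_{\mathrm{fl}}(F,A_{p^n}^{\mathrm{\acute et}})$. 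Since $A_{p^n}^{\mathrm{\acute et}}$ is an étale sheaf, this group coincides with the Galois cohomology $\coh^1_{\mathrm{Gal}}(F,A[p^n])$; and because $\tilde K_n\subseteq F$ by hypothesis, $G_F$ acts trivially on $A[p^n]$, so $\coh^1_{\mathrm{Gal}}(F,A[p^n])=\Hom(G_F,A[p^n])$. Lemma \ref{l:unramified} then guarantees that each such element, being in $\Sel_{p^n}(A/F)\cap \Hom(G_F,A[p^n])$, is unramified at every place.

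Next I would establish the reverse inclusion. Because $A/\FF$ is ordinary, the relative Frobenius $\tF_q$ acts as an automorphism of the étale $\ZZ_p$-module $A[p^n]=A_{p^n}^{\mathrm{\acute et}}(\bar K)$. From $\tV_q^m\circ \tF_q^m=[p^{am}]$ and the invertibility of $\tF_q^m$ on $A[p^n]$, we get $\tV_q^m=0$ on $A[p^n]$ as soon as $am\geq n$. Hence the central idempotent $e_V\in \ZZ_p\,\E$ (the limit of a subsequence of $\tV_q^{m_k}$) acts as zero on $A[p^n]$, so $e_F=1-e_V$ acts as the identity there and, by post-composition, on $\Hom(G_F,A[p^n])$. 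If $\varphi\in \Hom(G_F,A[p^n])$ is unramified everywhere, then by Lemma \ref{l:unramified} it belongs to $\Sel_{p^n}(A/F)$, whence $\varphi=e_F\varphi\in \F\cdot\Sel_{p^n}(A/F)$. Combining both directions yields $\F\cdot \Sel_{p^n}(A/F)=\Hom^{unr}(G_F,A[p^n])$.

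Finally, I would invoke geometric class field theory for $F$: the Artin reciprocity map sending each closed point $v$ of $X_F$ to its Frobenius $\Frob_v\in\Gal(F^{ab}/F)$ induces a homomorphism $\Pic(X_F)\to G_F^{ab,unr}$ with dense image. Precomposition therefore identifies $\Hom_{\mathrm{cts}}(G_F^{ab,unr},A[p^n])$ with $\Hom(\Pic(X_F),A[p^n])$; since unramified cocycles factor through $G_F^{ab,unr}$, and continuity is automatic for maps to a finite group, we obtain $\Hom^{unr}(G_F,A[p^n])\cong \Hom(\Pic(X_F),A[p^n])$. Finally, $A[p^n]$ being a $p$-group, this equals $\Hom(\ZZ_p\otimes\Pic(X_F),A[p^n])=\Hom(\fW_F,A[p^n])$, which concludes the proof.

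The main technical obstacle is to make the identifications in the first paragraph genuinely compatible: over an imperfect $F$ the connected-étale sequence of $A_{p^n}$ does not split, so one has to argue that the image provided by Corollary \ref{c:image1} naturally sits inside $\coh^1_{\mathrm{fl}}(F,A_{p^n})$ as a subgroup of $\coh^1_{\mathrm{Gal}}(F,A[p^n])$ (rather than merely mapping to the latter). Once this is pinned down (using that for large $m$ the endomorphism $\tF_q^m$ of $A_{p^n}$ factors through $A_{p^n}^{\mathrm{\acute et}}$, essentially by the existence of the idempotent $e_F$), the rest is a straightforward combination of Lemmata \ref{l:decomposition} and \ref{l:unramified} with the class field theory input.
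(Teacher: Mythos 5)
Your argument is correct and follows essentially the same route as the paper: class field theory identifies $\Hom(\fW_F,A[p^n])$ with the unramified homomorphisms, and the two inclusions are established via Lemmas~\ref{l:decomposition}, \ref{l:unramified} and Corollary~\ref{c:image1}. The paper phrases both directions as a single cyclic chain $\fV=\tF_q^m{}_*\fV\subset\tF_q^m{}_*\Sel_{p^n}(A/F)=\F\cdot\Sel_{p^n}(A/F)\subset\fV$, leaning on the bijectivity of $\tF_q^m{}_*$ on $\fV$; your reverse inclusion instead notes that $e_V$ annihilates $A[p^n]$ (hence $e_F$ acts as the identity there), which is the same fact in idempotent language.

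The ``main technical obstacle'' you flag at the end is not actually an obstacle in this setting: since $A$ is a constant abelian variety, the group scheme $A_{p^n}$ is defined over the \emph{perfect} field $\FF$, so its connected--\'etale sequence splits canonically, $A_{p^n}\simeq A_{p^n}^0\times A_{p^n}^{\mathrm{\acute et}}$ (this is exactly the paper's equation \eqref{e:dec}, recalled in the Remark after Lemma~\ref{l:image}). That splitting already exhibits $\coh^1_{\mathrm{fl}}(F,A_{p^n}^{\mathrm{\acute et}})$ as a direct summand of $\coh^1_{\mathrm{fl}}(F,A_{p^n})$, and it is stable under $\ZZ_p\E$, so there is no need for the more delicate factorization of $\tF_q^m$ you sketch. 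Citing \eqref{e:dec} cleans up the one loose end in your write-up.
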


\begin{proof} Let $\fV$ denote the group of homomorphisms $\Hom(\fW_F, A[p^n])$. Class field theory identifies $\fW_F$ with $\Gal(F^{unr,p}/F)$, where $F^{unr,p}$ is the maximal everywhere unramified abelian pro-$p$-extension of $F$. Thus, since $\tF_q^m$ induces an isomorphism on $A[p^n]$, Lemma \ref{l:unramified}, Lemma \ref{l:decomposition} and Corollary \ref{c:image1} imply, for $m\gg 0$,
$$\fV={\tF_q^m}_*(\fV)\subset {\tF_q^m}_*(\Sel_{p^n}(A/F))= \F\cdot\Sel_{p^n}(A/F)\subset\fV.$$
\end{proof}

\begin{subsubsection}{The extensions $L_{ar}$ and $\tilde L$} \label{ss:LL} Let $\FF(A[p^{\infty}])\subset\bar\FF$ be the field of definition of $A[p^{\infty}]$. We have $\Gal(\FF(A[p^{\infty}])/\FF)\simeq\ZZ_p\times H$ for some finite cyclic group $H$ of order prime to $p$. Let $L_{ar}$ denote the compositum $LK_{\infty}^{(p)}$, with $\Gamma_{ar}:=\Gal(L_{ar}/K)\simeq\ZZ_p^{d'}$, $d'\geq d$, and let
$${\tilde L}:=L\FF(A[p^{\infty}]),$$
with ${\tilde \Gamma}:=\Gal({\tilde L}/K)$. By a slight abuse of notation we can write
\begin{equation}\label{e:prod2}
{\tilde \Gamma}= \Gamma_{ar} \times H \,.
\end{equation}
By the main theorem of \cite{tan10a}, $X_p(A/{\tilde L})$ is a finitely generated $\Lambda(\Gamma_{ar})$-module, whence so is $\F X_p(A/\tilde L)$. Define
$$\fW_{\tilde L}:=\varprojlim_F\fW_F.$$

\begin{proposition} \label{p:selclassg} We have a canonical isomorphism of $\Gal({\tilde L}/K)$-modules:
$$\F X_p(A/\tilde L)\simeq \fW_{\tilde L}\otimes_{\ZZ_p}A[p^{\infty}]^\vee. $$
\end{proposition}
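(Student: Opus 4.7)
The plan is to take $F$-and-$n$-limits in Lemma \ref{l:s} and then apply Pontryagin duality via Hom-tensor adjunction.

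First, I would fix a cofinal system of finite subextensions $F/K$ of $\tilde L/K$ containing $\FF(A[p^\infty])K$; for such $F$ the hypothesis $\tilde K_n\subset F$ of Lemma \ref{l:s} holds for every $n$, so
$$\F\cdot\Sel_{p^n}(A/F)\;\simeq\;\Hom_{\ZZ_p}(\fW_F,A[p^n]).$$
I would check that this identification is natural both in $F$ (restriction of Selmer cocycles on the left corresponding, via class field theory, to the transition map $\fW_{F'}\to\fW_F$ on the right) and in $n$ (inclusion $A[p^n]\hookrightarrow A[p^{n+1}]$ on both sides). Passing to the direct limit over $F$ and using continuity of $\Hom$ into a finite target yields
$$\F\cdot\Sel_{p^n}(A/\tilde L)\;\simeq\;\Hom^{\mathrm{cts}}(\fW_{\tilde L},A[p^n]),$$
and the direct limit over $n$ then gives
$$\F\cdot\Sel_{p^\infty}(A/\tilde L)\;\simeq\;\Hom^{\mathrm{cts}}(\fW_{\tilde L},A[p^\infty]).$$

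Next, since $A/\FF$ is ordinary of dimension $g$, the group $A[p^\infty]^\vee$ is $\ZZ_p$-free of rank $g$ and we have the tautology $A[p^\infty]=\Hom_{\ZZ_p}(A[p^\infty]^\vee,\QQ_p/\ZZ_p)$. Hom-tensor adjunction then supplies canonical isomorphisms
$$\Hom^{\mathrm{cts}}(\fW_{\tilde L},A[p^\infty])\;\simeq\;\Hom^{\mathrm{cts}}\bigl(\fW_{\tilde L}\otimes_{\ZZ_p}A[p^\infty]^\vee,\QQ_p/\ZZ_p\bigr)\;=\;\bigl(\fW_{\tilde L}\otimes_{\ZZ_p}A[p^\infty]^\vee\bigr)^\vee.$$
Pontryagin-dualising, and using reflexivity of the profinite $\ZZ_p$-module $\fW_{\tilde L}\otimes_{\ZZ_p}A[p^\infty]^\vee\simeq\fW_{\tilde L}^{\oplus g}$, yields
$$\F X_p(A/\tilde L)=\bigl(\F\cdot\Sel_{p^\infty}(A/\tilde L)\bigr)^\vee\;\simeq\;\fW_{\tilde L}\otimes_{\ZZ_p}A[p^\infty]^\vee.$$

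The $\Gal(\tilde L/K)$-equivariance, with the diagonal action on the tensor product, is built in throughout: every identification above is functorial in $F$ and in the coefficient module $A[p^n]$, so that the Galois action on $\Sel$ matches the natural action on $\fW_{\tilde L}\otimes_{\ZZ_p}A[p^\infty]^\vee$. The main obstacle is the naturality check in the first step: one must verify that the class-field-theoretic identification underlying Lemma \ref{l:s} (which goes via Corollary \ref{c:image1} and hence involves a choice of Frobenius power $m$ depending on $n$) behaves well under the restriction maps as $F$ and $n$ grow. This reduces to the functoriality of the Artin reciprocity map $\fW_F\simeq\Gal(F^{\mathrm{unr},p}/F)$ for finite extensions, but must be tracked carefully since the proof of Lemma \ref{l:s} combines class field theory with the Frobenius decomposition of Section \ref{su:fa}.
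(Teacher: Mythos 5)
Your argument is correct and matches the paper's own proof: both pass to the double limit of Lemma \ref{l:s} over $F$ and then $n$ to get $\F\Sel_{p^\infty}(A/\tilde L)\simeq\Hom_{cont}(\fW_{\tilde L},A[p^\infty])$ and then Pontryagin-dualise, the only cosmetic difference being that you phrase the last step via Hom-tensor adjunction and reflexivity while the paper writes out the dual map $w\otimes Q^*\mapsto(\varphi\mapsto Q^*(\varphi(w)))$ explicitly. One small slip worth flagging: no \emph{finite} subextension $F$ of $\tilde L/K$ can contain the infinite extension $\FF(A[p^\infty])K$, so the correct bookkeeping is that for each fixed $n$ the finite $F$ with $\tilde K_n\subset F$ form a cofinal family inside the directed system of all finite subextensions, which is all the direct limit requires.
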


\begin{proof} Since $\fW_{\tilde L}$ is compact and $A[p^\infty]$ discrete, we have
$$\Hom_{cont}(\fW_{\tilde L} , A[p^n])=\varinjlim_F\Hom(\fW_F , A[p^n])\,,$$
where $F$ runs through all finite degree subextensions of ${\tilde L}/K$ (note that all homomorphisms from a finitely generated $\ZZ_p$-module such as $\fW_F$ into a finite group are continuous), and of course
$$\varinjlim_n\Hom_{cont}(\fW_{\tilde L} , A[p^n])=\Hom_{cont}(\fW_{\tilde L} , A[p^\infty])\,.$$
Hence we get, by Lemma \ref{l:s},
$$\F X_p(A/\tilde L)=\Hom_{cont}(\fW_{\tilde L} , A[p^\infty])^\vee\,.$$
Finally, the map
$$\fW_{\tilde L}\otimes_{\ZZ_p}A[p^\infty]^\vee \lr \Hom_{cont}(\fW_{\tilde L} , A[p^\infty])^\vee\,,$$
which sends $w\otimes Q^*$ to $\varphi\mapsto Q^*(\varphi(w))$, is an isomorphism because $A[p^\infty]^\vee$ is a finitely generated free $\ZZ_p$-module.
\end{proof}

Note that if $F$ is a finite extension of $K$ with constant field of cardinality $q^n$, then we have the commutative diagram  \[ \begin{CD}
\fW_F  @>{\deg_F}>> \ZZ_p \\
@VVV   @VV{n}V  \\
\fW_K @>{\deg_K}>> \ZZ_p,  \end{CD} \]
where the first down-arrow is the norm map and the second is the multiplication by $n$.
Therefore, since ${\tilde L}/K$ contains the constant $\ZZ_p$-extension, the exact sequence \eqref{e:weil} implies
\begin{equation}\label{e:fwfclim} \fW_{\tilde L}=\fC_{\tilde L}:=\varprojlim_F\fC_F. \end{equation}

\begin{lemma} \label{l:wtorsion} The group $\fW_{\tilde L}$ has no $p$-torsion. \end{lemma}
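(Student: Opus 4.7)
The plan is to exploit the fact that $\tilde L$ contains the arithmetic $\ZZ_p$-extension $\Kpinf$. Using the identification $\fW_{\tilde L}=\fC_{\tilde L}=\varprojlim_F\fC_F$ from \eqref{e:fwfclim} (with transition maps the norms) and the fact that $\varprojlim$ commutes with kernels, I would write $\fW_{\tilde L}[p]=\varprojlim_F\fC_F[p]$. It then suffices to prove that for every finite intermediate extension $F\subset\tilde L$ there is a larger finite intermediate extension $F'\subset\tilde L$ for which the norm $\fC_{F'}[p]\to\fC_F[p]$ is zero.

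Given such an $F$ with constant field $\FF_F$, I would take $F'=F^{(n)}$, the constant-field extension of $F$ of degree $p^n$, which lies in $\tilde L$ because $\tilde L\supseteq\Kpinf$. Writing $J_F$ for the Jacobian of the smooth projective curve underlying $F$, one has $\fC_F=J_F(\FF_F)[p^\infty]$ and the norm $\fC_{F^{(n)}}\to\fC_F$ corresponds to the trace of Jacobian points, i.e.\ to the endomorphism
$$
\Nm_n:=\sum_{i=0}^{p^n-1}\phi_F^{i}\ \in\ \mathrm{End}_{\FF_p}\!\bigl(J_F[p](\bar\FF)\bigr),
$$
where $\phi_F$ denotes the Frobenius of $\FF_F$ acting on the finite $\FF_p$-vector space $V:=J_F[p](\bar\FF)$.

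The key step will be the characteristic-$p$ identity
$$
1+X+\cdots+X^{p^n-1}\ =\ (X-1)^{p^n-1}\quad\text{in }\FF_p[X],
$$
obtained by cancelling $X-1$ in $X^{p^n}-1=(X-1)^{p^n}$. Evaluating at $X=\phi_F$ gives $\Nm_n=(\phi_F-1)^{p^n-1}$, while the same identity yields $\fC_{F^{(n)}}[p]=\ker\!\bigl((\phi_F-1)^{p^n}\bigr)\subseteq V$. The ascending chain of these kernels stabilises at some $V^{\ast}=\ker\!\bigl((\phi_F-1)^{N}\bigr)$ with $N\leq\dim_{\FF_p}V$, and on $V^{\ast}$ the operator $\phi_F-1$ is nilpotent of some index $e\leq N$. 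For any $n$ with $p^n-1\geq e$, one therefore obtains $\Nm_n|_{\fC_{F^{(n)}}[p]}=0$, concluding the proof. The only non-routine ingredient is spotting the characteristic-$p$ collapse of the cyclotomic-type sum into $(X-1)^{p^n-1}$; once that is noticed, everything else reduces to linear-algebraic nilpotency on a finite-dimensional space.
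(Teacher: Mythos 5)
Your proof is correct, and the global strategy is the same as the paper's: reduce to showing that the norm map kills the $p$-torsion of $\fC$ along a cofinal tower of constant-field subextensions of $\tilde L$, using $\Kpinf\subset\tilde L$. Where you differ is in the final mechanism. The paper's proof takes a coherent $p$-torsion element $t=(t_n)$, notes that $t_n$ lands in the finite group $Jac\,\C[p](\bar\FF)$, hence lies in a single finite layer $Jac\,\C(\FF_{F_m})$ for all $n$, and then observes that for $n>m$ the norm on $t_n$ is multiplication by $[\FF_{F_n}:\FF_{F_{n-1}}]$, a power of $p$, so $t_{n-1}=0$. You instead compute the norm endomorphism $\Nm_n$ on $V=J_F[p](\bar\FF)$ explicitly as $(\phi_F-1)^{p^n-1}$, via the characteristic-$p$ collapse $1+X+\cdots+X^{p^n-1}=(X-1)^{p^n-1}$, and combine this with the stabilisation of $\ker((\phi_F-1)^m)$ and nilpotency to conclude that the transition map on $p$-torsion vanishes for $n$ large. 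These are two phrasings of the same phenomenon (a $p$-torsion point is fixed by a high power of Frobenius, so the norm kills it), but your version is a bit more self-contained: it does not require checking that the degrees $[\FF_{F_n}:\FF_{F_{n-1}}]$ in the paper's tower $F_n=\FF(A[p^n])F$ are eventually nontrivial powers of $p$, since the constant-field tower $F^{(n)}$ has degree $p^n$ by construction, and the nilpotency index is bounded by $\dim_{\FF_p}V\le 2g$.
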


\begin{proof} For $F$ a finite extension of $K$ put $F_n:=\FF(A[p^n])F$. It suffices to show that for any $F$ the projective
limit (on $n$) of $\fC_{F_n}$ has no $p$-torsion. Let $\C/\FF_F$ be the curve associated with the function field $F$ (whose field of constants is $\FF_F$), so that
$$\fC_{F_n}=Jac\,\C[p^\infty]\cap Jac\,\C(\FF_{F_n}).$$
Let $t=(t_n)$ be an element in the $p$-torsion of $\varprojlim\fC_{F_n}$: then $t_n\in Jac\,\C[p]$ for all $n$ and hence there is some $m$ such that $t_n\in Jac\,\C(\FF_{F_m})$ for every $n$. But then for $n>m$ the norm map $\fC_{F_n}\rightarrow\fC_{F_{n-1}}$ acts on $t_n$ as multiplication by a power of $p$, hence $t=0$.
\end{proof}

\end{subsubsection}
\end{subsection}

\begin{subsection}{Frobenius Twist of Class Groups}\label{su:ftcg}
Let $\cO$ denote the ring of integers of some finite extension over $\QQ_p$. Recall the algebras $\La_{\cO}(\Gamma)$ and $\La_{\cO}(\tilde \Gamma)$ defined in Subsection \ref{su:iw}.

\subsubsection{The twist matrix} \label{ss:twist}
Since $A$ is defined over $\FF$, $A[p^{\infty}]$ is actually a $\Gal(\bar\FF/\FF)$-module. The latter group is topologically generated by the Frobenius substitution $\Fr_q$.
After the choice of an isomorphism $A[p^{\infty}]\simeq(\QQ_p/\ZZ_p)^g$, the action of $\Fr_q$ becomes that of a $g\times g$ matrix $\bo u$, called the twist matrix (see page 216 of \cite{mazur72} for a more detailed discussion). By \cite[Corollary 4.37]{mazur72}, the eigenvalues of the Frobenius endomorphism $\tF_q$ of $A$ are
$$\alpha_1,...,\alpha_g,\, \beta_1:=q/\alpha_1,...,\beta_g:=q/\alpha_g,$$
where $\alpha_1$,...,$\alpha_g$ are the eigenvalues of $\bo u$, counted with multiplicities. Assume that $\cO$ contains $\alpha_i$: then
\begin{equation} \label{e:alphabeta} \alpha_i\in\cO^{\times}\text{, }\beta_i\in q\cO\,. \end{equation}

\begin{lemma}\label{l:ses}
The twist matrix $\bo u$ is semi-simple.
\end{lemma}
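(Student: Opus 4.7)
The plan is to identify the matrix $\bo u$ with the action of the Frobenius endomorphism $\tF_q\in\E$ on $A[p^\infty]$, and then to exhibit a separable polynomial in $\ZZ[t]$ annihilating $\tF_q$ in $\E$. Since $A$ is defined over $\FF$, the Frobenius substitution $\Fr_q$ acts on the $\bar\FF$-points of $A$ in exactly the same way as $\tF_q$: both raise coordinates to the $q$th power. So $\bo u$ records precisely the action of $\tF_q\in\E$ on $A[p^\infty]\simeq(\QQ_p/\ZZ_p)^g$, and it suffices to exhibit a squarefree monic $m(t)\in\ZZ[t]$ with $m(\tF_q)=0$ in $\E$: for then $m(\bo u)=0$, the minimal polynomial of $\bo u$ divides $m$, has only simple roots, and $\bo u$ is semisimple.

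Such an $m$ is produced from the structure of $\E^0:=\E\otimes_\ZZ\QQ$ via Tate's theorem on abelian varieties over finite fields. Decomposing $A$ up to $\FF$-isogeny as $A\sim\prod_i A_i^{n_i}$ with the $A_i$ simple and pairwise non-isogenous, Tate's theorem gives
$$\E^0\simeq\prod_i M_{n_i}(D_i),\qquad D_i:=\operatorname{End}_{\FF}(A_i)\otimes_\ZZ\QQ,$$
with each $D_i$ a finite-dimensional division $\QQ$-algebra. Under this identification $\tF_q$ corresponds to the central tuple $(\pi_i\cdot I_{n_i})_i$, where $\pi_i:=\tF_{A_i,q}$ is a Weil $q$-integer generating the center $\QQ(\pi_i)$ of $D_i$. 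Let $m_i(t)\in\ZZ[t]$ denote the monic irreducible minimal polynomial of $\pi_i$ over $\QQ$. By Tate's isogeny criterion, distinct simple isogeny classes have distinct Frobenius characteristic polynomials, so the $m_i$ are pairwise distinct and, being irreducible, pairwise coprime. Then $m(t):=\prod_i m_i(t)\in\ZZ[t]$ is squarefree and satisfies $m(\tF_q)=0$ in $\E^0$, hence in $\E$ as well (the latter being torsion-free).

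The only somewhat delicate point in the plan is the identification of the Frobenius substitution on $A[p^\infty]$ with the action of the endomorphism $\tF_q$, but this is classical for any variety over a finite field. Beyond that, the argument is a formal consequence of Tate's theorem and does not actually invoke the ordinary hypothesis on $A$: semisimplicity of $\bo u$ reduces to the squarefreeness of a minimal polynomial manifestly arising at the level of $\QQ$.
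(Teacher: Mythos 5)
Your argument is correct and follows a genuinely different route from the paper's. The paper reduces to the case $A$ simple (so that $\E\otimes\QQ$ is a division algebra), shows the element $\Phi:=\prod_{i\le k}(\tF_q-\alpha_i)(\tF_q-\beta_i)\in\ZZ[\tF_q]$ is nilpotent (hence zero, being in a domain), and then uses the ordinary hypothesis $\beta_i\in q\cO$ to conclude that each $\Fr_q-\beta_i$ acts invertibly on $\cO A[p^\infty]$, so that the separable polynomial $\prod_{i\le k}(t-\alpha_i)$ already annihilates $\bo u$. You instead work with the Wedderburn decomposition of $\E\otimes\QQ$ and exploit only the centrality of $\tF_q$: a central element of a finite-dimensional semisimple $\QQ$-algebra always has squarefree minimal polynomial, because the center is a product of number fields. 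This avoids the reduction to the simple case and, as you note, makes no use of ordinarity beyond what is needed to define $\bo u$ as a $g\times g$ matrix; in exchange, the paper's argument yields the sharper information that the minimal polynomial of $\bo u$ divides $\prod_{i\le k}(t-\alpha_i)$, so that no $\beta_i$ occurs among its eigenvalues, whereas your $m$ still carries the $\beta_i$'s as spurious roots. One small inaccuracy: the pairwise distinctness of the $m_i$ does not follow directly from Tate's isogeny criterion as you state it, since that criterion concerns the characteristic polynomial of Frobenius rather than the minimal polynomial; it is the injectivity of the Honda--Tate assignment $A_i\mapsto\pi_i$ (up to conjugacy) that gives distinctness. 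Alternatively, replace $\prod_i m_i$ by the least common multiple of the $m_i$, which is a product of distinct irreducibles, hence automatically squarefree, with no distinctness claim needed.
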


\begin{proof} It is sufficient to assume that $A$ is simple. Suppose $\alpha_1,...,\alpha_k$, $k\leq g$, are all the distinct eigenvalues of $\bo u$ and let
$$\Phi=(\tF_q-\alpha_1)\cdots (\tF_q-\alpha_k)(\tF_q-\beta_1)\cdots (\tF_q-\beta_k).$$
Then $\Phi\in\ZZ[\tF_q]\subset \E$ is nilpotent and hence the simplicity assumption yields $\Phi=0$.

On $A[p^{\infty}]$, $\tF_q$ acts as $\Fr_q$. Assume that $\cO$ contains $\alpha_i$. Then, by \eqref{e:alphabeta},
$$\frac{1}{1-\beta_i\Fr_q^{-1}}:=1+\beta_i\Fr_q^{-1}+\dots+ \beta_i^n\Fr_q^{-n}+\dots$$
gives rise to an endomorphism of the group $\cO A[p^{\infty}]$. Therefore,
$$\Fr_q-\beta_i=\Fr_q(1-\beta_i\Fr_q^{-1})$$
is an automorphism of $\cO A[p^{\infty}]$. This shows that $(\Fr_q-\beta_1)\cdots (\Fr_q-\beta_k)$ is an automorphism of $A[p^{\infty}]$, and thus $\Phi=0$ implies that on $A[p^{\infty}]$
$$(\Fr_q-\alpha_1)\cdots (\Fr_q-\alpha_k)=0.$$
\end{proof}

\subsubsection{The Frobenius action on $A[p^\infty]^\vee$}\label{su:lambdai}
The action of $\Fr_q$ on $A[p^{\infty}]^\vee\simeq \ZZ_p^g$ is through the inverse matrix $\bo u^{-1}$. Suppose $\cO$ contains all $\alpha_1,...,\alpha_g$.
Let $\lambda_i$ denotes the continuous homomorphism $\tilde\Gamma \rightarrow \Gal(K(A[p^{\infty}])/K)\rightarrow \cO^{\times}$ such that
\begin{equation}\label{e:lambdai} \lambda_i(\Fr_q):=\alpha_i^{-1} \end{equation}
Let $\cO(\lambda_i)$ be the twist of $\cO$ by $\lambda_i$ defined in \S\ref{su:twistmodule} (see \eqref{e:twistbyphi} and the lines just after it). Denote $\psi_i:=\lambda_{i|H}$.

Note that Proposition \ref{p:selclassg} implies that $\fW_{\tilde L}$ is finitely generated over $\La(\Gamma_{ar})$.

\begin{proposition}\label{p:cs} Assume that $\fW_{\tilde L}$ is torsion over $\La(\Gamma_{ar})$ and that $\cO$ contains $\alpha_1,...,\alpha_g$ as well as $\psi(h)$ for all $\psi\in H^\vee$, $h\in H$. Then for every $\psi\in H^\vee$ there is a pseudo-isomorphism of $\Lambda_{\cO}(\Gamma_{ar})$-modules
$$\big(\cO\F X_p(A/\tilde L)\big)^{(\psi)}\sim \bigoplus_{i=1}^g \cO\fW_{\tilde L}^{(\psi\psi_i^{-1})}\otimes_{\cO} \cO(\lambda_i)\,.$$
Moreover, we have
$$\cO\F X_p(A/L_{ar}) \sim \bigoplus_{i=1}^g \cO\fW_{\tilde L}^{(\psi_i^{-1})}\otimes_{\cO} \cO(\lambda_i)\,.$$
\end{proposition}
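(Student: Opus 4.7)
The strategy is to combine the identification $\F X_p(A/\tilde L)\simeq\fW_{\tilde L}\otimes_{\ZZ_p}A[p^\infty]^\vee$ of Proposition \ref{p:selclassg} with the semisimplicity of the twist matrix (Lemma \ref{l:ses}), and then to cut out the desired $\psi$-isotypic components. After extending scalars to $\cO$, the first assertion reduces to a computation of
\[\big(\cO\fW_{\tilde L}\otimes_\cO\cO A[p^\infty]^\vee\big)^{(\psi)}\]
where the tensor product carries the diagonal $\tilde\Gamma$-action.

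Since $\bo u$ is semisimple with eigenvalues $\alpha_i\in\cO^\times$ (by Lemma \ref{l:ses} and \eqref{e:alphabeta}), the operator $\bo u^{-1}$ acting on $\cO A[p^\infty]^\vee$ is semisimple over $E:=\Frac(\cO)$ with eigenvalues $\alpha_i^{-1}$, so $EA[p^\infty]^\vee\simeq\bigoplus_{i=1}^g E(\lambda_i)$ as $E[\tilde\Gamma]$-modules. Choosing $\bo u^{-1}$-eigenvectors inside $\cO A[p^\infty]^\vee$ produces an inclusion $\bigoplus_i\cO(\lambda_i)\hookrightarrow\cO A[p^\infty]^\vee$ with finite $\cO$-module cokernel $C\simeq\bigoplus_k\cO/\pi^{n_k}$. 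Tensoring with the $\cO$-flat module $\cO\fW_{\tilde L}$ (flat because $\fW_{\tilde L}$ has no $p$-torsion by Lemma \ref{l:wtorsion} and $\cO/\ZZ_p$ is flat) then yields a short exact sequence of $\Lambda_\cO(\tilde\Gamma)$-modules
\[0\longrightarrow\bigoplus_i\cO\fW_{\tilde L}(\lambda_i)\longrightarrow\cO\F X_p(A/\tilde L)\longrightarrow\bigoplus_k\cO\fW_{\tilde L}/\pi^{n_k}\longrightarrow 0\,.\]

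The crucial point is that the cokernel is pseudo-null as a $\Lambda_\cO(\Gamma_{ar})$-module: the $\pi$-torsion-freeness of $\cO\fW_{\tilde L}$ forces $\pi\nmid\chi_{\cO,\Gamma_{ar}}(\cO\fW_{\tilde L})$, so by the hypothesis of torsionness one can find an annihilator $\xi\in\Lambda_\cO(\Gamma_{ar})$ of $\cO\fW_{\tilde L}$ coprime to $\pi$; then $\cO\fW_{\tilde L}/\pi^{n_k}$ is annihilated by the coprime pair $\xi$, $\pi^{n_k}$ and Lemma \ref{l:psn} gives the pseudo-nullness. Passing to $\psi$-isotypic components is exact on $\cO[H]$-modules because $|H|$ is prime to $p$ and $\cO$ contains the $|H|$-th roots of unity, hence preserves the pseudo-isomorphism. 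Finally, unwinding the diagonal $H$-action on the twist, the identity $h(w\otimes 1)=\psi_i(h)\cdot(hw)\otimes 1$ for $h\in H$ gives
\[\cO\fW_{\tilde L}(\lambda_i)^{(\psi)}=\cO\fW_{\tilde L}^{(\psi\psi_i^{-1})}\otimes_\cO\cO(\lambda_i)\,,\]
which assembles into the first statement.

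For the second statement, the coprimality of $|H|$ and $p$ makes the restriction map an isomorphism of $p$-primary Selmer groups $\Sel_{p^\infty}(A/L_{ar})\simeq\Sel_{p^\infty}(A/\tilde L)^H$, whence dually $X_p(A/L_{ar})\simeq X_p(A/\tilde L)_H$; after extending scalars, $\cO X_p(A/\tilde L)_H$ coincides with the trivial character component $(\cO X_p(A/\tilde L))^{(1)}$, and so $\cO\F X_p(A/L_{ar})=(\cO\F X_p(A/\tilde L))^{(1)}$. Specializing the first assertion to $\psi=1$ then finishes the proof. The main obstacle is verifying the pseudo-nullness of the error term $\cO\fW_{\tilde L}\otimes_\cO C$: while $C$ is itself finite, its tensor with the large module $\cO\fW_{\tilde L}$ is pseudo-null only because $\fW_{\tilde L}$ has no $p$-torsion, a non-trivial property isolated in Lemma \ref{l:wtorsion}.
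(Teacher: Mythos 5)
Your proof is correct and follows essentially the same route as the paper: both use the semisimplicity of the twist matrix (Lemma \ref{l:ses}) to split $\cO A[p^\infty]^\vee$ into rank-one $\lambda_i$-twists up to a finite cokernel, then invoke Lemma \ref{l:wtorsion} and Lemma \ref{l:psn} to kill the error term, and finally decompose into $H$-isotypic pieces using that $|H|$ is prime to $p$. The paper's treatment of the annihilator coprime to $p$ is slightly more direct (from flatness plus torsionness, rather than through the characteristic ideal), but the argument is the same in substance.
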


\begin{proof} Since $\Fr_q$ topologically generates $\Gal(K(A[p^\infty])/K)$, Lemma \ref{l:ses} yields an exact sequence of $\cO[[\Gal(K(A[p^{\infty}])/K)]]$-modules
\begin{equation}\label{e:vax} 0\lr \bigoplus_{i=1}^g \cO(\lambda_i) \lr \cO A[p^{\infty}]^\vee \lr Q \lr 0   \end{equation}
with $p^m Q=0$ for some $m$. Lemma \ref{l:wtorsion} implies that $\fW_{\tilde L}$ is a flat $\ZZ_p$-module and is annihilated by some $f\in\La(\Gamma_{ar})$ coprime with $p\,$: by Lemma \ref{l:psn}, it follows that the module $\fW_{\tilde L}\otimes_{\ZZ_p} Q$ is pseudo-null over $\La_{\cO}(\Gamma_{ar})$. Hence \eqref{e:vax} implies that $\fW_{\tilde L}\otimes_{\ZZ_p}\cO A[p^{\infty}]^\vee$ is pseudo-isomorphic to $\bigoplus_{i=1}^g \fW_{\tilde L}\otimes_{\ZZ_p} \cO(\lambda_i)$. Since the group $H$ is of order prime to $p$, we have
$$\bigoplus_{i=1}^g \fW_{\tilde L}\otimes_{\ZZ_p} \cO(\lambda_i)=\bigoplus_{i=1}^g \bigoplus_{\varphi\in H^\vee}\cO\fW_{\tilde L}^{(\varphi)}\otimes_{\cO} \cO(\lambda_i)\,.$$
The first statement then follows from Proposition \ref{p:selclassg}.

To prove the second claim, note first that the restriction map provides the identification
$$\Sel_{p^\infty}(A/L_{ar})=\Sel_{p^{\infty}}(A/{\tilde L})^{H}$$
(since composing with corestriction amounts to multiply by the order of $H$, which is prime to $p$). Therefore, if $h$ is a generator of $H$ then
$$\cO\F X_p(A/L_{ar})=\cO\F X_p(A/\tilde L)/(1-h)\cO\F X_p(A/\tilde L)$$
which, according to the above argument, is pseudo-isomorphic to $\bigoplus \fW_{\tilde L}^{(\psi_i^{-1})}\otimes_{\ZZ_p} \cO(\lambda_i).$
\end{proof}

\end{subsection}
\end{section}


\begin{section}{The Iwasawa Main Conjecture for constant abelian varieties} \label{s:imcconstant}

In this section, we keep the notation of \S\ref{su:sgcg} and \S\ref{su:ftcg}.
In the following we shall identify the Galois group $\Gal(K(A[p^\infty])/K)$ with $\Gal(\FF(A[p^\infty])/\FF)$ and consider the Frobenius substitution $\Fr_q$ as an element of it.

\begin{subsection}{The Stickelberger element and divisor class groups}\label{su:sc}
Given a topological ring $R$, the ring of formal power series $R[[u]]$ is endowed with the topology coming from identification with an infinite product of copies of $R$. Then any continuous ring homomorphism $\phi\colon R\rightarrow R'$ extends to a continuous ring homomorphism $R[[u]]\rightarrow R'[[u]]$ by applying $\phi$ to each coefficient.

\subsubsection{The Stickelberger element} \label{ss:stickelb}
Let $M/K$ be an abelian Galois extension (of finite or infinite degree) unramified outside $S$; in case $S=\emptyset$, we furthermore stipulate that $M$ is a subfield of $K\bar\FF$, so that $\Gal(M/K)$ is generated by $\Fr_q$ (note that if $S=\emptyset$ then $L=\Kpinf$, $\tilde L=K(A[p^\infty])$, because non-arithmetic abelian totally unramified extensions of $K$ have Galois group a factor of the finite group $\fC_K$).

For any place $v$ outside $S$ let the symbol $[v]_M\in\Gal(M/K)$ denote the (arithmetic) Frobenius element at $v$. Also, set $\delta_S=0$ if $S\not=\emptyset$ and $\delta_S=1$ if $S=\emptyset$. Since there are only finitely many places with degree bounded by a given positive integer, we can express the infinite product
\begin{equation}\label{e:Thetaprod}  \Theta_{M,S}(u):=(1-\Fr_q \cdot u)^{\delta_S}\prod_{v\not\in S} (1-[v]_M\cdot u^{\deg(v)})^{-1} \end{equation}
as a formal power series in $\ZZ[\Gal(M/K)][[u]]\subset \ZZ_p[[\Gal(M/K)]][[u]]$.

Clearly $\Theta_{M,S}$ is well-behaved with respect to the maps induced by $\Gal(M/K)\rightarrow\Gal(M'/K)$, for any extension $M/M'$.
Furthermore, if $\omega\colon{\Gal(M/K)}\rightarrow \CC^{\times}$ is a continuous character then it extends to a ring homomorphism $\omega\colon\ZZ[ {\tilde\Gamma}][[u]]\rightarrow \CC [[q^{-s}]]$ by $u\mapsto q^{-s}$ and we have
\begin{equation} \label{e:L*(S,omega)} \omega\big(\Theta_{M,S}(u)\big)=L_S^*(\omega,s) \,, \end{equation}
where, letting $q_v:=q^{\deg(v)}$ for $v$ a place of $K$, the right-hand side is defined by
\begin{equation} \label{e:L(S,omega)} L^*_S(\omega,s):=(1-\omega(\Fr_q)\,q^{-s})^{\delta_S}\cdot \prod_{v\not\in S} (1-\omega([v]_M)\,q_v^{-s})^{-1}. \end{equation}

Recall that for any topological ring $R$ the Tate algebra $R\langle u\rangle$ consists of those power series in $R[[u]]$ whose coefficients tend to 0.

\begin{proposition} \label{p:Thetaconv}
The power series $\Theta_{M,S}(u)$ belongs to $\ZZ_p[[\Gal(M/K)]]\langle u\rangle$.
\end{proposition}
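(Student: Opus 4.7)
The plan is to show the coefficients of $\Theta_{M,S}(u)$ converge to $0$ in the profinite topology of $\ZZ_p[[\Gal(M/K)]]$. My approach is to reduce to finite abelian quotients via the inverse-limit description of the Iwasawa algebra, and in each finite quotient to combine character decomposition with the rationality of function-field $L$-functions.

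Since $\ZZ_p[[\Gal(M/K)]] = \varprojlim \ZZ_p[\Gal(M'/K)]$ as $M'$ runs through finite abelian subextensions of $M/K$ unramified outside $S$, endowed with the inverse-limit topology, it will suffice to prove that for each such $M'$ the coefficients of $\Theta_{M',S}(u)$ tend to $0$ $p$-adically in $\ZZ_p[G']$, where $G':=\Gal(M'/K)$. For a fixed finite $G'$, setting $\cO:=\ZZ_p[\boldsymbol\mu_N]$ with $N$ the exponent of $G'$, the ring map $\cO[G']\xrightarrow{\sim}\prod_{\chi\in\widehat{G'}}\cO$, $g\mapsto(\chi(g))_\chi$, is an isomorphism whose restriction to $\ZZ_p[G']$ is injective with finite cokernel; hence the two $p$-adic topologies coincide, and one reduces to showing, for every character $\chi\colon G'\to\cO^\times$, that $\chi(\Theta_{M',S}(u))\in\cO[[u]]$ has coefficients of $p$-adic valuation tending to infinity. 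By \eqref{e:L*(S,omega)} this series is the partial $L$-function $L^*_S(\chi,s)$ read through $u=q^{-s}$.

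For $\chi$ non-trivial, the rationality theorem for $L$-functions of one-dimensional Galois representations over function fields (Weil, via Grothendieck's trace formula or direct Riemann--Roch on the curve associated with the fixed field of $\ker\chi$) gives that $L_S(\chi,s)$ is a polynomial in $q^{-s}$ with coefficients in $\cO$; hence $\chi(\Theta_{M',S}(u))$ is a polynomial and its coefficients are eventually zero. For trivial $\chi$, I compute directly
$$\chi(\Theta_{M',S}(u))=(1-u)^{\delta_S}\prod_{v\notin S}(1-u^{\deg v})^{-1}=(1-u)^{\delta_S}\cdot\zeta_K(u)\cdot\prod_{v\in S}(1-u^{\deg v}),$$
and substitute the Weil formula $\zeta_K(u)=P_K(u)/((1-u)(1-qu))$ with $P_K\in\ZZ[u]$. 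Both when $S=\emptyset$ (where $(1-u)^{\delta_S}$ cancels the $(1-u)$ in the denominator of $\zeta_K$) and when $S\neq\emptyset$ (where $(1-u)$ divides each factor $(1-u^{\deg v})$ for $v\in S$, producing the same cancellation), the result takes the form $Q(u)/(1-qu)$ for some $Q\in\ZZ[u]$. Since $q=p^a$, the expansion $(1-qu)^{-1}=\sum_{n\geq 0}q^n u^n$ has coefficients of $p$-adic valuation $an\to\infty$, so $\chi(\Theta_{M',S}(u))\in\cO\langle u\rangle$, completing the reduction.

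The hardest part will be the trivial-character case, where one must correctly cancel the simple pole of $\zeta_K$ at $u=1$. This depends on the somewhat ad hoc normalizing factor $(1-\Fr_q u)^{\delta_S}$ in \eqref{e:Thetaprod}, inserted precisely to kill that pole when $S=\emptyset$; the corresponding cancellation when $S\neq\emptyset$ is supplied automatically by any local factor at $v\in S$. Once this bookkeeping is done, the residual pole at $u=1/q$ is harmless $p$-adically because $1/q$ has positive $p$-adic valuation, and Tate convergence follows.
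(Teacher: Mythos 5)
Your reduction to finite quotients and to characters is valid, but the case analysis has a real gap. You assert that for every non-trivial $\chi$ the function $L_S(\chi,s)$ is a polynomial in $q^{-s}$; this is false. The Weil--Grothendieck rationality theorem yields a polynomial precisely when $\chi$ is \emph{geometrically} non-trivial, i.e.\ when $\chi$ does not factor through $\Gal(K\bar\FF/K)$. A non-trivial $\chi$ factoring through the constant extension --- for instance, any non-trivial character of $\Gal(\Kpinf/K)$ --- still has $L(\chi,s)=P_\chi(q^{-s})/\bigl((1-\chi(\Fr_q)q^{-s})(1-\chi(\Fr_q)q^{1-s})\bigr)$ as in \eqref{e:rational}, with a pole on the boundary of the $p$-adic unit disc at $q^{-s}=\chi(\Fr_q)^{-1}$; so your argument as written does not cover these characters. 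The repair is within reach, because exactly the bookkeeping you did for the trivial character applies to them: such a $\chi$ is unramified everywhere, so when $S\neq\emptyset$ every removed Euler factor $1-(\chi(\Fr_q)u)^{\deg v}$ with $v\in S$ is divisible by $1-\chi(\Fr_q)u$, while when $S=\emptyset$ the factor $(1-\chi(\Fr_q)u)^{\delta_S}$ supplies the cancellation, and the remaining pole at $(q\chi(\Fr_q))^{-1}$ lies strictly inside the $p$-adic unit disc. The correct dichotomy is therefore ``$\chi$ factors through $\Gal(K\bar\FF/K)$ or not'', rather than ``$\chi$ trivial or not''.

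The paper avoids the $p$-adic pole analysis entirely. It fixes a nonempty auxiliary set $T$ disjoint from $S$ and works with $\Theta_{M,S,T}(u):=\Theta_{M,S}(u)\cdot\prod_{v\in T}(1-q_v[v]_M u^{\deg v})$. After applying any $\omega$, the $T$-factor supplies $1-\omega(\Fr_q)q^{1-s}$, which kills the interior pole at $(q\omega(\Fr_q))^{-1}$ as well; thus $\omega(\Theta_{M,S,T})$ is a genuine polynomial for \emph{every} character, hence $\Theta_{F,S,T}\in\ZZ[\Gal(F/K)][u]$ for every finite $F$, and the coefficients of $\Theta_{M,S,T}$ tend to zero. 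One then divides by the $T$-factors, which are units in $\ZZ_p[[\Gal(M/K)]]\langle u\rangle$ because $q_v\in p\ZZ_p$. This $T$-smoothing is the standard device in the Stickelberger formalism and yields an integral (polynomiality) statement at every finite level, whereas your route, once the case distinction is fixed, establishes only Tate-algebra membership --- which is all Proposition \ref{p:Thetaconv} requires, but is weaker than what the $T$-modified element actually satisfies.
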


\begin{proof} We choose an auxiliary non-empty finite set $T$ of places of $K$ such that $S\cap T=\emptyset$ and define
$$\Theta_{M,S,T}(u):=\Theta_{M,S}(u) \cdot\prod_{v\in T} (1-q_v[v]_M\cdot u^{\deg(v)}).$$
Let $\omega\colon{\Gal(M/K)}\rightarrow \CC^{\times}$ be any continuous character and denote by $S_\omega\subseteq S$ its ramification locus and $K_\omega$ the fixed field of $\Ker(\omega)$. By a result of Weil it is known that the $L$-function
\begin{equation} \label{e:L(omega)} L(\omega,s):=\prod_{v\notin S_\omega}(1-\omega([v]_{K_\omega}) q_v^{-s})^{-1} \end{equation}
is rational in $q^{-s}$: there is a polynomial $P_\omega(u)\in\CC[u]$ such that
\begin{equation} \label{e:rational} L(\omega,s)=\frac{P_\omega(q^{-s})}{(1-\omega(\Fr_q)\,q^{-s})^{\delta_\omega}(1-\omega(\Fr_q)\,q^{1-s})^{\delta_\omega}}\,, \end{equation}
where $\delta_\omega=1$ if $\omega$ factors through a quotient of $\Gal(K\bar\FF/K)$, 0 else. Equalities \eqref{e:rational} and \eqref{e:L*(S,omega)} imply that
$$\omega(\Theta_{M,S,T}(u))=(1-\omega(\Fr_q)\,q^{-s})^{\delta_S}\cdot \prod_{v\in T} (1-\omega([v]_{K_\omega})\,q_v^{1-s})\cdot \prod_{S-S_\omega} (1-\omega([v]_{K_\omega})\,q_v^{-s})\cdot L(\omega,s)$$
is an element in the polynomial ring $\mathbb{C}[q^{-s}]$; since this holds for arbitrary $\omega$, it follows that $\Theta_{F,S,T}(u)$ belongs to $\ZZ[\Gal(F/K)][u]$ for any finite degree intermediate field $F$ of $M/K$. \footnote{A more detailed discussion of $\Theta_{F,S,T}(u)$ can be found in \cite[V, Proposition 2.15]{t84}. See also \cite[\S3]{gro88}.}
Therefore the coefficients of the power series $\Theta_{M,S,T}(u)\in\ZZ[\Gal(M/K)][[u]]$ tend to zero in
$$\varprojlim_{F\subset M}\ZZ[\Gal(F/K)]=:\ZZ[[\Gal(M/K)]]\subset \ZZ_p[[\Gal(M/K)]]\,.$$
To complete the proof now it suffices to observe that all factors in the auxiliary term $\prod_{v\in T} (1-q_v[v]_M\cdot u^{\deg(v)})$ are units in $\ZZ_p[[\Gal(M/K)]]\langle u\rangle$.
\end{proof}

Therefore, if $\alpha\in{\bar{\QQ}}_p$ with $|\alpha|\leq 1$, then $\Theta_{M,S}(\alpha)$ converges $p$-adically.

\begin{lemma} \label{l:evalTheta}
Let $\alpha\in\bar\QQ$ be embedded in $\bar\QQ_p$ so that $|\alpha|\leq1$. Let $\omega\colon{\Gal(M/K)}\rightarrow \CC^{\times}$ be a continuous character and extend it to a ring homomorphism $\ZZ[\alpha][[\Gal(M/K)]]\rightarrow\CC$ by $\omega(\alpha)=q^{-s_0}$. Then
\begin{equation} \label{e:evalTheta}  \omega\big(\Theta_{M,S}(\alpha)\big)=L_S^*(\omega,s_0)\,. \end{equation}
\end{lemma}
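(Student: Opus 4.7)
The plan is to reduce to the case of a finite Galois intermediate extension $F/K$, where the rationality of $\Theta_{F,S}$ established in the proof of Proposition \ref{p:Thetaconv} lets me replace the formal identity \eqref{e:L*(S,omega)} by a polynomial one, and then specialise $u=\alpha$.

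First, as $\Gal(M/K)$ is profinite and $\CC^{\times}$ has no small subgroups, the continuous character $\omega$ has finite image and factors through $\Gal(F/K)$ for some finite Galois intermediate extension $F/K$. The natural projection $\pi_F\colon\ZZ_p[[\Gal(M/K)]]\langle u\rangle\to\ZZ_p[\Gal(F/K)]\langle u\rangle$ is continuous for the $p$-adic topology and sends $\Theta_{M,S}(u)$ to $\Theta_{F,S}(u)$, hence sends $\Theta_{M,S}(\alpha)$ to $\Theta_{F,S}(\alpha)$. Therefore I may assume $M=F$.

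Second, I would choose an auxiliary non-empty finite set $T$ of places of $K$, disjoint from $S$, such that $\omega([v]_F)\, q_v^{1-s_0}\neq 1$ for every $v\in T$; this is possible since the places violating this inequality form a discrete subset of the infinite set of all places. By the proof of Proposition \ref{p:Thetaconv}, $\Theta_{F,S,T}(u)\in\ZZ[\Gal(F/K)][u]$ is a genuine polynomial, and the identity
\begin{equation}\label{e:polyidX}
\Theta_{F,S}(u)\cdot\prod_{v\in T}\bigl(1-q_v[v]_F u^{\deg v}\bigr)=\Theta_{F,S,T}(u)
\end{equation}
holds in $\ZZ_p[\Gal(F/K)]\langle u\rangle$. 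Substituting $u=\alpha$ (licit because $|\alpha|_p\leq 1$) produces an equality in $\ZZ_p[\alpha][\Gal(F/K)]$ whose right-hand side in fact lives in $\ZZ[\alpha][\Gal(F/K)]$.

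Third, I would apply the ring homomorphism $\omega$, extended to the localisation of $\ZZ[\alpha][\Gal(F/K)]$ obtained by inverting $\prod_{v\in T}(1-q_v[v]_F\alpha^{\deg v})$ (legitimate by the choice of $T$). Since polynomial evaluation commutes with $\omega$ via $\omega(\alpha)=q^{-s_0}$, this gives
$$\omega\bigl(\Theta_{F,S}(\alpha)\bigr)\cdot\prod_{v\in T}\bigl(1-\omega([v]_F)q_v^{1-s_0}\bigr)=\omega(\Theta_{F,S,T})(q^{-s_0}).$$
The explicit computation carried out in the proof of Proposition \ref{p:Thetaconv} identifies the right-hand side with
$$L_S^*(\omega,s_0)\cdot\prod_{v\in T}\bigl(1-\omega([v]_F)q_v^{1-s_0}\bigr),$$
and dividing through by the non-zero auxiliary product yields \eqref{e:evalTheta}.

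The main obstacle is to justify that $\omega$ commutes with the substitution $u\mapsto\alpha$ in $\Theta_{F,S}(u)$, whose coefficients are a priori only known to lie in the $p$-adic completion $\ZZ_p[\Gal(F/K)]$ and not in the group ring $\ZZ[\Gal(F/K)]$ on which $\omega$ is honestly defined. The point is that the auxiliary set $T$ circumvents this: multiplying \eqref{e:polyidX} through by $\prod_{v\in T}(1-q_v[v]_F u^{\deg v})$ replaces the $p$-adically convergent series $\Theta_{F,S}(u)$ by the polynomial $\Theta_{F,S,T}(u)$, for which the substitution is purely algebraic and $\omega$-equivariance is automatic.
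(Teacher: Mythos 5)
Your argument follows the paper's strategy, with the rationality step made explicit: reduce to the finite quotient $\Gal(K_\omega/K)$, pass to the polynomial $\Theta_{F,S,T}\in\ZZ[\Gal(F/K)][u]$ furnished by Weil's theorems in the proof of Proposition \ref{p:Thetaconv}, specialise $u\mapsto\alpha$ and apply $\omega$ (legitimate because polynomial specialisation commutes with ring homomorphisms), and finally divide by the auxiliary Euler factors. The paper compresses this into the assertion that $\Theta_{K_\omega,S}(u)$ is a rational function, which silently presupposes that the denominator survives the application of $\omega$; your introduction of $T$ makes that presupposition visible, which is a genuine improvement in rigour. What is not an argument is your justification for the existence of $T$: saying that ``the places violating this inequality form a discrete subset of the infinite set of all places'' says nothing, because the set of places of $K$ is discrete to begin with, and moreover the set of bad $v$ can be infinite (for instance when $\omega$ is an unramified character and $|\omega(\alpha)|_\infty=q^{-1}$). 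The correct observation is that the bad set cannot be \emph{all} of the places outside $S$. Indeed, if $\omega([v]_F)q_v^{1-s_0}=1$ for every $v\notin S$, then by Chebotarev density $\omega$ agrees with the unramified character $[v]\mapsto q^{(s_0-1)\deg v}$, so $\omega(\Fr_q)=q^{s_0-1}$, i.e. $q^{-s_0}=q^{-1}\omega(\Fr_q)^{-1}$. Every Galois conjugate of $q^{-1}\omega(\Fr_q)^{-1}$ has $p$-adic absolute value $q>1$ (because $\omega(\Fr_q)$ is a root of unity), while $q^{-s_0}=\omega(\alpha)$ is by hypothesis a conjugate of $\alpha$ with $|\alpha|_p\le1$ -- a contradiction. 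This is precisely the mechanism behind the remark following the lemma that the right-hand side of \eqref{e:evalTheta} is well-defined. With the existence of $T$ justified this way, your proof is complete.
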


Note that, by \eqref{e:rational}, $L_S^*(\omega,s)$ can have a pole at $s$ only when $q^{-s}=q^{-1}\omega(\Fr_q)^{-1}$: hence the right-hand side of \eqref{e:evalTheta} is well-defined.

\begin{proof} The key is to note that $\omega\big(\Theta_{M,S}(u)\big)= \omega\big(\Theta_{K_\omega,S}(u)\big).$
As observed in the proof of Proposition \ref{p:Thetaconv}, by Weil's theorems $\Theta_{K_\omega,S}(u)$ is a rational function in $\ZZ[\Gal(K_\omega/K)](u)$: as such,
$\omega\big(\Theta_{K_\omega,S}(u)\big)=\omega(\Theta_{K_\omega,S})\big(\omega(u)\big)$.
\end{proof}

Define the Stickelberger element
\begin{equation} \label{e:thetaLST} \theta_{M,S}:=\Theta_{M,S}(1)\in\ZZ_p[[\Gal(M/K)]]\,.   \end{equation}

\begin{lemma} \label{l:thetanot0} If $\Kpinf\subseteq M$, then $\theta_{M,S}\neq0\,.$
\end{lemma}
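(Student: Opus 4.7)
The plan is to exhibit a continuous character $\omega$ of $\Gal(M/K)$ with $\omega(\theta_{M,S})\neq 0$; since such an evaluation is a ring homomorphism, this will force $\theta_{M,S}\neq 0$.

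First, I would reduce to the arithmetic extension. Since $\Kpinf\subseteq M$, the natural surjection $\Gal(M/K)\twoheadrightarrow\Gal(\Kpinf/K)$ induces a ring surjection $\ZZ_p[[\Gal(M/K)]]\twoheadrightarrow\ZZ_p[[\Gal(\Kpinf/K)]]$ that carries $\theta_{M,S}$ to $\theta_{\Kpinf,S}$, by the compatibility of the Euler product definition \eqref{e:Thetaprod} under quotients of the Galois group. Hence it suffices to prove $\theta_{\Kpinf,S}\neq 0$. Writing $\sigma$ for the arithmetic Frobenius (a topological generator of $\Gal(\Kpinf/K)$), note that $\Kpinf/K$ is unramified everywhere, so $[v]_{\Kpinf}=\sigma^{\deg(v)}$ for every place $v$.

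Next I would choose a character $\omega\colon\Gal(\Kpinf/K)\to\boldsymbol\mu_{p^\infty}$ sending $\sigma$ to a primitive $p^n$-th root of unity $\zeta$, where $n$ is taken large enough that $p^n\nmid \deg(v)$, equivalently $\zeta^{\deg(v)}\neq 1$, for each of the finitely many $v\in S$. By Lemma~\ref{l:evalTheta} applied with $\alpha=1$,
\[\omega(\theta_{\Kpinf,S})=L^*_S(\omega,0).\]
Because $\omega$ is everywhere unramified and factors through $\Gal(K\bar\FF/K)$, Weil's rationality theorem \eqref{e:rational} gives
\[L(\omega,s)=\frac{P_\omega(q^{-s})}{(1-\zeta q^{-s})(1-\zeta q^{1-s})},\]
with $P_\omega(T)=\prod_i(1-\alpha_i T)$ whose inverse roots satisfy $|\alpha_i|_\infty=\sqrt q$ by the Weil bound.

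Unfolding the definition of $L^*_S(\omega,0)$ from \eqref{e:L(S,omega)} and substituting the formula above yields
\[L^*_S(\omega,0)=\begin{cases}P_\omega(1)/(1-q\zeta) & \text{if } S=\emptyset,\\[2pt] P_\omega(1)\prod_{v\in S}(1-\zeta^{\deg(v)})\big/\big[(1-\zeta)(1-q\zeta)\big] & \text{if } S\neq\emptyset.\end{cases}\]
In either case every factor is nonzero: $P_\omega(1)=\prod_i(1-\alpha_i)\neq 0$ because $|\alpha_i|_\infty=\sqrt q\neq 1$; $(1-q\zeta)\neq 0$ because $|q\zeta|_\infty=q>1$; $(1-\zeta)\neq 0$ because $\omega$ is nontrivial; and $(1-\zeta^{\deg(v)})\neq 0$ for $v\in S$ by the choice of $n$. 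Hence $\omega(\theta_{\Kpinf,S})\neq 0$, completing the proof. There is no real obstacle here: the only subtle points are noting that Weil's Riemann hypothesis prevents $P_\omega$ from vanishing at $T=1$, and that with $S$ finite a single character of sufficiently large $p$-power order kills no ``local'' Euler factor in the relevant evaluation.
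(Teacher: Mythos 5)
Your proof is correct and follows essentially the same route as the paper's: pick a character $\omega$ factoring through $\Gal(\Kpinf/K)$ of large enough $p$-power order, apply the interpolation formula to reduce to the nonvanishing of $L_S^*(\omega,0)$, and conclude via Weil's rationality theorem and the Riemann hypothesis over function fields to rule out a zero of $P_\omega$ at $1$, with the high order of $\omega$ killing the spurious factors from $v\in S$. The only cosmetic difference is that you first project $\theta_{M,S}$ down to $\theta_{\Kpinf,S}$ rather than directly viewing $\omega$ as an element of $\Gal(\Kpinf/K)^\vee\subset\Gal(M/K)^\vee$, and you write $P_\omega(u)$ where the paper writes $P_K(\varepsilon u)$.
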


\begin{proof}
By \eqref{e:evalTheta}, it suffices to show $L_S^*(\omega,0)\neq0$ for some $\omega\in \Gal(\Kpinf/K)^\vee\subset \Gal(M/K)^\vee$. Observe that for such an $\omega$, the $L$-function $L(\omega, s)$ is just a twist of the Dedekind zeta function $\zeta_K(s)$: hence, letting $\varepsilon:=\omega(\Fr_q)$ and $\varepsilon_v=\varepsilon^{\deg(v)}$, we have
$$L_S^*(\omega,s)=(1-\varepsilon\,q^{-s})^{\delta_S} \prod_{v\in S} (1-\varepsilon_vq_v^{-s})\cdot \frac{P_K(\varepsilon q^{-s})}{(1-\varepsilon q^{-s})(1-\varepsilon q^{1-s})}$$
since $(1-q^{-s})(1-q^{1-s})\zeta_K(s)=P_K(q^{-s})$ for some polynomial $P_K$. This proves our claim, because $P_K(\varepsilon)\neq0$ by the Riemann hypothesis over function fields and if we choose $\omega$ of order sufficiently high then $\varepsilon_v\neq1$ for all $v\in S$.
\end{proof}

\begin{remark}{\em The proof was particularly simple because we assume that $M$ contains a large arithmetic extension of $K$. More generally, one can use \cite[Lemma 1.2]{tan95} to deduce that $\theta_{M,S}=0$ if and only if there exists some place in $S$ splitting completely over $M/K$.}\end{remark}

\subsubsection{Relation with class groups}
In addition to the maps defined in (H1), (H2) of \S\ref{su:iw}, we will use the following morphism:
\begin{enumerate}
\item[(H3)] Since ${\tilde \Gamma}= \Gamma_{ar}\times H$, any character $\psi\colon H\lr\cO^\times$ can be extended to a homomorphism $\psi\colon\La_\cO(\tilde \Gamma) \lr \La_\cO(\Gamma_{ar})\,$, via $(h,g)\mapsto\psi(h)g$.
\end{enumerate}

For each $\psi\in H^\vee$ consider
\begin{equation} \theta_{\psi,L_{ar},S}^\sharp:=\psi(\theta_{\tilde L,S}^\sharp)=\psi\big(\Theta_{\tilde L,S}(1)^\sharp\big)\in \La_{\cO}(\Gamma_{ar}). \end{equation}
The following theorem was proved in \cite{crw87}; a simplified proof (avoiding the use of crystalline cohomology) has recently been given in \cite{blt09}.

\begin{mytheorem}\label{t:class} Let notation be as above and assume that $\psi\in H^\vee$ with $\psi(h)\in\cO$ for every $h\in H$. Then $\cO \fW_{\tilde L}^{(\psi)}$ is a finitely generated
$\La_{\cO}(\Gamma_{ar})$-module and
$$\chi_{\cO,\Gamma_{ar}}(\cO \fW_{\tilde L}^{(\psi)})= (\theta_{\psi,L_{ar},S}^\sharp)\,.$$
\end{mytheorem}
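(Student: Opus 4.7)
The plan is to follow the approach of Crew (\cite{crw87}) and the streamlined version in \cite{blt09}: reduce the equality of characteristic ideals to a comparison of specializations at continuous characters of $\Gamma_{ar}$, then use Grothendieck's trace formula to identify each specialization with an $L$-value, and match it to the corresponding specialization of the Stickelberger element.

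The first step is to establish that $\cO\fW_{\tilde L}^{(\psi)}$ is finitely generated and torsion over $\La_\cO(\Gamma_{ar})$. Finite generation follows from class field theory, which identifies $\fW_{\tilde L}$ with $\Gal(M/\tilde L)$ for $M$ the maximal everywhere-unramified abelian pro-$p$-extension of $\tilde L$, together with a Nakayama-type argument using that $\tilde L/K$ ramifies only at the finite set $S$. Torsionness follows from Lemma \ref{l:wtorsion}: since $\fW_{\tilde L}$ has no $p$-torsion, it suffices to produce a single nonzero annihilator in $\La_\cO(\Gamma_{ar})$, which is guaranteed by an Euler-characteristic computation at a finite layer (alternatively, by the classical finiteness of class groups of finite extensions of $K$).

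Next, I would reduce the equality of the two principal ideals to an equality of evaluations at a sufficiently large family of continuous characters. Using Monsky's theorem \ref{t:monsky}, it suffices to show that $\omega\bigl(\chi_{\cO,\Gamma_{ar}}(\cO\fW_{\tilde L}^{(\psi)})\bigr)$ and $\omega(\theta_{\psi,L_{ar},S}^\sharp)$ generate the same ideal in $\bar\QQ_p$ for every continuous character $\omega\colon\Gamma_{ar}\to\bar\QQ_p^\times$ avoiding a proper $\ZZ_p$-flat. By Lemma \ref{l:evalTheta}, the right-hand side is (up to a unit) the special value $L_S^*(\psi\omega^{-1},0)$, while the left-hand side, by a control theorem, can be computed as the size of the $(\psi\omega^{-1})$-isotypic component of $\fW_F$ at a suitable finite layer $F$ of $\tilde L/K$.

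The heart of the proof, and the main obstacle, is matching these two quantities through \'etale cohomology. Let $\cL_{\psi\omega^{-1}}$ denote the rank-one lisse sheaf on $C\setminus S$ associated with the character $\psi\omega^{-1}$. By class field theory and Poincar\'e duality on the curve, the $(\psi\omega^{-1})$-part of $\fW_F$ is expressed in terms of $H^1_{\mathrm{\acute et}}(\bar C\setminus S,\cL_{\psi\omega^{-1}})$ endowed with Frobenius, while Grothendieck's trace formula expresses $L_S^*(\psi\omega^{-1},0)$ as the alternating product $\prod_i\det(1-\Frob_q\mid H^i_c(\bar C\setminus S,\cL_{\psi\omega^{-1}}))^{(-1)^{i+1}}$. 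The identification of the two sides then reduces to matching the $H^1$-term and controlling the $H^0$ and $H^2$-terms; the $\sharp$-twist records the passage between the arithmetic Frobenius appearing in $\Theta$ and its inverse action on the Iwasawa module via \eqref{e:twistpairing}. The delicate technical points lie in carefully accounting for the Euler factors at $S$, the correction term $(1-\Fr_q\cdot u)^{\delta_S}$ in the definition of $\Theta_{\tilde L,S}$, and the degenerate characters for which $\psi\omega^{-1}$ becomes trivial on a decomposition subgroup at some place of $S$.
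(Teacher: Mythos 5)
You should note that the paper does not actually prove Theorem~\ref{t:class}: the sentence immediately preceding it reads ``The following theorem was proved in \cite{crw87}; a simplified proof (avoiding the use of crystalline cohomology) has recently been given in \cite{blt09}.'' The result is imported as a black box, so there is no in-paper proof to match your argument against.

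Your sketch is a reasonable high-level description of the type of argument used in those references, and steps (i)--(ii) are sound: finite generation and torsionness of $\cO\fW_{\tilde L}^{(\psi)}$ via class field theory, Nakayama, and Lemma~\ref{l:wtorsion}; and the reduction to comparing specializations at characters $\omega$ of $\Gamma_{ar}$ via Monsky's theorem is exactly how one passes from a collection of finite-layer computations to an identity of characteristic ideals in a multivariable Iwasawa algebra. However, you explicitly label the cohomological identification (class field theory $+$ Poincar\'e duality $\leftrightarrow$ Grothendieck trace formula on $H^\bullet_c(\bar C\setminus S,\cL_{\psi\omega^{-1}})$) as ``the heart of the proof, and the main obstacle'' and then do not carry it out; this is precisely the content supplied in \cite{crw87} and \cite{blt09}, and without it your argument is a plan, not a proof. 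A secondary point worth flagging: the $\sharp$ in $\theta_{\psi,L_{ar},S}^\sharp$ inverts the whole group $\tilde\Gamma$ before $\psi$ is applied, so the relevant specialization is $\omega\bigl(\theta_{\psi,L_{ar},S}^\sharp\bigr)=L_S^*\bigl(\psi^{-1}\omega^{-1},0\bigr)$ rather than $L_S^*(\psi\omega^{-1},0)$; keeping track of which of $\psi$, $\psi^{-1}$ (and arithmetic versus geometric Frobenius) appears on each side is exactly the kind of bookkeeping that the trace-formula computation in the cited references has to get right, and conflating them is where such arguments most often go wrong.
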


From $\omega\in\Gamma_{ar}^\vee$ one obtains a character $\omega\times\psi$ via \eqref{e:prod2}. Then $\omega(\theta_{\psi,L_{ar},S})=(\omega\times\psi) (\theta_{\tilde L,S})= L_S^*(\omega\times\psi,0)$ and the proof of Lemma \ref{l:thetanot0} is easily adapted to show that $\theta_{\psi,L_{ar},S}\neq0$. Thus Theorem \ref{t:class} implies $\fW_{\tilde L}$ is torsion over $\Lambda(\Gamma_{ar})$. By Proposition \ref{p:cs} we get

\begin{corollary} Both $\F X_p(A/\tilde L)$ and $\F X_p(A/L_{ar})$ are torsion $\La_\cO(\Gamma_{ar})$-modules. \end{corollary}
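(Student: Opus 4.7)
The plan is to assemble three ingredients already available in the text: the non-vanishing of the twisted Stickelberger element $\theta_{\psi,L_{ar},S}$, the characteristic-ideal formula of Theorem~\ref{t:class}, and the pseudo-isomorphisms of Proposition~\ref{p:cs}.

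First I would verify that $\theta_{\psi,L_{ar},S}\neq 0$ for every $\psi\in H^\vee$ (extending scalars to a sufficiently large $\cO$ containing the values of all such $\psi$ and all $\alpha_i$). Any $\omega\in\Gamma_{ar}^\vee$ pairs with $\psi$ via the decomposition $\tilde\Gamma=\Gamma_{ar}\times H$ to give a character $\omega\times\psi\in\tilde\Gamma^\vee$, and the identity $\omega(\theta_{\psi,L_{ar},S})=L^*_S(\omega\times\psi,0)$ reduces non-vanishing of $\theta_{\psi,L_{ar},S}$ to existence of a single $\omega$ with $L^*_S(\omega\times\psi,0)\neq 0$. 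Exactly as in Lemma~\ref{l:thetanot0}, choosing $\omega$ of sufficiently high $p$-power order ensures that none of the Euler factors at $v\in S$ vanishes, while Weil's Riemann hypothesis guarantees that the numerator polynomial does not vanish at the relevant twist. Since $\theta_{\psi,L_{ar},S}^\sharp$ is obtained by an automorphism of $\La_\cO(\Gamma_{ar})$, it is likewise non-zero.

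Combining this non-vanishing with Theorem~\ref{t:class} gives $\chi_{\cO,\Gamma_{ar}}\big(\cO\fW_{\tilde L}^{(\psi)}\big)=(\theta_{\psi,L_{ar},S}^\sharp)\neq 0$, so each isotypic piece $\cO\fW_{\tilde L}^{(\psi)}$ is a torsion $\La_\cO(\Gamma_{ar})$-module. Because $|H|$ is prime to $p$, enlarging $\cO$ as needed produces the direct sum decomposition $\cO\fW_{\tilde L}=\bigoplus_{\psi\in H^\vee}\cO\fW_{\tilde L}^{(\psi)}$, so $\cO\fW_{\tilde L}$ itself is $\La_\cO(\Gamma_{ar})$-torsion; descending to $\ZZ_p$-coefficients, $\fW_{\tilde L}$ is torsion over $\La(\Gamma_{ar})$. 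This puts us exactly in the hypothesis of Proposition~\ref{p:cs}.

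Finally I would invoke Proposition~\ref{p:cs}. For $\F X_p(A/\tilde L)$, each component $(\cO\F X_p(A/\tilde L))^{(\psi)}$ is pseudo-isomorphic to $\bigoplus_{i=1}^g\cO\fW_{\tilde L}^{(\psi\psi_i^{-1})}\otimes_\cO\cO(\lambda_i)$; each summand is torsion since the twist $\otimes_\cO\cO(\lambda_i)$ is by a free rank-one $\cO$-module and amounts to an automorphism of $\La_\cO(\Gamma_{ar})$ (compare Lemma~\ref{l:phi[]chi}), so it preserves the property of being torsion. Summing over $\psi\in H^\vee$ (again using $(|H|,p)=1$) gives torsionness of $\cO\F X_p(A/\tilde L)$ over $\La_\cO(\Gamma_{ar})$, hence of $\F X_p(A/\tilde L)$ over $\La(\Gamma_{ar})$; the argument for $\F X_p(A/L_{ar})$ is identical, applied to the second pseudo-isomorphism of Proposition~\ref{p:cs}. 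There is essentially no obstacle to the argument: the only subtle point is the adaptation of Lemma~\ref{l:thetanot0} to twisted $L$-values, which nevertheless follows the original proof verbatim once one notes that the Euler factors and the Weil numerator behave independently of the finite character $\psi$.
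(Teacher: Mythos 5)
Your argument is correct and follows essentially the same route as the paper: non-vanishing of $\theta_{\psi,L_{ar},S}$ (via the adaptation of Lemma~\ref{l:thetanot0}), Theorem~\ref{t:class} to deduce that $\fW_{\tilde L}$ is $\La(\Gamma_{ar})$-torsion, and Proposition~\ref{p:cs} to transfer this to $\F X_p(A/\tilde L)$ and $\F X_p(A/L_{ar})$. The additional care you take in noting that the twist by $\cO(\lambda_i)$ is an automorphism of $\La_\cO(\Gamma_{ar})$ and therefore preserves torsionness is a small but welcome clarification of a step the paper leaves implicit.
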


\end{subsection}

\begin{subsection}{Frobenius Twist of Stickelberger Elements} \label{su:cis}

In this section, we compute the characteristic ideal of $X_p(A/L_{ar})$. As in \S\ref{su:lambdai}, we assume that the ring $\cO$ contains all $\alpha_1,...,\alpha_g$ as well as $\psi(h)$ for all $\psi\in H^\vee$, $h\in H$.

\begin{proposition}\label{p:cs1}
Let $\lambda_i$ and $\psi_i$ be as in {\em{\S\ref{su:lambdai}}}. Then
$$\chi_{\cO,\Gamma_{ar}}\big((\cO\F X_p(A/\tilde L))^{(\psi)}\big) = \prod_{i=1}^g \big(\lambda_i^* (\theta_{\psi\psi_i^{-1},L_{ar},S}^\sharp)\big) $$
and
$$\chi_{\cO,\Gamma_{ar}}\big(\cO\F X_p(A/L_{ar})\big) = \prod_{i=1}^g \big(\lambda_i^* (\theta_{\psi_i^{-1},L_{ar},S}^\sharp)\big)\,.$$
\end{proposition}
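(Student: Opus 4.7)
The plan is to combine the pseudo-isomorphism of Proposition \ref{p:cs} with Theorem \ref{t:class} and the twist formula of Lemma \ref{l:phi[]chi}. Concretely, I would first invoke Proposition \ref{p:cs} to obtain
$$\big(\cO\F X_p(A/\tilde L)\big)^{(\psi)}\sim \bigoplus_{i=1}^g \cO\fW_{\tilde L}^{(\psi\psi_i^{-1})}\otimes_{\cO} \cO(\lambda_i)$$
as $\La_\cO(\Gamma_{ar})$-modules. Since the characteristic ideal is an invariant of the pseudo-isomorphism class and converts direct sums into products, the left-hand side of the proposition equals
$$\prod_{i=1}^g \chi_{\cO,\Gamma_{ar}}\big(\cO\fW_{\tilde L}^{(\psi\psi_i^{-1})}\otimes_{\cO}\cO(\lambda_i)\big).$$

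The key observation is that the tensor product $\cO\fW_{\tilde L}^{(\psi\psi_i^{-1})}\otimes_\cO\cO(\lambda_i)$, viewed as a $\La_\cO(\Gamma_{ar})$-module, is precisely the twist $M(\lambda_i)$ in the sense of \eqref{e:twistbyphi}: for $\gamma\in\Gamma_{ar}$ the action on a pure tensor $a\otimes x$ is $\gamma\cdot(a\otimes x)=\lambda_i(\gamma)\,(a\otimes\gamma x)$, exactly as described in \S\ref{su:twistmodule}. Lemma \ref{l:phi[]chi} then yields
$$\chi_{\cO,\Gamma_{ar}}\big(\cO\fW_{\tilde L}^{(\psi\psi_i^{-1})}\otimes_\cO \cO(\lambda_i)\big)=\lambda_i^*\big(\chi_{\cO,\Gamma_{ar}}(\cO\fW_{\tilde L}^{(\psi\psi_i^{-1})})\big),$$
and by Theorem \ref{t:class} the inner characteristic ideal is $(\theta_{\psi\psi_i^{-1},L_{ar},S}^\sharp)$. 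Assembling these identities gives the first formula. The second formula follows identically from the second half of Proposition \ref{p:cs}, with $\psi$ taken to be the trivial character of $H$, so that $\psi\psi_i^{-1}=\psi_i^{-1}$.

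The only delicate point, and where I expect to spend most of the verification, is the identification of $\cO\fW_{\tilde L}^{(\psi\psi_i^{-1})}\otimes_\cO\cO(\lambda_i)$ as the twist $M(\lambda_i)$ over $\Lambda_\cO(\Gamma_{ar})$: one needs to check that the $H$-components match on both sides (they do, since the $H$-action on the tensor product is $(\psi\psi_i^{-1})\psi_i=\psi$, which is consistent with extracting the $\psi$-isotypic component of $\cO\F X_p(A/\tilde L)$) and that $\lambda_i$, though a priori defined on $\tilde\Gamma$, induces the correct automorphism $\lambda_i^*$ of $\La_\cO(\Gamma_{ar})$ via the splitting $\tilde\Gamma=\Gamma_{ar}\times H$ of \eqref{e:prod2}. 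Once this bookkeeping is settled, the remainder of the proof is the straightforward chain of equalities described above, and no further analysis is needed.
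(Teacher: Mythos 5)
Your argument is exactly the paper's: it cites Proposition~\ref{p:cs}, Lemma~\ref{l:phi[]chi} and Theorem~\ref{t:class} in a one-line proof, and your proposal is a correct unwinding of that chain, including the key identification of $\cO\fW_{\tilde L}^{(\psi\psi_i^{-1})}\otimes_\cO\cO(\lambda_i)$ as the twist $M(\lambda_i)$ in the sense of \eqref{e:twistbyphi}.
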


\begin{proof} The statement follows from Proposition \ref{p:cs}, Lemma \ref{l:phi[]chi} and Theorem \ref{t:class}. \end{proof}

\begin{subsubsection}{The Stickelberger element for $A$}  \label{ss:stickA} Let $M/K$ be an extension as in \S\ref{ss:stickelb}. For $i=1,...,g$, define
\begin{equation}\label{e:thetasi}
\theta_{A,M,S,i}^+:=\Theta_{M,S}(\alpha_i^{-1})^\sharp  \in \cO[[\Gal(M/K)]]\,.
\end{equation}
This makes sense by Proposition \ref{p:Thetaconv}, because $\alpha_i$ is a unit in $\cO$. Put
\begin{equation}\label{e:theta+} \theta_{A,M,S}^+:=\prod_{i=1}^g \theta_{A,M,S,i}^+\,. \end{equation}
Note that $\theta_{A,L_{ar},S}^+\in\La(\Gamma_{ar})$, since the set $\{\alpha_1,...,\alpha_g\}$ is stable under the action of $\Gal({\bar \QQ_p}/\QQ_p)$. Define
$$\theta_{A,M,S}:=\theta_{A,M,S}^+\cdot(\theta_{A,M,S}^+)^\sharp.$$

\begin{proposition}\label{p:cs2} We have
$$\chi(X_p(A/L_{ar}))=(\theta_{A,L_{ar},S}).$$
\end{proposition}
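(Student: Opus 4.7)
The plan is to reduce Proposition \ref{p:cs2} to Proposition \ref{p:cs1} via the Frobenius decomposition of Proposition \ref{p:xffsharp}, and then to match the resulting expression with $\theta_{A,L_{ar},S}^+$ by an explicit Euler-product computation. Applying Proposition \ref{p:xffsharp} to $L = L_{ar}$ gives
$$\chi(X_p(A/L_{ar})) = \chi(\F X_p(A/L_{ar})) \cdot \chi(\F_{A^t} X_p(A^t/L_{ar}))^\sharp.$$
Since $A$ and $A^t$ are isogenous, the Frobenius endomorphisms have the same eigenvalues with the same multiplicities, so the unit eigenvalues $\alpha_i$ and the associated characters $\lambda_i$, $\psi_i$ defined by \eqref{e:lambdai} coincide for $A$ and $A^t$; in particular $\theta_{A^t,L_{ar},S}^+ = \theta_{A,L_{ar},S}^+$. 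Granting the identity $\chi(\F X_p(A/L_{ar})) = (\theta_{A,L_{ar},S}^+)$, the display above becomes
$$\chi(X_p(A/L_{ar})) = (\theta_{A,L_{ar},S}^+) \cdot (\theta_{A,L_{ar},S}^+)^\sharp = (\theta_{A,L_{ar},S}),$$
as required.

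To establish this identity, enlarge $\cO$ so that it contains all $\alpha_i$ and all values of characters of $H$. Proposition \ref{p:cs1} reads
$$\chi_{\cO,\Gamma_{ar}}\bigl(\cO \F X_p(A/L_{ar})\bigr) = \prod_{i=1}^g \bigl(\lambda_i^*(\theta_{\psi_i^{-1},L_{ar},S}^\sharp)\bigr),$$
so it suffices to identify each factor $\lambda_i^*(\theta_{\psi_i^{-1},L_{ar},S}^\sharp)$ with $\Theta_{L_{ar},S}(\alpha_i^{-1})^\sharp$. Writing $[v]_{\tilde L} = ([v]_{L_{ar}}, \rho_v)$ under $\tilde\Gamma = \Gamma_{ar} \times H$ and unravelling the definitions of $\sharp$, $\psi_i^{-1}$ and $\lambda_i^*$ term by term, the Euler factor at $v \notin S$ becomes
$$\bigl(1 - \psi_i(\rho_v)\, \lambda_i|_{\Gamma_{ar}}([v]_{L_{ar}}) \, [v]_{L_{ar}}^{-1}\bigr)^{-1}.$$
The key point is that $\lambda_i$ on $\tilde\Gamma = \Gamma_{ar} \times H$ decomposes as $\lambda_i|_{\Gamma_{ar}} \cdot \psi_i$, so the coefficient above equals $\lambda_i([v]_{\tilde L})$; and since $\lambda_i$ factors through $\Gal(K(A[p^\infty])/K)$, where $[v]_{\tilde L}$ sends to $\Fr_q^{\deg v}$, one obtains $\lambda_i([v]_{\tilde L}) = \lambda_i(\Fr_q)^{\deg v} = \alpha_i^{-\deg v}$ by \eqref{e:lambdai}. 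Hence the Euler factor equals $(1 - \alpha_i^{-\deg v} [v]_{L_{ar}}^{-1})^{-1}$, exactly the $v$-th Euler factor of $\Theta_{L_{ar},S}(\alpha_i^{-1})^\sharp$; when $\delta_S = 1$ (so $L_{ar} = \Kpinf$) the analogous computation matches the initial $(1 - \Fr_q\, u)^{\delta_S}$ factor.

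Taking the product over $v$ (and $i$) yields $\chi_\cO(\cO \F X_p(A/L_{ar})) = (\theta_{A,L_{ar},S}^+)$ inside $\La_\cO(\Gamma_{ar})$, and this equality descends to $\La(\Gamma_{ar})$ by faithful flatness of $\cO$ over $\ZZ_p$: if $c$ generates $\chi(X_p(A/L_{ar}))$ in $\La(\Gamma_{ar})$ and $(c) \La_\cO(\Gamma_{ar}) = (\theta_{A,L_{ar},S}) \La_\cO(\Gamma_{ar})$, then $c/\theta_{A,L_{ar},S}$ lies in $Q(\La(\Gamma_{ar})) \cap \La_\cO(\Gamma_{ar})^\times = \La(\Gamma_{ar})^\times$ (the last identity holds because $\cO$ is free over $\ZZ_p$, so $\La_\cO(\Gamma_{ar}) \cap Q(\La(\Gamma_{ar})) = \La(\Gamma_{ar})$). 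The main obstacle is the bookkeeping of characters under the decomposition $\tilde\Gamma = \Gamma_{ar} \times H$ and their interaction with the $\sharp$-involution and the twisting automorphism $\lambda_i^*$; once the identity $\lambda_i([v]_{\tilde L}) = \alpha_i^{-\deg v}$ is isolated, everything reduces to a routine manipulation of formal Euler products in the Tate algebra $\La_\cO(\Gamma_{ar})\langle u\rangle$.
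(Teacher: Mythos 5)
Your proof is correct and follows essentially the same route as the paper: both reduce to Propositions \ref{p:xffsharp} and \ref{p:cs1} (using that $A$ and $A^t$ share a twist matrix), and then identify $\lambda_i^*(\theta_{\psi_i^{-1},L_{ar},S}^\sharp)$ with $\Theta_{L_{ar},S}(\alpha_i^{-1})^\sharp$ by unwinding the Euler factors, the pivotal observation being that $\lambda_i^*\circ\psi_i^{-1}\circ\cdot^\sharp$ sends $[v]_{\tilde L}$ to $\lambda_i([v]_{\tilde L})[v]_{L_{ar}}^{-1}=\alpha_i^{-\deg v}[v]_{L_{ar}}^{-1}$. Your version spells out the $\cO$-to-$\ZZ_p$ descent that the paper leaves implicit, but the underlying argument is the same.
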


\begin{proof} Since $A$ and $A^t$ are isogenous, they share the same twist matrix. Thus, in view of Proposition \ref{p:xffsharp} and Proposition \ref{p:cs1}, we only need to prove the equality
$$\lambda_i^*(\theta_{\psi_i^{-1},L_{ar},S}^\sharp)=\theta_{A,L_{ar},S,i}^+\,.$$
But this is just a matter of unwinding definitions. The composition $\lambda_i^*\circ\psi_i^{-1}\circ\cdot^\sharp \colon\tilde\Gamma\lr \La_{\cO}(\Gamma_{ar})$ sends $\gamma=(\gamma_H,\gamma_{\Gamma_{ar}})$ to $\lambda_i(\gamma)\gamma_{\Gamma_{ar}}^{-1}$. In particular,
$$(\lambda_i^*\circ\psi_i^{-1}\circ\cdot^\sharp)(1-[v]_{\tilde L}\cdot u^{\deg(v)})=1-\lambda_i([v]_{\tilde L})[v]_{L_{ar}}^{-1} u^{\deg(v)}=1-[v]_{L_{ar}}^{-1}(\alpha_i^{-1} u)^{\deg(v)}\,,$$
which implies
$$\lambda_i^* \big(\psi_i^{-1}(\Theta_{\tilde L,S}(1)^\sharp)\big)= \Theta_{L_{ar},S}(\alpha_i^{-1})^\sharp =\theta_{A,L_{ar},S,i}^+\,.$$
\end{proof}

\end{subsubsection}
\end{subsection}

\begin{subsection}{$p$-adic interpolation of the $L$-function} \label{s:interpol}

Recall that the $L$-function of $A$ is
$$L(A,s):=\prod_v P_v(A,q_v^{-s})^{-1}=\prod_v\det(1- q_v^{-s}[v]|_{T_\ell A})^{-1},$$
where $[v]$ is the arithmetic Frobenius at $v$, acting on the $\ell$-adic ($\ell\neq p$) Tate module $T_\ell A$.
Let $\omega\colon\Gamma\rightarrow \boldsymbol\mu_{p^{\infty}}$ be a continuous character: the twisted $L$-function is then
$$L_S(A,\omega,s):=\prod_{v\notin S} P_v(A,\omega([v]_L)q_v^{-s})^{-1}.$$

Before stating the interpolation formula relating $\theta_{A,L,S}$ with $L_S(A,\omega,1)$, we need to introduce some notation. Recall the numbers $\alpha_i,\beta_i$ introduced in \S\ref{ss:twist}: in the following, we fix an embedding of the set $\{\alpha_i,\beta_i\}$ in $\CC$.
Let $S_\omega\subseteq S$ be the set of places where $\omega$ ramifies, $K_\omega$ the fixed field of $\Ker(\omega)$ and put
$$\Xi_{S,\omega}:=\prod_{i=1}^g\prod_{v\in S-S_\omega}\frac{1-\omega([v]_{K_\omega})^{-1}\alpha_{i,v}^{-1}}{1-\omega([v]_{K_\omega})\beta_{i,v}^{-1}}\in\CC$$
(where $\alpha_{i,v}:=\alpha_i^{\deg(v)}$ and $\beta_{i,v}:=\beta_i^{\deg(v)}=q_v\alpha_{i,v}^{-1}$). Also, set
$$\Delta_S:=\prod_{i=1}^g(1-\alpha_i^{-1}\Fr_q^{-1})^{\delta_S}(1-\alpha_i^{-1}\Fr_q)^{\delta_S}\in\CC[\Gamma]\,,$$
where $\delta_S\in\{0,1\}$ is the same as in \eqref{e:Thetaprod}.

Denote by $\kappa$ the genus of $K$ (that is, the genus of the corresponding curve $C/\FF$) and by $d_\omega$ the degree of the conductor of $\omega$. Fix an additive character $\Psi\colon{\bf A}_K/K\rightarrow\boldsymbol\mu_{p^\infty}$ on the adele classes of $K$ and let $b=(b_v)$ be a differental idele attached to $\Psi$ (\cite[p.~113]{We73}) and, for every place $v$, let $\alpha_v$ be the self-dual Haar measure on $K_v$ with reference to $\Psi_v$. Then define
$$\tau(\omega):=\begin{cases}
\displaystyle \omega(\Fr_q)^{2-2\kappa} & \text{if } \omega \text{ factors through } \Gal(K\bar\FF/K) \\ {}\\
\displaystyle \frac{1}{\omega(b)}\prod_{v\in S_\omega}\frac{1}{|b_v|_v^{1/2}}\int_{O_v^\times}\omega_v(x)\Psi_v(b_v^{-1}x)d\alpha_v(x) & \text{otherwise} \end{cases}$$
where $O_v$ is the ring of integers of $K_v$.

\begin{mytheorem} \label{t:interpo} The element $\theta_{A,L,S}$ interpolates the $L$-function of $A$: for any continuous character $\omega\colon\Gamma\rightarrow \boldsymbol\mu_{p^\infty}$ we have
\begin{equation} \label{e:interpo}
\omega(\theta_{A,L,S})=\tau(\omega)^g\big(q^{g/2}\prod_{i=1}^g\alpha_i^{-1}\big)^{2\kappa-2+d_\omega}\,\omega(\Delta_S)\,\Xi_{S,\omega}\, L_S(A,\omega,1)\,.
\end{equation}
\end{mytheorem}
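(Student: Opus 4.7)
\textbf{Proof Plan for Theorem \ref{t:interpo}.}

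The plan is to unwind the definition $\theta_{A,L,S} = \theta_{A,L,S}^+ \cdot (\theta_{A,L,S}^+)^\sharp = \prod_{i=1}^g \Theta_{L,S}(\alpha_i^{-1})^\sharp \cdot \Theta_{L,S}(\alpha_i^{-1})$, apply $\omega$ term by term using Lemma \ref{l:evalTheta}, and then convert the ``wrong-side'' $L$-values via the functional equation for Hecke $L$-functions. First, since $|\alpha_i|_p = 1$ by \eqref{e:alphabeta}, Lemma \ref{l:evalTheta} applies with $s_0 = \log_q \alpha_i$ (complex absolute value $q^{1/2}$ causes no convergence issue on the archimedean side, as $\Theta_{L,S}$ is computed $p$-adically). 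Noting that $\omega(\lambda^\sharp) = \omega^{-1}(\lambda)$, we obtain
$$\omega(\theta_{A,L,S}) = \prod_{i=1}^g L_S^*(\omega,\log_q \alpha_i)\,L_S^*(\omega^{-1},\log_q \alpha_i).$$

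Next, using the definition \eqref{e:L(S,omega)} of $L_S^*$, we factor out the Euler factors at places of $S - S_\omega$ and the $\delta_S$-term, whose combined contribution (over $i$ and summing the $\omega$ and $\omega^{-1}$ pieces) is precisely $\omega(\Delta_S)$ together with $\prod_i \prod_{v \in S - S_\omega}(1-\omega([v]_L)\alpha_{i,v}^{-1})(1-\omega([v]_L)^{-1}\alpha_{i,v}^{-1})$. The remaining kernel is $\prod_i L(\omega,\log_q \alpha_i) L(\omega^{-1}, \log_q \alpha_i)$, where $L(\omega,s)$ is the usual Hecke $L$-function \eqref{e:L(omega)}. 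I would then invoke Weil's functional equation over the function field $K$, which takes the form
$$L(\omega^{-1},s) = W(\omega^{-1})\, q^{(2\kappa-2+d_\omega)(1/2-s)}\, L(\omega, 1-s),$$
specialized at $s = \log_q \alpha_i$; since $q^{1-s} = q/\alpha_i = \beta_i$ we find $L(\omega^{-1},\log_q \alpha_i) = W(\omega^{-1})(q^{1/2}\alpha_i^{-1})^{2\kappa-2+d_\omega}L(\omega,\log_q\beta_i)$, and taking the product over $i=1,\ldots,g$ yields the factor $\tau(\omega)^g (q^{g/2}\prod \alpha_i^{-1})^{2\kappa-2+d_\omega}$, where $\tau(\omega) := W(\omega^{-1})$ is the root number (which I must identify with the epsilon factor defined in the statement using the additive character $\Psi$ and the differential idele $b$, via Tate's local computation of $\varepsilon$-factors).

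Once the functional equation has been applied $g$ times, we are left with $\prod_i L(\omega,\log_q\alpha_i)L(\omega,\log_q\beta_i)$. Since $\{\alpha_{1,v}, \beta_{1,v}, \ldots, \alpha_{g,v}, \beta_{g,v}\}$ is exactly the multiset of eigenvalues of arithmetic Frobenius $[v]$ on $T_\ell A$, each local factor $\prod_i (1-\omega([v]_L)\alpha_{i,v}^{-1})^{-1}(1-\omega([v]_L)\beta_{i,v}^{-1})^{-1}$ recombines into $P_v(A,\omega([v]_L)q_v^{-1})^{-1}$, and thus the product over all $v \notin S_\omega$ equals $L_{S_\omega}(A,\omega,1)$. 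Passing from $L_{S_\omega}$ to $L_S$ introduces an extra factor $\prod_i \prod_{v \in S - S_\omega}(1-\omega([v]_L)\beta_{i,v}^{-1})^{-1}(1-\omega([v]_L)\alpha_{i,v}^{-1})^{-1}$; combining this with the Euler contribution kept aside above, two of the four factors cancel and what remains is exactly $\Xi_{S,\omega}$ (after noting $\omega([v]_L) = \omega([v]_{K_\omega})$ since $\omega$ factors through $K_\omega$). Assembling everything yields the claimed formula.

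The main obstacle will be the identification $\tau(\omega) = W(\omega^{-1})$, that is, matching the root number that naturally appears via Weil's functional equation with the concrete formula given in the statement (local integrals against $\Psi_v$, normalized by the differential idele). This requires a careful appeal to Tate's local functional equation and the product formula for $\varepsilon$-factors, and also requires separate treatment of the unramified case $\omega \in \Gal(K\bar\FF/K)^\vee$ (where $d_\omega = 0$ and the functional equation degenerates, accounting for the alternative definition $\tau(\omega) = \omega(\Fr_q)^{2-2\kappa}$ via the $P_K(\omega(\Fr_q))$ form of the zeta function of $K$ twisted by $\omega$).
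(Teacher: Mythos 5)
Your proposal is correct and follows essentially the same route as the paper's proof: expand $\theta_{A,L,S}=\prod_i\Theta_{L,S}(\alpha_i^{-1})^\sharp\,\Theta_{L,S}(\alpha_i^{-1})$, evaluate at $\omega$ via Lemma \ref{l:evalTheta} (using $s_i$ with $\alpha_i=q^{s_i}$), peel off the $\delta_S$-term and the $S-S_\omega$ Euler factors to get $\omega(\Delta_S)$ and part of $\Xi_{S,\omega}$, apply Weil's functional equation to the $\omega^{-1}$-pieces to pick up $\tau(\omega)^g(q^{g/2}\prod\alpha_i^{-1})^{2\kappa-2+d_\omega}$ and the remaining half of $\Xi_{S,\omega}$, and finally recombine $\prod_i L_S(\omega,s_i)L_S(\omega,1-s_i)$ into $L_S(A,\omega,1)$ via the factorization of $P_v(A,x)$ into the $\alpha_{i,v},\beta_{i,v}$ eigenvalues. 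The one place you flag as "the main obstacle" -- identifying the epsilon factor $\tau(\omega)$ defined via the local integrals and differental idele with the constant $W(\omega^{-1})$ in the functional equation -- is handled in the paper by simply quoting Weil's Theorems 4 and 6 of Chapter VII of \emph{Basic Number Theory}, which already give the functional equation in precisely this normalization (including the degenerate unramified case $d_\omega=0$, where the formula reduces to $\omega(\Fr_q)^{2-2\kappa}$); so no further local $\varepsilon$-factor computation is needed beyond the citation.
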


\begin{proof} By definition, recalling \eqref{e:thetasi}, we have
\begin{equation} \label{e:thetaAprod}
\theta_{A,L,S}=\prod_{i=1}^g\theta_{A,L,S,i}^+(\theta_{A,L,S,i}^+)^\sharp= \prod_{i=1}^g\Theta_{L,S}(\alpha_i^{-1})^\sharp \, \Theta_{L,S}(\alpha_i^{-1})\,.
\end{equation}
For every $i$ fix $s_i\in\CC$ such that $\alpha_i=q^{s_i}$, so that $\beta_i=q^{1-s_i}$. Lemma \ref{l:evalTheta} yields
$$\omega\big(\Theta_{L,S}(\alpha_i^{-1})\big)=L_S^*(\omega,s_i)$$
and
$$\omega\big(\Theta_{L,S}(\alpha_i^{-1})^\sharp\big)=L_S^*(\omega^{-1},s_i)\,,$$
because $\omega(\lambda^\sharp)=\omega^{-1}(\lambda)$ for any $\lambda\in\La$.
It is well known that the $L$-function satisfies the functional equation
\begin{equation}\label{e:functeqL} L(\omega^{-1},s)=\tau(\omega)q^{(\frac{1}{2}-s)(2\kappa-2+d_\omega)}L(\omega,1-s) \end{equation}
(see e.g.~\cite[VII, Theorems 4 and 6]{We73}). Defining
$$L_S(\omega^{-1},s):= L(\omega^{-1},s) \prod_{v\in S-S_\omega}(1-\omega([v]_{F_\omega})^{-1}q_v^{-s})=(1-\omega(\Fr_q)^{-1}q^{-s})^{-\delta_S}L_S^*(\omega^{-1},s)\,, $$
formula \eqref{e:functeqL} implies
$$L_S(\omega^{-1},s_i)=\tau(\omega)(q^{1/2}\alpha_i^{-1})^{2\kappa-2+d_\omega} \prod_{v\in S-S_\omega}\frac{1-\omega([v]_{F_\omega})^{-1}\alpha_{i,v}^{-1}}{1-\omega([v]_{F_\omega})\beta_{i,v}^{-1}} \cdot L_S(\omega,1-s_i)\,.$$
Putting everything together, we have obtained
$$\omega(\theta_{A,L,S})=\tau(\omega)^g\big(q^{g/2}\prod_{i=1}^g\alpha_i^{-1}\big)^{2\kappa-2+d_\omega}\,\omega(\Delta_S)\,\Xi_S(\omega)\prod_{i=1}^g L_S(\omega,s_i)\,L_S(\omega,1-s_i)\,.$$
On the other hand,
\begin{eqnarray}  L_S(A,\omega,1) &=& \prod_{v\notin S}\prod_{i=1}^g \big(1-\omega([v]_L)\beta_{i,v}q_v^{-1}\big)^{-1}\big(1-\omega([v]_L)\alpha_{i,v}q_v^{-1}\big)^{-1}\\
 \label{e:decomposLA} {}&=&\prod_{i=1}^g L_S(\omega,s_i)\,L_S(\omega,1-s_i) \,,\end{eqnarray}
since, by \cite[Corollary 4.37]{mazur72},
$$P_v(A,x)=\prod_{i=1}^g (1-\alpha_{i,v}x)\big(1-\beta_{i,v}x).$$
\end{proof}

\begin{remark} {\em The function $L_S(A,\omega,s)$ cannot have a pole at $s=1$: this follows from \eqref{e:decomposLA} and \eqref{e:rational}, using the fact that the $\alpha_i$'s cannot be roots of 1 (since, as eigenvalues of the Frobenius endomorphism of $A$, the $\alpha_i$'s are Weil $q$-numbers).} \end{remark}

\subsubsection{The $p$-adic $L$-function for $L=\Kpinf$} In the case $S=\emptyset$ (that is, $L$ is the arithmetic extension $\Kpinf$), formula \eqref{e:interpo} can be improved a little. We have $\Xi_{\emptyset,\omega}=1$ and $\delta_\omega=0$ for all $\omega$, while
$$\Delta_\emptyset=\prod_{i=1}^g(1-\alpha_i^{-1}\Fr_q^{-1})(1-\alpha_i^{-1}\Fr_q)$$
can be seen as an element in $\La$, because the $\alpha_i$'s are the eigenvalues of the twist matrix $\bo u\in GL_g(\ZZ_p)$.
Define
\begin{equation} \label{e:cLtilde}
\tilde{\cL}_{A/\Kpinf}:=\frac{1}{q^{g(\kappa-1)}\Delta_\emptyset}\big(\prod_{i=1}^g\alpha_i\big)^{2\kappa-2}\Fr_q^{g(2\kappa-2)}\theta_{A,\Kpinf,\emptyset}\in Q(\La)\,, \end{equation}
where $Q(\La)$ is the fraction field of $\La$. By Theorem \ref{t:interpo} we see that
\begin{equation} \label{e:interpoLtilde} \omega(\tilde{\cL}_{A/\Kpinf})=L(A,\omega,1) \end{equation}
for all characters of $\Gamma$. We can think of $\tilde{\cL}_{A/\Kpinf}$ as the value at $s=1$ of
$$\tilde{\cL}_{A/\Kpinf}(s):=\prod_{i=1}^g\prod_v \big(1-[v]_L\alpha_{i,v}q_v^{-s}\big)^{-1} \big(1-[v]_L\beta_{i,v}q_v^{-s}\big)^{-1} \in\La[[q^{-s}]].$$

In section \ref{su:padicL} below the reader will find a different $p$-adic $L$-function $\cL_{A/\Kpinf}$, defined in the case when $L$ is the arithmetic extension $\Kpinf$ and $A/K$ has semistable reduction. Theorem \ref{GIMC2} will prove that $\tilde{\cL}_{A/L}=\cL_{A/L}$ when $A$ is a constant abelian variety.

\end{subsection}


\begin{subsection}{Proof of the Main Conjecture} \label{su:MCconstant}

Proposition \ref{p:cs2} proves the Iwasawa Main Conjecture (i.e., the analogue of {\bf (IMC3)}) for $A/K$ a constant abelian variety when $L=L_{ar}$. To deal with the general case, we apply the following theorem.
For $M'\subset M$ two Galois extensions of $K$, let $p_{M/M'}\colon\ZZ_p[[\Gal(M/K)]]\rightarrow\ZZ_p[[\Gal(M'/K)]]$ denote the natural map of Iwasawa algebras. We write $\chi_M$, $\chi_{M'}$ for characteristic ideals in $\ZZ_p[[\Gal(M/K)]]$, $\ZZ_p[[\Gal(M'/K)]]$ respectively.

\begin{mytheorem}[Tan] \label{t:descent} Let $M/K$ be a $\ZZ_p^e$-extension, ramified at finitely many places, and $M'/K$ a $\ZZ_p^{e-1}$-subextension, $e\geq2$. Define $\vartheta_{M/M'}, \varrho_{M/M'}\in\ZZ_p[[\Gal(M'/K)]]$ by
\begin{equation} \label{e:thetaM/M'} \vartheta_{M/M'}:=\prod_{v\in S_{M/M'}}\prod_{i=1}^g(1-\alpha_{i,v}^{-1}[v]_{M'})(1-\alpha_{i,v}^{-1}[v]_{M'}^{-1}) \end{equation}
{\em{(}}where $ S_{M/M'}$ is the set of places ramified in $M/K$ but not in $M'/K${\em{)}} and
\begin{equation} \label{e:rhoM/M'} \varrho_{M/M'}:=\begin{cases}
\displaystyle \prod_{i=1}^g(1-\alpha_i^{-1}\Fr_q)(1-\alpha_i^{-1}\Fr_q^{-1}) & \text{when } M'=\Kpinf \\
1 & \text{otherwise}.  \end{cases}\end{equation}
Then
\begin{equation} \label{e:descent}
\vartheta_{M/M'}\cdot\chi_{M'}(X_p(A/M'))=\varrho_{M/M'}\cdot p_{M/M'}\big(\chi_M(X_p(A/M))\big)\,.
\end{equation}
\end{mytheorem}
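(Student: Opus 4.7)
The plan is to carry out a descent analysis along the $\ZZ_p$-extension $M/M'$, combining a standard control-theorem approach with an explicit computation of local and global error factors. Set $G := \Gal(M/M')$ with topological generator $\gamma_0$, and consider the restriction map $r\colon \Sel_{p^\infty}(A/M') \to \Sel_{p^\infty}(A/M)^G$. A Hochschild--Serre / Poitou--Tate diagram together with the snake lemma identifies the kernel and cokernel of $r$ in terms of two types of contributions: the global cohomology groups $\coh^i(G, A_{p^\infty}(M))$ for $i=1,2$, and the local kernels at the places of $M'$ lying over $S_{M/M'}$. Dualizing yields an exact sequence of $\La(\Gal(M'/K))$-modules of the form
$$(\text{loc})^\vee \lr X_p(A/M)_G \lr X_p(A/M') \lr (\text{glob})^\vee \lr 0$$
to which multiplicativity of characteristic ideals applies.

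The standard descent identity $\chi_{M'}(X_p(A/M)_G) = p_{M/M'}(\chi_M(X_p(A/M)))$ will hold provided $X_p(A/M)^G$ is pseudo-null as a $\La(\Gal(M'/K))$-module, which follows from Proposition \ref{p:shatate} together with an argument analogous to Lemma \ref{l:a00}. It therefore suffices to identify the characteristic ideals of the local and global error terms with $\vartheta_{M/M'}$ and $\varrho_{M/M'}$, respectively.

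For the local contribution at $v \in S_{M/M'}$: since $A$ is constant ordinary, $A_{p^\infty}$ admits a connected-\'etale decomposition in which the Frobenius at $v$ acts on the \'etale part (resp.\ on the Cartier dual of the connected part) with eigenvalues $\alpha_{i,v}$ (resp.\ $\beta_{i,v}=q_v\alpha_{i,v}^{-1}$). Tate local duality together with an explicit Euler-characteristic computation then produces the factor $\prod_{i=1}^g (1-\alpha_{i,v}^{-1}[v]_{M'})(1-\alpha_{i,v}^{-1}[v]_{M'}^{-1})$ at each such $v$; the product over $v \in S_{M/M'}$ is $\vartheta_{M/M'}$. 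For the global contribution: if $M' \not\supseteq \Kpinf$, then a short Galois-theoretic argument shows that $A_{p^\infty}(M)$ is finite (since the field of definition of $A[p^n]$ is an arithmetic extension), so $\coh^i(G,A_{p^\infty}(M))$ is finite hence pseudo-null, consistent with $\varrho_{M/M'}=1$. If instead $M' = \Kpinf$, then $\Fr_q$ acts on $A_{p^\infty}(M)\simeq A_{p^\infty}(\bar\FF)$ through the twist matrix $\bo u^{-1}$ with eigenvalues $\alpha_i^{-1}$, and a direct computation of the Herbrand quotient of $A_{p^\infty}(M)$ under $G$ yields that the characteristic ideal associated with $\coh^1(G,A_{p^\infty}(M))$ and $\coh^2(G,A_{p^\infty}(M))$ (as $\La(\Gal(\Kpinf/K))$-modules via the $\Fr_q$-action) is generated by $\prod_i (1-\alpha_i^{-1}\Fr_q)(1-\alpha_i^{-1}\Fr_q^{-1})$, matching exactly $\varrho_{M/M'}$.

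The main obstacle will be the careful tracking of pseudo-null contributions and the interface between the control theorem (which generally yields only finiteness of kernels and cokernels) and the precise characteristic-ideal identities required here; in particular, one must verify that everything not explicitly absorbed into $\vartheta_{M/M'}$ or $\varrho_{M/M'}$ is genuinely pseudo-null rather than merely of bounded size. Once this bookkeeping is handled, applying multiplicativity of characteristic ideals to the four-term exact sequence above immediately yields \eqref{e:descent}.
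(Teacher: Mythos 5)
The paper does not prove Theorem~\ref{t:descent} at all: it is attributed to Tan and simply cited as a special case of \cite[Theorem 1]{tan10b}, with the one-line remark that good reduction everywhere collapses the general local factors $\vartheta_v$ to those coming only from changes in the ramification set. So your proposal is a genuinely different route — an attempt at a direct descent proof where the paper offers only a reference. Unfortunately the sketch has several concrete gaps, concentrated precisely where the real work would have to be done.

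First, your claim that ``if $M' \not\supseteq \Kpinf$ then $A_{p^\infty}(M)$ is finite'' is false: nothing prevents $M \supseteq \Kpinf$ even when $M'$ does not contain $\Kpinf$, and then $A_{p^\infty}(M) \supseteq A_{p^\infty}(\Kpinf)$, which is typically infinite. What is actually true in that case is that $G = \Gal(M/M')$ acts nontrivially on the divisible part of $A_{p^\infty}(M)$, so $\coh^1(G,A_{p^\infty}(M))$ is \emph{finite} by the same reasoning as Lemma~\ref{l:h1d} — but you must argue via the nontriviality of the $G$-action on the arithmetic direction, not via finiteness of $A_{p^\infty}(M)$. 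Second, you do not address the case $M' \supsetneq \Kpinf$ with $e-1\geq 2$, where $G$ \emph{does} act trivially on $A_{p^\infty}(M)$ and $\coh^1(G,A_{p^\infty}(M))$ is infinite; here $\varrho_{M/M'}=1$ only because this module, viewed over $\La(\Gal(M'/K))$ with $\Gal(M'/K)$ of rank $\geq 2$, is supported on a single arithmetic direction and hence pseudo-null. Third, and most seriously, your account of the case $M'=\Kpinf$ cannot produce the full factor $\prod_i(1-\alpha_i^{-1}\Fr_q)(1-\alpha_i^{-1}\Fr_q^{-1})$: since $G\simeq\ZZ_p$ has cohomological dimension $1$, $\coh^2(G,A_{p^\infty}(M))=0$, and with $G$ acting trivially $\coh^1(G,A_{p^\infty}(M))\simeq A_{p^\infty}(\Kpinf)$, whose dual has characteristic ideal $\prod(1-\zeta_i^{-1}\Fr_q^{-1})$ by \eqref{e:fvee} — i.e.\ only \emph{one} of the two factors. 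The second factor $\prod(1-\zeta_i^{-1}\Fr_q)$ matches $f_{T_p(A^t(\Kpinf))}$ from \eqref{e:ftp} and must arise from a dual/compact Selmer contribution elsewhere in the Poitou--Tate sequence, not from the ``Herbrand quotient of $A_{p^\infty}(M)$.'' Until that source is identified correctly, the global error bookkeeping does not close.
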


This is just a special case of \cite[Theorem 1]{tan10b}: since $A$ has good reduction everywhere, the only terms appearing in $\prod_v\vartheta_v$ of {\em loc.~cit.} are the ones coming from changes in the set of ramified places, i.e., $\vartheta_{M/M'}$ as defined in \eqref{e:thetaM/M'}.

\begin{proof}[{\bf Proof of Theorem \ref{t:imc3constant}}] By Proposition \ref{p:cs2}, $\chi(X_p(A/L_{ar}))=(\theta_{A,L_{ar},S'})$.
If $L=L_{ar}$ then the result is already proved. Otherwise, apply Theorem \ref{t:descent} with $M=L_{ar}$ and $M'=L$. Then obviously $S_{M/M'}=\emptyset$, and hence $\vartheta_{M/M'}=1$.
Also, $\varrho_{L_{ar}/L}=1$ since $\Kpinf$ is not contained in $L$. Therefore \eqref{e:descent} implies the desired equality
$$\chi(X_p(A/L))=(\theta_{A,L,S})$$
as $p_{L_{ar}/L}(\theta_{A,L_{ar},S})=\theta_{A,L,S}\,$.
\end{proof}

\end{subsection}
\end{section}
\end{part}


\begin{part}{The case of the arithmetic $\ZZ_p$-extension} \label{part:semistable}

We keep the setting of Part \ref{part:constantA}: $K$ is a function field and $\FF$ its constant field, with $|\FF|=q$.

\section{Syntomic cohomology of abelian varieties} \label{s:genord}

As announced in the introduction, if we take as $L$ the arithmetic $\ZZ_p$-extension we can deal with all semistable abelian varieties and not just the constant ones. Thus in this part we let $A/K$ be an abelian variety with at worst semistable reduction, but we impose the restriction that $L=\Kpinf$.

\subsubsection{The arithmetic tower} We fix the notations for Part \ref{part:semistable}. For any $n\geq 0$, let $k_n/\FF$ be the
$\ZZ/p^n\ZZ$-extension of $\FF$ and let $\kpinf:=\cup_{n\geq 0} k_n$ denote the induced $\ZZ_p$-extension of $\FF$. Thus our tower becomes $K_n:=Kk_n$ and $L=\Kpinf:=K\kpinf$.
Since they are canonically isomorphic, we identify $\Gamma$, $\Gamma_n$ and $\Gamma^{(n)}$ with $\Gal(\kpinf/\FF)$, $\Gal(k_n/\FF)$ and $\Gal(\kpinf/k_n)$ respectively. As in Part \ref{part:constantA}, we denote by $\Fr_q$ the generator of $\Gal(\kpinf/\FF)$, $x\mapsto x^q$.

Recall the smooth proper geometrically connected curve $C/\FF$ which is the model of $K$ over $\FF$. Let $C_\infty:=C\times_{\FF} \kpinf$ and $C_n:=C\times_{\FF}k_n$. Let $\pi\colon C_\infty\to C$ and $\pi_n\colon C_n\to C$ denote the \'etale covering with Galois group $\Gamma$ and $\Gamma_n$ respectively. By abuse of notation, we will also denote by $\pi$ and  $\pi_n$ the associated morphisms in the log crystalline topos (\cite{bbm82}).

\subsection{The cohomology} \label{su:cohom}

\subsubsection{The Dieudonn\'e crystal} Let $\cal A$ denote the N\'eron model of $A$ over $C$. Let $U$ be the dense open subset of $C$ where $A$ has good reduction and $Z:=C- U$ the finite (possibly empty) set of points where $A$ has bad (at worst semistable) reduction. We endow $C$ with the log structure induced by the smooth divisor $Z$ and denote $C^\#$ this log-scheme. Let $D$ be the (covariant) log Dieudonn\'e crystal over $C^\#/W(\FF)$ associated with $A/K$ as constructed in \cite[IV]{KT03}. Recall the
following theorem of \cite{KT03}:

\begin{mytheorem} \label{Lie(D)} {\em(}\cite[\S 5.4(b) and \S 5.5]{KT03}{\em)} Let $i$ be the canonical morphism of topoi of \cite{bbm82} from the topos of sheaves on $C_{{\acute{e}t}}$ to the log crystalline topos $\left(C^\#/W(\FF)\right)_{crys}$.
There exists a surjective map of sheaves $D\to i_*(Lie({\cal A}))$ in $\left(C^\#/W(\FF)\right)_{crys}$.
\end{mytheorem}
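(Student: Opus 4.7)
The plan is to obtain the map from the Hodge-type quotient that $D$ carries on the trivial PD-thickening, promote it to a morphism in the log crystalline topos via the crystal property, and then verify surjectivity locally.

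First I would analyse $D$ evaluated on the trivial thickening $(C^\#, C^\#)$. In the classical (non-logarithmic) setting, the theory of Mazur--Messing and Berthelot--Breen--Messing associates to an abelian scheme a Dieudonn\'e crystal whose value at the trivial thickening fits in a short exact sequence
$$0 \to \omega \to D_{C^\#} \to Lie(\A) \to 0,$$
with $\omega$ the appropriate conormal/cotangent sheaf; the right-hand arrow is the desired surjection. In the log-semistable setting of [KT03, IV] one expects exactly the same sequence, with the log structure on $C^\#$ intervening to make sense of the terms at the bad-reduction locus $Z$, where $\A$ is only semi-abelian and $\omega$ must be replaced by a suitable log cotangent sheaf.

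Second, I would promote this to a morphism of sheaves on the crystalline site. Since $D$ is a crystal, for any object $(U', T, \delta)$ of $(C^\#/W(\FF))_{crys}$ the value $D(T)$ is canonically the pullback of $D_{C^\#}$ along the structural morphism $T \to C^\#$. By construction of the functor $i_*$, the sheaf $i_*(Lie(\A))$ sends $(U',T,\delta)$ to the pullback of $Lie(\A)$ along the same morphism. Pulling back the local surjection and checking compatibility with transition maps between thickenings then yields a morphism of crystalline sheaves $D \to i_*(Lie(\A))$; surjectivity is local and can be tested on the trivial thickening, where it is tautological.

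The main obstacle is the behaviour at the semistable points $Z$. There $\A$ is only a semi-abelian scheme, so $Lie(\A)$ contains a toric contribution, and the log Dieudonn\'e crystal $D$ is assembled from the connected and \'etale pieces of the $p$-divisible group of the Raynaud extension of $\A^\circ$. One must verify that the log Hodge filtration really produces $Lie(\A)$ on the nose --- not the Lie algebra of the abelian quotient only. Checking this compatibility is where the log structure on $C^\#$ plays an essential role; I would expect to need a careful local analysis at each $z \in Z$ using the explicit description of $D$ via the log $p$-divisible group, and this is presumably where the bulk of the work referenced in [KT03, \S 5.4(b) and \S 5.5] lies.
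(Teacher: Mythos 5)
The paper gives no proof of this statement: the theorem is imported verbatim from \cite[\S5.4(b) and \S5.5]{KT03}, so there is nothing internal to compare your argument against. That said, your reconstruction is the natural one and matches the mechanism in the cited reference: the map is the Hodge quotient, extended to the log crystalline site via the crystal property.

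One technical imprecision is worth flagging. You invoke the crystal property to say that $D(T)$ is the pullback of $D_{C^\#}$ along ``the structural morphism $T\to C^\#$'', but in general no such morphism exists (a PD-thickening $(U,T,\delta)$ comes equipped with $U\to C$, not $T\to C$). What the crystal property does give is an isomorphism $j^{\ast}D_{(U,T)}\simeq D_{C^\#}|_U$, where $j\colon U\hookrightarrow T$ is the closed immersion. This is in fact exactly what one needs: since $\bigl(i_{\ast}Lie(\A)\bigr)_{(U,T)}=Lie(\A)|_U$ is supported on $U$, any $\cO_T$-linear map from $D_{(U,T)}$ to it factors canonically through $j^{\ast}D_{(U,T)}$; composing the above isomorphism with the restriction of the Hodge quotient yields the desired morphism, and surjectivity follows from surjectivity on the trivial thickening together with right-exactness of $j^{\ast}$. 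You correctly isolate the substantive issue --- showing that the log Hodge quotient over the semistable locus $Z$ produces all of $Lie(\A)$, including the toric contribution, and not only the Lie algebra of the abelian quotient --- and that verification is precisely the content of \cite[\S5.4(b), \S5.5]{KT03}, resting on the construction of the log Dieudonn\'e crystal in \cite[IV]{KT03} and de Jong's theory of log finite group schemes.
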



\subsubsection{A distinguished triangle} We denote by $D^0$ the kernel of $D\rightarrow i_{\ast}(Lie({\cal A}))$ in the topos $(C^\#/W(\FF))_{crys}$. Let ${\bf 1}\colon D^0\rightarrow D$ be the natural inclusion. By applying the canonical projection $u_{\ast}$ from the log crystalline topos $(C^\#/W(\FF))_{crys}$ to the topos of sheaves on $C_{\acute{e}t}$, we get a distinguished triangle:
$$Ru_{\ast} D^0 \overset{\bf 1}{\lra}  Ru_{\ast} D\lra Lie({\cal A}).$$
We can twist this triangle by the divisor $Z$ to get a triangle:
\begin{equation}\label{T1}
Ru_{\ast} D^0 (-Z) \overset{\bf 1}{\lra}  Ru_{\ast} D (-Z) \lra Lie({\cal A})(-Z).
\end{equation}
where $D(-Z)$ is the twist of the log Dieudonn\'e crystal $D$ defined in \cite[\S 5.11]{KT03}.

\subsubsection{The syntomic complex} In \cite[\S 5.8]{KT03}, a Frobenius operator
$$\varphi\colon Ru_*D^0(-Z)\lra Ru_{\ast}D(-Z)$$
is constructed. We denote by ${\cS}_D$ the mapping fiber of the map
$${\bf 1}-\varphi\colon Ru_*D^0(-Z) \lra Ru_*D(-Z).$$
This complex is an object in the derived category of complexes of sheaves over $C_{\acute{e}t}$ and we have a distinguished triangle:
\begin{equation}\label{T2} \begin{CD}
\cS_D \lra Ru_{\ast} D^0(-Z) @>{{\bf 1}-\varphi}>> Ru_{\ast}D(-Z). \end{CD}
\end{equation}

\subsubsection{The cohomology theories} \label{ss:cohomtheories}


%


We define the following modules:
\begin{enumerate}
\item Let
$$P^i_n:=\coh^i_{\mathrm{crys}}(C_n^\#/W(k_n),\pi_n^*D(-Z))\,.$$
Then for any $n$, $P^i_n$ is a finitely generated $W(k_n)$-module endowed with a $\Fr_q$-linear operator $F_{i,n}$ induced by the Frobenius operator of the Dieudonn\'e crystal. Using the (log) crystalline base change by the morphism of topoi $\pi_n\colon(C_\infty^\#/W(k_n))_{crys}\to (C^\#/W(\FF))_{crys}$
(\cite[\S2.5.2]{Ka94}) and by flatness of the extensions $W(k_n)/\ZZ_p$, we have,
for $n\geq 1$,
\begin{equation} \label{e:pi0n} P^i_n\simeq P^i_0\otimes W(k_n)\,. \end{equation}
These isomorphisms identify the $\Fr_q$-linear operator $F_{i,n}$ on the left hand side with the $\Fr_q$-linear operator $F_{i,0}\otimes\Fr_q$ on the right hand side.

\item Let $M_{1,\infty}^i$ be the $i$th cohomology group of $$\RR\Gamma_{crys}(C_\infty^\#/W(\kpinf),\pi^*D^0(-Z))\otimes^{\mbb{L}} \QQ_p/\ZZ_p.$$
\item Let $M_{2,\infty}^i$ be the $i$th cohomology group of $$\RR\Gamma_{crys}(C_\infty^\#/W(\kpinf),\pi^*D(-Z))\otimes^{\mbb{L}} \QQ_p/\ZZ_p.$$
\item Let $M_{1,n}^i$ be the $i$th cohomology group of $$\RR\Gamma_{crys}(C_n^\#/W(k_n),\pi_n^*D^0(-Z))\otimes^{\mbb{L}} \QQ_p/\ZZ_p.$$
\item Let $M_{2,n}^i$ be the $i$th cohomology group of $$\RR\Gamma_{crys}(C_n^\#/W(k_n),\pi^*_nD(-Z))\otimes^{\mbb{L}} \QQ_p/\ZZ_p.$$

Again by the base change theorem, we have, for $k=1,2$, an isomorphism of torsion $W(\kpinf)$-modules
\begin{equation} \label{e:mi0infty} M^i_{k,\infty}\simeq M^i_{k,0}\otimes W(\kpinf) \end{equation}
and for any $n\geq 0$ an isomorphism of torsion $W(k_n)$-modules $$M^i_{k,n}\simeq M^i_{k,0}\otimes W(k_n)$$ identifying the $\Fr_q$-linear operator ${\bf 1}-\varphi_{i,n}$ on the left
hand side with the $\Fr_q$-linear operator ${\bf 1}\otimes id-\varphi_{i,0}\otimes\Fr_q$ on the right hand side.
\item Let $L^i_\infty$ be the $i$th cohomology group of
$$\RR\Gamma\big(C_{\infty},\pi^*Lie(\A(-Z))\big)\otimes^{\mbb{L}}\QQ_p/\ZZ_p
=\RR\Gamma\big(C_\infty,\pi^*Lie(\A(-Z))\big)[1]\,.$$

\item Let $L^i_n$ be the $i$th cohomology group of
$$\RR\Gamma\big(C_n,\pi^*_nLie(\A(-Z))\big)\otimes^{\mbb{L}}\QQ_p/\ZZ_p=\RR\Gamma\big(C_n,\pi^*_n Lie(\A(-Z))\big)[1]\,.$$

By the Zariski base change formula (note that the cohomology of the finite locally free module $Lie({\A})(-Z)$ is the same in the \'etale or Zariski site), we have isomorphisms
$$L^i_\infty\simeq L^i_0\otimes W(\kpinf)$$
and, for any $n\geq 0$,
$$L^i_n\simeq L^i_0\otimes W(k_n).$$
In particular, since $L^i_0$ is a finite $\FF_p$-vector space with rank $d(L^i_0)$, we deduce that $L^i_\infty$ is a finite $\kpinf$-vector space while $L^i_n$ is a
finite $k_n$-vector space, both with the same rank $d(L^i_0)$.
\item Let $$N^i_\infty := H^i_{syn}( C_\infty , \pi^*\mathcal{S}_D \otimes \QQ_p/\ZZ_p )$$
be the $i$th cohomology group of $$\RR\Gamma(C_\infty,\pi^*\cS_D)\otimes^{\mbb{L}}\QQ_p/\ZZ_p.$$
\item Let $N^i_n$ be the $i$th cohomology group of $$\RR\Gamma(C_n,\pi_n^*\cS_D)\otimes^{\mbb{L}}\QQ_p/\ZZ_p.$$
\end{enumerate}

The distinguished triangles (\ref{T1}) and (\ref{T2}) induce, by passing to the cohomology, the following long exact sequences:
\begin{gather}
\begin{CD} \label{L1} ... @>>> N^i_\infty @>>> M_{1,\infty}^i @>{\bf 1}-\varphi_{i,\infty}>> M_{2,\infty}^i @>>> ... \end{CD}\\
\begin{CD} \label{L2} ... @>>> L^i_\infty @>>> M_{1,\infty}^i @>{\bf 1}>> M_{2,\infty}^i @>>> ...  \end{CD}
\end{gather}
which are inductive limits of the long exact sequences
\begin{gather}
\begin{CD} \label{L1n} ... @>>> N^i_n @>>>  M_{1,n}^i @>{{\bf 1}-\varphi_{i,n}}>> M_{2,n}^i @>>> ...  \end{CD}\\
\begin{CD} \label{L2n} ... @>>> L^i_n @>>>  M_{1,n}^i @>{\bf 1}>> M_{2,n}^i @>>> ...  \end{CD}
\end{gather}
Note that the cohomology theories $M$ and $N$ are concentrated in degrees 0,1 and 2 and the cohomology theory $L$ is concentrated in degrees 0 and 1.\\

In \cite{KT03}, the following was proved (see \cite[\S3.3.3, \S3.3.4 and \S3.3.5]{KT03}):
\begin{lemma}\label{cofinite}
For $k=1$ or $2$, there exists a map
$$f_k\colon P^i_0[p^{-1}]\lr M^i_{k,0}$$
satisfying the following conditions:
\begin{enumerate}
\item The kernel of $f_k$ is a $\ZZ_p$-lattice in $P^i_0[\frac{1}{p}]$ and the cokernel is a finite group. In particular, $M^i_{1,0}$ and
  $M^i_{2,0}$ are torsion $\ZZ_p$-modules with the same finite corank.
\item The diagrams
    \[\begin{CD}
    P^i_0[\frac{1}{p}] @>{id}>> P^i_0[\frac{1}{p}] && \hspace{60pt} && P^i_0[\frac{1}{p}] @>{p^{-1}F_{i,0}}>> P^i_0[\frac{1}{p}]\\
    @V{f_1}VV @V{f_2}VV  \text{and} && @V{f_1}VV @V{f_2}VV \\
    M^i_{1,0} @>{{\bf 1}}>> M^i_{2,0} && \hspace{60pt} && M^i_{1,0} @>{\varphi_{i,0}}>> M^i_{2,0}
    \end{CD}\]
    commute.
\end{enumerate}
\end{lemma}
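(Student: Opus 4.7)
The plan is to treat $f_1$ and $f_2$ uniformly by first introducing the auxiliary module $Q^i_0 := \coh^i_{\mathrm{crys}}(C^\#/W(\FF), D^0(-Z))$. The distinguished triangle \eqref{T1} will produce a long exact sequence
\[ \cdots \to \coh^{i-1}(C, Lie(\A)(-Z)) \to Q^i_0 \xrightarrow{\mathbf 1} P^i_0 \to \coh^i(C, Lie(\A)(-Z)) \to \cdots \]
in which the outer terms are finite-dimensional $\FF$-vector spaces (as $Lie(\A)(-Z)$ is a coherent sheaf on the smooth curve $C/\FF$) and hence killed by $p$. Inverting $p$ will therefore yield a canonical isomorphism $\mathbf 1\colon Q^i_0[p^{-1}] \xrightarrow{\sim} P^i_0[p^{-1}]$. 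In parallel, applying $-\otimes^{\mathbb{L}}\QQ_p/\ZZ_p$ to $\RR\Gamma_{\mathrm{crys}}(C^\#/W(\FF), D(-Z))$ and using the triangle $\ZZ_p \to \QQ_p \to \QQ_p/\ZZ_p$ produces a universal-coefficient-type short exact sequence
\[ 0 \to P^i_0 \otimes_{\ZZ_p} \QQ_p/\ZZ_p \to M^i_{2,0} \to P^{i+1}_0[p^\infty] \to 0, \]
together with the analogous sequence for $M^i_{1,0}$ with $Q$ replacing $P$.

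I will then define $f_2$ as the composition of the canonical surjection $P^i_0[p^{-1}] \twoheadrightarrow P^i_0 \otimes \QQ_p/\ZZ_p$ (whose kernel is the image of $P^i_0/P^i_0[p^\infty]$) with the inclusion $P^i_0 \otimes \QQ_p/\ZZ_p \hookrightarrow M^i_{2,0}$, and analogously for $f_1$ via the identification above. Condition (1) should then be immediate: $\Ker f_k$ is the image of $P^i_0/P^i_0[p^\infty]$ (resp.\ $Q^i_0/Q^i_0[p^\infty]$) in $P^i_0[p^{-1}]$, a $\ZZ_p$-lattice, while $\Coker f_k$ is $P^{i+1}_0[p^\infty]$ (resp.\ $Q^{i+1}_0[p^\infty]$), finite because the log crystalline cohomology of the proper log scheme $C^\#/\FF$ relative to $W(\FF)$ is finitely generated as a $\ZZ_p$-module. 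The common corank of $M^i_{1,0}$ and $M^i_{2,0}$ equals the $\ZZ_p$-rank of $P^i_0$, which matches that of $Q^i_0$ by the isomorphism of the first step.

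For (2), both squares will reduce to naturality of the universal coefficient sequence with respect to a morphism of complexes. The left square follows directly from functoriality applied to $\mathbf 1\colon D^0 \to D$. The right square, which is the subtler one, will rest on identifying the map induced on cohomology by the divided Frobenius $\varphi\colon D^0 \to D$ with $p^{-1} F_{i,0}$ under the isomorphism $Q^i_0[p^{-1}]\simeq P^i_0[p^{-1}]$; this is the content of the construction in \cite[\S5.8]{KT03}, where by design the crystalline Frobenius $F_{i,0}$ satisfies $F_{i,0} = p\cdot \varphi$ after composition with $\mathbf 1$. The hard part of the plan will therefore be to match the conventions of \emph{loc.\,cit.}~precisely to the identity $\varphi = p^{-1} F_{i,0}$ on rationalizations; once this is settled, the commutativity of both squares follows by naturality of the construction.
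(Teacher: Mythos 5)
Your proposal is essentially correct; note, however, that the paper does not prove this lemma but cites it directly from \cite[\S3.3.3--\S3.3.5]{KT03}, and your reconstruction matches the argument used there. The three ingredients you identify---the universal-coefficient sequence obtained by applying $\otimes^{\mathbb{L}}\QQ_p/\ZZ_p$ to the finitely generated log-crystalline cohomology, the rational isomorphism $\coh^i_{\mathrm{crys}}(C^\#/W(\FF),D^0(-Z))[p^{-1}]\simeq P^i_0[p^{-1}]$ coming from the triangle \eqref{T1} and the $p$-torsion of the coherent cohomology of $Lie(\A)(-Z)$, and the identity $\varphi=p^{-1}F$ on rationalizations coming from the definition of the divided Frobenius in \cite[\S5.8]{KT03}---are precisely the content of \emph{loc.\,cit.}
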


\begin{lemma}\label{linalg1} Let $M$ be a torsion $\ZZ_p$-module of cofinite type. Then the Pontryagin dual of $M\otimes_{\ZZ_p} W(\kpinf)$ is a finitely generated $\La$-module. Moreover, we have an isomorphism of $\La$-modules
$$\big(M\otimes_{\ZZ_p} W(\kpinf)\big)^\vee\simeq\La^r\oplus \bigoplus_{i=1}^s \La/(p^{n_i})\,.$$
\end{lemma}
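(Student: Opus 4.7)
The plan is to decompose $M$ into cyclic $\ZZ_p$-summands, identify the Pontryagin dual of the tensor product of each summand with $W(\kpinf)$, and reassemble. Since $M$ is a cofinite type torsion $\ZZ_p$-module, I may write
$$M\simeq(\QQ_p/\ZZ_p)^{r_0}\oplus\bigoplus_{i=1}^{s_0}\ZZ/p^{m_i}\ZZ,$$
and both $-\otimes_{\ZZ_p}W(\kpinf)$ and Pontryagin duality commute with finite direct sums, so it suffices to treat the two building blocks $M=\QQ_p/\ZZ_p$ and $M=\ZZ/p^m\ZZ$ individually.

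The key structural input will be a normal integral basis for the \'etale Galois extension $W(k_n)/W(\FF)$ of complete discrete valuation rings, obtained by Hensel-lifting a classical normal basis of $k_n/\FF$ (equivalently, by Galois descent for the \'etale cover $\Spec W(k_n)\to\Spec W(\FF)$). Setting $\La_n:=\ZZ_p[\Gamma_n]$ and $a:=[\FF:\FF_p]$, this yields an isomorphism of $\La_n$-modules
$$W(k_n)\simeq W(\FF)\otimes_{\ZZ_p}\La_n\simeq\La_n^{\,a}.$$
Reducing modulo $p^m$ gives $W_m(k_n)\simeq(\La_n/p^m)^a$, and the standard self-duality of the finite group algebra $(\ZZ/p^m)[\Gamma_n]$ (a Frobenius algebra) then produces $W_m(k_n)^\vee\simeq(\La_n/p^m)^a$ as $\La_n$-modules. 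The injective transitions $W_m(k_n)\hookrightarrow W_m(k_{n+1})$ dualize to surjections, so
$$W_m(\kpinf)^\vee=\varprojlim_n W_m(k_n)^\vee\simeq(\La/p^m)^a,$$
a finitely generated $p^m$-torsion $\La$-module. For the divisible summand, writing $W(\kpinf)\otimes_{\ZZ_p}\QQ_p/\ZZ_p=\varinjlim_m W_m(\kpinf)$ with transitions multiplication by $p$ and dualizing gives
$$\bigl(W(\kpinf)\otimes_{\ZZ_p}\QQ_p/\ZZ_p\bigr)^\vee\simeq\varprojlim_m(\La/p^m)^a\simeq\La^{\,a},$$
a finitely generated free $\La$-module.

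Assembling the contributions of the cyclic summands of $M$ produces
$$(M\otimes_{\ZZ_p}W(\kpinf))^\vee\simeq\La^{ar_0}\oplus\bigoplus_{i=1}^{s_0}(\La/p^{m_i})^a,$$
which is of the announced form (with $r=ar_0$ and each $m_i$ appearing $a$ times among the $n_j$). The main potentially delicate step is the existence of the normal integral basis for $W(k_n)/W(\FF)$ together with enough compatibility across $n$ to identify the inverse-limit system with $(\La/p^m)^a$; once this structural input is in hand, the rest of the argument is a formal manipulation of Pontryagin duals of Mittag-Leffler inverse systems of finite abelian groups.
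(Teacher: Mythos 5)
Your strategy — split $M$ into cyclic summands, compute $W(k_n)\simeq W(\FF)[\Gamma_n]$ via a normal integral basis (unramified extensions of complete DVRs do have normal integral bases, by lifting a residue-field normal basis and applying Nakayama), invoke self-duality of the Frobenius algebra $(\ZZ/p^m)[\Gamma_n]$, and assemble — is genuinely different from the paper's. The paper instead computes $(X_\infty)_\Gamma$ directly, applies topological Nakayama over $W_j(\FF)[[\Gamma]]$ to get a cyclic generator, and runs an induction on the exponent $j$ with a snake-lemma diagram chase to lift freeness from $W_{j-1}(\FF)[[\Gamma]]$ to $W_j(\FF)[[\Gamma]]$. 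Your route avoids that induction entirely by knowing the structure of $W_m(k_n)$ at every finite level in one shot, which is an attractive simplification.

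However, there is a real gap at the line ``the injective transitions dualize to surjections, so $W_m(\kpinf)^\vee=\varprojlim_n W_m(k_n)^\vee\simeq(\La/p^m)^a$,'' and you have flagged it yourself. Knowing that each term of a projective system is abstractly isomorphic to $(\La_n/p^m)^a$ and that the transitions are surjective does not, by itself, pin down the limit: the isomorphisms at each level are not canonical, and you have not shown they can be chosen to commute with the transition maps. The compatibility of normal integral bases across $n$ (choosing $\theta_n$ with $\mathrm{Tr}_{W(k_{n+1})/W(k_n)}\theta_{n+1}=\theta_n$ while keeping each $\theta_n$ a generator) is itself not obvious and would need a separate argument. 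The cleanest repair bypasses compatibility altogether: since Pontryagin duality exchanges invariants and coinvariants, $(W_m(\kpinf)^\vee)_{\Gamma^{(n)}}\simeq (W_m(\kpinf)^{\Gamma^{(n)}})^\vee = W_m(k_n)^\vee$; in particular $(W_m(\kpinf)^\vee)_\Gamma\simeq W_m(\FF)^\vee\simeq(\ZZ/p^m)^a$, so topological Nakayama over $\La/p^m$ gives a surjection $(\La/p^m)^a\twoheadrightarrow W_m(\kpinf)^\vee$. Reducing modulo $\omega_n=\gamma^{p^n}-1$ produces a surjection between finite groups of the same order $p^{map^n}$, hence an isomorphism for every $n$, and passing to the limit shows $\pi$ itself is an isomorphism. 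Note, though, that this repair reinstates the topological Nakayama lemma that is the engine of the paper's proof; once you insert it, your argument and the paper's differ mainly in how the finite-level cardinalities are computed (normal integral basis versus the paper's induction on $j$).
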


\begin{proof} Let $X_\infty$ denote the Pontryagin dual of $M\otimes_{\ZZ_p} W(\kpinf)$. Then $X_\infty$ is the limit of the projective system of Pontryagin duals $X_n:=\big(M\otimes_{\ZZ_p} W(k_n)\big)^\vee$. Note that the functor $M\rightsquigarrow(M\otimes_{\ZZ_p} W(\kpinf))^\vee$ is exact.

In the case $M=\ZZ/p\ZZ$, we have a projective system $\{X_n=(k_n)^\vee\}_n$ where the transition maps are the Pontryagin dual of the canonical inclusions
$k_n\hookrightarrow k_{n+1}$. Let $\Omega:=\FF[[\Gamma]]\simeq \FF[[T]]$. We have
$$(X_\infty)_\Gamma=X_\infty/TX_\infty=\big((\kpinf)^\Gamma\big)^\vee\simeq\FF\,.$$
Now, by Nakayama's lemma, this implies that $X_\infty$ is a cyclic $\Omega$-module and since $X_\infty$ is infinite we have $X_\infty\simeq\Omega=\La/p\La$.

If $M=\ZZ/p^j\ZZ$, then $X_\infty=(W_j(\kpinf))^\vee={\ilim} (W_j(k_n))^\vee=:Y_j$ (where $W_j$ denotes Witt vectors of length $j$) and we prove by the same argument that $Y_j$ is a cyclic $W_j(\FF)[[\Gamma]]$-module. In particular for each $j$ we have a surjective map:
$$W_j(\FF)[[\Gamma]]\,\lr Y_j\,.$$
We prove by induction on $j$ that $Y_j$ is a free $W_j(\FF)[[\Gamma]]$-module. The case $j=1$ was treated above. Now assume the assertion is true for $j-1$ and consider the commutative diagram of short exact sequences:
$$\begin{CD}
0 @>>> Y_{j-1} @>{\delta}>>  Y_j @>{\epsilon}>> Y_1 @>>> 0 \\
&& @A{\alpha}AA @A{\beta}AA  @A{\gamma}AA \\
0 @>>> W_{j-1}(\FF)[[\Gamma]] @>{\times p}>> W_j(\FF)[[\Gamma]] @>>> \Omega @>>> 0 \,.\end{CD}$$
The upper horizontal line is induced by the exact sequence $\ZZ/p\ZZ\hookrightarrow\ZZ/p^j\ZZ\twoheadrightarrow\ZZ/p^{j-1}\ZZ\,.$ The vertical maps are constructed as follows: define first $\beta$ as the map sending 1 to a generator $x$ of the cyclic $W_j(\FF)[[\Gamma]]$-module $Y_j$. Then $px\in\Ker(\epsilon)=\image(\delta)$. Let $w\in Y_{j-1}$ be such that $px=\delta(w)$ and let $y=\epsilon(x)$. Finally, define $\alpha$ to be the map sending 1 to $w$ and $\gamma$ to be the map sending 1 to $y$. The map $\gamma$ is surjective because $\beta$ and $\epsilon$ are surjective. So $\gamma$ is an isomorphism since $Y_1$ is infinite and the only proper quotients of $\Omega$ are finite. We deduce by the snake lemma that $\alpha$ is surjective and therefore, by the induction hypothesis, an isomorphism. This implies that $\beta$ is also an isomorphism, by the 5-lemma.

The case $M=\QQ_p/\ZZ_p$ is deduced from the case $M=\ZZ/p^j\ZZ$ by passing to the inductive limit in $j$ and the general case follows from the cases $M=\QQ_p/\ZZ_p$ and $M=\ZZ/p^j\ZZ$.
\end{proof}

We deduce from Lemmas \ref{linalg1} and \ref{cofinite}:
\begin{corollary}\label{M,L} \hspace{5pt}
\begin{enumerate}
\item
For any $i\geq 0$, $(M^i_{1,\infty})^\vee$ and $(M^i_{2,\infty})^\vee$ are two $\La$-modules of finite type with the same rank $r_i$ (equal to the $\ZZ_p$-rank of $P^i_0${\em )} and torsion parts isomorphic to $\oplus_{j=1}^s \La/p^{n_j}\La.$
\item For any $i$, $(L^i_\infty)^\vee$ is a finitely generated torsion $\La$-module isomorphic to $\oplus_{j=1}^{d(L^i_0)} \La/p\La\,.$
\item Passing to the Pontryagin dual, the long exact sequences \eqref{L1} and \eqref{L2} induce long exact sequences of $\La$-modules with $\La$-linear operators.
\end{enumerate}
\end{corollary}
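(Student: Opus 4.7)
The plan is to deduce all three parts directly from the combination of Lemma \ref{cofinite} (which controls the $\ZZ_p$-structure at the bottom level $n=0$) and Lemma \ref{linalg1} (which upgrades such $\ZZ_p$-structural information to a structural description of the Pontryagin dual of its base change to $W(\kpinf)$).

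First I would address part (1). By Lemma \ref{cofinite}(1), the map $f_k\colon P^i_0[p^{-1}]\to M^i_{k,0}$ has kernel a $\ZZ_p$-lattice in $P^i_0[p^{-1}]$ and finite cokernel, for both $k=1,2$. It follows that $M^i_{1,0}$ and $M^i_{2,0}$ are cofinitely generated torsion $\ZZ_p$-modules, both of the same corank $r_i=\rank_{\ZZ_p}P^i_0$, hence abstractly of the form $(\QQ_p/\ZZ_p)^{r_i}\oplus\bigoplus_{j=1}^s\ZZ/p^{n_j}\ZZ$ (possibly with different $s$ and $n_j$ for $k=1$ and $k=2$). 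Combining the base-change isomorphism \eqref{e:mi0infty} with Lemma \ref{linalg1} applied summand by summand yields
$$(M^i_{k,\infty})^\vee\simeq\La^{r_i}\oplus\bigoplus_{j=1}^s\La/p^{n_j}\La$$
as $\La$-modules. This is exactly part (1).

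Next, for part (2), I note that by definition $L^i_\infty=\coh^i(C_\infty,\pi^\ast Lie(\A(-Z)))\otimes^{\mathbb{L}}\QQ_p/\ZZ_p$; since $Lie(\A(-Z))$ is an $\cO_C$-module of characteristic $p$, the resulting cohomology is annihilated by $p$. As observed in \S\ref{ss:cohomtheories}(6), $L^i_0$ is then a finite $\FF$-vector space of dimension $d(L^i_0)$, so as a $\ZZ_p$-module $L^i_0\simeq(\ZZ/p\ZZ)^{d(L^i_0)}$, and the base change isomorphism gives $L^i_\infty\simeq L^i_0\otimes W(\kpinf)$. Feeding this into Lemma \ref{linalg1} produces the claimed identification
$$(L^i_\infty)^\vee\simeq\bigoplus_{j=1}^{d(L^i_0)}\La/p\La,$$
and in particular $(L^i_\infty)^\vee$ is finitely generated and torsion over $\La$.

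Finally, for part (3), the cohomology long exact sequences \eqref{L1} and \eqref{L2} are exact sequences of discrete $\Gamma$-modules with $\Gamma$-equivariant connecting homomorphisms (the $\Gamma$-equivariance being built into the construction via the projection $\pi\colon C_\infty\to C$). Applying Pontryagin duality, which is an exact contravariant functor on the category of locally compact Hausdorff abelian groups, produces long exact sequences of compact $\Gamma$-modules, hence of $\La$-modules; the $\La$-linearity of every arrow (including the connecting maps) is automatic from the $\Gamma$-equivariance of the original arrows. The only step that deserves a check is that each arrow in \eqref{L1} and \eqref{L2} is indeed $\Gamma$-equivariant, which follows because the differentials in $\pi^\ast D(-Z)$, $\pi^\ast D^0(-Z)$, $\pi^\ast Lie(\A(-Z))$ and $\pi^\ast\cS_D$ on $C_\infty$ are functorial in the covering, and the $\Gamma$-action is induced by deck transformations. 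The main (minor) technical point of the whole argument is therefore just the bookkeeping of Lemma \ref{linalg1}; parts (2) and (3) are essentially formal consequences, and part (1) reduces to Lemma \ref{cofinite} plus that bookkeeping.
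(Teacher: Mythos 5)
Your proof of parts (1) and (2) is correct and follows the same route as the paper: Lemma \ref{cofinite} provides the $\ZZ_p$-structure of $M^i_{k,0}$, and Lemma \ref{linalg1} together with the base-change isomorphisms \eqref{e:mi0infty} upgrades this to the claimed $\La$-structure of the Pontryagin duals. (One small slip: the paper defines $d(L^i_0)$ as the $\FF_p$-dimension of $L^i_0$, not its $\FF$-dimension; this affects the count of summands only if $\FF\neq\FF_p$, but since you ultimately apply Lemma \ref{linalg1} to the $\ZZ_p$-module structure of $L^i_0$, the conclusion is unaffected.)

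For part (3) your argument is too coarse exactly where the paper makes its one substantive observation. $\Gamma$-equivariance of the maps \emph{not} involving $\varphi$ (the inclusion ${\bf 1}\colon D^0\to D$ and the boundary maps in the triangle) does follow from functoriality and the deck-transformation description, as you say. But the operator $\varphi_{i,\infty}$ is $\Fr_q$-semilinear over $W(\kpinf)$: under the identification $M^i_{k,\infty}\simeq M^i_{k,0}\otimes W(\kpinf)$, it becomes $\varphi_{i,0}\otimes\Fr_q$, and $\Fr_q$ is not a $\kpinf$-linear deck transformation but rather the topological generator of $\Gamma$ itself. So ``functoriality in the covering'' does not by itself show that $1\otimes\gamma$ and $\varphi_{i,0}\otimes\Fr_q$ commute; this requires precisely the fact that $\Gamma$ is abelian, so that $\gamma\Fr_q=\Fr_q\gamma$. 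This is the point the paper explicitly singles out (``note that $\Gamma$ is an abelian group and therefore any $\tau\in\Gamma$ commutes with $\Fr_q$''), and it is the only non-formal input to part (3). You should add this observation; without it the claim that ${\bf 1}-\varphi$ is $\La$-linear is not justified.
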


\begin{proof} The first assertion is a consequence of Lemmas \ref{linalg1} and \ref{cofinite}. Assertion (2) is proved similarly to the case of $M^i_{k,\infty}$. For the third assertion, note that $\Gamma$ is an abelian group and therefore any $\tau\in\Gamma$ commutes with $\Fr_q$. Hence, the operator ${\bf 1}-\varphi$ is $\La$-linear.
\end{proof}

Next, we are going to prove that $(N^i_\infty)^\vee$ is a finitely generated torsion $\La$-module for $i=0,\dots,2$.

\begin{proposition} \label{0case} The $\La$-modules $(N^0_\infty)^\vee$ and $\left(\Coker({\bf 1}-\varphi_{0,\infty})\right)^\vee$ are $\La$-torsion.
\end{proposition}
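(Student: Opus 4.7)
The long exact sequence \eqref{L1} gives the four-term exact sequence
\[
0 \to N^0_\infty \to M^0_{1,\infty} \xrightarrow{{\bf 1}-\varphi_{0,\infty}} M^0_{2,\infty} \to \Coker({\bf 1}-\varphi_{0,\infty}) \to 0\,.
\]
Pontryagin-dualizing and invoking Corollary \ref{M,L}, which tells us that $(M^0_{1,\infty})^\vee$ and $(M^0_{2,\infty})^\vee$ are both finitely generated $\La$-modules of common rank $r_0=\rank_{\ZZ_p} P^0_0$, this reduces the proposition to showing that the dual $\La$-linear map $({\bf 1}-\varphi_{0,\infty})^\vee\colon(M^0_{2,\infty})^\vee\to(M^0_{1,\infty})^\vee$ becomes an isomorphism after tensoring with $Q(\La)$; equivalently, that its characteristic element is a nonzero element of $\La\otimes\QQ_p$.

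To compute this characteristic element I would combine three ingredients. First, the base-change isomorphism $M^0_{k,\infty}\simeq M^0_{k,0}\otimes_{\ZZ_p} W(\kpinf)$ of \eqref{e:mi0infty} identifies ${\bf 1}-\varphi_{0,\infty}$ with ${\bf 1}\otimes id - \varphi_{0,0}\otimes\Fr_q$. Second, Lemma \ref{cofinite} identifies the divisible quotient of $M^0_{k,0}$ (up to bounded $p$-torsion) with $P^0_0[1/p]/P^0_0$, transporting ${\bf 1}$ and $\varphi_{0,0}$ to the $\ZZ_p$-linear operators $id$ and $p^{-1}F_{0,0}$ on $P^0_0[1/p]/P^0_0$. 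Third, Lemma \ref{linalg1} yields the isomorphism $((\QQ_p/\ZZ_p)\otimes W(\kpinf))^\vee\simeq\La$, under which the $\Fr_q$-action on $W(\kpinf)\otimes\QQ_p/\ZZ_p$ becomes multiplication by a topological generator $\gamma$ of $\Gamma$. Putting these together, modulo $\La$-torsion the dual map becomes the self-operator $I - p^{-1}F_{0,0}^\vee\otimes\gamma^{\pm 1}$ on a free $\La\otimes\QQ_p$-module of rank $r_0$.

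To finish, I would show this operator has nonzero determinant. Writing $\lambda_1,\dots,\lambda_{r_0}\in\bar\QQ_p$ for the eigenvalues of $F_{0,0}$ on $P^0_0\otimes\bar\QQ_p$ and $T=\gamma-1$ so that $\La\otimes\bar\QQ_p\simeq\bar\QQ_p[[T]]$, the determinant factors as $\prod_{j=1}^{r_0}\bigl(1 - p^{-1}\lambda_j(1+T)^{\pm 1}\bigr)\in\bar\QQ_p[[T]]$; each factor is visibly nonzero, so the product is nonzero, as required. The main obstacle is the careful verification in step two: one must check that the $\Fr_q$-action on $W(\kpinf)\otimes\QQ_p/\ZZ_p$ dualizes exactly to the canonical $\gamma$-action on $\La$, and that the bounded $p$-torsion contributions coming from the non-divisible parts of $M^0_{k,0}$ produce only $\La$-torsion on the dual side. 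Since such contributions are annihilated by a fixed power of $p$, they cannot affect the torsion-ness conclusion, and once these identifications are in place the final linear-algebra computation yields the proposition immediately.
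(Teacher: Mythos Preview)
Your argument is correct, but it takes a genuinely different route from the paper. The paper first uses the arithmetic interpretation of $N^0_\infty$: from \eqref{e:alg-arithm} one has an injection $N^0_\infty\hookrightarrow A(\Kpinf)[p^\infty]$, so $(N^0_\infty)^\vee$ is a quotient of the finitely generated $\ZZ_p$-module $A(\Kpinf)[p^\infty]^\vee$, hence $\La$-torsion. Then, from the four-term exact sequence you write down and the equality of $\La$-ranks of $(M^0_{1,\infty})^\vee$ and $(M^0_{2,\infty})^\vee$ (Corollary~\ref{M,L}), one reads off that $(\Coker({\bf 1}-\varphi_{0,\infty}))^\vee$ has the same $\La$-rank as $(N^0_\infty)^\vee$, namely zero.

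Your approach instead bypasses the arithmetic input entirely and computes directly that the dual map $({\bf 1}-\varphi_{0,\infty})^\vee$ is injective on the free part, via the determinant $\prod_j(1-p^{-1}\lambda_j\gamma^{\pm1})\neq 0$ in $\La[1/p]$. This is essentially the computation the paper carries out later, in Lemma~\ref{induce}, \S\ref{ss:Pinfty}, and the proof of Theorem~\ref{t:ranks} (see \eqref{e:detfr}), so you are previewing machinery that is set up only afterwards. The trade-off: your method is uniform in $i$ and purely linear-algebraic, while the paper's is shorter here, uses only what is already available, and actually yields the stronger statement that $(N^0_\infty)^\vee$ is finitely generated over $\ZZ_p$ (not merely $\La$-torsion), which is used again in Lemma~\ref{N02}.
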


\begin{proof} Reasoning as in \cite[\S2.5.2]{KT03} (see \eqref{e:alg-arithm} below), one obtains that the module $(N^0_\infty)^\vee$ is a quotient of $A(\Kpinf)[p^\infty]^\vee$. The latter is a finitely generated $\ZZ_p$-module (and so a torsion $\La$-module), hence the claim for $(N^0_\infty)^\vee$ is proven. As for $(\Coker({\bf 1}-\varphi_{0,\infty}))^\vee$, note that we have an exact sequence of $\La$-modules
$$\begin{CD} 0\lr (\Coker({\bf 1}-\varphi_{0,\infty}))^\vee\ \lr (M_{2,\infty}^0)^\vee @>{({\bf 1}-\varphi_{0,\infty})^\vee}>> (M_{1,\infty}^0)^\vee \lr (N^0_\infty)^\vee \lr 0. \end{CD}$$
Since $(M_{1,\infty}^0)^\vee$ and $(M_{2,\infty}^0)^\vee$ have the same $\La$-rank, it implies that the $\La$-rank of $(\Coker({\bf 1}-\varphi_{0,\infty}))^\vee$ is equal to that of $(N^0_\infty)^\vee$, which is zero.
\end{proof}

We now prove that $(N^1_\infty)^\vee$ is $\La$-torsion.\\

The exact sequence:
\begin{multline}\label{alpha_beta}
\cdots \lra \coh^1_{\mathrm{syn}} (C_\infty,\pi^*\mathcal{S}_{D}) \otimes \QQ_p
\buildrel{\alpha}\over\lra \coh^1_{\mathrm{syn}} (C_\infty,\pi^*\mathcal{S}_{D}
\otimes \QQ_p/\ZZ_p) \\
\buildrel{\beta}\over\lra \coh^2_{\mathrm{syn}} (C_\infty,\pi^*\mathcal{S}_{D})\buildrel{\gamma}\over\lra \coh^2_{\mathrm{syn}} (C_\infty,\pi^*\mathcal{S}_{D})\otimes \QQ_p\lra \cdots
\end{multline}
induces a short exact sequence
\begin{equation}\label{exact-section2}
0\lra \image(\alpha)\lra N^1_\infty\lra \image(\beta)\lra 0.
\end{equation}
By taking the Pontryagin dual of $(\ref{exact-section2})$, we have
\begin{equation}\label{exactbisbis-section2}
0\lra \image(\beta)^\vee \lra (N^1_\infty)^\vee \lra \image(\alpha )^\vee \lra 0,
\end{equation}
where the modules and the morphisms are naturally defined over $\La$.

\begin{lemma}\label{finitevector}
$\coh^1_{\mathrm{syn}} (C_\infty,\pi^*\mathcal{S}_{D})\otimes \QQ_p$ is a finite dimensional
$\QQ_p$-vector space.
\end{lemma}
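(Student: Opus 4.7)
My plan is to reduce the statement to a standard finiteness result for $F$-isocrystals. First I would take the long exact sequence of cohomology on $C_\infty$ attached to the distinguished triangle \eqref{T2}, then tensor with $\QQ_p$ (an exact functor). Abbreviating $P^i_\infty:=\coh^i_{\mathrm{crys}}(C_\infty^\#/W(\kpinf),\pi^*D(-Z))$ and $\bar P^i_\infty:=\coh^i_{\mathrm{crys}}(C_\infty^\#/W(\kpinf),\pi^*D^0(-Z))$, this yields the short exact sequence
$$0\lr \Coker\bigl(\bar P^0_\infty\otimes\QQ_p\xrightarrow{{\bf 1}-\varphi}P^0_\infty\otimes\QQ_p\bigr) \lr \coh^1_{\mathrm{syn}}(C_\infty,\pi^*\mathcal{S}_D)\otimes\QQ_p \lr \Ker\bigl(\bar P^1_\infty\otimes\QQ_p\xrightarrow{{\bf 1}-\varphi}P^1_\infty\otimes\QQ_p\bigr) \lr 0,$$
so that the lemma is reduced to finite-dimensionality of the two outer terms.

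Next I would invoke the triangle \eqref{T1}: its third vertex $Lie(\mathcal A)(-Z)$ is a coherent $\cO_{C_\infty}$-module, hence an $\FF_p$-module whose cohomology is killed by $p$ and vanishes after $\otimes\QQ_p$. Therefore ${\bf 1}\colon\bar P^i_\infty\otimes\QQ_p\xrightarrow{\sim}P^i_\infty\otimes\QQ_p$ is an isomorphism for every $i$, and identifying source and target through it, the operator ${\bf 1}-\varphi$ becomes $\mathrm{id}-F^i$ with $F^i:=\varphi\circ{\bf 1}^{-1}$. By crystalline base change (cf.\ \eqref{e:pi0n}) the space $V^i:=P^i_\infty\otimes\QQ_p$ is a finite-dimensional vector space over $E:=W(\kpinf)[\tfrac{1}{p}]$, and $F^i$ acts on it as a $\Fr_q$-semilinear endomorphism; its bijectivity is the well-known bijectivity of Frobenius on rational log-crystalline cohomology of proper log-smooth schemes.

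Finally I would invoke the following general fact from the theory of $F$-isocrystals: for any finite-dimensional $E$-vector space $V$ with a bijective $\Fr_q$-semilinear endomorphism $F$, both $V^{F=\mathrm{id}}$ and $V/(\mathrm{id}-F)V$ are finite-dimensional over $\QQ_p$. The fixed-point statement is immediate, since $V^{F=\mathrm{id}}$ is naturally a vector space over $E^{\Fr_q}=W(\FF)[\tfrac{1}{p}]$ (a finite extension of $\QQ_p$), and the classical semilinear independence argument yields $\dim_{E^{\Fr_q}}V^{F=\mathrm{id}}\le\dim_E V$. The hard part, and the main obstacle I anticipate, is the finiteness of the cokernel. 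The cleanest route I see is to extend scalars to $W(\bar\FF_p)[\tfrac{1}{p}]$, apply the Dieudonn\'e--Manin slope decomposition to observe that $\mathrm{id}-F$ is bijective on every isoclinic piece of nonzero slope while the slope-zero piece reduces to the additive Hilbert 90 computation $E/(\mathrm{id}-\Fr_q)E\simeq E^{\Fr_q}$, and descend back to $\kpinf$ via Galois invariants.
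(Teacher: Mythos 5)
Your first two paragraphs reproduce exactly the paper's own reduction: the long exact sequence from the triangle \eqref{T2}, tensored with $\QQ_p$, together with the observation from triangle \eqref{T1} that ${\bf 1}\otimes\QQ_p$ is an isomorphism because the $L^i_\infty$ are killed by $p$, so the whole thing comes down to the finiteness of $\Coker({\bf 1}-\varphi_0)\otimes\QQ_p$ and $\Ker({\bf 1}-\varphi_1)\otimes\QQ_p$ with source and target identified via ${\bf 1}$. That much is spot on.

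Where you diverge is in the proof of the semilinear-algebra core (the paper's Lemma \ref{sblem-section2}). The paper simply cites \cite[Lemme 6.2]{EL97} and asserts that the argument there goes through with $\bar{\FF}_p$ replaced by $\kpinf$; crucially, that lemma is stated for an \emph{arbitrary} $\QQ_p$-linear operator $\varphi$ on $V$, with no bijectivity assumption. Your route — base-change to $W(\bar\FF_p)[\tfrac{1}{p}]$, apply Dieudonn\'e--Manin, descend — requires $F^i$ to be bijective in order for the slope decomposition to exist; you invoke bijectivity of Frobenius on rigid cohomology, which is available here via \eqref{e:PHrig}, but to match the generality of the cited lemma you would still need to split off and treat separately the (topologically) nilpotent part of $\varphi$, on which $1-\varphi\otimes\Fr_q$ is invertible by a geometric-series argument. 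With that extra step your strategy does give a self-contained proof, and the faithful-flatness descent of the cokernel and the inclusion of kernels are both sound.

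There is, however, a concrete error in the slope-zero analysis as you wrote it: the isomorphism $E/(\mathrm{id}-\Fr_q)E\simeq E^{\Fr_q}$ is false in the situations you need. Over $W(\bar\FF_p)[\tfrac{1}{p}]$ (and in fact already over $W(\kpinf)[\tfrac{1}{p}]$, since $\kpinf$ is exactly the union of all $p$-power degree extensions of $\FF$ and Artin--Schreier equations have solutions there) the map $1-\Fr_q$ is \emph{surjective}, by the usual successive-approximation argument on Witt vectors, so the cokernel of the slope-zero piece is zero — not isomorphic to $E^{\Fr_q}\simeq\QQ_q$. The finite-level isomorphism $\Coker\simeq\Ker$ you are remembering from additive Hilbert~90 over a finite field fails to persist in the limit: the transition maps on the cokernels are multiplication by $p$ and so the direct limit dies. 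Fortunately this error is in your favor: vanishing of the cokernel is what you need for the faithful-flatness descent to give surjectivity over $W(\kpinf)[\tfrac{1}{p}]$, matching the stronger statement of Lemma \ref{sblem-section2}. Correct the slope-zero computation to ``cokernel $=0$, kernel is a finite-dimensional $\QQ_q$-space,'' add the reduction to the $F$-bijective part, and your argument becomes a legitimate self-contained alternative to the citation.
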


\begin{proof} The long exact sequence
\begin{multline*}
\cdots
\lra \coh^i_{\mathrm{syn}} (C_\infty,\pi^*\mathcal{S}_{D})\otimes \QQ_p\lra \coh^i_{\mathrm{crys}}\big(C_\infty^\#/W(\kpinf),\pi^*D^0(-Z)\big)\otimes \QQ_p \\
\buildrel{1-\varphi_i}\over\lra \coh^i_{\mathrm{crys}}\big(C_\infty^\#/W(\kpinf),\pi^*D(-Z)\big)\otimes \QQ_p\lra \cdots \end{multline*}
can be rewritten
\begin{multline*}
\cdots \lra \coh^i_{\mathrm{syn}} (C_\infty,\pi^*\mathcal{S}_{D})\otimes \QQ_p\lra \coh^i_{\mathrm{crys}}(C_\infty^\#/W(\kpinf),\pi^*D(-Z))\otimes \QQ_p \\
\buildrel{1-\varphi_i}\over\lra \coh^i_{\mathrm{crys}}(C_\infty^\#/W(\kpinf),\pi^*D(-Z)) \otimes \QQ_p\lra \cdots
\end{multline*}
(because the difference between the middle terms in these sequences is $L^i_\infty$, which, thanks to Lemma \ref{M,L}, (2) and (3), is known to be $p$-torsion).\\
We deduce from this long exact sequence the following short exact sequence:
$$ 0\lra \Coker(1-\varphi_0)\lr \coh^1_{\mathrm{syn}} (C_\infty,\pi^*\mathcal{S}_{D})\otimes \QQ_p\lra \Ker(1-\varphi_1)\lra 0. $$
Since $\coh^i_{\mathrm{crys}}(C_\infty^\#/W(\kpinf),\pi^*D(-Z))\otimes\QQ_p$ is a finite dimensional $W(\kpinf)[\frac{1}{p}]$-vector space and \eqref{e:pi0n} holds, the assertion is implied by the following:
\begin{lemma}\label{sblem-section2} Let $V$ be a finite dimensional $\QQ_p$-vector space endowed with a linear operator $\varphi\colon V\rightarrow V$. Then
$1-\varphi\otimes\Fr_q\colon V\otimes_{\ZZ_p}W(\kpinf)\to V\otimes_{\ZZ_p}W(\kpinf)$ is a surjective map whose kernel is a finite dimensional $\QQ_p$-vector space.
\end{lemma}
\begin{proof} One can easily check that the proof of \cite[Lemme 6.2]{EL97} remains true if we replace $\bar{\FF}_p$ by $\kpinf$.
\end{proof}
\end{proof}

\begin{corollary}\label{lemmaN^1}
The group $\image(\alpha)^{\lor}$ of \eqref{exactbisbis-section2} is a free $\ZZ_p$-module of finite rank.
\end{corollary}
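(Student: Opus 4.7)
Plan: The key observation is that by Lemma \ref{finitevector}, the group $V := \coh^1_{\mathrm{syn}}(C_\infty, \pi^*\cS_D) \otimes \QQ_p$ is a finite-dimensional $\QQ_p$-vector space. Letting $W \subseteq V$ denote the image of the natural map $\coh^1_{\mathrm{syn}}(C_\infty, \pi^*\cS_D) \to V$, the exact sequence \eqref{alpha_beta} identifies $\image(\alpha) = V/W$. Since $V$ is divisible and $N^1_\infty$ is $p$-primary torsion (being a cohomology group of $\RR\Gamma\otimes^{\mbb{L}}\QQ_p/\ZZ_p$), $\image(\alpha)\subseteq N^1_\infty$ is a divisible $p$-primary torsion abelian group; hence it is isomorphic to $(\QQ_p/\ZZ_p)^{(I)}$ for some index set $I$, and $\image(\alpha)^\vee$ is automatically torsion-free as a $\ZZ_p$-module.

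The task thus reduces to showing $|I|$ is finite, equivalently that $\image(\alpha)[p]$ has finite $\FF_p$-dimension. Since $V/W$ is torsion, we have $W\otimes_{\ZZ_p}\QQ_p = V$, so choosing a $\QQ_p$-basis of $V$ lying in $W$ yields a $\ZZ_p$-lattice $\Lambda_0\simeq\ZZ_p^r\subseteq W$ with $r := \dim_{\QQ_p}V$. Then $W/\Lambda_0$ is a $\ZZ_p$-submodule of $V/\Lambda_0\simeq(\QQ_p/\ZZ_p)^r$; by Pontryagin duality with the Noetherian $\ZZ_p$-module $\ZZ_p^r$ together with the structure theorem for finitely generated $\ZZ_p$-modules, every such submodule is of the form $(\QQ_p/\ZZ_p)^a \oplus F$ with $a\leq r$ and $F$ a finite abelian $p$-group. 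Writing $W/\Lambda_0\simeq(\QQ_p/\ZZ_p)^a\oplus F$ accordingly, the quotient $(W/\Lambda_0)/p(W/\Lambda_0)\simeq F/pF$ is finite.

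Two applications of the snake lemma now complete the argument. Applied to multiplication by $p$ on $0\to\Lambda_0\to W\to W/\Lambda_0\to 0$, and noting that both $\Lambda_0$ and $W$ are $p$-torsion-free as submodules of $V$, it gives an exact sequence
$$0 \to (W/\Lambda_0)[p] \to \Lambda_0/p\Lambda_0 \to W/pW \to F/pF \to 0,$$
so $W/pW$ is finite since $\Lambda_0/p\Lambda_0\simeq\FF_p^r$ and $F/pF$ are. Applied to multiplication by $p$ on $0\to W\to V\to\image(\alpha)\to 0$, and using that $p$ is invertible on the $\QQ_p$-vector space $V$, it yields $\image(\alpha)[p]\simeq W/pW$. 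Combining, $|I| = \dim_{\FF_p}\image(\alpha)[p] < \infty$, and $\image(\alpha)^\vee\simeq\ZZ_p^{|I|}$ is a finitely generated free $\ZZ_p$-module. The argument uses only elementary homological algebra once Lemma \ref{finitevector} is invoked, so I do not anticipate any substantial technical obstacle beyond carefully tracking the two snake-lemma computations; the only subtle point is recognizing that $p$-primary torsionness of $\image(\alpha)$ already forces $W$ to span $V$ over $\QQ_p$, which is what makes the containment $\Lambda_0\subseteq W$ available.
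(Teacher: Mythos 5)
Your proof is correct and follows the same route as the paper: invoke Lemma \ref{finitevector} to see that $\image(\alpha)$ is a $\ZZ_p$-torsion quotient of a finite-dimensional $\QQ_p$-vector space, then conclude it is cofree of finite corank. The paper treats that last implication as immediate, whereas you supply the elementary snake-lemma verification; this is a matter of level of detail, not of method.
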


\begin{proof}
By (\ref{L1}), $N^1_\infty$ is a torsion $\ZZ_p$-module. Thus, $\image(\alpha)$ is a torsion $\ZZ_p$-module
which is a quotient of $\coh^1_{\mathrm{syn}}(C_\infty,\pi^*\mathcal{S}_{D})\otimes \QQ_p$. We deduce from Lemma \ref{finitevector} that $\image(\alpha)$
is cofree of finite corank $n\leq \dim_{\QQ_p}(\coh^1_{\mathrm{syn}} (C_\infty,\pi^*\mathcal{S}_{D})\otimes \QQ_p)$.
\end{proof}

We now study the term $\image(\beta)$:
\begin{lemma}\label{beta}
The group $\image(\beta )^{\lor}$ of \eqref{exactbisbis-section2} is a finitely generated torsion $\La$-module.
\end{lemma}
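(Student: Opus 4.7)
The plan is to identify $\image(\beta)$ with the $\ZZ_p$-torsion of $\coh^2_{\mathrm{syn}}(C_\infty,\pi^*\mathcal{S}_D)$, to decompose this subgroup via the distinguished triangle \eqref{T2}, and to reduce each piece to the situation treated by Lemma \ref{linalg1}.

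By the long exact sequence \eqref{alpha_beta}, $\image(\beta)=\Ker(\gamma)$ is exactly the $\ZZ_p$-torsion subgroup of $\coh^2_{\mathrm{syn}}(C_\infty,\pi^*\mathcal{S}_D)$. Applying $\RR\Gamma(C_\infty,\pi^*(-))$ to the triangle \eqref{T2} yields the short exact sequence
$$
0\to \Coker(1-\varphi_1)\to \coh^2_{\mathrm{syn}}(C_\infty,\pi^*\mathcal{S}_D)\to \Ker(1-\varphi_2)\to 0,
$$
in which $1-\varphi_i$ is the $\Fr_q$-semilinear map $\coh^i_{\mathrm{crys}}(C_\infty^\#/W(\kpinf),\pi^*D^0(-Z))\to \coh^i_{\mathrm{crys}}(C_\infty^\#/W(\kpinf),\pi^*D(-Z))$ between modules of the form $Q^i_0\otimes W(\kpinf)$ and $P^i_0\otimes W(\kpinf)$. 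Via the rational identification $\varphi_i\leftrightarrow p^{-1}F_i$ of Lemma \ref{cofinite}(2), Lemma \ref{sblem-section2} shows that $1-\varphi_i$ is surjective after inverting $p$ and has finite-dimensional $\QQ_p$-kernel. Consequently $\Coker(1-\varphi_1)$ is a $\ZZ_p$-torsion module and $\Ker(1-\varphi_2)_{\mathrm{tors}}\subseteq (Q^2_0)_{\mathrm{tors}}\otimes W(\kpinf)$, since $W(\kpinf)$ is $\ZZ_p$-torsion-free and so the $\ZZ_p$-torsion of $Q^2_0\otimes W(\kpinf)$ is exactly $(Q^2_0)_{\mathrm{tors}}\otimes W(\kpinf)$.

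For the kernel piece, $(Q^2_0)_{\mathrm{tors}}$ is a finite abelian group, so Lemma \ref{linalg1} with $M=(Q^2_0)_{\mathrm{tors}}$ implies that $((Q^2_0)_{\mathrm{tors}}\otimes W(\kpinf))^\vee$ is a finitely generated torsion $\La$-module, and hence so is its quotient $\Ker(1-\varphi_2)_{\mathrm{tors}}^\vee$. For the cokernel piece, the essential input is that $D/D^0\simeq i_*Lie(\mathcal{A})$ is killed by $p$, so the cokernel of $Q^1_0\to P^1_0$ is $p$-torsion; combining this structural fact with the rational surjectivity provided by Lemma \ref{sblem-section2} (whose proof runs along the lines of a Banach-type open-mapping estimate) yields a uniform integer $N$ such that $p^N$ annihilates $\Coker(1-\varphi_1)$. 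Then $\Coker(1-\varphi_1)$ is a quotient of $(P^1_0/p^N P^1_0)\otimes W(\kpinf)$, and applying Lemma \ref{linalg1} to the finite group $M=P^1_0/p^N P^1_0$ shows that its Pontryagin dual embeds into a finitely generated torsion $\La$-module, hence is itself finitely generated and torsion.

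Taking $\ZZ_p$-torsion in the displayed short exact sequence is left-exact, and realizes $\image(\beta)$ as an extension of a submodule of $\Ker(1-\varphi_2)_{\mathrm{tors}}$ by $\Coker(1-\varphi_1)$. Pontryagin-dualizing exhibits $\image(\beta)^\vee$ as an extension of two finitely generated torsion $\La$-modules, and thus as itself finitely generated and torsion over $\La$. The main obstacle will be making the uniform $p^N$-annihilator on $\Coker(1-\varphi_1)$ precise: the natural route is to upgrade the rational surjectivity of Lemma \ref{sblem-section2} to an integral bound by exploiting the $p$-torsion cokernel of $Q^1_0\to P^1_0$ in tandem with the integral relation $p\varphi_1=F_1|_{Q^1_0}$ coming from $F(D^0)\subseteq pD\subseteq D^0$, together with the lattice comparison of Lemma \ref{cofinite}(1).
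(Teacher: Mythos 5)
Your proposal follows the paper's route step for step: identify $\image(\beta)=\Ker(\gamma)$ with the $p$-power torsion of the integral syntomic $\coh^2_{\mathrm{syn}}(C_\infty,\pi^*\mathcal S_D)$, use the short exact sequence coming from the triangle \eqref{T2}, apply the left-exact $p$-torsion functor, dualize, and reduce to showing that the duals of $\Ker({\bf 1}-\varphi_{2,\infty})[p^\infty]$ and $\Coker({\bf 1}-\varphi_{1,\infty})[p^\infty]$ are finitely generated torsion over $\La$. Your handling of the kernel term is the same as the paper's: by flatness of $W(\kpinf)$ it sits inside the $p$-torsion of $\coh^2_{\mathrm{crys}}(C_\infty^\#/W(\kpinf),\pi^*D^0(-Z))$, which is $(\text{finite group})\otimes W(\kpinf)$, and Lemma \ref{linalg1} finishes it.

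Where you diverge is the cokernel term. The paper disposes of both terms with the one-line assertion that they are ``$p$-torsion subgroups of finitely generated $W(\kpinf)$-modules,'' which is immediate for the kernel but, as you observe, not literally correct for $\Coker({\bf 1}-\varphi_{1,\infty})$: since ${\bf 1}-\varphi$ is only $\Fr_q$-semilinear, the cokernel is a quotient of $P^1_0\otimes W(\kpinf)$ as a $\La$-module but is \emph{not} a $W(\kpinf)$-module, and its $p$-torsion is not a priori contained in $(P^1_0)_{\mathrm{tors}}\otimes W(\kpinf)$. Your proposed patch --- produce a uniform $p^N$ annihilating $\Coker({\bf 1}-\varphi_{1,\infty})$, using that ${\bf 1}$ has cokernel killed by $p$ (since $D/D^0\simeq i_*Lie(\A)$ is a coherent $\cO_C$-module and hence $p$-torsion) together with the $\sigma$-linear-algebra content of Lemma \ref{sblem-section2} and the lattice comparison in Lemma \ref{cofinite} --- is a sensible route: once such an $N$ exists, $\Coker({\bf 1}-\varphi_{1,\infty})$ becomes a quotient of $(P^1_0/p^NP^1_0)\otimes W(\kpinf)$ and Lemma \ref{linalg1} again applies. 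However, you leave that step as a sketch and explicitly label it ``the main obstacle,'' so your proof is incomplete on precisely the point where the paper's own justification is thinnest. To complete the comparison honestly: the structure is identical, and until the uniform $p$-power bound is actually established your proposal and the paper's text are equally in need of that supplementary argument.
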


\begin{proof}
Note that \eqref{alpha_beta} yields an isomorphism $\image(\beta)\simeq \Ker(\gamma)$.
The kernel of the map
$$\gamma\colon\coh^2_{\mathrm{syn}} (C_\infty,\pi^*\mathcal{S}_{D})\lra \coh^2_{\mathrm{syn}} (C_\infty,\pi^*\mathcal{S}_{D})\otimes \QQ_p$$
is $\coh^2_{\mathrm{syn}} (C_\infty,\pi^*\mathcal{S}_{D})[p^\infty]$.
Recall that we have a short exact sequence:
$$ 0\lra \Coker({\bf 1}-\varphi_{1,\infty})\lra \coh^2_{\mathrm{syn}} (C_\infty,\pi^*\mathcal{S}_{D}) \lra \Ker({\bf 1}-\varphi_{2,\infty})\lra 0. $$
By taking the $p$-power torsion part of this sequence, we have
\begin{equation}\label{exact2-section2}
0\lra \Coker({\bf 1}-\varphi_{1,\infty})[p^\infty] \lra \image(\beta) \lra \Ker({\bf 1}-\varphi_{2,\infty})[p^\infty]\,.
\end{equation}
Taking Pontryagin duals, we have the following:
\begin{equation}\label{seq}
\Ker({\bf 1}-\varphi_{2,\infty})[p^\infty]^\vee \lra \image(\beta)^\vee  \lra \Coker({\bf 1}-\varphi_{1,\infty})[p^\infty]^\vee \lra 0,
\end{equation}
where the modules and the morphisms are defined over $\La$.
By the sequence $(\ref{seq})$, it is enough to show that $\Coker({\bf 1}-\varphi_{1,\infty})[p^\infty]^\vee$ and $\Ker({\bf 1}-\varphi_{2,\infty})[p^\infty]^\vee$ are finitely generated
torsion $\La$-modules. But these two groups are both $p$-torsion subgroups of finitely generated $W(\kpinf)$-modules, so their Pontryagin duals are $\La$-torsion by Lemma \ref{linalg1}.
\end{proof}

\begin{corollary}\label{1case} The groups
$$(N^1_\infty)^\vee \text{, } (\Ker({\bf 1}-\varphi_{1,\infty}))^\vee \text{, } (\Coker({\bf 1}-\varphi_{1,\infty}))^\vee \text{, } (\Ker({\bf 1}-\varphi_{2,\infty}))^\vee \text{ and } (N^2_\infty)^\vee$$
are $\La$-torsion. In particular, $X_p(A/\Kpinf)$ is $\La$-torsion.
\end{corollary}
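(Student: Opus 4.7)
The plan is to deduce all five torsion statements in a chain, using the exact sequences \eqref{L1}, \eqref{exactbisbis-section2}, already-established torsion results (Proposition \ref{0case}, Corollary \ref{lemmaN^1}, Lemma \ref{beta}), and the $\Lambda$-rank equality between $(M^i_{1,\infty})^\vee$ and $(M^i_{2,\infty})^\vee$ from Corollary \ref{M,L}(1). The guiding principle is the standard fact that in an exact sequence $0\to K\to M\to N\to Q\to 0$ of finitely generated $\Lambda$-modules one has $\operatorname{rank}_\Lambda K-\operatorname{rank}_\Lambda M+\operatorname{rank}_\Lambda N-\operatorname{rank}_\Lambda Q=0$, so when the middle ranks coincide so do the outer ones.

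First, I would treat $(N^1_\infty)^\vee$. Dualizing \eqref{exactbisbis-section2} yields
\begin{equation*}
0\lra \image(\beta)^\vee\lra (N^1_\infty)^\vee\lra \image(\alpha)^\vee\lra 0.
\end{equation*}
The left term is $\Lambda$-torsion by Lemma \ref{beta}, and the right term is a finitely generated $\ZZ_p$-module by Corollary \ref{lemmaN^1}, hence $\Lambda$-torsion; therefore so is $(N^1_\infty)^\vee$. Next, the portion of \eqref{L1} in degree $1$ breaks into a short exact sequence
\begin{equation*}
0\lra \Coker({\bf 1}-\varphi_{0,\infty})\lra N^1_\infty\lra \Ker({\bf 1}-\varphi_{1,\infty})\lra 0,
\end{equation*}
whose Pontryagin dual exhibits $(\Ker({\bf 1}-\varphi_{1,\infty}))^\vee$ as a submodule of the torsion module $(N^1_\infty)^\vee$, so it is $\Lambda$-torsion.

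For $(\Coker({\bf 1}-\varphi_{1,\infty}))^\vee$, I would apply the rank principle to
\begin{equation*}
0\lra \Ker({\bf 1}-\varphi_{1,\infty})\lra M^1_{1,\infty}\xrightarrow{{\bf 1}-\varphi_{1,\infty}} M^1_{2,\infty}\lra \Coker({\bf 1}-\varphi_{1,\infty})\lra 0,
\end{equation*}
after Pontryagin dualization. Since $(M^1_{1,\infty})^\vee$ and $(M^1_{2,\infty})^\vee$ have the same $\Lambda$-rank by Corollary \ref{M,L}(1), the duals of the kernel and cokernel share a common rank, and we just showed the kernel's dual is torsion. The same rank argument, applied to the sequence
\begin{equation*}
0\lra \Ker({\bf 1}-\varphi_{2,\infty})\lra M^2_{1,\infty}\xrightarrow{{\bf 1}-\varphi_{2,\infty}} M^2_{2,\infty}\lra 0,
\end{equation*}
where surjectivity of ${\bf 1}-\varphi_{2,\infty}$ follows from the fact that the cohomology theories $M$ and $N$ vanish in degree $\geq 3$ (so the connecting map from $M^2_{2,\infty}$ lands in $N^3_\infty=0$), dualized and combined with Corollary \ref{M,L}(1), shows that $(\Ker({\bf 1}-\varphi_{2,\infty}))^\vee$ is $\Lambda$-torsion.

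Finally, for $(N^2_\infty)^\vee$, the degree-$2$ portion of \eqref{L1} gives
\begin{equation*}
0\lra \Coker({\bf 1}-\varphi_{1,\infty})\lra N^2_\infty\lra \Ker({\bf 1}-\varphi_{2,\infty})\lra 0,
\end{equation*}
so dualizing and applying the two preceding steps places $(N^2_\infty)^\vee$ in an exact sequence between two $\Lambda$-torsion modules, whence it too is torsion. The last sentence follows immediately because $X_p(A/\Kpinf)$ is a submodule of $(N^1_\infty)^\vee$ by \cite{KT03}. No step presents a serious obstacle here: the nontrivial input is already contained in Proposition \ref{0case}, Lemma \ref{beta} and Corollary \ref{lemmaN^1}, and what remains is a bookkeeping exercise with exact sequences and the rank equality of Corollary \ref{M,L}(1).
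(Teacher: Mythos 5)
Your proposal is correct and follows essentially the same chain of deductions as the paper's own proof: torsion of $(N^1_\infty)^\vee$ from \eqref{exactbisbis-section2} via Lemma \ref{beta} and Corollary \ref{lemmaN^1}, then propagating through the short exact sequences coming from \eqref{L1} together with the rank equality of Corollary \ref{M,L}(1). The only difference is that you spell out explicitly the alternating-rank bookkeeping and the vanishing of $\Coker({\bf 1}-\varphi_{2,\infty})$ (from $N^3_\infty=0$), which the paper leaves implicit.
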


\begin{proof} The module $(N^1_\infty)^\vee$ is $\La$-torsion by Lemma \ref{beta} and Corollary \ref{lemmaN^1}. By \eqref{L1}, using the short exact sequence
\begin{equation} \label{e:sesNi} 0\lr \Coker({\bf 1}-\varphi_{i-1,\infty})\lr N^i_\infty\lr \Ker({\bf 1}-\varphi_{i,\infty})\lr 0 \end{equation}
we find that $(\Ker({\bf 1}-\varphi_{1,\infty}))^\vee$ is $\La$-torsion. Then by using the exact sequence
$$0\lr \Ker({\bf 1}-\varphi_{1,\infty})\lr M^1_{1,\infty}\lr M^1_{2,\infty}\lr \Coker({\bf 1}-\varphi_{1,\infty})\lr 0$$
we deduce that $(\Coker({\bf 1}-\varphi_{1,\infty}))^\vee$ is $\La$-torsion. Finally, by the exact sequence
$$0\lr \Ker({\bf 1}-\varphi_{2,\infty})\lr M^2_{1,\infty}\lr M^2_{2,\infty}\lr 0$$
we know that $(\Ker({\bf 1}-\varphi_{2,\infty}))^\vee$ is $\La$-torsion. Hence, the short exact sequence
$$0\lr \Coker({\bf 1}-\varphi_{1,\infty})\lr N^2_\infty\lr \Ker({\bf 1}-\varphi_{2,\infty})\lr 0$$
tells us that $(N^2_\infty)^\vee$ is also $\La$-torsion. For the last assertion, note that the surjections $N^1_n\twoheadrightarrow \Sel_{p^\infty}(A/K_n)$ proved in \cite[\S2.5.2]{KT03} induce a surjection $N^1_\infty\twoheadrightarrow \Sel_p(A/\Kpinf)$ by passing to the inductive limit in $n$. But since $(N^1_\infty)^\vee$ is $\La$-torsion, the same assertion holds for $X_p(A/\Kpinf)$.
\end{proof}

\noindent This completes the proof of Theorem \ref{GIMC1}.

\subsection{The characteristic element} \label{su:charelem}
We denote $Q(\La)$ the field of fractions of $\La$. As mentioned in the introduction, in this section we find it more convenient to use characteristic elements rather than characteristic ideals. This is because we are going to take ratios of characteristic elements and we find it more suggestive to work with $Q(\La)^\times/\La^\times$ rather than with the divisor group of $\La$. So, for any finitely generated $\La$-torsion module $M$, we let $f_M\in \La$ be a characteristic element associated with $M$; $f_M$ is defined uniquely up to $\La^\times$.

We set
\begin{equation} \label{e:charelement}
f_{A/\Kpinf}:=\frac{f_{(N^1_\infty)^\vee}f_{(L^0_\infty)^\vee}}{f_{(N^0_\infty)^\vee}f_{(N^2_\infty)^\vee}f_{(L^1_\infty)^\vee}}\in Q(\La)^\times/\La^\times
\end{equation}
and call it the characteristic element associated with $A/K$ relative to the arithmetic $\ZZ_p$-extension of $K$.

\subsubsection{Arithmetic interpretation} \label{ss:bsdinfty} The characteristic element $f_{A/\Kpinf}$ is related to the arithmetic invariants of $A$ as follows. By considering the
$p$-torsion part of the exact sequence in \cite[\S2.5.2]{KT03} and using the fact that the functor ``take the $p$-primary part'' is exact in the category of finite abelian groups,
we deduce an exact sequence:
\begin{equation} \label{e:alg-arithm0} 0\lr N^0_0\lr A(K)[p^\infty]\lr (\oplus_{v\in Z}\M_v)[p^\infty]\lr N^1_0\lr \Sel_{p^\infty}(A/K)\lr 0, \end{equation}
with $\M_v:=A(K_v)/{\A}(\fm_v)$, where $O_v$ and $\fm_v$ are ring of integers and maximal ideal of $K_v$ and
$${\A}(\fm_v):=\Ker\!\big(A(K_v)= {\A}(O_v)\lr {\A}(k(v))\big).$$
Observe that, since we assumed that $A/K$ has semistable reduction, we can take the group $V_v$ defined in \cite[Proposition 5.13]{KT03} to be equal to ${\A}(\fm_v)$. Since $\A/O_v$ is
smooth we have in fact $\M_v=\A(k(v))$. The finite group $\M_v$ is controlled by the short exact sequence:
$$0\,\lr Q_v\lr \M_v\lr \Phi_v\lr 0,$$
where $\Phi_v$ is the (finite) group of components, $\Phi_v:=({\A}/{\A}^0)(k(v))$, and
$$Q_v={\A}^0(O_v)/({\A}(\fm_v)\cap{\A}^0(O_v))={\A}^0(k(v))$$
by Hensel's lemma. Moreover, since $A/K$ has semistable reduction at $v$, we have a short exact sequence:
$$0\,\lr T_v\lr {\A}^0_v\lr B_v\lr 0,$$
where $T_v$ is a torus and $B_v$ an abelian variety over $k(v)$.\\

Since the N\'eron model functor is stable by \'etale base change, we also have for any $n\geq 0$ an exact sequence:
$$ 0\to N^0_n\to A(K_n)[p^\infty]\to(\oplus_{w\in C_n, w|v\in Z}\M_w)[p^\infty]\to N^1_n\to \Sel_{p^\infty}(A/K_n)\to 0, $$
which induces, by passing to the inductive limit in $n$, an exact sequence:
\begin{equation} \label{e:alg-arithm} 0\to N^0_\infty\to A(\Kpinf)[p^\infty]\to (\oplus_{w\in C_\infty, w|v\in Z}\M_w)[p^\infty]\to N^1_\infty\to \Sel_{p^\infty}(A/\Kpinf)\to 0 \end{equation}
and then, by passing to the Pontryagin dual, an exact sequence of finitely generated torsion $\La$-modules. We set
$$\M_n:=(\oplus_{w\in C_n, w|v\in Z}\M_w)[p^\infty]$$
and $\M_\infty:=\varinjlim \M_n$. By multiplicativity of characteristic elements associated with torsion $\La$-modules, we have:
\begin{equation} \label{alg-arith1} \frac{f_{(N^1_\infty)^\vee}}{f_{(N^0_\infty)^\vee}}=\frac{f_{X_p(A/\Kpinf)}f_{\M_\infty^\vee}^{}}{f_{A(\Kpinf)[p^\infty]^\vee}}. \end{equation}

Let $\zeta_1,\dots,\zeta_l$ be the eigenvalues of the Galois actions of $\Fr_q$ on $T_pA(K^{(p)}_\infty)[p^{\infty}]$. Then by \cite[Proposition 2.3.5]{tan10b} we can write
\begin{equation}\label{e:fvee}
f_{A(K^{(p)}_\infty)[p^{\infty}]^\vee}=\prod_{i=1}^l (1-\zeta_i^{-1}\Fr_q^{-1})\,.
\end{equation}

\begin{lemma} \label{l:minfty} Denote by $g(v)$ the dimension of $B_v$ and let $\beta_{1}^{(v)},..., \beta_{2g(v)}^{(v)}\in \bar\QQ_p $ be the eigenvalues of the Frobenius endomorphism $\tF_{B_v,\,q_v}$ (notations of \S\ref{su:fa}). Then
\begin{equation}\label{e:fMinfty}
f_{\M_\infty^\vee}=\prod_{v\in Z}\prod_{i=1}^{2g(v)} ({\beta_{i}^{(v)}}-\Fr_v^{-1})\,.
\end{equation}
\end{lemma}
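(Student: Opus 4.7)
The plan is to localize the computation at each $v\in Z$ and then to reduce to a familiar statement on the abelian quotient $B_v$. Since $\M_\infty$ splits as $\bigoplus_{v\in Z}\M_{v,\infty}$ with $\M_{v,\infty}:=\bigoplus_{w\mid v}\M_w[p^\infty]$ ($w$ running over places of $C_\infty$ above $v$), it suffices to compute $f_{\M_{v,\infty}^\vee}$ for each $v$ and multiply. Because $\Kpinf/K$ is unramified everywhere, the decomposition group at $v$ is the closed subgroup $D_v\subseteq\Gamma$ topologically generated by $\Fr_v=\Fr_q^{\deg v}$; writing $\deg v=p^{a_v}m_v$ with $(m_v,p)=1$, $D_v$ has finite index $p^{a_v}$ in $\Gamma$, and the common residue field of all the places $w\mid v$ in $C_\infty$ is $\ell_v:=k(v)\cdot\kpinf$, the $\ZZ_p$-extension of $k(v)$, with $\Gal(\ell_v/k(v))\simeq D_v$. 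Grouping summands into $\Gamma$-orbits identifies
\[
\M_{v,\infty}\;\simeq\;\La\otimes_{\La(D_v)}\A_v(\ell_v)[p^\infty]
\]
as $\La$-modules. Since $[\Gamma:D_v]$ is finite, induction agrees with coinduction, and Pontryagin duality gives $\M_{v,\infty}^\vee\simeq\La\otimes_{\La(D_v)}\A_v(\ell_v)[p^\infty]^\vee$.

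The freeness of $\La$ of finite rank over $\La(D_v)$ implies that the characteristic ideal over $\La$ of this induced module is the image under $\La(D_v)\hookrightarrow\La$ of the characteristic ideal of $\A_v(\ell_v)[p^\infty]^\vee$ over $\La(D_v)$. I next pass from $\A_v$ to the abelian quotient $B_v$, using the short exact sequences $0\to T_v\to\A_v^0\to B_v\to 0$ and $0\to\A_v^0\to\A_v\to\Phi_v\to 0$. Lang's theorem applied to the union of finite fields $\ell_v$ gives $\coh^1(\ell_v,T_v)=\coh^1(\ell_v,\A_v^0)=0$; moreover $T_v$ has no $p$-power torsion over any perfect extension of $\FF_p$, and $T_v(\ell_v)$ is $p$-divisible because $\ell_v$ is perfect and $[p]\colon T_v\to T_v$ is a purely inseparable isogeny. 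Combined these force $\A_v^0(\ell_v)[p^\infty]\simeq B_v(\ell_v)[p^\infty]$ together with an inclusion $\A_v(\ell_v)[p^\infty]/\A_v^0(\ell_v)[p^\infty]\hookrightarrow\Phi_v(\ell_v)$ of finite cokernel, whence $\A_v(\ell_v)[p^\infty]^\vee$ and $B_v(\ell_v)[p^\infty]^\vee$ are pseudo-isomorphic as $\La(D_v)$-modules.

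The characteristic element of $B_v(\ell_v)[p^\infty]^\vee$ over $\La(D_v)$ is now given by applying the argument that produces \eqref{e:fvee} to the constant abelian variety $B_v/k(v)$ and its $\ZZ_p$-extension $\ell_v/k(v)$: after extending scalars to a discrete valuation ring $\cO_v$ containing all Weil numbers $\beta_i^{(v)}$, semisimplicity of Frobenius on $T_pB_v$ (Tate) together with the rank-one calculation yields
\[
\prod_{\beta_i^{(v)}\in\cO_v^\times}\bigl(\beta_i^{(v)}-\Fr_v^{-1}\bigr)
\]
up to units of $\cO_v[[D_v]]$. The product may be extended over all $2g(v)$ indices $i$ without changing the generated ideal: whenever $v_p(\beta_i^{(v)})>0$, the factor $\beta_i^{(v)}-\Fr_v^{-1}=-\Fr_v^{-1}\bigl(1-\beta_i^{(v)}\Fr_v\bigr)$ is already a unit of $\cO_v[[\Gamma]]$, since $\Fr_v\equiv 1$ modulo the augmentation ideal and $\beta_i^{(v)}$ lies in the maximal ideal of $\cO_v$, placing $\beta_i^{(v)}\Fr_v$ in the maximal ideal. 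Descending through $\La(D_v)\hookrightarrow\La$ and multiplying the contributions of all $v\in Z$ gives the claimed formula.

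The principal obstacle is the bookkeeping around the induction $\La\otimes_{\La(D_v)}{-}$: one must verify that this functor preserves pseudo-isomorphisms (so that the finite contributions coming from $T_v$ and $\Phi_v$ remain pseudo-null when lifted to $\La$) and that elementary divisors transfer intact across the embedding $\La(D_v)\hookrightarrow\La$. Both properties follow from $\La$ being a finite free $\La(D_v)$-algebra together with the two-dimensional regular structure of both rings, but the care needed to handle these comparisons (especially to keep track of which factors are genuine characteristic divisors and which are units) is the most delicate step of the argument.
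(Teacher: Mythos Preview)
Your approach is essentially the paper's: localize at each $v\in Z$, use induction from $\La(D_v)$ to $\La$, strip away $T_v$ and $\Phi_v$ to reduce to $B_v(\ell_v)[p^\infty]^\vee$, compute the latter via the Frobenius eigenvalues on $T_pB_v$, and then harmlessly extend the product to all $2g(v)$ indices because the non-unit factors are already units in the Iwasawa algebra.

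There is one genuine step you glide over. When you invoke ``the argument that produces \eqref{e:fvee}'' you obtain the product over the eigenvalues of $\Fr_v$ acting on $T_pB_v$; you then write this as $\prod_{\beta_i^{(v)}\in\cO_v^\times}(\beta_i^{(v)}-\Fr_v^{-1})$, silently identifying those eigenvalues with the $p$-adic unit Weil numbers among the $\beta_i^{(v)}$. That identification is true but is not a formality, and the paper actually proves it: one knows a priori only that the eigenvalues on $T_pB_v$ form some sub-multiset of the $\beta_i^{(v)}$ consisting of $p$-adic units, say $\beta_1^{(v)},\dots,\beta_{h(v)}^{(v)}$ with $h(v)\le f(v)$. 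The paper then compares $|B_v[p^\infty](k(v)_N)|$ computed two ways (once via $T_pB_v$, once via Weil's formula $\prod_i(1-(\beta_i^{(v)})^N)$) and, choosing $N$ so that $(\beta_i^{(v)})^N\equiv 1\pmod p$ for the unit eigenvalues, forces $h(v)=f(v)$. You should either supply this counting argument or cite the standard Dieudonn\'e-theoretic fact that the \'etale part of $B_v[p^\infty]$ corresponds exactly to the slope-zero (unit-root) Frobenius eigenvalues.

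By contrast, the ``principal obstacle'' you flag---that induction $\La\otimes_{\La(D_v)}{-}$ preserves pseudo-isomorphisms and characteristic ideals---is routine here and the paper dispatches it in a line; since $\La$ is free of finite rank over $\La(D_v)$, the functor is exact and sends finite modules to finite modules, so pseudo-null goes to pseudo-null and $\La/\xi\La\simeq\La\otimes_{\La(D_v)}\La(D_v)/\xi\La(D_v)$ transports elementary divisors verbatim.
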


\begin{proof} For $v$ a place of $K$, let $\Gamma_v\subset\Gamma$ denote the decomposition group at $v$ and put $\La_v:=\ZZ_p[[\Gamma_v]]$. Thus we get $\La=\oplus_{\sigma\in \Gamma/\Gamma_v}\sigma\La_v\,.$ For each $v\in Z$, choose a $w_0\in C_\infty$ sitting over $v$. Then
$$\M_{\infty, v}:=\bigoplus_{w\in C_\infty, w|v}\M_w[p^\infty]=\bigoplus_{\sigma\in \Gamma/\Gamma_v} \sigma \M_{w_0}[p^\infty] $$
and hence $\M_{\infty, v}^\vee=\La \otimes_{\La_v} \M_{w_0}[p^\infty]^\vee$. In particular, the characteristic element $f_{\M_{\infty, v}^\vee}$ can be chosen to be that of $\M_{w_0}[p^\infty]^\vee$ over $\La_v$. Also, since $\Phi_v$ is finite and $T_v$ is a torus, $\M_{w_0}[p^\infty]^\vee$ is pseudo-isomorphic to $B_v[p^\infty](k_\infty^{(p)})^\vee$.
For each $v$, we order the eigenvalues $\beta_i^{(v)}$ so that $\beta_i^{(v)}$ is a $p$-adic unit if and only if $i\leq f(v)\leq g(v)$.

Let $\Fr_v\colon x\mapsto x^{q_v}$ denote the Frobenius substitution as an element of $\Gal(\overline{k(v)}/k(v))$ (and also, by abuse of notation, the corresponding element of $\Gal(k_\infty^{(p)}/k(v))$ and of $\Gamma_v$). The product $\beta_{1}^{(v)}\cdot...\cdot\beta_{f(v)}^{(v)}$ is a $p$-adic unit and $\prod_{i>f(v)} ({\beta_{i}^{(v)}}-\Fr_v^{-1})$ is a unit in $\La$.
We claim that $\beta_{1}^{(v)},...,\beta_{f(v)}^{(v)}$ are the eigenvalues of the action of $\Fr_v$ on the Tate module $T_pB_v\,$.
Then by \cite[Proposition 2.3.6]{tan10b} we can write
$$ f_{\M_{\infty, v}^\vee}=\prod_{i=1}^{f(v)} ({\beta_{i}^{(v)}}-\Fr_v^{-1})=\prod_{i=1}^{2g(v)} ({\beta_{i}^{(v)}}-\Fr_v^{-1})$$
and \eqref{e:fMinfty} follows from $f_{\M_\infty^\vee}=\prod_{v\in Z} f_{\M_{\infty,v}^\vee}$

We have to prove the claim. Let $\rho\colon\mathrm{End}_{k(v)}(B_v)\rightarrow \mathrm{End} (T_pB_v)$ denote the $p$-adic representation. Then for every $f\in \mathrm{End}_{k(v)}(B_v)$ the eigenvalues of $\rho(f)$, counting multiplicities, are a portion of those of $f$ (this can be seen e.g.~mimicking the argument in the proof of \cite[Theorem 12.18]{gm13}, with $\Ker(f)$ replaced by its maximal \'etale subgroup). In particular, we can rearrange the order of the $\beta_i^{(v)}$'s so that, for every positive integer $N$, the eigenvalues of $\rho(\tF_{B_v,q_v}^N)=\Fr_v^N$ equal $(\beta_1^{(v)})^N,...,(\beta_{h(v)}^{(v)})^N$ for some $h(v)\leq f(v)$. Let $k(v)_N$ denote the degree $N$ extension of $k(v)$. Letting $\equiv_p$ denote congruence modulo $p$-adic units, we have
$$\prod_{i=1}^{h(v)}(1-(\beta_{i}^{(v)})^N)\equiv_p|B_v[p^\infty](k(v)_N)|\equiv_p\prod_{i=1}^{2g(v)}(1-(\beta_{i}^{(v)})^N)\equiv_p\prod_{i=1}^{f(v)}(1-(\beta_{i}^{(v)})^N)\,,$$
where the second equality is from Weil's formula and the third is from the fact that $1-(\beta_{i}^{(v)})^N$ is a $p$-adic unit if $i>f(v)$. Taking $N$ such that $(\beta_{i}^{(v)})^N\equiv 1$ mod $p$ for all $i<f(v)$, we deduce $h(v)=f(v)$.
\end{proof}

Write $\AA_K$ for the adelic ring. Let $\mu=(\mu_v)_v$ be the Haar measure on $Lie({\A})( \AA_K)$ such that
$$\mu_v(Lie({\cal A})(O_v)):=1$$ for every $v$ and let $\alpha_v$ denote the Haar measure on $A(K_v)=\A(O_v)$ such that for $n\geq 1$
$$\alpha_v(\A(\fm_v^n)):=\mu_v (Lie({\cal A})(\fm_v^n)).$$
Then since $|\A(O_v)/\A(\fm_v)|=|\M_v|$ and $Lie({\cal A})(O_v)/Lie({\cal A})(\fm_v)\simeq (O_v/\fm_v)^g$, we have
\begin{equation}\label{e:alpgaM}
\alpha_v(\A(O_v))=|\M_v|\cdot q_v^{-g}.
\end{equation}
By \cite[p.552]{KT03} we have the relation:
\begin{equation} \label{e:tamagawa} |\M_0|\cdot |L^0_0|\cdot |L^1_0|^{-1}= \mu(Lie(\A)( \AA_K)/Lie(\A)( K))^{-1}\cdot\prod_{v\in Z}\alpha_v(\A(O_{v}))\,. \end{equation}
Thus, by \eqref{e:alpgaM} and \eqref{e:tamagawa}
\begin{equation}\label{e:tamagawa2} |L^0_0|\cdot|L^1_0|^{-1}=q^{-g\deg (Z)}\cdot \mu(Lie({\cal A})( \mathbb{A}_K)/ Lie({\cal A})( K))^{-1}. \end{equation}

Next, we choose a a basis $e_1,...,e_g$ of the $K$-vector space $Lie(A)(K)=Lie({\A})(K)$. Then for every $v$ the exterior product $e:=e_1\wedge\cdots\wedge e_g$
determines the Haar measure $\mu_v^{(e)}$ on $ Lie({\A})(K_v)$ that has measure $1$ on the compact subset $\fL(O_v):=O_ve_1+\cdots + O_ve_g$. Similarly, if we choose a a basis $f_1,...,f_g$ of $Lie({\A})(O_v)$ over $O_v$, then the exterior product $f_v:=f_{1 v}\wedge\cdots\wedge f_{g v}$ actually determines the Haar measure $\mu_v$. Define the number $\delta$ by
\begin{equation}\label{e:deltaratio}
q^{-\delta}:=\prod_{\text{all}\; v} \frac{\mu_v^{(e)}(Lie({\A})(O_v))}{\mu_v(Lie({\A})(O_v))}= \prod_{\text{all}\; v} \mu_v^{(e)}(Lie({\A})(O_v))\,, \end{equation}
so that the Haar measures $\mu$ and $\mu^{(e)}$ are related by $\mu^{(e)}=q^{-\delta}{\mu}\,.$
\footnote{If $g=1$ and $\Delta$ denote the global discriminant, then $\delta=\frac{\deg (\Delta)}{12}$ (see e.g.~ \cite[eq.~(9)]{tan95v}).}
By a well-known computation (see e.g.~\cite[VI, Corollary 1 of Theorem 1]{We73}) one finds
$$\mu^{(e)}(Lie({\cal A})( \mathbb{A}_K)/Lie({\cal A})(K))=q^{g(\kappa-1)},$$
with $\kappa$ the genus of $C/\FF$, whence, by \eqref{e:tamagawa2} and \eqref{e:deltaratio},
\begin{equation}\label{e:tamagawa3}
|L^0_0|\cdot|L^1_0|^{-1}=q^{-g(\deg (Z)+\kappa-1)-\delta}.
\end{equation}

\begin{lemma}\label{l:l0l1}
Under the above notation we can write
$$\frac{f_{(L_\infty^0)^\vee}}{f_{(L_\infty^1)^\vee}}=q^{-g(\deg (Z)+\kappa-1)-\delta}.$$
\end{lemma}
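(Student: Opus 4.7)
The proof will be a short direct computation combining the structural description of $(L^i_\infty)^\vee$ given in Corollary \ref{M,L}(2) with the arithmetic identity \eqref{e:tamagawa3}. There is no serious obstacle; the main content is simply to translate the equality of cardinalities \eqref{e:tamagawa3} into an equality of characteristic elements.

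First I would invoke Corollary \ref{M,L}(2), which tells us that for $i=0,1$ there is an isomorphism of $\La$-modules
\[
(L^i_\infty)^\vee \;\simeq\; \bigoplus_{j=1}^{d(L^i_0)} \La/p\La\,.
\]
Since $\La = \ZZ_p[[T]]$ is a unique factorization domain in which $p$ is a prime element, the characteristic ideal of $\La/p\La$ is $(p)$, and consequently a characteristic element for $(L^i_\infty)^\vee$ can be chosen to be $p^{d(L^i_0)}$, well-defined modulo $\La^\times$. Thus in $Q(\La)^\times/\La^\times$ we have
\[
\frac{f_{(L^0_\infty)^\vee}}{f_{(L^1_\infty)^\vee}} \;=\; p^{d(L^0_0)-d(L^1_0)}\,.
\]

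Next I would use that $L^i_0$ is by construction a finite $\FF$-vector space of $\FF$-dimension $d(L^i_0)$ (or equivalently an $\FF_p$-vector space of dimension $a\cdot d(L^i_0)$, where $q=p^a$), so that $|L^i_0| = q^{d(L^i_0)}$. Hence
\[
p^{d(L^0_0)-d(L^1_0)} \;=\; q^{(d(L^0_0)-d(L^1_0))/a} \;=\; \frac{|L^0_0|^{1/a}}{|L^1_0|^{1/a}}\,.
\]
Finally, \eqref{e:tamagawa3} provides the equality $|L^0_0|\cdot|L^1_0|^{-1} = q^{-g(\deg(Z)+\kappa-1)-\delta}$ (so the exponent is indeed divisible by $a$, since the left side is a power of $p$ equal to a power of $q$), which yields
\[
\frac{f_{(L^0_\infty)^\vee}}{f_{(L^1_\infty)^\vee}} \;=\; q^{-g(\deg(Z)+\kappa-1)-\delta}
\]
as required. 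The entire argument is essentially bookkeeping once Corollary \ref{M,L}(2) and \eqref{e:tamagawa3} are in hand, so no step presents a genuine difficulty; the only point where one must tread carefully is to note that the equality \eqref{e:tamagawa3} of $p$-power integers is the same thing as the equality of characteristic elements in $\La$, because both sides are global scalars (powers of $p$) and characteristic ideals of modules of the form $(\La/p\La)^n$ are generated by such scalars.
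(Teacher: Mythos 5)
Your overall strategy is the right one, and it is the same as the paper's: read off the characteristic element of $(L^i_\infty)^\vee$ as a power of $p$ from Corollary \ref{M,L}(2), then feed in \eqref{e:tamagawa3}. However, the middle two steps of your computation contain a genuine error.

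You claim that $L^i_0$ is an $\FF$-vector space of $\FF$-dimension $d(L^i_0)$, whence $|L^i_0|=q^{d(L^i_0)}$. This contradicts the paper's definition: in item (7) of \S\ref{ss:cohomtheories} the integer $d(L^i_0)$ is introduced as the $\FF_p$-rank of $L^i_0$, so the correct identity is
\[
|L^i_0|=p^{d(L^i_0)},
\]
not $q^{d(L^i_0)}$. With the correct identification the proof actually \emph{shortens}: since $f_{(L^i_\infty)^\vee}=p^{d(L^i_0)}$ by Corollary \ref{M,L}(2), one has
\[
\frac{f_{(L^0_\infty)^\vee}}{f_{(L^1_\infty)^\vee}}=p^{d(L^0_0)-d(L^1_0)}=\frac{|L^0_0|}{|L^1_0|}=q^{-g(\deg(Z)+\kappa-1)-\delta}
\]
by \eqref{e:tamagawa3}, with no $a$th roots anywhere. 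Your version instead arrives at $(|L^0_0|/|L^1_0|)^{1/a}$, and the final line silently replaces this by $|L^0_0|/|L^1_0|$ -- a non sequitur whenever $a=[\FF:\FF_p]>1$. (A power of $p$ is not a unit in $\La$, so one cannot discard the discrepancy modulo $\La^\times$ either.) The fix is just to use the paper's normalization of $d(L^i_0)$; once you do, your computation becomes correct and coincides with the paper's argument.
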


\begin{proof}
Since $L_\infty^i\simeq L_0^i\otimes_{\ZZ_p} W(k_\infty^{(p)})$, the lemma follows from \eqref{e:tamagawa3} and Lemma \ref{linalg1} (as well as its proof).
\end{proof}

Finally, by \cite[2.5.3]{KT03}, we have for any $n\geq 0$ an isomorphism:
\begin{equation} \label{e:KT253} N^2_n\simeq \Sel_{\ZZ_p}(A^t/K_n)^\vee, \end{equation}
where $\Sel_{\ZZ_p}(.):=\varprojlim \Sel_{p^n}(.)$ denotes the compact Selmer group as in \cite[\S2.3]{KT03}. These isomorphisms induce, when passing to the inductive limit, an isomorphism
\begin{equation} \label{e:N2compactSel} (N^2_\infty)^\vee\simeq \Sel_{\ZZ_p}(A^t/\Kpinf):=\ilim \Sel_{\ZZ_p}(A^t/K_n)\,. \end{equation}
Let $T_p(A(K^{(p)}_\infty)):=\varprojlim A(K^{(p)}_\infty)[p^{m}]$ denote the Tate-module of $A(K^{(p)}_\infty)$.

\begin{proposition} \label{p:N2trivial}
If $A_{p^\infty}(\Kpinf)$ is a finite group, then $N^2_\infty=0$.
In general, we can write
$$f_{(N^2_{\infty})^\vee}=f_{T_p(A^t(K^{(p)}_\infty))}.$$
\end{proposition}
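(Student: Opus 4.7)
The plan is to start from the identification $(N^2_\infty)^\vee \simeq \Sel_{\ZZ_p}(A^t/\Kpinf)$ furnished by \eqref{e:N2compactSel}, and analyze this compact Selmer group by iterating the Kummer-theoretic description of Selmer, first in the $p$-power direction and then along the arithmetic tower. For each finite layer $K_n$, taking the projective limit in $m$ of the standard short exact sequences
$$0 \lr A^t(K_n)/p^m \lr \Sel_{p^m}(A^t/K_n) \lr \Sha(A^t/K_n)[p^m] \lr 0$$
(Mittag--Leffler is immediate since every term is finite) produces
$$0 \lr A^t(K_n) \otimes \ZZ_p \lr \Sel_{\ZZ_p}(A^t/K_n) \lr T_p\Sha(A^t/K_n) \lr 0 .$$
I would then take the second inverse limit in $n$, with respect to the corestriction (norm) transitions that dualize the natural maps $N^2_n \to N^2_{n+1}$ defining $N^2_\infty = \varinjlim N^2_n$, to obtain an exact sequence of finitely generated $\La$-modules
$$0 \lr \varprojlim_n \bigl(A^t(K_n) \otimes \ZZ_p\bigr) \lr (N^2_\infty)^\vee \lr \varprojlim_n T_p\Sha(A^t/K_n).$$

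For the first assertion, assume $A_{p^\infty}(\Kpinf)$ is finite. By isogeny the same holds for $A^t_{p^\infty}(\Kpinf)$, so $A^t(\Kpinf)$ is finitely generated (Lang--N\'eron, the hypothesis ruling out an infinite trace), hence $A^t(K_n) \otimes \ZZ_p$ stabilizes for $n \gg 0$; once stable, the norm map acts as multiplication by $p$ on this finitely generated $\ZZ_p$-module, forcing $\varprojlim_n A^t(K_n) \otimes \ZZ_p = 0$. A parallel norm argument, together with the fact that the divisible part of $\Sha_{p^\infty}(A^t/K_n)$ is killed in the inverse limit, yields $\varprojlim_n T_p\Sha(A^t/K_n)=0$. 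Hence $(N^2_\infty)^\vee = 0$, i.e.\ $N^2_\infty = 0$.

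For the general assertion, the plan is to establish a pseudo-isomorphism of finitely generated torsion $\La$-modules
$$(N^2_\infty)^\vee \sim T_p\bigl(A^t(\Kpinf)\bigr)\,,$$
which, since characteristic elements are invariant under pseudo-isomorphism, immediately gives $f_{(N^2_\infty)^\vee} = f_{T_p(A^t(\Kpinf))}$ in $Q(\La)^\times / \La^\times$. The free part of $\varprojlim_n A^t(K_n) \otimes \ZZ_p$ dies for the same reason as above, whereas the $p$-primary torsion contribution $\varprojlim_n A^t(K_n)[p^\infty]$ assembles, up to a pseudo-null error, into $T_p(A^t(\Kpinf)) = \varprojlim_m A^t(\Kpinf)[p^m]$ via an interchange-of-limits argument exploiting $A^t(\Kpinf)[p^\infty] = \varinjlim_n A^t(K_n)[p^\infty]$. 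The residual term $\varprojlim_n T_p\Sha(A^t/K_n)$ is pseudo-null, being a finitely generated $\ZZ_p$-module whose $\Fr_q$-eigenvalues are distinct from those appearing in $T_p(A^t(\Kpinf))$ (and in fact vanishes entirely in the many cases where $\Sha_{p^\infty}$ is known to be finite along the tower).

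The hard part will be making rigorous the identification of $\varprojlim_n A^t(K_n)[p^\infty]$, with norm transitions, with $T_p(A^t(\Kpinf)) = \varprojlim_m A^t(\Kpinf)[p^m]$, with multiplication-by-$p$ transitions. This requires interchanging the two inverse limits and exploiting the identity $\Nm_n|_{A^t(K_n)[p^\infty]} = p \cdot \mathrm{id}$ together with a control on the quotients $A^t(\Kpinf)[p^\infty]/A^t(K_n)[p^\infty]$ along the arithmetic $\ZZ_p$-tower, so that the ``new'' torsion appearing at each level $n+1$ contributes coherently to the Tate module in the limit.
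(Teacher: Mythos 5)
Your overall strategy --- start from \eqref{e:N2compactSel}, decompose $\Sel_{\ZZ_p}(A^t/K_n)$, and kill the unwanted piece by a norm argument along the arithmetic tower --- is the same as the paper's, but you chose a different decomposition, and this choice opens a genuine gap in the general case. The paper does not use the Kummer exact sequence $0\to A^t(K_n)/p^m \to \Sel_{p^m} \to \Sha[p^m]\to 0$. Instead it uses the sequence coming from $A^t_{p^m}\hookrightarrow A^t_{p^\infty}\xrightarrow{p^m} A^t_{p^\infty}$, namely
$$0\lr D_n/p^mD_n \lr \Sel_{p^m}(A^t/K_n) \lr \Sel_{p^\infty}(A^t/K_n)[p^m]\lr 0$$
with $D_n:=A^t(K_n)[p^\infty]$ finite; passing to $\varprojlim_m$ this yields $0\to D_n\to \Sel_{\ZZ_p}(A^t/K_n)\to V_n\to 0$ where $V_n:=T_p\big(\Sel_{p^\infty}(A^t/K_n)_{div}\big)$. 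The advantage is that $D_n$ is already the torsion piece and $V_n$ is a single free $\ZZ_p$-module absorbing simultaneously the Mordell--Weil free part and $T_p\Sha$, so that one only has to show that the single quantity $V_n$ stabilizes for $n\gg0$ (which follows from $\Sel_{p^\infty}(A^t/\Kpinf)$ being $\La$-cotorsion, combined with Lemma \ref{l:h1d}) and then dies under the norm maps. By contrast, your Kummer decomposition splits what the paper treats uniformly into two terms ($A^t(K_n)\otimes\ZZ_p$ and $T_p\Sha$) that must be handled separately, and the second of them is where your argument breaks.

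The concrete gap is in the treatment of $\varprojlim_n T_p\Sha(A^t/K_n)$ in the general case. You claim it is pseudo-null ``being a finitely generated $\ZZ_p$-module whose $\Fr_q$-eigenvalues are distinct from those appearing in $T_p(A^t(\Kpinf))$.'' This reasoning cannot work: here $\Gamma\simeq\ZZ_p$, so $\La$ is of Krull dimension $2$ and ``pseudo-null'' means ``finite,'' yet a nonzero free $\ZZ_p$-module is never finite, and no argument about Frobenius eigenvalues changes that. What is actually true, and what the paper's decomposition obtains automatically, is that this limit \emph{vanishes}: under the standing $\La$-cotorsion hypothesis one can show (exactly as in the paper's argument for $V_n$) that $T_p\Sha(A^t/K_n)$ stabilizes for $n\gg0$ and that the corestriction maps eventually act by multiplication by $p^{r-n}$, forcing the inverse limit to zero. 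Your proposal does not supply this; it substitutes incorrect reasoning. Likewise, the stabilization of $A^t(K_n)\otimes\ZZ_p$ needed to kill its free part is asserted via ``Lang--N\'eron'' but the hypothesis only controls the $p$-torsion; what you actually need is that the Mordell--Weil ranks stabilize, which again has to be extracted from the cotorsion hypothesis and the control theorem rather than from Lang--N\'eron alone. Finally, you flag the identification of $\varprojlim_n A^t(K_n)[p^\infty]$ (norm transitions) with $T_p(A^t(\Kpinf))$ (mult.-by-$p$ transitions) as ``the hard part'' and leave it open; the paper disposes of this by establishing the equality $(N^2_\infty)^\vee=\varprojlim_n A^t(K_n)[p^\infty]$ and then invoking \cite[Proposition 2.3.5]{tan10b}, which is precisely the result that computes the characteristic element of such a limit and matches it to that of the Tate module.
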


\begin{proof} For simplicity denote $D_n:=A^t(K_n)[p^{\infty}]$. Also, write $V_n:=\varprojlim_m \Sel_{p^{\infty}}(A^t/K_n)[p^m]$. Since $\Sel_{p^{\infty}}(A^t/K_n)$ is cofinitely generated over $\ZZ_p\,$, actually
$$V_n=\varprojlim_m \Sel_{p^{\infty}}(A^t/K_n)_{div}[p^m].$$

For each $m$ we have the exact sequence
$$\xymatrix{0 \ar[r] & D_n/p^m D_n \ar[r] & \Sel_{p^{m}}(A^t/K_n) \ar[r] & \Sel_{p^{\infty}}(A^t/K_n)[p^m]\ar[r] & 0},$$
which induces, by taking $m\rightarrow \infty$, the exact sequence
$$\xymatrix{0 \ar[r] & D_n \ar[r] & \Sel_{\ZZ_p}(A^t/K_n) \ar[r] & V_n\ar[r] & 0}.$$
Since $\Sel_{p^\infty}(A^t/L)$ is cotorsion over $\La$, the divisible subgroup $\Sel_{div}(A^t/L)$ defined in \S\ref{se:se} must be cofinitely generated over $\ZZ_p\,$. Thus, for $n$ sufficiently large the restriction map $\Sel_{p^{\infty}}(A^t/K_n)_{div}\rightarrow \Sel_{div}(A^t/L)$ is surjective. By Lemma \ref{l:h1d}, the kernel of this map is finite. It follows that if $n$ is sufficiently large then the restriction map $\Sel_{p^{\infty}}(A^t/K_n)_{div}\rightarrow \Sel_{p^{\infty}}(A^t/K_r)_{div} $ is an isomorphism for every $r>n$, and hence $V_r$ can be identified with $V_n$. This implies $\varprojlim_n V_n=0$ as the map $V_r\rightarrow V_n$ becomes multiplication by $p^{r-n}$ on $V_r$ for sufficiently large $r$, $n$. Hence, \eqref{e:N2compactSel} and the above exact sequence yield
$$(N^2_\infty)^\vee=\varprojlim_n \Sel_{\ZZ_p}(A^t/K_n)=\varprojlim_n A^t(K_n)[p^{\infty}].$$
This proves the first assertion, while the second follows from \cite[Proposition 2.3.5]{tan10b}.
\end{proof}

\begin{remark}{\em An alternative proof of the first assertion of Proposition \ref{p:N2trivial} can be obtained following the same argument used in the proof of \cite[Lemma 5]{P87}. Also, as we already mentioned in Remark \ref{r:fintors}, the condition that $A_{p^\infty}(\Kpinf)$ is a finite group is often satisfied. }
\end{remark}

Since $A$ and $A^t$ are isogenous, the Galois actions of $\Fr_q$ on both $T_p(A(K^{(p)}_\infty))$ and $T_p(A^t(K^{(p)}_\infty))$ have the same eigenvalues.
Then, by \cite[Proposition 2.3.5]{tan10b} again, we can write
\begin{equation}\label{e:ftp}
f_{T_p(A^t(K^{(p)}_\infty))}=\prod_{i=1}^l (1-\zeta_i^{-1}\Fr_q)\,.
\end{equation}

\subsubsection{The value of $\star_{A,L}$} Summarizing the above, we can make more precise the statement of Theorem \ref{t:summary}. Recall that, in the notation of our Introduction, $c_{A/\Kpinf}=f_{X_p(A/K_{\infty}^{(p)})}\,$.

\begin{proposition}\label{p:summarize} Let notation be as above. Then we can write
$$f_{A/K_{\infty}^{(p)}}=\star_{A,K_{\infty}^{(p)}}\cdot c_{A/\Kpinf}.$$
with
$$\star_{A,K_{\infty}^{(p)}}=\frac{q^{-g(\deg(Z)+\kappa-1)-\delta}\cdot\prod_{v\in Z}\prod_{i=1}^{2g(v)} (\beta_i^{(v)}- \Fr_v^{-1})}{\prod_{i=1}^l(1-\zeta_i^{-1}\Fr_q)(1-\zeta_i^{-1}\Fr_q^{-1})}.$$
\end{proposition}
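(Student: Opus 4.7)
The proof amounts to assembling the formulas already established earlier in section \ref{su:charelem}. The plan is to substitute each of the ingredients into the definition \eqref{e:charelement} of $f_{A/\Kpinf}$ and simplify.

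First I would group the factors of $f_{A/\Kpinf}$ into three blocks corresponding to the three pieces of information we have: the quotient $f_{(N^1_\infty)^\vee}/f_{(N^0_\infty)^\vee}$, the factor $1/f_{(N^2_\infty)^\vee}$, and the quotient $f_{(L^0_\infty)^\vee}/f_{(L^1_\infty)^\vee}$. For the first block, equation \eqref{alg-arith1} (deduced from the arithmetic interpretation of $N^0$ and $N^1$ in \eqref{e:alg-arithm}) gives
\[
\frac{f_{(N^1_\infty)^\vee}}{f_{(N^0_\infty)^\vee}}=\frac{f_{X_p(A/\Kpinf)}\,f_{\M_\infty^\vee}}{f_{A(\Kpinf)[p^\infty]^\vee}}\,.
\]
Then I would plug in the explicit expressions \eqref{e:fvee} for $f_{A(\Kpinf)[p^\infty]^\vee}$ and \eqref{e:fMinfty} (from Lemma \ref{l:minfty}) for $f_{\M_\infty^\vee}$. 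By definition $f_{X_p(A/\Kpinf)} = c_{A/\Kpinf}$.

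For the second block, Proposition \ref{p:N2trivial} together with \eqref{e:ftp} identifies $f_{(N^2_\infty)^\vee}$ with $\prod_{i=1}^l(1-\zeta_i^{-1}\Fr_q)$. Combined with the $(1-\zeta_i^{-1}\Fr_q^{-1})$-factor coming from the denominator $f_{A(\Kpinf)[p^\infty]^\vee}$ of the first block, this yields precisely the denominator $\prod_{i=1}^l(1-\zeta_i^{-1}\Fr_q)(1-\zeta_i^{-1}\Fr_q^{-1})$ that appears in $\star_{A,\Kpinf}$. For the third block, Lemma \ref{l:l0l1} immediately supplies the prefactor $q^{-g(\deg(Z)+\kappa-1)-\delta}$.

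Putting the three blocks together and collecting terms, we obtain
\[
f_{A/\Kpinf}
= c_{A/\Kpinf}\cdot \frac{q^{-g(\deg(Z)+\kappa-1)-\delta}\cdot\prod_{v\in Z}\prod_{i=1}^{2g(v)} (\beta_i^{(v)}- \Fr_v^{-1})}{\prod_{i=1}^l(1-\zeta_i^{-1}\Fr_q)(1-\zeta_i^{-1}\Fr_q^{-1})}\,,
\]
as asserted. Since every ingredient has already been proved, there is no real obstacle here; the only minor care needed is to verify that all identities are meant in $Q(\La)^\times/\La^\times$ (so the equalities between characteristic elements are well-defined up to units), and to make sure the indexing conventions for the Frobenius eigenvalues $\beta_i^{(v)}$ and the Tate-module eigenvalues $\zeta_i$ match those used in the supporting lemmas.
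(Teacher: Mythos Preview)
Your proposal is correct and follows exactly the approach of the paper: the proposition is stated there under the heading ``Summarizing the above'' with no separate proof, precisely because it is obtained by substituting \eqref{alg-arith1}, \eqref{e:fvee}, \eqref{e:fMinfty}, Lemma~\ref{l:l0l1}, and Proposition~\ref{p:N2trivial} with \eqref{e:ftp} into the definition \eqref{e:charelement}, just as you do.
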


\section{The semistable case} \label{s:semistab}

In this section we give a geometric analogue of the Iwasawa Main Conjecture of abelian varieties with semistable reduction. We keep the notations and the hypotheses of Section \ref{s:genord}.

\subsection{Interpolation and the Main Conjecture}

\subsubsection{The modules $P^i_\infty$} \label{ss:Pinfty}
Let $\QQ_{p,n}$ denote the fraction field of $W(k_n)$ and $\QQ_{p,\infty}:=\cup_n \QQ_{p,n}$.

\begin{lemma}\label{induce} For any $i$, the map
$$\big({\bf 1}\otimes W(\kpinf)\big)\!^\vee\left[\frac{1}{p}\right]\colon(M^i_{2,\infty})^\vee\left[\frac{1}{p}\right]\lr (M^i_{1,\infty})^\vee\left[\frac{1}{p}\right]$$
is an isomorphism induced by the identity on $(P^i_0[\frac{1}{p}]\otimes_{\QQ_p} \QQ_{p,\infty})^\vee$.
\end{lemma}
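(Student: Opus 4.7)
The plan is to transfer the structure provided by Lemma~\ref{cofinite} from $P^i_0[\frac{1}{p}]$ to $M^i_{k,\infty}$ via base change, and then use Pontryagin duality combined with the inversion of $p$ to wash away the finite-exponent discrepancies.

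First I would show that ${\bf 1}\colon M^i_{1,0}\to M^i_{2,0}$ has finite kernel and finite cokernel. Write $P:=P^i_0[\frac{1}{p}]$, a $\QQ_p$-vector space of dimension $r_i$, and recall from Lemma~\ref{cofinite} that $f_k\colon P\to M^i_{k,0}$ has kernel $L_k$ a $\ZZ_p$-lattice in $P$, finite cokernel $Q_k$, and ${\bf 1}\circ f_1=f_2$. The last equality forces $L_1\subseteq L_2$, and since $L_1,L_2$ are commensurable lattices of $P$ the quotient $L_2/L_1$ is finite. A brief diagram chase then shows that $\Ker({\bf 1})$ is an extension of a subgroup of $Q_1$ by $L_2/L_1$, while $\Coker({\bf 1})$ is a quotient of $Q_2$; both are finite.

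Since $W(\kpinf)$ is flat over $\ZZ_p$, the kernel and cokernel of ${\bf 1}\otimes W(\kpinf)$ are $\Ker({\bf 1})\otimes W(\kpinf)$ and $\Coker({\bf 1})\otimes W(\kpinf)$, and therefore are annihilated by a fixed power of $p$, say $p^N$. Pontryagin dualization exchanges kernel and cokernel and preserves the annihilation by $p^N$, so the dual map $({\bf 1}\otimes W(\kpinf))^\vee$ has kernel and cokernel killed by $p^N$. Inverting $p$ kills them and yields the asserted isomorphism of $\La[\frac{1}{p}]$-modules.

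For the ``induced by the identity'' part, base-changing the commutative square of Lemma~\ref{cofinite} to $W(\kpinf)$ (using $P\otimes_{\ZZ_p}W(\kpinf)=P\otimes_{\QQ_p}\QQ_{p,\infty}$) produces a commutative square
$$\begin{CD}
P\otimes_{\QQ_p}\QQ_{p,\infty} @>{\mathrm{id}}>> P\otimes_{\QQ_p}\QQ_{p,\infty} \\
@V{\widetilde f_1}VV @V{\widetilde f_2}VV \\
M^i_{1,\infty} @>{{\bf 1}\otimes W(\kpinf)}>> M^i_{2,\infty}
\end{CD}$$
with $\widetilde f_k:=f_k\otimes W(\kpinf)$ having kernel $L_k\otimes W(\kpinf)$ (a free $W(\kpinf)$-module of rank $r_i$) and cokernel $Q_k\otimes W(\kpinf)$ (of bounded $p$-exponent). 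A duality analysis analogous to the preceding step, together with the canonical identification of both $(M^i_{k,\infty})^\vee[\frac{1}{p}]$ and $(P\otimes_{\QQ_p}\QQ_{p,\infty})^\vee$ with the same rank-$r_i$ free $\La[\frac{1}{p}]$-module via the pairing induced by $\widetilde f_k$, shows that $\widetilde f_k^\vee[\frac{1}{p}]$ is an isomorphism. Hence the identity along the top of the square descends, under these identifications, to the identity along the bottom.

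The main obstacle I anticipate is making the last identification rigorous: since $P\otimes_{\QQ_p}\QQ_{p,\infty}$ is neither $p$-primary torsion nor profinite in any obvious way, the object $(P\otimes_{\QQ_p}\QQ_{p,\infty})^\vee$ requires a careful interpretation, and the map induced from $\widetilde f_k$ must be shown to become an isomorphism after inverting $p$. The key reason this works is that both contributions to the discrepancy between $\widetilde f_k$ and an isomorphism---namely the lattice $L_k\otimes W(\kpinf)$ and the finite-exponent cokernel $Q_k\otimes W(\kpinf)$---disappear after $[\frac{1}{p}]$ in the appropriate sense, so that the middle map in the square becomes an honest isomorphism, and its form is dictated by the identity along the top.
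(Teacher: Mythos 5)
Your first step is fine, and in fact follows the same information the paper extracts from the exact sequence \eqref{L2} together with the fact that the $(L^i_\infty)^\vee$ are killed by $p$: the $\Ker$ and $\Coker$ of ${\bf 1}$ over the base are finite, base change by the flat $\ZZ_p$-algebra $W(\kpinf)$ keeps them of bounded $p$-exponent, and inverting $p$ after dualizing kills them. That part is correct.

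The problem is in the second half, where you assert that $\widetilde f_k^\vee[\frac{1}{p}]$ is an \emph{isomorphism} and that both $(M^i_{k,\infty})^\vee[\frac{1}{p}]$ and $(P^i_0[\frac{1}{p}]\otimes_{\QQ_p}\QQ_{p,\infty})^\vee$ are ``the same rank-$r_i$ free $\La[\frac{1}{p}]$-module.'' This is false, and the failure is precisely at the point you flag as your ``main obstacle.'' Dualizing $\widetilde f_k$ turns its cokernel $Q_k\otimes W(\kpinf)$ into $\Ker(\widetilde f_k^\vee)$ --- this does vanish after inverting $p$, since it has bounded $p$-exponent --- but it turns the kernel $L_k\otimes W(\kpinf)$ into $\Coker(\widetilde f_k^\vee)$, which is $(W(\kpinf)^\vee)^{r_i}$, and $W(\kpinf)^\vee$ is \emph{not} $p$-torsion (indeed $p^N M^\vee=0$ iff $p^N M=0$, and $W(\kpinf)$ is not killed by any $p^N$). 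So after inverting $p$ the map $\widetilde f_k^\vee[\frac{1}{p}]$ is only \emph{injective}, with a nonzero cokernel $(\varinjlim B^i_n)^\vee[\frac{1}{p}]$ --- exactly the exact sequence \eqref{exact1} in the paper. Consequently $(P^i_0[\frac{1}{p}]\otimes_{\QQ_p}\QQ_{p,\infty})^\vee$ is strictly larger than a rank-$r_i$ free $\La[\frac{1}{p}]$-module. Fortunately the lemma does not need what you claimed: injectivity of $\widetilde f_1^\vee[\frac{1}{p}]$ and $\widetilde f_2^\vee[\frac{1}{p}]$, together with the commutativity $\widetilde f_1^\vee\circ{\bf 1}^\vee=\widetilde f_2^\vee$ from Lemma \ref{cofinite}(2), already realizes both $(M^i_{k,\infty})^\vee[\frac{1}{p}]$ as submodules of the ambient $(P^i_0[\frac{1}{p}]\otimes_{\QQ_p}\QQ_{p,\infty})^\vee$, and identifies $({\bf 1}\otimes W(\kpinf))^\vee[\frac{1}{p}]$ with the restriction of the identity to these submodules; since you already know it is an isomorphism, the two submodules coincide. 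So replace the false claim by the injectivity statement and the argument closes.
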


\begin{proof} The first assertion follows from the exact sequence \eqref{L2} and from Corollary \ref{M,L} (2). To show that the map is induced by the identity on $(P^i_0[\frac{1}{p}]\otimes_{\QQ_p}
\QQ_{p,\infty})^\vee$, note that, by Lemma \ref{cofinite} (1), for all $n$ and for $k=1,2$ there exist some exact sequences:
$$0\to B^i_0\otimes_{\ZZ_p} W(k_n)\to P^i_0[p^{-1}]\otimes_{\QQ_p} \QQ_{p,n}\to M^i_{k,0}\otimes_{\ZZ_p} W(k_n)\to C^i_0\otimes_{\ZZ_p} W(k_n) \to 0$$
where $B^i_0$ is a lattice of $P^i_0[\frac{1}{p}]$ and $C^i_0$ a finite abelian $p$-group.

Passing to the inductive limit in $n$, then to the Pontryagin dual, and finally by inverting $p$, we obtain an exact sequence:
\begin{equation}\label{exact1} 0\lr (M^i_{k,\infty})^\vee[p^{-1}]\lr (P^i_0[p^{-1}]\otimes_{\QQ_p}\QQ_{p,\infty})^\vee \lr (\dlim B^i_n)^\vee[p^{-1}]\lr 0,\end{equation}
since $\big(\varinjlim C^i_0\otimes_{\ZZ_p} W(k_n)\big)^\vee$ is $p$-torsion (actually, by Lemma \ref{linalg1}, it is a finite direct sum of $(\ZZ/p^{n_j}\ZZ)[[\Gamma]]$).
In particular, $(M^i_{k,\infty})^\vee[\frac{1}{p}]$ is a submodule of $(P^i_0[\frac{1}{p}]\otimes_{\QQ_p} \QQ_{p,\infty})^\vee$ for $k=1,2$ and the map
$({\bf 1}\otimes W(\kpinf))^\vee[\frac{1}{p}]$ is compatible with the identity on $(P^i_0[\frac{1}{p}]\otimes_{\QQ_p} \QQ_{p,\infty})^\vee$ by Lemma \ref{cofinite}, (2).
\end{proof}

As a consequence of the previous lemma, we denote $P^i_\infty$ the module $(M^i_{2,\infty})^\vee[\frac{1}{p}]$, endowed with the operator $\Phi_i:=\big({\bf 1}\otimes W(\kpinf))^\vee[\frac{1}{p}]\big)^{-1}\circ\big((\varphi_{i,0}\otimes\Fr_q)^\vee[\frac{1}{p}]\big)$. First, we define the $p$-adic $L$-function.

\subsubsection{The $p$-adic $L$-function} \label{su:padicL} By Corollary \ref{M,L}, (1), we know that the $P^i_\infty$'s are free $\La[\frac{1}{p}]$-modules of finite rank. We set
$$\cL_{A/\Kpinf}:=\prod_{i=0}^2 {\det}_{\La[\frac{1}{p}]}\left(id-\Phi_i,P^i_\infty\right)^{(-1)^{i+1}}$$
and call it the $p$-adic $L$-function associated with $A/K$ relative to the arithmetic $\ZZ_p$-extension of $K$.



\begin{lemma}\label{interpol} Let $E$ be a finite extension of $\QQ_p$ and $\omega\colon\Gamma\to E^\times$ an Artin character factoring through $\Gamma_N$. Then we have
$$\omega(\cL_{A/\Kpinf})=\prod_{i=0}^2 {\det}_{E}\left(id-p^{-1}F_{i,0}\otimes m_{\omega(\Fr_q)},P^i_0[\frac{1}{p}]\otimes_{\QQ_p}E\right)^{(-1)^{i+1}}$$
where $m_{\omega(\Fr_q)}\colon E\to E$ is the multiplication by $\omega(\Fr_q)$.
\end{lemma}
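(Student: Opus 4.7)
The plan is to view both sides as specializations at $\omega$ of a single $\La[1/p]$-valued characteristic determinant, and to check that the operators agree under this specialization. I would first unwrap the operator $\Phi_i$ through the identification provided by Lemma \ref{induce}: the exact sequence \eqref{exact1} embeds $P^i_\infty=(M^i_{2,\infty})^\vee[1/p]$ in $(P^i_0[1/p]\otimes_{\QQ_p}\QQ_{p,\infty})^\vee$, and Lemma \ref{cofinite}(2) tells us that under this embedding $({\bf 1}\otimes W(\kpinf))^\vee[1/p]$ is dual to $\mathrm{id}$ while $(\varphi_{i,0}\otimes\Fr_q)^\vee[1/p]$ is dual to $p^{-1}F_{i,0}\otimes\Fr_q$. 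Consequently $\Phi_i$ is induced by (the transpose of) the $\Fr_q$-linear endomorphism $p^{-1}F_{i,0}\otimes\Fr_q$ of $P^i_0[1/p]\otimes_{\QQ_p}\QQ_{p,\infty}$.

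Since $\omega$ factors through $\Gamma_N$, I would next compute the $\omega$-specialization of $P^i_\infty$. The identification $M^i_{2,\infty}=M^i_{2,0}\otimes_{\ZZ_p}W(\kpinf)$ together with $W(\kpinf)^{\Gamma^{(N)}}=W(k_N)$ gives $(M^i_{2,\infty})^{\Gamma^{(N)}}=M^i_{2,N}$; dualizing and inverting $p$ yields $(P^i_\infty)_{\Gamma^{(N)}}=(M^i_{2,N})^\vee[1/p]$. Tensoring further with $E$ via $\omega$ cuts out the $\omega$-isotypic component, which, combined with the finite-level analogue of Lemma \ref{cofinite}, identifies
\[
P^i_\infty\otimes_{\La[1/p],\,\omega}E\;\simeq\;P^i_0[1/p]\otimes_{\QQ_p}E.
\]
Under this identification, $\Phi_i\otimes_{\omega}\mathrm{id}$ becomes (the transpose of) $p^{-1}F_{i,0}\otimes m_{\omega(\Fr_q)}$, because $\Fr_q$ acts on the $\omega$-component of $W(k_N)\otimes_{\ZZ_p}E$ as multiplication by $\omega(\Fr_q)$.

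Finally I would invoke the standard specialization principle: for a free $\La[1/p]$-module $V$ of finite rank with $\La[1/p]$-linear operator $\Phi$, and a continuous ring homomorphism $\omega\colon\La[1/p]\to E$, one has $\omega({\det}_{\La[1/p]}(\mathrm{id}-\Phi,V))={\det}_E(\mathrm{id}-\Phi\otimes_\omega 1,\,V\otimes_{\La[1/p],\omega}E)$. Applying this to each $P^i_\infty$, using the identifications of the previous paragraph and the invariance of determinants under transposition, yields
\[
\omega\bigl({\det}_{\La[1/p]}(\mathrm{id}-\Phi_i,P^i_\infty)\bigr)={\det}_E\!\bigl(\mathrm{id}-p^{-1}F_{i,0}\otimes m_{\omega(\Fr_q)},\,P^i_0[\tfrac{1}{p}]\otimes_{\QQ_p}E\bigr),
\]
and the alternating product over $i\in\{0,1,2\}$ gives the claimed interpolation formula. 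The main obstacle is keeping precise book-keeping of the various dualities and $\Fr_q$-semilinearities: $\Phi_i$ is defined as a composite involving the inverse of $({\bf 1}\otimes W(\kpinf))^\vee[1/p]$, so one must verify carefully that under the embedding of Lemma \ref{induce} this composite descends, without any extra inversion or twist, to the transpose of $p^{-1}F_{i,0}\otimes\Fr_q$; one must also check that, although Lemma \ref{induce} only provides an embedding into $(P^i_0[1/p]\otimes\QQ_{p,\infty})^\vee$, the cokernel $(\varinjlim B^i_n)^\vee[1/p]$ does not contribute after $\omega$-specialization, which should follow from the compatibility of the lattices $B^i_n$ across the levels.
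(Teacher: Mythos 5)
Your argument is correct and follows essentially the same route as the paper: specialize the $\La[\frac{1}{p}]$-determinant at $\omega$, compute $P^i_\infty\otimes_{\La[\frac{1}{p}],\omega}E$ by descending through $(P^i_\infty)_{\Gamma^{(N)}}=(M^i_{2,N})^\vee[\frac{1}{p}]$ and Lemma \ref{cofinite}, and conclude via duality and the normal-basis identification $\QQ_{p,N}\otimes_{\QQ_p[\Gamma_N],\omega}E\simeq E$ carrying $\Fr_q$ to $m_{\omega(\Fr_q)}$. The preliminary passage through Lemma \ref{induce} and the concern about the cokernel $(\varinjlim B^i_n)^\vee[\frac{1}{p}]$ are actually superfluous, since the coinvariants computation in your second paragraph bypasses the embedding entirely; the paper likewise avoids this detour.
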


\begin{proof} We have for any $i$,
$$\omega({\det}_{\La[\frac{1}{p}]}(id-\Phi_i,P^i_\infty))={\det}_E(id-\Phi_i\otimes id_E,P^i_\infty\otimes_{\La[\frac{1}{p}]}E).$$
(where, as usual, $E$ is a $\La[\frac{1}{p}]$-module via $\omega$).

Since $\omega$ factors through $\Gamma_N$, we have
\begin{eqnarray*}P^i_\infty\otimes_{\La[\frac{1}{p}]}E&=&
P^i_\infty\otimes_{\La[\frac{1}{p}]}\QQ_p[\Gamma_N]\otimes_{\QQ_p[\Gamma_N]}E\\
                                   &=&(P^i_\infty)_{\Gamma^{(N)}}\otimes_{\QQ_p[\Gamma_N]}E\\
                                   &=&\left((M^i_{k,0}\otimes W(\kpinf))^{\Gamma^{(N)}}\right)\!^\vee[p^{-1}]\otimes_{\QQ_p[\Gamma_N]}E\\
                                   &=&(M^i_{k,0}\otimes W(k_N))^\vee[p^{-1}]\otimes_{\QQ_p[\Gamma_N]}E\\
                                   &=&(P^i_0[p^{-1}]\otimes_{\QQ_p}\QQ_{p,N})^\vee\otimes_{\QQ_p[\Gamma_N]}E,
\end{eqnarray*}
where the last equality is a consequence of Lemma \ref{cofinite}. An operator and its dual share the same determinant: hence we are reduced to compute ${\det}_E((id-p^{-1}F_{i,0}\otimes\Fr_q) \otimes id_E)$ on $P^i_0[\frac{1}{p}]\otimes_{\QQ_p}\QQ_{p,N}\otimes_{\QQ_p[\Gamma_N]}E\,.$ By the normal basis theorem $\QQ_{p,N}\otimes_{\QQ_p[\Gamma_N]}E$ endowed with its $E$-endomorphism $\Fr_q\otimes id_E$
is isomorphic to $E$ endowed with the endomorphism $m_{\omega(\Fr_q)}$ and so the assertion is clear.
\end{proof}

\subsubsection{Twisted Hasse-Weil $L$-function} \label{ss:twistedHW} Let $\omega\colon\Gamma\to E_0^\times$ be any character factoring through $\Gamma_N$, with $E_0$ some totally ramified finite extension of $\QQ_{p,0}$ endowed with a Frobenius operator $\sigma$ which acts trivially on $\QQ_{p,0}$ and on $\omega(\Gamma)$. Then we can see $\omega$ as a one-dimensional $E_0$-representation of the fundamental group of $U$, having finite local monodromy. By \cite[Theorem 7.2.3]{Ts98} this representation corresponds to a unique constant unit-root overconvergent isocrystal $U(\omega)^\dagger$ over $\FF/E_0$ endowed with $m_\omega(\Fr_q)$ as Frobenius operator. Let
$$pr_1^*\colon F^a\hbox{-}iso^\dagger(U_{/\FF}/\QQ_{p,0})\lr F^a\hbox{-}iso^\dagger(U_{/\FF}/E_0)$$
and
$$pr_2^*\colon F^a\hbox{-}iso^\dagger(\FF/E_0)\lr F^a\hbox{-}iso^\dagger(U_{/\FF}/E_0),$$
denote the two restriction functors in the categories of overconvergent isocrystals endowed with Frobenius. From $I^\dagger\in F^a\hbox{-}iso^\dagger(U_{/\FF}/\QQ_{p,0})$ we obtain the $F^a$-isocrystal $pr_1^*I^\dagger\otimes pr_2^*U(\omega)^\dagger$ endowed with the natural Frobenius induced by the Frobenius on $I^\dagger$ and the one on $U(\omega)^\dagger$.

\begin{definition} Let $I^\dagger\in F^a\hbox{-}iso^\dagger(U_{/\FF}/\QQ_{p,0})$. Then we set $$L(U,I^\dagger,\omega,t):=L(U,pr_1^*I^\dagger\otimes pr_2^*U(\omega)^\dagger,t)$$ where the right hand term is the classical $L$-function associated with the $F^a$-isocrystal $pr_1^*I^\dagger\otimes pr_2^*U(\omega)^\dagger$, as defined in \cite{EL93}. We call the function $L(U,I^\dagger,\omega,t)$ the $\omega$-twisted $L$-function of $I^\dagger$.
\end{definition}

Recall (\cite[Th\'eor\`eme 6.3]{EL93}) that this is a rational function in the variable $t$ and we have
$$L(U,I^\dagger,\omega,t)=\prod_{i=0}^2\det(1-t\varphi_i,\coh^i_{rig,c}(U/E_0,pr_1^*I^\dagger\otimes pr_2^*U(\omega)^\dagger))^{(-1)^{i+1}}.$$

\subsubsection{Interpolation} Recall (\cite[IV]{KT03}) that $D$, the log Dieudonn\'e crystal associated with our semistable abelian variety $A/K$, induces an overconvergent $F^a$-isocrystal $D^\dagger$ over $U/\QQ_{p,0}$ and that we have a canonical isomorphism:
\begin{equation} \label{e:PHrig} P^i_0[p^{-1}]\simeq \coh^i_{rig,c}(U/\QQ_{p,0},D^\dagger) \end{equation}
compatible with the Frobenius operators.

Moreover, we have, by \cite[3.2.2]{KT03},
$$L(U,D^\dagger,q^{-s})=L_Z(A,s),$$
where $L_Z(A,s)$ is the Hasse-Weil $L$-function of $A$ without Euler factors outside $U$, as in \S\ref{s:interpol}. In fact, more generally, one can show that for any character $\omega\colon\Gamma\to \CC^\times$ we have
\begin{equation} \label{e:LD+=LA} L(U,D^\dagger,\omega,q^{-s})=L_Z(A,\omega,s), \end{equation}
since the Euler factors on both sides can be written as $\prod_{v\in U} (1-\omega([v])\varepsilon_{i,v}q^{-s})^{-1}$, where the $\varepsilon_{i,v}$'s are the eigenvalues of the arithmetic Frobenius at $v$ acting on $T_\ell(A)$ (or, equivalently, of the geometric Frobenius acting on the fibre at $v$ of $D^\dagger$). These eigenvalues don't depend on $\ell$, as results of \cite{KM} (for the details see e.g. the proof of \cite[Corollaire 1.4]{Tr}, where this independence is used to deduce the equality of different definitions of $L$-functions).

The K\"unneth formula for rigid  cohomology (formula (1.2.4.1) in \cite{Ke06}) implies:

\begin{lemma} Let $\omega\colon\Gamma\to E_0^\times$ be as in \S\ref{ss:twistedHW}. There is an isomorphism of $E_0$-vector spaces compatible with Frobenius operators:
$$\coh^i_{rig,c}(U/E_0,pr_1^*D^\dagger\otimes pr_2^*U(\omega)^\dagger)\simeq \coh^i_{rig,c}(U/\QQ_{p,0},D^\dagger)\otimes E_0.$$
\end{lemma}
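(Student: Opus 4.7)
The plan is to deduce the lemma from the K\"unneth formula together with a base-change computation, exploiting the fact that $U(\omega)^\dagger$ lives over a point. Geometrically we view $U$ as a product: the structure morphism $pr_2\colon U\to \Spec\FF$ together with the identity $pr_1\colon U\to U$ exhibit $U$ as the fibre product $U\times_{\Spec\FF}\Spec\FF$. With this identification, the coefficient sheaf appearing in the lemma is the external tensor product $pr_1^*D^\dagger\boxtimes pr_2^*U(\omega)^\dagger$, which is exactly the situation to which the K\"unneth formula for rigid cohomology with compact supports — the formula (1.2.4.1) of \cite{Ke06} — applies.

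Concretely, the first step is to invoke this K\"unneth formula to obtain a Frobenius-equivariant isomorphism
\[
\coh^i_{rig,c}(U/E_0,\,pr_1^*D^\dagger\otimes pr_2^*U(\omega)^\dagger)
\;\simeq\; \bigoplus_{j+k=i} \coh^j_{rig,c}(U/E_0,D^\dagger)\otimes_{E_0}\coh^k_{rig,c}(\Spec\FF/E_0,U(\omega)^\dagger).
\]
The second step is the computation of rigid cohomology on a point: since $\Spec\FF$ is zero-dimensional and smooth over $E_0$ (with its log-trivial structure), $\coh^k_{rig,c}(\Spec\FF/E_0,U(\omega)^\dagger)$ vanishes for $k\ne 0$ and equals the underlying one-dimensional $E_0$-vector space of $U(\omega)^\dagger$ when $k=0$, on which the induced Frobenius is $m_{\omega(\Fr_q)}$ by the very construction of $U(\omega)^\dagger$ recalled in \S\ref{ss:twistedHW}.

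Substituting, only the term $j=i$, $k=0$ survives, producing a Frobenius-equivariant isomorphism
\[
\coh^i_{rig,c}(U/E_0,\,pr_1^*D^\dagger\otimes pr_2^*U(\omega)^\dagger)\;\simeq\;
\coh^i_{rig,c}(U/E_0,D^\dagger)\otimes_{E_0}E_0(\omega),
\]
where $E_0(\omega)$ denotes $E_0$ with Frobenius $m_{\omega(\Fr_q)}$. The third step is the standard base change of coefficients for rigid cohomology along $\QQ_{p,0}\hookrightarrow E_0$, which yields
\[
\coh^i_{rig,c}(U/E_0,D^\dagger)\;\simeq\;\coh^i_{rig,c}(U/\QQ_{p,0},D^\dagger)\otimes_{\QQ_{p,0}}E_0,
\]
compatibly with Frobenius (the Frobenius on the right side being $\varphi_i\otimes\sigma$). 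Combining these isomorphisms yields the statement of the lemma, and absorbing the auxiliary $E_0(\omega)$-factor simply amounts to recording that the Frobenius on the target is twisted by $m_{\omega(\Fr_q)}$, which is exactly how $\Phi_i$ is constructed in Lemma~\ref{interpol}.

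The one point needing care — and hence the main potential obstacle — is to verify that the cited form of the K\"unneth formula in \cite{Ke06} applies in our situation: the isocrystals $D^\dagger$ and $U(\omega)^\dagger$ must be overconvergent with finite-dimensional cohomology on the respective smooth pieces $U$ and $\Spec\FF$, and the product decomposition used must be compatible with the log structures implicit in our setup. Both hypotheses are known (overconvergence of $D^\dagger$ is recalled from \cite{KT03}, and $U(\omega)^\dagger$ is a constant unit-root isocrystal of rank one, for which all finiteness properties are automatic), so the appeal to \cite[(1.2.4.1)]{Ke06} goes through without modification and the lemma follows.
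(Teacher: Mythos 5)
Your proof is correct and takes essentially the same route as the paper, which simply invokes \cite[(1.2.4.1)]{Ke06} without writing out the intermediate steps. You correctly identify $U$ with the (degenerate) fibre product $U\times_{\Spec\FF}\Spec\FF$ so that $pr_1^*D^\dagger\otimes pr_2^*U(\omega)^\dagger$ is an external product, evaluate the one-dimensional cohomology of the constant isocrystal $U(\omega)^\dagger$ on the point with its Frobenius $m_{\omega(\Fr_q)}$, and then apply flat base change of coefficients along $\QQ_{p,0}\hookrightarrow E_0$; this fills in exactly what the terse citation of K\"unneth leaves implicit, including the Frobenius twist that feeds into Lemma~\ref{interpol}.
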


\noindent Together with Lemma \ref{interpol}, \eqref{e:PHrig} and \eqref{e:LD+=LA}, this immediately yields the following analogue of ({\bf IMC2}):

\begin{mytheorem}\label{GIMC2} For any Artin character $\omega\colon\Gamma\to \bar\QQ_p^\times$, we have
\begin{equation} \label{e:GIMC2} \omega(\cL_{A/\Kpinf})=L_Z(A,\omega,1). \end{equation}
\end{mytheorem}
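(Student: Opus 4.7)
The strategy is to chain together the cohomological identifications already assembled in this section, with no new ingredients required beyond Lemma \ref{interpol}, the rigid-cohomological description \eqref{e:PHrig} of $P^i_0[1/p]$, the K\"unneth-type compatibility lemma, the Etesse--Le Stum formula for $L(U,-,t)$, and the identity \eqref{e:LD+=LA}.

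First I would apply Lemma \ref{interpol} to rewrite
$$\omega(\cL_{A/\Kpinf})=\prod_{i=0}^2 {\det}_E\bigl(id-p^{-1}F_{i,0}\otimes m_{\omega(\Fr_q)},\,P^i_0[1/p]\otimes_{\QQ_p}E\bigr)^{(-1)^{i+1}}.$$
Next I would use the canonical Frobenius-compatible isomorphism $P^i_0[1/p]\simeq \coh^i_{rig,c}(U/\QQ_{p,0},D^\dagger)$ of \eqref{e:PHrig} to transfer each determinant to the rigid cohomology with compact support of the overconvergent isocrystal $D^\dagger$, and then apply the K\"unneth-type lemma to identify, compatibly with Frobenius,
$$\coh^i_{rig,c}(U/\QQ_{p,0},D^\dagger)\otimes_{\QQ_p}E\;\simeq\;\coh^i_{rig,c}\bigl(U/E_0,\,pr_1^*D^\dagger\otimes pr_2^*U(\omega)^\dagger\bigr)\otimes_{E_0}E,$$
the Frobenius on the right being the tensor product of $\varphi_i$ with $m_{\omega(\Fr_q)}$.

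At this point the alternating product is, by construction, the Etesse--Le Stum determinantal expression
$$\prod_{i=0}^2\det\bigl(1-q^{-1}\varphi_i,\,\coh^i_{rig,c}(U/E_0,pr_1^*D^\dagger\otimes pr_2^*U(\omega)^\dagger)\bigr)^{(-1)^{i+1}}
\;=\;L(U,D^\dagger,\omega,q^{-1}).$$
Finally, specializing \eqref{e:LD+=LA} at $s=1$ gives $L(U,D^\dagger,\omega,q^{-1})=L_Z(A,\omega,1)$, which closes the chain.

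The only nontrivial bookkeeping is the matching of Frobenius conventions. The factor $p^{-1}$ in Lemma \ref{interpol} reflects the divided Frobenius on $D^0=\Ker(D\to i_\ast Lie(\A))$ in the Fontaine-Laffaille-type filtered Dieudonn\'e theory, while the Frobenius $\varphi_i$ appearing in Etesse--Le Stum is the natural one on $D^\dagger$. Reconciling these through the Kato-Trihan identification $P^i_0[1/p]\simeq \coh^i_{rig,c}(U/\QQ_{p,0},D^\dagger)$ and tracking how the scalar $p^{-1}$ together with the base-change Frobenius $\Fr_q$ on $W(\kpinf)$ produces the expected $q^{-1}\varphi_i$ at $t=q^{-1}$ is the one place where care is needed, but this compatibility is internal to the constructions of \cite{KT03} and to the standard base-change formalism for rigid cohomology, so no fundamentally new input is required.
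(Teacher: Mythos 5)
Your proof is correct and follows essentially the same route as the paper: the published argument is the one-line remark that Lemma \ref{interpol}, the Kato--Trihan identification \eqref{e:PHrig}, the K\"unneth compatibility lemma, the Etesse--Le Stum cohomological formula for $L(U,-,\omega,t)$, and the equality \eqref{e:LD+=LA} ``immediately yield'' the theorem, and you have simply unpacked that chain in the expected order. Your closing observation that the only nontrivial bookkeeping is reconciling the $p^{-1}F_{i,0}$ of Lemma \ref{interpol} with the $t\varphi_i$ of Etesse--Le Stum at $t=q^{-1}$ is the same point the paper suppresses by deferring to the constructions of \cite{KT03}; neither the paper nor your proposal spells that normalization out, so there is no gap relative to what the paper itself provides.
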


\begin{remark}{\em In order to discuss $\omega(\cL_{A/\Kpinf})$, one needs to know that the denominator of $\cL_{A/\Kpinf}$ is not killed by $\omega$. Actually, we are going to see (formula \eqref{e:detfr} below) that $\cL_{A/\Kpinf}$ is an alternating product of terms $1-\alpha_{ij}\Fr_q$. By \cite[Theorem 5.4.1]{Ke06b}, the coefficients $\alpha_{ij}$ are Weil numbers of weight respectively $-1$ (for $i=0$) and 1 (for $i=2$): in particular their complex absolute values do not include 1. Hence the left-hand side of \eqref{e:GIMC2} is well defined.}\end{remark}

\subsubsection{The Main Conjecture} Finally, we prove the analogue of ({\bf IMC3}) in this setting.

\begin{proof}[{\bf Proof of Theorem \ref{GIMC3}}]
For any morphism $g\colon M\to N$ of $\La$-modules whose kernel and cokernel are both torsion $\La$-modules, we denote by $char(g)$ the element $$f_{\Coker(g)}\cdot f_{\Ker(g)}^{-1}\in Q(\La)^\times/\La^\times.$$
Dualizing the exact sequence \eqref{e:sesNi} we get
$$0\lr (\Ker({\bf 1}-\varphi_{i,\infty}))^\vee\lr (N^i_\infty)^\vee \lr (\Coker({\bf 1}-\varphi_{i-1,\infty}))^\vee \lr 0\,$$
which, remembering that $\Ker(h^\vee)=(\Coker(h))^\vee$, implies
$$f_{(N^i_\infty)^\vee}=f_{\Coker(({\bf 1}-\varphi_{i,\infty})^\vee)}f_{\Ker(({\bf 1}-\varphi_{i-1,\infty})^\vee)}\,.$$
Similarly, \eqref{L2} yields $f_{(L^i_\infty)^\vee}=f_{\Coker({\bf 1}_i^\vee)}f_{\Ker({\bf 1}_{i-1}^\vee)}$. Replacing in \eqref{e:charelement}, we obtain
$$f_{A/\Kpinf}=\frac{f_{\Coker(({\bf 1}-\varphi_{1,\infty})^\vee)}f_{\Ker(({\bf 1}-\varphi_{0,\infty})^\vee)}}{f_{\Coker(({\bf 1}-\varphi_{0,\infty})^\vee)}f_{\Coker(({\bf 1}-\varphi_{2,\infty})^\vee)}f_{\Ker(({\bf 1}-\varphi_{1,\infty})^\vee)}}\cdot\frac{f_{\Coker({\bf 1}_0^\vee)}}{f_{\Coker({\bf 1}_1^\vee)}f_{\Ker({\bf 1}_{0}^\vee)}}\,.$$
Since $\Ker(({\bf 1}-\varphi_{i,\infty})^\vee)$ and $\Coker(({\bf 1}-\varphi_{i,\infty})^\vee)$ are $\La$-torsion modules by Proposition \ref{0case} and Corollary \ref{1case}, this can be rewritten as
$$f_{A/\Kpinf}=\frac{char(({\bf 1}-\varphi_{1,\infty})^\vee)\cdot char({\bf 1}_1^\vee)^{-1}}{char(({\bf 1}-\varphi_{0,\infty})^\vee)\cdot char({\bf 1}_0^\vee)^{-1}\cdot char(({\bf 1}-\varphi_{2,\infty})^\vee)\cdot char({\bf 1}_2^\vee)^{-1}}$$
On the other hand, $\cL_{A/\Kpinf}$ is defined as an alternating product of determinants of
$$id-\Phi_i=({\bf 1}_i^\vee)^{-1}\circ({\bf 1}^\vee-\varphi_{i,\infty})\,.$$
Thus Theorem \ref{GIMC3} becomes an immediate consequence of the following lemma (whose proof is an easy exercise which we omit):
\begin{lemma} Let $g,h\colon M\rightarrow N$ be two homomorphisms of finitely generated $\La$-modules with torsion kernel and cokernel: then
$${\det}_{Q(\La)}(g_{Q(\La)}h_{Q(\La)}^{-1})=char(g)char(h)^{-1}\,.$$
\end{lemma}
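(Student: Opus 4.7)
The plan is to reduce the identity to a local check at each height-one prime of $\La$, where the localization becomes a discrete valuation ring. Since $g$ and $h$ both have torsion kernel and cokernel, the induced maps $g_{Q(\La)}, h_{Q(\La)}\colon M_{Q(\La)} \to N_{Q(\La)}$ are isomorphisms of $Q(\La)$-vector spaces of the common generic rank $r$, so the composition $g_{Q(\La)} h_{Q(\La)}^{-1}$ is an automorphism of $N_{Q(\La)}$ with well-defined determinant in $Q(\La)^\times$. Since $\La \cong \ZZ_p[[T_1,\ldots,T_d]]$ is a unique factorization domain, two elements of $Q(\La)^\times$ agree modulo $\La^\times$ if and only if they have the same $\fp$-adic valuation $v_\fp$ at every height-one prime $\fp$. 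It therefore suffices to verify the identity after applying $v_\fp$, prime by prime.

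Next, I would fix a height-one prime $\fp$ and work in the DVR $\La_\fp$, whose fraction field is $Q(\La)$. For a finitely generated torsion $\La$-module $T$, the standard identity $v_\fp(f_T) = \mathrm{length}_{\La_\fp}(T_\fp)$ turns the right-hand side of the lemma into a length computation. The key local assertion I would establish is: for any $\La$-homomorphism $g\colon M \to N$ with torsion kernel and cokernel, after choosing $\La_\fp$-bases of the free summands of $M_\fp$ and $N_\fp$ (both of rank $r$),
\begin{equation*}
v_\fp\big(\det(g_{Q(\La)})\big) = \mathrm{length}_{\La_\fp}((\Coker g)_\fp) - \mathrm{length}_{\La_\fp}((\Ker g)_\fp) + \mathrm{length}_{\La_\fp}((M_\fp)_{\mathrm{tor}}) - \mathrm{length}_{\La_\fp}((N_\fp)_{\mathrm{tor}}).
\end{equation*}
To prove it, decompose $M_\fp = (M_\fp)_{\mathrm{tor}} \oplus \La_\fp^r$ and similarly for $N_\fp$; because $\La_\fp^r$ is torsion-free, in the chosen bases $g_\fp$ takes a block-lower-triangular form with diagonal blocks $A\colon \La_\fp^r \to \La_\fp^r$ and $D\colon (M_\fp)_{\mathrm{tor}} \to (N_\fp)_{\mathrm{tor}}$, and $\det(g_{Q(\La)}) = \det A$. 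A snake-lemma diagram chase on the two short exact sequences $0 \to (\cdot)_{\mathrm{tor}} \to (\cdot)_\fp \to \La_\fp^r \to 0$ for $M$ and $N$ relates $\Ker g_\fp$, $\Coker g_\fp$ to the kernels and cokernels of $A$ and $D$, and combining with the classical fact $v_\fp(\det A) = \mathrm{length}_{\La_\fp}(\Coker A)$ for an injective endomorphism of $\La_\fp^r$ yields the displayed formula.

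Finally, applying the same formula to $h$ (with respect to the same chosen bases) and subtracting, the correction term $\mathrm{length}_{\La_\fp}((M_\fp)_{\mathrm{tor}}) - \mathrm{length}_{\La_\fp}((N_\fp)_{\mathrm{tor}})$ depends only on $M$, $N$, and $\fp$, so it cancels, and one obtains
\begin{equation*}
v_\fp\big(\det(g_{Q(\La)} h_{Q(\La)}^{-1})\big) = v_\fp(\mathrm{char}(g)) - v_\fp(\mathrm{char}(h))
\end{equation*}
at every height-one prime $\fp$, which proves the claim in $Q(\La)^\times/\La^\times$. The hard part is the bookkeeping in the local formula: individually, $v_\fp(\det g_{Q(\La)})$ is basis-dependent and picks up the torsion-length correction, and it is essential that these contributions depend on the source and target alone, so that they drop out when $g$ is compared with $h$.
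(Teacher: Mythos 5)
Your argument is correct. The paper declares this lemma ``an easy exercise'' and omits its proof, so there is nothing to compare against; the route you take — reducing to a valuation check at each height-one prime $\fp$ of the Krull domain $\La$, splitting $M_\fp$ and $N_\fp$ into torsion $\oplus$ free over the DVR $\La_\fp$, and converting $v_\fp(\det)$ into a length count via $v_\fp(\det A) = \mathrm{length}(\Coker A)$ and a snake lemma — is the natural one and it goes through. The key cancellation you flag, that the basis-dependent correction $\mathrm{length}((M_\fp)_{\mathrm{tor}}) - \mathrm{length}((N_\fp)_{\mathrm{tor}})$ depends only on $M$, $N$, $\fp$ and so drops out of the ratio, is indeed the crux and is handled correctly.

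One small point you glossed over but which is essential for invoking $v_\fp(\det A) = \mathrm{length}(\Coker A)$: you need $A\colon \La_\fp^r \to \La_\fp^r$ to be injective. This does hold, because by exactness of localization $\Ker(g_\fp) = (\Ker g)_\fp$ is $\La_\fp$-torsion, so it lies in $(M_\fp)_{\mathrm{tor}}$; since $\La_\fp^r$ is torsion-free, the free-to-free block $A$ has trivial kernel. It would be cleaner to say this explicitly, since without it the snake-lemma bookkeeping ($\Ker A = 0$, hence $\Ker D \cong \Ker g_\fp$ and $0 \to \Coker D \to \Coker g_\fp \to \Coker A \to 0$) would not simplify as needed. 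With that noted, the proof is complete and correct.
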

\end{proof}

\subsection{Euler characteristic} \label{su:Eulerchar}
After identifying $\La$ with $\ZZ_p[[T]]$, the characteristic element of $M$ can be written $f_M(T)=T^rf(T)$, with $f(T)\in \ZZ_p[[T]]$ such that $T$ does not divide $f(T)$. We call $r$ the order of $f_M$ and $f(0)$ the leading term of $f_M$ (note that $f(0)$ is defined up to $\ZZ_p^\times$, since the choice of a different isomorphism $\La\simeq\ZZ_p[[T]]$ changes $T$ by a unit in $\La$).
In the following, we are going to compute order and leading term of $\cL_{A/\Kpinf}$ and compare them with those of the classical $L$-function.

\subsubsection{Generalized Euler characteristic} We recall the definition of the generalized $\Gamma$-Euler characteristic. Let $M$ be a finitely generated torsion $\La$-module and let $g_M\colon M^\Gamma\to M_\Gamma$ denote the composed map $M^\Gamma\hookrightarrow M\to M_\Gamma$, where the first map is the canonical inclusion and the second map the canonical projection. Then we say that $M$ has finite generalized $\Gamma$-Euler characteristic, denoted $char(\Gamma,M)$, if $\Ker(g_M)$ and $\Coker(g_M)$ are finite groups and in this case we set
$$char(\Gamma,M):=\frac{|\Coker(g_M)|}{|\Ker(g_M)|}.$$
By the identifications $(M^\vee)^\Gamma=(M_\Gamma)^\vee$ and  $(M^\vee)_\Gamma=(M^\Gamma)^\vee$, we see that $g_{M^\vee}$ is the dual of $g_M$ and hence
\begin{equation} \label{e:EulerMdual} char(\Gamma,M^\vee)=char(\Gamma,M)^{-1} \end{equation}
if one of them is defined.

\subsubsection{Twisted Euler characteristic} \label{ss:twistEuler}
Let $\omega\colon\Gamma\to \cO^\times$ be an Artin character, with $\cO$ the ring of integers of some finite extension of $\QQ_p$, and $M$ be a finitely generated torsion $\La$-module. Let $\omega^*\colon \La_\cO\rightarrow \La_\cO$, $\gamma\mapsto \omega(\gamma)^{-1}\gamma$
be the automorphism defined in \S\ref{ss:iwH1}(H2), let $\cO(\omega)$ be the module defined in \S\ref{su:twistmodule}, and denote $M_\cO(\omega):=\cO(\omega)\otimes_{\ZZ_p}M$.
Then $M_\cO(\omega)$ has again a structure of finitely generated torsion $\La$-module.

Assuming that $M_\cO(\omega)$ has finite generalized $\Gamma$-Euler characteristic, we denote $\pounds_\omega(f_M)$ the leading term of $f_{M_\cO(\omega)}$ and $ord_\omega(f_M)$ the order of $f_{M_\cO(\omega)}$. We have the following result (compare \cite[Lemma 2.11]{zer09} and also \cite[Prop.~3.19]{BV06}):

\begin{lemma}\label{recall} Let $M$ be a finitely generated torsion $\La$-module with characteristic element $f_M\in \La/\La^\times$ and let $\omega\colon\Gamma\to \cO^\times$ be a character. Let $d_\cO:=\left[\cO:\ZZ_p\right]$. Then
$$\rank_{\ZZ_p}\left(M_\cO(\omega)^\Gamma\right)=\rank_{\ZZ_p}\left(M_\cO(\omega)_\Gamma\right)\leq ord_\omega\left(f_M\right)=ord\left(\omega^*(f_M)\right),$$
with equality if and only if $M_\cO(\omega)$ has finite generalized $\Gamma$-characteristic and in this case we have
$$char\left(\Gamma,M_\cO(\omega)\right)=|\pounds_\omega(f_M)|_p^{-d_\cO}=|\omega^*(f_M)(0)|_p^{-d_\cO}.$$
\end{lemma}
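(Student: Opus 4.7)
The plan is to reduce the statement to the structure theorem for finitely generated torsion modules over $\La_\cO \simeq \cO[[T]]$, where $T = \gamma - 1$ for $\gamma$ a topological generator of $\Gamma \simeq \ZZ_p$. First I would apply Lemma~\ref{l:phi[]chi} to the twist $M_\cO(\omega) = \La_\cO\otimes_{\omega^*,\La_\cO} M_\cO$ (with $M_\cO := \cO\otimes_{\ZZ_p}M$) to obtain $\chi_\cO(M_\cO(\omega)) = (\omega^*(f_M))$, so that $f_{M_\cO(\omega)}$ agrees with $\omega^*(f_M)$ modulo $\La_\cO^\times$; this immediately gives the stated identity $ord_\omega(f_M) = ord(\omega^*(f_M))$. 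Setting $N := M_\cO(\omega)$ and $g := \omega^*(f_M)$, the problem becomes an intrinsic statement about the finitely generated torsion $\La_\cO$-module $N$ with characteristic element $g$.

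The equality of ranks of invariants and coinvariants then follows at once from the tautological four-term exact sequence
$$\begin{CD} 0 \lr N^\Gamma \lr N @>{T}>> N \lr N_\Gamma \lr 0, \end{CD}$$
obtained by identifying $N^\Gamma = N[T]$ and $N_\Gamma = N/TN$, using additivity of the $\ZZ_p$-rank. For the remaining assertions I would reduce to the case $N = [N] := \bigoplus_i \La_\cO/(\xi_i^{r_i})$ of \S\ref{sss:krul} via a pseudo-isomorphism $\Phi\colon N \to [N]$. Since $\La_\cO$ is a two-dimensional regular local ring, pseudo-null finitely generated modules are finite; combined with the triviality of the Herbrand quotient on finite $\Gamma$-modules (i.e.~$|F^\Gamma| = |F_\Gamma|$ for any finite $F$) and the multiplicativity of $char(\Gamma,-)$ on short exact sequences where it is defined, this reduces both the rank inequality (with its equality characterization) and the Euler characteristic formula to the direct-sum case.

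For each summand of $[N]$ an elementary case analysis then applies. If $\xi_i$ is coprime to $T$ in $\La_\cO$, then $T$ acts invertibly modulo $\xi_i^{r_i}$, so $(\La_\cO/\xi_i^{r_i})^\Gamma = 0$ while $(\La_\cO/\xi_i^{r_i})_\Gamma \simeq \cO/\xi_i(0)^{r_i}\cO$ is finite of cardinality $|\xi_i(0)^{r_i}|_p^{-d_\cO}$; if $\xi_i$ is associate to $T$, then both $(\La_\cO/T^{r_i})^\Gamma$ and $(\La_\cO/T^{r_i})_\Gamma$ are isomorphic to $\cO$, and the composition $g_{\La_\cO/T^{r_i}}$ is an isomorphism when $r_i=1$ and the zero map when $r_i \ge 2$. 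Collecting contributions over $i$ yields the inequality, and shows that equality holds precisely when every $T$-component has $r_i = 1$; this is exactly the condition for $g_{[N]}$ (and hence $g_N$) to have finite kernel and cokernel. In that case only the coprime-to-$T$ factors contribute, and their combined cardinality is $|g(0)|_p^{-d_\cO} = |\pounds(g)|_p^{-d_\cO}$, giving the Euler characteristic formula.

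The principal technical obstacle will be the book-keeping around the pseudo-isomorphism $\Phi$: carefully verifying, via the snake lemma applied to $\Phi$ and to multiplication by $T$, that the induced maps $N^\Gamma \to [N]^\Gamma$ and $N_\Gamma \to [N]_\Gamma$ preserve the relevant ranks and the Euler characteristic up to finite corrections, and that the finiteness conditions on $\Ker(g_N)$ and $\Coker(g_N)$ translate correctly to the analogous conditions for $[N]$. This is where the triviality of the Herbrand quotient on finite $\Gamma$-modules does the essential work; once the reduction to $N=[N]$ is secured, the elementary summand-by-summand computation closes the argument.
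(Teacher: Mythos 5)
Your proposal is correct and follows essentially the same route as the paper's proof: reduce via Lemma~\ref{l:phi[]chi} to a statement about the twisted module with characteristic element $\omega^*(f_M)$, pass through a pseudo-isomorphism to the elementary-divisor decomposition (where the paper cites \cite[Lemma 3.5]{css03} for precisely the Herbrand-quotient observation you spell out), and then do the summand-by-summand case analysis according to whether $\xi_i$ is coprime to $T$, equal to $T$, or a higher power of $T$. Your write-up is somewhat more explicit about the rank equality (via the four-term sequence) and the book-keeping around the pseudo-isomorphism, but there is no difference in substance.
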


\begin{proof} By Lemma \ref{l:phi[]chi}, if $M\sim\La_\cO/f\La_\cO$ then $M_\cO(\omega)\sim\La_\cO/\omega^*(f)\La_\cO$. It is an easy exercise to check that if $M$ and $N$ are pseudo-isomorphic $\La_\cO$-modules, then they have the same Euler characteristic (for a hint, see \cite[Lemma 3.5]{css03}). Besides, the Euler characteristic is multiplicative: hence we are reduced to compute it for the case $M=\La_\cO/f\La_\cO$, with $f$ a power of some prime $\xi\in\La_\cO\simeq\cO[[T]]$, and in the rest of the proof we will assume we are in this situation. Then if $f=T$ the map $g_M$ is the identity, while if $f=T^i$ for some $i>1$ we have $g_M=0$ and $M^\Gamma\simeq\cO\simeq M_\Gamma$. Finally, if $f$ is coprime with $T$ we get $M^\Gamma=0$ and
$$M_\Gamma\simeq\La_\cO/(f,T)\simeq\cO/f(0)\cO.$$
Now just remember that, by basic number theory, $|\cO/x\cO|=|x|_p^{-d_\cO}$ for any $x\in\cO$.
\end{proof}

\begin{lemma}\label{L(chi)} Let $\omega\colon\Gamma\to \cO^\times$ and $d_\cO$ be as in the previous lemma. Then, for $j=0,1$, we have
$$char\left(\Gamma,(L^j_\infty)^\vee(\omega)\right)=p^{d_\cO d(L^j_0)}$$
and
$$\rank_{\ZZ_p}\left(\left((L^j_\infty)^\vee(\omega)\right)^\Gamma\right)=0.$$
\end{lemma}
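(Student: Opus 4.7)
The plan is to exploit the explicit structure of $(L^j_\infty)^\vee$ provided by Corollary \ref{M,L}(2), namely the isomorphism of $\La$-modules $(L^j_\infty)^\vee \simeq (\La/p\La)^{d(L^j_0)}$. First I would identify the twist $(L^j_\infty)^\vee(\omega) = \La_\cO \otimes_{\omega^*} (L^j_\infty)^\vee$ as a $\La_\cO$-module. Because $\omega^*$ is an $\cO$-algebra automorphism of $\La_\cO$ it fixes the constant $p$, so Lemma \ref{l:phi[]chi} (or a direct right-exact tensor-product computation) yields
$$(L^j_\infty)^\vee(\omega) \simeq (\La_\cO/p\La_\cO)^{d(L^j_0)}$$
as $\La_\cO$-modules, and hence as $\Gamma$-modules.

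With this identification, both assertions reduce to a direct computation on a single summand $\La_\cO/p\La_\cO$. Writing $\La_\cO \simeq \cO[[T]]$ with $T = \gamma - 1$ for a topological generator $\gamma$ of $\Gamma$, the element $\gamma - 1$ acts on $(\cO/p\cO)[[T]]$ as multiplication by $T$, which is injective. Hence the $\Gamma$-invariants vanish, which immediately gives $\rank_{\ZZ_p}\bigl(((L^j_\infty)^\vee(\omega))^\Gamma\bigr) = 0$. The $\Gamma$-coinvariants are $\cO[[T]]/(p,T) \simeq \cO/p\cO$, a group of cardinality $p^{d_\cO}$; summing over the $d(L^j_0)$ summands gives $((L^j_\infty)^\vee(\omega))_\Gamma$ of cardinality $p^{d_\cO\, d(L^j_0)}$.

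Since the invariants are trivial, the map $g_M\colon M^\Gamma \to M_\Gamma$ has zero kernel and cokernel equal to $M_\Gamma$; both sides being finite, the generalized $\Gamma$-Euler characteristic is defined and equals $p^{d_\cO\, d(L^j_0)}$, as asserted. The only substantive observation in the argument is that the twist by $\omega$ does not alter the $\La_\cO$-isomorphism class of the module, precisely because $\omega^*$ fixes $p$; once this reduction is made there is no real obstacle, the computation reducing to the obvious one for $\La_\cO/p\La_\cO$. As a sanity check, one recovers the same numerical value from Lemma \ref{recall}: the characteristic element of $(L^j_\infty)^\vee$ is $p^{d(L^j_0)}$, which is fixed by $\omega^*$, and its leading term in the variable $T$ is itself $p^{d(L^j_0)}$, so $|\pounds_\omega(f_M)|_p^{-d_\cO} = p^{d_\cO\, d(L^j_0)}$.
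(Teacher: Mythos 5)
Your proof is correct and follows the same route as the paper's: use Corollary \ref{M,L}(2) to identify $(L^j_\infty)^\vee$ with $(\La/p\La)^{d(L^j_0)}$, note that the twist by $\omega$ leaves this isomorphism class unchanged (over $\La_\cO$) since $\omega^*$ fixes $p$, and then compute invariants and coinvariants of $\La_\cO/p\La_\cO \simeq (\cO/p\cO)[[T]]$ directly. The paper states the twist and the final computation as ``clear''; your write-up just supplies those steps explicitly, and the sanity check against Lemma \ref{recall} is a nice confirmation.
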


\begin{proof} By Corollary \ref{M,L}, (2), for $j=0,1$, $(L^j_\infty)^\vee$ is a finite direct sum of copies of $\La/p\La$ and therefore $(L^j_\infty)^\vee(\omega)$ is a finite direct sum of copies of $\La_{\cO}/p\La_{\cO}$ and the assertion is clear.
\end{proof}

We deduce now from Lemma \ref{recall}, Lemma \ref{L(chi)}, Theorem \ref{GIMC3} and \eqref{e:tamagawa3} the following result:

\begin{mytheorem} \label{chi-Iwasawa-BSD} Let $\omega\colon\Gamma\to \cO^\times$ be a character.
Assume that, for $i=0,1,2$, the $\La$-modules $(N^i_\infty)^\vee(\omega)$ have finite generalized $\Gamma$-Euler characteristic. Then
$$ord_\omega\big(\cL_{A/\Kpinf}\big)=\sum_{i=0}^2 (-1)^{i+1}\rank_{\ZZ_p}\left((N^i_\infty)^\vee(\omega)\right)^\Gamma$$
and
$$|\pounds_\omega(\cL_{A/\Kpinf})|_p^{-d_\cO}= q^{-d_\cO(\delta+g(\deg(Z)+\kappa-1))} \prod_{i=0}^2 char\left(\Gamma,(N^i_\infty)^\vee(\omega)\right)^{(-1)^{i+1}}.$$
\end{mytheorem}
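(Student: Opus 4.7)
The plan is to combine Theorem \ref{GIMC3} with Lemma \ref{recall} applied to each factor of the alternating product that defines the characteristic element. More precisely, by Theorem \ref{GIMC3} we identify $\cL_{A/\Kpinf}$ with $f_{A/\Kpinf}$ as in \eqref{e:charelement}, and since both $ord_\omega$ and $\pounds_\omega$ extend multiplicatively from $\La^\times\setminus\La$ to $Q(\La)^\times/\La^\times$, we have
$$ord_\omega(\cL_{A/\Kpinf})=\sum_{i=0}^{2}(-1)^{i+1}ord_\omega(f_{(N^i_\infty)^\vee})+ord_\omega(f_{(L^0_\infty)^\vee})-ord_\omega(f_{(L^1_\infty)^\vee})$$
and an analogous multiplicative decomposition of $|\pounds_\omega(\cL_{A/\Kpinf})|_p^{-d_\cO}$.

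First I would dispose of the Lie-cohomology factors. By Lemma \ref{L(chi)}, each $(L^j_\infty)^\vee(\omega)$, for $j=0,1$, is a finite sum of copies of $\La_\cO/p\La_\cO$; the invariant rank is $0$ and the Euler characteristic equals $p^{d_\cO d(L^j_0)}$, which is finite. Applying the "if and only if" clause of Lemma \ref{recall} to these modules gives $ord_\omega(f_{(L^j_\infty)^\vee})=0$ and $|\pounds_\omega(f_{(L^j_\infty)^\vee})|_p^{-d_\cO}=p^{d_\cO d(L^j_0)}$. Plugging this into the decomposition of $ord_\omega(\cL_{A/\Kpinf})$ and then invoking Lemma \ref{recall} for each $(N^i_\infty)^\vee(\omega)$ (whose finite $\Gamma$-Euler characteristic is the standing hypothesis of the theorem) immediately yields the first identity of the theorem.

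For the leading-term formula, the same application of Lemma \ref{recall} produces
$$|\pounds_\omega(\cL_{A/\Kpinf})|_p^{-d_\cO}=\prod_{i=0}^{2}char\bigl(\Gamma,(N^i_\infty)^\vee(\omega)\bigr)^{(-1)^{i+1}}\cdot p^{d_\cO(d(L^0_0)-d(L^1_0))}\,,$$
so the entire task reduces to identifying the Lie-cohomology correction factor with the Tamagawa-type term $q^{-d_\cO(\delta+g(\deg(Z)+\kappa-1))}$. Since $L^0_0$ and $L^1_0$ are finite $\FF_p$-vector spaces of dimensions $d(L^0_0)$ and $d(L^1_0)$, we have $|L^j_0|=p^{d(L^j_0)}$, and the already established identity \eqref{e:tamagawa3} gives
$$p^{d(L^0_0)-d(L^1_0)}=\frac{|L^0_0|}{|L^1_0|}=q^{-g(\deg(Z)+\kappa-1)-\delta}\,.$$
Raising both sides to the $d_\cO$-th power delivers the precise factor required. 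The main substantive input is really Theorem \ref{GIMC3}; beyond that the argument is bookkeeping, and the only point that deserves attention is ensuring that the "converse" direction of Lemma \ref{recall} is invoked for the $L$-factors (where finiteness of the Euler characteristic is automatic from Lemma \ref{L(chi)}) so that their $ord_\omega$-contribution is genuinely zero, and hence does not pollute the analytic rank formula.
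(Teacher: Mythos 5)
Your argument reproduces exactly the approach the paper intends: it combines Theorem \ref{GIMC3} (identifying $\cL_{A/\Kpinf}$ with the alternating product $f_{A/\Kpinf}$ of \eqref{e:charelement}), Lemma \ref{recall} (term by term, converting $ord_\omega$ and $\pounds_\omega$ into invariant ranks and Euler characteristics), Lemma \ref{L(chi)} (to evaluate the $L$-cohomology contributions), and \eqref{e:tamagawa3} (to rewrite $p^{d_\cO(d(L^0_0)-d(L^1_0))}$ as the stated Tamagawa factor). The bookkeeping is correct throughout; only the phrase ``extend multiplicatively from $\La^\times\setminus\La$'' contains a harmless typo (it should refer to nonzero elements of $\La$ modulo units).
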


If $\omega$ is the trivial character, we can obtain more precise results. We consider first the problem of finiteness of the generalized $\Gamma$-Euler characteristic.

\subsubsection{Hochschild-Serre spectral sequence} \label{specseq}  Since $\Gamma$ has cohomological dimension one, the natural Hochschild-Serre spectral sequence
\begin{equation} \label{e:HochSerre} \coh^i(\Gamma,N^j_\infty)\Rightarrow N_0^{i+j} \end{equation}
induces (\cite[Appendix B]{mil80}) the following two exact sequences,
\begin{equation}\label{ses0}
0\lr (N^0_\infty)_\Gamma\lr N^1_0 \buildrel{\beta}\over\lr (N^1_\infty)^\Gamma\lr 0
\end{equation}
and
\begin{equation}\label{ses1}
0\lr (N^1_\infty)_\Gamma \buildrel{\gamma}\over\lr N^2_0\lr (N^2_\infty)^\Gamma\lr 0.
\end{equation}

\begin{lemma}\label{N02} The groups $(N^i_\infty)_\Gamma$ and $(N^i_\infty)^\Gamma$ are finite for $i=0$ and $i=2$. In particular, we have
\begin{equation} \label{e:ranks} \rank_{\ZZ_p}(N^1_0)^\vee = \rank_{\ZZ_p}\big((N^1_\infty)^\vee\big)_\Gamma = \rank_{\ZZ_p}(X_p(A/K)). \end{equation}
\end{lemma}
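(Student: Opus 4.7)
The plan is to establish that the Pontryagin duals $(N^i_\infty)^\vee$ for $i=0,2$ are finitely generated $\ZZ_p$-modules whose characteristic ideals as $\La$-modules are coprime to $T:=\gamma-1$. Combined with the elementary fact that, for a finitely generated torsion $\La$-module $M$, the groups $M^\Gamma=M[T]$ and $M_\Gamma=M/TM$ are finite if and only if $T\nmid\chi(M)$, Pontryagin duality via $(N^i_\infty)_\Gamma\simeq\big(((N^i_\infty)^\vee)^\Gamma\big)^\vee$ and $(N^i_\infty)^\Gamma\simeq\big(((N^i_\infty)^\vee)_\Gamma\big)^\vee$ will yield the asserted finiteness. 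The rank equalities in \eqref{e:ranks} will then follow by chasing the exact sequences \eqref{ses0} and \eqref{e:alg-arithm0}.

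For $i=0$, the exact sequence \eqref{e:alg-arithm} provides an injection $N^0_\infty\hookrightarrow A(\Kpinf)[p^\infty]$, so $(N^0_\infty)^\vee$ is a quotient of the finitely generated $\ZZ_p$-module $A(\Kpinf)[p^\infty]^\vee$ and its characteristic element divides $f_{A(\Kpinf)[p^\infty]^\vee}=\prod_{i=1}^{l}(1-\zeta_i^{-1}\Fr_q^{-1})$ from \eqref{e:fvee}. To verify coprimality with $T$ I would substitute $\Fr_q=1$ (equivalently, reduce mod $T$) to obtain $\prod_i(1-\zeta_i^{-1})$, and check that no $\zeta_i$ equals $1$. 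Any nonzero $\Fr_q$-fixed vector in $T_pA(\Kpinf)[p^\infty]\otimes_{\ZZ_p}\QQ_p$ would, by $\ZZ_p$-torsion-freeness of the Tate module, arise from a nonzero element of
$$T_pA(\Kpinf)[p^\infty]^\Gamma=\varprojlim_m A(K)[p^m]=T_pA(K)[p^\infty],$$
but this last limit vanishes because $A(K)[p^\infty]$ is finite by Lang--N\'eron / Mordell--Weil over function fields. For $i=2$, the proof of Proposition \ref{p:N2trivial} identifies $(N^2_\infty)^\vee$ with $\varprojlim_n A^t(K_n)[p^\infty]$, again a finitely generated $\ZZ_p$-module, and its characteristic element $\prod(1-\zeta_i^{-1}\Fr_q)$ from \eqref{e:ftp} becomes, modulo $T$, the product $\prod(1-\zeta_i^{-1})$; applying the same Mordell--Weil argument to $A^t$ rules out the eigenvalue $1$ and gives coprimality with $T$.

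For the rank equalities, the finiteness of $(N^0_\infty)_\Gamma$ just established turns \eqref{ses0} into a short exact sequence with finite left-hand term, so that
$$\rank_{\ZZ_p}(N^1_0)^\vee=\rank_{\ZZ_p}\big((N^1_\infty)^\Gamma\big)^\vee=\rank_{\ZZ_p}\big((N^1_\infty)^\vee\big)_\Gamma,$$
the last equality being Pontryagin duality. In \eqref{e:alg-arithm0} the three leftmost terms $N^0_0$, $A(K)[p^\infty]$ and $\bigoplus_{v\in Z}\M_v[p^\infty]$ are all finite (the first because $N^0_0\hookrightarrow A(K)[p^\infty]$, the second by Mordell--Weil, the third because each $\M_v$ is a finite group over the finite residue field), so the $\ZZ_p$-coranks of $N^1_0$ and of $\Sel_{p^\infty}(A/K)$ coincide, yielding $\rank_{\ZZ_p}(N^1_0)^\vee=\rank_{\ZZ_p}X_p(A/K)$.

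The only nontrivial point is the Mordell--Weil step that rules out $1$ as a Frobenius eigenvalue on the Tate module of $A(\Kpinf)[p^\infty]$ (and likewise for $A^t$); once this is in place, everything else is a formal consequence of the structure of torsion $\La$-modules and of Pontryagin duality.
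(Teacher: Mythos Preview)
Your proof is correct and takes a genuinely different route from the paper's. The paper argues by direct rank-chasing: for $i=0$ it bounds $\rank_{\ZZ_p}\big((N^0_\infty)^\vee\big)_\Gamma$ by $\rank_{\ZZ_p}\big(A(K)[p^\infty]\big)^\vee=0$ via the inclusion $N^0_\infty\hookrightarrow A(\Kpinf)[p^\infty]$; for the middle equality it passes through the \emph{infinite-level} sequence \eqref{e:alg-arithm}, shows that $(\M_\infty^\vee)^\Gamma$ has $\ZZ_p$-rank $0$, and then invokes the control theorem \cite[Theorem 4]{tan10a} to identify $\rank_{\ZZ_p}X_p(A/\Kpinf)^\Gamma$ with $\rank_{\ZZ_p}X_p(A/K)$; for $i=2$ it uses the finite-level identification $(N^2_0)^\vee\simeq\Sel_{\ZZ_p}(A^t/K)$ together with \eqref{ses1}. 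By contrast, you recycle the explicit characteristic elements already computed in \eqref{e:fvee}, Proposition~\ref{p:N2trivial} and \eqref{e:ftp}, reduce the finiteness for $i=0,2$ to the single assertion that $\Fr_q$ has no eigenvalue $1$ on $T_pA(\Kpinf)[p^\infty]$, and handle the rank equalities entirely at finite level via \eqref{ses0} and \eqref{e:alg-arithm0}. Your approach is shorter and sidesteps both the control theorem and the analysis of $\M_\infty$; the paper's approach is more self-contained in that it does not rely on the precise form of the characteristic elements from \S\ref{ss:bsdinfty}. One small remark: the claim that $(N^2_\infty)^\vee$ is finitely generated over $\ZZ_p$ is true (the characteristic element has no $p$-factor, so $\mu=0$) but is not actually needed---your ``elementary fact'' only requires finitely generated torsion over $\La$, which is already Corollary~\ref{1case}.
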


\begin{proof} Recall that invariants and coinvariants of a $\La$-module have the same rank. For $i=0$, observe that we have, by \eqref{e:alg-arithm},
$$\rank\big((N^0_\infty)^\vee\big)_\Gamma=\rank\big((N^0_\infty)^\Gamma\big)^\vee\leq \rank\big(A_{p^\infty}(\Kpinf)^\Gamma\big)^\vee =\rank\big(A(K)[p^\infty]\big)^\vee=0\,.$$
From \eqref{e:alg-arithm} we get the exact sequence
$$0 \lr X_p(A/\Kpinf) \lr (N^1_\infty)^\vee \lr \M_\infty[p^\infty]^\vee.$$
Keeping the notations of \S\ref{ss:bsdinfty}, we have
$$\rank_{\ZZ_p}\big((\M_\infty)^\vee\big)^\Gamma =\rank_{\ZZ_p}\big(\oplus_{w|v}\big(B_w(k(w))[p^\infty]^\vee\big)^\Gamma\big)=0$$
where the first equality results from the facts that $\Phi_w$ is a finite group and $(T_w)[p^\infty]$ is trivial. Therefore we have
$$\rank_{\ZZ_p}\left((N^1_\infty)^\vee\right)^\Gamma = \rank_{\ZZ_p}X_p(A/\Kpinf)^\Gamma = \rank_{\ZZ_p} X_p(A/K) \,,$$
where the last equality is a consequence of the control theorem \cite[Theorem 4]{tan10a}. This yields \eqref{e:ranks}.
On the other hand, for the group $(N^2_\infty)^\Gamma$, we have
$$\rank_{\ZZ_p}(N^2_0)^\vee=\rank_{\ZZ_p}\Sel_{\ZZ_p}(A^t/K)\,,$$
by \eqref{e:N2compactSel}.
Comparing  \eqref{e:selmersha} with the Pontryagin dual of the exact sequence \eqref{e:selmersha}, we get
$$\rank_{\ZZ_p}\Sel_{\ZZ_p}(A^t/K)=\rank A^t(K)+\rank_{\ZZ_p}\Sha_{p^\infty}(A^t/K)^\vee=\rank_{\ZZ_p}X_p(A^t/K),$$
In particular,
$$\rank_{\ZZ_p}(N^2_0)^\vee = \rank_{\ZZ_p}X_p(A^t/K) = \rank_{\ZZ_p}X_p(A/K)\,,$$
where the second equality results from the existence of an isogeny between $A$ and $A^t$. Hence, from the short exact sequence \eqref{ses1} we deduce $\rank_{\ZZ_p}((N^2_\infty)^\Gamma)^\vee=0$.
\end{proof}

Let $\tau\in \coh^1_{\mathrm{\acute et}}(\FF_p,\ZZ_p)=\Hom_{cont}(\Gal(\bar{\FF}_p/\FF_p),\ZZ_p)$ be the element which sends the arithmetic Frobenius to 1. By \cite[Lemma 6.9]{KT03} the composed map
$$\cup'\colon N_0^1\lr M^1_{1,0}\buildrel{{\bf 1}}\over\lr M^1_{2,0}\lr N_0^2$$
coincides up to sign with $\tau\cup$, the cup product by the image of $\tau$ in $\coh^1_{\mathrm{fl}}(X, \ZZ_p)$. Using \cite[Proposition 6.5]{Mil86b}, we can prove the following result.

\begin{lemma}\label{cup}
The composed map
$$\begin{CD} \cup\colon N^1_0\buildrel{\beta}\over\lr (N^1_\infty)^\Gamma @>{g_{N^1_\infty}}>> (N^1_\infty)_\Gamma \buildrel{\gamma}\over\lr N^2_0 \end{CD}$$
coincides (up to the sign) with the map
$$\tau\cup=\cup'\colon N^1_0\, \lr N^2_0.$$
\end{lemma}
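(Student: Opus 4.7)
\textbf{Plan for the proof of Lemma \ref{cup}.} The strategy is to recognise both $\cup$ and $\cup'$ as realisations of cup product with the canonical class $\tau \in \coh^1(C_{\mathrm{\acute et}}, \ZZ_p)$ inflated from the generator of $\coh^1(\Gamma, \ZZ_p)$: once this is done, the identification of $\cup'$ with $\tau \cup -$ is already supplied by \cite[Lemma 6.9]{KT03}, and the lemma follows.

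First, I would re-interpret the exact sequences \eqref{ses0} and \eqref{ses1} as coming from the Hochschild--Serre (Leray) spectral sequence
$$E_2^{i,j} = \coh^i(\Gamma, N^j_\infty) \Longrightarrow N_0^{i+j}$$
attached to the \'etale $\Gamma$-cover $\pi\colon C_\infty \to C$ applied to the complex $\pi^*\cS_D \otimes^{\mbb L} \QQ_p/\ZZ_p$. Because $\Gamma \simeq \ZZ_p$ has cohomological dimension one, $E_2^{i,j}$ vanishes for $i \geq 2$, one has canonical identifications $\coh^0(\Gamma, M) = M^\Gamma$ and $\coh^1(\Gamma, M) = M_\Gamma$, and the spectral sequence degenerates to the short exact sequences stated in \S\ref{specseq}. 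In particular $\beta$ is the edge map $N^1_0 \twoheadrightarrow E_\infty^{0,1} = (N^1_\infty)^\Gamma$ and $\gamma$ is the edge map $(N^1_\infty)_\Gamma = E_\infty^{1,1} \hookrightarrow N^2_0$.

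Second, I would invoke \cite[Proposition 6.5]{Mil86b}, which gives the compatibility between cup products and the Leray spectral sequence. Applied to the class $\tau$ (regarded as a class in $E_2^{1,0}$ via the natural map $\coh^1(\Gamma, \ZZ_p) \to \coh^1(C, \pi_* \ZZ_p) \to \coh^1(C, \ZZ_p)$), it asserts that for $x \in N^1_0$ the class $\tau \cup x \in N^2_0$ is represented in $E_\infty^{1,1} = (N^1_\infty)_\Gamma$ by the image of $\beta(x) \in (N^1_\infty)^\Gamma$ under the cup product map $\tau \cup_\Gamma -\colon \coh^0(\Gamma, N^1_\infty) \to \coh^1(\Gamma, N^1_\infty)$. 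Because $\Gamma$ is procyclic, this last map, read through the identifications $\coh^0(\Gamma, -) = (-)^\Gamma$ and $\coh^1(\Gamma, -) = (-)_\Gamma$, is precisely the natural map $g_{N^1_\infty}$. Thus $\tau \cup x = \pm\, \gamma \circ g_{N^1_\infty} \circ \beta(x)$ for every $x \in N^1_0$, which is exactly $\pm\cup(x)$.

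Third, \cite[Lemma 6.9]{KT03} gives the equality (up to sign) $\cup' = \tau \cup -$ via the factorisation through $M^1_{1,0} \xrightarrow{\ \bf 1\ } M^1_{2,0}$, and combining this with the previous step yields the lemma. The main obstacle I expect is sign bookkeeping: one has to match the sign conventions of \cite[Proposition 6.5]{Mil86b} with those implicit in the construction of the edge maps and in \cite[Lemma 6.9]{KT03} so that the two ``up to sign'' statements combine cleanly; the substantive homological content, namely the identification of $g_{N^1_\infty}$ with $\tau \cup_\Gamma -$ for procyclic $\Gamma$, is elementary.
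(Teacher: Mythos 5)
Your proposal is correct and takes the same route the paper alludes to: the authors give essentially no proof, merely saying the lemma follows from \cite[Proposition 6.5]{Mil86b} in combination with the preceding identification of $\cup'$ with $\tau\cup$ from \cite[Lemma 6.9]{KT03}. You have correctly unpacked the one-line citation — reading $\beta$ and $\gamma$ as edge maps of the Hochschild–Serre spectral sequence \eqref{e:HochSerre}, invoking Milne's compatibility of cup product with the Leray filtration to reduce to cup product with $\tau$ on $\Gamma$-cohomology, and observing that for procyclic $\Gamma$ the map $\tau\cup_\Gamma-\colon H^0(\Gamma,M)\to H^1(\Gamma,M)$ is exactly $g_M\colon M^\Gamma\to M_\Gamma$ under the standard identification — which is the substantive content the paper leaves implicit.
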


\subsubsection{The height pairing}\label{iwasawapairing} Recall the N\'eron-Tate height pairing \eqref{e:nthp} of \S\ref{su:d}. Let $e_1,...e_r$ be elements of $A(K)$ which form a $\ZZ$-basis of $A(K)/A(K)_{tor}$ and let $e_1^*,...,e_r^*$ be elements of $A^t(K)$ which form a $\ZZ$-basis of $A^t(K)/A^t(K)_{tor}$. Then
$$Disc(\tilde h_{A/K}):=|\det(\tilde h_{A/K}(e_i,e_j^*)_{i,j})|\in\RR$$
is independent of the choices of basis and we call it the discriminant of the height pairing. It is known that $Disc(\tilde h_{A/K})\neq 0$. We write
$$Disc(h_{A/K}) = \log(p)^{-r} Disc(\tilde h_{A/K}),$$
with $r=\rank(A(K))$.

Consider the quotient category $(ab)/(fab)$, where $(ab)$ is the category of abelian groups and $(fab)$ the category of finite abelian groups. Let $\theta$ be the composed map in $(ab)/(fab)$ defined by:
\begin{equation} \label{e:relhp} \begin{CD}  A(K) \otimes \QQ_p/\ZZ_p @>{\alpha}>>  N^1_0 \buildrel{\beta}\over\lr (N^1_\infty)^\Gamma @>{g_{N^1_\infty}}>> (N^1_\infty)_\Gamma \\
@V{\theta}VV &&    @V{\gamma}VV\\
\Hom\big(A^t(K)/A^t_{p^\infty}(K), \QQ_p/\ZZ_p\big) @<{v^{-1}}<< \Hom(A^t(K), \QQ_p/\ZZ_p) @<{u}<< N^2_0
\end{CD}\end{equation}
where: \begin{enumerate}
\item the map $\alpha\colon A(K)\otimes \QQ_p/\ZZ_p \rightarrow N_0^1$ is the canonical morphism in $(ab)/(fab)$ coming from $A(K)\otimes \QQ_p/\ZZ_p \to \Sel_{p^\infty}(A/K)$, by \eqref{e:alg-arithm0};
\item the map $u\colon N^2_0\to \Hom(A^t(K), \QQ_p/\ZZ_p)$ is the map constructed by using the isomorphism \eqref{e:KT253} and the natural map $A^t(K)\otimes \ZZ_p\to \Sel_{\ZZ_p}(A^t/K)$;
\item the map $v$ is induced by the quotient map (which is an isomorphism, since we are in the quotient category $(ab)/(fab)$).
\end{enumerate}
Thanks to Lemma \ref{cup} and \cite[3.3.6.2 and \S6.8]{KT03}, $\theta$ coincides (up to sign) with the map induced by $h_{A/K}$ in $(ab)/(fab)$. In particular, since the N\'eron-Tate height pairing is non-degenerate, $\theta$ is a quasi-isomorphism (i.e., an isomorphism in the quotient category).

\subsubsection{Computation of the Euler characteristic} If $f$ is a quasi-isomorphism of abelian groups, we denote $char(f):=|\Ker(f)|/|\Coker(f)|$. Since this characteristic is multiplicative, \eqref{e:relhp} gives
$$char(\alpha)\cdot char(\beta)\cdot char(g_{N^1_\infty})\cdot char(\gamma)\cdot char(u)\cdot char(v)^{-1} = char(\theta) \equiv_p Disc(h_{A/K})$$
where $ \equiv_p$ means ``$\equiv$ mod $\ZZ_p^{\times}$''. If we assume that $A/K$ has semistable reduction and the Tate-Shafarevich group of $A/K$ is finite, then we have: \label{euler}
$$char(\alpha)=\frac{|A_{p^\infty}(K)|}{|\Sha_{p^\infty}(A/K)|\cdot |\M_0[p^\infty]|\cdot|N^0_0|}$$
by \eqref{e:alg-arithm0};
$$char(v)\equiv_p|A_{p^\infty}^t(K)|^{-1}$$
because $v\colon \Hom\big(A^t(K)/A^t_{p^\infty}(K), \QQ_p/\ZZ_p\big) \rightarrow \Hom(A^t(K), \QQ_p/\ZZ_p)$ is injective with cokernel $A^t_{p^\infty}(K)^\vee$;
$$char(\beta)=|(N^0_\infty)_\Gamma|$$
and
$$char(\gamma)=|(N^2_\infty)^\Gamma|^{-1}\,,$$
by \eqref{ses0} and \eqref{ses1}, using Lemma \ref{N02}; and $char(u)=1$ since the Tate-Shafarevich group is assumed to be finite.

\begin{mytheorem}\label{euchar}  Assume that $A/K$ has semistable reduction and the Tate-Shafarevich group of $A/K$ is finite. Then we have
$$ord_{\bf 1}(\cL_{A/\Kpinf})=\rank_\ZZ(A(K))=ord_{s=1}L_Z(A,s)$$
and
$$|\pounds_{\bf 1}(\cL_{A/\Kpinf})|_p^{-1}\equiv_p c_{BSD}\cdot|(N^2_\infty)_\Gamma|\,,$$
where $c_{BSD}$ is the leading coefficient at $s=1$ of $L_Z(A,s)$.
\end{mytheorem}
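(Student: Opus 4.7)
The plan is to combine Theorem \ref{GIMC3} with Theorem \ref{chi-Iwasawa-BSD} applied at the trivial character $\omega = {\bf 1}$, and then to identify both the order and the leading coefficient of $f_{A/\Kpinf}$ with their arithmetic counterparts by means of the diagram \eqref{e:relhp} and the explicit computations collected in \S\ref{euler}. After using Theorem \ref{GIMC3} to replace $\cL_{A/\Kpinf}$ by $f_{A/\Kpinf}$, the first step will be to verify the finite $\Gamma$-Euler-characteristic hypothesis of Theorem \ref{chi-Iwasawa-BSD} for each of $(N^0_\infty)^\vee, (N^1_\infty)^\vee, (N^2_\infty)^\vee$. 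For $i=0,2$ this is immediate from Lemma \ref{N02}. For $i=1$, one observes that $g_{(N^1_\infty)^\vee}$ is the Pontryagin dual of $g_{N^1_\infty}$, and by the factorization \eqref{e:relhp} the latter sits inside a composition which coincides, up to sign, with the map $\theta$ induced by the N\'eron--Tate height pairing; the hypothesis that $\Sha(A/K)$ is finite makes $\theta$ a quasi-isomorphism, forcing the required finiteness.

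For the order assertion, Theorem \ref{chi-Iwasawa-BSD} immediately yields
$$ord_{\bf 1}(\cL_{A/\Kpinf}) = \sum_{i=0}^{2}(-1)^{i+1}\rank_{\ZZ_p}\big((N^i_\infty)^\vee\big)^\Gamma,$$
and Lemma \ref{N02} reduces the right-hand side to $\rank_{\ZZ_p} X_p(A/K)$. The finiteness of $\Sha_{p^\infty}(A/K)$ together with \eqref{e:selmersha} identifies this with $\rank_\ZZ A(K)$, settling the first equality. The equality $\rank_\ZZ A(K) = ord_{s=1}L_Z(A,s)$ is then the classical rank part of the function-field BSD conjecture, known to hold under the assumption of finite $\Sha$ (Artin--Tate, Schneider, Kato--Trihan); the passage from $L(A,s)$ to $L_Z(A,s)$ is harmless because for semistable $A$ each missing Euler factor $P_v(A,q_v^{-s})$ at $v \in Z$ is a product of terms $1-\alpha\,q_v^{-s}$ with $|\alpha|_\infty = q_v^{1/2}$, and thus does not vanish at $s=1$.

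For the leading coefficient, I would apply the second half of Theorem \ref{chi-Iwasawa-BSD},
$$|\pounds_{\bf 1}(\cL_{A/\Kpinf})|_p^{-1} = q^{-(\delta+g(\deg(Z)+\kappa-1))}\prod_{i=0}^{2}char\big(\Gamma,(N^i_\infty)^\vee\big)^{(-1)^{i+1}},$$
and evaluate each Euler characteristic separately. For $i=0,2$, \eqref{e:EulerMdual} combined with \eqref{ses0}--\eqref{ses1} reduces them to ratios of the orders of $(N^i_\infty)^\Gamma$ and $(N^i_\infty)_\Gamma$. For $i=1$, the diagram \eqref{e:relhp} together with the explicit values of $char(\alpha),\ldots,char(v)$ recorded in \S\ref{euler} expresses $char(\Gamma,N^1_\infty)=char(g_{N^1_\infty})$ in terms of $Disc(h_{A/K})$, $|\Sha_{p^\infty}(A/K)|$, $|\M_0[p^\infty]|$, the $p$-primary torsion cardinalities $|A_{p^\infty}(K)|$, $|A^t_{p^\infty}(K)|$, $|N^0_0|$, and the correction factors $|(N^0_\infty)_\Gamma|$, $|(N^2_\infty)^\Gamma|$. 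Most of the torsion contributions cancel in the alternating product, and one is left with an expression that, once compared against the function-field BSD formula of Kato--Trihan for $c_{BSD}$, produces the desired congruence $|\pounds_{\bf 1}(\cL_{A/\Kpinf})|_p^{-1}\equiv_p c_{BSD}\cdot|(N^2_\infty)_\Gamma|$. The main obstacle will be precisely this final bookkeeping: one must match the $q$-weights arising from \eqref{e:tamagawa3} and \eqref{e:deltaratio} against the function-field period in the BSD formula, identify $|\M_0[p^\infty]|$ with the $p$-part of the product of Tamagawa numbers at the places of $Z$, and reconcile the $p$-primary torsion orders appearing via \eqref{e:alg-arithm0} with the full torsion orders $|A(K)_{tor}|$, $|A^t(K)_{tor}|$ (the prime-to-$p$ discrepancy being irrelevant since the identification is only modulo $\ZZ_p^\times$).
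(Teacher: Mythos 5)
Your proposal follows the paper's proof closely: Lemma \ref{N02} handles $i=0,2$, the height-pairing diagram \eqref{e:relhp} and \S\ref{euler} handle $i=1$ and the leading-coefficient bookkeeping, Theorem \ref{chi-Iwasawa-BSD} at $\omega={\bf 1}$ gives both the order and leading-coefficient formulae, and the Kato--Trihan BSD expression for $c_{BSD}$ furnishes the final matching via \eqref{e:tamagawa} and \eqref{e:tamagawa3}. One small inaccuracy is worth noting: finiteness of $\Sha(A/K)$ does not make $\theta$ a quasi-isomorphism (nondegeneracy of the N\'eron--Tate pairing gives that unconditionally); rather, $\Sha$-finiteness is what makes the flanking maps $\alpha$, $u$, $v$ in \eqref{e:relhp} quasi-isomorphisms or isomorphisms, which is precisely what lets you extract $g_{N^1_\infty}$ as a quasi-isomorphism from the commutativity of the diagram.
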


\begin{proof} By Lemma \ref{N02}, $(N^0_\infty)^\vee$ and $(N^2_\infty)^\vee$ have finite generalized $\Gamma$-characteristic.
For $(N^1_\infty)^\vee$, remark that under our assumption $\alpha$, $\beta$ and $v$ are quasi-isomorphisms while $u$ and $\gamma$ are isomorphisms. Therefore, the map $g_{N^1_\infty}$ is a quasi-isomorphism and so is its Pontryagin dual, $g_{(N^1_\infty)^\vee}$. By Theorem \ref{chi-Iwasawa-BSD} and Lemma \ref{N02}, we have
$$ord_{\bf 1}(\cL_{A/\Kpinf})=\rank_{\ZZ_p}X_p(A/K)=\rank_\ZZ A(K)\,,$$
since we have assumed that the Tate-Shafarevich group of $A/K$ is finite. The last equality $\rank_\ZZ A(K)=ord_{s=1}L_Z(A,s)$ follows from the main theorem of \cite{KT03}. The same theorem also proves that if $\alpha_v$ and $\mu_v$ are the Haar measure defined in \S\ref{ss:bsdinfty}, then
$$c_{BSD}=\frac{|\Sha(A/K)|\cdot Disc(h_{A/K})}{|A(K)_{tor}|\cdot|A^t(K)_{tor}|}\cdot\mu(Lie({\cal A})( \mathbb{A}_K)/Lie({\cal A})( K))^{-1}\cdot \prod_{v\in Z}\alpha_v(A(K_v))\,.$$
Replacing the values above and applying \eqref{e:tamagawa} and \eqref{e:tamagawa3} one gets
$$ c_{BSD} \equiv_p q^{-g(\deg(Z)+\kappa-1)-\delta}\cdot\frac{|(N^0_\infty)_\Gamma|\cdot char(g_{N^1_\infty})}{|N^0_0|\cdot|(N^2_\infty)^\Gamma|}\,. $$

On the other hand, by Theorem \ref{chi-Iwasawa-BSD},
$$|\pounds_{\bf 1}(\cL_{A/\Kpinf})|_p^{-1}=q^{-g(\deg(Z)+\kappa-1)-\delta}\cdot\frac{char(\Gamma,(N^1_\infty)^\vee)}{char(\Gamma,(N^0_\infty)^\vee)\,char(\Gamma,(N^2_\infty)^\vee)}$$
and so the result follows remembering \eqref{e:EulerMdual} and observing that if the $\La$-module $M^\Gamma$ has finite cardinality then $char(\Gamma,M)=|M_\Gamma|/|M^\Gamma|$ (note also that $(N^0_\infty)^\Gamma=N^0_0$).
\end{proof}

Without assuming the finiteness of the Tate-Shafarevich group, we have:

\begin{mytheorem} \label{t:ranks}
$$ord_{\bf 1}(\cL_{A/\Kpinf})=ord_{s=1}\big(L(A,s)\big)\geq \rank_{\ZZ_p} X_p(A/K).$$
\end{mytheorem}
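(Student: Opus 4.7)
The proof splits into the equality $ord_{\bf 1}(\cL_{A/\Kpinf}) = ord_{s=1}L(A,s)$ and the inequality $ord_{s=1}L(A,s) \geq \rank_{\ZZ_p} X_p(A/K)$, and I would treat them separately.

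For the equality, the plan is to compute $\cL_{A/\Kpinf}$ explicitly as a rational function in $\gamma$ and compare with the rigid cohomological expression of $L_Z(A,s)$. Using Lemma \ref{induce} to realise $P^i_\infty$ as a free $\La[1/p]$-module of rank $r_i := \dim_{\QQ_p} P^i_0[1/p]$, and tracking the operator $\Phi_i$ through Lemma \ref{cofinite}(2), each factor becomes $\det(I - \gamma\cdot p^{-1}F_{i,0},\,P^i_0[1/p])$, a polynomial in $\gamma$ with $\QQ_p$-coefficients; this is consistent with the interpolation in Lemma \ref{interpol}. On the rigid side, the definition of $L(U,D^\dagger,t)$ together with the comparison isomorphism \eqref{e:PHrig} yields $L_Z(A,s) = \prod_i \det(1 - q^{1-s}\cdot p^{-1}F_{i,0})^{(-1)^{i+1}}$. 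Hence the substitution $\gamma \leftrightarrow q^{1-s}$ identifies $\cL_{A/\Kpinf}$ with $L_Z(A,s)$, matching the trivial character with $s=1$ and giving $ord_{\bf 1}(\cL_{A/\Kpinf}) = ord_{s=1}L_Z(A,s)$, both equal to $\sum_i(-1)^{i+1}\dim_{\QQ_p}\Ker(id - p^{-1}F_{i,0};\,P^i_0[1/p])$. Finally, for $v \in Z$ of semistable reduction the Frobenius eigenvalues on $V_\ell A^{I_v}$ have weights in $\{0,1\}$ by Deligne, so $P_v(A,q_v^{-1}) \neq 0$ and $ord_{s=1}L_Z(A,s) = ord_{s=1}L(A,s)$.

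For the inequality, the plan is to use the Main Conjecture together with Lemma \ref{recall}. By Theorem \ref{GIMC3}, $\cL_{A/\Kpinf}$ and $f_{A/\Kpinf}$ agree modulo $\La^\times$, so by the definition \eqref{e:charelement}
\[
ord_{\bf 1}(\cL_{A/\Kpinf}) = \sum_{i=0}^2 (-1)^{i+1}\,ord_{\bf 1}\!\big(f_{(N^i_\infty)^\vee}\big) + ord_{\bf 1}\!\big(f_{(L^0_\infty)^\vee}\big) - ord_{\bf 1}\!\big(f_{(L^1_\infty)^\vee}\big).
\]
The $L$-terms vanish by Lemma \ref{L(chi)}, since $(L^j_\infty)^\vee$ has characteristic element a power of $p$. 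For $i = 0, 2$, Lemma \ref{N02} shows that both $(N^i_\infty)^\Gamma$ and $(N^i_\infty)_\Gamma$ are finite, hence the generalised $\Gamma$-Euler characteristic of $(N^i_\infty)^\vee$ is finite and $\rank_{\ZZ_p}((N^i_\infty)^\vee)^\Gamma = 0$, so Lemma \ref{recall} gives $ord_{\bf 1}(f_{(N^i_\infty)^\vee}) = 0$. For $i = 1$, the key identity \eqref{e:ranks} of Lemma \ref{N02} shows $\rank_{\ZZ_p}((N^1_\infty)^\vee)^\Gamma = \rank_{\ZZ_p} X_p(A/K)$, and Lemma \ref{recall} then yields $ord_{\bf 1}(f_{(N^1_\infty)^\vee}) \geq \rank_{\ZZ_p} X_p(A/K)$. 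Combining the three contributions gives the desired inequality.

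The main subtlety will lie in Step 1, specifically in pinning down the Frobenius normalisations. The factor $p^{-1}$ between $F_{i,0}$ (the crystalline Frobenius on $P^i_0$) and $\varphi_{i,0}$ (the syntomic Frobenius on $M^i_{k,0}$) from Lemma \ref{cofinite}(2), together with the weight conventions on rigid cohomology that make Lemma \ref{interpol}'s interpolation formula coincide with $L_Z(A,\omega,1)$, must be tracked carefully so that the substitution $\gamma \leftrightarrow q^{1-s}$ is the correct one and the trivial character corresponds precisely to $s=1$. Once this bookkeeping is fixed, both parts of the theorem reduce to formal manipulations using the results already assembled in Section \ref{su:Eulerchar}.
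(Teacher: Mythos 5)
Your proof is correct. The two halves are worth assessing separately.

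For the equality $ord_{\bf 1}(\cL_{A/\Kpinf})=ord_{s=1}L(A,s)$, your route is essentially the paper's: expand $\cL_{A/\Kpinf}$ via the determinant formula (the paper's \eqref{e:detfr}), so that $\cL_{A/\Kpinf}=\prod_i\big(\prod_j(1-\alpha_{ij}\Fr_q)\big)^{(-1)^{i+1}}$ with the $\alpha_{ij}$ the eigenvalues of $p^{-1}F_{i,0}$, identify this with $L_Z(A,s)$ under $\Fr_q\leftrightarrow q^{1-s}$, and match the trivial character with $s=1$; then pass from $L_Z$ to $L$ by noting the semistable Euler factors at $v\in Z$ are nonzero at $s=1$. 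One small slip: the order is the algebraic multiplicity of the eigenvalue $1$ of $p^{-1}F_{i,0}$, not $\dim_{\QQ_p}\Ker(id-p^{-1}F_{i,0})$ (the geometric multiplicity); these agree only when $p^{-1}F_{i,0}$ acts semisimply at $1$. This does not affect the argument, since the identification of orders follows from the determinant formula itself and does not need either explicit count, but you should not quote the $\dim\Ker$ formula as the common value.

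For the inequality you take a genuinely different route from the paper. The paper simply cites \cite[\S3.5]{KT03} for $ord_{s=1}L(A,s)\geq\rank_{\ZZ_p}X_p(A/K)$. You instead derive $ord_{\bf 1}(\cL_{A/\Kpinf})\geq\rank_{\ZZ_p}X_p(A/K)$ internally: from Theorem \ref{GIMC3} and \eqref{e:charelement}, kill the $L^j$-terms by Lemma \ref{L(chi)}, kill the $i=0,2$ terms by Lemma \ref{N02} (both invariants and coinvariants finite, hence by Lemma \ref{recall} the order vanishes), and for $i=1$ use Lemma \ref{recall}'s inequality together with \eqref{e:ranks}. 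Combined with the first half this recovers the KT03 inequality as a formal consequence of the Main Conjecture and the syntomic Euler-characteristic computations of Section \ref{su:Eulerchar}, which is a tidier and more self-contained presentation. The paper's citation buys brevity and avoids redoing bookkeeping that KT03 already did; your derivation buys the pedagogical point that the inequality is a corollary of the IMC and the control-theorem identity $\rank_{\ZZ_p}\big((N^1_\infty)^\vee\big)^\Gamma=\rank_{\ZZ_p}X_p(A/K)$, with no hidden dependence on the analytic input. There is no circularity, since Theorem \ref{GIMC3} is proved independently of the inequality.
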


\begin{proof} The last inequality has been proved in \cite[\S3.5]{KT03}. We show that the analytic rank is equal to the rank of our $p$-adic $L$ function $\cL_{A/\Kpinf}$. First, note that the operator $(\varphi_{i,0}\otimes\Fr_q)^\vee$ on $P^i_\infty$ is induced by the operator  $(p^{-1}F_{i,0}\otimes\Fr_q)^\vee$ on $(P^i_0[\frac{1}{p}]\otimes \QQ_{p,\infty})^\vee$. Moreover, using \eqref{exact1}, we observe that we have an injection of $\QQ_p[[\Gamma]]$-modules
$$P^i_\infty\otimes_{\La[\frac{1}{p}]}\QQ_p[[\Gamma]]\hookrightarrow (P^i_0[p^{-1}]\otimes \QQ_{p,\infty})^\vee \simeq\QQ_p[[\Gamma]]^{r_i}$$
with $r_i:=\dim_{\QQ_p}(P^i_0[\frac{1}{p}])$ and where the operator $(p^{-1}F_{i,0}\otimes\Fr_q)^\vee$ on the left-hand side corresponds to the operator $\Fr_q\cdot p^{-1}F_{i,0}$ on the right-hand side (as shown in Lemma \ref{induce}).
Also, Lemma \ref{linalg1} shows that $P^i_\infty$ is a free $\La[\frac{1}{p}]$-module of rank equal to the $\ZZ_p$-corank of $M^i_{2,0}$, which, by Lemma \ref{cofinite}, is precisely $r_i$.
Hence, the $p$-adic $L$-function $\cL_{A/\Kpinf}$ can be written
\begin{equation} \label{e:detfr}
\prod_{i=0}^2 {\det}_{\QQ_p[[\Gamma]]}\left(id-\Fr_q\cdot p^{-1}F_{i,0},P^i_\infty\otimes\QQ_p[[\Gamma]]\right)^{(-1)^{i+1}}=\prod_{i=0}^2(\prod_{j=1}^{r_i}(1-\alpha_{ij}\Fr_q))^{(-1)^{i+1}},
\end{equation}
where the $\alpha_{ij}$'s are the eigenvalues (in ${\bar \QQ}_p$) of $p^{-1}F_{i,0}$. In particular, $ord_{\bf 1}(\prod_{j=1}^{r_i}(1-\alpha_{ij}\Fr_q))$ is the number of $\alpha_{ij}$ equal to $1$ (note that $1-\lambda\Fr_q$ has order 0 if $\lambda\neq 1$ and order 1 else), that is, the multiplicity of the eigenvalue 1 of the operator $p^{-1}F_{i,0}$ and the assertion follows from \cite[3.5.2]{KT03}.
\end{proof}

\subsection{Comparison with the constant ordinary case} \label{su:2dreams}
Now we assume that $A/K$ is a constant abelian variety and compare the Main Conjecture of Part \ref{part:semistable} (i.e., Theorem \ref{GIMC3}) with the Main Conjecture of Part \ref{part:constantA} (that is, Theorem \ref{t:imc3constant}).
Let $\mathbf{A}/\FF$ be such that $A=\mathbf{A}\times_\FF \Spec K$ and $\A=\mathbf{A}\times_\FF C$.

We have $Z=\emptyset$. Thus, \eqref{e:interpoLtilde} and Theorem \ref{GIMC2} imply that for all characters of $\Gamma$
$$\omega(\tilde{\cL}_{A/\Kpinf}) = L(A,\omega,1) = \omega(\cL_{A/\Kpinf}).$$
Therefore,
\begin{equation}\label{e:LL}
\tilde{\cL}_{A/\Kpinf}=\cL_{A/\Kpinf}.
\end{equation}

\begin{mytheorem} Let $A/K$ be a constant ordinary abelian variety. Then {\em{Theorem \ref{t:imc3constant}}} is equivalent to {\em{Theorem \ref{GIMC3}}}.
\end{mytheorem}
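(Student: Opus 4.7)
The plan is to unfold both sides of Theorem \ref{GIMC3} modulo $\La^\times$ until only $\theta_{A,\Kpinf,\emptyset}$ and $c_{A/\Kpinf}$ remain. First I would invoke \eqref{e:LL} to replace $\cL_{A/\Kpinf}$ with $\tilde\cL_{A/\Kpinf}$; since $(\prod_i\alpha_i)^{2\kappa-2}\in\ZZ_p^\times$ and $\Fr_q^{g(2\kappa-2)}\in\Gamma\subset\La^\times$, the definition \eqref{e:cLtilde} yields
$$\cL_{A/\Kpinf}\equiv \frac{\theta_{A,\Kpinf,\emptyset}}{q^{g(\kappa-1)}\,\Delta_\emptyset}\pmod{\La^\times}.$$
On the right-hand side, since $A$ is constant we have $Z=\emptyset$, and choosing a Lie-algebra basis of $Lie(A)(K)$ arising from $Lie(\mathbf{A})(\FF)$ forces $\mu_v^{(e)}(Lie(\A)(O_v))=1$ at every $v$, so $\delta=0$. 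Plugging these into Proposition \ref{p:summarize} gives
$$f_{A/\Kpinf}\equiv \frac{c_{A/\Kpinf}}{q^{g(\kappa-1)}\,\prod_{j=1}^l(1-\zeta_j^{-1}\Fr_q)(1-\zeta_j^{-1}\Fr_q^{-1})}\pmod{\La^\times}.$$
After clearing denominators, Theorem \ref{GIMC3} thus becomes equivalent to
$$\theta_{A,\Kpinf,\emptyset}\cdot \prod_{j=1}^l(1-\zeta_j^{-1}\Fr_q)(1-\zeta_j^{-1}\Fr_q^{-1})\equiv c_{A/\Kpinf}\cdot\Delta_\emptyset\pmod{\La^\times}.$$

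The heart of the argument is then to match the remaining auxiliary factors, i.e., to prove the identity
$$\Delta_\emptyset\equiv \prod_{j=1}^l(1-\zeta_j^{-1}\Fr_q)(1-\zeta_j^{-1}\Fr_q^{-1})\pmod{\La^\times}.$$
For this I would pass to $\La_\cO:=\cO[[\Gamma]]$ for $\cO$ a finite extension of $\ZZ_p$ containing all eigenvalues $\alpha_1,\dots,\alpha_g$ of the twist matrix $\mathbf{u}$. The image of $\Gal(\bar\FF/\FF)$ in $GL(T_pA)=GL_g(\ZZ_p)$ is the topological closure $\overline{\langle\mathbf{u}\rangle}$, which decomposes canonically as a direct product $P\times H$ with $P$ pro-$p$ and $H$ the finite part of order prime to $p$. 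Writing $\mathbf{u}=\mathbf{u}_P\mathbf{u}_H$ accordingly and diagonalizing over $\cO$, each eigenvalue factors as $\alpha_i=\alpha_i^P\alpha_i^H$ with $\alpha_i^P\in 1+\mathfrak{m}_\cO$ and $\alpha_i^H\in\mu(\cO)$ of prime-to-$p$ order, which is nothing but the multiplicative decomposition $\cO^\times\simeq\mu(\cO)\times(1+\mathfrak{m}_\cO)$ applied to $\alpha_i$. The $i$th eigenvector is $H$-fixed---equivalently, defined over $\Kpinf$---iff $\alpha_i^H=1$ iff $\alpha_i\in 1+\mathfrak{m}_\cO$, so the multiset $\{\zeta_j\}_{j=1}^l$ coincides with $\{\alpha_i\colon\alpha_i\in 1+\mathfrak{m}_\cO\}$. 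For the remaining indices one has $1-\alpha_i^{-1}\in\cO^\times$, whence each factor $1-\alpha_i^{-1}\Fr_q^{\pm 1}$ is a unit in $\La_\cO$ (its image under the augmentation is a unit in $\cO$). This establishes the required identity modulo $\La_\cO^\times$; a direct argument using the freeness of $\cO$ as a $\ZZ_p$-module then gives $\La\cap\La_\cO^\times=\La^\times$, so the congruence descends to $\La$.

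Once this identity is in place, both theorems reduce to the single relation $(\theta_{A,\Kpinf,\emptyset})=(c_{A/\Kpinf})$, which is exactly Theorem \ref{t:imc3constant} applied to $L=\Kpinf$ (the arithmetic extension is unramified everywhere, so $S=\emptyset$). The main obstacle I anticipate is the identification $\{\zeta_j\}=\{\alpha_i\colon\alpha_i\in 1+\mathfrak{m}_\cO\}$: this requires carefully matching the prime-to-$p$ Galois decomposition of $\overline{\langle\mathbf{u}\rangle}$ with the multiplicative decomposition of $\cO^\times$, and keeping track of the relation between the Frobenius action on $T_pA$ and the (generally smaller) action on $T_pA(\Kpinf)$. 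Once this is settled, the remainder is routine manipulation in the Iwasawa algebra.
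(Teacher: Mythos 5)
Your proof is correct and follows essentially the same route as the paper's: both unfold $\cL_{A/\Kpinf}$ via \eqref{e:LL} and \eqref{e:cLtilde}, compute $Z=\emptyset$ and $\delta=0$, plug into Proposition \ref{p:summarize}, and reduce the claim to matching $\Delta_\emptyset$ against $\prod_{j=1}^l(1-\zeta_j^{-1}\Fr_q)(1-\zeta_j^{-1}\Fr_q^{-1})$ by observing that $\{\zeta_j\}$ consists exactly of those eigenvalues $\alpha_i$ on which $H$ acts trivially, the remaining factors being units because $1-\alpha_i^{-1}$ is already a unit. Your treatment is if anything a bit more careful than the paper's at the final step, where you explicitly work in $\La_\cO$ and descend via $\La\cap\La_\cO^\times=\La^\times$ (the paper states the unit conclusion in $\La^\times$ directly, which is slightly informal since the individual factors live in $\La_\cO$).
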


\begin{proof}
First, by \eqref{e:cLtilde} and \eqref{e:LL}
$$ \theta_{A,\Kpinf,\emptyset}= q^{g(\kappa-1)}\Fr_q^{g(2-2\kappa)}(\prod_{i=1}^g\alpha_i^{2-2\kappa}\big) \cdot \big(\prod_{i=1}^g(1-\alpha_i^{-1}\Fr_q)(1-\alpha_i^{-1}\Fr_q^{-1})\big) \cdot \cL_{A/\Kpinf} \,. $$
Note that $\Fr_q^{g(2-2\kappa)}$ and $\prod\alpha_i^{2-2\kappa}$ are units in $\La$. Thus, it is sufficient to show that
$$\star_{A,K_\infty^{(p)}}= q^{-g(\kappa-1)}\cdot \big(\prod_{i=1}^g(1-\alpha_i^{-1}\Fr_q)(1-\alpha_i^{-1}\Fr_q^{-1})\big)^{-1}.$$
On the other hand Proposition \ref{p:summarize} yields
$$\star_{A,K_\infty^{(p)}}= q^{-g(\kappa-1)}\cdot \big(\prod_{i=1}^l(1-\zeta_i^{-1}\Fr_q)(1-\zeta_i^{-1}\Fr_q^{-1})\big)^{-1}$$
as we have $Z=\emptyset$ and $\delta=0$ because we can choose $e_1,...,e_g$ to be a basis of $Lie( \mathbf{A})(\FF)$ so that $f_v$ can be taken to be $e$. Besides $\{\zeta_1,...,\zeta_l\}\subset\{\alpha_1,...,\alpha_g\}\,,$ since they are the eigenvalues of the Galois action respectively on $T_pA(\Kpinf)[p^\infty]$ and $T_pA$. Let $\tilde\Gamma$ and $H$ as in \eqref{e:prod2}. The maps $\Fr_q\mapsto\alpha_i$ determine homomorphisms of $\tilde\Gamma$ onto some group of $p$-adic units and  the equality $T_pA(\Kpinf)[p^\infty]=(T_pA)^H$ yields that the image of $H$ is non-trivial for $i>l$. Hence $i>l$ implies $\alpha_i\not\equiv1$ mod $p$ and thus $(1-\alpha_i\Fr_q^{\pm1})\in\La^\times$.
\end{proof}

\section{Iwasawa theory for general coefficients and open questions}

Let $D$ be a log Dieudonn\'e crystal over $C^\#/W(\FF)$ and define the cohomology groups $N_{D,\infty}^i$, $L^j_{D,\infty}$ and $P^i_{D,\infty}$ similarly to those in \S\ref{ss:cohomtheories} and \S\ref{ss:Pinfty}. Let $D^\dagger$ be the overconvergent $F$ isocrystal over $U$ associated with $D$ (remember that $U$ is the dense open subset of $C$ where the log structure is trivial).

We can prove (exactly as in Subsection \ref{su:cohom}) the following:

\begin{mytheorem} Assume that $(N^0_{D,\infty})^\vee$ is $\La$-torsion. Then, $(N_{D,\infty}^i)^\vee$ ($i=0,1,2$) and $(L^j_{D,\infty})^\vee$ ($j=0,1$) are finitely generated torsion $\La$-modules.
\end{mytheorem}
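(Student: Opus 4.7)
The plan is to follow the proofs of Corollaries \ref{M,L} and \ref{1case} essentially verbatim, with the only change being that the input we previously got ``for free'' (namely, that $(N^0_\infty)^\vee$ is $\La$-torsion, coming from the quotient by $A(\Kpinf)[p^\infty]^\vee$ via \eqref{e:alg-arithm}) is now furnished by hypothesis. The key observation is that almost every step in Subsection \ref{su:cohom} uses only formal properties of the log crystalline/syntomic machinery and $\sigma$-linear algebra, not the specific origin of $D$ as the Dieudonn\'e crystal of an abelian variety.

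First I would generalize Lemma \ref{cofinite}: the construction of $f_k \colon P^i_{D,0}[p^{-1}] \to M^i_{k,D,0}$ with finite cokernel and lattice kernel is carried out in \cite{KT03} for an arbitrary log Dieudonn\'e crystal, so the analog of Corollary \ref{M,L} follows by combining it with Lemma \ref{linalg1}: each $(M^i_{k,D,\infty})^\vee$ is finitely generated over $\La$, the modules $(M^i_{1,D,\infty})^\vee$ and $(M^i_{2,D,\infty})^\vee$ have the same $\La$-rank $r_i = \rank_{\ZZ_p} P^i_{D,0}$, and $(L^j_{D,\infty})^\vee$ is a finite direct sum of copies of $\La/p\La$, hence finitely generated and $\La$-torsion. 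This settles the statement for the $L^j$'s.

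Next, assuming $(N^0_{D,\infty})^\vee$ is $\La$-torsion, the argument of Proposition \ref{0case} shows that $(\Coker({\bf 1}-\varphi_{0,D,\infty}))^\vee$ is $\La$-torsion, since it has the same $\La$-rank as $(N^0_{D,\infty})^\vee$ by the $\La$-module exact sequence dual to \eqref{L1} (using that $(M^0_{1,D,\infty})^\vee$ and $(M^0_{2,D,\infty})^\vee$ have equal ranks). For $(N^1_{D,\infty})^\vee$, I would rerun Lemma \ref{finitevector}, Corollary \ref{lemmaN^1} and Lemma \ref{beta} word for word: the finite-dimensionality of $\coh^1_{\mathrm{syn}}(C_\infty, \pi^*\mathcal{S}_D) \otimes \QQ_p$ rests on Lemma \ref{sblem-section2}, which is purely formal and applies to any finite-dimensional $\QQ_p$-vector space with Frobenius; the exact sequence \eqref{exactbisbis-section2} then exhibits $(N^1_{D,\infty})^\vee$ as an extension of a quotient of $(M^1_{2,D,\infty})^\vee[p^\infty]$ and $(M^1_{1,D,\infty})^\vee[p^\infty]$ (both $\La$-torsion by Lemma \ref{linalg1}) by a free $\ZZ_p$-module of finite rank (hence $\La$-torsion).

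Finally, to handle $(N^2_{D,\infty})^\vee$ I follow Corollary \ref{1case}: from the short exact sequence $0 \to \Ker({\bf 1}-\varphi_{1,D,\infty}) \to M^1_{1,D,\infty} \to M^1_{2,D,\infty} \to \Coker({\bf 1}-\varphi_{1,D,\infty}) \to 0$ and the fact that $(\Ker({\bf 1}-\varphi_{1,D,\infty}))^\vee$ is $\La$-torsion (deduced from the torsion of $(N^1_{D,\infty})^\vee$ via \eqref{e:sesNi}), equality of $\La$-ranks of $(M^1_{1,D,\infty})^\vee$ and $(M^1_{2,D,\infty})^\vee$ forces $(\Coker({\bf 1}-\varphi_{1,D,\infty}))^\vee$ to be $\La$-torsion. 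Similarly $(\Ker({\bf 1}-\varphi_{2,D,\infty}))^\vee$ is $\La$-torsion, because the cohomology is concentrated in degrees $\leq 2$, yielding a short exact sequence $0 \to \Ker({\bf 1}-\varphi_{2,D,\infty}) \to M^2_{1,D,\infty} \to M^2_{2,D,\infty} \to 0$ where the last two modules have equal $\La$-rank. The short exact sequence \eqref{e:sesNi} for $i=2$ then concludes that $(N^2_{D,\infty})^\vee$ is $\La$-torsion. Since none of these steps uses any property specific to abelian varieties (the abelian-variety input appeared only in the original unconditional torsion of $(N^0_\infty)^\vee$), I do not expect any serious obstacle; the only point worth double-checking is that the finite-dimensionality input for the rigid/syntomic cohomology in Lemma \ref{finitevector} is available for an arbitrary log Dieudonn\'e crystal, which is the case since it reduces to Lemma \ref{sblem-section2}.
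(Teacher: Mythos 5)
Your proposal is correct and takes essentially the same approach as the paper: the paper itself just states ``We can prove (exactly as in Subsection \ref{su:cohom}) the following'' and leaves the verification to the reader, which is precisely the rerunning of Lemma \ref{cofinite}, Corollary \ref{M,L}, Proposition \ref{0case}, Lemma \ref{finitevector}, Corollary \ref{lemmaN^1}, Lemma \ref{beta} and Corollary \ref{1case} that you carry out, with the abelian-variety-specific input (torsionness of $(N^0_\infty)^\vee$ via \eqref{e:alg-arithm}) replaced by the hypothesis.
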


Hence, if $(N^0_{D,\infty})^\vee$ is $\La$-torsion (for example, this happens when $N^0_{D,0}$ is a finite group), we define
$$f_{D}:=\frac{f_{(N^1_{D,\infty})^\vee}f_{(L^0_{D,\infty})^\vee}}{f_{(N^0_{D,\infty})^\vee}f_{(N^2_{D,\infty})^\vee}f_{(L^1_{D,\infty})^\vee}}\in Q(\La)^\times/\La^\times$$
and call it the characteristic element associated with $D$ relative to the arithmetic $\ZZ_p$-extension of $K$.

\subsubsection{Iwasawa main conjecture for log Dieudonn\'e crystals} As in Section \ref{s:semistab}, the element
$$\cL_D:=\prod_{i=0}^2 det_{\La[\frac{1}{p}]}\left(id-\Phi_i,P^i_{D,\infty}\right)^{(-1)^{i+1}}$$
is well defined and we call it the $p$-adic $L$-function associated with $D$ relative to the arithmetic $\ZZ_p$-extension of $K$.

We get as in Theorems \ref{GIMC2}, \ref{GIMC3} and \ref{chi-Iwasawa-BSD}:

\begin{mytheorem}\label{GGIMC2} For any Artin character $\omega\colon\Gamma\to \bar\QQ_p^\times$ and for any log Dieudonn\'e crystal $D$, $$\omega(\cL_D)=L(U,D^\dagger,\omega,1).$$
\end{mytheorem}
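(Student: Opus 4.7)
The plan is to follow the proof strategy used for Theorem \ref{GIMC2}, but observe that the statement of Theorem \ref{GGIMC2} is already phrased in terms of the twisted $L$-function $L(U,D^\dagger,\omega,1)$ attached to the overconvergent $F$-isocrystal $D^\dagger$. Consequently we can bypass the abelian-variety-specific identification \eqref{e:LD+=LA} which in Theorem \ref{GIMC2} converted the right-hand side into a Hasse--Weil $L$-value. The rest of the argument is formal and should carry over mutatis mutandis to an arbitrary log Dieudonn\'e crystal.

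First I would check that the structural results of \S\ref{s:genord} extend to a general log Dieudonn\'e crystal $D$: namely that Lemma \ref{cofinite} holds for $D$ (the argument in \cite{KT03} uses only the general formalism of the syntomic complex and the log-crystalline cohomology, not specific features of $A$), and that the base-change isomorphisms \eqref{e:pi0n} and \eqref{e:mi0infty} hold verbatim. Assuming $(N^0_{D,\infty})^\vee$ is $\Lambda$-torsion, Lemma \ref{induce} then carries over to give that $P^i_{D,\infty}:=(M^i_{2,D,\infty})^\vee[\tfrac{1}{p}]$ is a free $\Lambda[\tfrac{1}{p}]$-module of finite rank $\dim_{\QQ_p}(P^i_{D,0}[\tfrac{1}{p}])$, and that the operator $\Phi_i$ is induced by $p^{-1}F_{i,D,0}\otimes \Fr_q$ on $(P^i_{D,0}[\tfrac{1}{p}]\otimes_{\QQ_p}\QQ_{p,\infty})^\vee$.

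With these ingredients in hand, the proof of Lemma \ref{interpol} applies verbatim (the normal basis theorem computation is completely formal) and yields, for any Artin character $\omega\colon\Gamma\to E^\times$ factoring through $\Gamma_N$,
$$\omega(\cL_D)=\prod_{i=0}^2{\det}_E\bigl(id-p^{-1}F_{i,D,0}\otimes m_{\omega(\Fr_q)},\,P^i_{D,0}[\tfrac{1}{p}]\otimes_{\QQ_p}E\bigr)^{(-1)^{i+1}}.$$
Next I would invoke the canonical comparison isomorphism
$$P^i_{D,0}[\tfrac{1}{p}]\;\simeq\;\coh^i_{rig,c}(U/\QQ_{p,0},D^\dagger)$$
compatible with Frobenius, which identifies $p^{-1}F_{i,D,0}$ with the rigid-cohomology Frobenius $\varphi_i$; this is the analogue of \eqref{e:PHrig} and is the general comparison between log-crystalline cohomology of a Dieudonn\'e crystal and rigid cohomology of the associated overconvergent isocrystal. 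The K\"unneth formula for rigid cohomology \cite[(1.2.4.1)]{Ke06} gives a Frobenius-compatible isomorphism
$$\coh^i_{rig,c}(U/E_0,\,pr_1^*D^\dagger\otimes pr_2^*U(\omega)^\dagger)\;\simeq\;\coh^i_{rig,c}(U/\QQ_{p,0},D^\dagger)\otimes_{\QQ_{p,0}}E_0,$$
under which the Frobenius on the right is $\varphi_i\otimes m_{\omega(\Fr_q)}$ (since $U(\omega)^\dagger$ has Frobenius $m_{\omega(\Fr_q)}$). Substituting into the definition of $L(U,D^\dagger,\omega,t)$ and evaluating at $t=1$ gives exactly the right-hand side of the displayed formula for $\omega(\cL_D)$, concluding the proof.

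The main obstacle should be verifying the comparison isomorphism between the log-crystalline cohomology $P^i_{D,0}[\tfrac{1}{p}]$ and the compactly-supported rigid cohomology of $D^\dagger$ with matching Frobenius normalization (including the appearance of the factor $p^{-1}$), in the generality of an arbitrary log Dieudonn\'e crystal; in the abelian-variety setting this is built into the construction of $D^\dagger$ in \cite[IV]{KT03}, but for arbitrary $D$ one must either appeal to general comparison theorems in log rigid cohomology or impose enough hypotheses on $D$ to guarantee it. The only other delicate point is the convergence of $\omega(\cL_D)$, i.e.\ that the denominators appearing in the alternating product are not killed by $\omega$; this follows, as in the remark after Theorem \ref{GIMC2}, from Kedlaya's weight-monodromy results \cite[Theorem 5.4.1]{Ke06b} provided $D^\dagger$ is pure (which one should add as a hypothesis, or phrase the identity as one of rational functions where no cancellation occurs).
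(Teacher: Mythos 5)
Your proposal is correct and follows exactly the paper's intended strategy (the paper's own ``proof'' is just the remark ``We get as in Theorems \ref{GIMC2}, \ref{GIMC3} and \ref{chi-Iwasawa-BSD}''): one reproduces Lemma \ref{interpol}, invokes the analogue of \eqref{e:PHrig} and the K\"unneth formula, and observes that \eqref{e:LD+=LA} is no longer needed since the statement is already phrased in terms of $L(U,D^\dagger,\omega,1)$. The two caveats you raise — the comparison isomorphism $P^i_{D,0}[\tfrac{1}{p}]\simeq\coh^i_{rig,c}(U/\QQ_{p,0},D^\dagger)$ for general $D$, and the convergence of $\omega(\cL_D)$ (which in the paper is handled via a purity/weight argument for $D=D(A)$ only) — are legitimate points where the paper is implicitly assuming more than it states, and you are right that a careful reader would either add hypotheses or interpret the identity as one of rational functions in a suitable variable.
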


\begin{mytheorem}\label{GGIMC3} Let $D$ be a log Dieudonn\'e crystal over $C^\#/W(\FF)$ and assume that $(N^0_{D,\infty})^\vee$ is $\La$-torsion. We have the following equality in $Q(\La)^\times/\La^\times$:
$$\cL_{D}=f_D$$
\end{mytheorem}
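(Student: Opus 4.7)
The plan is to follow verbatim the structure of the proof of Theorem \ref{GIMC3}, since that argument is essentially formal once the right torsion properties are in hand. The only substantive task is to verify that the kernels and cokernels of the maps $\mathbf{1}-\varphi_{i,D,\infty}\colon M^i_{1,D,\infty}\to M^i_{2,D,\infty}$ have $\La$-torsion Pontryagin duals for every $i$; then the characteristic-element calculus takes over.

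First I would record the torsion-ness of these kernels and cokernels. By hypothesis $(N^0_{D,\infty})^\vee$ is $\La$-torsion, and by the theorem just stated before \ref{GGIMC3} so are $(N^i_{D,\infty})^\vee$ and $(L^j_{D,\infty})^\vee$ for all $i,j$. The long exact sequence \eqref{L1} applied to $D$ breaks into short exact sequences
\[
0\lr \Coker(\mathbf{1}-\varphi_{i-1,D,\infty}) \lr N^i_{D,\infty} \lr \Ker(\mathbf{1}-\varphi_{i,D,\infty}) \lr 0,
\]
whose Pontryagin duals show that $(\Ker(\mathbf{1}-\varphi_{i,D,\infty}))^\vee$ and $(\Coker(\mathbf{1}-\varphi_{i,D,\infty}))^\vee$ are both $\La$-torsion for $i=0,1,2$. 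The same argument, using \eqref{L2} applied to $D$, shows that $(\Ker(\mathbf{1}_{i,D}))^\vee$ and $(\Coker(\mathbf{1}_{i,D}))^\vee$ are $\La$-torsion. In particular, denoting by $char(g)\in Q(\La)^\times/\La^\times$ the element $f_{\Coker(g)}\cdot f_{\Ker(g)}^{-1}$, the quantities $char((\mathbf{1}-\varphi_{i,D,\infty})^\vee)$ and $char(\mathbf{1}_{i,D}^\vee)$ are well defined.

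Next, dualizing \eqref{L1} and \eqref{L2} and applying multiplicativity of characteristic elements to the short exact sequences above (and their $\mathbf{1}_{i,D}$ analogues), I obtain
\[
f_{(N^i_{D,\infty})^\vee} = f_{\Coker((\mathbf{1}-\varphi_{i,D,\infty})^\vee)}\,f_{\Ker((\mathbf{1}-\varphi_{i-1,D,\infty})^\vee)},\quad f_{(L^j_{D,\infty})^\vee} = f_{\Coker(\mathbf{1}_{j,D}^\vee)}\,f_{\Ker(\mathbf{1}_{j-1,D}^\vee)}.
\]
Substituting these into the definition of $f_D$ and simplifying, exactly as in the proof of Theorem \ref{GIMC3}, yields
\[
f_D \;=\; \frac{char((\mathbf{1}-\varphi_{1,D,\infty})^\vee)\,char(\mathbf{1}_{1,D}^\vee)^{-1}}{char((\mathbf{1}-\varphi_{0,D,\infty})^\vee)\,char(\mathbf{1}_{0,D}^\vee)^{-1}\,char((\mathbf{1}-\varphi_{2,D,\infty})^\vee)\,char(\mathbf{1}_{2,D}^\vee)^{-1}}\,.
\]

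On the other hand, the map $\mathbf{1}_{i,D}^\vee\otimes\QQ_p$ is an isomorphism (this is the generalization of Lemma \ref{induce}, whose proof uses only formal properties of $M^i_{k,D,\infty}\simeq M^i_{k,D,0}\otimes W(\kpinf)$ together with Lemma \ref{cofinite} applied to $D$; the construction of the map $f_k$ there is purely from the syntomic formalism and not specific to the Dieudonn\'e crystal coming from an abelian variety). Therefore $id-\Phi_{i,D} = (\mathbf{1}_{i,D}^\vee)^{-1}\circ(\mathbf{1}^\vee-\varphi_{i,D,\infty})$ is defined on $P^i_{D,\infty}$ and the concluding algebraic lemma of the proof of Theorem \ref{GIMC3} — which states that for two $\La$-morphisms $g,h\colon M\to N$ with $\La$-torsion kernels and cokernels one has $\det_{Q(\La)}(g_{Q(\La)}h_{Q(\La)}^{-1}) = char(g)\,char(h)^{-1}$ — applies unchanged to each factor in the alternating product defining $\cL_D$, giving $\cL_D = f_D$.

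The main obstacle, as indicated above, is really bookkeeping: one must check that Lemma \ref{induce} and the rank equalities of Corollary \ref{M,L}(1) genuinely carry over to arbitrary $D$. This is not difficult, since the proofs of \ref{cofinite}, \ref{linalg1} and \ref{induce} are formulated for general log Dieudonn\'e crystals (the only input beyond general linear algebra is finite generation of $P^i_{D,0}$ over $W(\FF)$, which is \cite[\S 5.5]{KT03}), but it should be explicitly recorded before invoking the algebraic lemma. Once that is done, no further analytic input is required and the Main Conjecture follows.
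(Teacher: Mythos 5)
Your proposal is correct and follows exactly the paper's intended route: the paper states that Theorem \ref{GGIMC3} is obtained ``as in'' Theorem \ref{GIMC3}, whose proof is purely formal once the torsion of $(N^i_{D,\infty})^\vee$ and $(L^j_{D,\infty})^\vee$ is in hand. Your verification that the kernels and cokernels of $\mathbf{1}-\varphi_{i,D,\infty}$ and $\mathbf{1}_{i,D}$ have $\La$-torsion duals, and that Lemma \ref{induce} extends so that $id-\Phi_{i,D}$ is well defined, is precisely the bookkeeping the paper leaves implicit.
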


\begin{mytheorem}\label{Gchi-Iwasawa-BSD} Let $\omega\colon\Gamma\to O^\times$ be a character. Let $D$ be a log Dieudonn\'e crystal over $C^\#/W(\FF)$ and assume that $(N^0_{D,\infty})^\vee$ is $\La$-torsion. Assume furthermore that, for $i=0,1,2$, the $\La$-modules $(N^i_{D,\infty})^\vee(\omega)$ have finite generalized $\Gamma$-characteristic. Then
$$ord_\omega\left(\cL_D\right)=\sum_{i=0}^2 (-1)^{i+1}rank_{\ZZ_p}\left(\left((N^i_{D,\infty})^\vee(\omega)\right)^\Gamma\right)$$
and
$$|\pounds_\omega(\cL_{D})|_p^{-1}=\prod_{i=0}^2 char\left(\Gamma,(N^i_{D,\infty})^\vee(\omega)\right)^{(-1)^{i+1}}p^{d_\omega(d(L^0_{D,0})-d(L^1_{D,0}))}.$$
\end{mytheorem}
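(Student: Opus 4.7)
The strategy is to apply Theorem \ref{GGIMC3} to replace $\cL_D$ by $f_D$ in $Q(\La)^\times/\La^\times$, then to compute $ord_\omega$ and $\pounds_\omega$ of each of the five characteristic elements appearing in the alternating expression defining $f_D$. Both $ord_\omega$ and the $p$-adic valuation of $\pounds_\omega$ are additive in this sense on $Q(\La)^\times/\La^\times$, so the total formula decomposes into a signed sum of contributions indexed by the factors $(N^i_{D,\infty})^\vee$ ($i = 0,1,2$) and $(L^j_{D,\infty})^\vee$ ($j = 0,1$). This is the exact blueprint used to derive Theorem \ref{chi-Iwasawa-BSD} from Theorem \ref{GIMC3}.

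For the three $(N^i_{D,\infty})^\vee$-factors, the hypothesis that each $(N^i_{D,\infty})^\vee(\omega)$ has finite generalized $\Gamma$-Euler characteristic is precisely what is needed to invoke Lemma \ref{recall}; applied termwise this gives
$$ord_\omega\!\left(f_{(N^i_{D,\infty})^\vee}\right) \,=\, \rank_{\ZZ_p}\!\left((N^i_{D,\infty})^\vee(\omega)^\Gamma\right)$$
together with $\left|\pounds_\omega(f_{(N^i_{D,\infty})^\vee})\right|_p^{-d_\omega} = char(\Gamma, (N^i_{D,\infty})^\vee(\omega))$. Equipped with the signs $(-1)^{i+1}$ coming from the definition of $f_D$, these sum to the asserted expressions, modulo the contribution of the $L^j$-factors.

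For the $(L^j_{D,\infty})^\vee$-factors I would establish a generalization of Lemma \ref{L(chi)}. The key structural fact to verify is $(L^j_{D,\infty})^\vee \simeq (\La/p\La)^{d(L^j_{D,0})}$. To see it, note that $L^j_{D,0}$ is a finite-dimensional $\FF_p$-vector space of dimension $d(L^j_{D,0})$, and by the Zariski base change along the pro-étale cover $\pi\colon C_\infty \to C$ (the cohomology of the finite locally free sheaves underlying the $\mathrm{Lie}$-type component of $D$ is computed the same way in the Zariski and étale sites), we get $L^j_{D,\infty} \simeq L^j_{D,0} \otimes_{\FF_p} \kpinf$. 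Applying Lemma \ref{linalg1} with $M = \FF_p$ then gives the required direct sum decomposition. Consequently $(L^j_{D,\infty})^\vee(\omega) \simeq (\La_\cO/p\La_\cO)^{d(L^j_{D,0})}$, which has $\ZZ_p$-rank zero in its $\Gamma$-invariants (so it contributes $0$ to $ord_\omega$) and generalized Euler characteristic equal to $p^{d_\omega d(L^j_{D,0})}$, producing exactly the correction factor $p^{d_\omega(d(L^0_{D,0}) - d(L^1_{D,0}))}$ in the formula for $|\pounds_\omega(\cL_D)|_p^{-1}$.

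The main obstacle will be verifying that the structural description of $L^j_{D,\infty}$ just sketched survives in the full generality of a log Dieudonné crystal $D$, together with checking that Lemma \ref{recall} is genuinely insensitive to the particular $\La$-module structure of the factors (i.e.\! that the multiplicativity of $ord_\omega$ and $\pounds_\omega$ across the fraction $f_D = \text{numerator}/\text{denominator}$ produces no sign or normalization error). Both points should be routine: the first because the Zariski base change argument of §\ref{ss:cohomtheories} only uses that the sheaf is finite locally free on $C$, and the second because the map $M \mapsto \pounds_\omega(f_M)$ is, by construction, multiplicative under short exact sequences with pseudo-null error, which is all that is needed.
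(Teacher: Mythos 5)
Your proposal is correct and follows exactly the route the paper intends: the paper gives no explicit proof of this theorem, just the remark that one ``gets it as in Theorems \ref{GIMC2}, \ref{GIMC3} and \ref{chi-Iwasawa-BSD},'' and Theorem \ref{chi-Iwasawa-BSD} itself is deduced from Theorem \ref{GIMC3}, Lemma \ref{recall} and Lemma \ref{L(chi)} in precisely the way you reproduce: substitute $\cL_D=f_D$ via Theorem \ref{GGIMC3}, decompose $f_D$ into the five-factor alternating product, apply Lemma \ref{recall} to the $(N^i_{D,\infty})^\vee$ terms, and generalize Lemma \ref{L(chi)} for the $(L^j_{D,\infty})^\vee$ terms. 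Your verification that the Lemma \ref{L(chi)} step survives for a general log Dieudonn\'e crystal is also the right thing to check: the only input is that $L^j_{D,0}$ is a finite $\FF_p$-vector space so that the Zariski base change of \S\ref{ss:cohomtheories} together with Lemma \ref{linalg1} gives $(L^j_{D,\infty})^\vee\simeq(\La/p\La)^{d(L^j_{D,0})}$. One minor point you inherit from the paper's statement: the exponent written there as $d_\omega$ in $p^{d_\omega(d(L^0_{D,0})-d(L^1_{D,0}))}$ should, for consistency with Lemma \ref{recall} and Lemma \ref{L(chi)}, read $d_\cO=[\cO:\ZZ_p]$ (and the left-hand side should be $|\pounds_\omega(\cL_D)|_p^{-d_\cO}$ rather than $|\cdot|_p^{-1}$), exactly as in Theorem \ref{chi-Iwasawa-BSD}; this is a typo in the statement, not a gap in your argument.
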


\subsubsection{Finiteness of the crystalline cohomology in a ramified tower?} If $L/K$ is a $\ZZ_p$-extension different from $\Kpinf$, then it will be ramified at some places (and possibly even at infinitely many). The method above does not work any more because the modules over $L$ are not obtained by base extension. It might be interesting to prove analogues of ({\bf IMC2}) and ({\bf IMC3}) in the following cases:
\begin{enumerate}
\item Assume that $A/K$ has at worst semistable reduction. Consider a $\ZZ_p$-extension $L/K$ such that $X_p(A/L)$ is $\La$-torsion. The $\La$-modules $(L^i_\infty)^\vee$ and $(N^i_\infty)^\vee$ are also $\La$-torsion using \S\ref{ss:bsdinfty} and Lemma \ref{N02}. Therefore, under these assumptions, we can define $f_{A/L}$ as in \eqref{e:charelement}. The problem is to define the $p$-adic $L$-function.\\

{\bf Question:} {\em is $(M^i_{k,\infty})^\vee$ a finitely generated $\La$-module of finite type?}\\

\noindent If the answer to this question is yes, then the $p$-adic $L$-function as defined before is well-defined and Theorem \ref{GIMC3} holds.

\item For a non-isotrivial elliptic curve $E/K$ with ordinary generic fibre, we know that the supersingular locus is a finite set of points $\Sigma_{ss}$ (this is not true in general for an abelian variety). Let $\Sigma_{bad}$ denote the finite set of places of $C$ where $E$ has bad reduction, $\Sigma_{ram}$ denote the places of $K$ ramifying in $L$ and $\Sigma:=\Sigma_{ss}\cup\Sigma_{bad}\cup\Sigma_{ram}$. We assume that $\Sigma_{ram}$ is finite and $\Sigma_{ram}\cap \Sigma_{ss}=\emptyset$. Put $U:=C-\Sigma$. Then if $D(E)$ denotes the (overconvergent) covariant Dieudonn\'e crystal associated with the N\'eron model of $E$ over $U$, we have (see \cite{crw92}) a short exact sequence of Dieudonn\'e crystals
$$0\lr M^*(1)\lr D(E)\lr M\lr 0,$$
where $M$ is a unit-root $F$-isocrystal (not necessarily with finite monodromy) and $M^*(1)$ is the dual Dieudonn\'e crystal. The geometric Iwasawa Main Conjecture for $E/K$ relative to the $\ZZ_p$-extension $L/K$ should reduce in this case to the conjecture for $M$ and $M^*(1)$.
\end{enumerate}

\end{part}

\appendix

\section{An example of non-cotorsion Selmer group}
In this appendix we provide an example where $X_p(A/L)$ is a non-torsion $\La$-module, with $K$ a global field of characteristic $p>0$ and $L/K$ a $\ZZ_p$-extension. The case $p=2$ is allowed.\\

Let $k\subset K$ be a subfield such that $K/k$ is a separable quadratic extension with $\Gal(K/k)=\{1,\tau\}$. We say that an abelian extension $M/K$ is anticyclotomic with respect to $k$ if $M/k$ is Galois and $\Gal(M/k)$ is dihedral: that is, we have a decomposition
$$\Gal(M/k)\simeq\Gal(M/K)\rtimes\Gal(K/k)$$
so that for any $\sigma\in\Gal(M/K)$ we have $\tau\sigma\tau^{-1}=\sigma^{-1}$ (where $\tau$ is abusively identified with its lift to $\Gal(M/k)$).

In this appendix we specialize $A$ to be a non-isotrivial semistable elliptic curve of analytic rank 0 with split multiplicative reduction at a place $v_0$ of $K$ and take as $L/K$ a $\ZZ_p$-extension, totally ramified above $v_0$, unramified elsewhere and anticyclotomic with respect to $k$ (class field theory provides plenty of such $L$). Furthermore, $A$ is assumed to be already defined over $k$ and to have split multiplicative reduction at the restriction of $v_0$ to $k$. We are going to show that under these conditions $X_p(A/L)$ cannot be a torsion $\Lambda$-module.\\

In the following, with a slight abuse of notation we shall often use the same symbol to denote places in different fields: e.g., $L_{v_0}$ will be the completion of $L$ at the only place above $v_0$. Also, for $v$ a place of $K$ put $\Gamma_v:=\Gal(L_v/K_v)$. Since $\Gamma_{v_0}$ can be identified with $\Gamma$, we won't distinguish between the two.

\subsubsection{} By \cite{Ta66} (and \cite{mil75} for the $p$-part of $\Sha$ - see also the comments on Milne's webpage http://www.jmilne.org/math/articles/index.html for $p=2$) it is known that analytic rank 0 implies that the full Birch and Swinnerton-Dyer conjecture holds for $A/K$. Therefore the groups $A(K)$, $\Sha(A/K)$ and $\Sel_{p^\infty}(A/K)$ are all finite.

Since $A$ has split multiplicative reduction at $v_0$, it is a Tate curve on the completion $K_{v_0}$: we denote the local Tate period by $Q$.

\begin{remark} \label{r:stabletor} {\em We claim that $A_{tor}(K)=A_{tor}(L)$. To see it, first notice that the constant field does not grow in $L_{v_0}/K_{v_0}$, since it is a totally ramified extension. Besides, if a root $Q^{1/n}$ of $Q$ is not already in $K_{v_0}$ then it cannot  belong to $L_{v_0}$, either because then $Q^{1/n}$ is not separable over $K_{v_0}$ (for $n$ a power of $p$) or because $L/K$ is a $p$-extension (for $(p,n)=1$). The explicit description of the torsion points of $A(L_{v_0})$ in terms of roots of unity and of $Q$ then implies $A_{tor}(K_{v_0})=A_{tor}(L_{v_0})$. To conclude, it suffices to observe that $L\cap K_{v_0}=K$, exploiting once again the totally ramified hypothesis.} \end{remark}

\subsubsection{} There are many known instances of elliptic curves satisfying the hypotheses assumed in this appendix. For example, let $\FF$ be a finite field of characteristic $p>2$ and consider the function fields $k=\FF(s)$ and $K=\FF(t)$, with $s=t^2$.
Let $A$ be defined by the Weierstrass equation
\begin{equation} \label{e:appbeau} A: y^2=x(x+1)(x+s)=x(x+1)(x+t^2). \end{equation}
This is an elliptic curve having split multiplicative reduction at $t=0$, $t=\infty$ and (if $-1$ is a square in $\FF$) $t=\pm 1$, and good reduction at all other places. Therefore, as a divisor of $K$ the conductor $\fn$ of $A/K$  is the sum of these four places. The elliptic curve $A/K$ is well known to have Hasse-Weil function $L(A/K,s)\equiv 1$: one way to prove it is to observe that $A$ is associated with a modular form and apply the corollary of \cite[Proposition 3]{tan93}; another approach (closer to the methods of \cite{Ta66}) is explained in \cite{shi92}.

Equation \eqref{e:appbeau} is an instance of Beauville surface: they exist in all characteristics and always satisfy the hypotheses of this appendix. See \cite{lan91} for a full discussion and classification.

\subsection{The algebraic side} In the following, we fix a topological generator $\gamma$ of $\Gamma$ and put $T:=\gamma-1\in\Lambda$; then $T$ is a generator of the augmentation ideal $I$. By abuse of notation, we also identify $\gamma$ with a generator of $\Gamma_n$ for all $n$.

\subsubsection{} Since $L/k$ is Galois, our ramification hypotheses imply that the $\Gal(K/k)$-orbit of $v_0$ contains no other place of $K$. Besides, we are assuming that $A$ is already a Tate curve over $k_{v_0}$ and thus we have $Q\in k_{v_0}^\times$.

Let $\N\subseteq K_{v_0}^\times$ denote the group of universal norms of the local extension $L_{v_0}/K_{v_0}$. Write ${\rm rec}$ for the reciprocity map of local class field theory: then we have ${\rm rec}(\tau (x))=\tau {\rm rec}(x)\tau^{-1}$ for any $x\in K_{v_0}^\times$, and hence $\tau(x)\equiv x^{-1}\mod\N$. In particular $Q\equiv Q^{-1}\mod\N$, so that $Q^2\in\N$. As $K_{v_0}^\times/\N\simeq\Gamma$ is torsion free, we deduce that $Q\in\N$.

\begin{lemma} \label{l:apploccoh} Let $v$ be a place of $K$. Then
$$\coh^1(\Gamma_v,A(L_v))\simeq \begin{cases}
0 & \text{if } v \text{ is a place of good reduction;}  \\
\text{a finite group}& \text{if } v \text{ is an unramified place of bad reduction;}\\
\QQ_p/\ZZ_p & \text{if }  v=v_0.  \end{cases}$$
\end{lemma}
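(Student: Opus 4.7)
The plan is to treat the three cases separately, all ultimately reducing to class-field-theoretic computations via Tate's $p$-adic uniformization of $A$ at places of multiplicative reduction. For the good reduction case, the hypothesis that $v_0$ is the only ramified place of $L/K$ implies $L_v \subseteq K_v^{\mathrm{unr}}$, so the vanishing $\coh^1(\Gal(K_v^{\mathrm{unr}}/K_v), A(K_v^{\mathrm{unr}})) = 0$ from \cite[I, Proposition 3.8]{mil86a} (already invoked in the proof of Lemma \ref{l:unramified}), combined with inflation-restriction for the tower $K_v \subset L_v \subset K_v^{\mathrm{unr}}$, immediately yields $\coh^1(\Gamma_v, A(L_v)) = 0$.

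For both remaining cases I would use Tate's uniformization: possibly after an unramified quadratic twist to handle non-split multiplicative reduction, one obtains an exact sequence $0 \to q_v^\ZZ \to \bar K_v^\times \to A(\bar K_v) \to 0$ with Tate period $q_v \in K_v^\times$ (equal to $Q$ at $v_0$). Taking $\Gal(\bar K_v/L_v)$-invariants and using Hilbert 90 for $L_v$ produces $0 \to \ZZ \to L_v^\times \to A(L_v) \to 0$, and the long exact $\Gamma_v$-cohomology sequence, together with Hilbert 90 for $\Gamma_v$, reduces the computation to the kernel of the connecting map
$$\delta \colon \coh^2(\Gamma_v, \ZZ) \lr \coh^2(\Gamma_v, L_v^\times),$$
which sends the canonical generator to the class of $q_v$. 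A routine computation via $0 \to \ZZ \to \QQ \to \QQ/\ZZ \to 0$ identifies $\coh^2(\Gamma_v, \ZZ) \simeq \QQ_p/\ZZ_p$, while local class field theory in the limit identifies $\coh^2(\Gamma_v, L_v^\times) \simeq \QQ_p/\ZZ_p$ through the inductive system of Hasse invariants. At $v = v_0$ the universal norm property $Q \in \N$, established just above from the relation $\tau(x) \equiv x^{-1} \bmod \N$ and the torsion-freeness of $\Gamma$, forces $\delta = 0$ and hence $\coh^1(\Gamma, A(L_{v_0})) \simeq \QQ_p/\ZZ_p$. At an unramified place $v \neq v_0$ of bad reduction, $\Gamma_v$ is either trivial or pro-cyclic; in the latter case local class field theory sends the class of $q_v$ at the $n$-th layer to $\mathrm{ord}_v(q_v)/p^n$, and since $\mathrm{ord}_v(q_v) \neq 0$ for multiplicative reduction the kernel of $\delta$ is a finite subgroup of $\QQ_p/\ZZ_p$, of order $p^{v_p(\mathrm{ord}_v(q_v))}$.

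The main obstacle will be the non-split multiplicative case, where Tate's parametrization involves a twist by an unramified quadratic character. The cleanest approach is to first pass to the unramified quadratic extension where $A$ becomes a split Tate curve and then descend via Hochschild-Serre; since the twisting character is unramified it should leave the relevant cohomological invariants essentially unchanged, but the case $p = 2$, which is allowed in the appendix, will demand a small extra verification. One must also carefully match the class-field-theoretic normalizations used in the identifications of $\coh^2(\Gamma_v, \ZZ)$ and $\coh^2(\Gamma_v, L_v^\times)$ with $\QQ_p/\ZZ_p$, so that $\delta$ is correctly recognized as multiplication by $\mathrm{ord}_v(q_v)$ in the unramified case and as the reciprocity class of $Q$ in the totally ramified case.
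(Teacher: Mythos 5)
Your argument is correct, and it agrees with the paper at the two extremes: the good-reduction case via Milne's Prop.~I.3.8 and inflation--restriction, and the $v_0$ case via the Tate parametrization $A(K_{n,v_0}) \simeq K_{n,v_0}^\times/Q^{\ZZ}$ together with $Q \in \N$, which is exactly the paper's computation leading to \eqref{e:H1loc}. Where you diverge is the \emph{unramified bad-reduction} case. The paper handles this (tersely, by citing Milne again) through the N\'eron model: $\coh^1(\Gamma_v, \A^0(O_{L_v})) = 0$ by Lang's theorem and Hensel's lemma, and the finite component group $\Phi_v$ is all that remains, so $\coh^1(\Gamma_v, A(L_v))$ injects into the finite group $\coh^1(\Gamma_v,\Phi_v)$. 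You instead run the Tate-parametrization machinery at every place of multiplicative reduction and compute the kernel of $\delta\colon \coh^2(\Gamma_v,\ZZ) \to \coh^2(\Gamma_v,L_v^\times)$ via the invariant map. Your route has the advantage of producing the actual order $p^{v_p(\mathrm{ord}_v(q_v))}$ of the finite group (which the paper does not bother to record), but it forces you to deal separately with non-split multiplicative reduction. For $p$ odd this is painless -- the unramified quadratic twist has order prime to $p$, so restriction--corestriction realizes $\coh^1(\Gamma_v,A(L_v))$ as a direct summand of the split computation -- but for $p=2$, which the appendix permits, the quadratic subfield may sit inside $L_v$ and the twist interacts with $\Gamma_v$ nontrivially, so the extra verification you flag is genuinely needed. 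The N\'eron-model argument sidesteps the split/non-split dichotomy entirely, which is why the paper prefers it; on the other hand it gives only qualitative finiteness, whereas your computation is quantitative. Either is adequate here, since the lemma asserts only ``a finite group'' at such places.
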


\begin{proof} For unramified places, this is a consequence of \cite[I, Proposition 3.8]{mil86a}. As for $v_0$, observe that for any $n$ we have $A(K_{n,v_0})\simeq K_{n,v_0}^\times/Q^{\ZZ}.$ We deduce the exact sequence
$$\coh^1(\Gamma_n,K_{n,v_0}^\times)\lr \coh^1(\Gamma_n,A(K_{n,v_0}))\lr \coh^2(\Gamma_n,Q^{\ZZ})\lr \coh^2(\Gamma_n,K_{n,v_0}^\times)\,,$$
that we can rewrite
\begin{equation} \label{e:seqcohloc}  0\lr \coh^1(\Gamma_n,A(K_{n,v_0}))\lr Q^{\ZZ}/Q^{p^n\ZZ}\lr K_{v_0}^\times/N_{K_{n,v_0}/K_{v_0}}(K_{n,v_0}^\times)\,. \end{equation}
Since $Q\in\N$, the map $Q^{\ZZ}\to K_{v_0}^\times/N_{K_{n,v_0}/K_{v_0}}(K_{n,v_0}^\times)$ is trivial. Thus we obtain
\begin{equation} \label{e:H1loc} \coh^1(\Gamma_n,A(K_{n,v_0}))\simeq p^{-n}\ZZ/\ZZ\,. \end{equation}
As $n$ varies, this isomorphism is compatible with the inflation maps on the left and the canonical inclusions on the right, thereby proving the assertion.
\end{proof}

\begin{corollary} \label{c:appcorank} The group $\Sel_{p^\infty}(A/L)^\Gamma$ has $\ZZ_p$-corank at most 1.
\end{corollary}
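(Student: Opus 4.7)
The plan is to apply the standard control theorem diagram and exploit the local computation of Lemma \ref{l:apploccoh}. First I would write down the commutative diagram with exact rows
$$\begin{CD}
0 @>>> \Sel_{p^\infty}(A/K) @>>> \coh^1(K,A[p^\infty]) @>>> \prod_v \coh^1(K_v,A) \\
@. @VV{s}V @VV{r}V @VV{\prod r_v}V \\
0 @>>> \Sel_{p^\infty}(A/L)^\Gamma @>>> \coh^1(L,A[p^\infty])^\Gamma @>>> \prod_v \prod_{w|v}\coh^1(L_w,A)\,,
\end{CD}$$
where the right-hand product in the second row is taken over all places $v$ of $K$ and all places $w$ of $L$ above $v$. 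By inflation-restriction, $\Ker(r)=\coh^1(\Gamma, A(L)[p^\infty])$, and $\Ker(r_v)=\coh^1(\Gamma_v, A(L_v))$ for each $v$.

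Next I would apply the snake lemma to extract
$$0\lr \Ker(s)\lr \coh^1(\Gamma, A(L)[p^\infty]) \lr \Sel_{p^\infty}(A/K) \lr \Sel_{p^\infty}(A/L)^\Gamma \lr \bigoplus_v \coh^1(\Gamma_v, A(L_v))[p^\infty]\,,$$
so that $\Sel_{p^\infty}(A/L)^\Gamma$ fits in an exact sequence in which the left neighbor $\Sel_{p^\infty}(A/K)$ is finite (by the standing assumption that $A/K$ has analytic rank $0$) and the right neighbor is controlled place by place.

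The key step is now the computation of the right-hand target: by Lemma \ref{l:apploccoh} the summand $\coh^1(\Gamma_v, A(L_v))[p^\infty]$ vanishes at good places, is finite at the (finitely many) unramified places of bad reduction, and equals $\QQ_p/\ZZ_p$ at the single place $v_0$. Hence
$$\mathrm{corank}_{\ZZ_p}\Big(\bigoplus_v \coh^1(\Gamma_v, A(L_v))[p^\infty]\Big)=1\,.$$
Combined with the finiteness of $\Sel_{p^\infty}(A/K)$, the long exact sequence above forces $\mathrm{corank}_{\ZZ_p}\Sel_{p^\infty}(A/L)^\Gamma\leq 1$, which is the claim.

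The only mildly delicate point is verifying the finiteness of $\coh^1(\Gamma, A(L)[p^\infty])$ needed to justify that the kernel of the control map is finite (though this is not actually required for the corank statement). This uses Remark \ref{r:stabletor}, which gives $A[p^\infty](L)=A[p^\infty](K)$, the latter being finite; so the group $\coh^1(\Gamma,A(L)[p^\infty])$ is finite and the argument goes through without obstruction.
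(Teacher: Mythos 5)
Your overall strategy is the same as the paper's: compare the Selmer sequences for $K$ and $L$ via the snake lemma and feed in the local computation of Lemma \ref{l:apploccoh} together with the finiteness of $\Sel_{p^\infty}(A/K)$. The conclusion you draw is correct. However, the ``long exact sequence'' you extract from the snake lemma is not correct as written: it contains the stretch
$$\coh^1(\Gamma, A(L)[p^\infty]) \lra \Sel_{p^\infty}(A/K) \lra \Sel_{p^\infty}(A/L)^\Gamma,$$
but there is no natural map from $\coh^1(\Gamma, A(L)[p^\infty])$ to $\Sel_{p^\infty}(A/K)$, and the snake lemma never produces both the source and the target of a vertical map as consecutive terms; it produces the kernel and the cokernel. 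What you should get, after replacing the top-right corner $\prod_v\coh^1(K_v,A)$ by the image of $loc_K$ so that the top row becomes short exact, is
$$0 \lra \Ker(s) \lra \coh^1(\Gamma, A_{p^\infty}(L)) \lra H \lra \Sel_{p^\infty}(A/L)^\Gamma/\Sel_{p^\infty}(A/K) \lra 0,$$
where $H$ is a subgroup of $\bigoplus_v \coh^1(\Gamma_v, A(L_v))$ and the rightmost $0$ is $\Coker(r)$, which vanishes because it injects into $\coh^2(\Gamma, A_{p^\infty}(L))=0$, $\Gamma\simeq\ZZ_p$ having cohomological dimension one --- a fact you need to invoke explicitly but do not. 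From this corrected sequence, $\Sel_{p^\infty}(A/L)^\Gamma/\Sel_{p^\infty}(A/K)$ is a quotient of $H$, hence of $\ZZ_p$-corank at most $1$ by Lemma \ref{l:apploccoh}, and finiteness of $\Sel_{p^\infty}(A/K)$ gives the claim. (Also, the $[p^\infty]$ you attach to $\coh^1(\Gamma_v, A(L_v))$ is redundant, since $\Gamma_v$ is pro-$p$.) With these corrections your argument coincides with the paper's; your closing observation that the finiteness of $\coh^1(\Gamma, A(L)[p^\infty])$ is not actually needed for the corank bound is correct.
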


\begin{proof} Applying the snake lemma to the diagram
$$\begin{CD} 0 @>>> \Sel_{p^\infty}(A/K) @>>> \coh^1_{\mathrm{fl}}(K,A_{p^\infty}) @>{\cL_K}>> \bigoplus_v \coh^1(K_v,A) \\
&& @VVV  @VVV  @VVV \\
0 @>>> \Sel_{p^\infty}(A/L)^\Gamma@>>> \coh^1_{\mathrm{fl}}(L,A_{p^\infty})^\Gamma @>{\cL_L}>> \bigoplus_v \coh^1(L_v,A)^{\Gamma_v} \end{CD}$$
we get an exact sequence
$$\coh^1(\Gamma,A_{p^\infty}(L)) \lr H \lr \Sel_{p^\infty}(A/L)^\Gamma/\Sel_{p^\infty}(A/K) \lr \coh^2(\Gamma,A_{p^\infty}(L))=0,$$
where $H$ is a subgroup of $\oplus_v\coh^1(\Gamma_v,A(L_v))$ and the equality on the right comes from the fact that $\Gamma$ has cohomological dimension 1. The claim follows from Lemma \ref{l:apploccoh}, since $A_{p^\infty}(L)$ and $\Sel_{p^\infty}(A/K)$ are finite groups.
\end{proof}

\begin{remark} \label{r:H1Ginvariant}
{\em The group $G:=\Gal(K/k)$ acts on $\coh^1(K,A)$ and the other cohomology groups appearing in the above discussion. In particular, since $v_0$ does not split in $K/k$, $G$ acts on $\coh^1(\Gamma,A(L_{v_0}))$. The connecting homomorphism in \eqref{e:seqcohloc} is a map of $G$-modules and so is the isomorphism \eqref{e:H1loc}, because $Q\in k_v$. Therefore we have
$$\coh^1(\Gamma,A(L_{v_0})) = \coh^1(\Gamma,A(L_{v_0}))^G\,. $$}
\end{remark}

\subsubsection{}\label{ss:E} Consider the localization map:
$$loc_K\colon \coh^1(K,A)\lr \bigoplus_v \coh^1(K_v,A).$$
The generalized Cassels-Tate dual exact sequence of \cite[Main Theorem]{gt07} identifies, for any integer $m$, the cokernel of $loc_K$ with the Pontryagin dual of
$$T_m\Sel(A^t/K):=\varprojlim\Sel_{m^n}(A^t/K)\,.$$
In our case $A$ is an elliptic curve, so $A=A^t$. Taking the inverse limit of the sequence
$$0\lr A(K)/m^nA(K) \lr \Sel_{m^n}(A/K) \lr \Sha(A/K)[m^n] \lr 0$$
we see that $T_m\Sel(A/K)$ is always finite, and trivial for almost every $m$, because so are $A(K)$ and $\Sha(A/K)$. Hence $loc_K$ has finite cokernel and, by the inclusions
$$\coh^1(\Gamma,A(L_{v_0}))\subseteq \coh^1(K_{v_0},A)\subseteq \bigoplus_v \coh^1(K_v,A)\,,$$
it follows that
$$\cH:=\coh^1(\Gamma,A(L_{v_0}))\cap loc_K\big(\coh^1(K,A)\big)$$
has finite index in $\coh^1(\Gamma,A(L_{v_0}))$. Since the latter is $p$-divisible, we must have
\begin{equation} \label{e:imlocK} \cH=\coh^1(\Gamma,A(L_{v_0}))\subseteq loc_K\big(\coh^1(K,A)\big). \end{equation}
Write $E:=loc_K^{-1}\big(\coh^1(\Gamma,A(L_{v_0}))\big)$ and let $res_{L/K}\colon\coh^1(K,A)\rightarrow\coh^1(L,A)$ denote the restriction map. We have an exact sequence
\begin{equation} \label{e:E} \begin{CD} 0\,\lr \Sha(A/K)\,\lr E @>{loc_K}>> \coh^1(\Gamma,A(L_{v_0}))\lr 0. \end{CD}\end{equation}
It follows that $loc_L(res_{L/K}(E))=0$, because restriction and localization commute and $\coh^1(\Gamma,A(L_{v_0}))$ has trivial image in $\oplus_v \coh^1(L_v,A)$.

Let $D$ be the divisible part of $E$: then $D\simeq\QQ_p/\ZZ_p$, since $\Sha(A/K)$ is a finite group. By construction $D$ is a subgroup of $\coh^1(K,A)$, killed by $loc_L\circ res_{L/K}$: that is,
\begin{equation} \label{e:Dinsha} res_{L/K}(D)\subset \Sha_{p^\infty}(A/L)^\Gamma. \end{equation}

\subsubsection{} For the proof that $X_p(A/L)$ is not torsion, we are going to reason by contradiction. Thus in the following we assume that $X_p(A/L)$ is torsion.

Let $\fA$ be the Cassels-Tate $\Gamma$-system defined in Section \ref{s:al}. Since $A=A^t$,
$$\fa_n=\fb_n=\Sha_{p^\infty}(A/K_n)/\Sha_{p^\infty}(A/K_n)_{div}=\Sel_{p^\infty}(A/K_n)/\Sel_{p^\infty}(A/K_n)_{div}.$$
By \eqref{e:axy}, our hypothesis on $X_p(A/L)$ implies that $\fa=\fb$ is actually torsion. The Cassels-Tate pairing induces, for every $n$, a perfect alternating pairing:
$$\langle\,,\rangle_n\colon \fb_n\times \fb_n\lr \QQ_p/\ZZ_p\,.$$
Then the module $\fb_{\infty}:=\varinjlim \fb_m$ is identified with the Pontryagin dual of $\fb$.

\begin{proposition} \label{p:botimesLaI} The $\ZZ_p$-rank of $\fb/T\fb$ equals $1$.
\end{proposition}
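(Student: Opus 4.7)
The plan is to reformulate the claim as a $\ZZ_p$-corank computation and then bound the relevant corank above and below by $1$. Because the Cassels--Tate pairing identifies $\fb$ with the Pontryagin dual of $\fb_\infty:=\varinjlim_n\fb_n$, we have $(\fb/T\fb)^\vee=\fb_\infty[T]=\fb_\infty^\Gamma$, so $\rank_{\ZZ_p}(\fb/T\fb)=\corank_{\ZZ_p}(\fb_\infty^\Gamma)$ and everything reduces to showing this corank equals $1$.

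For the upper bound I would take $\Gamma$-cohomology of the short exact sequence
$$0\,\lr\Sel_{div}(A/L)\,\lr\Sel_{p^\infty}(A/L)\,\lr\fb_\infty\,\lr 0$$
to obtain
$$\Sel_{div}(A/L)^\Gamma\hookrightarrow\Sel_{p^\infty}(A/L)^\Gamma\lr\fb_\infty^\Gamma\lr\coh^1(\Gamma,\Sel_{div}(A/L))\,.$$
Under the working (contradiction) hypothesis that $X_p(A/L)$ is $\La$-torsion, $Y_p(A/L)=\Sel_{div}(A/L)^\vee$ is a finitely generated, $\ZZ_p$-free torsion $\La$-module, so $\Sel_{div}(A/L)$ is cofinitely generated over $\ZZ_p$. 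The standard Euler characteristic identity for such modules over $\Gamma\simeq\ZZ_p$ gives $\corank_{\ZZ_p}\coh^1(\Gamma,\Sel_{div}(A/L))=\corank_{\ZZ_p}\Sel_{div}(A/L)^\Gamma$, and Corollary \ref{c:appcorank} bounds $\corank_{\ZZ_p}\Sel_{p^\infty}(A/L)^\Gamma\leq 1$. Chasing coranks in the displayed four-term sequence then yields $\corank_{\ZZ_p}\fb_\infty^\Gamma\leq 1$.

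For the lower bound I would exploit the divisible subgroup $D\simeq\QQ_p/\ZZ_p$ from \S\ref{ss:E}: its restriction $\res_{L/K}(D)$ lies in $\Sha_{p^\infty}(A/L)^\Gamma\subset\Sel_{p^\infty}(A/L)^\Gamma$, and finiteness of $D\cap\Sha(A/K)$ forces $\res_{L/K}$ to have finite kernel on $D$, so that its image contributes $\ZZ_p$-corank $1$ to $\Sel_{p^\infty}(A/L)^\Gamma$. The task is then to show that this image is not absorbed into $\Sel_{div}(A/L)$, so that it descends to an infinite subgroup of $\fb_\infty^\Gamma$. The anticyclotomic setup should enter here: by construction $E$ is stable under $G=\Gal(K/k)$ (since $loc_K$ is $G$-equivariant and $\coh^1(\Gamma,A(L_{v_0}))$ carries trivial $G$-action by Remark \ref{r:H1Ginvariant}), and on the divisible group $D$ the involution $G$ must act by $\pm 1$; the finite-kernel surjection $D\twoheadrightarrow\coh^1(\Gamma,A(L_{v_0}))$ onto a target with trivial $G$-action rules out $-1$, so $G$ fixes $D$ pointwise and $\res_{L/K}(D)$ lands in the $G$-fixed part of $\Sel_{p^\infty}(A/L)^\Gamma$.

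I expect the main obstacle to be turning this $G$-invariance into a proof that $\res_{L/K}(D)$ escapes $\Sel_{div}(A/L)$. My strategy would be to show that the $G$-fixed part of $\Sel_{div}(A/L)^\Gamma$ is finite, exploiting the dihedral structure $\Gal(L/k)=\Gamma\rtimes G$ with $G$ inverting $\Gamma$: a $G$-fixed, $\Gamma$-fixed divisible class should descend, essentially via inflation-restriction up the anticyclotomic tower, to a divisible subgroup of $\Sel_{p^\infty}(A/F)$ for some subfield $F$ of $L$ whose Galois group over $k$ is a $p$-group, and the finiteness of $A(k)$ and of $\Sha(A/k)$ (inherited from the analytic-rank-zero assumption on $A/k$ via the BSD results invoked at the start of the appendix) will preclude the existence of such a subgroup. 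Granted this parity-type sign argument, the $G$-fixed corank-$1$ image of $D$ cannot lie inside $\Sel_{div}(A/L)^\Gamma$, which forces $\corank_{\ZZ_p}\fb_\infty^\Gamma\geq 1$ and, together with the upper bound, gives the equality.
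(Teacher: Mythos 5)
Your upper bound is sound and is essentially the Pontryagin dual of the argument in the paper: instead of applying the snake lemma (multiplication by $T$) to the sequence $0\to\fb\to X_p(A/L)\to Y_p(A/L)\to 0$ and invoking Corollary~\ref{c:appcorank}, you take $\Gamma$-cohomology of the dual sequence $0\to\Sel_{div}(A/L)\to\Sel_{p^\infty}(A/L)\to\fb_\infty\to 0$ and use the Herbrand-quotient cancellation. Both routes need $Y_p(A/L)$ to be finitely generated over $\ZZ_p$ (here Theorem~\ref{t:flat} is what guarantees the $\mu$-invariant vanishes), and both land on $\corank_{\ZZ_p}\fb_\infty^\Gamma\leq 1$. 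So far so good, and your Euler-characteristic variant is if anything a little cleaner at this step.

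The lower bound is where the proposal has a genuine gap. You correctly isolate the crux -- showing that the corank-$1$ image of $D$ under restriction does not fall inside $\Sel_{div}(A/L)$ -- but you then take a detour through $G=\Gal(K/k)$-invariance that is both unnecessary and unfinished. The paper does something more elementary and decisive: it proves outright that the group $\Sel_{div}(A/L)^\Gamma$ is \emph{finite}, by combining the control results Lemma~\ref{l:zerocoker} and Lemma~\ref{l:fingen} (the latter again resting on Theorem~\ref{t:flat}) with the finiteness of $\Sel_{p^\infty}(A/K)$ supplied by the BSD input from \cite{Ta66}. Once $\Sel_{div}(A/L)^\Gamma$ is finite, the corank-$1$ subgroup $\res_{L/K}(\AE)\subset\Sel_{p^\infty}(A/L)^\Gamma$ obviously cannot be contained in it, and $\corank_{\ZZ_p}\fb_\infty^\Gamma\geq 1$ falls out. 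Your route tries to replace this with a dihedral sign argument: the part showing $G$ acts trivially on $D$ is fine, but the key claim -- that a $G$-fixed, $\Gamma$-fixed divisible class would ``descend via inflation-restriction to a divisible subgroup of $\Sel_{p^\infty}(A/F)$ for some finite $F/k$'' and thereby contradict BSD over $k$ -- is neither proved nor plausible as stated: being fixed by $\Gamma$ and $G$ does not make a class come from a finite layer, and finiteness of $\Sel(A/k)$ says nothing about $\Sel(A/F)$ for $F\neq k$. You also give an unconvincing justification for the finiteness of $\Ker(\res_{L/K})$ on $D$ (``finiteness of $D\cap\Sha(A/K)$''); the actual reason is that $A(L)=A_{tor}(K)$ is finite (Remark~\ref{r:stabletor}), so $\coh^1(\Gamma,A(L))$ is finite. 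Finally, note that the anticyclotomic/$G$-structure is not used in the paper's proof of this proposition at all -- it only enters afterwards, in Lemmas~\ref{l:bGinv} and~\ref{l:keyhere} and in the final contradiction with the alternating Cassels--Tate pairing.
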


\begin{proof} First, note that, by Lemma \ref{l:zerocoker}, the finiteness of $\Sel_{p^\infty}(A/K)$ implies $\Sel_{p^\infty}(A/K_n)^\Gamma$ is finite for every $n\geq 0$.
Then Lemma \ref{l:zerocoker} and Lemma \ref{l:fingen} imply that for $n\gg 0$,
$$\Sel_{div}(A/L)^\Gamma=(\Sel_{div}(A/L)^{\Gamma^{(n)}})^\Gamma=(\Sel_{p^\infty}(A/K_n)_{div})^\Gamma\subset \Sel_{p^\infty}(A/K_n)^\Gamma$$
is also finite. By duality, the $\ZZ_p$-module $Y_p(A/L)/T Y_p(A/L)$ is finite. This also implies that $Y_p(A/L)[T]$ is finite, because $Y_p(A/L)$, as a submodule of the $\ZZ_p$-free part of $X_p(A/L)$, is a finitely generated $\ZZ_p$-module. Since \eqref{e:axy} and the snake lemma yield the exact sequence
$$ Y_p(A/L)[T] \lr \fb/T\fb \lr X_p(A/L)/T X_p(A/L) \lr Y_p(A/L)/T Y_p(A/L)\,,$$
it follows from Corollary \ref{c:appcorank} that the $\ZZ_p$-rank of $\fb/T\fb$ is at most $1$.

For the other inequality, consider the composition
\begin{equation} \label{e:piD} \pi\colon\xymatrix{D\ar[rr]^-{{res_{L/K}}} & & \Sha_{p^{\infty}}(A/L)^\Gamma \ar[r] &  \fb_\infty^{\Gamma}}\,. \end{equation}
Let $\AE$ denote the preimage of $D$ under the natural surjection from $\coh^1(K,A_{p^\infty})$ to the $p$-primary part of $\coh^1(K,A)$. Because $A(K)$ is finite, the exact sequence
$$0\lr \QQ_p/\ZZ_p\otimes A(K)\lr\AE \lr D\lr 0$$
implies that $\AE$ is of corank $1$ over $\ZZ_p$. By Lemma \ref{l:h1d}, the kernel of the restriction map $\coh^1(K,A_{p^\infty})\rightarrow  \coh^1(L,A_{p^\infty})$ is of finite order, and hence
$res_{L/K}(\AE)\subset \Sel_{p^\infty}(A/L)^{\Gamma}$ is also of corank $1$ over $\ZZ_p$.
If the image of $\pi$ were finite (and thus trivial, as $D$ is $p$-divisible) then $res_{L/K}(\AE)$ would be contained in $\Sel_{div}(A/L)^{\Gamma}$, which has just been shown to be finite. This is absurd. Therefore, the corank of $\fb_{\infty}^\Gamma$ is at least $1$, and by the duality the rank of $\fb/T\fb$ is at least $1$.
\end{proof}

Proposition \ref{p:botimesLaI} implies that there exist some $r\geq 1$ and $\xi_i\in\La$, $i=1,...,s$, coprime to $T$ so that
\begin{equation}\label{e:fbTfb}
[\fb]\sim \La/T^r\La\oplus\bigoplus_i^s\La/\xi_i\La.
\end{equation}

\begin{lemma} \label{l:bntorsion} Let $r$ be as in \eqref{e:fbTfb}. Then the indexes of the subgroups $\fk_n(\fb)\subset \fb_n$ and $\fk_n(\fb[T^r])\subset \fb_n[T^r]$ are bounded as $n$ varies.
\end{lemma}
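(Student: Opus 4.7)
The proof would proceed in two main steps, corresponding to the two claims.

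\textbf{First,} to bound $|\fb_n/\fk_n(\fb)|$ uniformly, I would use pseudo-control of $\fA$. Since $X_p(A/L)$ is assumed torsion and $L/K$ is a $\ZZ_p$-extension, Proposition~\ref{p:pscontrl} applies; moreover an inspection of its proof shows that $\fb_n^0=\fb_n^{00}$ for all $n$ sufficiently large. Since $\fb_n^{00}$ is the image of $\fs_n^{00}=\coh^1(\Gamma^{(n)},A_{p^\infty}(L))$ and since Remark~\ref{r:stabletor} gives $A_{p^\infty}(L)=A_{p^\infty}(K)$ a finite group, the bound \eqref{e:ddd} from Lemma~\ref{l:h1d} provides a uniform bound $|\fb_n^0|\leq|A_{p^\infty}(K)|$ independent of $n$. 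The adjointness relations of axiom~($\Gamma$-4) identify $\fk_n(\fb)$ with the orthogonal complement of $\ker(\fr_n\colon\fb_n\to\fb_\infty)=\fb_n^0$ under the perfect pairing $\langle\,,\,\rangle_n$, so by self-duality $|\fb_n/\fk_n(\fb)|=|\fb_n^0|$, which is uniformly bounded.

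\textbf{Next,} I would bound $|\fb_n[T^r]/\fk_n(\fb[T^r])|$ by factoring it as $|\fb_n[T^r]/\fk_n(\fb)[T^r]|\cdot|\fk_n(\fb)[T^r]/\fk_n(\fb[T^r])|$. The first factor injects into $\fb_n/\fk_n(\fb)$: indeed, if a class in $\fb_n[T^r]$ is trivial modulo $\fk_n(\fb)$, its representative already lies in $\fk_n(\fb)\cap\fb_n[T^r]=\fk_n(\fb)[T^r]$. Hence the first factor is uniformly bounded by the previous step. For the second factor, I apply the snake lemma for multiplication by $T^r$ to the short exact sequence $0\to K_n\to\fb\to\fk_n(\fb)\to 0$ where $K_n:=\ker\fk_n$, obtaining
\[
\fk_n(\fb)[T^r]/\fk_n(\fb[T^r])\;\cong\;(K_n\cap T^r\fb)/T^rK_n.
\]

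\textbf{To bound} this last quotient uniformly, I exploit the decomposition $[\fb]=\La/T^r\La\oplus N$ from \eqref{e:fbTfb}. Since each $\xi_i$ is coprime to $T$, an easy check gives $N[T^r]=0$, so $[\fb][T^r]=\La/T^r\La$. Applying the snake lemma for multiplication by $T^r$ to $0\to K_n\cap N\to N\to\fk_n(N)\to 0$ and using $N[T^r]=0$ identifies the analogous quotient for the direct summand $N$ with $\fk_n(N)[T^r]$, which embeds into $\fb_n[T^r]$; combined with the embedding of $\La/T^r\La$ as the remaining summand of $[\fb][T^r]$, one sees that $(K_n\cap T^r[\fb])/T^rK_n|_{[\fb]}$ is controlled by the $T^r$-torsion of $\fb_n$ already covered by Step~1. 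Since $[\fb]\hookrightarrow\fb$ has finite cokernel, a diagram chase shows that passing from $[\fb]$ to $\fb$ introduces only a bounded error.

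\textbf{The main obstacle} is precisely this last diagram chase: although the boundedness coming from Step~1 transfers easily to $|\fb_n[T^r]/\fk_n(\fb)[T^r]|$, ensuring that $\fk_n(\fb[T^r])$ fills $\fk_n(\fb)[T^r]$ up to a bounded amount genuinely requires the direct-sum structure \eqref{e:fbTfb}. The key ingredient is the vanishing $N[T^r]=0$, which confines the obstruction to the finite error term $\fb/[\fb]$ rather than allowing it to grow with the Iwasawa modules; making this quantitative is the delicate part.
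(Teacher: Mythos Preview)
Your Step 1 is correct and matches the paper's argument: both use duality to identify $|\fb_n/\fk_n(\fb)|$ with $|\fb_n^0|$ and then bound the latter via Lemma~\ref{l:h1d} and Remark~\ref{r:stabletor}.

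Step 2, however, has a genuine gap. You correctly reduce (via the snake lemma) to bounding $\fk_n(\fb)[T^r]/\fk_n(\fb[T^r])\cong (K_n\cap T^r\fb)/T^rK_n$, and you correctly isolate the essential piece as $\fk_n(N)[T^r]$ with $N=\bigoplus_i\La/\xi_i\La$. But the sentence ``which embeds into $\fb_n[T^r]$; \dots one sees that $(K_n\cap T^r[\fb])/T^rK_n|_{[\fb]}$ is controlled by the $T^r$-torsion of $\fb_n$ already covered by Step~1'' does not work: $\fb_n[T^r]$ is \emph{not} bounded in $n$ (its order grows like $p^{rn}$), and Step~1 bounds $\fb_n/\fk_n(\fb)$, not $\fb_n[T^r]$. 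So the embedding $\fk_n(N)[T^r]\hookrightarrow\fb_n[T^r]$ gives no uniform bound, and nothing in Step~1 closes the gap. You yourself flag that ``making this quantitative is the delicate part,'' but the sketch you give does not do it.

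The missing idea---and this is how the paper proceeds---is to work with $\fd:=\fb/\fb[T^r]$ and its quotients $\fd_n:=\fk_n(\fb)/\fk_n(\fb[T^r])$, observing that $\fd$ is annihilated by some $g\in\La$ coprime to $T$. Then $(T^r,g)\supset p^s\La$ for some $s$, so $\fd_n[T^r]$ is a $\La/(T^r,g)$-module. The clean way to finish is to note that $\fd_n$ is \emph{finite} (being a subquotient of $\fb_n$), hence $|\fd_n[T^r]|=|\fd_n/T^r\fd_n|$; and $\fd_n/T^r\fd_n$ is a quotient of $\fd/T^r\fd$, which is a finitely generated $\La/(T^r,g)$-module over a finite ring, hence has bounded order independent of $n$. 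In your language: the right target is not $\fb_n[T^r]$ but $N/T^rN=\bigoplus_i\La/(\xi_i,T^r)$, which is finite, and one reaches it via $|\fk_n(N)[T^r]|=|\fk_n(N)/T^r\fk_n(N)|\le|N/T^rN|$.
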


\begin{proof} The same argument already used in the proof of Proposition \ref{p:N2trivial} shows that $\Sel_{p^\infty}(A/K_n)_{div}$ stabilizes for $n\gg0$. Thus, by \eqref{e:bnabvar} and the inequality \eqref{e:ddd}, the kernel of the map $\fb_n\rightarrow \fb_\infty$ is of bounded order. By duality the cokernel of $\fk_n\colon\fb\rightarrow\fb_n$ is also of bounded order.
Denote $\fd:=\fb/\fb[T^r]$ and $\fd_n:=\fk_n(\fd)$. By \eqref{e:fbTfb}, $\fd$ is annihilated by some $g\in\La$ coprime to $T$. Then we have $p^s\in (T^r,g)\subset \La$ for some $s\geq 1$.
Therefore, if $a_1,...,a_l$ are generators of $\fd$ over $\ZZ_p$, then the order of  $\fd_n[T^r]$ is bounded by $p^{sl}$.
\end{proof}

\subsubsection{} In order to exploit the anticyclotomic assumption, we now consider the action of $G:=\Gal(K/k)=\langle\tau\rangle$ on $\fb$.
We lift $G$ to a subgroup of $\Gal(L/k)$. Note that the maps $loc_K$ of \eqref{e:E} and $\pi$ of \eqref{e:piD} are both compatible with the action of $G$.

\begin{lemma} \label{l:bGinv} There exists $x\in\fb[T^r]^G$ such that $[\fb[T^r]:\La x]<\infty$. \end{lemma}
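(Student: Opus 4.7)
The strategy has three ingredients: a structural identification of $\fb[T^r]$ up to finite index, a Nakayama/Jordan-type criterion for generation by a single element, and a $G$-triviality statement on a certain free quotient of $\fb/T\fb$ coming from the anticyclotomic assumption. First, by \eqref{e:fbTfb} the submodule $[\fb][T^r]$ equals the cyclic factor $\La/T^r\La$ (since every $\xi_i$ is coprime to $T$, so $(\La/\xi_i\La)[T^r]=0$), and since $\fb/[\fb]$ is finite the inclusion $[\fb][T^r]\hookrightarrow \fb[T^r]$ has finite cokernel. Because $T^\sharp=-\gamma^{-1}T$ differs from $T$ by a unit of $\La$, the submodule $M:=\fb[T^r]$ is stable under $G$; it is a finitely generated $\ZZ_p$-module of rank $r$ on which $T$ acts nilpotently with $T^r=0$.

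Next, since $M_1/TM_1=\ZZ_p$ and $(M/M_1)/T(M/M_1)$ is finite, the quotient $M/TM$ has $\ZZ_p$-rank equal to $1$, so the nilpotent operator $T$ on the $r$-dimensional $\QQ_p$-vector space $(M/\mathrm{tors})\otimes\QQ_p$ has a single Jordan block of size $r$. Hence, for any $x\in M$ whose image in $M/TM$ has infinite order, the elements $x,Tx,\dots,T^{r-1}x$ are $\ZZ_p$-linearly independent in $M/\mathrm{tors}$, so $\La x=\ZZ_p\langle x,Tx,\dots,T^{r-1}x\rangle$ has full $\ZZ_p$-rank, forcing $[M:\La x]<\infty$. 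The task is now reduced to producing such an $x$ inside $M^G$.

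The key point is that $G$ acts trivially on $(\fb/T\fb)/\mathrm{tors}\simeq\ZZ_p$. To see this, recall that $\pi\colon D\to \fb_\infty^\Gamma$ from \eqref{e:piD} is $G$-equivariant and that, in the proof of Proposition \ref{p:botimesLaI}, its image $\pi(D)\cong\QQ_p/\ZZ_p$ was shown to coincide with the divisible part of $\fb_\infty^\Gamma$. By \eqref{e:E} and the finiteness of $\Sha(A/K)$, the $G$-equivariant map $loc_K|_D\colon D\to\coh^1(\Gamma,A(L_{v_0}))$ is an isomorphism, and Remark \ref{r:H1Ginvariant} tells us that $\coh^1(\Gamma,A(L_{v_0}))$ is pointwise $G$-fixed. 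Hence $D$, and therefore $\pi(D)$, is pointwise $G$-fixed; Pontryagin duality then translates this into triviality of the $G$-action on the free quotient $(\fb/T\fb)/\mathrm{tors}$.

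Finally, the composed $G$-equivariant map
\[
\rho\colon M=\fb[T^r]\hookrightarrow \fb \lr \fb/T\fb \lr (\fb/T\fb)/\mathrm{tors}\simeq \ZZ_p
\]
has infinite image because it contains the image of $M_1=\La/T^r\La$, which is of $\ZZ_p$-rank one. Pick any $y\in M$ with $\rho(y)\ne 0$ and set $x:=y+\tau(y)\in M^G$. Since $G$ acts trivially on the target, $\rho(x)=2\rho(y)$, which is nonzero in $\ZZ_p$ even when $p=2$. Thus $x$ has infinite order in $M/TM$, and the second step gives $[\fb[T^r]:\La x]<\infty$. The only real obstacle is Step~3: correctly matching the $G$-actions on $D$, on $\pi(D)$, and on the relevant subquotients of $\fb_\infty^\Gamma$, and then dualizing to the free part of $\fb/T\fb$; the rest is formal once this is in place.
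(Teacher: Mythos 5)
Your proof is correct and follows the same approach as the paper: use Remark~\ref{r:H1Ginvariant} together with \eqref{e:E} to show the divisible group $D$ is pointwise $G$-fixed, deduce by duality that $G$ acts trivially on the free rank-one quotient of $\fb/T\fb$, and then symmetrize an element $y$ with nonzero image to obtain $x=(1+\tau)y\in\fb[T^r]^G$ with $\rho(x)=2\rho(y)\neq 0$. You usefully make explicit two points the paper leaves implicit: that $\fb[T^r]$ is in fact $G$-stable (since $T^\sharp=-\gamma^{-1}T$ differs from $T$ by a unit of $\La$), and the Jordan-block argument showing that any element of $\fb[T^r]$ with infinite order modulo $T\fb$ generates $\fb[T^r]$ up to finite index.
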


\begin{proof} Remark \ref{r:H1Ginvariant} yields that $(1-\tau)D$ is contained in $\Ker(loc_K)$ and hence is trivial (because $D$ is divisible). Thus we have $\pi(D)=\pi(D_G)=\pi(D)_G$ and by duality (see the proof of Proposition \ref{p:botimesLaI}) it follows that $(\fb/T\fb)^G$ has rank 1 over $\ZZ_p$. Also the image of $\fb[T^r]$ in $\fb/T\fb$ is of $\ZZ_p$-rank $1$, by \eqref{e:fbTfb}, so there must be $y\in\fb[T^r]$ such that $y$ mod $T\fb$ is $G$-invariant and has infinite order. Choose $x:=(1+\tau)y$. Then $x\in\fb[T^r]^G$ and $\La x$ has finite index in $\fb[T^r]\sim\La/T^r\La$ since $x\equiv 2y\pmod{T\fb}$ generates a free $\ZZ_p$-module in $\fb/T\fb$.
\end{proof}

\noindent Let $\fc:=\La x$ and $\fc_n:=\fk_n(\fc)$.

\begin{lemma} \label{l:Trbndual} As $n$ varies, the orders of the cokernels of the maps $\fc_n\rightarrow \fb_n[T^r]$ and $\fc_n\rightarrow \fb_n\rightarrow \fb_n/T^r\fb_n$ are bounded. \end{lemma}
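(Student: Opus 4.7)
My plan is to handle the two cokernels separately. The first, $\fc_n\to\fb_n[T^r]$, follows directly from the setup: since $x\in\fb[T^r]^G$ implies $\fc=\Lambda x\subseteq\fb[T^r]$, I factor the inclusion $\fc_n=\fk_n(\fc)\subseteq\fk_n(\fb[T^r])\subseteq\fb_n[T^r]$. The outer quotient is uniformly bounded by Lemma \ref{l:bntorsion}, and the inner quotient satisfies $[\fk_n(\fb[T^r]):\fc_n]\le[\fb[T^r]:\fc]<\infty$ by Lemma \ref{l:bGinv}, giving the desired uniform bound.

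For the second map I will combine this with Cassels-Tate duality. Since $(T^r)^\sharp=(-1)^r\gamma^{-r}T^r$ is a unit multiple of $T^r$, the perfect pairing $\langle\,,\,\rangle_n$ satisfies $(T^r\fb_n)^\perp=\fb_n[T^r]$, and therefore descends to a perfect induced pairing $(\fb_n/T^r\fb_n)\times\fb_n[T^r]\to\QQ_p/\ZZ_p$. Under this duality, the cokernel of $\fc_n\to\fb_n/T^r\fb_n$ is Pontryagin dual to the annihilator $\fc_n^\perp\cap\fb_n[T^r]$ of $\fc_n$ inside $\fb_n[T^r]$. Since $T^r\fb_n\subseteq\fc_n^\perp$, the quotient $(\fc_n^\perp\cap\fb_n[T^r])/(T^r\fb_n\cap\fb_n[T^r])$ embeds into $\fc_n^\perp/T^r\fb_n$, whose order equals $|\fb_n[T^r]/\fc_n|$ (already bounded by the first step). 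Hence the problem reduces to bounding $T^r\fb_n\cap\fb_n[T^r]=T^r(\fb_n[T^{2r}])$ uniformly in $n$.

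For this last bound I will strengthen Lemma \ref{l:bntorsion} to exponent $2r$: the structure \eqref{e:fbTfb} gives $[\fb][T^{2r}]=[\fb][T^r]$, so $\fb/\fb[T^{2r}]\sim\fN$ is annihilated by an element coprime to $T$, and the argument of Lemma \ref{l:bntorsion} applies verbatim to give a uniform bound on $[\fb_n[T^{2r}]:\fk_n(\fb[T^{2r}])]$. On the other hand, the same decomposition forces $T^r\cdot[\fb][T^{2r}]=0$, so $T^r\fb[T^{2r}]$ is pseudo-null in $\fb$, hence finite; thus $T^r\fk_n(\fb[T^{2r}])=\fk_n(T^r\fb[T^{2r}])$ has bounded order, and pushing it through the bounded quotient $\fb_n[T^{2r}]/\fk_n(\fb[T^{2r}])$ yields the required bound on $T^r\fb_n[T^{2r}]$.

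The main obstacle is this last step: the preceding reductions are essentially formal from the setup plus standard duality, but controlling $T^r\fb_n\cap\fb_n[T^r]$ genuinely requires the precise form of the structure decomposition \eqref{e:fbTfb}, via both the extension of Lemma \ref{l:bntorsion} to $T^{2r}$ and the crucial vanishing $T^r\cdot[\fb][T^{2r}]=0$ which makes $T^r\fb[T^{2r}]$ a finite group from the outset.
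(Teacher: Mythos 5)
Your treatment of the first cokernel is exactly the paper's: factor the inclusion $\fc_n=\fk_n(\fc)\subseteq\fk_n(\fb[T^r])\subseteq\fb_n[T^r]$ and invoke Lemmas~\ref{l:bGinv} and~\ref{l:bntorsion}. For the second cokernel, however, you take a genuinely different and considerably longer route. The paper's argument is direct: the structure decomposition \eqref{e:fbTfb} shows that the composite $\fb[T^r]\to\fb\to\fb/T^r\fb$ already has finite cokernel (the $\La/T^r\La$ summand maps isomorphically onto itself, and the $\La/\xi_i\La$ summands contribute the finite groups $\La/(\xi_i,T^r)$), so combining this with Lemma~\ref{l:bGinv} gives finite cokernel for $\fc\to\fb/T^r\fb$, and then Lemma~\ref{l:bntorsion} bounds the cokernel of $\fb/T^r\fb\to\fb_n/T^r\fb_n$, giving the bound in one chain. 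You instead dualize via the Cassels--Tate pairing, reducing the claim to bounding the annihilator $\fc_n^\perp\cap\fb_n[T^r]$, which you split into $|\fb_n[T^r]/\fc_n|$ (already bounded) and $|T^r\fb_n[T^{2r}]|$; the latter then forces you to re-run the argument of Lemma~\ref{l:bntorsion} at exponent $2r$ and to exploit the extra fact that $T^r\fb[T^{2r}]$ is finite. Every step checks out — in particular your identities $(T^r)^\sharp=(-1)^r\gamma^{-r}T^r$, $T^r\fb_n\cap\fb_n[T^r]=T^r\fb_n[T^{2r}]$, and $[\fb][T^{2r}]=[\fb][T^r]$ are all correct — so the argument is valid, but it re-derives by duality what the paper obtains directly from the module structure, at the cost of an auxiliary lemma and a delicate intersection computation. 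The one modest advantage of your route is that it keeps the Cassels--Tate pairing (the central object of the whole appendix) front and center rather than invoking it only implicitly.
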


\begin{proof} By \eqref{e:fbTfb} the cokernel of $\fb[T^r]\rightarrow \fb\rightarrow \fb/T^r\fb$ is finite. Then apply Lemma \ref{l:bntorsion} and Lemma \ref{l:bGinv}.
\end{proof}

\begin{lemma}\label{l:keyhere} We have
$$\langle\fc_n,\fc_n\rangle_n\subset (\QQ_p/\ZZ_p)[2].$$
\end{lemma}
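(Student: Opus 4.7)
The plan is to exploit the anticyclotomic hypothesis, namely that $\tau\gamma\tau^{-1}=\gamma^{-1}$ in $\Gal(L/k)$. This translates into the statement that conjugation by $\tau$ on the Iwasawa algebra $\La$ coincides with the involution $\cdot^\sharp$, i.e., $\tau\lambda\tau^{-1}=\lambda^\sharp$ for every $\lambda\in\La$. I would combine this with three standard properties of the Cassels--Tate pairing: it is $\Gal(K_n/k)$-equivariant in the form $\langle\sigma a,\sigma b\rangle_n=\langle a,b\rangle_n$ for $\sigma\in G$; it satisfies the $\La$-adjunction formula \eqref{e:twistpairing}; and, because $A$ is an elliptic curve (so self-dual via the canonical principal polarization), it is antisymmetric: $\langle a,b\rangle_n=-\langle b,a\rangle_n$.

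First, since $x\in\fb[T^r]^G$ and the transition map $\fk_n$ is $G$-equivariant (corestriction commutes with the action of $\Gal(K_n/k)$), we have $\tau\fk_n(x)=\fk_n(x)$. Every element of $\fc_n=\La\cdot\fk_n(x)$ has the form $\lambda\fk_n(x)$ for some $\lambda\in\La$, and by \eqref{e:twistpairing} we may reduce the general pairing $\langle\lambda_1\fk_n(x),\lambda_2\fk_n(x)\rangle_n$ to $\langle\mu\fk_n(x),\fk_n(x)\rangle_n$ with $\mu=\lambda_2^\sharp\lambda_1$. Thus it suffices to show that $2\langle\mu\fk_n(x),\fk_n(x)\rangle_n=0$ for every $\mu\in\La$.

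Now I would compute this quantity in two different ways. On the one hand, antisymmetry and \eqref{e:twistpairing} give
$$\langle\mu\fk_n(x),\fk_n(x)\rangle_n=-\langle\fk_n(x),\mu\fk_n(x)\rangle_n=-\langle\mu^\sharp\fk_n(x),\fk_n(x)\rangle_n.$$
On the other hand, applying the $G$-equivariance of $\langle\,,\rangle_n$ together with $\tau\fk_n(x)=\fk_n(x)$ and $\tau\mu\tau^{-1}=\mu^\sharp$ yields
$$\langle\mu\fk_n(x),\fk_n(x)\rangle_n=\langle\tau(\mu\fk_n(x)),\tau\fk_n(x)\rangle_n=\langle\mu^\sharp\fk_n(x),\fk_n(x)\rangle_n.$$
Comparing the two identities forces $2\langle\mu\fk_n(x),\fk_n(x)\rangle_n=0$, which gives the claim.

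The main point to verify carefully is the interplay of the three symmetries of the Cassels--Tate pairing; none of them is deep in this setting, but one must make sure that the self-pairing on $\fb_n$ is indeed the one induced by the canonical self-duality $A\simeq A^t$ of the elliptic curve (so that antisymmetry is available) and that $G$-equivariance passes to the quotient by the maximal divisible subgroup. These are standard, so I do not anticipate a real obstacle beyond bookkeeping.
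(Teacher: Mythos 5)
Your proof is correct and is essentially the same argument as the paper's: both combine the antisymmetry of the Cassels--Tate pairing with the $\Lambda$-adjunction \eqref{e:twistpairing} to get one expression for $\langle\mu x_n,x_n\rangle_n$, and combine $G$-invariance with $\tau x_n = x_n$ and the anticyclotomic identity $\tau\mu\tau^{-1}=\mu^\sharp$ to get the negative of that expression. The only cosmetic difference is that you work with $\langle\mu x_n,x_n\rangle_n$ while the paper works with $\langle x_n,\lambda x_n\rangle_n$.
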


\noindent So $\langle\fc_n,\fc_n\rangle_n$ has at most 2 elements and it is trivial if $p\not=2$.

\begin{proof} Write $x_n:=\fk_n(x)$. For any $\lambda\in\La$ we have
\begin{equation} \label{e:appc+} \langle x_n,\lambda x_n\rangle_n = - \langle \lambda x_n,x_n\rangle_n = - \langle x_n,\lambda^\# x_n\rangle_n \,, \end{equation}
using first the fact that $\langle \;,\;\rangle_n$ is alternating and then its $\Gamma$-equivariance. On the other hand, the pairing is also $G$-invariant and $x=\tau x$ implies
\begin{equation} \label{e:appc-} \langle x_n,\lambda x_n\rangle_n = \langle \tau x_n,\tau(\lambda x_n)\rangle_n = \langle x_n,\lambda^\# x_n\rangle_n \,, \end{equation}
because $\tau(\lambda x)=(\tau\lambda\tau^{-1})\tau x$ and the action of $\tau$ on $\La$ is precisely $\lambda\mapsto\lambda^\#$, by the anticyclotomic hypothesis. Equalities \eqref{e:appc+} and \eqref{e:appc-} together prove
$$2\langle x_n,\lambda^\# x_n\rangle_n=0$$
and this suffices, since $\fc_n=\La x_n$.
\end{proof}

\subsubsection{} Now we can finally obtain a contradiction. The Cassels-Tate pairing makes $\fb_n[T^r]$ and $\fb_n/T^r\fb_n$ dual to each other . Lemma \ref{l:Trbndual} implies that the subgroup
$$\langle \fc_n,\fc_n\rangle_n\subseteq \langle \fb_n[T^r],\fb_n/T^r\fb_n\rangle_n$$
has bounded index as $n$ varies. Since $\pi(D)\subseteq \fb_\infty[T]\subseteq \fb_\infty[T^r]=\varinjlim_n \fb_n[T^r]$ is infinite, we must have
$$\bigcup_n \langle \fc_n,\fc_n\rangle_n=\QQ_p/\ZZ_p\,,$$
a contradiction to Lemma \ref{l:keyhere}.

\subsection{The analytic side} In this example $X_p(A/L)$ is non-torsion, whence its characteristic ideal is trivial.
In the spirit of the Iwasawa main conjecture one expects that the corresponding $p$-adic $L$-function should be 0. Here we verify this.\\

In the function field setting, non-isotrivial elliptic curves are known to be modular: that is, there is a cuspidal automorphic function $f$ associated with $A$; its level is $\fn$, the conductor of $A/K$ (as a divisor of $K$). For any $n\geq 0$, let $\fd_n$ denote the divisor $nv_0$ and let $K(\fd_n)/K$ be the corresponding ray class field. It is shown in \cite{tan93} how to construct a modular element $\Theta_{\fd_n,f}\in\ZZ_p[\Gal(K(\fd_n)/K)]$, such that for each $\omega\in\Gal(K(\fd_n)/K)^\vee$ one has
\begin{equation}  \label{e:apptheta}  \omega(\Theta_{\fd_n,f})=\tau_{\omega}\cdot L(A,\omega,1)\,, \end{equation}
where $\tau_{\omega}$ is a Gauss sum.

By \cite[Proposition 2, 2.(d)]{tan93}, the maps
$$\ZZ_p[\Gal(K(\fd_{n+1})/K)]\lr \ZZ_p[\Gal(K(\fd_n)/K)]$$
send the modular elements $\Theta_{\fd_n,f}$ into each other, so that one can take their limit $\tilde\Theta$. Any abelian extension of $K$ totally ramified above $v_0$ and unramified elsewhere is contained in $\cup K(\fd_n)$: in particular this holds for our $L$. Let $\Theta$ be the image of $\tilde\Theta$ under the projection
$$\varprojlim\ZZ_p[\Gal(K(\fd_n)/K)]\lr \La.$$
Equation \eqref{e:apptheta} shows that $\Theta$ satisfies the interpolation property required for the $p$-adic $L$-function.

Observe that $\Theta_{\fd_n, f}$ is invariant under the action of $\Gal(K/k)$, since the modular elements are already defined above $k$. On the other hand, by \cite[Proposition 3]{tan93} we have
$$\Theta_{\fd_n,f}= - \Theta_{\fd_n,f}^{\#} \cdot \eta,$$
where $\eta\in\Gal(K(\fd_n)/K)$ corresponds to the divisor $\fn'=\fn-v_0$. This implies that $\Theta=0$.


\end{document}